%------------------------------------------------------------------------------
% Here please write the date of submission of paper or its revisions:
%------------------------------------------------------------------------------
%
\documentclass[12pt, reqno]{amsart}
\usepackage{amsmath, amsthm, amscd, amsfonts, amssymb, graphicx, color, mathrsfs}
\usepackage[bookmarksnumbered, colorlinks, plainpages]{hyperref}
\usepackage[all]{xy}
\usepackage{slashed}
\usepackage{tikz-cd}
\usepackage{mathabx}
\usepackage{tipa}
\usepackage{soul}
\usepackage{cancel}
\usepackage{ulem}

\textheight 22.5truecm \textwidth 14.5truecm
\setlength{\oddsidemargin}{0.35in}\setlength{\evensidemargin}{0.35in}

\setlength{\topmargin}{-.5cm}

\newtheorem{theorem}{Theorem}[section]
\newtheorem{lemma}[theorem]{Lemma}

\newtheorem{proposition}[theorem]{Proposition}
\newtheorem{corollary}[theorem]{Corollary}
\theoremstyle{definition}
\newtheorem{definition}[theorem]{Definition}
\newtheorem{example}[theorem]{Example}

\newtheorem{cri}[theorem]{Criterion}

\theoremstyle{remark}
\newtheorem{remark}[theorem]{Remark}
\numberwithin{equation}{section}

\begin{document}
\setcounter{page}{1}

\title[ Subelliptic pseudo-differential operators ]{ Subelliptic pseudo-differential operators and Fourier integral operators  on compact Lie groups}

\author[D. Cardona]{Duv\'an Cardona}
\address{
  Duv\'an Cardona:
  \endgraf
  Department of Mathematics: Analysis, Logic and Discrete Mathematics
  \endgraf
  Ghent University, Belgium
  \endgraf
  {\it E-mail address} {\rm duvanc306@gmail.com}
  }

\author[M. Ruzhansky]{Michael Ruzhansky}
\address{
  Michael Ruzhansky:
  \endgraf
  Department of Mathematics: Analysis, Logic and Discrete Mathematics
  \endgraf
  Ghent University, Belgium
  \endgraf
 and
  \endgraf
  School of Mathematical Sciences
  \endgraf
  Queen Mary University of London
  \endgraf
  United Kingdom
  \endgraf
  {\it E-mail address} {\rm ruzhansky@gmail.com}
  }

\subjclass[2020]{Primary 22E30; Secondary 58J40}.

\keywords{Sub-Laplacian, Compact Lie group, Pseudo-differential operator, Fourier analysis, Fourier integral operator}

\thanks{The authors are supported by the FWO Odysseus 1 grant G.0H94.18N: Analysis and Partial Differential Equations and by the Methusalem programme of the Ghent University Special Research Fund (BOF) (Grant number 01M01021). MR is also supported in parts by the EPSRC grant
EP/R003025/2.}

\begin{abstract} In this memoir we extend the theory of global pseudo-differential operators to the setting of arbitrary sub-Riemannian structures on a compact Lie group. More precisely, 
given a compact Lie group $G$, and the sub-Laplacian $\mathcal{L}$ associated to a system of vector fields $X=\{X_1,\cdots,X_k\}$ satisfying the H\"ormander condition, we introduce a (subelliptic) pseudo-differential calculus associated to $\mathcal{L},$ based on the matrix-valued quantisation process developed in \cite{Ruz}. This theory will be developed as follows. First, we will investigate the singular kernels of this calculus,  estimates of  $L^p$-$L^p$,  $H^1$-$L^1$, $L^\infty$-$BMO$ type and also the weak (1,1) boundedness of these subelliptic H\"ormander  classes. Between the obtained  estimates we prove subelliptic versions of the celebrated sharp Fefferman $L^p$-theorem  and  the Calder\'on-Vaillancourt theorem.  The obtained estimates will be used to establish the boundedness of subelliptic operators on subelliptic Sobolev and Besov spaces. We will investigate the ellipticity,  the construction of parametrices, the heat traces and the regularisation of traces  for the developed subelliptic calculus. A subelliptic global functional calculus will be established as well as a subelliptic version of Hulanicki theorem. This subelliptic functional calculus will be used to  prove a subelliptic version of the G\r{a}rding inequality, which we also use to study the global solvability for a class of  subelliptic pseudo-differential problems.  Finally, by using both, the matrix-valued symbols and also the notion of matrix-valued phases we study the $L^2$-boundedness of global Fourier integral operators. The approach established in characterising our subelliptic H\"ormander classes (by proving that the definition of these classes is independent of certain parameters) will be  also  applied in order to characterise the global H\"ormander classes on arbitrary graded Lie groups developed in \cite{FischerRuzhanskyBook}.
\end{abstract} \maketitle

\tableofcontents
\allowdisplaybreaks

\section{Introduction and historical remarks}

\subsection{Motivation}
In pure mathematics the Kohn-Nirenberg quantisation $a(x,D)$  was defined in \cite{KohnNirenberg1965}  for any measurable function $a$ defined on the phase space $\mathbb{R}^n\times \mathbb{R}^n, $ in terms of the Fourier transform $f\mapsto \widehat{f}$ of distributions on $\mathbb{R}^n.$ Indeed, the action of $a(x,D)$ in the space of test functions $f\in C^\infty_0(\mathbb{R}^n)$ is given by the integral formula
\begin{equation}
    a(x,D)f(x)=\smallint\limits_{\mathbb{R}^n}e^{2\pi i (x,\xi)}a(x,\xi)\widehat{f}(\xi)d\xi,
\end{equation}if for example, the symbol $a(x,\xi)$ satisfies the estimates
\begin{equation}\label{symbol:intro}
  \forall\alpha\in \mathbb{N}_0^n,\,\forall\beta\in \mathbb{N}_0^n,\exists C_{\alpha,\beta}>0, \,  |\partial_x^\beta\partial_\xi^\alpha a(x,\xi)|\leq C_{\alpha,\beta}(1+|\xi|)^{m-|\alpha|},
\end{equation} for some $m\in \mathbb{R}.$ By following the standard terminology one says that $a(x,D)$ is the pseudo-differential operator associated to the symbol $a(\cdot, \cdot).$ The class of symbols $S^{m}(\mathbb{R}^n\times \mathbb{R}^n)=\{a\in C^\infty(\mathbb{R}^{2n}): a\textnormal{  satisfies }\eqref{symbol:intro}\}$ begets the class of operators $\Psi^{m}(\mathbb{R}^n)=\{a(x,D): a\in S^{m}(\mathbb{R}^n\times \mathbb{R}^n)\}.$ These classes satisfy the following properties:
\begin{itemize}
    \item[1.] If $a(x,D)\in \Psi^{m_1}(\mathbb{R}^n)$  and  $b(x,D)\in \Psi^{m_2}(\mathbb{R}^n),$ then $c(x,D)=a(x,D)\circ b(x,D)$ is a pseudo-differential operator in the class  $\Psi^{m_1+m_2}(\mathbb{R}^n). $ This means that the classes  $\Psi^{m}(\mathbb{R}^n)$ are stable under compositions.
    \item[2.] The formal adjoint $a(x,D)^{*}$ of $a(x,D)\in \Psi^{m_1}(\mathbb{R}^n)$ is a pseudo-differential operator in the class  $\Psi^{m_1}(\mathbb{R}^n).$ The classes  $\Psi^{m}(\mathbb{R}^n)$ are  stable by taking adjoints.
    \item[3.] Any pseudo-differential operator in the class $\Psi^0(\mathbb{R}^n)$ is a bounded operator on $L^p(\mathbb{R}^n),$ for all $1<p<\infty.$ This fundamental $L^p$-estimate can be viewed as a variable coefficient version of the H\"ormander-Mihlin multiplier theorem \cite{Hormander1985III}.
    \item[4.] Any elliptic operator $a(x,D)$ of the class $\Psi^{m}(\mathbb{R}^n),$ i.e. when its symbol $a(\cdot,\cdot)$ has order $m$ from above and from  below, that is
    \[\exists C,C',R_0>0,\,\forall|\xi|\geq R_{0},\,\forall x\in \mathbb{R}^n,\, \ C'(1+|\xi|)^m\leq |a(x,\xi)|\leq C(1+|\xi|), \]
    admits the existence of a (approximate inverse) parametrix $a^{\#}(x,D)$ with the symbol  $a^{\#}\in \Psi^{-m}(\mathbb{R}^n)$ being elliptic of order $-m,$ and such that 
    \begin{equation}
        a(x,D)a^{\#}(x,D)=I+R(x,D),\,a^{\#}(x,D)a(x,D)=I+R'(x,D),
    \end{equation}where the ``error terms'' $R(x,D)$ and $R'(x,D)$ are smoothing/compact operators in the class $\Psi^{-\infty}(\mathbb{R}^n)=\cap_{m\in \mathbb{R}}\Psi^{m}(\mathbb{R}^n).$ 
\end{itemize}
A fundamental property of the classes $\Psi^{m}(\mathbb{R}^n)$ is that they include the family of partial differential operators
\begin{equation}
    p(x,D)=\sum_{|\alpha|\leq m}a_{\alpha}(x)\partial_{x}^{\alpha},
\end{equation}  of any order $m\in \mathbb{N},$  defined by the symbols $p(x,\xi)=\sum_{\alpha}a_{\alpha}(x)\xi^{\alpha},$ with bounded derivatives in $x,$ as well as the class of elliptic partial differential operators of order $m\in \mathbb{N},$ defined by symbols satisfying the ellipticity condition
    \[\exists C,C',R_0>0,\,\forall|\xi|\geq R_{0},\,\forall x\in \mathbb{R}^n,\, \ C'(1+|\xi|)^m\leq |p(x,\xi)|\leq C(1+|\xi|)^m. \]
Although the classes $\Psi^m(\mathbb{R}^n)$ have as a motivating example the elliptic partial differential operators, they exclude operators with unbounded coefficients like the harmonic oscillator $H=-\Delta_x+|x|^2$, and then other classes of pseudo-differential operators need to be defined.  For instance, adapted to the harmonic oscillator, Shubin (see e.g. \cite[Page 455]{FischerRuzhanskyBook}) has introduced the classes of symbols $\Sigma^m(\mathbb{R}^n\times \mathbb{R}^n),$ defined by the decay conditions
\begin{equation}\label{symbol:intro:2}
 a\in \Sigma^m(\mathbb{R}^n\times \mathbb{R}^n)\Longleftrightarrow \,\forall\alpha,\beta\in \mathbb{N}_0^n,\,  |\partial_x^\beta\partial_\xi^\alpha a(x,\xi)|\leq C_{\alpha,\beta}(1+|x|+|\xi|)^{m-|\alpha|-|\beta|}.
\end{equation} The classes of symbols $\Sigma^m(\mathbb{R}^n\times \mathbb{R}^n)$ also beget classes of pseudo-differential operators having similar properties to the ones satisfied by the Kohn-Nirenberg classes: stability under compositions, stability by taking adjoints, existence of parametrices  and $L^p$-boundedness for the class of a certain order, in this case of order zero. This last property can be different if one considers classes allowing loss of regularity, or allowing the analysis of hypoelliptic problems (instead of elliptic problems) as in the case of the H\"ormander classes $S^{m}_{\rho,\delta}(\mathbb{R}^n\times \mathbb{R}^n),$ $0\leq \delta\leq \rho\leq 1,$ see   Section \ref{Sect2} for details. Similarly one can define pseudo-differential operators on a compact manifold, like the localised Kohn-Nirenberg classes  $S^{m}(M,\text{loc})=S^{m}(T^{*}M),$ and in particular, one can consider the case related to this monograph, for instance related to the pseudo-differential classes on compact Lie groups allowing the analysis of subelliptic problems for H\"ormander sub-Laplacians $\mathcal{L}=-\sum_{j}X_{j}^2.$ We precise the contributions of this memoir in the rest of this introduction.

\subsection{Contributions of this memoir}
This monograph is devoted to the development of the pseudo-differential calculus for subelliptic  pseudo-differential  operators on arbitrary compact Lie groups and its applications. For instance, the theory developed here could remain valid in several compact non-commutative structures with the presence of symmetries  (see \cite[Part IV]{Ruz}). Indeed, in modern mathematics, the theory of pseudo-differential operators is a powerful branch in the analysis of linear partial  differential operators due to its interactions with several areas of mathematics. For instance, from the point of view of the theory of partial differential equations, pseudo-differential operators are used e.g. to study the global/local solvability of several partial differential problems, to understand the mapping properties of certain singular integral operators, to understand the propagation of singularities in distribution theory, and in the construction of fundamental solutions and parametrices. Also, in the interplay between differential geometry and algebraic topology, pseudo-differential operators are used to compute some geometric invariants arising in the  index theory. That is the case of analytical expressions for the Euler characteristic, the Hirzebruch signature and, in a more general context, the Atiyah-Singer index theorem (see e.g. Atiyah and Singer \cite{AS,AS1,AS2,AS3,AS4,AS5}, Kohn and Nirenberg  \cite{KohnNirenberg1965}, the fundamental book by H\"ormander \cite{Hormander1985III} and references therein). On the other hand, in the microlocal analysis, the theory of Fourier integral operators becomes  a prominent generalisation of  pseudo-differential operators, to study the spectral function for elliptic operators on vector bundles and in solving hyperbolic differential equations (see Duistermaat and H\"ormander \cite{DuiHor} and H\"ormander \cite{Hor71}).

In this work we  develop a subelliptic pseudo-differential calculus on compact Lie groups and some of its applications, by contributing with the notions and results of harmonic analysis on compact Lie groups, building up on the monograph \cite{Ruz} by V. Turunen and the second author, which was devoted to the development of the general theory of global pseudo-differential operators (with matrix-valued symbols) on spaces with symmetries. Starting our work, we investigate the action of the subelliptic calculus on $L^p,$ subelliptic Sobolev, and subelliptic Besov spaces and in the final part, we study the $L^2$-boundedness of global Fourier integral operators. Instead of the standard models of pseudo-differential operator theories  defined by the   notion of a symbol via localisations (see H\"ormander \cite{Hormander1985III}\footnote{ Where the argument of such symbols is defined in points  $(x,\xi)$ of the cotangent bundle $T^*G\cong G\times \mathfrak{g}^{*}$ over $G.$ Here, $\mathfrak{g}$ denotes the Lie algebra of $G,$ and we denote by $\widehat{G}$ its unitary dual.}) and other ones defined by sub-Laplacians (see e.g. Nagel and Stein \cite{NagelStein78}), we will follow the paradigm of the global quantisation on  compact Lie groups $G$ as introduced in \cite{Ruz}, which is a non-commutative extension of the classical Konh-Nirenberg quantisation \cite{KohnNirenberg1965}.  The global symbols according to the theory developed in \cite{Ruz} are defined on the non-commutative phase space $G\times \widehat{G},$ however, the idea of studying pseudo-differential operators on Lie groups as a generalisation of  multipliers of the Fourier transform can be traced back to  Taylor \cite{taylorNC}.   The pseudo-differential calculus associated to the usual $(\rho,\delta)$-classes and this global notion are equivalent for $1-\rho\leqslant  \delta<\rho\leqslant 1$, (see \cite{Fischer2015,RuzhanskyTurunenWirth2014}) with the global notion allowing also for the  range $0\leqslant \delta<\rho\leqslant 1/2,$ where the pseudo-differential calculus associated with the notion of a symbol defined via localisations is not operable\footnote{Indeed, in this case the $(\rho,\delta)$-classes on arbitrary $C^\infty$-manifolds are not stable under coordinate changes.}. So, in particular,  the resultant global calculus for  the range  $0\leqslant \delta<\rho\leqslant 1/2,$ can be applied to the treatment of inverses of complex vector fields, sub-Laplacians and a wide variety of pseudo-differential problems (see \cite{Ruz} for a complete description and also e.g.  the references of this work).

From its beginning, the theory of pseudo-differential operators was closely related to the theory of singular integral operators developed by Mihlin \cite{Mihlin}, Calder\'on and Zygmund \cite{CalderonZygmund}. However, in the case of $\mathbb{R}^n,$ and of other manifolds with symmetries, we can use the Fourier transform (as Kohn and Nirenberg in \cite{KohnNirenberg1965}, and the works of the second author with Turunen, Fischer, and    M\u{a}ntoiu \cite{Ruz,FischerRuzhanskyBook,M1,M2}) to define pseudo-differential operators  by using global symbols. In the case of compact Lie groups as the theory developed in \cite{Ruz}, these global symbols are matrix-valued with the size of the matrix growing according to the size of the representation spaces. In the other general cases, for example on graded Lie groups \cite{FischerRuzhanskyBook} and on general Lie groups of type I \cite{M1,M2}, the symbols become  operator-valued and densely defined on the possibly infinitely dimensional representation spaces. In the spirit of the theory of singular integrals of Coifman and Weiss \cite{CoifmanWeiss}, we will follow the criterion given by Coifman and  De Guzm\'an \cite{CoifmandeGuzman} and the approach developed by Fefferman \cite{Fefferman1973} to establish the mapping properties for subelliptic  pseudo-differential operators. Also, the classical Cotlar-Stein Lemma will be applied to obtain the Calder\'on-Vaillancourt theorem  and the $L^2$-boundedness of global Fourier integral operators (see Section \ref{Outline} for details).

Because the subelliptic pseudo-differential calculus developed here is a parallel theory to the ones developed in \cite{Ruz} and \cite{FischerRuzhanskyBook}, we will exploit that our operators have singular kernels in order to study their mapping properties and other spectral properties arising in the spectral geometry, specifically from the regularisation of traces. The singularity orders for the kernels of the obtained subelliptic calculus can be classified in terms of the Hausdorff dimension $Q\geqslant \dim(G),$ of a compact Lie group $G,$  associated with the Carnot-Carath\'eodory distance induced by the sub-Laplacian under consideration, so in local coordinates  we obtain  more singular  kernels that those obtained by the H\"ormander calculus  in the case of compact Lie groups \cite{Fischer2015,RuzhanskyTurunenWirth2014}. 

Let us consider on the compact Lie group $G,$ the positive sub-Laplacian  \[\mathcal{L}=-(X_1^2+\cdots +X_k^2),
\] which is  considered in such a way that the system of vector fields $X=\{X_i\}_{i=1}^{k}$ satisfies the H\"ormander condition of order $\kappa$.\footnote{This means that their iterated commutators of length $\leqslant \kappa$ span the Lie algebra $\mathfrak{g}$ of $G$.} In this memoir we develop a subelliptic pseudo-differential calculus associated with $\mathcal{L},$  by defining certain H\"ormander type classes ${S}^{m,\mathcal{L}}_{\rho,\delta}(G\times \widehat{G})=\mathscr{S}^{m\kappa,\mathcal{L}}_{\rho\kappa,\delta\kappa}(G),$ $0\leqslant \rho,\delta\leqslant 1,$ (stable under compositions and adjoints). We have opted for using the matrix-valued (global) quantisation process developed by the second author and V. Turunen in \cite{Ruz,RuzTurIMRN}. This viewpoint has shown to be a versatile tool in the analysis of pseudo-differential operators for describing their analytic and spectral properties (see e.g. \cite{deMoraes,RodriguezRuzhansky2020,RuzhanskyDelgado2017,Fischer2015,Garetto,GarettoRuzhansky2015,Ruz,RuzhanskyTurunenWirth2014,RuzTurIMRN,RuzhanskyWirth2014,RuzhanskyWirth2015}) and in the treatment of some  problems for PDE on compact Lie groups. Our motivation for using the matrix-valued quantisation came from the fact that the global symbols obtained with this procedure together with a suitable difference structure of the unitary dual $\widehat{G},$ of $G,$  characterise the H\"ormander classes of pseudo-differential operators defined by charts \cite{RuzhanskyTurunenWirth2014}.

The sub-Laplacian $\mathcal{L},$ associated with a system of vector fields $X=\{X_i\}_{i=1}^{k}$ satisfying the H\"ormander condition, endows  $G$ with a natural sub-Riemannian structure. The sub-bundle $\mathcal{H}=\textnormal{span}{(X)},$ of the tangent bundle $TG$ generated by the system $X,$ provides a natural setting to study subelliptic operators as $\mathcal{L},$ which is, in fact, hypoelliptic  by an application of the theorem of sum of squares of H\"ormander. This kind of sub-Riemannian structure appears in many areas, to say, describing  constrained systems in mechanics, or as limiting classes of Riemannian geometries (see e.g. Gordina and Laetsch \cite{GL} and references therein).  We also refer the reader to Bramanti \cite{Bramanti}, where the applications of this kind of sub-Riemannian manifolds are discussed, as its relation with  the Kolmogorov-Fokker-Plank equation, the $\overline{\partial}$-Neumann problem,  the  tangential Cauchy-Riemann Complex, the Kohn-Laplacian $\square_{b},$ and other differential problems. 
 
When we review the criteria obtained in terms of the matrix-valued quantization developed in \cite{Ruz,RuzTurIMRN}, we observe that some of them are given in terms of the decay of the matrix-valued symbol and its derivatives (or its differences) which is measured compared to the spectrum of the positive Laplace operator 
\[\mathcal{L}_{G}=-(X_1^2+\cdots +X_n^2),\,\,n=\dim(G).
\] 
Consequently, the symbolic calculus on compact Lie groups developed with the matrix-valued symbols enjoys  good properties when we look at its action on function spaces associated to the Laplacian.
From this view point, if we measure the decay of the global symbols used in the symbolic calculus associated  to the matrix-valued quantisation, with respect to the spectrum of the sub-Laplacian $\mathcal{L}$ (instead of using the spectrum of the Laplacian), we could provide sharp estimates for subelliptic problems without loss of regularity (see e.g. \cite{Garetto}, for the case of the wave equation associated to sub-Laplacians) on subelliptic function spaces by exploiting in this case the sub-Riemannian structure of $G$.

This analysis is organised as follows. 
\begin{itemize}
    \item In Section \ref{Outline} we will present the main results of this work and our contributions in relation to the existent literature for global pseudo-differential operators on compact Lie groups.
    \item In Section \ref{Sect2}, we present the preliminaries used throughout  this work. For instance, we will follow the original exposition in \cite{Ruz}.
    \item In Section \ref{Seccionsubelliptic} we define and develop the subelliptic pseudo-differential calculus on compact Lie groups, in terms of the matrix-valued quantisation. By using the Calder\'on-Zygmund estimates of Coifman and de Guzm\'an \cite{CoifmandeGuzman} (see also \cite{RuzhanskyWirth2015}), we  prove Theorem \ref{LpQL} and Theorem \ref{SubellipticLpestimate} in Section \ref{ps}.
    \item  By using the Littlewood-Paley theory and some estimates for commutators, Theorems \ref{parta} and \ref{parta2} will be proved in Section \ref{fs}. In Section \ref{LEH} we study the notion of ellipticity  associated with the developed subelliptic calculus. We provide the construction of subelliptic parametrices and we also study the heat traces and the regularised traces in the subelliptic context. 
    \item  A subelliptic global functional calculus will be developed in Section \ref{SFC}.
    \item Applications of this global functional calculus include the (subelliptic) G\r{a}rding inequality which will be used to study the global solvability  for  subelliptic evolution problems in Section \ref{GST}. 
    \item Finally, we will study the $L^2$-boundedness of global Fourier integral operators in Section \ref{FIOCLG}.
\end{itemize}

\section{Outline and main results}\label{Outline}

\subsection{Notation}In order to explain the main results of this work we will present some preliminaries on the matrix-valued quantisation. Indeed,  by following \cite{Ruz,RuzTurIMRN}, we associate  to a continuous linear operator $A:C^\infty(G)\rightarrow C^\infty(G),$ the global symbol $\sigma_A$ defined on the phase space $G\times \widehat{G}$ (here $\widehat{G}$ denotes the unitary dual of $G$) by the identity
 \[
     a(x,\xi)=\xi(x)^*A\xi(x), \,\,\,[\xi]\in \widehat{G}.\footnote{ Strictly speaking from every equivalence class $[\xi]$ we choose one and only one matrix-valued representation so that $a(x,[\xi]):=a(x,\xi)$.  }
 \]Then, the operator $A$ can be written in terms of this global symbol as
 \begin{equation}\label{RTq}
     Af(x)=\sum_{[\xi]\in \widehat{G}}d_\xi\textnormal{\textbf{Tr}}[\xi(x)a(x,\xi)\widehat{f}(\xi)],\,\,f\in C^\infty(G),
 \end{equation}where $\widehat{f}$ denotes the group Fourier transform of $f,$ $$\widehat{f}(\xi):=
\int\limits_{G}f(x)\xi(x)^{*}dx,$$ where $dx$ is the (normalised) Haar measure on $G.$  We denote by  $\Psi^{m}_{\rho,\delta}(G,\textnormal{loc}),$ $\delta<\rho,$ $\rho\geqslant   1-\delta,$  the H\"ormander class of order $m$ and of  type $(\rho,\delta)$\footnote{ i.e. the class of operators which in all local coordinate charts give operators in  $\Psi^{m}_{\rho,\delta}(\mathbb{R}^n).$ }. Then (see \cite{RuzhanskyWirth2014})
 \[
     A\in \Psi^{m}_{\rho,\delta}(G,\textnormal{loc})\textnormal{ \quad  if and only if \quad   }\Vert \Delta_\xi^\alpha\partial_{x}^\beta a(x,\xi) \Vert_{\textnormal{op}} \leqslant C_{\alpha,\beta}\langle \xi \rangle^{m-\rho|\alpha|+\delta|\beta|} ,
 \] for all multi-indices $\alpha,\beta.$   Here $\langle \xi \rangle=(1+\lambda_{[\xi]})^\frac{1}{2},$ and $\{\lambda_{[\xi]}\}_{[\xi]\in \widehat{G}}$ is the positive spectrum of the Laplacian $\mathcal{L}_{G},$ which can be enumerated by the unitary dual, and $\{\Delta_\xi^\alpha:\alpha\in\mathbb{N}_0^n\}$ is the collection of  the difference operators introduced in \cite{Ruz} that provide a difference structure on the unitary dual $\widehat{G}$. Furthermore, in \cite{Ruz}, the  H\"ormander classes  $\Psi^{m}_{\rho,\delta}(G)$ defined by
 \begin{equation}\label{HormanderclassRT}
     A\in \Psi^{m}_{\rho,\delta}(G),\,\,0\leqslant \delta,\rho\leqslant 1,\textnormal{   if and only if,   }\Vert \Delta_\xi^\alpha\partial_{x}^\beta a(x,\xi) \Vert_{\textnormal{op}} \leqslant C_{\alpha,\beta}\langle \xi \rangle^{m-\rho|\alpha|+\delta|\beta|} ,
 \end{equation} admit the complete range $0\leqslant \delta,\rho \leqslant 1$ and a symbolic calculus for these classes is possible for $0\leqslant \delta<\rho\leqslant 1$ without the standard restriction $\rho\geqslant   1-\delta.$ The symbol classes 
 \begin{equation}\label{smrd}
   \mathscr{S}^{m}_{\rho,\delta}(G\times \widehat{G}) :=\{ a(x,{\xi}): A=\textnormal{Op}(a)\in  \Psi^m_{\rho,\delta}(G)\},
 \end{equation}
  are useful tools in the analysis on compact Lie groups, where there  appear operators with symbols in local coordinates belonging to H\"ormander classes of type $(\rho,\delta)$ and  satisfying  the condition $\rho\leqslant 1-\delta,$\footnote{ e.g. these classes appear with symbols in the class $\mathscr{S}^{-1}_{\frac{1}{2},0}(G)$ where we have symbols of pseudo-differential parametrices of  sub-Laplacians,  or  the parametrix of the heat operator $\textnormal{D}_3-\textnormal{D}_1^2-\textnormal{D}_2^2$ on $\textnormal{SU}(2)$ (see \cite{RuzhanskyWirth2015}).}  under which these classes are not invariant under coordinate changes, and the classical methods, where one uses a  symbol  defined by local coordinate systems (see H\"ormander \cite{Hormander1985III}), could not be  applicable.  
 The subelliptic classes of pseudo-differential operators will be defined in Section \ref{Seccionsubelliptic}. They will be denoted by $$S^{m,\ell,\ell',\mathcal{L}}_{\rho,\delta}(G\times \widehat{G})=\mathscr{S}^{m\kappa,\ell,\ell',\mathcal{L}}_{\rho\kappa,\delta\kappa}(G),\,\,\, 0\leqslant \rho,\delta\leqslant 1,$$ by indicating that the symbols there have  order $m,$ which satisfy the $\rho$-type (subelliptic) conditions up to order $\ell\in\mathbb{N},$   and the $\delta$-type (subelliptic)  conditions up to order $\ell'$. So,
 \[   {S}^{m,\mathcal{L}}_{\rho,\delta}(G\times \widehat{G}):=\bigcap_{\ell,\ell'}  {S}^{m,\ell,\ell',\mathcal{L}}_{\rho,\delta}(G\times \widehat{G}),\,\,\,\,0\leqslant \rho,\delta\leqslant 1,
 \] denotes the contracted class of subelliptic smooth symbols  of order $m$. We also define in  Section \ref{Seccionsubelliptic} the class of symbols for subelliptic Fourier multipliers ${S}^{m,\ell,\mathcal{L}}_{\rho}(\widehat{G})=\mathscr{S}^{m\kappa,\ell,\mathcal{L}}_{\rho\kappa}(G),$ $0\leqslant\rho\leqslant 1.$ The corresponding classes of operators associated with these symbols classes are denoted by 
 \begin{equation}\label{C1}
     \textnormal{Op}( {S}^{m,\ell,\ell',\mathcal{L}}_{\rho,\delta}(G\times \widehat{G})),\,\,\textnormal{Op}({S}^{m,\ell,\mathcal{L}}_{\rho}(\widehat{G})),\,\,\,\,0\leqslant \rho,\delta\leqslant 1,
 \end{equation} and 
 \begin{equation}\label{C2}
     \textnormal{Op}( {S}^{m,\mathcal{L}}_{\rho,\delta}(G\times \widehat{G})):=\bigcap_{\ell,\ell'}\textnormal{Op}(  {S}^{m,\ell,\ell',\mathcal{L}}_{\rho,\delta}(G\times \widehat{G})),\,\,\,\,0\leqslant \rho,\delta\leqslant 1.
 \end{equation}
The reason for defining the contracted classes is that they will be useful in Section \ref{ps} in order to establish the $L^p$-boundedness of subelliptic Fourier multipliers.

 We will reserve as usually, the notation
 \begin{equation}\label{C3}
     \textnormal{Op}( \mathscr{S}^{m,\ell,\ell'}_{\rho,\delta}(G\times \widehat{G})),\,\,\textnormal{Op}(\mathscr{S}^{m,\ell}_{\rho}( \widehat{G})),\,\,\,\,0\leqslant \rho,\delta\leqslant 1,
 \end{equation} and 
 \begin{equation}\label{C4}
  \Psi^m_{\rho,\delta}(G)\equiv   \textnormal{Op}( \mathscr{S}^{m}_{\rho,\delta}(G\times \widehat{G})):=\bigcap_{\ell,\ell'}\textnormal{Op}(  \mathscr{S}^{m,\ell,\ell'}_{\rho,\delta}(G\times \widehat{G})),\,\,\,\,0\leqslant \rho,\delta\leqslant 1,
 \end{equation} for the corresponding classes of limited regularity and of smooth symbols.
 Here,    $\varkappa$ will be defined as the smallest even integer larger than $\dim(G)/2.$ It is interesting to note that in stark contrast to graded Lie groups, the symbol classes here may depend on the choice of a sub-Laplacian (see Remarks \ref{dependece1} and \ref{dependece2}).
 \subsection{Main results}
 In terms of the notations that we fix above, the main results of this memoir are, the symbolic calculus developed in Section  \ref{Seccionsubelliptic} and in Section \ref{LEH}, the subelliptic global functional calculus developed in Section \ref{SFC},  and the following subelliptic boundedness results/lower bounds/applications which we describe as follows and in the subsequent remarks.
\begin{itemize}
    \item (Subelliptic Marcinkiewicz multiplier theorem). Every  $A\in \textnormal{Op}({S}^{\,0,\varkappa,\mathcal{L}}_{1}(\widehat{G})),$  extends to an operator of weak type $(1,1)$ and is bounded on $L^p(G)$ for all $1<p<\infty$ (see Theorem \ref{LpQL}). Moreover,  if $A\in \textnormal{Op}({S}^{-(1-\rho),\varkappa,\mathcal{L}}_{\rho}(G)),$ $0\leqslant  \rho<1,$ then  $A$ extends to an operator of weak type $(1,1)$ and compact on $L^p(G)$ for all $1<p<\infty,$                   (see Theorem \ref{LpQL} for values of $\varkappa$).\footnote{The main point in this situation is the weak (1,1) estimate because the compactness is straightforward on $L^2(G)$ and by using the interpolation with the weak (1,1) estimate  the compactness on $L^p(G)$ also follows.} 
\item (Subelliptic Calder\'on-Vaillancourt theorem).  For  $0\leqslant \delta<\rho\leqslant  1,$ (or $0\leq \delta\leq \rho\leq 1,$ $\delta<1/\kappa$) let us consider a continuous linear operator $A:C^\infty(G)\rightarrow\mathscr{D}'(G)$ with symbol  $\sigma\in {S}^{0,\mathcal{L}}_{\rho,\delta}(G\times \widehat{G})$. Then $A$ extends to a bounded operator from $L^2(G)$ to  $L^2(G).$ The case $\rho=\delta=0$ can be deduced from Theorem 10.5.5 of \cite{Ruz}.  We observe that a similar $L^2$-theorem for $\delta=\rho=0,$ can be proved for global Fourier integral operators. But, we will return to this point in detail after presenting the mapping properties of the subelliptic calculus (see Theorem \ref{L2FIO}). 

\item   Let $A\in \textnormal{Op}({S}^{-(1-\rho),\varkappa,\left[\frac{n}{p}\right]+1,\mathcal{L}}_{\rho,0}(G)),$ $0\leqslant \rho\leqslant1.$  For $\rho=1,$ $A$ extends to a bounded operator on $L^p(G),$ and for $0\leqslant \rho<1,$  $A$ extends to a compact operator on $L^p(G),$ in both cases for all $1<p<\infty,$                   (Theorem \ref{SubellipticLpestimate}).\footnote{Again, the main point here is the boundedness estimate because the compactness is straighforward from the argument of interpolation.}
\item(Subelliptic Fefferman theorem).  For any compact Lie group $G,$  let us denote by $Q$ its Hausdorff
dimension  associated to the control distance associated to the sub-Laplacian $\mathcal{L}=\mathcal{L}_X,$ where  $X= \{X_{1},\cdots,X_k\} $ is a system of vector fields satisfying the H\"ormander condition of order $\kappa$.  For  $0\leqslant \delta<\rho\leqslant 1,$   let us consider a continuous linear operator $A:C^\infty(G)\rightarrow\mathscr{D}'(G)$ with symbol  $\sigma\in {S}^{-m,\mathcal{L}}_{\rho,\delta}(G\times \widehat{G})$. For the order  $m=\frac{Q(1-\rho) }{2},$ the operator $A$ is bounded from $L^\infty(G)$ to $\textnormal{BMO}^{\mathcal{L}}(G),$ from the subelliptic Hardy space $\textnormal{H}^{1,\mathcal{L}}(G)$ to $L^1(G),$ and from $L^p(G)$ into itself for all $1<p<\infty.$ Moreover,  for  $1<p<\infty,$ and 
\begin{equation}\label{sharporder''}
    m\geqslant   m_{p}:= Q(1-\rho)\left|\frac{1}{p} -\frac{1}{2}\right|,
\end{equation} the linear operator $A$ extends to a bounded operator on $L^p(G),$ see Theorem \ref{parta} and Theorem \ref{parta2}.
\item(Subelliptic G\r{a}rding inequality). Let $a(x,D):C^\infty(G)\rightarrow\mathscr{D}'(G)$ be an operator with symbol  $a\in {S}^{m,\mathcal{L}}_{\rho,\delta}(G\times \widehat{G})$, $0\leqslant \delta< \rho\leqslant 1,$   and let $m>0$. Let $\mathcal{M}=(1+\mathcal{L})^{\frac{1}{2}}$ be defined by the functional calculus and let $\{\widehat{\mathcal{M}}(\xi)\}_{[\xi]\in \widehat{G}}$ be its corresponding global symbol. Let us assume that $a(x,D)$ is strongly $\mathcal{L}$-elliptic which means that,
\[A(x,\xi):=\frac{1}{2}(a(x,\xi)+a(x,\xi)^{*}),\,(x,[\xi])\in G\times \widehat{G},\,\,a\in S^{m,\mathcal{L}}_{\rho,\delta}(G\times \widehat{G}), 
\] satisfies
\begin{equation}\label{garding2}    \Vert\widehat{\mathcal{M}}(\xi)^{m}A(x,\xi)^{-1} \Vert_{\textnormal{op}}\leqslant C_{0}.
\end{equation}Then, there exist $C_{1},C_{2}>0,$ such that the lower bound
\[    \textnormal{Re}(a(x,D)u,u) \geqslant C_1\Vert u\Vert_{{L}^{2,\mathcal{L}}_{\frac{m}{2}}(G)}-C_2\Vert u\Vert_{L^2(G)}^2,
\] holds true for every $u\in C^\infty(G).$ Here, $H^{\frac{m}{2},\mathcal{L}}(G)\equiv {L}^{2,\mathcal{L}}_{\frac{m}{2}}(G),$ is the subelliptc Sobolev space associated to $\mathcal{L}$ with regularity order $m/2.$ This subelliptic version of the G\r{a}rding inequality will be proved in Theorem \ref{GardinTheorem2}.
\item We will use the subelliptic G\r{a}rding inequality to study the well posedness for the Cauchy problem \begin{equation}\label{PVI2} \frac{\partial v}{\partial t}=K(t,x,D)v+f ,
v(0)=u_0,
\end{equation} where $K(t)=K(t,x,D)$ is strongly $\mathcal{L}$-elliptic for all $t\in [0,T].$ The simplest case $K(t)=\mathcal{L},$ corresponds to the heat equation for the sub-Laplacian. In particular we can take $K(t)=a(t,x)\mathcal{L}^s$ or $K(t)=a(t,x)(1+\mathcal{L})^{\frac{s}{2}},$ where $a:=a(t,x)\in C^\infty([0,T]\times G)$ is such that $|\textnormal{Re}(a(t,x))|\geqslant a_0>0,$ and $s>0.$  We refer the reader to Section \ref{GST} for details.
\end{itemize}
Now, we describe some $L^2$-estimates for  global Fourier integral operators on compact Lie groups which appear as continuous linear operators of the form
\begin{equation}\label{FIO}
    Af(x):=\sum\limits_{[\xi]\in \widehat{G}}\textnormal{\textbf{Tr}}(e^{i\phi(x,\xi)}\sigma(x,\xi)\widehat{f}(\xi)), \,\,\,f\in C^\infty(G),\,\,\,x\in G,
\end{equation}where $\phi:G\times \widehat{G}\rightarrow\cup_{[\xi]\in \widehat{G}}\mathbb{C}^{d_\xi\times d_\xi}$ is the matrix-valued phase function of $A.$ Global  Fourier integral operators (FIOs) appear as useful extensions of pseudo-differential operators (see Remark \ref{r1}) and arise in  solutions of some differential problems (see e.g. Remark \ref{r2}). We study essentially two kinds of symbol conditions:
\begin{itemize}
\item  First, to study the $L^2$-boundedness of a global FIO we need to impose reasonable conditions on the symbol $\sigma(x,\xi)$ and also on the matrix-valued phase function $\phi(x,\xi)=\textnormal{diag}(\phi_{jj}(x,\xi))$. To do so, if $\mathbb{X}=\{X_{1},\cdots,X_n\}$ is a basis of the Lie algebra $\mathfrak{g},$ and the corresponding gradient $ \nabla_{\mathbb{X}}$ is defined by
\[ 
    \nabla_{\mathbb{X}}\psi(x)=(X_{1}\psi,\cdots,X_n\psi)\in C^{\infty}(G)\times \cdots \times C^{\infty}(G),\,\,\,\psi\in C^\infty(G),
\]in Theorem \ref{L2FIO} we show that the conditions \begin{equation}\label{613intro}
  \sup_{(x,[\xi])\in G\times \widehat{G}}  \Vert X^{\alpha}_{x}\sigma(x,\xi)\Vert_{\textnormal{op}}<\infty,
\end{equation}for all $ |\alpha|\leqslant 5n/2,$ and 
\begin{equation}\label{614intro}
    \vert \nabla_{\mathbb{X}}\phi_{jj}(x,\xi)-\nabla_{\mathbb{X}}\phi_{j'j'}(x,\xi')\vert\asymp |\lambda_{[\xi]}^\tau-\lambda_{[\xi']}^\tau|,
\end{equation}uniformly in $([\xi],[\xi'])\in \widehat{G}\times \widehat{G}$ for some $\tau>0 ,$ imply the existence of a bounded extension of  $A$ (defined in \eqref{FIO})  on $L^2(G).$
\item For a global Fourier integral operator of the form
\begin{equation}\label{wavetype''}
 Af(x)=\sum\limits_{[\xi]\in \widehat{G}}d_\xi\textnormal{\textbf{Tr}}(\xi(x)e^{i\Phi(\xi)}\sigma(x,\xi)\widehat{f}(\xi)),\quad f\in C^\infty(G),
\end{equation}      where the function $\Phi:\widehat{G}\rightarrow \cup_{[\xi]\in \widehat{G}}\mathbb{C}^{d_\xi\times d_\xi},$ is such that $\Phi(\xi)=\Phi(\xi)^*$ for all $[\xi]\in \widehat{G},$ we will prove in Theorem \ref{L2FIO2}  that under the symbol inequalities
 \begin{equation}
    \sup_{(x,[\xi])\in G\times \widehat{G}}  \Vert X^{\alpha_1}_{i_1}\cdots X^{\alpha_k}_{i_k}\sigma(x,\xi)\Vert_{\textnormal{op}}<\infty,\quad1\leqslant i_1\leqslant i_2\leqslant\cdots \leqslant i_{k}\leqslant k,
\end{equation}  
$ |\alpha|\leqslant \varkappa_Q\footnote{ Here, $ \varkappa_Q$ is the smallest even integer larger than $Q/2. $ },$  if $X= \{X_{1},\cdots,X_k\} $ is a system of vector fields satisfying the H\"ormander condition, the operator $A$ extends to a bounded linear operator on $L^2(G).$ Moreover, if $1<p<\infty$ and $0<\rho\leq 1,$ in Theorem \ref{LpFIO1} we will prove that under the following  conditions:
 \begin{equation}\label{condFIO222}
    \sup_{(x,[\xi])\in G\times \widehat{G}}  \Vert\widehat{\mathcal{M}}(\xi)^{m+\rho|\gamma|}\Delta_{\xi}^{\gamma}(e^{i\Phi(\xi)} X^{\alpha_1}_{i_1}\cdots X^{\alpha_k}_{i_k}\sigma(x,\xi))\Vert_{\textnormal{op}}<\infty,\quad \gamma\in \mathbb{N}_0^n,
\end{equation}  
for all  $1\leqslant i_1\leqslant i_2\leqslant\cdots \leqslant i_{k}\leqslant k,$ and $ |\alpha|\leqslant \varkappa_{Q,p}\footnote{For all $1<p<\infty,$  $ \varkappa_{Q,p}$ is the smallest even integer larger than $Q/p. $ In particular $\varkappa_{Q}:=\varkappa_{Q,2}. $ }   ,$ the Fourier integral operator  $A$ extends to a bounded linear operator on $L^p(G),$ provided that
\begin{equation}
     m\geq m_{p}:=Q(1-\rho)\left|\frac{1}{p}-\frac{1}{2}\right|.
\end{equation}
 \item In particular,  for $\rho=\frac{1}{Q},$ Theorem \ref{LpFIO1} implies that under the condition $m\geq (Q-1)\left|\frac{1}{p}-\frac{1}{2}\right|,$ the operator $A\equiv \textnormal{FIO}(\sigma, \phi):C^\infty(G)\rightarrow\mathscr{D}'(G)$ associated with the symbol  $\sigma$  satisfying the family of inequalities in \eqref{condFIO222}, extends to a bounded operator from $L^p(G)$ to itself where $1<p<\infty$.

\end{itemize}
\begin{remark}
Let us explain the properties of the developed subelliptic calculus. In Definition \ref{contracted''} we define the (contracted) subelliptic H\"ormander classes for the sub-Laplacian $\mathcal{L},$ ${S}^{m,\mathcal{L}}_{\rho,\delta}(G\times \widehat{G}),$ $0\leqslant \rho,\delta\leqslant 1,$ (and the dilated classes in Definition \ref{dilated}). These classes are closed under compositions and adjoints, and we prove in Proposition \ref{CalderonZygmund}, that the operators associated to these classes have Calder\'on-Zygmund kernels in some sense.
\end{remark}
\begin{remark}
The singularity order for the right-convolution kernel of a subelliptic operator in our classes can be classified in terms of the Hausdorff dimension $Q$ of the Lie group $G$ with respect to $\mathcal{L}.$ Indeed, if  $A:C^\infty(G)\rightarrow\mathscr{D}'(G)$ is a continuous linear operator with symbol $\sigma\in {S}^{m,\mathcal{L}}_{\rho,\delta}(G\times \widehat{G}),$   then the  right-convolution kernel of $A,$ $x\mapsto k_{x}:G\rightarrow C^\infty(G\setminus\{e_G\}),$ defined by $k_{x}:=\mathscr{F}^{-1}\sigma(x,\cdot),$ satisfies the following estimates for $|y|<1$ (see Proposition \ref{CalderonZygmund}):
\[ 
    \left\{ \begin{array}{lcc}
             |k_{x}(y)|\lesssim_{m}  \Vert \sigma\Vert_{\ell, S^{m,\mathcal{L}}_{\rho,\delta}}|y|^{-\frac{Q+m}{\rho}},&   \textnormal{  if  }   m>-Q \\
             \\ |k_{x}(y)|\lesssim_{m} \Vert \sigma\Vert_{\ell, S^{m,\mathcal{L}}_{\rho,\delta}}|\log|y||,&  \textnormal{  if  }   m=-Q \\
             \\ |k_{x}(y)|\lesssim_{m} \Vert \sigma\Vert_{\ell, S^{m,\mathcal{L}}_{\rho,\delta}}, &  \textnormal{  if  }   m<-Q,
             \end{array}
   \right.
\]
 where $\ell\in \mathbb{N},$  is large enough and is independent of the symbol $\sigma.$ Let us remark that for $m+Q>0,$ in the context of the pseudo-differential classes defined by Nagel and Stein \cite{NagelStein78} for sub-Riemannian structures, the kernel estimates for the corresponding classes have been obtained in terms of the Carn\'ot-Carath\'eodory distance $|y|_{cc}=d_{cc}(y,e_G)$ whose orders should be  better than the reported in Proposition \ref{CalderonZygmund} in terms of the geodesic distance on $G.$ This is because of the topological inequalities
 \begin{equation}
     |y|\lesssim  |y|_{cc}\lesssim |y|^{\frac{1}{\kappa}},\,\,\,|y|<1. 
 \end{equation} Indeed, the classes of order $m$ in the calculus of Nagel and Stein \cite{NagelStein78}  include operators with kernels $k_{x}(y),$ behaving like $|y|_{cc}^{-(Q+m)}\lesssim |y|^{-(Q+m)},$ when $m>-Q,$ and with $|y|<1.$ Note that for $m=-Q,$ the operators of order $-Q$ behave like $|\log|y|_{cc}|\lesssim |\log|y||,$ for $|y|<1.$    However, for our purposes, the kernel estimates in Proposition \ref{CalderonZygmund} are good enough to describe the subelliptic calculus in terms of the difference operators $\Delta_\xi^\alpha$.
\end{remark}
\begin{remark}
  The condition \eqref{sharporder''} is sharp for connected and simply connected Lie groups $G$ in the following sense. If we replace the sub-Laplacian by the Laplace operator  on $G,$ $\mathcal{L}_{G},$   we recover Theorem 4.15 of \cite{RuzhanskyDelgado2017} and then (see Remark \ref{sharpexample}) the  critical order $m_p$ in  \eqref{sharporder''} is the best possible  in order to assure the $L^p(G)$-boundedness for operators in the  class $\textnormal{Op}(\mathscr{S}^m_{\rho,\delta}(G\times \widehat{G})).$ Indeed, in this case \eqref{sharporder''} with $\kappa=1,$ and $Q=\dim(G),$ is a necessary and sufficient condition for the $L^p(G)$-boundedness of $A$. The condition  \eqref{sharporder''} is an analogy of Theorem 1.2 in \cite{RuzhanskyDelgadoCardona2019} about the boundedness of the H\"ormander classes on arbitrary graded Lie groups which is an extension of the sharp theorem due to C. Fefferman  \cite{Fefferman1973}. 
  \end{remark}
  \begin{remark}
    Theorems \ref{parta} and \ref{parta2} are analogues on compact Lie groups of the boundedness   theorems due to Fefferman \cite{Fefferman1973} and Hirschman \cite{Hirschman1956} for the classical H\"ormander classes  on $\mathbb{R}^n,$ extensions of the classical Wainger $L^p$-estimates for oscillating  multipliers on the torus \cite{Wainger1965} and also extensions of the $L^p$-estimates for oscillating central multipliers for the Laplacian on compact connected and simply connected Lie groups proved in Chen and Fan  \cite[Theorem 1]{Chen}. 
  \end{remark}
\begin{remark}
   Theorem \ref{LpQL} and  Theorem \ref{SubellipticLpestimate} are extensions  of the weak (1,1) boundedness theorem proved  by the second author and J. Wirth in \cite{RuzhanskyWirth2014,RuzhanskyWirth2015}.
   Our main  $L^p$-subelliptic estimates can be used to prove estimates for pseudo-differential operators on subelliptic Sobolev and subelliptic Besov espaces (see Corollaries  \ref{SobL}, \ref{BesovL} and \ref{SobL2}). 
 \end{remark}
 \begin{remark} The condition \eqref{614intro} for the $L^2$-boundedness of Fourier integral operators is the non-commutative version of the usual local graph condition for Fourier integral operators, necessary for the local $L^2$-boundednes for Fourier integral operators on $\mathbb{R}^n$ (see e.g. Eskin \cite{Eskin} and H\"ormander \cite{Hor71}). Theorem  \ref{L2FIO} is the non-commutative extension of Theorem 4.14.2 for Fourier integral operators on the torus \cite{Ruz} (see Theorem \ref{L2FIOtorus}) and also extends the $L^2$-boundedness Theorem 10.5.5 in \cite{Ruz} for pseudo-differential operators on compact Lie groups.
 \end{remark}
For the general aspects of the theory of Fourier integral operators we refer the reader to H\"ormander \cite{Hor71} and Duistermaat and H\"ormander \cite{DuiHor}. The problem of the boundedness of Fourier integral operators has been treated in Fujiwara \cite{Fuji}, Asada and Fujiwara  \cite{AF}, Miyachi  \cite{Miyachi}, Peral \cite{Peral}, Seeger, Sogge, and Stein \cite{SSS91},  Tao \cite{Tao}, and also the references \cite{M. Ruzhansky}, \cite{CoRu1},  \cite{CoRu2} and \cite{RuzSugi01}. For necessary conditions of the weak boundedness of Fourier integral operators we refer the reader to \cite{CardonaRuzhansky2022FIO} and to \cite{RuzSugi2019} for the recent {\it local-to-global extension argument}. Results on the boundedness of Fourier integral operators on the torus can be found in \cite[Theorem 4.14.2]{Ruz} and in  \cite{CRS2018}.

\begin{remark}
We will consider a suitable notion of ellipticity associated to the sub-Laplacian $\mathcal{L}$ which we call $\mathcal{L}$-ellipticity. We found this notion consistent with the classical  notion of ellipticity from the point of view of construction of parametrices (see Section \ref{LEH}). 
\end{remark}
\begin{remark}[Heat traces and regularised traces for subelliptic operators]
We will study the asymptotic behaviour of the  heat traces and also of other regularised traces for  $\mathcal{L}$-elliptic    pseudo-differential  operators. Indeed, we will prove under reasonable conditions on a operator $A\in \Psi^{m,\mathcal{L}}_{\rho,\delta}(G\times \widehat{G}),$ $0\leqslant {\delta},\rho\leqslant 1,$ that 
\begin{equation}\label{summarisingformulas2}
     \textnormal{\textbf{Tr}}(Ae^{-t(1+\mathcal{L})^\frac{q}{2}})\sim t^{-\frac{m+Q}{q}}\sum_{k=0}^\infty a_kt^\frac{k}{q}-\frac{b_0}{q}\log(t),\quad t\rightarrow0^{+},
 \end{equation}for  $m\geqslant    -Q.$ If $m=-Q,$ then $a_k=0$ for every $k,$ and for $m>-Q,$ $b_0=0.$ If we consider the case of the Laplacian (see Corollary \ref{asymptotictracemultiplier2'}), and we restrict our attention to the case $(\rho,\delta)=(1,0)$ and $m=-n,$ $n:=\dim(G),$ it is known that the term $b_{0}$ in the asymptotic expansion \eqref{summarisingformulas2} agrees with the Wodzicki residue of $A$ (see e.g. Wodzicki \cite{Wodzicki} and Lesch \cite{Lesch}) and consequently with the Dixmier trace of $A,$ which is a consequence of a celebrated theorem due to A. Connes \cite{Connes94}. We   will also prove asymptotic expansions of the form (see Theorem \ref{asymptotictraceeta}),
 \begin{equation}\label{TC}
   \textnormal{\bf{Tr}}(A\psi(t E))=t^{-\frac{Q+m}{q}}\left(  \sum_{k=0}^{\infty}a_kt^k\right)+\frac{c_{Q}}{q}\int\limits_{0}^{\infty}\psi(s)\times\frac{ds}{s},\,\,t\rightarrow 0^{+} ,
\end{equation}for $m\geqslant -Q,$ where $E$ is an $\mathcal{L}$-elliptic left-invariant pseudo-differential operator of order $q>0,$ and $A\in S^{m,\mathcal{L}}_{\rho,\delta}(G\times \widehat{G}),$ $0\leqslant {\delta},\rho\leqslant 1,$ is a suitable operator in the subelliptic calculus. Again, if we consider the case of the Laplacian on $G,$  we replace $Q$ by $n,$ and for $(\rho,\delta)=(1,0)$ and $m=-n,$ it was proved e.g. in \cite{Fischertraces2}, that in the constant component 
$$\frac{c_{n}}{q}\int\limits_{0}^{\infty}\psi(s)\times \frac{ds}{s},$$ 
the term $c_{n}$  agrees with the Wodzicki residue of $A.$ Other singular traces on compact manifolds can be found   e.g. in the seminal paper of  Kontsevich and  Vishik \cite{KV} where  the canonical trace was introduced, and in other complete references on the subject such as  Fedosov, Golse, and  Leichtnam  \cite{FedosovGolseLeichtnam}, Grubb and Schrohe \cite{GrubbSchrohe}, Scott \cite{Scott}, and Paycha \cite{Paycha}.   We refer the reader to  \cite{Fischertraces1,Fischertraces2} for the treatment of regularised traces (in whose expansions appear the Wodzicki residue and the canonical trace) using the matrix-valued quantisation. A complete investigation about the spectral trace of global operators on compact Lie groups can be found in \cite{DelRuzTrace1,DelRuzTrace11,DelRuzTrace111,DelRuzTrace1111,CardonaWodzicki}.
 
 The second source of applications came from the functional calculus for subelliptic operators developed in Section  \ref{SFC}. Indeed, as an application of the subelliptic functional calculus we will deduce a subelliptic version of the G\r{a}rding inequality and we will study the Dixmier traceability of subelliptic operators.
\end{remark}
\begin{remark} One of the main features of the developed subelliptic calculus is that if we replace the role of the sub-Laplacian by the Laplace operator on the group $G,$ we recover the known properties for the global H\"ormander classes on compact Lie groups \cite{Fischer2015,Ruz,RuzhanskyTurunenWirth2014} which makes our calculus parallel to others existing in the literature, where the global symbols are used e.g. to develop the calculus on graded Lie groups by using Rockland operators \cite{FischerRuzhanskyBook}.
\end{remark}
\begin{remark}
We end this section by providing a short list of references about some recent works related to  the analysis of pseudo-differential operators on compact Lie groups and other related topics with the harmonic analysis of graded Lie groups. For instance,  we refer  to \cite{Poincare,RuzhanskyDelgadoCardona2022TracesDet} for the analysis of traces and determinants on compact Lie groups and on homogeneous vector bundles. For recent works about oscillating Fourier multipliers and criteria for their boundedness in terms of matrix-valued symbols, see \cite{CardonaRuzhanskyOscil1,CardonaRuzhanskyOscil2,CardonaRuzhanskyOscil3,CardonaRuzhanskyOscil4,Dyadic2023} as well as \cite{CardonaRuzhanksyFourierTriebel,CardonaRuzhanksyFourierTriebel2}. For extensions of the sharp G\r{a}rding inequality in the setting of compact Lie groups we refer to \cite{CardonaFedericoRuzhansky}. For applications of the sharp G\r{a}rding inequality to the analysis of fractional diffusion models on compact Lie groups  we refer to \cite{RuzhanskyDelgadoCardona2022JEv}. The  extension of the global functional calculus for the global H\"ormander classes on graded Lie groups can be found in \cite{RuzhanskyDelgadoCardona2022FC}. In \cite{BrinkerWirth}, Gelfand triples for the Kohn-Nirenberg quantization on homogeneous Lie groups have been analysed. For the analysis of diffusive wavelets on compact Lie groups and on homogeneous spaces, see \cite{EbertWirth}.  In \cite{control1,control2,Control3} applications of the pseudo-differential calculus on compact Lie groups to the control theory of heat equations for elliptic operators have been established and also analysed in the setting of arbitrary compact manifolds. Other works related to local Weyl formulas on compact Lie groups can be found in \cite{localweyl}. Finally,   necessary and sufficient conditions by employing this local formula for the membership of global pseudo-differential operators in the Schatten ideals have been investigated in \cite{Toft2023}.
\end{remark}

\section{Preliminaries}\label{Sect2} Throughout the memoir, we shall use the notation $A \lesssim B$ to indicate $A\leq cB $ for a suitable constant $c >0,$   whereas $A \asymp B$ if $A\leq cB$ and $B\leq d A$, for suitable $c, d >0.$

\subsection{Pseudo-differential operators via localisations}
 In this subsection we describe the  well-known  formulation of 
 pseudo-differential operators on compact manifolds (and in particular, on compact Lie groups) associated to symbols defined by local coordinate systems (see H\"ormander \cite{Hormander1985III} and e.g. the book of M. Taylor \cite{Taylorbook1981}). 
 
 If $U$ is an open topological subset of $\mathbb{R}^n,$ we say that  $a:U\times \mathbb{R}^n\rightarrow \mathbb{C},$ belongs to the H\"ormander class $S^m_{\rho,\delta}(U\times \mathbb{R}^n),$ $0\leqslant \rho,\delta\leqslant 1,$ if for every compact subset $K\subset U,$ the symbol inequalities,
\[ 
    |\partial_{x}^\beta\partial_{\xi}^\alpha a(x,\xi)|\leqslant C_{\alpha,\beta,K}(1+|\xi|)^{m-\rho|\alpha|+\delta|\beta|},
\] hold true uniformly in $x\in K$ and $\xi\in \mathbb{R}^n.$ Then, a continuous linear operator $A:C^\infty_0(U) \rightarrow C^\infty(U)$ 
is a pseudo-differential operator of order $m,$ of  $(\rho,\delta)$-type, if there exists
a function $a\in S^m_{\rho,\delta}(U\times \mathbb{R}^n),$ satisfying that
\[ 
    Af(x)=\int\limits_{\mathbb{R}^n}e^{2\pi i x\cdot \xi}a(x,\xi)(\mathscr{F}_{\mathbb{R}^n}{f})(\xi)d\xi,
\] for all $f\in C^\infty_0(U),$ where
\[ 
  (\mathscr{F}_{\mathbb{R}^n}{f})(\xi):=\int\limits_Ue^{-i2\pi x\cdot \xi}f(x)dx,  
\] is the  Euclidean Fourier transform of $f$ at $\xi\in \mathbb{R}^n.$ The class $S^m_{\rho,\delta}(U\times \mathbb{R}^n)$ on the phase space $U\times \mathbb{R}^n,$ is invariant under  changes of coordinates only if $\rho\geqslant   1-\delta,$ while a symbolic calculus (closed for products, adjoints, parametrices, etc.) is only possible for $\delta<\rho$ and $\rho\geqslant   1-\delta.$

In the case of a $C^\infty$-manifold $M,$ a linear continuous operator $A:C^\infty_0(M)\rightarrow C^\infty(M) $ is a pseudo-differential operator of order $m,$ of $(\rho,\delta)$-type, $ \rho\geqslant   1-\delta, $ if for every local  coordinate patch $\omega: M_{\omega}\subset M\rightarrow U\subset \mathbb{R}^n,$
and for every $\phi,\psi\in C^\infty_0(U),$ the operator
\[ 
    Tu:=\psi(\omega^{-1})^*A\omega^{*}(\phi u),\,\,u\in C^\infty(U),\footnote{As usually, $\omega^{*}$ and $(\omega^{-1})^*$ are the pullbacks induced by the maps $\omega$ and $\omega^{-1},$ respectively.}
\] is a pseudo-differential operator with symbol $a_T\in S^m_{\rho,\delta}(U\times \mathbb{R}^n).$ In this case we write that $A\in \Psi^m_{\rho,\delta}(M,\textnormal{loc}).$

\subsection{The positive sub-Laplacian} Let $G$ be a compact Lie group  with Lie algebra $\mathfrak{g}.$ Under the identification $\mathfrak{g}\simeq T_{e_G}G,  $ where $e_{G}$ is the identity element of $G,$ let us consider  a system of $C^\infty$-vector fields $X=\{X_1,\cdots,X_k \}\in \mathfrak{g}$. For all $I=(i_1,\cdots,i_\omega)\in \{1,2,\cdots,k\}^{\omega},$ of length $\omega\geqslant   2,$ denote $$X_{I}:=[X_{i_1},[X_{i_2},\cdots [X_{i_{\omega-1}},X_{i_\omega}]\cdots]],$$ and for $\omega=1,$ $I=(i),$ $X_{I}:=X_{i}.$ Let $V_{\omega}$ be the subspace generated by the set $\{X_{I}:|I|\leqslant \omega\}.$ That $X$ satisfies the H\"ormander condition,  means that there exists $\kappa'\in \mathbb{N}$ such that $V_{\kappa'}=\mathfrak{g}.$ Certainly, we consider the smallest $\kappa'$ with this property and we denote it by $\kappa$ which will be  later called the step of the system $X.$ We also say that $X$ satisfies the H\"ormander condition of order $\kappa.$ 

Note that  the sum of squares
\[ 
    \mathcal{L}\equiv \mathcal{L}_{X}:=-(X_{1}^2+\cdots +X_{k}^2),
\] is a subelliptic operator which by following the usual terminology is called the subelliptic Laplacian associated with the family $X.$ For short we refer to $\mathcal{L}$ as the sub-Laplacian. In view of the H\"ormander theorem on sums of the  squares of vector fields (see H\"ormander \cite{Hormander1967}) it is a hypoelliptic operator (i.e. if $ \mathcal{L}u\in C^\infty(G)$ with $u\in  \mathscr{D}'(G)$ then $u\in C^\infty(G),$ and also locally at all points). 

For other aspects on the analysis of the sub-Laplacian we refer the reader to Agrachev et al. \cite{Agrachev2008}, Bismut \cite{Bismut2008}, Domokos et al. \cite{Domokos} as well as to  the fundamental book of Montgomery \cite{Montgomery}.

A central notion in the analysis of the sub-Laplacian is that of the Hausdorff dimension, in this case, associated to $\mathcal{L}$. Indeed, for all $x\in G,$ denote by $H_{x}^\omega G$ the subspace of the tangent space $T_xG$ generated by the $X_i$'s and all the Lie brackets  $$ [X_{j_1},X_{j_2}],[X_{j_1},[X_{j_2},X_{j_3}]],\cdots, [X_{j_1},[X_{j_2}, [X_{j_3},\cdots, X_{j_\omega}] ] ],$$ with $\omega\leqslant \kappa.$ The H\"ormander condition can be stated as $H_{x}^\kappa G=T_xG,$ $x\in G.$ We have the filtration
\[ 
H_{x}^1G\subset H_{x}^2G \subset H_{x}^3G\subset \cdots \subset H_{x}^{\kappa-1}G\subset H_{x}^\kappa G= T_xG,\,\,x\in G.
\] In our case,  the dimension of every $H_x^\omega G$ does not depend on $x$ and we write that $\dim H^\omega G:=\dim H_{x}^\omega G,$ for any $x\in G.$ So, the Hausdorff dimension can be defined as (see e.g. \cite[p. 6]{HK16}),
\begin{equation}\label{Hausdorff-dimension}
    Q:=\dim(H^1G)+\sum_{i=1}^{\kappa-1} (i+1)(\dim H^{i+1}G-\dim H^{i}G ).
\end{equation}
Explicit examples of sub-Laplacians on compact Lie groups are presented in Section \ref{examplessublaplacians} in the case of $\mathbb{S}^3\cong \textnormal{SU}(2),$ $\textnormal{SU}(3),$ $\textnormal{SO}(4)$ and $\textnormal{Spin}(4)\cong \textnormal{SU}(2)\times \textnormal{SU}(2).$

\subsection{Pseudo-differential operators via global symbols}
In this work we are interested in developing a pseudo-differential calculus associated to the sub-Laplacian $\mathcal{L}$. We will use the quantisation process developed by the second author and V. Turunen in \cite{Ruz}.  We explain it as follows. First, let us record the notion of the unitary dual $\widehat{G}$ of a compact Lie group $G.$ To do this, we start with the fundamental object in this setting which is the definition of a irreducible representation. Indeed, a continuous, unitary and irreducible  representation of $G,$ is:
\begin{itemize}
    \item a mapping $\xi\in \textnormal{Hom}(G, \textnormal{U}(H_{\xi})),$ for some finite-dimensional vector space $H_\xi\cong \mathbb{C}^{d_\xi},$ i.e. $\xi(xy)=\xi(x)\xi(y)$ and for the  adjoint of $\xi(x),$ $\xi(x)^*=\xi(x^{-1}),$ for every $x,y\in G.$
    \item The map $(x,v)\mapsto \xi(x)v, $ from $G\times H_\xi$ into $H_\xi$ is continuous.
    \item For every $x\in G,$ and for any subspace $W_\xi\subset H_\xi,$ if $\xi(x)W_{\xi}\subset W_{\xi},$ then $W_\xi=H_\xi$ or  $W_\xi=\{0\}.$
\end{itemize} Let $\textnormal{Rep}(G)$ be the set of unitary, continuous and irreducible representations of $G.$ The relation, {\small{
\[ 
    \xi_1\sim \xi_2\textnormal{ if and only if, there exists } A\in \textnormal{End}(H_{\xi_1},H_{\xi_2}),\textnormal{ such that }A\xi_{1}(x)A^{-1}=\xi_2(x), 
\]}}for every $x\in G,$ is an equivalence relation and the unitary dual of $G,$ denoted by $\widehat{G}$ is defined via
$$
    \widehat{G}:={\textnormal{Rep}(G)}/{\sim}.
$$By a suitable change of basis, we always can assume that every $\xi$ is matrix-valued and that $H_{\xi}=\mathbb{C}^{d_\xi}.$ 

Note that if a representation $\xi$ is unitary, then $$\xi(G):=  \{\xi(x):x\in G \}$$ is a subgroup (of the group of  matrices $\mathbb{C}^{d_\xi\times d_\xi}$) which is isomorphic to the original group $G$. Thus the homomorphism $\xi$ allows us to represent the compact Lie group $G$ as a group of matrices. This is the motivation for the term `representation'. 

Now, let us follow \cite[Chapter 10]{Ruz} to introduce the analysis of operators on the phase space $G\times \widehat{G}.$ Indeed, if $A$ is a continuous linear operator on $C^\infty(G),$  there exists a function \begin{equation}\label{symbol}a:G\times \widehat{G}\rightarrow \cup_{\ell\in \mathbb{N}} \mathbb{C}^{\ell\times \ell},\end{equation} such that for every equivalence class $[\xi]\in \widehat{G},$ $a(x,\xi):=a(x,[\xi])\in \mathbb{C}^{d_\xi\times d_\xi},$ (where $d_\xi$ is the dimension of the continuous, unitary and irreducible  representation $\xi:G\rightarrow \textnormal{U}(\mathbb{C}^{d_\xi})$) and satisfying
\begin{equation}\label{RuzhanskyTurunenQuanti}
    Af(x)=\sum_{[\xi]\in \widehat{G}}d_\xi\textnormal{\textbf{Tr}}[\xi(x)a(x,\xi)\widehat{f}(\xi)],\,\,f\in C^\infty(G).
\end{equation}Here 
\[ 
    \widehat{f}(\xi)\equiv (\mathscr{F}f)(\xi):=\int\limits_{G}f(x)\xi(x)^*dx\in  \mathbb{C}^{d_\xi\times d_\xi},\,\,\,[\xi]\in \widehat{G},
\]is the matrix-valued Fourier transform of $f$ at $\xi=(\xi_{ij})_{i,j=1}^{d_\xi}.$
The function $a$ in \eqref{symbol}, satisfying \eqref{RuzhanskyTurunenQuanti} is unique and satisfies the identity,\footnote{It is well known that the functions $\xi_{ij}$ are of $C^\infty$-class, that they are eigenfunctions of the positive Laplace operator $\mathcal{L}_G$, and that $\mathcal{L}_G\xi_{ij}=\lambda_{[\xi]}\xi_{ij}$.}
\[ 
    a(x,\xi)=\xi(x)^{*}(A\xi)(x),\,\, A\xi:=(A\xi_{ij})_{i,j=1}^{d_\xi},\,\,\,\,[\xi]\in \widehat{G}.
\]In general, we refer to the function $a$ as the (global or matrix) {\it{symbol}} of the operator $A.$ 

\begin{remark}
Let us denote by $\mathscr{S}(\widehat{G}):=\mathscr{F}(C^\infty(G))$ the Schwartz space on the unitary dual. Then the Fourier transform on the group $\mathscr{F}$ is a bijective mapping from $C^\infty(G)$ into $\mathscr{S}(\widehat{G})$ (see \cite[Page 541]{Ruz}), and in terms of the Fourier transform we have
\[ 
    Af(x)=\mathscr{F}^{-1}[ a(x,\cdot )(\mathscr{F}{f})\,](x),
\]for every $f\in C^{\infty}(G).$ In particular, if $a(x,\xi)=I_{d_\xi}$ is the identity matrix in every representation space, $A\equiv I$ is the identity operator on $C^\infty(G),$ and we recover the Fourier inversion formula
\[ 
    f(x)=\sum_{[\xi]\in \widehat{G}}d_\xi\textnormal{\textbf{Tr}}[\xi(x)\widehat{f}(\xi)],\,\,f\in C^\infty(G).
\]
\end{remark}

In order to classify symbols in the H\"ormander classes  developed in \cite{Ruz}, the notion of {\it{ difference operators}} on the unitary dual, by endowing $\widehat{G}$ with a difference structure, is an instrumental tool. Indeed,   by following  \cite{RuzhanskyWirth2015},   a difference operator $Q_\xi$ of order $k,$  is defined by
\begin{equation}\label{taylordifferences}
    Q_\xi\widehat{f}(\xi)=\widehat{qf}(\xi),\,[\xi]\in \widehat{G}, 
\end{equation} for all $f\in C^\infty(G),$ for some function $q$ vanishing of order $k$ at the identity $e=e_G.$ We will denote by $\textnormal{diff}^k(\widehat{G})$  the set of all difference operators of order $k.$ For a  fixed smooth function $q,$ the associated difference operator will be denoted by $\Delta_q:=Q_\xi.$

For our further analysis, we will choose an admissible collection of difference operators (see e.g. \cite{RuzhanskyDelgado2017,RuzhanskyWirth2015}),
\[ 
  \Delta_{\xi}^\alpha:=\Delta_{q_{(1)}}^{\alpha_1}\cdots   \Delta_{q_{(i)}}^{\alpha_{i}},\,\,\alpha=(\alpha_j)_{1\leqslant j\leqslant i}, 
\]
where
\[ 
    \textnormal{rank}\{\nabla q_{(j)}(e):1\leqslant j\leqslant i \}=\textnormal{dim}(G), \textnormal{   and   }\Delta_{q_{(j)}}\in \textnormal{diff}^{1}(\widehat{G}).
\]We say that this admissible collection is strongly admissible if 
\[ 
    \bigcap_{j=1}^{i}\{x\in G: q_{(j)}(x)=0\}=\{e_G\}.
\]

\begin{remark}\label{remarkD} Matrix components of unitary representations induce difference operators. Indeed, if $\xi_{1},\xi_2,\cdots, \xi_{k},$ are  fixed irreducible and unitary  representation of $G$, which not necessarily belong to the same equivalence class, then each coefficient of the matrix
\begin{equation}
 \xi_{\ell}(g)-I_{d_{\xi_{\ell}}}=[\xi_{\ell}(g)_{ij}-\delta_{ij}]_{i,j=1}^{d_{\xi_\ell}},\, \quad g\in G, \,\,1\leq \ell\leq k,
\end{equation} 
that is each function 
$q^{\ell}_{ij}(g):=\xi_{\ell}(g)_{ij}-\delta_{ij}$, $ g\in G,$ defines a difference operator
\begin{equation}\label{Difference:op:rep}
    \mathbb{D}_{\xi_\ell,i,j}:=\mathscr{F}(\xi_{\ell}(g)_{ij}-\delta_{ij})\mathscr{F}^{-1}.
\end{equation}
We can fix $k\geq \mathrm{dim}(G)$ of these representations in such a way that the corresponding  family of difference operators is admissible, that is, 
\[ 
    \textnormal{rank}\{\nabla q^{\ell}_{i,j}(e):1\leqslant \ell\leqslant k \}=\textnormal{dim}(G).
\]
To define higher order difference operators of this kind, let us fix a unitary irreducible representation $\xi_\ell$.
Since the representation is fixed we omit the index $\ell$ of the representations $\xi_\ell$ in the notation that will follow.
Then, for any given multi-index $\alpha\in \mathbb{N}_0^{d_{\xi_\ell}^2}$, with 
$|\alpha|=\sum_{i,j=1}^{d_{\xi_\ell}}\alpha_{i,j}$, we write
$$\mathbb{D}^{\alpha}:=\mathbb{D}_{1,1}^{\alpha_{11}}\cdots \mathbb{D}^{\alpha_{d_{\xi_\ell},d_{\xi_\ell}}}_{d_{\xi_\ell}d_{\xi_\ell}}
$$ 
for a difference operator of order $|\alpha|$.
\end{remark}
\begin{remark}[Leibniz rule for difference operators]\label{Leibnizrule} The difference structure on the unitary dual $\widehat{G},$ induced by the difference operators acting on the momentum variable $[\xi]\in \widehat{G},$  implies the following Leibniz rule 
\begin{align*}
    \Delta_{q_\ell}(a_{1}a_{2})(x_0,\xi) =\sum_{ |\gamma|,|\varepsilon|\leqslant \ell\leqslant |\gamma|+|\varepsilon| }C_{\varepsilon,\gamma}(\Delta_{q_\gamma}a_{1})(x_0,\xi) (\Delta_{q_\varepsilon}a_{2})(x_0,\xi), \quad (x_{0},[\xi])\in G\times \widehat{G},
\end{align*} for $a_{1},a_{2}\in C^{\infty}(G, \mathscr{S}'(\widehat{G})).$ For details we refer the reader to  \cite{Ruz,RuzhanskyTurunenWirth2014}.
\end{remark}
\begin{remark}
 Every $X\in\mathfrak{g},$ can be identified with the differential operator $X:C^\infty(G)\rightarrow C^\infty(G)$  defined by
 \[ 
     (X_{x}f)(x):=\frac{d}{dt}f(x\exp(tX) )|_{t=0},\quad x\in G.
 \]
\end{remark}
If $A\in \Psi^m_{\rho,\delta}(G,\textnormal{loc}),$ $\rho\geqslant   1-\delta,$ the matrix-valued symbol $\sigma_A$ of $A$ satisfies (see \cite{Ruz,RuzhanskyTurunenWirth2014}),
\begin{equation}\label{HormanderSymbolMatrix}
    \Vert {X}_x^\beta \Delta_{q_\gamma} \sigma_A(x,\xi)\Vert_{\textnormal{op}}\leqslant C_{\alpha,\beta}
    \langle \xi \rangle^{m-\rho|\gamma|+\delta|\beta|}\end{equation} for all $\beta$ and  $\gamma $ multi-indices and all $(x,[\xi])\in G\times \widehat{G}$. Now, if $0\leqslant \delta,\rho\leqslant 1,$
we say that $\sigma_A\in \mathscr{S}^m_{\rho,\delta}(G\times \widehat{G}),$ if the global symbol inequalities \eqref{HormanderSymbolMatrix}  hold true. So, for $\sigma_A\in \mathscr{S}^m_{\rho,\delta}(G\times \widehat{G})$ we write $A\in\Psi^m_{\rho,\delta}(G)\equiv\textnormal{Op}(\mathscr{S}^m_{\rho,\delta}(G\times \widehat{G})).$  As we mentioned early in the introduction we have the following equality of classes,
\[ 
   \textnormal{Op}(\mathscr{S}^m_{\rho,\delta}(G\times \widehat{G}))= \Psi^m_{\rho,\delta}(G,\textnormal{loc}),\,\,\,0\leqslant \delta<\rho\leqslant 1,\,\rho\geqslant   1-\delta.
\]

\subsection{Calder\'on-Zygmund type estimates for multipliers}
In order to provide $L^p$-estimates for multipliers in the subelliptic context, we will use the techniques developed by the second author and J. Wirth in \cite{RuzhanskyWirth2015}, where a special case (compatible with the notion of difference operators) of a statement of Coifman and de Guzm\'an (\cite{CoifmandeGuzman}, Theorem 2) was established. We record it as follows  (see   \cite[p. 630]{RuzhanskyWirth2015}).  
    \begin{cri}\label{CoifDeGuzCrit} Assume that $A:L^2(G)\rightarrow L^2(G)$ is a left-invariant operator on $G$ satisfying 
 \begin{equation}\label{CoifDeGuz}
  \Vert A\psi_r\Vert_{L^2(G,\rho(x)^{n(1+\varepsilon)}dx )}  := \left(\,\int\limits_{G}|A\psi_{r}(x)|^2\rho(x)^{n(1+\varepsilon)}dx\right)^\frac{1}{2}\leqslant Cr^{\frac{\varepsilon}{2}},
 \end{equation}for some constants $C>0$ and $\varepsilon>0,$ uniformly in $r.$ Then $A$ is of weak type $(1,1)$ and bounded on $L^p(G),$ for all $1<p<\infty.$
 \end{cri}
 
 The family $\{\psi_r\}_{r>0}$  that appears in Criterion \eqref{CoifDeGuzCrit} is defined by  $\psi_{r}=\phi_{r}-\phi_{r/2},$ where the functions in the net $\{\phi_r\}_{r>0},$ satisfy, among other things,  the following properties (see \cite[p. 140]{CoifmandeGuzman}):
 \begin{itemize}
     \item $\int\limits_{G}\phi_{r}(x)dx=1,$ 
     \item $\int\limits_{G}\phi_{r}^2(x)dx=O(\frac{1}{r}),$
     \item $\phi_r\ast\phi_s=\phi_s\ast \phi_r,$ $r,s>0.$
 \end{itemize}
  The function $\rho:x\mapsto\rho(x),$  appearing in \eqref{CoifDeGuz}, is a suitable pseudo-distance defined on $G.$ If $G$ is semi-simple (this means that the centre of $G,$ $Z(G)$ is trivial), it is defined by
  \begin{equation}\label{therhofucntion}
      \rho(x)^2:=\dim(G)-\textnormal{\textbf{Tr}}(\textnormal{Ad}(x))=\sum_{\xi\in \Delta_0}(d_\xi-\textnormal{\textbf{Tr}}(\xi(x))),\,\,x\in G,
  \end{equation}
 where $\textnormal{Ad}:G\rightarrow\textnormal{U}(\mathfrak{g}),$ and $\Delta_0$ is the system of positive roots. It can be decomposed into irreducible representations as,
 \[ 
     \textnormal{Ad}=[\textnormal{rank}(G)e_{\widehat{G}}]\oplus\left( \bigoplus_{\xi\in \Delta_{0}}\xi\right),
 \]
 where $e_{\widehat{G}}$ is the trivial representation.
With the consideration on the centre $Z(G)=\{e_{G}\},$ it can be shown (see Lemma 3.1 of \cite{RuzhanskyWirth2015}) that 
\begin{itemize}
    \item $\rho^2(x)\geqslant   0$ and $\rho(x)=0$ if and only if $x=e_{G}.$
    \item $\Delta_{\rho^2}\in \textnormal{diff}^2(\widehat{G}).$
\end{itemize}
 If $G$ is not semi-simple,  we refer the reader to \cite[Remark 3.2]{RuzhanskyWirth2015} for the modifications in the definition of $\rho,$ in this particular case. For our further analysis, we will use the following lemma which exploits the properties of the functions $\psi_r,$ (see Lemma 3.4 of \cite[p. 630]{RuzhanskyWirth2015}).
 \begin{lemma}\label{LemmaRuzhWirth} Let $q\in C^\infty(G)$ be a smooth function vanishing to order $\tilde\ell\in \mathbb{R} ,$ at $e_G.$ Then
 \[ 
     \Vert q(x)\psi_{r}\Vert_{H^{-s}(G)}\leqslant C_{q,s} r^{\frac{\tilde\ell+s}{n}-\frac{1}{2}},
 \] for all $0\leqslant s\leqslant 1+\frac{n}{2}.$
 \end{lemma}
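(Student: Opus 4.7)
The plan is to exploit the localisation of $\psi_r$ at spatial scale $\rho:=r^{1/n}$ together with the vanishing of $q$ at $e_G$. From $\int\phi_r\,dx=1$, $\int\phi_r^2\,dx=O(r^{-1})$, and non-negativity of $\phi_r$ as in the underlying Coifman--de Guzm\'an construction, $\phi_r$ is essentially supported in a ball $B(e_G,\rho)$ of volume $\sim r$; hence $\psi_r=\phi_r-\phi_{r/2}$ concentrates there in an annular sense, with $\|\psi_r\|_{L^2}\lesssim r^{-1/2}$, $\|\psi_r\|_{L^1}\lesssim 1$, and $\int\psi_r\,dx=0$. The vanishing of $q$ to order $\tilde\ell$ gives $|q(x)|\lesssim d(x,e_G)^{\tilde\ell}$ near $e_G$, so
\[
\|q\psi_r\|_{L^2(G)}\lesssim\rho^{\tilde\ell}\,\|\psi_r\|_{L^2}\lesssim r^{\tilde\ell/n-1/2},
\]
which settles the case $s=0$.

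For general $s\in(0,1+n/2]$ I would argue by duality, $\|q\psi_r\|_{H^{-s}(G)}=\sup_{\|g\|_{H^s}\le 1}|\langle q\psi_r,g\rangle|$, and decompose the test function via a Littlewood--Paley cut-off at frequency $\langle\xi\rangle\sim\rho^{-1}$ into $g=g_{\rm lo}+g_{\rm hi}$. On the high-frequency side the Bernstein estimate $\|g_{\rm hi}\|_{L^2}\lesssim\rho^{s}\|g\|_{H^{s}}$ combined with Cauchy--Schwarz and the previous $L^2$-bound gives $|\langle q\psi_r,g_{\rm hi}\rangle|\lesssim r^{(\tilde\ell+s)/n-1/2}\|g\|_{H^s}$, which is exactly the target. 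For the low-frequency piece one uses the mean-zero property of $\psi_r$ to replace $g_{\rm lo}$ by its fluctuation around $e_G$:
\[
\langle q\psi_r,g_{\rm lo}\rangle=\langle\psi_r,q(g_{\rm lo}-g_{\rm lo}(e_G))\rangle,
\]
and then quantifies the Sobolev--Morrey-type continuity of $g_{\rm lo}$ on $B(e_G,\rho)$ in terms of $\|g\|_{H^s}$, so that an $L^1$-pairing against $\|q\psi_r\|_{L^1}\lesssim\rho^{\tilde\ell}$ closes the estimate at the same scaling.

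The main obstacle I anticipate is making this low-frequency step rigorous uniformly in $s$ through the full range, in particular at the upper endpoint $s=1+n/2$. In $\mathbb{R}^n$ one would proceed by Taylor expansion, but on $G$ there is no global linear structure; one works in exponential coordinates around $e_G$ and carefully controls the linearisation error, or bypasses the geometry altogether by exploiting the semigroup structure $\phi_r\sim e^{-r\mathcal{L}_G}\delta_{e_G}$ to absorb the cancellation directly on the Fourier side. A cleaner alternative would be to establish only the two extremes $s=0$ (from the $L^2$ computation above) and $s=1+n/2$ (using one extra derivative together with $\int\psi_r=0$) and then fill in the intermediate range by complex interpolation of the associated operator family, thereby confining the delicate analysis to a single endpoint.
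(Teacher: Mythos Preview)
The paper does not prove this lemma; it is quoted verbatim from Lemma~3.4 of \cite{RuzhanskyWirth2015}, where the approximate identities $\phi_r$ (and hence $\psi_r=\phi_r-\phi_{r/2}$) are built via the spectral calculus of the Laplacian, so that $\widehat{\psi_r}(\xi)$ is an explicit scalar function of $\langle\xi\rangle$ essentially supported in the shell $\langle\xi\rangle\sim r^{-1/n}$. The proof there works directly on the Fourier side, exploiting this spectral localisation together with the difference operator $\Delta_q$, rather than by a physical-space duality argument.

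Your $s=0$ step and the high-frequency half of the duality argument are correct. The low-frequency step, however, has a genuine gap: the identity
\[
\langle q\psi_r,\,g_{\mathrm{lo}}\rangle=\langle \psi_r,\,q(g_{\mathrm{lo}}-g_{\mathrm{lo}}(e_G))\rangle
\]
is false in general. The cancellation $\int_G\psi_r=0$ kills constants paired against $\psi_r$, not against $q\psi_r$; subtracting $g_{\mathrm{lo}}(e_G)$ costs the extra term $g_{\mathrm{lo}}(e_G)\int_G q\,\psi_r$, and there is no reason for $\int_G q\,\psi_r$ to vanish (for instance if $q$ behaves like $|x|^{\tilde\ell}$ near $e_G$ this integral is of size $\sim\rho^{\tilde\ell}$). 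That residual contributes $\rho^{\tilde\ell}\,\|g_{\mathrm{lo}}\|_{L^\infty}$, which for $s>n/2$ and small $r$ is \emph{larger} than the target $\rho^{\tilde\ell+s-n/2}$, so the estimate does not close. The same obstruction hits your proposed endpoint $s=1+n/2$, so the interpolation alternative does not bypass it.

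What actually makes the low-frequency piece small is not a spatial moment condition but the \emph{spectral} localisation of $\psi_r$: since $\widehat{\psi_r}$ lives near $\langle\xi\rangle\sim\rho^{-1}$ and $q$ is a fixed smooth function, $q\psi_r$ has rapidly decaying low-frequency Fourier content, and pairing against $g_{\mathrm{lo}}$ is automatically tiny. If you want to keep a physical-space proof, you would need to replace the single moment $\int\psi_r=0$ by quantitative control of many moments of $\psi_r$ (equivalently, vanishing of $\widehat{\psi_r}$ to high order at the trivial representation), which again comes from the spectral construction.
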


\subsection{$L^p$-multipliers and $L^p$-bounded pseudo-differential operators }\label{Lpcompact1}
We record the $L^p$-estimates for multipliers on compact Lie groups through  the methods developed by the second author and J. Wirth in \cite{RuzhanskyWirth2015} by using Criterion \ref{CoifDeGuzCrit}.    We will denote by $\Sigma(\widehat{G}\times G)$ and $\Sigma(\widehat{G}) $ the space of  matrix-valued functions,
\[ 
    \Sigma(G\times \widehat{G}):=\{ \sigma: G\times \widehat{G}\rightarrow \cup_{[\xi]\in \widehat{G}}\mathbb{C}^{d_\xi\times d_\xi}\},
\]
 \[ 
    \Sigma(\widehat{G}):=\{ \sigma:  \widehat{G}\rightarrow \cup_{[\xi]\in \widehat{G}}\mathbb{C}^{d_\xi\times d_\xi}\}.
\]

\begin{theorem}\label{TRW2015ZIP}
Let $G$ be a compact Lie group   and let $\varkappa\in 2\mathbb{N}$ be such that $\varkappa>\frac{n}{2}$. Let $a\in \Sigma(\widehat{G})$ be a symbol, satisfying
\[ 
    \Vert\mathbb{D}^{\alpha} a(\xi)\Vert_{\textnormal{op}}\leqslant C_\alpha\langle \xi\rangle^{-|\alpha|},\,\,|\alpha|\leqslant \varkappa.
\] Then $A=\textnormal{Op}(a)$ is of weak type $(1,1)$  and bounded on $L^p(G)$ for all $1<p<\infty.$ Moreover, if $0\leqslant \rho<1,$ and $a$ satisfies 
\[ 
    \Vert\mathbb{D}^{\alpha} a(\xi)\Vert_{\textnormal{op}}\leqslant C_\alpha\langle \xi \rangle^{-\rho|\alpha|},\,\,|\alpha|\leqslant \varkappa,
\] then $A$ extends to a bounded operator from $L^p_{r}(G)$ into $L^p(G)$ for all $1<p<\infty$ and $r=\varkappa(1-\rho) \left|\frac{1}{p}-\frac{1}{2}\right |.$ Here $L^p_r(G)$ denotes the Sobolev space of order $r$ over $L^p(G).$
\end{theorem}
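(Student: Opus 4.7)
The strategy is to reduce both assertions to the Coifman--de Guzm\'an Criterion \ref{CoifDeGuzCrit}: since $A$ is left-invariant (as a Fourier multiplier), it suffices to establish the weighted $L^2$-estimate $\|A\psi_r\|_{L^2(G,\rho^{n(1+\varepsilon)}dx)}\lesssim r^{\varepsilon/2}$ for some fixed $\varepsilon\in(0,1)$. Setting $s^{*}:=n(1+\varepsilon)/2$, I would re-write this norm on the Fourier side by Plancherel, using the defining relation \eqref{taylordifferences} that converts multiplication by $\rho^{s^{*}}$ on the group into the difference operator $\Delta_{\rho^{s^{*}}}$ on $\widehat{G}$:
\begin{equation*}
\|\rho^{s^{*}}A\psi_r\|_{L^{2}(G)}^{2} \;=\; \sum_{[\xi]\in\widehat{G}} d_\xi\, \bigl\| \Delta_{\rho^{s^{*}}}\!\bigl[a(\xi)\,\widehat{\psi_r}(\xi)\bigr]\bigr\|_{HS}^{2}.
\end{equation*}
The Leibniz rule of Remark \ref{Leibnizrule} then distributes $\Delta_{\rho^{s^{*}}}$ across the product into a finite sum of terms of the form $(\Delta^{\alpha}a)(\Delta^{\beta}\widehat{\psi_r})$ with $|\alpha|,|\beta|\le s^{*}$ and $|\alpha|+|\beta|\ge s^{*}$. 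Fixing $\varepsilon$ small enough that $s^{*}\le\varkappa$, the hypothesis on $a$ provides $\|\Delta^{\alpha}a(\xi)\|_{\mathrm{op}}\lesssim\langle\xi\rangle^{-|\alpha|}$ for every $\alpha$ occurring above.

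The next step is to bound each Leibniz contribution. Writing $\Delta^{\beta}\widehat{\psi_r}=\mathscr{F}(q_\beta\psi_r)$ for some smooth $q_\beta$ vanishing to order $|\beta|$ at $e_G$, Plancherel yields
\begin{equation*}
\sum_{[\xi]} d_\xi\, \langle\xi\rangle^{-2|\alpha|}\, \|\Delta^{\beta}\widehat{\psi_r}(\xi)\|_{HS}^{2} \;=\; \|q_\beta\psi_r\|_{H^{-|\alpha|}(G)}^{2},
\end{equation*}
and Lemma \ref{LemmaRuzhWirth}, applied with $\tilde\ell=|\beta|$ and Sobolev index $|\alpha|\le 1+n/2$, gives $\|q_\beta\psi_r\|_{H^{-|\alpha|}(G)}^{2}\lesssim r^{2((|\alpha|+|\beta|)/n-1/2)}\le r^{\varepsilon}$ since $|\alpha|+|\beta|\ge s^{*}=n(1+\varepsilon)/2$. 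Summing the finitely many Leibniz terms and taking square roots establishes the target weighted $L^{2}$-bound, and Criterion \ref{CoifDeGuzCrit} produces both the weak $(1,1)$ estimate and the strong $L^{p}$-boundedness of $A$ for $1<p<\infty$.

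For the second assertion with $0\le\rho<1$, I would run the same scheme on the auxiliary multiplier $A_s:=A(1-\mathcal{L}_{G})^{-s/2}$. A direct Leibniz computation combined with the standard estimates on differences of $\langle\xi\rangle^{-s}$ shows that its symbol satisfies $\|\Delta^{\alpha}[a(\xi)\langle\xi\rangle^{-s}]\|_{\mathrm{op}}\lesssim\langle\xi\rangle^{-\rho|\alpha|-s}$. Repeating the previous estimate replaces the exponent of $r$ in each Leibniz term by $\varepsilon+2s/n-2(1-\rho)|\alpha|/n$, which is $\ge\varepsilon$ uniformly in $|\alpha|\le\varkappa$ provided $s\ge\varkappa(1-\rho)$. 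Hence $A_{\varkappa(1-\rho)}$ is of weak type $(1,1)$, equivalently $A\colon L^{1}_{\varkappa(1-\rho)}(G)\to L^{1,\infty}(G)$. Paired with the trivial bound $A\colon L^{2}(G)\to L^{2}(G)$ arising from $\|a(\xi)\|_{\mathrm{op}}\le C_0$, and with the dual endpoint $A\colon L^{\infty}_{\varkappa(1-\rho)}(G)\to\mathrm{BMO}(G)$ obtained from the same argument applied to $A^{*}$, Fefferman--Stein complex interpolation on the analytic family $A_z=A(1-\mathcal{L}_{G})^{-z/2}$ over the strip $0\le\mathrm{Re}(z)\le\varkappa(1-\rho)$ yields $A\colon L^{p}_{r}(G)\to L^{p}(G)$ with $r=\varkappa(1-\rho)\,|1/p-1/2|$ for every $1<p<\infty$.

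The delicate point is the simultaneous compatibility of three constraints in the weighted estimate: the Leibniz expansion enforces $|\alpha|+|\beta|\ge s^{*}$, Lemma \ref{LemmaRuzhWirth} requires the Sobolev index $|\alpha|$ to stay below $1+n/2$, and the symbol hypothesis only controls $\Delta^{\alpha}a$ for $|\alpha|\le\varkappa$. The assumption $\varkappa>n/2$ together with the freedom to shrink $\varepsilon$ is precisely what allows all three constraints to be met at once; carrying out this bookkeeping cleanly, especially in the perturbative step where $\langle\xi\rangle^{-s}$ is inserted, is the principal technical obstacle.
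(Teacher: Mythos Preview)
The paper does not prove Theorem \ref{TRW2015ZIP}; it is quoted from \cite{RuzhanskyWirth2015} as background. Your strategy---Plancherel to rewrite the Coifman--de Guzm\'an weighted norm as $\|\Delta_{\rho^{s^*}}(a\,\widehat{\psi_r})\|_{\ell^2(\widehat{G})}$, Leibniz expansion, and Lemma \ref{LemmaRuzhWirth} for each piece---is exactly the method of \cite{RuzhanskyWirth2015}, and is the same scheme the present paper runs for the subelliptic analogue in the proof of Theorem \ref{LpQL}.

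One point deserves tightening. You write ``fixing $\varepsilon$ small enough that $s^*\le\varkappa$'' and later invoke ``the freedom to shrink $\varepsilon$''. In fact $\varepsilon$ is \emph{not} a free continuous parameter: the weight $\rho^{n(1+\varepsilon)}$ passes to a difference operator only when $\rho^{s^*}$ is smooth, and since $\rho^2$ is smooth while $\rho$ is not at $e_G$, this forces $s^*=n(1+\varepsilon)/2$ to be an \emph{even} integer. So one must take $s^*=2m$ for some $m\in\mathbb{N}$ with $n/2<2m\le\varkappa$ and set $\varepsilon=4m/n-1$; this is precisely what the paper does in the proof of Theorem \ref{LpQL} (``Let us define $\varepsilon>0$ by the equality $n(1+\varepsilon)=4m=2\varkappa$''). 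Your assertion that the Sobolev index $|\alpha|\le 1+n/2$ in the application of Lemma \ref{LemmaRuzhWirth} then amounts to $2m\le 1+n/2$, which the paper checks explicitly in the subelliptic proof and which is the reason Theorems \ref{LpQ} and \ref{LpQL} split into the cases $d$ odd versus $d$ even with different choices of $\varkappa$. This dimensional bookkeeping is the only substantive gap in your write-up; the overall architecture is correct. The interpolation argument for the second assertion is also the standard one.
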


From the proof of  Corollary 5.1 of \cite{RuzhanskyWirth2015}, one has the following version of Theorem \ref{TRW2015ZIP}.
\begin{theorem}\label{LpQ}
Let us assume that $G$ is a compact Lie group of dimension $n=2d$ or $n=2d+1,$ and that  $d$ is odd. Let  $0< \rho\leqslant  1,$ and   $a\in \Sigma(\widehat{G})$ be a symbol satisfying
\[ 
    \Vert\mathbb{D}^{\alpha} a(\xi)\Vert_{\textnormal{op}}\leqslant C_\alpha\langle \xi\rangle^{-\varkappa(1-\rho)-\rho|\alpha|},\,\,|\alpha|\leqslant \varkappa:=d+1,\, 
\] then $A=\textnormal{Op}(a)$ extends to a linear operator of weak type $(1,1).$  Moreover, if the dimension of the group is $\dim(G)=2d$ or $\dim(G)=2d+1,$ and   $d$ is even, the conclusion on $A$ is the same provided that
\[ 
    \Vert\mathbb{D}^{\alpha} a(\xi)\Vert_{\textnormal{op}}\leqslant C_\alpha\langle \xi\rangle^{-\varkappa(1-\rho)-\rho|\alpha|},\,\,|\alpha|\leqslant \varkappa:=d+2. 
\]
\end{theorem}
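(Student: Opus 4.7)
The strategy is to verify the hypothesis of Criterion~\ref{CoifDeGuzCrit} for the left-invariant operator $A = \textnormal{Op}(a)$, namely to establish
$$
\|A\psi_r\|_{L^2(G,\rho(x)^{n(1+\varepsilon)}dx)} \lesssim r^{\varepsilon/2}
$$
uniformly in $r$, for some $\varepsilon>0$. The parity assumption on $d$ ensures that $\varkappa$ is the smallest \emph{even} integer exceeding $n/2$; then $2\varkappa/n - 1 > 0$, so one can fix $\varepsilon \in (0, 2\varkappa/n-1]$ with $n(1+\varepsilon)/2 \leq \varkappa$. The evenness of $\varkappa$ is essential since $\rho^2$, and not $\rho$ itself, provides a smooth symbol inducing $\Delta_{\rho^2} \in \textnormal{diff}^2(\widehat{G})$.

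Since the weighted $L^2$-norm equals $\|\rho^{n(1+\varepsilon)/2}A\psi_r\|_{L^2(G)}$, I would first control the integer-exponent version $\|\rho^{\varkappa}A\psi_r\|_{L^2(G)}$. Using Plancherel together with the identity $\widehat{qf} = \Delta_q\widehat{f}$, this norm can be computed on the Fourier side in terms of $\Delta_{\rho^\varkappa}(a\widehat{\psi_r})$, and the Leibniz rule of Remark~\ref{Leibnizrule} distributes the $\varkappa$ differences between $a$ and $\widehat{\psi_r}$. The resulting pieces $(\Delta^\gamma a)(\Delta^\beta\widehat{\psi_r})$ with $|\gamma|+|\beta|\leq\varkappa$ are estimated by the symbol hypothesis $\|\mathbb{D}^\alpha a(\xi)\|_{\textnormal{op}}\lesssim\langle\xi\rangle^{-\varkappa(1-\rho)-\rho|\alpha|}$ on the $a$-factor and, after returning to the group side by Plancherel, by Lemma~\ref{LemmaRuzhWirth} on the $\psi_r$-factor: $\|q^\beta \psi_r\|_{H^{-s}(G)} \lesssim r^{(|\beta|+s)/n - 1/2}$, with the Sobolev order $s$ chosen proportionally to $\rho|\gamma|$ so as to absorb the $\langle\xi\rangle^{-\rho|\gamma|}$ decay of $\Delta^\gamma a$.

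To interpolate from the integer weight $\rho^\varkappa$ to the fractional weight $\rho^{n(1+\varepsilon)/2}$, I would use H\"older's inequality against the trivial endpoint bound $\|A\psi_r\|_{L^2(G)} \lesssim 1$, which follows from $\|\widehat{\psi_r}\|_{\ell^\infty(\widehat{G})} \lesssim 1$ together with the $|\alpha|=0$ symbol bound $\|a(\xi)\|_{\textnormal{op}}\lesssim 1$. Combining the two endpoints with suitably chosen weights yields the target $r^{\varepsilon/2}$-estimate, and Criterion~\ref{CoifDeGuzCrit} then gives the weak $(1,1)$-boundedness of $A$.

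The main technical obstacle is to match the exponents across the different diagonals $|\gamma|+|\beta|=k$ for $0 \leq k \leq \varkappa$: the symbol decay $\langle\xi\rangle^{-\varkappa(1-\rho)-\rho|\gamma|}$ must combine with the Lemma~\ref{LemmaRuzhWirth} loss so that the aggregate exponent of $r$ is a non-negative multiple of $\varepsilon$. This is where the additional decay $-\varkappa(1-\rho)$ at order $|\alpha|=0$ in the hypothesis (compared to the simpler $-\rho|\alpha|$ of Theorem~\ref{TRW2015ZIP}) is decisive: it exactly counterbalances the loss which, in Theorem~\ref{TRW2015ZIP}, manifests as a loss of $\varkappa(1-\rho)|1/p-1/2|$ derivatives, thereby upgrading the conclusion to weak $(1,1)$-boundedness without any loss of smoothness.
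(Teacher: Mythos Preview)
Your strategy—verifying Criterion~\ref{CoifDeGuzCrit} via Plancherel, the Leibniz rule on $\Delta_{\rho^\varkappa}(a\widehat{\psi_r})$, the symbol hypothesis, and Lemma~\ref{LemmaRuzhWirth}—is exactly the one used in \cite{RuzhanskyWirth2015} (from which this theorem is quoted) and reproduced in the paper's proof of the subelliptic analogue, Theorem~\ref{LpQL}. Your identification of the role of the extra decay $-\varkappa(1-\rho)$ in the exponent bookkeeping is also correct.

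Two corrections, however. First, the interpolation step is unnecessary: the standard choice is $\varepsilon = 2\varkappa/n - 1$, for which $n(1+\varepsilon)/2 = \varkappa$ \emph{exactly}, so the weighted norm \emph{is} $\|\rho^\varkappa A\psi_r\|_{L^2(G)}$ and one estimates it directly. This is what the proof in \cite{RuzhanskyWirth2015} and the paper's Theorem~\ref{LpQL} do. Second, your claimed endpoint bound $\|A\psi_r\|_{L^2(G)} \lesssim 1$ is not justified: since $\|\psi_r\|_{L^2(G)} \asymp r^{-1/2}$ (from the property $\int_G\phi_r^2\,dx = O(1/r)$) and $\|\widehat{\psi_r}\|_{\ell^\infty(\widehat{G})}$ does not control any $L^2$-norm, one only gets $\|A\psi_r\|_{L^2} \lesssim r^{-1/2}$. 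With this corrected endpoint the H\"older interpolation still closes (the resulting exponent is $\theta\varkappa/n - 1/2 = \varepsilon/2$ with $\theta = n(1+\varepsilon)/(2\varkappa)$), but it is a detour that the direct choice of $\varepsilon$ avoids entirely.
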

The argument  developed  in  \cite{RuzhanskyWirth2015} using the Sobolev embedding theorem for extending the $L^p$-estimates from multipliers to pseudo-differential operators, allows us to present the following consequence of  Theorem \ref{LpQ} (see Theorem 5.2 of \cite{RuzhanskyWirth2015}).
\begin{theorem}\label{LpQnoninvariant}
Let us assume that $G$ is a compact Lie group of dimension $n=2d$ or $n=2d+1,$ and that $d$ is odd. Let     $a\in \Sigma(G\times \widehat{G})$ be a non-invariant symbol satisfying
\[ 
    \Vert{X}_x^\beta\mathbb{D}^{\alpha} a(x,\xi)\Vert_{\textnormal{op}}\leqslant C_\alpha\langle \xi\rangle^{-|\alpha|},\,\,|\alpha|\leqslant \varkappa:=d+1,\,\, |\beta|\leqslant \left[\frac{n}{p}\right]+1,
\] then $A=\textnormal{Op}(a)$ extends to a bounded operator on $L^p(G)$ for all $1<p<\infty.$ Moreover, if the dimension of the group is $\dim(G)=2d$ or $\dim(G)=2d+1,$ and  $d$ is even, the conclusion on $A$ is the same provided that
\[ 
    \Vert{X}_x^\beta\mathbb{D}^{\alpha} a(x,\xi)\Vert_{\textnormal{op}}\leqslant C_\alpha\langle \xi\rangle^{-|\alpha|},\,\,|\alpha|\leqslant \varkappa:=d+2,\,\,|\beta|\leqslant \left[\frac{n}{p}\right]+1. 
\]
\end{theorem}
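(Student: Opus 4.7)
The plan is to freeze the $x$-variable, reducing the problem to a uniform family of left-invariant Fourier multipliers covered by Theorem \ref{LpQ}, and then to recover the non-invariant operator via a Sobolev embedding in $x$, in the spirit of Theorem 5.2 of \cite{RuzhanskyWirth2015}. For each $x_0\in G$, let $B(x_0):=\textnormal{Op}(a(x_0,\cdot))$ be the Fourier multiplier obtained by freezing the first variable. Taking $\beta=0$ in the hypothesis shows that, uniformly in $x_0\in G$, the symbol $a(x_0,\cdot)$ satisfies the hypothesis of Theorem \ref{LpQ} with $\rho=1$. Combining the resulting weak $(1,1)$ bound with the $L^2$-boundedness (which comes from the $|\alpha|=0$ case of the hypothesis, giving $\Vert a(x_0,\xi)\Vert_{\textnormal{op}}\leqslant C$) via Marcinkiewicz interpolation and duality yields
\begin{equation*}
\Vert B(x_0)f\Vert_{L^p(G)}\leqslant C\Vert f\Vert_{L^p(G)},\quad 1<p<\infty,
\end{equation*}
with $C$ independent of $x_0$.

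Writing $Af(x)=B(x_0)f(x)|_{x_0=x}$, we have pointwise $|Af(x)|\leqslant \sup_{x_0\in G}|B(x_0)f(x)|$. Since $k:=[n/p]+1>n/p$, applying the Sobolev embedding $W^{k,p}(G)\hookrightarrow L^\infty(G)$ to the function $x_0\mapsto B(x_0)f(x)$ (with $x$ fixed) and using the elementary inequality $(\sum_j a_j)^p\lesssim\sum_j a_j^p$ produces
\begin{equation*}
|Af(x)|^p\leqslant C_p\sum_{|\beta|\leqslant [n/p]+1}\int\limits_G|X_{x_0}^\beta B(x_0)f(x)|^p\,dx_0.
\end{equation*}
Integrating in $x$ and applying Fubini yields
\begin{equation*}
\Vert Af\Vert_{L^p(G)}^p\leqslant C_p\sum_{|\beta|\leqslant [n/p]+1}\int\limits_G\Vert X_{x_0}^\beta B(x_0)f\Vert_{L^p(G)}^p\,dx_0.
\end{equation*}

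Now $X_{x_0}^\beta B(x_0)$ is the Fourier multiplier with symbol $X_{x_0}^\beta a(x_0,\cdot)$, and the hypothesis of the theorem ensures precisely that, for every $|\beta|\leqslant [n/p]+1$, this symbol obeys the same $\xi$-difference estimates as $a(x_0,\cdot)$, uniformly in $x_0$. A second application of Theorem \ref{LpQ} (with the value of $\varkappa$ appropriate to the parity of $d$) then gives $\Vert X_{x_0}^\beta B(x_0)f\Vert_{L^p(G)}\leqslant C\Vert f\Vert_{L^p(G)}$ uniformly in $x_0$, and substituting back produces the desired $L^p$-bound for $A$. The two cases of the theorem (according to the parity of $d$) differ only through the value of $\varkappa$ supplied by Theorem \ref{LpQ}, itself dictated by Criterion \ref{CoifDeGuzCrit}; the scheme above is identical in both cases.

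I do not anticipate a genuine obstacle here: the key point is simply the uniformity in $x_0$ of the constants appearing in the hypotheses of Theorem \ref{LpQ} when applied to $X_{x_0}^\beta a(x_0,\cdot)$, which is built into the statement of the theorem since the constants $C_\alpha$ do not depend on $x$. The remaining steps, Sobolev embedding on $G$ and Fubini, are routine.
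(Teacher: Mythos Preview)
The proposal is correct and follows essentially the same argument that the paper indicates (and carries out in full in the proof of Theorem \ref{SubellipticLpestimate}): freeze the spatial variable, apply the multiplier result Theorem \ref{LpQ} uniformly in the frozen parameter to the differentiated symbols $X_{x_0}^\beta a(x_0,\cdot)$, and pass from the family $\{B(x_0)\}$ back to $A$ via the Sobolev embedding $W^{[n/p]+1,p}(G)\hookrightarrow L^\infty(G)$ in the $x_0$-variable together with Fubini.
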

The following theorem records the action of the H\"ormander classes $\mathscr{S}^{m}_{\rho,\delta}(G\times \widehat{G})$ (see \eqref{smrd}) on $L^p(G)$ spaces (see  \cite{RuzhanskyDelgado2017}).
\begin{theorem}\label{DelgadoRuzhanskyLppseudo}
Let $G$ be a compact Lie group of dimension $n.$ Let $0\leqslant \delta<\rho\leqslant 1.$ Let $\sigma\in \mathscr{S}^{-\nu}_{\rho,\delta} (G\times \widehat{G}) ,$ where $\nu\in \mathbb{R}.$ Then $A\equiv \sigma(x,D)$ extends to a bounded operator on $L^p(G)$ provided that
\begin{equation}\label{sharporder}
    \nu\geqslant   n(1-\rho)\left|\frac{1}{p}-\frac{1}{2}\right|.
\end{equation}
\end{theorem}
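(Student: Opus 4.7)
The plan is to prove this sharp $L^p$-boundedness theorem by combining an $L^2$ bound, an endpoint bound at the critical order, and complex interpolation. The structure mirrors Fefferman's original argument \cite{Fefferman1973} adapted to the matrix-valued quantisation on $G$.

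\textbf{Step 1 ($L^2$-boundedness).} Since $\mathscr{S}^{-\nu}_{\rho,\delta}(G\times\widehat{G}) \subset \mathscr{S}^{0}_{\rho,\delta}(G\times\widehat{G})$ for $\nu\geqslant 0$, the Calder\'on--Vaillancourt theorem for the global H\"ormander classes (the analogue of our subelliptic Theorem with $\mathcal{L}$ replaced by $\mathcal{L}_G$, which in this non-subelliptic setting is classical) gives $A:L^2(G)\to L^2(G)$. This covers the case $p=2$, where the hypothesis $\nu\geqslant n(1-\rho)|1/p-1/2|$ reduces to $\nu\geqslant 0$.

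\textbf{Step 2 (endpoint $H^1\!\to\! L^1$).} I would prove that at the critical order $\nu_c=n(1-\rho)/2$, the operator $A$ maps $H^1(G)$ boundedly into $L^1(G)$. The plan is to perform a Littlewood--Paley decomposition of the symbol based on the spectral resolution of $(1+\mathcal{L}_G)^{1/2}$: write $\sigma = \sum_{j\geqslant 0}\sigma_j$ with $\sigma_j(x,\xi)$ spectrally localised to $\langle\xi\rangle \asymp 2^j$. Each dyadic piece $A_j=\mathrm{Op}(\sigma_j)$ has right-convolution kernel $k_{j,x}(y)=\mathscr{F}^{-1}\sigma_j(x,\cdot)(y)$, and from the symbol inequalities
\begin{equation*}
\Vert\Delta^\alpha_\xi X^\beta_x \sigma_j(x,\xi)\Vert_{\mathrm{op}} \lesssim 2^{j(-\nu_c-\rho|\alpha|+\delta|\beta|)}
\end{equation*}
together with summation by parts over $\widehat{G}$ (using the Plancherel formula and the eigenfunction bounds for matrix coefficients), one obtains the Calder\'on--Zygmund-type estimate
\begin{equation*}
|k_{j,x}(y)| \lesssim 2^{j(n-\nu_c)}\bigl(1+2^{j\rho}|y|\bigr)^{-N}
\end{equation*}
for any $N$, uniformly in $x$, where $|y|$ denotes geodesic distance to $e_G$. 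Summing on $j$, one verifies the $H^1$-atomic test: for any atom $a$ supported in a ball $B(x_0,r)$, $\Vert Aa\Vert_{L^1(G)}\lesssim 1$, by splitting the integral into the $2$-fold dilate $B(x_0,2r)$ (handled by $L^2$-boundedness and Cauchy--Schwarz) and its complement (handled by the cancellation $\int a=0$ combined with a standard smoothness estimate on $k_{j,x}$ derived analogously).

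\textbf{Step 3 (interpolation).} Embedding $\sigma$ into an analytic family $\sigma_z = \sigma\cdot (1+\mathcal{L}_G)^{-z/2}$ in the strip $0\leqslant \mathrm{Re}(z)\leqslant n(1-\rho)/2-\nu$, I would apply Stein's complex interpolation between the $L^2$-bound of Step~1 and the $H^1\!\to\! L^1$-bound of Step~2 (for $1<p\leqslant 2$), and dualise (using that adjoints of symbols in $\mathscr{S}^{-\nu}_{\rho,\delta}$ stay in the class for $\delta<\rho$) to treat $2\leqslant p<\infty$. This yields $L^p$-boundedness precisely when $\nu \geqslant n(1-\rho)|1/p-1/2|$.

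\textbf{Main obstacle.} The decisive step is the pointwise kernel estimate in Step~2. The matrix-valued quantisation controls $\sigma$ only through the difference operators $\Delta^\alpha_\xi$ on the dual, and converting these into off-diagonal decay of $k_{j,x}(y)$ in the Riemannian metric requires exploiting Taylor expansions of matrix coefficients $\xi(y)$ near $e_G$ together with the Leibniz rule of Remark \ref{Leibnizrule}. The dependence on $x$ must be tracked carefully, since the $\delta$-loss from $X^\beta_x$ derivatives enters the constants but must not destroy the summability over $j$; this forces the restriction $\delta<\rho$. Once these uniform dyadic estimates are secured, the atomic decomposition argument and the complex interpolation proceed by standard means.
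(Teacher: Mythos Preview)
This theorem is recorded in the paper as a known result from \cite{RuzhanskyDelgado2017}; the paper does not give its own proof here but rather proves the subelliptic analogue, Theorem \ref{parta2}, whose argument it says is ``verbatim the proof of Theorem 4.15 of \cite{RuzhanskyDelgado2017}''. Your proposal follows the same Fefferman strategy used there: $L^2$-boundedness, an endpoint bound at the critical order $\nu_c=n(1-\rho)/2$, and Stein complex interpolation with the analytic family $\sigma_z=\sigma\cdot(1+\mathcal{L}_G)^{-z/2}$ (up to a harmless factor $e^{z^2}$). So at the architectural level your plan coincides with the paper's.

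The one substantive difference is which endpoint you target in Step~2. You propose to prove $H^1\to L^1$ directly via atomic testing and dyadic kernel estimates; the paper (in its proof of Theorem \ref{parta}, the subelliptic model for this result) instead establishes the $L^\infty\to BMO^{\mathcal{L}}$ bound first---splitting the symbol with a cutoff $\tilde\gamma(r\langle\xi\rangle)$ adapted to the ball, using the mean value theorem and a dyadic decomposition of $\sigma'_k=\sigma_{X_k}\sigma^0+X_k\sigma^0$ for one piece, and a commutator argument with the multiplication operator $M_\phi$ for the other---and then obtains $H^1\to L^1$ by duality. Your atomic approach is a legitimate alternative, and your claimed dyadic kernel bound $|k_{j,x}(y)|\lesssim 2^{j(n-\nu_c)}(1+2^{j\rho}|y|)^{-N}$ is consistent with the kernel analysis the paper carries out (cf.\ Proposition \ref{CalderonZygmund} and Lemma \ref{lemma3.18} in the subelliptic setting). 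Either route leads to the same interpolation step.
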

\begin{remark}[Sharpness of Theorem \ref{DelgadoRuzhanskyLppseudo}]\label{sharpexample} Let $G$ be a connected, simply connected, compact semi-simple Lie group of dimension $n.$ Theorem \ref{DelgadoRuzhanskyLppseudo} is sharp in the  following sense. Let $\Lambda$ be the lattice of  the highest weights, $\Delta_0$ be the system of positive roots and $\delta$ be the half sum of all positive roots. There exists a correspondence $\lambda\mapsto [\xi_\lambda]$ between $\Lambda$ and the unitary dual $\widehat{G}.$ For $0<\beta<1,$ $\alpha>0,$ let us define the oscillating  Fourier multiplier,
\[ 
    T_{\alpha,\beta,\Omega}f(x)=\sum_{\lambda+\delta\in \Lambda\setminus\{0\}}d_{\xi_\lambda}\textnormal{\textbf{Tr}}[\xi_\lambda(x)\sigma_{\alpha,\beta}(\xi_\lambda)\widehat{f}(\xi_\lambda)],\,\,
\] with symbol
\[ 
    \sigma_{\alpha,\beta}(\xi_\lambda):=\frac{ e^{i\Vert \lambda+\delta\Vert^{\beta}} \Omega(\frac{\lambda+\delta}{\Vert \lambda+\delta \Vert})}{ \Vert \lambda+\delta\Vert^{\alpha} }I_{d_{\xi_\lambda}},
\]
where $\Omega$ is a sufficiently smooth function that is symmetric under the dual Weyl group. Here, $\Vert \alpha \Vert^2$ is the quadratic form $\Vert \alpha\Vert^2=\langle \alpha,\alpha\rangle$ on $\mathfrak{g}^{*}$ induced from the Killing form. It was proved by Chen and Fan (see \cite[Theorem 1]{Chen}) that $T_{\alpha,\beta,\Omega}$ is bounded on $L^p(G),$ $1<p<\infty,$ if and only if,  $$ \alpha\geqslant  \alpha_{p}:= n\beta\left|\frac{1}{2}-\frac{1}{p}\right|.$$ So, for $\alpha< \alpha_{p}:=n\beta\left|\frac{1}{2}-\frac{1}{p}\right|,$ there exists a bounded sequence $f_n$ in $L^p(G)$ such that $\Vert T_{\alpha,\beta,\Omega}f_n \Vert_{L^p(G)}\rightarrow \infty.$ The main point in this remark is that $\sigma_{\alpha,\beta}\in \mathscr{S}^{-\nu}_{\rho,0}(G\times \widehat{G})$ where $\nu=\alpha$ and $\rho=1-\beta.$ Consequently,  $T_{\alpha,\beta,\Omega}$ is bounded on $L^p(G),$ $1<p<\infty,$ if and only if,  $ \nu\geqslant   n(1-\rho)|\frac{1}{2}-\frac{1}{p}|,$ which implies that the condition \eqref{sharporder} is sharp. The aforementioned  
Chen and Fan's estimate is an  extension of the classical estimate due to Wainger \cite{Wainger1965}.

\end{remark}

We end this section with the following theorem where the important cases of symbols with limited regularity are considered, see \cite{RuzhanskyDelgado2017}.
\begin{theorem}\label{DelgadoRuzhanskylimitedregularity}
Let $0\leqslant \delta, \rho\leqslant 1,$ and let $\varkappa$ be the smallest even integer larger than $n/2,$ $n:=\dim(G).$ If $A:C^\infty(G)\rightarrow\mathscr{D}'(G)$ is a continuous operator such that its matrix symbol $\sigma_A$ satisfies 
\[ 
    \Vert {X}_x^\gamma\Delta^\alpha \sigma_A(x,\xi)\Vert_{\textnormal{op}}\leqslant C_{\gamma,\beta}\langle \xi \rangle^{-\nu_0-\rho|\alpha|+\delta|\gamma|},\,\,[\xi]\in \widehat{G},\,|\alpha|\leqslant \varkappa, |\gamma|\leqslant [n/p]+1,
\]
with
\[ 
    \nu_0\geqslant   \varkappa(1-\rho)\left|\frac{1}{p}-\frac{1}{2}\right|+\delta\left(\left[\frac{n}{p}\right]+1\right),
\]
then  $A$ extends to a bounded operator from $L^{p}(G)$ into $L^p(G)$ for all $1<p<\infty.$
\end{theorem}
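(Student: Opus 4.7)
The plan is to reduce the non-invariant case to the multiplier setting of Theorem~\ref{LpQ} / Theorem~\ref{TRW2015ZIP} by freezing the spatial variable and then recovering the dependence on $x$ via a Sobolev embedding in that variable, exactly as outlined in the paragraph preceding Theorem~\ref{LpQnoninvariant}.

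For a fixed $z\in G$, let $T_z$ denote the left-invariant (multiplier) operator with frozen symbol $\sigma(z,\xi)$, and define
\begin{equation*}
F(z,x):=T_z f(x)=\sum_{[\xi]\in\widehat{G}}d_\xi\,\textnormal{\textbf{Tr}}\bigl[\xi(x)\sigma(z,\xi)\widehat{f}(\xi)\bigr],
\end{equation*}
so that $Af(x)=F(x,x)$. Since $\varkappa$ is larger than $n/2$, and $[n/p]+1>n/p$, the classical Sobolev embedding $W^{[n/p]+1,p}(G)\hookrightarrow L^{\infty}(G)$ applied in the $z$-variable gives, uniformly in $x$,
\begin{equation*}
|F(x,x)|^{p}\,\le\,\sup_{z\in G}|F(z,x)|^{p}\,\lesssim\,\sum_{|\gamma|\le [n/p]+1}\int_{G}|X_{z}^{\gamma}F(z,x)|^{p}\,dz.
\end{equation*}
Integrating in $x$ and swapping the order of integration by Fubini, the matter is reduced to showing that for every $\gamma$ with $|\gamma|\le [n/p]+1$, the left-invariant operators $B_{\gamma,z}:=\textnormal{Op}(X_{z}^{\gamma}\sigma(z,\cdot))$ are bounded on $L^{p}(G)$ uniformly in $z\in G$.

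For each such $\gamma$ and each fixed $z$, the hypothesis yields
\begin{equation*}
\Vert \mathbb{D}^{\alpha}\bigl(X_{z}^{\gamma}\sigma(z,\xi)\bigr)\Vert_{\textnormal{op}}\,\le\, C_{\alpha,\gamma}\,\langle\xi\rangle^{-\nu_{0}+\delta|\gamma|-\rho|\alpha|},\quad |\alpha|\le\varkappa,
\end{equation*}
with constants independent of $z$. Setting $\tau_{\gamma}(z,\xi):=\langle\xi\rangle^{\nu_{0}-\delta|\gamma|}X_{z}^{\gamma}\sigma(z,\xi)$, the Leibniz rule for the difference operators $\mathbb{D}^{\alpha}$ (Remark~\ref{Leibnizrule}) together with the above estimate shows that, uniformly in $z$,
\begin{equation*}
\Vert \mathbb{D}^{\alpha}\tau_{\gamma}(z,\xi)\Vert_{\textnormal{op}}\,\lesssim\,\langle\xi\rangle^{-\rho|\alpha|},\quad |\alpha|\le \varkappa.
\end{equation*}
Theorem~\ref{TRW2015ZIP} then tells us that $\textnormal{Op}(\tau_{\gamma}(z,\cdot))$ maps $L^{p}_{r}(G)\to L^{p}(G)$ boundedly with $r=\varkappa(1-\rho)|1/p-1/2|$, uniformly in $z$. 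Writing $B_{\gamma,z}=\textnormal{Op}(\tau_{\gamma}(z,\cdot))\circ(1+\mathcal{L}_{G})^{-(\nu_{0}-\delta|\gamma|)/2}$ and invoking the hypothesis
\begin{equation*}
\nu_{0}-\delta|\gamma|\,\ge\,\nu_{0}-\delta([n/p]+1)\,\ge\,\varkappa(1-\rho)\Bigl|\tfrac{1}{p}-\tfrac{1}{2}\Bigr|=r,
\end{equation*}
we conclude that $B_{\gamma,z}:L^{p}(G)\to L^{p}(G)$ is bounded uniformly in $z$. Combining this with the Fubini--Sobolev reduction above yields $\Vert Af\Vert_{L^{p}(G)}\lesssim \Vert f\Vert_{L^{p}(G)}$.

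The delicate step is the uniform-in-$z$ application of Theorem~\ref{TRW2015ZIP}: one must verify that the constants provided by the multiplier criterion depend only on the symbolic seminorms of $\sigma$ (which are already uniform in $z$ by hypothesis) and on $G$, $p$, $\rho$, $\gamma$, $\nu_{0}$, but not on the frozen point $z$. This is a book-keeping point in the proof of Theorem~\ref{TRW2015ZIP} via Criterion~\ref{CoifDeGuzCrit}: the left-invariant machinery (the pseudo-distance $\rho$, the net $\{\psi_{r}\}$, and Lemma~\ref{LemmaRuzhWirth}) does not see the parameter $z$, so the uniformity is automatic once the symbol estimates are uniform. Apart from this, the argument is essentially a direct adaptation of the reasoning that led from Theorem~\ref{LpQ} to Theorem~\ref{LpQnoninvariant}, now tracking the extra loss $\delta|\gamma|$ coming from the $\delta$-type hypothesis in the $x$-variable.
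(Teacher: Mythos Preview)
Your approach is correct and is precisely the technique the paper indicates: the paper does not give its own proof of this statement (it is quoted from \cite{RuzhanskyDelgado2017} as a preliminary result), but it explicitly describes this method---freezing $x$, applying the Sobolev embedding $W^{[n/p]+1,p}(G)\hookrightarrow L^{\infty}(G)$ in the frozen variable, and reducing to the multiplier Theorem~\ref{TRW2015ZIP}---in the paragraph introducing Theorem~\ref{LpQnoninvariant}, and carries out the identical argument in the proof of Theorem~\ref{SubellipticLpestimate}. Your tracking of the $\delta|\gamma|$ loss via the factorisation $B_{\gamma,z}=\textnormal{Op}(\tau_{\gamma}(z,\cdot))\circ(1+\mathcal{L}_{G})^{-(\nu_{0}-\delta|\gamma|)/2}$ is exactly how the order condition on $\nu_{0}$ is meant to be used.
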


\subsection{The subelliptic spaces $H^1$ and $BMO$ on compact Lie groups}\label{BMOH1}
 Let $G$ be a compact Lie group. Let us consider a sub-Laplacian $\mathcal{L}=-(X_1^2+\cdots +X_k^2)$ on $G,$ where the system of vector fields $X=\{X_i\}_{i=1}^{k}$ satisfies the H\"ormander condition of step $\kappa$. For every point $g\in G,$ let us denote  $X_{g}=\{X_{i,g}\}_{i=1}^{k},$  $\mathcal{H}_{g}=\textnormal{span}\{X_{g}\}.$ We say that a curve $\gamma:[0,1]\rightarrow G$ is  horizontal if  $$\Dot{\gamma}(t)\in \mathcal{H}_{\gamma(t)},\textnormal{   for a.e.  }t\in (0,1).$$ The Carnot-Carath\'eodory distance associated to the sub-Riemannian structure induced by $X,$ is defined by
 \[ 
     d_{s}(g_{0},g_{1}):=\inf_{\gamma \textnormal{ horizontal   }}\{l(\gamma):=\int\limits_{0}^{1}|\Dot{\gamma}(t)|dt:\,\,\gamma(0)=g_0,\,\gamma(1)=g_1\},\,\,g_{0},g_{1 }\in G.
 \]
 
 We will fix a subelliptic norm on $G,$ $|\cdot|,$ defined by the Carnot-Carath\'eodory distance in the natural way: $|g|=d_s(g, e_G),$ 
where $e_{G}$ is the identity element of $G.$  As usual, the ball of radius $r>0,$ is defined as 
\[ 
    B(x,r)=\{y\in G:|y^{-1}x|<r\}.
\]Then subelliptic $BMO$ space on $G,$ which we denote by $\textnormal{BMO}^{\mathcal{L}}(G),$ is the space of locally integrable functions $f$ satisfying
\[ 
    \Vert f\Vert_{\textnormal{BMO}^{\mathcal{L}}(G)}:=\sup_{\mathbb{B}}\frac{1}{|\mathbb{B}|}\int\limits_{\mathbb{B}}|f(x)-f_{\mathbb{B}}|dx<\infty,\textnormal{ where  } f_{\mathbb{B}}:=\frac{1}{|\mathbb{B}|}\int\limits_{\mathbb{B}}f(x)dx,
\]
and $\mathbb{B}$ ranges over all balls $B(x_{0},r),$ with $(x_0,r)\in G\times (0,\infty).$ 

On the other hand, the subelliptic  Hardy space $\textnormal{H}^{1,\mathcal{L}}(G)$ will be defined via the atomic decomposition. Thus, $f\in \textnormal{H}^{1,\mathcal{L}}(G)$ if and only if $f$ can be expressed as $$f=\sum_{j=1}^\infty c_{j}a_{j},$$ where $\{c_j\}_{j=1}^\infty$ is a sequence in $\ell^1(\mathbb{N}),$ and every function $a_j$ is an atom, i.e., $a_j$ is supported in some ball $B_j,$ ($a_j$ satisfies the cancellation property) $$\int\limits_{B_j}a_{j}(x)dx=0,$$ and 
\[ 
    \Vert a_j\Vert_{L^\infty(G)}\leqslant \frac{1}{|B_j|}.
\] The norm $\Vert f\Vert_{\textnormal{H}^{1,\mathcal{L}}(G)}$ is the infimum over  all possible series $\sum_{j=1}^\infty|c_j|.$ Furthermore $\textnormal{BMO}^{\mathcal{L}}(G)$ is the dual space of $\textnormal{H}^{1,\mathcal{L}}(G)$. This can be understood in the following sense:
\begin{itemize}
    \item[(a)] if $\phi\in \textnormal{BMO}^{\mathcal{L}}(G), $ then $$\Phi: f\mapsto \int\limits_{G}f(x)\phi(x)dx,$$ admits a bounded extension on $\textnormal{H}^{1,\mathcal{L}}(G).$
    \item[(b)] Conversely, every continuous linear functional $\Phi$ on $\textnormal{H}^{1,\mathcal{L}}(G)$ arises as in $\textnormal{(a)}$ with a unique element $\phi\in \textnormal{BMO}^{\mathcal{L}}(G).$
\end{itemize} The norm of $\phi$ as a linear functional on $\textnormal{H}^{1,\mathcal{L}}(G)$ is equivalent with the $\textnormal{BMO}^{\mathcal{L}}(G)$-norm. Important properties of the $\textnormal{BMO}^{\mathcal{L}}(G)$ and the $\textnormal{H}^{1,\mathcal{L}}(G)$ norms are the following,
\begin{equation}\label{BMOnormduality}
 \Vert f \Vert_{\textnormal{BMO}^{\mathcal{L}}(G)}  =\sup_{\Vert g\Vert_{\textnormal{H}^{1,\mathcal{L}}(G)}=1} 
\left| \int\limits_{G}f(x)g(x)dx\right|,\end{equation}
\begin{equation}\label{BMOnormduality'}\Vert g \Vert_{\textnormal{H}^{1,\mathcal{L}}(G)}  =\sup_{\Vert f\Vert_{\textnormal{BMO}^{\mathcal{L}}(G)}=1} 
\left| \int\limits_{G}f(x)g(x)dx\right|.
\end{equation} 
If we replace $\mathcal{L}$ by the Laplacian $\mathcal{L}_G$ in the definitions above, we will write  $BMO(G)$ and $H^1(G),$ defined by the distance induced by the usual bi-invariant Riemannian metric on $G.$ The subelliptic Fefferman-Stein interpolation theorem in this case can be stated as follows (see e.g. Carbonaro, Mauceri and Meda \cite{CMM}). 
\begin{theorem}
Let $G$ be a compact Lie group. Let us consider a sub-Laplacian $\mathcal{L}=-(X_1^2+\cdots +X_k^2)$ on $G,$ where the system of vector fields $X=\{X_i\}_{i=1}^{k}$ satisfies the H\"ormander condition of step $\kappa$. For every $\theta\in (0,1),$ we have,
\begin{itemize}
    \item If $p_\theta=\frac{2}{1-\theta},$ then $(L^2,\textnormal{BMO}^{\mathcal{L}})_{[\theta]}(G)=L^{p_\theta}(G).$ 
    \item If $p_\theta=\frac{2}{2-\theta},$ then $(\textnormal{H}^{1,\mathcal{L}},L^2)_{[\theta]}(G)=L^{p_\theta}(G).$ 
\end{itemize}
\end{theorem}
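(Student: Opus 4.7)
The plan is to prove both identities via Calder\'on's complex interpolation method, exploiting that $(G, d_s, dx)$ is a space of homogeneous type. The doubling property $|B(x,2r)| \lesssim |B(x,r)|$ follows from the Nagel-Stein-Wainger ball-box estimate $|B(x,r)| \asymp r^{Q}$ at small scales combined with the compactness of $G$, yielding a uniform doubling constant governed by the Hausdorff dimension $Q$. The two identities have parallel proofs, each consisting of two continuous inclusions; the key analytic inputs are the subelliptic Fefferman-Stein sharp maximal function bound (for the first identity) and the atomic decomposition of $\textnormal{H}^{1,\mathcal{L}}$ recorded in Section \ref{BMOH1} (for the second), both adapted to the Carnot-Carath\'eodory geometry.

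For $(L^{2}, \textnormal{BMO}^{\mathcal{L}})_{[\theta]} = L^{p_\theta}$, the forward inclusion $L^{p_\theta} \hookrightarrow (L^{2}, \textnormal{BMO}^{\mathcal{L}})_{[\theta]}$ would be obtained constructively by exhibiting, for $f = u|f|$ with $|u|=1$, an analytic family $F(z) := u\,|f|^{\alpha(z)}$ with affine exponent $\alpha$ chosen so that $\alpha(\theta) = 1$, $\textnormal{Re}\,\alpha(it) = p_\theta/2$ and $\textnormal{Re}\,\alpha(1+it) = 0$; this makes $\Vert F(it)\Vert_{L^{2}}$ and $\Vert F(1+it)\Vert_{L^{\infty}}$ both controlled by powers of $\Vert f\Vert_{L^{p_\theta}}$, and since $L^{\infty}(G) \hookrightarrow \textnormal{BMO}^{\mathcal{L}}(G)$ continuously, the three-lines lemma concludes. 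The reverse inclusion requires the subelliptic Fefferman-Stein inequality
\[
\Vert g\Vert_{L^{p}(G)} \lesssim_{p} \Vert g^{\#}\Vert_{L^{p}(G)},\qquad 1<p<\infty,
\]
where $g^{\#}(x) := \sup_{\mathbb{B}\ni x}|\mathbb{B}|^{-1}\int_{\mathbb{B}}|g - g_{\mathbb{B}}|\,dy$ is the sharp maximal function built from Carnot-Carath\'eodory balls. Combined with Stein's interpolation theorem applied to a linear functional obtained by pairing $g$ against a smooth analytic family, this converts $L^{2}$ and $\textnormal{BMO}^{\mathcal{L}}$ endpoint bounds into $L^{p_\theta}$ control. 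The second identity $(\textnormal{H}^{1,\mathcal{L}}, L^{2})_{[\theta]} = L^{p_\theta}$ is handled analogously, replacing the sharp-maximal-function argument with direct use of the atomic decomposition of $\textnormal{H}^{1,\mathcal{L}}$: the forward inclusion uses that $u|f|^{p_\theta \alpha(z)}$ admits an atomic expansion depending analytically on $z$, and the reverse inclusion follows from bilinear interpolation with the $H^{1}$-$\textnormal{BMO}^{\mathcal{L}}$ integrability pairing \eqref{BMOnormduality}.

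The principal obstacle is the good-$\lambda$ inequality underlying the Fefferman-Stein bound, together with the John-Nirenberg inequality for $\textnormal{BMO}^{\mathcal{L}}$ needed to iterate the stopping-time decomposition. Both are standard on any doubling metric measure space but must be realised here via a Calder\'on-Zygmund decomposition adapted to Carnot-Carath\'eodory balls, using the Vitali-type covering lemma and the weak-$(1,1)$ estimate for the subelliptic Hardy-Littlewood maximal operator $M$ in the framework of Coifman-Weiss \cite{CoifmanWeiss}. Once these ingredients are in place with constants depending only on $Q$, the classical Fefferman-Stein interpolation scheme and the Coifman-Weiss atomic machinery transfer verbatim, and the identities follow. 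A subsidiary point is verifying the regularity of the Banach couples so that duality-based shortcuts apply where needed; this is automatic here since $C^\infty(G)$ sits in all four spaces $L^{2}$, $\textnormal{BMO}^{\mathcal{L}}$, $\textnormal{H}^{1,\mathcal{L}}$, and is dense in $L^{2}$ by compactness of $G$.
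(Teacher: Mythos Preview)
The paper does not prove this statement; it quotes it from Carbonaro, Mauceri and Meda \cite{CMM}. Your outline—reducing to the Coifman--Weiss homogeneous-type framework via the doubling of Carnot--Carath\'eodory balls, then running the sharp maximal function, John--Nirenberg, and good-$\lambda$ machinery together with the atomic decomposition—is precisely the strategy in \cite{CMM}, so your approach matches the cited reference.

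One point requires correction. Your closing claim that regularity of the couple $(L^2,\textnormal{BMO}^{\mathcal{L}})$ is ``automatic'' because $C^\infty(G)$ is dense in $L^2$ is not sound. Smooth (or bounded) functions are not dense in $\textnormal{BMO}^{\mathcal{L}}$; their closure is the strictly smaller space $\textnormal{VMO}^{\mathcal{L}}$, and in any case $\textnormal{BMO}^{\mathcal{L}}$ is only seminormed (constants have seminorm zero), so both the interpolation couple and Calder\'on's duality theorem need a more careful setup than you indicate. The situation is salvageable on a compact group—John--Nirenberg on a finite-measure space forces $\textnormal{BMO}^{\mathcal{L}}\subset L^2$, and one works modulo constants—but the justification is not the one you gave, and the ``duality-based shortcut'' still requires an argument beyond mere regularity (Calder\'on's duality theorem for complex interpolation has hypotheses, e.g.\ reflexivity of one endpoint, that $\textnormal{BMO}^{\mathcal{L}}$ does not satisfy). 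Since you already identified the sharp-maximal route for the reverse inclusion, this does not break your main argument; it only means the ``subsidiary point'' is where the actual subtlety lies rather than being routine.
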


\section{Subelliptic pseudo-differential operators}\label{Seccionsubelliptic}
\subsection{Subelliptic  symbols on compact Lie groups}

In order to establish the basic properties of the subelliptic symbolic calculus, and as it was pointed out in \cite{Ruz}, we will use as in the Euclidean case, the expansion of smooth functions in the Taylor series. Although it has been constructed in \cite[Section 10.6]{Ruz} for arbitrary compact Lie groups, and we apply it in further sections, we  will explain this notion in the case of compact connected Lie groups. 
 
\begin{remark}[Local Taylor series on compact connected Lie groups] If $V$ is an open and convex subset of $\mathbb{R}^{{n}},$ and $h\in C^{\infty}(V),$ the Taylor polynomial of order $N$ at $y_0\in V,$ is given by 
 \begin{align*} h(y)=(p_{y_0,N}h)(y)+(R_{N}h)(y,y_0),   \,\,(p_{y_0,N}h)(y):=\sum_{|\alpha|\leqslant N}  \frac{(y-y_0)^{\alpha}}{\alpha!}\frac{\partial^\alpha h}{\partial y^\alpha}(y_0),
\end{align*} for all $y\in V,$ where  
\[ 
   (R_{N}h)(y, y_0):= (N+1)\sum_{|\alpha|=N+1}\frac{(y-y_0)^{\alpha}}{\alpha!}\int\limits_{0}^{1}(1-t)^N\frac{\partial^\alpha h}{\partial y^\alpha}(y_0+t(y-y_0))dt.
\] Now, let $G$ be a connected compact Lie group with Lie algebra $\mathfrak{g}=\textnormal{Lie}(G),$ and of dimension $n.$  The exponential map \begin{equation}\label{exponentialmap}\textnormal{exp}: \mathfrak{g}\rightarrow G,
\end{equation}    is a local diffeomorphism from an open  neighbourhood of $0_{\mathfrak{g}}$ into an open set containing the identity $e_G$ of $G.$   Moreover, both hypothesis, connectedess and compactness, assure the completeness of $G$ and hence  the exponential map is surjective according to the Hopf-Rinow Theorem.

So, if $f\in C^\infty(G),$ and $x\in G,$
$f(x)=f(\textnormal{exp}(X))$ for some  $X\in\mathfrak{g}.$ By defining the function $\tilde{f}:=f\circ \textnormal{exp},$ and under the identification $\mathfrak{g}\simeq \mathbb{R}^n,$ we can think that $\tilde{f}$ is defined on $\mathbb{R}^n$ and consequently, we have the expansion at $0_{\mathfrak{g}}\sim {0}\in \mathbb{R}^n,$
\begin{align*}
    \tilde{f}(X)=(p_{0,N} \tilde{f})(X)+(R_{0,N} \tilde{f})(X,0),\,\,\,X\in\mathfrak{g}.
\end{align*}Returning to the coordinates of $G$ with the inverse exponential mapping, we have
\begin{align*}
    f(x)=\sum_{|\alpha|\leqslant N}  \frac{x^{\alpha}}{\alpha!} & \frac{\partial^\alpha  (f\circ\textnormal{exp})}{\partial y^\alpha}(0)\\
    &+(N+1)\sum_{|\alpha|=N+1}\frac{x^{\alpha}}{\alpha!}\int\limits_{0}^{1}(1-t)^N\frac{\partial^\alpha (f\circ \exp)}{\partial y^\alpha}(t\exp^{-1}(x))dt,
\end{align*}where, for a basis $B=\{X_1,X_2,\cdots,X_n\}$ of $ \mathfrak{g}$ we have used the multi-index notation  $x^\alpha:=\omega(X_1)^{\alpha_1}\cdots \omega(X_n)^{\alpha_n}=y_1^{\alpha_1}\cdots y_n^{\alpha_n},$ again using the identification between  $\mathfrak{g} $ and $\mathbb{R}^n,$ under the diffeomorphism $\omega:\mathfrak{g}\rightarrow\mathbb{R}^{n},$ $y_{i}=\omega(X_i),$ where $y_{i},$ $1\leqslant i\leqslant n,$ are the coordinate functions on $\mathbb{R}^n.$ We will use the notation
 \end{remark} 
 \begin{equation}\label{diffoper}
    \partial_{x_i}^{\alpha_i}f:= \frac{\partial^{\alpha_i}  (f\circ\textnormal{exp})}{\partial y_i^{\alpha_i}},\,\,\,  \partial_{x}^{\alpha}f:= \frac{\partial^\alpha  (f\circ\textnormal{exp})}{\partial y^\alpha}\equiv \partial_{x_1}^{\alpha_1}\partial_{x_2}^{\alpha_2}\cdots \partial_{x_n}^{\alpha_n}f ,
 \end{equation}
 for the local differential operators  appearing in the local Taylor series. 
 However, in order to introduce our subelliptic classes, we need a suitable Taylor expansion associated with a suitable system of vector fields. So, we present it in the following Lemma (see Lemma 7.4 in \cite{Fischer2015}).

 \begin{lemma}[Global Taylor Series on compact Lie groups]\label{Taylorseries} Let $G$ be a compact Lie group of dimension $n.$ Let us consider an strongly  admissible collection of difference operators $\mathfrak{D}=\{\Delta_{q_{(j)}}\}_{1\leqslant j\leqslant n}$, which means that 
\[ 
    \textnormal{rank}\{\nabla q_{(j)}(e):1\leqslant j\leqslant n \}=n, \,\,\,\bigcap_{j=1}^{n}\{x\in G: q_{(j)}(x)=0\}=\{e_G\}.
\]Then there exists a basis $X_{\mathfrak{D}}=\{X_{1,\mathfrak{D}},\cdots ,X_{n,\mathfrak{D}}\}$ of $\mathfrak{g},$ such that $X_{j,\mathfrak{D}}q_{(k)}(\cdot^{-1})(e_G)=\delta_{jk}.$ Moreover, by using the multi-index notation $$\partial_{X}^{(\beta)}=\partial_{X_{i,\mathfrak{D}}}^{\beta_1}\cdots \partial_{X_{n,\mathfrak{D}}}^{\beta_n}, \,\beta\in\mathbb{N}_0^n,$$
where $$\partial_{X_{i,\mathfrak{D}}}f(x):=  \frac{d}{dt}f(x\exp(tX_{i,\mathfrak{D}}) )|_{t=0},\,\,f\in C^{\infty}(G),$$ and denoting
for every $f\in C^{\infty}(G)$
\[ 
    R_{x,N}^{f}(y):=f(xy)-\sum_{|\alpha|<N}q_{(1)}^{\alpha_1}(y^{-1})\cdots q_{(n)}^{\alpha_n}(y^{-1})\partial_{X}^{(\alpha)}f(x),
\] we have the inequality
\[ 
    | R_{x,N}^{f}(y)|\leqslant C|y|^{N}\max_{|\alpha|\leqslant N}\Vert \partial_{X}^{(\alpha)}f\Vert_{L^\infty(G)}.
\]The constant $C>0$ is dependent on $N,$ $G$ and $\mathfrak{D},$ but not on $f\in C^\infty(G).$ Also, we have the identity $\partial_{X}^{(\beta)}|_{x_1=x}R_{x_1,N}^{f}=R_{x,N}^{\partial_{X}^{(\beta)}f}$ as well as the inequality 
\[ 
    | \partial_{X}^{(\beta)}|_{y_1=y}R_{x,N}^{f}(y_1)|\leqslant C|y|^{N-|\beta|}\max_{|\alpha|\leqslant N-|\beta|}\Vert \partial_{X}^{(\alpha+\beta)}f\Vert_{L^\infty(G)},
\]provided that $|\beta|\leqslant N.$
 \end{lemma}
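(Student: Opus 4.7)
The plan is to reduce the global Taylor expansion on $G$ to the classical Euclidean Taylor theorem in exponential coordinates near $e_G$, and then transfer the result to the coordinates furnished by the functions $q_{(j)}(\cdot^{-1})$ by exploiting the duality built into the basis $X_{\mathfrak{D}}$. The construction of $X_{\mathfrak{D}}$ itself is immediate: the strong admissibility hypothesis says that the covectors $dq_{(j)}(e_G)$, and hence also $d(q_{(j)}\circ \iota)(e_G)$ with $\iota(x)=x^{-1}$, form a basis of $T_{e_G}^{*}G \cong \mathfrak{g}^{*}$; I pick $X_{j,\mathfrak{D}}$ as the dual basis in $\mathfrak{g}$, which by definition gives $X_{j,\mathfrak{D}}q_{(k)}(\cdot^{-1})(e_G)=\delta_{jk}$.

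The algebraic heart of the proof is the identity
\[
\partial_X^{(\gamma)}\big|_{y=e_G}\bigl[q_{(1)}^{\alpha_1}(y^{-1})\cdots q_{(n)}^{\alpha_n}(y^{-1})\bigr] \;=\; \alpha!\,\delta_{\gamma,\alpha},\qquad |\gamma|,|\alpha|\leq N,
\]
which I would establish by induction on $|\gamma|$ via the Leibniz rule for left-invariant vector fields: since each $q_{(k)}(\cdot^{-1})$ vanishes at $e_G$, only the fully differentiated terms survive upon evaluation at the identity, and the duality property from the basis construction then pins down the value. As a direct consequence, the remainder
\[
R_{x,N}^{f}(y) \;:=\; f(xy)-\sum_{|\alpha|<N} q_{(1)}^{\alpha_1}(y^{-1})\cdots q_{(n)}^{\alpha_n}(y^{-1})\,\partial_X^{(\alpha)}f(x)
\]
and all its $\partial_X^{(\gamma)}$-derivatives with $|\gamma|<N$ vanish at $y=e_G$.

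For the remainder estimate I would work on a small neighbourhood $U$ of $e_G$ using the exponential chart $y=\exp(Y)$ with $Y\in\mathfrak{g}\simeq\mathbb{R}^{n}$. In these coordinates each $\partial_{X_{j,\mathfrak{D}}}$ is a first-order differential operator with smooth bounded coefficients, so $\partial_X^{(\alpha)}$ expands as a linear combination of Euclidean partials of order $\leq|\alpha|$. The classical integral form of the Taylor remainder along $t\mapsto \exp(tY)$, combined with the vanishing of $R_{x,N}^{f}$ and its derivatives up to order $N-1$ at $e_G$, yields
\[
|R_{x,N}^{f}(y)| \;\leq\; C\,|Y|^{N}\max_{|\alpha|\leq N}\|\partial_X^{(\alpha)}f\|_{L^{\infty}(G)},
\]
and on the compact group $|Y|\asymp |y|$ for $y\in U$. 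Outside $U$, $|y|$ is bounded below, so each term of $R_{x,N}^{f}$ is trivially controlled. The identity $\partial_X^{(\beta)}\big|_{x_1=x}R_{x_1,N}^{f}=R_{x,N}^{\partial_X^{(\beta)}f}$ is immediate from the left-invariance of $\partial_X^{(\beta)}$, and the derivative estimate on $\partial_X^{(\beta)}\big|_{y_1=y}R_{x,N}^{f}(y_1)$ follows by rerunning the same chart argument on the differentiated remainder, whose $\partial_X$-derivatives still vanish at $e_G$ up to order $N-|\beta|-1$ by the algebraic step.

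The main obstacle will be the bookkeeping in the algebraic identity: the functions $q_{(j)}(y^{-1})$ agree with dual linear coordinates only to first order at $e_G$, so $q^{\alpha}(y^{-1})$ is not literally a coordinate monomial and the higher-order corrections must be shown not to contaminate the evaluation $\partial_X^{(\gamma)}\big|_{y=e_G}[q^{\alpha}(y^{-1})]$. The non-commutativity of $G$ and the interaction between left-invariant derivatives and the $q_{(j)}(\cdot^{-1})$ enter precisely here, and the induction described above is tailored to handle exactly this interplay.
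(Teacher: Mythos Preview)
The paper does not prove this lemma; it is quoted from Lemma~7.4 of \cite{Fischer2015}, so there is no in-paper argument to compare against. Your overall architecture---dualise to obtain $X_{\mathfrak{D}}$, then push the Euclidean Taylor remainder through exponential coordinates, handling the far-from-identity region by compactness---is the natural one and matches how such results are established.

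The gap is in the ``algebraic heart''. Your identity $\partial_X^{(\gamma)}\big|_{e_G}[q^{\alpha}(y^{-1})]=\alpha!\,\delta_{\gamma,\alpha}$ is valid for $|\gamma|\le|\alpha|$ (your Leibniz reasoning covers exactly that range: every factor must receive at least one derivative, and first-order duality finishes it), but it \emph{fails} for $|\gamma|>|\alpha|$. For instance $\partial_{X_1}^{\,2}q_{(1)}(y^{-1})\big|_{e_G}=\tfrac{d^2}{ds^2}\big|_{s=0}\,q_{(1)}(\exp(-sX_1))$ is a second-order jet of $q_{(1)}$ at $e_G$, and nothing in strong admissibility or in the first-order duality $X_jq_{(k)}(\cdot^{-1})(e_G)=\delta_{jk}$ forces it to vanish. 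Consequently, for $|\gamma|\ge 2$ one computes
\[
\partial_X^{(\gamma)}\big|_{e_G}R_{x,N}^{f}
=(1-\gamma!)\,\partial_X^{(\gamma)}f(x)\;-\;\sum_{|\alpha|<|\gamma|}c_{\gamma,\alpha}\,\partial_X^{(\alpha)}f(x),\qquad c_{\gamma,\alpha}:=\partial_X^{(\gamma)}\big|_{e_G}\bigl[q^{\alpha}(y^{-1})\bigr],
\]
which is nonzero for generic $f$ (observe the diagonal coefficient $1-\gamma!$ already fails, so there is also an $\alpha!$-normalisation mismatch with the expansion exactly as written). Hence $R_{x,N}^{f}$ does \emph{not} vanish to order $N$ at $e_G$, and the Euclidean Taylor step cannot produce the $|y|^N$ bound by your route. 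This is precisely the ``higher-order contamination'' you flagged; your instinct was right, but the induction you sketch does not eliminate it. What is actually true is that the matrix $(c_{\gamma,\alpha})_{|\gamma|,|\alpha|<N}$ is block lower-triangular (by degree) with invertible diagonal blocks, and one must either absorb its inverse into the choice of differential operators or carry the triangular correction through the remainder analysis---exact diagonality is simply not available from the first-order data.
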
 
 Now with the notation above and with the following one $\Delta_{\xi}^\alpha:=\Delta_{q_{(1)}}^{\alpha_1}\cdots   \Delta_{q_{(n)}}^{\alpha_{n}},$ for arbitrary compositions of difference operators, we introduce the subelliptic H\"ormander class of symbols of order $m\in \mathbb{R},$ in the $(\rho,\delta)$-class.  Here, $\widehat{ \mathcal{M}}$ is the matrix-valued symbol of the subelliptic Bessel potential $$\mathcal{M}:=(1+\mathcal{L})^{\frac{1}{2}},$$ and  for every $[\xi]\in \widehat{G},$ we define
   \[ 
       \widehat{ \mathcal{M}}(\xi)^{s}:=\textnormal{diag}[(1+\nu_{ii}(\xi)^2)^{s/2 }]_{1\leqslant i\leqslant d_\xi}, \,s\in \mathbb{R},
   \] where $$\widehat{\mathcal{L}}(\xi)=:\textnormal{diag}[\nu_{ii}(\xi)^2]_{1\leqslant i\leqslant d_\xi}$$ is the symbol of the sub-Laplacian $\mathcal{L}$ at $[\xi].$ 
\begin{definition}[Dilated subelliptic H\"ormander classes]\label{dilated}
   Let $G$ be a compact Lie group and let  $0\leqslant \delta,\rho\leqslant \kappa.$ Let us consider a sub-Laplacian $\mathcal{L}=-(X_1^2+\cdots +X_k^2)$ on $G,$ where the system of vector fields $X=\{X_i\}_{i=1}^{k}$ satisfies the H\"ormander condition of step $\kappa$. The dilated class $\mathscr{S}^{m,\mathcal{L}}_{\rho,\delta}(G)$ of subelliptic H\"ormander order $m$ and of type $(\rho,\delta),$ consists of those functions $\sigma \in \Sigma(G\times \widehat{G}),$ satisfying the symbol inequalities
   \begin{equation}\label{InI}
      p_{\alpha,\beta,\rho,\delta,m,\textnormal{left}}(a):= \sup_{(x,[\xi])\in G\times \widehat{G} }\Vert \widehat{ \mathcal{M}}(\xi)^{\frac{1}{\kappa}(\rho|\alpha|-\delta|\beta|-m)}\partial_{X}^{(\beta)} \Delta_{\xi}^{\alpha}a(x,\xi)\Vert_{\textnormal{op}} <\infty,
   \end{equation}
   and 
   \begin{equation}\label{InII}
      p_{\alpha,\beta,\rho,\delta,m,\textnormal{right}}(a):= \sup_{(x,[\xi])\in G\times \widehat{G} }\Vert (\partial_{X}^{(\beta)} \Delta_{\xi}^{\alpha} a(x,\xi) ) \widehat{ \mathcal{M}}(\xi)^{\frac{1}{\kappa}(\rho|\alpha|-\delta|\beta|-m)}\Vert_{\textnormal{op}} <\infty.
   \end{equation}
\end{definition}
\begin{remark}Note that in contrast with the usual conditions $0\leqslant \rho,\delta\leqslant 1,$  appearing  in  Section \ref{Sect2} for H\"ormander classes on manifolds (by using charts),  in the definition of dilated subelliptic H\"ormander classes, we allow $0\leqslant \rho,\delta\leqslant \kappa,$ in view of the normalisation factor $\frac{1}{\kappa}$   in \eqref{InI} and \eqref{InII}. To establish a calculus with the usual conditions on $\rho$ and $\delta$ we will define the following contracted  classes. The reason for their definition will be clear in Example \ref{powesfirstpart}.
\end{remark}
\begin{definition}[Contracted subelliptic H\"ormander classes]\label{contracted''}
   Let $G$ be a compact Lie group and let $0\leqslant \delta,\rho\leqslant 1.$ Let us consider a sub-Laplacian $\mathcal{L}=-(X_1^2+\cdots +X_k^2)$ on $G,$ where the system of vector fields $X=\{X_i\}_{i=1}^{k}$ satisfies the H\"ormander condition of step $\kappa$. A symbol $\sigma$ belongs to the contracted class ${S}^{m,\mathcal{L}}_{\rho,\delta}(G\times \widehat{G})$ if $\sigma\in \mathscr{S}^{m\kappa,\mathcal{L}}_{\rho\kappa,\delta\kappa}(G).$ This means that $\sigma $ satisfies  the symbol inequalities
   \begin{equation}\label{InIC}
      p_{\alpha,\beta,\rho,\delta,m,\textnormal{left}}(\sigma)':= \sup_{(x,[\xi])\in G\times \widehat{G} }\Vert \widehat{ \mathcal{M}}(\xi)^{(\rho|\alpha|-\delta|\beta|-m)}\partial_{X}^{(\beta)} \Delta_{\xi}^{\alpha}\sigma(x,\xi)\Vert_{\textnormal{op}} <\infty,
   \end{equation}
   and 
   \begin{equation}\label{InIIC}
      p_{\alpha,\beta,\rho,\delta,m,\textnormal{right}}(\sigma)':= \sup_{(x,[\xi])\in G\times \widehat{G} }\Vert (\partial_{X}^{(\beta)} \Delta_{\xi}^{\alpha} \sigma(x,\xi) ) \widehat{ \mathcal{M}}(\xi)^{(\rho|\alpha|-\delta|\beta|-m)}\Vert_{\textnormal{op}} <\infty.
   \end{equation}
  \end{definition}
By following the usual terminology we  define:
\[ 
    \textnormal{Op}(\mathscr{S}^{m,\mathcal{L}}_{\rho,\delta}(G)):=\{A:C^{\infty}(G)\rightarrow \mathscr{D}'(G):\sigma_A\equiv\widehat{A}(x,\xi)\in \mathscr{S}^{m,\mathcal{L}}_{\rho,\delta}(G) \},
\] with
\[ 
    Af=\sum_{[\xi]\in \widehat{G}}d_\xi \textnormal{\textbf{Tr}}(\xi(\cdot)\widehat{A}(\cdot,\xi)\widehat{f}(\xi)),\,\,\,f\in C^\infty(G).  
\]
We also define
\[ 
    \Psi^{m,\mathcal{L}}_{\rho, \delta}(G\times \widehat{G})\equiv \textnormal{Op}({S}^{m,\mathcal{L}}_{\rho,\delta}(G\times \widehat{G})):=\textnormal{Op}(\mathscr{S}^{m\kappa,\mathcal{L}}_{\rho\kappa,\delta\kappa}(G)),\,\,\,0\leq \delta,\rho\leq 1.
\]

In order to illustrate the connections of the spectral properties of the sub-Laplacian $\mathcal{L}$ with these  classes, we show in the following example that the real powers of the operator $(1+\mathcal{L})^\frac{1}{2},$ have subelliptic symbols.

\begin{example}[Positive powers of $(1+\mathcal{L})^\frac{1}{2}$ in subelliptic H\"ormander classes]\label{powesfirstpart} Although it will be proved in Theorem \ref{orders:theorems} that for every $s\in \mathbb{R},$ 
\begin{equation}\label{realpowerssharp}
  \mathcal{M}_{s}:=  (1+\mathcal{L})^\frac{s}{2}\in \Psi^{s,\mathcal{L}}_{1,0}(G\times \widehat{G}),
\end{equation} let us illustrate quickly how to include positive powers of $(1+\mathcal{L})^\frac{1}{2}$ to the subelliptic classes just giving a short argument in order to deduce \eqref{1useful}. Certainly \eqref{realpowerssharp} is a better conclusion than  \eqref{1useful}. 
We will apply the inequality
\begin{equation}\label{GarettoRuzhanskyIneq}
  \langle \xi\rangle^{\frac{1}{\kappa}}\lesssim  (1+\nu_{ii}(\xi)^2)^{\frac{1}{2}}\lesssim \langle \xi\rangle,
\end{equation}  proved in Proposition 3.1 of \cite{GarettoRuzhansky2015}.  Let us observe that for $0<s<\infty,$ 
 \begin{equation}\label{1useful}
     \mathcal{M}_{s}=(1+\mathcal{L})^\frac{s}{2}\in \textnormal{Op}(\mathscr{S}^{s\kappa,\mathcal{L}}_{1,0}(G))\equiv\textnormal{Op}({S}^{s,\mathcal{L}}_{\frac{1}{\kappa},0}(G\times \widehat{G})).
 \end{equation} Indeed, we can prove \eqref{1useful} as follows. For $0<s<1,$
  if $m=m_s$ is the subelliptic H\"ormander  order of $m_{s},$ according to \eqref{InI}, if we set  $|\alpha|=0,$ then  we have that
 $$
 \sup_{[\xi]\in G }\Vert \widehat{ \mathcal{M}}(\xi)^{\frac{1}{\kappa}(-m_s)}\widehat{\mathcal{M}}_s(\xi)\Vert_{\textnormal{op}}=\sup_{1\leqslant i\leqslant d_\xi} (1+\nu_{ii}(\xi)^2)^{\frac{1}{2}(- \frac{m_s}{\kappa}+s)}<\infty,$$ if and only if $m_s\geqslant   \kappa s.$ This suggests that perhaps $m_s=\kappa s.$ Indeed, this is the case.  Because, for $0<s<1,$ $\mathcal{M}_s\in \textnormal{Op}(\mathscr{S}^{s}_{1,0}(G)), $ we have the estimates,
 \[ 
     \Vert \Delta_\xi^\alpha \widehat{\mathcal{M}}_{s}(\xi) \Vert_{\textnormal{op}}\leqslant C_\alpha \langle \xi\rangle^{s-|\alpha|}.
 \] So, in order to deduce that $\mathcal{M}_{s}\in \textnormal{Op}(\mathscr{S}^{s\kappa,\mathcal{L}}_{1,0}(G)),$
 it remains to show that
 \[ 
    \textnormal{I'}:=\sup_{[\xi]\in  \widehat{G} }\Vert \widehat{ \mathcal{M}}(\xi)^{\frac{1}{\kappa}(|\alpha|-s\kappa)} \Delta_{\xi}^{\alpha}\widehat{\mathcal{M}}_{s}(\xi)\Vert_{\textnormal{op}} <\infty,
\] and
\[ 
   \textnormal{II'}:= \sup_{[\xi]\in \widehat{G} }\Vert ( \Delta_{\xi}^{\alpha}\widehat{\mathcal{M}}_{s}(\xi)) \widehat{ \mathcal{M}}(\xi)^{\frac{1}{\kappa}(|\alpha|-s\kappa)} \Vert_{\textnormal{op}} <\infty,
\] for all $|\alpha|\geqslant   1.$ However, in this case $s-|\alpha|<1-|\alpha|\leqslant 0,$ and the estimate $\langle \xi\rangle^{s-|\alpha|}\leqslant (1+\nu_{ii}(\xi)^2)^{\frac{1}{2}(s-|\alpha|)},$ leads to 
\[ 
    \Vert  \Delta_{\xi}^{\alpha}\widehat{\mathcal{M}}_{s}(\xi)\Vert_{\textnormal{op}} \Vert \widehat{ \mathcal{M}}(\xi)^{\frac{1}{\kappa}(|\alpha|-s\kappa)} \Vert_{\textnormal{op}}\leqslant C_\alpha\sup_{1\leqslant i\leqslant d_\xi} (1+\nu_{ii}(\xi)^2)^{\frac{1}{2}(s-|\alpha|+\frac{|\alpha|}{\kappa}-s)}\leqslant C_\alpha.
\] This analysis proves that $\textnormal{I'},\textnormal{II'}\leqslant C_\alpha.$ In the general case $s\geqslant   1,$ we also have $\mathcal{M}_{s}\in \textnormal{Op}(\mathscr{S}^{s\kappa,\mathcal{L}}_{1,0}(G)). $ This will be proved in Remark \ref{sgeq1} as a consequence of the subelliptic symbolic calculus. 
\end{example}

\begin{remark}
{ One of the reasons to introduce contracted classes  $S^{m,\mathcal{L}}_{\rho,\delta}(G\times \widehat{G})$ is that when studying the qualitative properties of the subelliptic Fourier multipliers, one uses the inequality \eqref{GarettoRuzhanskyIneq} and the powers of the operator $\mathcal{M}^{\frac{1}{\kappa}}$ appear. These contracted classes will be useful to establish the subelliptic $L^p$-multipliers in Section \ref{ps}. Furthermore, the contracted classes allow for the inclusion \eqref{realpowerssharp} which is what one expects in terms of the orders of operators.  }
\end{remark}

For symbols of Fourier multipliers we will use the following notation,
\[ 
     \mathscr{S}^{m,\mathcal{L}}_{\rho}(\widehat{G})=\{\sigma\in  \Sigma(\widehat{G}):\sigma\in \mathscr{S}^{m,\mathcal{L}}_{\rho,0}(G)\}.
 \]
 
  We also define the class of symbols  of order $m,$ satisfying the $\rho$-type conditions up to order $\ell\in\mathbb{N},$  $ \mathscr{S}^{m,\ell,\mathcal{L}}_{\rho}(\widehat{G}),$ by those symbols satisfying,
\[  \sup_{[\xi]\in \widehat{G} }\Vert \widehat{ \mathcal{M}}(\xi)^{\frac{1}{\kappa}(\rho|\alpha|-m)} \Delta_{\xi}^{\alpha}a(\xi)\Vert_{\textnormal{op}} <\infty,\,\,|\alpha|\leqslant \ell,\]
and 
\[  \sup_{[\xi]\in \widehat{G} }\Vert  \Delta_{\xi}^{\alpha}a(\xi)\widehat{ \mathcal{M}}(\xi)^{\frac{1}{\kappa}(\rho|\alpha|-m)}\Vert_{\textnormal{op}} <\infty,\,\,|\alpha|\leqslant \ell.
\]

In a similar way,  we also define the class of non-invariant symbols  with order $m,$ satisfying the $\rho$-type conditions up to order $\ell\in\mathbb{N},$   and $\delta$-type conditions up to order $\ell'$, $ \mathscr{S}^{m,\ell,\ell',\mathcal{L}}_{\rho,\delta}(G),$ by those symbols satisfying,
\[ \sup_{[\xi]\in \widehat{G} }\Vert \widehat{ \mathcal{M}}(\xi)^{\frac{1}{\kappa}(\rho|\alpha|-\delta|\beta|-m)} \partial_{X}^{(\beta)}\Delta_{\xi}^{\alpha}a(x,\xi)\Vert_{\textnormal{op}} <\infty,\,\,|\alpha|\leqslant \ell,\,|\beta|\leqslant \ell',\]
and 
\[  \sup_{[\xi]\in \widehat{G} }\Vert  \partial_{X}^{(\beta)}  \Delta_{\xi}^{\alpha}a(x,\xi)\widehat{ \mathcal{M}}(\xi)^{\frac{1}{\kappa}(\rho|\alpha|-\delta|\beta|-m)}\Vert_{\textnormal{op}} <\infty,\,\,|\alpha|\leqslant \ell,\,|\beta|\leqslant \ell'.\]

\begin{lemma}\label{lemadecaying1}
Let $G$ be a compact Lie group and  let $0\leqslant \delta,\rho\leqslant \kappa.$ If $a\in \mathscr{S}^{m,\mathcal{L}}_{\rho,\delta}(G)$ then for every $\alpha,\beta\in \mathbb{N}_0^n,$ there exists $C_{\alpha,\beta}>0$ satisfying the estimates
\[ 
    \Vert \partial_{X}^{(\beta)} \Delta_{\xi}^{\alpha}a(x,\xi)\Vert_{\textnormal{op}}\leqslant C_{\alpha,\beta}\sup_{1\leqslant i\leqslant d_\xi}(1+\nu_{ii}(\xi))^{\frac{m-\rho|\alpha|+\delta|\beta|}{\kappa }},
\]uniformly in $(x,[\xi])\in G\times \widehat{G}.$ 
\end{lemma}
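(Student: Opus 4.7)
The plan is to unpack the left-type semi-norm in Definition~\ref{dilated} and exploit the diagonal structure of $\widehat{\mathcal{M}}(\xi)$ to remove the weight $\widehat{\mathcal{M}}(\xi)^{\frac{1}{\kappa}(\rho|\alpha|-\delta|\beta|-m)}$ from the defining symbol estimate. Fix $\alpha,\beta\in \mathbb{N}_0^n$ and set
\[
s:=\tfrac{1}{\kappa}(\rho|\alpha|-\delta|\beta|-m),\qquad B(x,\xi):=\partial_{X}^{(\beta)} \Delta_{\xi}^{\alpha}a(x,\xi).
\]
By hypothesis $a\in \mathscr{S}^{m,\mathcal{L}}_{\rho,\delta}(G)$, so the seminorm $p_{\alpha,\beta,\rho,\delta,m,\textnormal{left}}(a)$ defined in \eqref{InI} is finite; in other words there exists $C_{\alpha,\beta}>0$ such that $\|\widehat{\mathcal{M}}(\xi)^{s}B(x,\xi)\|_{\textnormal{op}}\leqslant C_{\alpha,\beta}$ uniformly in $(x,[\xi])\in G\times \widehat{G}$.

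Next, I would observe that $\widehat{\mathcal{M}}(\xi)$ is invertible as a diagonal matrix with strictly positive entries $(1+\nu_{ii}(\xi)^2)^{1/2}>0$, so that $\widehat{\mathcal{M}}(\xi)^{s}$ admits the diagonal inverse $\widehat{\mathcal{M}}(\xi)^{-s}$. Writing $B(x,\xi)=\widehat{\mathcal{M}}(\xi)^{-s}\bigl(\widehat{\mathcal{M}}(\xi)^{s}B(x,\xi)\bigr)$ and using sub-multiplicativity of the operator norm gives
\[
\|B(x,\xi)\|_{\textnormal{op}}\leqslant \|\widehat{\mathcal{M}}(\xi)^{-s}\|_{\textnormal{op}}\cdot C_{\alpha,\beta}.
\]
Because $\widehat{\mathcal{M}}(\xi)^{-s}$ is diagonal with positive entries, its operator norm equals the maximum of those entries, namely
\[
\|\widehat{\mathcal{M}}(\xi)^{-s}\|_{\textnormal{op}}=\sup_{1\leqslant i\leqslant d_\xi}(1+\nu_{ii}(\xi)^2)^{-s/2}=\sup_{1\leqslant i\leqslant d_\xi}(1+\nu_{ii}(\xi)^2)^{\frac{m-\rho|\alpha|+\delta|\beta|}{2\kappa}}.
\]

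Finally, I would invoke the elementary two-sided equivalence $(1+t^2)^{1/2}\asymp 1+t$ for $t\geqslant 0$, whose $r$-th powers give $(1+t^2)^{r/2}\asymp (1+t)^r$ for every $r\in\mathbb{R}$ with constants depending only on $r$. Applying this with $r=(m-\rho|\alpha|+\delta|\beta|)/\kappa$ and $t=\nu_{ii}(\xi)\geqslant 0$, and harmlessly absorbing the new equivalence constant into $C_{\alpha,\beta}$, produces the claimed bound
\[
\|\partial_{X}^{(\beta)} \Delta_{\xi}^{\alpha}a(x,\xi)\|_{\textnormal{op}}\leqslant C_{\alpha,\beta}\sup_{1\leqslant i\leqslant d_\xi}(1+\nu_{ii}(\xi))^{\frac{m-\rho|\alpha|+\delta|\beta|}{\kappa}}.
\]
There is no real obstacle in this argument: its whole content is the identification of the operator norm of a diagonal positive matrix with the supremum of its diagonal entries, together with the trivial equivalence of $(1+t^2)^{1/2}$ with $1+t$. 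I remark in passing that the right-hand semi-norm \eqref{InII} affords an entirely parallel route, multiplying by $\widehat{\mathcal{M}}(\xi)^{-s}$ on the right, but is not needed here.
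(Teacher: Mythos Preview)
Your proof is correct and follows essentially the same argument as the paper: insert the identity $\widehat{\mathcal{M}}(\xi)^{-s}\widehat{\mathcal{M}}(\xi)^{s}$, apply sub-multiplicativity together with the left semi-norm bound, compute the operator norm of the diagonal matrix $\widehat{\mathcal{M}}(\xi)^{-s}$ as the maximum of its entries, and finish with the equivalence $(1+t^2)^{1/2}\asymp 1+t$. Your presentation is in fact slightly cleaner than the paper's, which first says ``there exists $i$ such that the norm equals $(1+\nu_{ii}(\xi)^2)^{-s/2}$'' and then bounds this by the supremum, rather than directly identifying the operator norm with the supremum as you do.
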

\begin{proof}
Let us assume that  $a\in \mathscr{S}^{m,\mathcal{L}}_{\rho,\delta}(G).$ Then for every $\alpha,\beta\in \mathbb{N}_0^n,$ we have 
$$\Vert \widehat{ \mathcal{M}}(\xi)^{\frac{1}{\kappa}(\rho|\alpha|-\delta|\beta|-m)}\partial_{X}^{(\beta)} \Delta_{\xi}^{\alpha}a(x,\xi)\Vert_{\textnormal{op}} \leqslant C_{\alpha,\beta}<\infty.$$
On the other hand, let us estimate
\begin{align*}
  \Vert \partial_{X}^{(\beta)} \Delta_{\xi}^{\alpha}a(x,\xi)\Vert_{\textnormal{op}} &=   \Vert \widehat{ \mathcal{M}}(\xi)^{-\frac{1}{\kappa}(\rho|\alpha|-\delta|\beta|-m)}     \widehat{ \mathcal{M}}(\xi)^{\frac{1}{\kappa}(\rho|\alpha|-\delta|\beta|-m)} \partial_{X}^{(\beta)} \Delta_{\xi}^{\alpha}a(x,\xi)\Vert_{\textnormal{op}}\\
  &\leqslant \Vert \widehat{ \mathcal{M}}(\xi)^{-\frac{1}{\kappa}(\rho|\alpha|-\delta|\beta|-m)}  \Vert_{\textnormal{op}} \Vert   \widehat{ \mathcal{M}}(\xi)^{\frac{1}{\kappa}(\rho|\alpha|-\delta|\beta|-m)} \partial_{X}^{(\beta)} \Delta_{\xi}^{\alpha}a(x,\xi)\Vert_{\textnormal{op}}\\
  &\leqslant  C_{\alpha,\beta}  \Vert   \widehat{ \mathcal{M}}(\xi)^{-\frac{1}{\kappa}(\rho|\alpha|-\delta|\beta|-m)} \Vert_{\textnormal{op}}.
\end{align*}Finally, there exists $i\in \{1,2,\cdots, d_\xi\}$ depending on $[\xi],$ such that  
\begin{align*}
    \Vert   \widehat{ \mathcal{M}}(\xi)^{-\frac{1}{\kappa}(\rho|\alpha|-\delta|\beta|-m)} \Vert_{\textnormal{op}} &=(1+\nu_{ii}(\xi)^2)^{\frac{m-\rho|\alpha|+\delta|\beta|}{2\kappa }}\leqslant \sup_{1\leqslant i\leqslant d_\xi}(1+\nu_{ii}(\xi)^2)^{\frac{m-\rho|\alpha|+\delta|\beta|}{2\kappa }}\\
    &\asymp \sup_{1\leqslant i\leqslant d_\xi}(1+\nu_{ii}(\xi))^{\frac{m-\rho|\alpha|+\delta|\beta|}{\kappa }}. 
\end{align*}
 The proof is complete.
\end{proof}
 
The following Lemma is straightforward.
\begin{lemma}\label{Lemma47}
Let $G$ be a compact Lie group and  $0\leqslant \delta,\rho\leqslant 1.$ Let us consider a sub-Laplacian $\mathcal{L}=-(X_1^2+\cdots +X_k^2)$ on $G,$ where the system of vector fields $X=\{X_i\}_{i=1}^{k}$ satisfies the H\"ormander condition of step $\kappa$.
Let us consider $\sigma\in {S}^{m,\mathcal{L}}_{\rho,\delta}(G\times \widehat{G})$ and let  $k_{x}$ be its associated  right-convolution kernel. Then,
\begin{enumerate}
    \item if $\Delta_q\in \textnormal{diff}^{\ell}(\widehat{G}),$ then the right convolution kernel associated with the symbol  $\partial_{X}^{(\beta)} \Delta_q\sigma(x,\xi)\in {S}^{m-\rho \ell+\delta|\beta|,\mathcal{L}}_{\rho,\delta}(G\times \widehat{G}) $ is given by $q(\cdot)\partial_{X}^{(\beta)}k_{x}.$  
    \item If $0\leqslant \delta\leqslant \delta'\leqslant 1$ and  $0\leqslant \rho\leqslant \rho'\leqslant 1,$ then 
    \[ 
       {S}^{m,\mathcal{L}}_{\rho',\delta}(G\times \widehat{G})\subset  {S}^{m,\mathcal{L}}_{\rho,\delta}(G\times \widehat{G})\subset {S}^{m,\mathcal{L}}_{\rho,\delta'}(G\times \widehat{G}), 
    \]with continuous inclusions. In particular, 
    \[ 
        {S}^{m,\mathcal{L}}_{1,0}(G\times \widehat{G})\subset  {S}^{m,\mathcal{L}}_{\rho,\delta}(G\times \widehat{G})\subset {S}^{m,\mathcal{L}}_{0,1}(G\times \widehat{G}).
    \]
\end{enumerate}
\end{lemma}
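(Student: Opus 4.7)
The plan is to deduce both parts from explicit manipulations of the defining seminorms, using the basic Fourier-side identity $\Delta_q\widehat{f}(\xi)=\widehat{qf}(\xi)$ from \eqref{taylordifferences}.

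For part (1), I would first identify the kernel. Since by definition $\sigma(x,\xi)=\widehat{k_{x}}(\xi)$, the defining property of the difference operator gives $\Delta_q\sigma(x,\xi)=\widehat{qk_{x}}(\xi)$. The operator $\partial_X^{(\beta)}$ differentiates only in $x$, so it commutes with $\Delta_q$ (which acts on the $\xi$-variable) and also with the $y$-Fourier transform, so that the right-convolution kernel of $\partial_X^{(\beta)}\Delta_q\sigma(x,\xi)$ is the pointwise product $q(y)\,\partial_X^{(\beta)}k_{x}(y)$, which is the stated formula. For the assertion $\partial_X^{(\beta)}\Delta_q\sigma\in S^{m-\rho\ell+\delta|\beta|,\mathcal{L}}_{\rho,\delta}(G\times\widehat{G})$, I would verify the defining inequalities \eqref{InIC}--\eqref{InIIC} directly: for any further multi-indices $\alpha',\beta'$ one needs to control
$$\partial_X^{(\beta'+\beta)}\Delta_\xi^{\alpha'}\Delta_q\sigma(x,\xi)$$
weighted by $\widehat{\mathcal{M}}(\xi)^{\rho|\alpha'|-\delta|\beta'|-(m-\rho\ell+\delta|\beta|)}$. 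The heart of the matter is showing that $\Delta_\xi^{\alpha'}\Delta_q$ behaves like an element of $\textnormal{diff}^{|\alpha'|+\ell}(\widehat{G})$ relative to the admissible collection; I would achieve this by Taylor-expanding $q$ at $e_G$ to order $\ell$ via Lemma \ref{Taylorseries}, writing $q$ as a polynomial in the strongly admissible functions $q_{(j)}$ of total degree $\geqslant\ell$ plus a remainder that vanishes to higher order, and then invoking the Leibniz-type identity from Remark \ref{Leibnizrule} to reorganise the resulting sum into the standard differences $\Delta_\xi^\gamma$. This then gives exactly the decay $\widehat{\mathcal{M}}(\xi)^{-(\rho(|\alpha'|+\ell)-\delta|\beta'+\beta|-m)}$ predicted by the symbol hypothesis on $\sigma$.

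Part (2) is considerably easier and follows at the level of seminorms from the elementary observation that $\widehat{\mathcal{M}}(\xi)=\textnormal{diag}((1+\nu_{ii}(\xi)^2)^{1/2})$ has all entries $\geqslant 1$. Consequently, for any real exponents $s_1\leqslant s_2$ and any matrix $A\in\mathbb{C}^{d_\xi\times d_\xi}$, the factor $\widehat{\mathcal{M}}(\xi)^{s_1-s_2}$ is a diagonal contraction, whence
$$\|\widehat{\mathcal{M}}(\xi)^{s_1}A\|_{\textnormal{op}}\leqslant\|\widehat{\mathcal{M}}(\xi)^{s_1-s_2}\|_{\textnormal{op}}\,\|\widehat{\mathcal{M}}(\xi)^{s_2}A\|_{\textnormal{op}}\leqslant\|\widehat{\mathcal{M}}(\xi)^{s_2}A\|_{\textnormal{op}},$$
and the same comparison holds when $\widehat{\mathcal{M}}(\xi)^{s_j}$ multiplies on the right. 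Applying this pointwise in $(x,[\xi])$ with $s_1=\rho|\alpha|-\delta|\beta|-m\leqslant\rho'|\alpha|-\delta|\beta|-m=s_2$ yields $S^{m,\mathcal{L}}_{\rho',\delta}(G\times\widehat{G})\subset S^{m,\mathcal{L}}_{\rho,\delta}(G\times\widehat{G})$ with continuous inclusion; taking instead $s_1=\rho|\alpha|-\delta'|\beta|-m\leqslant\rho|\alpha|-\delta|\beta|-m=s_2$ gives the second inclusion $S^{m,\mathcal{L}}_{\rho,\delta}(G\times\widehat{G})\subset S^{m,\mathcal{L}}_{\rho,\delta'}(G\times\widehat{G})$ with continuity of the same form. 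The only real obstacle in the entire lemma is the Taylor-expansion reduction of a general $\Delta_q\in\textnormal{diff}^\ell(\widehat{G})$ to the admissible differences in part (1); part (2) and the kernel identity in part (1) are direct bookkeeping.
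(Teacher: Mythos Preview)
Your argument is correct and supplies precisely the verification that the paper omits; the paper itself provides no proof, simply labelling the lemma ``straightforward.'' One small imprecision: the Leibniz rule of Remark~\ref{Leibnizrule} concerns difference operators acting on \emph{products of symbols}, not compositions of difference operators, and is not really what you need in part~(1). The relevant identity is the simpler $\Delta_\xi^{\alpha'}\Delta_q=\Delta_{q^{\alpha'}q}$ (since $\Delta_f\Delta_g=\Delta_{fg}$ by definition), after which the Taylor/Hadamard expansion of the function $q^{\alpha'}q$---vanishing to order $|\alpha'|+\ell$ at $e_G$---in terms of the admissible $q^\gamma$ (using that the strongly admissible $q_{(j)}$ generate the maximal ideal at $e_G$ by the rank condition) reduces everything to the defining seminorms of $\sigma$. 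Your treatment of part~(2) and of the kernel identity in part~(1) is exactly right.
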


In the following proposition we compare some subelliptic classes with (elliptic) classes on compact Lie groups (we will use \eqref{C1} and \eqref{C3} for the corresponding classes of matrix symbols of limited regularity).
\begin{proposition}\label{lemitapequeño}  Let $G$ be a compact Lie group and  $0\leqslant \delta,\rho\leqslant 1.$ Let us consider a sub-Laplacian $\mathcal{L}=-(X_1^2+\cdots +X_k^2)$ on $G,$ where the system of vector fields $X=\{X_i\}_{i=1}^{k}$ satisfies the H\"ormander condition of step $\kappa$.
Let us assume that $\nu\geqslant   0.$  Then, for every $\ell\in \mathbb{N}$ and all $0\leqslant \rho\leqslant 1, $ we have
\[ 
      \mathscr{S}^{-\nu\kappa,\ell,\mathcal{L}}_{\rho \kappa,0}(G)\subset \mathscr{S}^{-\frac{\nu}{\kappa},\ell}_{\frac{\rho}{\kappa},0 }(G\times \widehat{G}),\,\,\mathscr{S}^{-\nu,\ell}_{\rho,0 }(G\times  \widehat{G})\subset \mathscr{S}^{-\nu\kappa,\ell,\mathcal{L}}_{{\rho}\kappa,0 }(G)
  \]  with continuous inclusions.
\end{proposition}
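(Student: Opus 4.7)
The plan is to deduce both inclusions directly from the two-sided inequality \eqref{GarettoRuzhanskyIneq} of Garetto--Ruzhansky, $\langle\xi\rangle^{1/\kappa}\lesssim (1+\nu_{ii}(\xi)^2)^{1/2}\lesssim \langle\xi\rangle,$ which is precisely what translates between the Laplacian-based weight $\langle\xi\rangle$ appearing in the classical classes and the sub-Laplacian-based weight $\widehat{\mathcal{M}}(\xi)$ appearing in the dilated subelliptic classes. Only the weights change; the symbol and the differences $\partial_X^{(\beta)}\Delta_\xi^\alpha$ are the same in both definitions, so there is nothing more to do than compare seminorms. I would handle both inclusions in parallel, and I expect no real obstacle beyond keeping track of exponents.

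For the first inclusion, suppose $a\in \mathscr{S}^{-\nu\kappa,\ell,\mathcal{L}}_{\rho\kappa,0}(G).$ For $|\alpha|\leqslant \ell$ Lemma \ref{lemadecaying1} applied with $m=-\nu\kappa,$ $\rho\kappa$ in place of $\rho$ and $\delta=0,$ gives
\begin{equation*}
\|\Delta_\xi^\alpha a(x,\xi)\|_{\textnormal{op}}\leqslant C_\alpha \sup_{1\leqslant i\leqslant d_\xi}(1+\nu_{ii}(\xi))^{\frac{-\nu\kappa-\rho\kappa|\alpha|}{\kappa}}=C_\alpha \sup_{i}(1+\nu_{ii}(\xi))^{-\nu-\rho|\alpha|}.
\end{equation*}
Since the exponent is non-positive, the \emph{lower} bound $(1+\nu_{ii}(\xi)^2)^{1/2}\gtrsim \langle\xi\rangle^{1/\kappa}$ in \eqref{GarettoRuzhanskyIneq} yields $(1+\nu_{ii}(\xi))^{-\nu-\rho|\alpha|}\lesssim \langle\xi\rangle^{-\nu/\kappa-\rho|\alpha|/\kappa},$ uniformly in $i,$ so $a\in \mathscr{S}^{-\nu/\kappa,\ell}_{\rho/\kappa,0}(G\times\widehat{G}).$ Continuity of the inclusion is obtained by inspection, since each seminorm on the target side is controlled by a finite linear combination of seminorms on the source side.

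For the second inclusion, suppose $a\in \mathscr{S}^{-\nu,\ell}_{\rho,0}(G\times\widehat{G}),$ so that $\|\Delta_\xi^\alpha a(x,\xi)\|_{\textnormal{op}}\leqslant C_\alpha\langle\xi\rangle^{-\nu-\rho|\alpha|}$ for $|\alpha|\leqslant \ell.$ The \emph{upper} bound in \eqref{GarettoRuzhanskyIneq} gives
\begin{equation*}
\|\widehat{\mathcal{M}}(\xi)^{\rho|\alpha|+\nu}\|_{\textnormal{op}}=\sup_{i}(1+\nu_{ii}(\xi)^2)^{(\rho|\alpha|+\nu)/2}\lesssim \langle\xi\rangle^{\rho|\alpha|+\nu},
\end{equation*}
since the exponent $\rho|\alpha|+\nu$ is non-negative. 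Combining these two estimates through the submultiplicativity of the operator norm,
\begin{equation*}
\|\widehat{\mathcal{M}}(\xi)^{\frac{1}{\kappa}(\rho\kappa|\alpha|+\nu\kappa)}\Delta_\xi^\alpha a(x,\xi)\|_{\textnormal{op}}\leqslant \|\widehat{\mathcal{M}}(\xi)^{\rho|\alpha|+\nu}\|_{\textnormal{op}}\,\|\Delta_\xi^\alpha a(x,\xi)\|_{\textnormal{op}}\lesssim 1,
\end{equation*}
which is the left-type inequality \eqref{InI} for $\mathscr{S}^{-\nu\kappa,\ell,\mathcal{L}}_{\rho\kappa,0}(G)$ with $m=-\nu\kappa,$ $\delta=0.$ The right-type inequality \eqref{InII} is obtained identically, since $\widehat{\mathcal{M}}(\xi)$ is diagonal and the same operator-norm estimate applies on either side. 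This proves $a\in \mathscr{S}^{-\nu\kappa,\ell,\mathcal{L}}_{\rho\kappa,0}(G),$ and again continuity is immediate from the bounds above.
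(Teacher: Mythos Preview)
Your proposal is correct and follows essentially the same approach as the paper: both arguments reduce to comparing the weights $\widehat{\mathcal{M}}(\xi)$ and $\langle\xi\rangle$ via the two-sided inequality \eqref{GarettoRuzhanskyIneq}, using the lower bound for the first inclusion and the upper bound for the second. The only cosmetic difference is that for the first inclusion you invoke Lemma~\ref{lemadecaying1} (which packages the factorisation $\Delta_\xi^\alpha a=\widehat{\mathcal{M}}(\xi)^{-s}\,\widehat{\mathcal{M}}(\xi)^{s}\Delta_\xi^\alpha a$), whereas the paper writes out that decomposition explicitly; note that the lemma is stated for the full smooth class, but its proof only uses the seminorm for the fixed $\alpha,\beta$ at hand, so it applies verbatim to the limited-regularity class $\mathscr{S}^{-\nu\kappa,\ell,\mathcal{L}}_{\rho\kappa,0}(G)$.
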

\begin{proof}
 Because we are considering symbols with $\delta=0,$ it is sufficient to prove  Proposition \ref{lemitapequeño} for classes of invariant symbols. Let us assume that $a\in  \mathscr{S}^{-\nu,\ell,\mathcal{L}}_{\rho }(\widehat{G})$ where $0\leqslant \rho\leqslant \kappa.$ Then we have 
\[  \sup_{[\xi]\in \widehat{G} }\Vert  \Delta_{\xi}^{\alpha}a(\xi)\widehat{ \mathcal{M}}(\xi)^{\frac{1}{\kappa}(\rho|\alpha|+\nu)}\Vert_{\textnormal{op}},\sup_{[\xi]\in \widehat{G} }\Vert \widehat{ \mathcal{M}}(\xi)^{\frac{1}{\kappa}(\rho|\alpha|+\nu)} \Delta_{\xi}^{\alpha}a(\xi)\Vert_{\textnormal{op}} <\infty,\,\,|\alpha|\leqslant \ell.\]
Now, let us note that
\begin{align*}
    \Vert \Delta_\xi^\alpha a(\xi)\Vert_{\textnormal{op}}& =\Vert \widehat{ \mathcal{M}}(\xi)^{-\frac{1}{\kappa}(\rho|\alpha|+\nu)}\widehat{ \mathcal{M}}(\xi)^{\frac{1}{\kappa}(\rho|\alpha|+\nu)} \Delta_\xi^\alpha a(\xi)\Vert_{\textnormal{op}}\\
    &\leqslant \Vert \widehat{ \mathcal{M}}(\xi)^{-\frac{1}{\kappa}(\rho|\alpha|+\nu)}\Vert_{\textnormal{op}}\Vert \widehat{ \mathcal{M}}(\xi)^{\frac{1}{\kappa}(\rho|\alpha|+\nu)} \Delta_\xi^\alpha a(\xi)\Vert_{\textnormal{op}}\\
     &=\sup_{1\leqslant i\leqslant d_{\xi}}(1+\nu_{ii}(\xi)^2)^{-\frac{1}{2\kappa}(\rho|\alpha|+\nu)}\Vert \widehat{ \mathcal{M}}(\xi)^{\frac{1}{\kappa}(\rho|\alpha|+\nu)} \Delta_\xi^\alpha a(\xi)\Vert_{\textnormal{op}}\\
     \\
     &\leqslant \langle \xi\rangle^{-\frac{1}{\kappa^2}(\rho|\alpha|+\nu)}   \sup_{[\xi]\in \widehat{G}}\Vert \widehat{ \mathcal{M}}(\xi)^{\frac{1}{\kappa}(\rho|\alpha|+\nu)} \Delta_\xi^\alpha a(\xi)\Vert_{\textnormal{op}},
\end{align*}
where in the last line  we have used \eqref{GarettoRuzhanskyIneq}. So, we have proved that
\begin{equation}\label{444}
     \mathscr{S}^{-\nu,\ell,\mathcal{L}}_{\rho }(\widehat{G})\subset \mathscr{S}^{-\frac{\nu}{\kappa^2},\ell}_{\frac{\rho}{\kappa^2} }(\widehat{G}),\,\,0\leqslant \rho\leqslant \kappa.
\end{equation}So, if we replace $\rho$ in \eqref{444} by $\rho\kappa,$ where $0\leqslant \rho\leqslant 1,$ we obtain the inclusion,
 \[ 
     \mathscr{S}^{-\nu\kappa,\ell,\mathcal{L}}_{\rho\kappa}(\widehat{G})\subset \mathscr{S}^{-\frac{\nu}{\kappa},\ell}_{\frac{\rho}{\kappa} }(\widehat{G}),\,\,\,0\leqslant \rho\leqslant 1.
 \]The inclusion $  \mathscr{S}^{-\nu,\ell}_{\rho }(\widehat{G})\subset \mathscr{S}^{-\nu\kappa,\ell,\mathcal{L}}_{{\rho}\kappa }(\widehat{G}) $ for $0\leqslant \rho\leqslant 1,$ can be proved in an analogous way. Indeed, if $a\in \mathscr{S}^{-\nu,\ell}_{\rho }(\widehat{G}), $ we have
 \begin{align*}
     \Vert \widehat{ \mathcal{M}}(\xi)^{\frac{1}{\kappa}(\kappa{\rho}|\alpha|+\kappa\nu)}  \Delta_\xi^\alpha a(\xi)\Vert_{\textnormal{op}} & =\Vert \widehat{ \mathcal{M}}(\xi)^{\frac{1}{\kappa}(\kappa{\rho}|\alpha|+\kappa\nu)}\langle \xi\rangle^{-\rho|\alpha|-\nu}\langle \xi\rangle^{\rho|\alpha|+\nu}  \Delta_\xi^\alpha a(\xi)\Vert_{\textnormal{op}}\\
     &\leqslant \Vert \widehat{ \mathcal{M}}(\xi)^{\frac{1}{\kappa}(\kappa{\rho}|\alpha|+\kappa\nu)}\langle \xi\rangle^{-\rho|\alpha|-\nu} \Vert_{\textnormal{op}}  \Vert \langle \xi\rangle^{\rho|\alpha|+\nu}  \Delta_\xi^\alpha a(\xi)\Vert_{\textnormal{op}}.
 \end{align*} Because $a\in \mathscr{S}^{-\nu,\ell}_{\rho }(\widehat{G}), $
 \[ 
     \sup_{[\xi]\in \widehat{G}}\Vert \langle \xi\rangle^{\rho|\alpha|+\nu}  \Delta_\xi^\alpha a(\xi)\Vert_{\textnormal{op}}<\infty,\,\,|\alpha|\leqslant \ell.
 \]So, we only need to check that $\Vert \widehat{ \mathcal{M}}(\xi)^{\frac{1}{\kappa}(\kappa{\rho}|\alpha|+\kappa\nu)}\langle \xi\rangle^{-\rho|\alpha|-\nu} \Vert_{\textnormal{op}}$ is uniformly bounded in $[\xi]\in \widehat{G}.$ For this, we can estimate
 \begin{align*}
     \Vert \widehat{ \mathcal{M}}(\xi)^{\frac{1}{\kappa}(\kappa{\rho}|\alpha|+\kappa\nu)}\langle \xi\rangle^{-\rho|\alpha|-\nu} \Vert_{\textnormal{op}}=\sup_{1\leqslant i\leqslant d_\xi}(1+\nu_{ii}(\xi)^2)^{\frac{1}{2}({\rho|\alpha|}+\nu)}\langle \xi\rangle^{-\rho|\alpha|-\nu}\leqslant 1,
 \end{align*} because $(1+\nu_{ii}(\xi)^2)^{\frac{1}{2}({\rho|\alpha|}+\nu)}\lesssim \langle \xi \rangle^{\rho|\alpha|+\nu}$ in view of \eqref{GarettoRuzhanskyIneq}. So, we finish the proof.
\end{proof}
\begin{corollary}\label{lemitapequeñocontracted}
Let us assume that $\nu\geqslant   0.$  Then, for every $\ell\in \mathbb{N}$ and all $0\leqslant \rho\leqslant 1, $ we have
\[ 
      {S}^{-\nu,\ell,\mathcal{L}}_{\rho ,0}(G\times \widehat{G})\subset \mathscr{S}^{-\frac{\nu}{\kappa},\ell}_{\frac{\rho}{\kappa},0 }(G\times \widehat{G}),\,\,\mathscr{S}^{-\nu,\ell}_{\rho,0 }(G\times \widehat{G})\subset {S}^{-\nu,\ell,\mathcal{L}}_{{\rho},0 }(G\times \widehat{G})
  \]  with continuous inclusions.
\end{corollary}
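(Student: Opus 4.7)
The plan is to deduce this corollary essentially as a direct translation of Proposition \ref{lemitapequeño} through the defining identity between contracted and dilated subelliptic classes, namely
\begin{equation*}
{S}^{m,\mathcal{L}}_{\rho,\delta}(G\times \widehat{G}) \;=\; \mathscr{S}^{m\kappa,\mathcal{L}}_{\rho\kappa,\delta\kappa}(G),\qquad 0\leq \rho,\delta\leq 1,
\end{equation*}
given in Definition \ref{contracted''}. No new symbolic analysis is needed; the task is purely one of parameter bookkeeping.

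First, I would substitute the defining identity into the left-hand side of the first inclusion we want:
\begin{equation*}
{S}^{-\nu,\ell,\mathcal{L}}_{\rho,0}(G\times \widehat{G}) = \mathscr{S}^{-\nu\kappa,\ell,\mathcal{L}}_{\rho\kappa,0}(G).
\end{equation*}
By the first inclusion of Proposition \ref{lemitapequeño} applied with the parameters $\nu$ and $\rho$ (so that the condition $0\leq \rho\leq 1$ guarantees $0\leq \rho\kappa\leq \kappa$, matching the hypothesis there on the dilated class), the right-hand side above is continuously embedded into $\mathscr{S}^{-\nu/\kappa,\ell}_{\rho/\kappa,0}(G\times \widehat{G})$, which is exactly the target of the first inclusion.

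For the second inclusion, I would apply the second part of Proposition \ref{lemitapequeño} directly: for $0\leq \rho\leq 1$ it already asserts
\begin{equation*}
\mathscr{S}^{-\nu,\ell}_{\rho,0}(G\times \widehat{G})\subset \mathscr{S}^{-\nu\kappa,\ell,\mathcal{L}}_{\rho\kappa,0}(G),
\end{equation*}
and the right-hand side is, again by Definition \ref{contracted''}, equal to ${S}^{-\nu,\ell,\mathcal{L}}_{\rho,0}(G\times \widehat{G})$. Continuity of the inclusions is inherited from the corresponding continuity in Proposition \ref{lemitapequeño}, since the translation between contracted and dilated seminorms amounts only to a relabeling of the parameters $(m,\rho,\delta)\mapsto (m\kappa,\rho\kappa,\delta\kappa)$ in the defining seminorms \eqref{InI}--\eqref{InII} and \eqref{InIC}--\eqref{InIIC}.

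There is no real obstacle here; the only point to be careful about is verifying that the range $0\leq \rho\leq 1$ of the contracted parameter maps into the range $0\leq \rho\kappa\leq \kappa$ that is permitted for the dilated class in Proposition \ref{lemitapequeño}, which is immediate. Thus the corollary is obtained by combining the identity of Definition \ref{contracted''} with the two inclusions already established.
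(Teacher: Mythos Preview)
Your proposal is correct and matches the paper's intended approach: the corollary is stated without proof in the paper precisely because it is an immediate translation of Proposition \ref{lemitapequeño} through the identity ${S}^{m,\ell,\ell',\mathcal{L}}_{\rho,\delta}(G\times\widehat{G})=\mathscr{S}^{m\kappa,\ell,\ell',\mathcal{L}}_{\rho\kappa,\delta\kappa}(G)$ from Definition \ref{contracted''}. One small remark: your parameter-range check is unnecessary, since Proposition \ref{lemitapequeño} is itself stated for $0\leq\rho\leq1$, so no rescaling of the admissible range is actually involved.
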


Now, we will prove some useful identities in order to characterise  the subelliptic H\"ormander classes, by showing that   \eqref{InI} and \eqref{InII} are equivalent in some sense. To do so, we will use the following version of the Corach-Porta-Recht inequality (see  Corach, Porta, and Recht \cite{CorachPortaRecht90} and Seddik \cite[Theorem 2.3]{Seddik} for \eqref{Recht1} and Andruchow, Corach, and  Stojanoff \cite[page 297]{Andruchow} for \eqref{Recht2}).

\begin{proposition}
Let $H$ be a complex Hilbert space and let $A,P,Q,X\in \mathscr{B}(H)$ be bounded operators on $H.$ Let us assume that $P$ and $Q$ are positive and invertible operators with $PQ=QP,$ and that $A$ is self-adjoint. Then we have the norm inequalities
\begin{equation}\label{Recht1}
    2\Vert X\Vert_{\textnormal{op}}\leqslant \max\{ \Vert PXP^{-1}+Q^{-1}XQ\Vert_{\textnormal{op}}, \Vert PX^*P^{-1}+Q^{-1}X^*Q\Vert_{\textnormal{op}}  \}, 
\end{equation}and 
\begin{equation}\label{Recht2}
    \Vert X \Vert_{\textnormal{op}}\leqslant \Vert AXA+(1+A^2)^{\frac{1}{2}}X(1+A^2)^{\frac{1}{2}} \Vert_{\textnormal{op}}.
\end{equation}
\end{proposition}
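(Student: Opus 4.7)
The plan is to reduce both inequalities to the (generalised) Corach--Porta--Recht inequality
\[
\|SYS^{-1}+S^{-1}YS\|_{\textnormal{op}}\geqslant 2\|Y\|_{\textnormal{op}},
\]
valid for any positive invertible $S$ and any $Y\in\mathscr{B}(H)$, via algebraic manipulations that exploit the commutativity hypotheses.

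For \eqref{Recht2}, I would first set $P:=(1+A^{2})^{1/2}$; by the functional calculus $P$ commutes with $A$ and satisfies $P^{2}-A^{2}=I$, whence $(P+A)(P-A)=(P-A)(P+A)=I$ and, because $P\geqslant |A|$, both $P\pm A$ are positive invertible mutual inverses. The key algebraic identity is then
\[
(P+A)X(P+A)+(P-A)X(P-A)=2(AXA+PXP),
\]
obtained by direct expansion. Writing $S:=P+A$, so that $S^{-1}=P-A$, the target inequality becomes
\[
\|SXS+S^{-1}XS^{-1}\|_{\textnormal{op}}\geqslant 2\|X\|_{\textnormal{op}}.
\]
This product-form Corach--Porta--Recht bound follows by setting $S=e^{H}$ with $H$ self-adjoint and observing that on the Hilbert--Schmidt ideal the map $Y\mapsto SYS+S^{-1}YS^{-1}$ equals $2\cosh(L_{H}+R_{H})Y$, where $L_{H}$ and $R_{H}$ denote left and right multiplication by $H$. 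Since $L_{H}+R_{H}$ is self-adjoint on $\mathscr{S}_{2}(H)$ and $\cosh\geqslant I$, this yields the bound on $\mathscr{S}_{2}$; the operator-norm version then extends by a standard duality/approximation argument.

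For \eqref{Recht1}, I would exploit the commutativity of $P$ and $Q$ to factor them through common positive generators: set $R:=(PQ)^{1/2}$ and $W:=(PQ^{-1})^{1/2}$, both positive invertible and mutually commuting by the functional calculus of the commutative $C^{*}$-algebra generated by $P,Q$. Then $P=RW$, $Q=RW^{-1}$, and with $Y:=WXW^{-1}$ a direct computation using commutativity gives
\[
PXP^{-1}+Q^{-1}XQ=RYR^{-1}+R^{-1}YR.
\]
Applying the generalised Corach--Porta--Recht inequality to $Y$ (conjugation by the positive invertible $R$) produces $\|PXP^{-1}+Q^{-1}XQ\|_{\textnormal{op}}\geqslant 2\|WXW^{-1}\|_{\textnormal{op}}$, and the analogous statement applied to $X^{*}$ reads $\|PX^{*}P^{-1}+Q^{-1}X^{*}Q\|_{\textnormal{op}}\geqslant 2\|W^{-1}XW\|_{\textnormal{op}}$ (using $\|WX^{*}W^{-1}\|_{\textnormal{op}}=\|W^{-1}XW\|_{\textnormal{op}}$). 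To finish, I would establish
\[
\max\bigl\{\|WXW^{-1}\|_{\textnormal{op}},\|W^{-1}XW\|_{\textnormal{op}}\bigr\}\geqslant\|X\|_{\textnormal{op}}
\]
by embedding $X$ into the self-adjoint block matrix $\widetilde{X}=\bigl(\begin{smallmatrix}0 & X\\ X^{*} & 0\end{smallmatrix}\bigr)$ on $H\oplus H$ (so that $\|\widetilde{X}\|_{\textnormal{op}}=\|X\|_{\textnormal{op}}$) and conjugating by $\widetilde{W}=\textnormal{diag}(W,W)$. Since $\widetilde{W}\widetilde{X}\widetilde{W}^{-1}$ is similar to $\widetilde{X}$ its spectral radius equals $\|X\|_{\textnormal{op}}$, while a direct computation shows its operator norm equals $\max(\|WXW^{-1}\|_{\textnormal{op}},\|W^{-1}XW\|_{\textnormal{op}})$; the inequality $\|\cdot\|_{\textnormal{op}}\geqslant r(\cdot)$ then delivers the claim, and taking the maximum of the two lower bounds above completes \eqref{Recht1}.

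The main obstacle is the underlying Corach--Porta--Recht inequality itself in the full generality needed here---namely for arbitrary (not necessarily self-adjoint) $Y$ and in both the sum form used for \eqref{Recht1} and the product form used for \eqref{Recht2}. This is where the genuine spectral-theoretic input of the cited works \cite{CorachPortaRecht90,Seddik,Andruchow} enters; everything else is bookkeeping with the commutative functional calculus for $P,Q$ and the standard $2\times 2$-matrix trick.
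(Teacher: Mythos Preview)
The paper does not actually prove this proposition: it is stated with citations to Corach--Porta--Recht \cite{CorachPortaRecht90}, Seddik \cite{Seddik}, and Andruchow--Corach--Stojanoff \cite{Andruchow}, and then used as a black box in the proof of Theorem~\ref{cor}. Your proposal therefore goes well beyond what the paper does, by sketching how both inequalities reduce to the classical self-adjoint Corach--Porta--Recht bound.

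Your reductions are essentially correct. For \eqref{Recht2} the identity $(P+A)X(P+A)+(P-A)X(P-A)=2(AXA+PXP)$ with $P=(1+A^{2})^{1/2}$ and $S:=P+A=(P-A)^{-1}$ is exactly the right move. For \eqref{Recht1} the factorisation $P=RW$, $Q=RW^{-1}$ via the commutative functional calculus, followed by the spectral-radius argument on the self-adjoint block matrix, is a clean path. The one step that deserves tightening is your passage from the Hilbert--Schmidt bound $2\cosh(L_{H}\pm R_{H})\geqslant 2I$ to the operator-norm inequality via ``a standard duality/approximation argument,'' which is vague as stated. A cleaner route that stays entirely in operator norm: for the quotient form, apply self-adjoint CPR to $\widetilde{Y}=\bigl(\begin{smallmatrix}0&Y\\Y^{*}&0\end{smallmatrix}\bigr)$ with $\widetilde{R}=\textnormal{diag}(R,R)$ and note that the two off-diagonal blocks of $\widetilde{R}\widetilde{Y}\widetilde{R}^{-1}+\widetilde{R}^{-1}\widetilde{Y}\widetilde{R}$ are mutual adjoints, hence have equal norm, yielding $\|RYR^{-1}+R^{-1}YR\|_{\textnormal{op}}\geqslant 2\|Y\|_{\textnormal{op}}$ for arbitrary $Y$; for the product form needed in \eqref{Recht2}, the same trick with $\widetilde{S}=\textnormal{diag}(S,S^{-1})$ works. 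With this substitution your outline is complete, and in fact recovers the cited results rather than merely invoking them.
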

\begin{remark}
It was proved also in Andruchow, Corach, and  Stojanoff \cite[page 302]{Andruchow} that the inequalities \eqref{Recht1} and \eqref{Recht2} are equivalent and $2$ is the best constant in  \eqref{Recht1} if $P=Q$.
\end{remark}
So, we are ready to prove the following characterization of dilated subelliptic H\"ormander classes.
\begin{theorem}\label{cor}   Let $G$ be a compact Lie group and let  $0\leqslant \delta,\rho\leqslant \kappa.$    The following conditions are equivalent.
\begin{itemize}
    \item[A.] For every $\alpha,\beta\in \mathbb{N}_0^n,$ \begin{equation}\label{InI2}
      p_{\alpha,\beta,\rho,\delta,m,\textnormal{left}}(a):= \sup_{(x,[\xi])\in G\times \widehat{G} }\Vert \widehat{ \mathcal{M}}(\xi)^{\frac{1}{\kappa}(\rho|\alpha|-\delta|\beta|-m)}\partial_{X}^{(\beta)} \Delta_{\xi}^{\alpha}a(x,\xi)\Vert_{\textnormal{op}} <\infty.
   \end{equation}
   \item[B.]For every $\alpha,\beta\in \mathbb{N}_0^n,$ \begin{equation}\label{InII2}
      p_{\alpha,\beta,\rho,\delta,m,\textnormal{right}}(a):= \sup_{(x,[\xi])\in G\times \widehat{G} }\Vert (\partial_{X}^{(\beta)} \Delta_{\xi}^{\alpha} a(x,\xi) ) \widehat{ \mathcal{M}}(\xi)^{\frac{1}{\kappa}(\rho|\alpha|-\delta|\beta|-m)}\Vert_{\textnormal{op}} <\infty.
   \end{equation}
   \item[C.] For all $r\in \mathbb{R},$ $\alpha,\beta\in \mathbb{N}_0^n,$
    \begin{equation}\label{InI2X}
      p_{\alpha,\beta,\rho,\delta,m,r}(a):= \sup_{(x,[\xi])\in G\times \widehat{G} }\Vert \widehat{ \mathcal{M}}(\xi)^{\frac{1}{\kappa}(\rho|\alpha|-\delta|\beta|-m-r)}\partial_{X}^{(\beta)} \Delta_{\xi}^{\alpha}a(x,\xi)\widehat{ \mathcal{M}}(\xi)^{\frac{r}{\kappa}}\Vert_{\textnormal{op}} <\infty.
   \end{equation}
   \item[D.] There exists $r_0\in \mathbb{R},$ such that for every $\alpha,\beta\in \mathbb{N}_0^n,$
    \begin{equation}\label{InI2X''}
      p_{\alpha,\beta,\rho,\delta,m,r_0}(a):= \sup_{(x,[\xi])\in G\times \widehat{G} }\Vert \widehat{ \mathcal{M}}(\xi)^{\frac{1}{\kappa}(\rho|\alpha|-\delta|\beta|-m-r_0)}\partial_{X}^{(\beta)} \Delta_{\xi}^{\alpha}a(x,\xi)\widehat{ \mathcal{M}}(\xi)^{\frac{r_0}{\kappa}}\Vert_{\textnormal{op}} <\infty.
   \end{equation}
   \item[E.] $a\in \mathscr{S}^{m,\mathcal{L}}_{\rho,\delta}(G).$
\end{itemize}
\begin{proof}

We only need to prove that 
$\textnormal{D}\Longrightarrow \textnormal{C}.$ Let us assume that \eqref{InI2X''} holds true for some $r_0\in \mathbb{R}$ and let $r\in \mathbb{R}$ be a real number. Let us assume first that $r>r_0.$ Let us note that the operator $A(x,\xi):=\widehat{ \mathcal{M}}(\xi)^{\frac{1}{\kappa}(r_0-r)}$ is self-adjoint. Let us denote
\[ 
    X_{\alpha,\beta,r_0}(x,\xi)=  \widehat{ \mathcal{M}}(\xi)^{\frac{1}{\kappa}(\rho|\alpha|-\delta|\beta|-m-r_0)}\partial_{X}^{(\beta)} \Delta_{\xi}^{\alpha}a(x,\xi)\widehat{ \mathcal{M}}(\xi)^{\frac{r_0}{\kappa}}.
\]

From the Corach-Porta-Recht inequality \eqref{Recht2}, we have 
\begin{align*}
    &\Vert \widehat{ \mathcal{M}}(\xi)^{\frac{1}{\kappa}(\rho|\alpha|-\delta|\beta|-m-r)}\partial_{X}^{(\beta)} \Delta_{\xi}^{\alpha}a(x,\xi)\widehat{ \mathcal{M}}(\xi)^{\frac{r}{\kappa}}\Vert_{\textnormal{op}}\\
    &=\Vert \widehat{ \mathcal{M}}(\xi)^{\frac{1}{\kappa}(r_0-r)} \widehat{ \mathcal{M}}(\xi)^{\frac{1}{\kappa}(\rho|\alpha|-\delta|\beta|-m-r_0)}\partial_{X}^{(\beta)} \Delta_{\xi}^{\alpha}a(x,\xi)\widehat{ \mathcal{M}}(\xi)^{\frac{r_0}{\kappa}}\widehat{ \mathcal{M}}(\xi)^{\frac{1}{\kappa}(r-r_0)}\Vert_{\textnormal{op}} \\
    &=\Vert \widehat{ \mathcal{M}}(\xi)^{\frac{1}{\kappa}(r_0-r)} X_{\alpha,\beta,r_0}(x,\xi)\widehat{ \mathcal{M}}(\xi)^{\frac{1}{\kappa}(r-r_0)}\Vert_{\textnormal{op}}\\
    &\leqslant \Vert A(x,\xi) \widehat{ \mathcal{M}}(\xi)^{\frac{1}{\kappa}(r_0-r)}  X_{\alpha,\beta,r_0}(x,\xi)\widehat{ \mathcal{M}}(\xi)^{\frac{1}{\kappa}(r-r_0)}A(x,\xi)\\
    &+(1+A(x,\xi)^{2})^{\frac{1}{2}}\widehat{ \mathcal{M}}(\xi)^{\frac{1}{\kappa}(r_0-r)} X_{\alpha,\beta,r_0}(x,\xi)\widehat{ \mathcal{M}}(\xi)^{\frac{1}{\kappa}(r-r_0)} (1+A(x,\xi)^{2})^{\frac{1}{2}} \Vert_{\textnormal{op}}\\
    &= \Vert \widehat{ \mathcal{M}}(\xi)^{\frac{2}{\kappa}(r_0-r)}  X_{\alpha,\beta,r_0}(x,\xi)\\
   &+(1+\widehat{ \mathcal{M}}(\xi)^{\frac{2}{\kappa}(r_0-r)})^{\frac{1}{2}}\widehat{ \mathcal{M}}(\xi)^{\frac{1}{\kappa}(r_0-r)} X_{\alpha,\beta,r_0}(x,\xi)\widehat{ \mathcal{M}}(\xi)^{\frac{1}{\kappa}(r-r_0)} (1+\widehat{ \mathcal{M}}(\xi)^{\frac{2}{\kappa}(r_0-r)})^{\frac{1}{2}} \Vert_{\textnormal{op}}.
    \end{align*} 
Taking into account that $r_0-r<0,$ and the following facts (see  \cite[Remark 5.9]{CardonaRuzhansky2019I})
\[ 
  \widehat{ \mathcal{M}}(\xi)^{\frac{2}{\kappa}(r_0-r)}\in \mathscr{S}^{\frac{2(r_0-r)}{\kappa^{2}}}_{\frac{1}{\kappa},0}(G\times\widehat{G}),\,\,\, \,(1+\widehat{ \mathcal{M}}(\xi)^{\frac{2}{\kappa}(r_0-r)})^{\frac{1}{2}} \widehat{ \mathcal{M}}(\xi)^{\frac{1}{\kappa}(r_0-r)}\in \mathscr{S}^{\frac{2(r_0-r)}{\kappa^{2}}}_{\frac{1}{\kappa},0}(G\times\widehat{G}) ,
\]  
\[ 
    \widehat{ \mathcal{M}}(\xi)^{\frac{1}{\kappa}(r-r_0)} (1+\widehat{ \mathcal{M}}(\xi)^{\frac{2}{\kappa}(r_0-r)})^{\frac{1}{2}} \in \mathscr{S}^{0}_{\frac{1}{\kappa},0}(G\times\widehat{G}),
\]
we deduce that
\[ 
  \mathscr{D}_1:=  \sup_{[\xi]\in \widehat{G}} \Vert  \widehat{ \mathcal{M}}(\xi)^{\frac{2}{\kappa}(r_0-r)}\Vert_{\textnormal{op}}, \, \mathscr{D}_2:=  \sup_{[\xi]\in \widehat{G}} \Vert  (1+\widehat{ \mathcal{M}}(\xi)^{\frac{2}{\kappa}(r_0-r)})^{\frac{1}{2}} \widehat{ \mathcal{M}}(\xi)^{\frac{2}{\kappa}(r_0-r)}  \Vert_{\textnormal{op}}  <\infty,
\] and 
\[ 
  \mathscr{D}_3:=  \sup_{[\xi]\in \widehat{G}}\Vert \widehat{ \mathcal{M}}(\xi)^{\frac{1}{\kappa}(r-r_0)} (1+\widehat{ \mathcal{M}}(\xi)^{\frac{2}{\kappa}(r_0-r)})^{\frac{1}{2}}\Vert_{\textnormal{op}}<\infty.
\]
Consequently, 
\begin{align*}
    &\Vert \widehat{ \mathcal{M}}(\xi)^{\frac{1}{\kappa}(\rho|\alpha|-\delta|\beta|-m-r)}\partial_{X}^{(\beta)} \Delta_{\xi}^{\alpha}a(x,\xi)\widehat{ \mathcal{M}}(\xi)^{\frac{r}{\kappa}}\Vert_{\textnormal{op}}\\
    &\leqslant  \Vert \widehat{ \mathcal{M}}(\xi)^{\frac{2}{\kappa}(r_0-r)}  X_{\alpha,\beta,r_0}(x,\xi)\\
   &+(1+\widehat{ \mathcal{M}}(\xi)^{\frac{2}{\kappa}(r_0-r)})^{\frac{1}{2}}\widehat{ \mathcal{M}}(\xi)^{\frac{1}{\kappa}(r_0-r)} X_{\alpha,\beta,r_0}(x,\xi)\widehat{ \mathcal{M}}(\xi)^{\frac{1}{\kappa}(r-r_0)} (1+\widehat{ \mathcal{M}}(\xi)^{\frac{2}{\kappa}(r_0-r)})^{\frac{1}{2}} \Vert_{\textnormal{op}}\\
   &\leqslant  \Vert \widehat{ \mathcal{M}}(\xi)^{\frac{2}{\kappa}(r_0-r)}\Vert_{\textnormal{op}} \Vert  X_{\alpha,\beta,r_0}(x,\xi)\Vert_{\textnormal{op}}\\
   &+\Vert (1+\widehat{ \mathcal{M}}(\xi)^{\frac{2}{\kappa}(r_0-r)})^{\frac{1}{2}}\widehat{ \mathcal{M}}(\xi)^{\frac{1}{\kappa}(r_0-r)}\Vert_{\textnormal{op}}\Vert X_{\alpha,\beta,r_0}(x,\xi)\Vert_{\textnormal{op}}\\
   &\hspace{7cm}\times\Vert \widehat{ \mathcal{M}}(\xi)^{\frac{1}{\kappa}(r-r_0)} (1+\widehat{ \mathcal{M}}(\xi)^{\frac{2}{\kappa}(r_0-r)})^{\frac{1}{2}} \Vert_{\textnormal{op}}\\
   &\leqslant (\mathscr{D}_1+\mathscr{D}_2\times \mathscr{D}_3)\Vert X_{\alpha,\beta,r_0}(x,\xi)\Vert_{\textnormal{op}}.
\end{align*}The previous argument shows that $\textnormal{D}\Longrightarrow \textnormal{C}$ for $r>r_0.$ In the  case where $r<r_0,$ we can define $A(x,\xi)=\widehat{ \mathcal{M}}(\xi)^{\frac{1}{\kappa}(r-r_0)}.$ By repeating the argument above we can  deduce that $\textnormal{D}\Longrightarrow \textnormal{C}$ for $r<r_0.$ Indeed, by using again the Corach-Porta-Recht inequality \eqref{Recht2}, we have 
\begin{align*}
    &\Vert \widehat{ \mathcal{M}}(\xi)^{\frac{1}{\kappa}(\rho|\alpha|-\delta|\beta|-m-r)}\partial_{X}^{(\beta)} \Delta_{\xi}^{\alpha}a(x,\xi)\widehat{ \mathcal{M}}(\xi)^{\frac{r}{\kappa}}\Vert_{\textnormal{op}}\\
    &=\Vert \widehat{ \mathcal{M}}(\xi)^{\frac{1}{\kappa}(r_0-r)} \widehat{ \mathcal{M}}(\xi)^{\frac{1}{\kappa}(\rho|\alpha|-\delta|\beta|-m-r_0)}\partial_{X}^{(\beta)} \Delta_{\xi}^{\alpha}a(x,\xi)\widehat{ \mathcal{M}}(\xi)^{\frac{r_0}{\kappa}}\widehat{ \mathcal{M}}(\xi)^{\frac{1}{\kappa}(r-r_0)}\Vert_{\textnormal{op}} \\
    &=\Vert \widehat{ \mathcal{M}}(\xi)^{\frac{1}{\kappa}(r_0-r)} X_{\alpha,\beta,r_0}(x,\xi)\widehat{ \mathcal{M}}(\xi)^{\frac{1}{\kappa}(r-r_0)}\Vert_{\textnormal{op}}\\
    &\leqslant \Vert A(x,\xi) \widehat{ \mathcal{M}}(\xi)^{\frac{1}{\kappa}(r_0-r)}  X_{\alpha,\beta,r_0}(x,\xi)\widehat{ \mathcal{M}}(\xi)^{\frac{1}{\kappa}(r-r_0)}A(x,\xi)\\
    &+(1+A(x,\xi)^{2})^{\frac{1}{2}}\widehat{ \mathcal{M}}(\xi)^{\frac{1}{\kappa}(r_0-r)} X_{\alpha,\beta,r_0}(x,\xi)\widehat{ \mathcal{M}}(\xi)^{\frac{1}{\kappa}(r-r_0)} (1+A(x,\xi)^{2})^{\frac{1}{2}} \Vert_{\textnormal{op}}\\
    &= \Vert   X_{\alpha,\beta,r_0}(x,\xi)\widehat{ \mathcal{M}}(\xi)^{\frac{2}{\kappa}(r-r_0)}\\
   &+(1+\widehat{ \mathcal{M}}(\xi)^{\frac{2}{\kappa}(r-r_0)})^{\frac{1}{2}}\widehat{ \mathcal{M}}(\xi)^{\frac{1}{\kappa}(r_0-r)} X_{\alpha,\beta,r_0}(x,\xi)\widehat{ \mathcal{M}}(\xi)^{\frac{1}{\kappa}(r-r_0)} (1+\widehat{ \mathcal{M}}(\xi)^{\frac{2}{\kappa}(r-r_0)})^{\frac{1}{2}} \Vert_{\textnormal{op}}.
    \end{align*} 
Since $r-r_0$ is negative, we have the following facts (see  \cite[Remark 5.9]{CardonaRuzhansky2019I})
\[ 
  \widehat{ \mathcal{M}}(\xi)^{\frac{2}{\kappa}(r-r_0)}\in \mathscr{S}^{\frac{2}{\kappa^2}(r-r_0)}_{\frac{1}{\kappa},0}(G\times\widehat{G}), \,  \widehat{ \mathcal{M}}(\xi)^{\frac{1}{\kappa}(r-r_0)} (1+\widehat{ \mathcal{M}}(\xi)^{\frac{2}{\kappa}(r-r_0)})^{\frac{1}{2}}        \in \mathscr{S}^{\frac{2(r-r_0)}{\kappa^{2}}}_{\frac{1}{\kappa},0}(G\times\widehat{G}) ,
\]  
\[ 
  (1+\widehat{ \mathcal{M}}(\xi)^{\frac{2}{\kappa}(r-r_0)})^{\frac{1}{2}} \widehat{ \mathcal{M}}(\xi)^{\frac{1}{\kappa}(r_0-r)}   \in \mathscr{S}^{0}_{\frac{1}{\kappa},0}(G\times\widehat{G}),
\]
we deduce that
\[ 
  \mathscr{D}'_1:=  \sup_{[\xi]\in \widehat{G}} \Vert   \widehat{ \mathcal{M}}(\xi)^{\frac{2}{\kappa}(r-r_0)}  \Vert_{\textnormal{op}}, \, \mathscr{D}_2':=  \sup_{[\xi]\in \widehat{G}} \Vert  (1+\widehat{ \mathcal{M}}(\xi)^{\frac{2}{\kappa}(r-r_0)})^{\frac{1}{2}} \widehat{ \mathcal{M}}(\xi)^{\frac{1}{\kappa}(r_0-r)}    \Vert_{\textnormal{op}}  <\infty,
\] and 
\[ 
  \mathscr{D}_3':=  \sup_{[\xi]\in \widehat{G}}\Vert  \widehat{ \mathcal{M}}(\xi)^{\frac{1}{\kappa}(r-r_0)} (1+\widehat{ \mathcal{M}}(\xi)^{\frac{2}{\kappa}(r-r_0)})^{\frac{1}{2}}    \Vert_{\textnormal{op}}<\infty.
\]
Consequently, 
\begin{align*}
    &\Vert \widehat{ \mathcal{M}}(\xi)^{\frac{1}{\kappa}(\rho|\alpha|-\delta|\beta|-m-r)}\partial_{X}^{(\beta)} \Delta_{\xi}^{\alpha}a(x,\xi)\widehat{ \mathcal{M}}(\xi)^{\frac{r}{\kappa}}\Vert_{\textnormal{op}}\\
    &\leqslant  \Vert   X_{\alpha,\beta,r_0}(x,\xi) \widehat{ \mathcal{M}}(\xi)^{\frac{2}{\kappa}(r-r_0)}\\
   &+(1+\widehat{ \mathcal{M}}(\xi)^{\frac{2}{\kappa}(r-r_0)})^{\frac{1}{2}}\widehat{ \mathcal{M}}(\xi)^{\frac{1}{\kappa}(r_0-r)} X_{\alpha,\beta,r_0}(x,\xi)\widehat{ \mathcal{M}}(\xi)^{\frac{1}{\kappa}(r-r_0)} (1+\widehat{ \mathcal{M}}(\xi)^{\frac{2}{\kappa}(r-r_0)})^{\frac{1}{2}} \Vert_{\textnormal{op}}\\
   &\leqslant  \Vert \widehat{ \mathcal{M}}(\xi)^{\frac{2}{\kappa}(r-r_0)}\Vert_{\textnormal{op}} \Vert  X_{\alpha,\beta,r_0}(x,\xi)\Vert_{\textnormal{op}}\\
   &+\Vert (1+\widehat{ \mathcal{M}}(\xi)^{\frac{2}{\kappa}(r-r_0)})^{\frac{1}{2}}\widehat{ \mathcal{M}}(\xi)^{\frac{1}{\kappa}(r_0-r)}\Vert_{\textnormal{op}}\Vert X_{\alpha,\beta,r_0}(x,\xi)\Vert_{\textnormal{op}}\\
   &\hspace{7cm}\times\Vert \widehat{ \mathcal{M}}(\xi)^{\frac{1}{\kappa}(r-r_0)} (1+\widehat{ \mathcal{M}}(\xi)^{\frac{2}{\kappa}(r-r_0)})^{\frac{1}{2}} \Vert_{\textnormal{op}}\\
   &\leqslant (\mathscr{D}_1'+\mathscr{D}_2'\times \mathscr{D}_3')\Vert X_{\alpha,\beta,r_0}(x,\xi)\Vert_{\textnormal{op}}.
\end{align*}The previous argument shows that $\textnormal{D}\Longrightarrow \textnormal{C}$ for $r_0>r.$
The proof is complete.
\end{proof}
\end{theorem}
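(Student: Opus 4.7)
My plan is to organize the five conditions in a cycle that makes the logical structure transparent: $\textnormal{E} \Longleftrightarrow (\textnormal{A} \wedge \textnormal{B})$ holds by definition of the dilated class, and both conditions A and B are obtained from C by particular choices of the free parameter, namely $r=0$ gives A while $r=\rho|\alpha|-\delta|\beta|-m$ collapses the left factor to the identity and turns C into B. The implication $\textnormal{C} \Longrightarrow \textnormal{D}$ is a tautology (pick any $r_0\in\mathbb{R}$). So the entire content of the theorem reduces to proving $\textnormal{D} \Longrightarrow \textnormal{C}$, i.e.\ promoting a bound at one fixed exponent $r_0$ to a bound at every $r$.

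For that key step I would argue as follows. Set
\begin{equation*}
X_{\alpha,\beta,r_0}(x,\xi) := \widehat{\mathcal{M}}(\xi)^{\frac{1}{\kappa}(\rho|\alpha|-\delta|\beta|-m-r_0)} \partial_X^{(\beta)} \Delta_\xi^\alpha a(x,\xi)\, \widehat{\mathcal{M}}(\xi)^{\frac{r_0}{\kappa}},
\end{equation*}
which is uniformly bounded in $(x,[\xi])$ by hypothesis D. Because $\widehat{\mathcal{M}}(\xi)$ commutes with its own real powers, one has the algebraic identity
\begin{equation*}
\widehat{\mathcal{M}}(\xi)^{\frac{1}{\kappa}(\rho|\alpha|-\delta|\beta|-m-r)} \partial_X^{(\beta)} \Delta_\xi^\alpha a(x,\xi)\, \widehat{\mathcal{M}}(\xi)^{\frac{r}{\kappa}} = \widehat{\mathcal{M}}(\xi)^{\frac{r_0-r}{\kappa}}\, X_{\alpha,\beta,r_0}(x,\xi)\, \widehat{\mathcal{M}}(\xi)^{\frac{r-r_0}{\kappa}},
\end{equation*}
so everything comes down to bounding an expression of the form $\widehat{\mathcal{M}}^{s}\, X\, \widehat{\mathcal{M}}^{-s}$ with $X$ uniformly bounded. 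The natural tool is the Corach-Porta-Recht inequality \eqref{Recht2} applied to the positive self-adjoint operator $A(\xi) = \widehat{\mathcal{M}}(\xi)^{(r_0-r)/\kappa}$ (or $\widehat{\mathcal{M}}(\xi)^{(r-r_0)/\kappa}$ depending on the sign of $r_0-r$), which gives an upper bound involving compositions of $\widehat{\mathcal{M}}$ with opposite exponents and the correction factor $(1+A(\xi)^2)^{1/2}$.

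To close the argument, one needs to verify that each of the resulting scalar symbols
\begin{equation*}
\widehat{\mathcal{M}}(\xi)^{\frac{2(r_0-r)}{\kappa}}, \qquad (1+\widehat{\mathcal{M}}(\xi)^{\frac{2(r_0-r)}{\kappa}})^{1/2}\, \widehat{\mathcal{M}}(\xi)^{\frac{r_0-r}{\kappa}}, \qquad \widehat{\mathcal{M}}(\xi)^{\frac{r-r_0}{\kappa}}(1+\widehat{\mathcal{M}}(\xi)^{\frac{2(r_0-r)}{\kappa}})^{1/2}
\end{equation*}
is uniformly bounded in $[\xi]\in \widehat{G}$. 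For this one diagonalises in the representation space and uses the two-sided equivalence $\langle\xi\rangle^{1/\kappa}\lesssim (1+\nu_{ii}(\xi)^2)^{1/2}\lesssim \langle\xi\rangle$ from Proposition 3.1 of \cite{GarettoRuzhansky2015} to place these factors in the appropriate elliptic class $\mathscr{S}^{\ast}_{1/\kappa,0}$. The main subtlety will be the sign dichotomy: the choice of $A(\xi)$ in the Corach-Porta-Recht step must be made so that $A(\xi)^2$ has a nonpositive exponent of $\widehat{\mathcal{M}}$, otherwise the scalar factors are not bounded but grow. This forces one to split the proof into the two cases $r>r_0$ and $r<r_0$ and choose $A(\xi)$ accordingly in each; this sign-bookkeeping is the only real obstacle, the rest being algebraic rearrangement.
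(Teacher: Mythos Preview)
Your proposal is correct and follows essentially the same route as the paper: reduce everything to the implication $\textnormal{D}\Rightarrow\textnormal{C}$, rewrite the target quantity as $\widehat{\mathcal{M}}(\xi)^{(r_0-r)/\kappa}X_{\alpha,\beta,r_0}(x,\xi)\widehat{\mathcal{M}}(\xi)^{(r-r_0)/\kappa}$, apply the Corach--Porta--Recht inequality \eqref{Recht2} with $A(\xi)$ equal to the power of $\widehat{\mathcal{M}}(\xi)$ carrying the negative exponent (hence the case split on the sign of $r-r_0$), and then bound the three scalar factors using the membership of the relevant $\widehat{\mathcal{M}}$-functions in the elliptic classes $\mathscr{S}^{\ast}_{1/\kappa,0}$. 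One small remark: your logical reduction does not explicitly close the loop through $\textnormal{B}$ (since $\textnormal{B}$ is not literally $\textnormal{D}$ for a fixed $r_0$), but the same Corach--Porta--Recht argument works pointwise in $(\alpha,\beta)$, so $\textnormal{B}\Rightarrow\textnormal{C}$ follows by the identical computation with $r_0=\rho|\alpha|-\delta|\beta|-m$.
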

Theorem \ref{cor} implies the following characterization for the contracted subelliptic classes.

\begin{corollary}\label{corC}   Let $G$ be a compact Lie group and let  $0\leqslant \delta,\rho\leqslant 1.$   The following conditions are equivalent.
\begin{itemize}
    \item[A.] For every $\alpha,\beta\in \mathbb{N}_0^n,$ \begin{equation}\label{InI2C}
      p_{\alpha,\beta,\rho,\delta,m,\textnormal{left}}(a)':= \sup_{(x,[\xi])\in G\times \widehat{G} }\Vert \widehat{ \mathcal{M}}(\xi)^{(\rho|\alpha|-\delta|\beta|-m)}\partial_{X}^{(\beta)} \Delta_{\xi}^{\alpha}a(x,\xi)\Vert_{\textnormal{op}} <\infty.
   \end{equation}
   \item[B.] For every $\alpha,\beta\in \mathbb{N}_0^n,$ \begin{equation}\label{InII2C}
      p_{\alpha,\beta,\rho,\delta,m,\textnormal{right}}(a)':= \sup_{(x,[\xi])\in G\times \widehat{G} }\Vert (\partial_{X}^{(\beta)} \Delta_{\xi}^{\alpha} a(x,\xi) ) \widehat{ \mathcal{M}}(\xi)^{(\rho|\alpha|-\delta|\beta|-m)}\Vert_{\textnormal{op}} <\infty.
   \end{equation}
   \item[C.] For all $r\in \mathbb{R},\alpha,\beta\in \mathbb{N}_0^n,$
    \begin{equation}\label{InI2XC}
      p_{\alpha,\beta,\rho,\delta,m,r}(a)':= \sup_{(x,[\xi])\in G\times \widehat{G} }\Vert \widehat{ \mathcal{M}}(\xi)^{(\rho|\alpha|-\delta|\beta|-m-r)}\partial_{X}^{(\beta)} \Delta_{\xi}^{\alpha}a(x,\xi)\widehat{ \mathcal{M}}(\xi)^{r}\Vert_{\textnormal{op}} <\infty.
   \end{equation}
   \item[D.] There exists $r_0\in \mathbb{R},$ such that for every $\alpha,\beta\in \mathbb{N}_0^n,$
    \begin{equation}\label{InI2X''C}
      p_{\alpha,\beta,\rho,\delta,m,r_0}(a)':= \sup_{(x,[\xi])\in G\times \widehat{G} }\Vert \widehat{ \mathcal{M}}(\xi)^{(\rho|\alpha|-\delta|\beta|-m-r_0)}\partial_{X}^{(\beta)} \Delta_{\xi}^{\alpha}a(x,\xi)\widehat{ \mathcal{M}}(\xi)^{r_0}\Vert_{\textnormal{op}} <\infty.
   \end{equation}
   \item[E.]  $a\in {S}^{m,\mathcal{L}}_{\rho,\delta}(G\times \widehat{G}).$
\end{itemize}
\end{corollary}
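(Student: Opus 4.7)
The plan is to deduce Corollary \ref{corC} directly from Theorem \ref{cor} by a change of parameters, exploiting the very definition of the contracted classes given in Definition \ref{contracted''}, namely
\begin{equation*}
S^{m,\mathcal{L}}_{\rho,\delta}(G\times \widehat{G}) \;=\; \mathscr{S}^{m\kappa,\mathcal{L}}_{\rho\kappa,\delta\kappa}(G),\quad 0\leqslant \rho,\delta\leqslant 1.
\end{equation*}
Since $0\leqslant \rho\kappa,\delta\kappa\leqslant \kappa$, the dilated parameters fall inside the range covered by Theorem \ref{cor}, so the theorem applies with $(m,\rho,\delta)$ replaced by $(m\kappa,\rho\kappa,\delta\kappa)$.

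The key observation is that the normalisation exponent $\frac{1}{\kappa}$ in the dilated seminorms \eqref{InI2}, \eqref{InII2}, \eqref{InI2X} cancels exactly with the factor $\kappa$ in the substituted parameters. Indeed,
\begin{equation*}
\tfrac{1}{\kappa}\bigl((\rho\kappa)|\alpha|-(\delta\kappa)|\beta|-m\kappa\bigr) \;=\; \rho|\alpha|-\delta|\beta|-m,
\end{equation*}
so condition A of Theorem \ref{cor} with dilated parameters $(m\kappa,\rho\kappa,\delta\kappa)$ becomes exactly condition A of Corollary \ref{corC}; likewise for B. For C and D, I would additionally reparametrise $r$ via $r=\kappa r'$, which turns $\widehat{\mathcal{M}}(\xi)^{r/\kappa}$ in \eqref{InI2X}–\eqref{InI2X''} into $\widehat{\mathcal{M}}(\xi)^{r'}$ and the exponent $\frac{1}{\kappa}(\rho\kappa|\alpha|-\delta\kappa|\beta|-m\kappa-\kappa r')=\rho|\alpha|-\delta|\beta|-m-r'$ into the one appearing in \eqref{InI2XC}–\eqref{InI2X''C}. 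Since $r\mapsto \kappa r$ is a bijection of $\mathbb{R}$, the quantifier ``for all $r\in\mathbb{R}$'' (resp.\ ``there exists $r_0\in\mathbb{R}$'') in Theorem \ref{cor} translates verbatim into the analogous quantifier in Corollary \ref{corC}.

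Having matched A, B, C, D of Corollary \ref{corC} with their counterparts in Theorem \ref{cor}, the equivalence of A--D follows. Condition E is equivalent to A (and hence to all of them) simply by unwinding Definition \ref{contracted''}. I do not anticipate any genuine difficulty: everything is a tautological rescaling, and the only substantive analytic content, namely the Corach--Porta--Recht inequality \eqref{Recht2} used to pass from D to C, is already packaged inside Theorem \ref{cor}. The mild point to keep in mind is to verify explicitly, before invoking Theorem \ref{cor}, that both the left- and right-normalisation versions (A and B) correspond to the $r=0$ and $r=m+\delta|\beta|-\rho|\alpha|$ (respectively, the corresponding shift) instances of C, so that the chain of implications A$\Rightarrow$D$\Rightarrow$C$\Rightarrow$A,B closes up cleanly after the substitution.
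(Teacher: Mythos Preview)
Your proposal is correct and is exactly the approach the paper takes: the corollary is stated immediately after Theorem \ref{cor} with the remark that it follows from it, and the deduction is precisely the rescaling $(m,\rho,\delta,r)\mapsto(m\kappa,\rho\kappa,\delta\kappa,\kappa r')$ together with the identity $S^{m,\mathcal{L}}_{\rho,\delta}(G\times\widehat{G})=\mathscr{S}^{m\kappa,\mathcal{L}}_{\rho\kappa,\delta\kappa}(G)$ from Definition \ref{contracted''}. Your cancellation $\tfrac{1}{\kappa}(\rho\kappa|\alpha|-\delta\kappa|\beta|-m\kappa)=\rho|\alpha|-\delta|\beta|-m$ and the bijectivity of $r\mapsto\kappa r$ on $\mathbb{R}$ are exactly what is needed, and nothing further is required.
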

\begin{remark}
We will prove an analogy of  Theorem \ref{cor} for arbitrary graded Lie groups    in Theorem
\ref{corgraded} extending Theorem 5.5.20 of \cite{FischerRuzhanskyBook}. For this we will use the same approach that in the proof of   Theorem \ref{cor}.
\end{remark}

To study, for example, the classification of negative powers of subelliptic operators we need to study the inversion of symbols in the (dilated) subelliptic H\"ormander classes. So, we have the following theorem.

\begin{theorem}\label{IesT} Let $m\in \mathbb{R},$ and let $0\leqslant \delta<\rho\leqslant \kappa.$  Let  $a=a(x,\xi)\in \mathscr{S}^{m,\mathcal{L}}_{\rho,\delta}(G).$  Assume also that $a(x,\xi)$ is invertible for every $(x,[\xi])\in G\times\widehat{G},$ and satisfies
\begin{equation}\label{Ies}
   \sup_{(x,[\xi])\in G\times \widehat{G}} \Vert \widehat{\mathcal{M}}(\xi)^{\frac{m}{\kappa}}a(x,\xi)^{-1}\Vert_{\textnormal{op}}<\infty.
\end{equation}Then, $a^{-1}:=a(x,\xi)^{-1}\in \mathscr{S}^{-m,\mathcal{L}}_{\rho,\delta}(G).$ 
\end{theorem}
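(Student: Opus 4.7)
The plan is to prove the estimate for $a^{-1}$ inductively on $N := |\alpha|+|\beta|$, establishing condition A of Theorem \ref{cor} for $a^{-1}$ with subelliptic order $-m$; by the equivalence A $\Leftrightarrow$ E in Theorem \ref{cor}, this will yield $a^{-1} \in \mathscr{S}^{-m,\mathcal{L}}_{\rho,\delta}(G)$. The base case $N = 0$ is nothing but the hypothesis \eqref{Ies}. Once the base case is in hand, the equivalence A $\Leftrightarrow$ C of Theorem \ref{cor} applied at $(\alpha,\beta) = (0,0)$ yields the two-sided versions
\begin{equation*}
\sup_{(x,[\xi])}\Vert \widehat{\mathcal{M}}(\xi)^{(m-r)/\kappa}\, a(x,\xi)^{-1}\, \widehat{\mathcal{M}}(\xi)^{r/\kappa}\Vert_{\textnormal{op}} < \infty \qquad \text{for every } r \in \mathbb{R},
\end{equation*}
which are needed in the inductive step. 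Analogously, Theorem \ref{cor} applied to $a$ (which is in the class by hypothesis) provides the same shifted estimates at every order.

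For the inductive step I apply $\partial_{X}^{(\beta)}\Delta_{\xi}^{\alpha}$ to the identity $a(x,\xi)\,a^{-1}(x,\xi) = I_{d_\xi}$ and expand using the standard product rule for the vector-field derivative $\partial_{X}^{(\beta)}$ together with the Leibniz rule for difference operators from Remark \ref{Leibnizrule}. After isolating the unique leading term $a \cdot \partial_{X}^{(\beta)}\Delta_{\xi}^{\alpha} a^{-1}$ (of coefficient $1$) and left-multiplying by $a^{-1}$, one obtains an identity of the schematic form
\begin{equation*}
\partial_{X}^{(\beta)}\Delta_{\xi}^{\alpha} a^{-1} = -\,a^{-1}\!\!\!\sum_{(\alpha_1,\beta_1,\alpha_2,\beta_2)\in\mathcal{I}} \!\!\! c_{\alpha_j,\beta_j}\,\bigl(\partial_{X}^{(\beta_1)}\Delta_{\xi}^{\alpha_1} a\bigr)\bigl(\partial_{X}^{(\beta_2)}\Delta_{\xi}^{\alpha_2} a^{-1}\bigr),
\end{equation*}
where the index set $\mathcal{I}$ requires $|\alpha_1|+|\beta_1| \geq 1$, so the $a^{-1}$-factor on the right is either strictly of lower total order than $N$, or is of the same total order but paired with an $a$-factor carrying at least one non-trivial difference.

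To bound $\Vert \widehat{\mathcal{M}}^{(\rho|\alpha|-\delta|\beta|+m)/\kappa}\,\partial_{X}^{(\beta)}\Delta_{\xi}^{\alpha} a^{-1}\Vert_{\textnormal{op}}$, I insert commuting powers of $\widehat{\mathcal{M}}$ between each pair of factors, choosing the exponents so that each of the three resulting blocks carries exactly its own subelliptic order; in particular, the powers split as $\widehat{\mathcal{M}}^{s_0}\cdot\widehat{\mathcal{M}}^{s_1}\cdot \widehat{\mathcal{M}}^{s_2}$ with $s_0$ matching the order $-m/\kappa$ of $a^{-1}$, $s_1$ matching $(m - \rho|\alpha_1|+\delta|\beta_1|)/\kappa$, and $s_2$ matching $(-m-\rho|\alpha_2|+\delta|\beta_2|)/\kappa$. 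Each of the resulting blocks is uniformly bounded in $(x,[\xi])$: the $a^{-1}$-block by the upgraded base case; the $a$-derivative block by $a \in \mathscr{S}^{m,\mathcal{L}}_{\rho,\delta}(G)$ together with Theorem \ref{cor}; and the lower-order $a^{-1}$-block by the inductive hypothesis, similarly upgraded via Theorem \ref{cor}.

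The main obstacle lies in the non-commutative Leibniz rule for difference operators on $\widehat{G}$: unlike in the Euclidean case, the expansion produces cross-terms in which $|\alpha_2|$ may equal $|\alpha|$, provided a compensating difference $|\alpha_1|\geq 1$ acts on the $a$-factor. Such terms do not immediately lower the order of the $a^{-1}$-difference, so the naive induction stalls. However, each extra difference on $a$ gains a factor of $\widehat{\mathcal{M}}^{-\rho/\kappa}$ (by the hypothesis $a \in \mathscr{S}^{m,\mathcal{L}}_{\rho,\delta}(G)$), which at sufficiently high frequencies is a strictly decaying multiplier whose contribution can be absorbed into the left-hand side; at the finitely many remaining low-frequency representations, the bound is trivial. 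This absorption, together with the systematic use of Theorem \ref{cor} (and, through it, of the Corach–Porta–Recht inequality) to convert the one-sided ellipticity hypothesis \eqref{Ies} into the two-sided shifted estimates that the non-commutative algebra demands, is what makes the induction close.
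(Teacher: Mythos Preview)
Your proposal is correct and follows essentially the same route as the paper: induction using the Leibniz identity $\partial_X^{(\beta)}\Delta_\xi^\alpha(a\cdot a^{-1})=0$, combined with the Corach--Porta--Recht inequality (which you invoke through Theorem~\ref{cor}, while the paper applies~\eqref{Recht2} directly) to shift powers of $\widehat{\mathcal{M}}$ across the non-commuting factors. The paper inducts separately---first on $|\beta|$, then on the order $\ell$ of $\Delta_{q_\ell}$---whereas you induct jointly on $|\alpha|+|\beta|$; this is a cosmetic difference. Your explicit absorption argument for the cross-terms (those with $|\alpha_2|=|\alpha|$ but $|\alpha_1|\geq 1$) is in fact more transparent than the paper's treatment, which lists these terms among $IV_{(4)},V_{(3)},V_{(4)}$ and dispatches them with ``A similar analysis can be used.'' One small point you should make precise: the absorption inequality $T_N\leq C+\varepsilon(R)\,T_N$ at high frequencies requires $T_N<\infty$ a~priori; this is arranged by first restricting to $\langle\xi\rangle\leq R'$ (finitely many representations, hence trivially finite sup), obtaining an $R'$-independent bound, and letting $R'\to\infty$.
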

\begin{proof}
Let us estimate $\partial^{(\beta)}_{X_i}a^{-1}$ first. Suppose we have proved that 
\[ 
 I:= \sup_{|\beta|\leqslant \ell}\sup_{(x,[\xi])\in G\times \widehat{G} }\Vert \widehat{ \mathcal{M}}(\xi)^{\frac{1}{\kappa}(-\delta|\beta|+m)}\partial_{X}^{(\beta)} a(x,\xi)^{-1}\Vert_{\textnormal{op}} <\infty,
   \] 
   for some  $ \ell\in \mathbb{N}.$ We proceed by mathematical induction. Let us analyse the cases $|\tilde\beta|=\ell+1.$ If we write $\partial_{X}^{(\tilde\beta)}= \partial_{X}\partial_{X}^{(\beta)}$ where $|\beta|\leqslant \ell,$ then  $\partial^{\tilde\beta}_{X_i}a^{-1}=\partial_{X}\partial_{X}^{(\beta)}a^{-1}.$ From the identity $a(x,\xi)a(x,\xi)^{-1}=I_{d_\xi}$ we have
   \[ 
       a(x,\xi)\partial_{X}\partial_{X}^{(\beta)}a^{-1}(x,\xi)=-\sum_{\beta_1+\beta_2=\beta+e_{j},|\beta_2|\leqslant  |\beta|}C_{\beta_1,\beta_2}(\partial_{X}^{(\beta_1)}a(x,\xi))(\partial_{X}^{(\beta_2)}a^{-1}(x,\xi)).
   \]
Consequently,
\[ 
       \partial_{X}\partial_{X}^{(\beta)}a^{-1}(x,\xi)=-a(x,\xi)^{-1}\sum_{\beta_1+\beta_2=\beta+e_{j},|\beta_2|\leqslant  |\beta|}C_{\beta_1,\beta_2}(\partial_{X}^{(\beta_1)}a(x,\xi))(\partial_{X}^{(\beta_2)}a^{-1}(x,\xi)).
   \] We want to prove the estimate
   \[ 
 \sup_{(x,[\xi])\in G\times \widehat{G} }\Vert \widehat{ \mathcal{M}}(\xi)^{\frac{1}{\kappa}(-\delta(|\beta|+1)+m)}\partial_{X}\partial_{X}^{(\beta)} a(x,\xi)^{-1}\Vert_{\textnormal{op}} <\infty.
   \]
   For this, we only need to show that for every $\beta_1$ and $\beta_2$ such that $\beta_1+\beta_2=\beta+e_{j},|\beta_2|\leqslant  |\beta|,$
\[ 
    \sup_{(x,[\xi])\in G\times \widehat{G} }\Vert \widehat{ \mathcal{M}}(\xi)^{\frac{1}{\kappa}(-\delta(|\beta|+1)+m)} a(x,\xi)^{-1}(\partial_{X}^{(\beta_1)}a(x,\xi))(\partial_{X}^{(\beta_2)}a^{-1}(x,\xi))   \Vert_{\textnormal{op}} <\infty.
\] 
Observe that
\begin{align*}
 &\widehat{ \mathcal{M}}(\xi)^{\frac{1}{\kappa}(-\delta(|\beta|+1)+m)} a(x,\xi)^{-1}(\partial_{X}^{(\beta_1)}a(x,\xi))(\partial_{X}^{(\beta_2)}a^{-1}(x,\xi))   \\
 &=\widehat{ \mathcal{M}}(\xi)^{\frac{1}{\kappa}(-\delta(|\beta_1|+|\beta_2|)+m)} a(x,\xi)^{-1}(\partial_{X}^{(\beta_1)}a(x,\xi))(\partial_{X}^{(\beta_2)}a^{-1}(x,\xi)).
\end{align*} First, let us prove that
 \begin{align*}
    & \Vert \widehat{ \mathcal{M}}(\xi)^{\frac{1}{\kappa}(-\delta(|\beta_1|+|\beta_2|)} \widehat{ \mathcal{M}}(\xi)^{\frac{m}{\kappa}} a(x,\xi)^{-1}(\partial_{X}^{(\beta_1)}a(x,\xi))(\partial_{X}^{(\beta_2)}a^{-1}(x,\xi)) \Vert_{\textnormal{op}}\\
    &\lesssim \Vert(\partial_{X}^{(\beta_1)}a(x,\xi))(\partial_{X}^{(\beta_2)}a^{-1}(x,\xi))  \widehat{ \mathcal{M}}(\xi)^{\frac{1}{\kappa}(-\delta(|\beta_1|+|\beta_2|)}\Vert_{\textnormal{op}}.
 \end{align*} Indeed, by using \eqref{Recht2} with $A= \widehat{ \mathcal{M}}(\xi)^{\frac{1}{\kappa}(-\delta(|\beta_1|+|\beta_2|)} $ and  $$X=\widehat{ \mathcal{M}}(\xi)^{\frac{1}{\kappa}(-\delta(|\beta_1|+|\beta_2|)} \widehat{ \mathcal{M}}(\xi)^{\frac{m}{\kappa}} a(x,\xi)^{-1}(\partial_{X}^{(\beta_1)}a(x,\xi))(\partial_{X}^{(\beta_2)}a^{-1}(x,\xi)) ,$$ we obtain
 \begin{align*}
  &\Vert \widehat{ \mathcal{M}}(\xi)^{\frac{1}{\kappa}(-\delta(|\beta_1|+|\beta_2|)} \widehat{ \mathcal{M}}(\xi)^{\frac{m}{\kappa}} a(x,\xi)^{-1}(\partial_{X}^{(\beta_1)}a(x,\xi))(\partial_{X}^{(\beta_2)}a^{-1}(x,\xi))\Vert_{\textnormal{op}}\\
  &\lesssim \Vert \widehat{ \mathcal{M}}(\xi)^{\frac{2}{\kappa}(-\delta(|\beta_1|+|\beta_2|)} \\
  &\times \widehat{ \mathcal{M}}(\xi)^{\frac{m}{\kappa}} a(x,\xi)^{-1}(\partial_{X}^{(\beta_1)}a(x,\xi))(\partial_{X}^{(\beta_2)}a^{-1}(x,\xi)) \widehat{ \mathcal{M}}(\xi)^{\frac{1}{\kappa}(-\delta(|\beta_1|+|\beta_2|)}\\
  &\hspace{1cm}+ (1+\widehat{ \mathcal{M}}(\xi)^{\frac{2}{\kappa}(-\delta(|\beta_1|+|\beta_2|)})^{\frac{1}{2}}X(1+\widehat{ \mathcal{M}}(\xi)^{\frac{2}{\kappa}(-\delta(|\beta_1|+|\beta_2|)})^{\frac{1}{2}}   \Vert_{\textnormal{op}} \\
  &\lesssim  \Vert \widehat{ \mathcal{M}}(\xi)^{\frac{2}{\kappa}(-\delta(|\beta_1|+|\beta_2|)} \Vert_{\textnormal{op}}\\
  &\times \Vert \widehat{ \mathcal{M}}(\xi)^{\frac{m}{\kappa}} a(x,\xi)^{-1}\Vert_{\textnormal{op}}\Vert (\partial_{X}^{(\beta_1)}a(x,\xi))(\partial_{X}^{(\beta_2)}a^{-1}(x,\xi)) \widehat{ \mathcal{M}}(\xi)^{\frac{1}{\kappa}(-\delta(|\beta_1|+|\beta_2|)}\Vert_{\textnormal{op}}\\
  &\hspace{1cm}+ \Vert(1+\widehat{ \mathcal{M}}(\xi)^{\frac{2}{\kappa}(-\delta(|\beta_1|+|\beta_2|)})^{\frac{1}{2}}\Vert_{\textnormal{op}}\Vert X(1+\widehat{ \mathcal{M}}(\xi)^{\frac{2}{\kappa}(-\delta(|\beta_1|+|\beta_2|)})^{\frac{1}{2}}   \Vert_{\textnormal{op}}\\
  &\lesssim  \Vert (\partial_{X}^{(\beta_1)}a(x,\xi))(\partial_{X}^{(\beta_2)}a^{-1}(x,\xi)) \widehat{ \mathcal{M}}(\xi)^{\frac{1}{\kappa}(-\delta(|\beta_1|+|\beta_2|)}\Vert_{\textnormal{op}}\\
  &\hspace{1cm}+ \Vert X \widehat{ \mathcal{M}}(\xi)^{\frac{1}{\kappa}(-\delta(|\beta_1|+|\beta_2|)} \widehat{ \mathcal{M}}(\xi)^{\frac{1}{\kappa}(\delta(|\beta_1|+|\beta_2|)} (1+\widehat{ \mathcal{M}}(\xi)^{\frac{2}{\kappa}(-\delta(|\beta_1|+|\beta_2|)})^{\frac{1}{2}}   \Vert_{\textnormal{op}}\\
  &\lesssim  \Vert (\partial_{X}^{(\beta_1)}a(x,\xi))(\partial_{X}^{(\beta_2)}a^{-1}(x,\xi)) \widehat{ \mathcal{M}}(\xi)^{\frac{1}{\kappa}(-\delta(|\beta_1|+|\beta_2|)}\Vert_{\textnormal{op}}\\
  &\hspace{1cm}+ \Vert X \widehat{ \mathcal{M}}(\xi)^{\frac{1}{\kappa}(-\delta(|\beta_1|+|\beta_2|)}\Vert_{\textnormal{op}}\Vert  \widehat{ \mathcal{M}}(\xi)^{\frac{1}{\kappa}(\delta(|\beta_1|+|\beta_2|)} (1+\widehat{ \mathcal{M}}(\xi)^{\frac{2}{\kappa}(-\delta(|\beta_1|+|\beta_2|)})^{\frac{1}{2}}   \Vert_{\textnormal{op}}\\
  &\lesssim   \Vert (\partial_{X}^{(\beta_1)}a(x,\xi))(\partial_{X}^{(\beta_2)}a^{-1}(x,\xi)) \widehat{ \mathcal{M}}(\xi)^{\frac{1}{\kappa}(-\delta(|\beta_1|+|\beta_2|)}\Vert_{\textnormal{op}}\\
  &+ \Vert \widehat{ \mathcal{M}}(\xi)^{\frac{1}{\kappa}(-\delta(|\beta_1|+|\beta_2|)}\\
  &\times \widehat{ \mathcal{M}}(\xi)^{\frac{m}{\kappa}} a(x,\xi)^{-1}(\partial_{X}^{(\beta_1)}a(x,\xi))(\partial_{X}^{(\beta_2)}a^{-1}(x,\xi)) \widehat{ \mathcal{M}}(\xi)^{\frac{1}{\kappa}(-\delta(|\beta_1|+|\beta_2|)}\Vert_{\textnormal{op}}\\
  &\lesssim   \Vert (\partial_{X}^{(\beta_1)}a(x,\xi))(\partial_{X}^{(\beta_2)}a^{-1}(x,\xi)) \widehat{ \mathcal{M}}(\xi)^{\frac{1}{\kappa}(-\delta(|\beta_1|+|\beta_2|)}\Vert_{\textnormal{op}}\\
  &+ \Vert \widehat{ \mathcal{M}}(\xi)^{\frac{1}{\kappa}(-\delta(|\beta_1|+|\beta_2|)} \Vert_{\textnormal{op}}\\
  &\times \Vert \widehat{ \mathcal{M}}(\xi)^{\frac{m}{\kappa}} a(x,\xi)^{-1}\Vert_{\textnormal{op}}\Vert (\partial_{X}^{(\beta_1)}a(x,\xi))(\partial_{X}^{(\beta_2)}a^{-1}(x,\xi)) \widehat{ \mathcal{M}}(\xi)^{\frac{1}{\kappa}(-\delta(|\beta_1|+|\beta_2|)}\Vert_{\textnormal{op}}\\
  &\lesssim   \Vert (\partial_{X}^{(\beta_1)}a(x,\xi))(\partial_{X}^{(\beta_2)}a^{-1}(x,\xi)) \widehat{ \mathcal{M}}(\xi)^{\frac{1}{\kappa}(-\delta(|\beta_1|+|\beta_2|)}\Vert_{\textnormal{op}}.
 \end{align*}  Observe that we have estimated $$\Vert  \widehat{ \mathcal{M}}(\xi)^{\frac{1}{\kappa}(\delta(|\beta_1|+|\beta_2|)} (1+\widehat{ \mathcal{M}}(\xi)^{\frac{2}{\kappa}(-\delta(|\beta_1|+|\beta_2|)})^{\frac{1}{2}}   \Vert_{\textnormal{op}}=O(1),$$ because of
 \begin{align*}
     \widehat{ \mathcal{M}}(\xi)^{\frac{1}{\kappa}(\delta(|\beta_1|+|\beta_2|)} (1+\widehat{ \mathcal{M}}(\xi)^{\frac{2}{\kappa}(-\delta(|\beta_1|+|\beta_2|)})^{\frac{1}{2}} \in \mathscr{S}^{0}_{\frac{1}{\kappa},0}(G\times \widehat{G}).
 \end{align*}
 Again, by using Theorem \ref{cor}, we have

\begin{align*}
   & \Vert(\partial_{X}^{(\beta_1)}a(x,\xi))(\partial_{X}^{(\beta_2)}a^{-1}(x,\xi))  \widehat{ \mathcal{M}}(\xi)^{\frac{1}{\kappa}(-\delta(|\beta_1|+|\beta_2|)}\Vert_{\textnormal{op}}\\
   &\leqslant \Vert(\partial_{X}^{(\beta_1)}a(x,\xi))  \widehat{ \mathcal{M}}(\xi)^{\frac{1}{\kappa}(-m-\delta|\beta_1|)}\Vert_{\textnormal{op}} \\
   &\hspace{4cm}\times\Vert\widehat{ \mathcal{M}}(\xi)^{\frac{1}{\kappa}(m+\delta|\beta_1|)}   (\partial_{X}^{(\beta_2)}a^{-1}(x,\xi))  \widehat{ \mathcal{M}}(\xi)^{\frac{1}{\kappa}(-\delta(|\beta_1|+|\beta_2|)}  \Vert_{\textnormal{op}}\\
   &\lesssim  \Vert\widehat{ \mathcal{M}}(\xi)^{\frac{1}{\kappa}(m+\delta|\beta_1|)}   (\partial_{X}^{(\beta_2)}a^{-1}(x,\xi))  \widehat{ \mathcal{M}}(\xi)^{\frac{1}{\kappa}(-\delta(|\beta_1|+|\beta_2|)}  \Vert_{\textnormal{op}}\\
   &=\Vert\widehat{ \mathcal{M}}(\xi)^{\frac{1}{\kappa}(m+\delta|\beta_1|)}   (\partial_{X}^{(\beta_2)}a^{-1}(x,\xi))  \widehat{ \mathcal{M}}(\xi)^{\frac{1}{\kappa}(-\delta(|\beta_2|+m)}\widehat{ \mathcal{M}}(\xi)^{\frac{1}{\kappa}(-\delta|\beta_1|-m)}\Vert_{\textnormal{op}}\\
   &\lesssim \Vert   (\partial_{X}^{(\beta_2)}a^{-1}(x,\xi))  \widehat{ \mathcal{M}}(\xi)^{\frac{1}{\kappa}(-\delta(|\beta_2|+m)}\Vert_{\textnormal{op}}\\
  & \lesssim 1,
\end{align*}  uniformly in $(x,[\xi]).$
A similar analysis using the Leibniz rule for difference operators can be used in order to estimate the terms $\Delta_\xi^{\alpha}a^{-1}.$ For this, we need the following two estimates,
\begin{equation}
     \Vert \widehat{\mathcal{M}}(\xi)^{\frac{1}{\kappa}(\rho|\gamma|-\delta|\beta|)} a(x,\xi)^{-1}[\Delta_{\xi}^{\gamma}\partial_{X}^{(\beta)}a(x,\xi)]\Vert_{\textnormal{op}}=O(1),
\end{equation}
\begin{equation}
     \Vert  [\Delta_{\xi}^{\gamma}\partial_{X}^{(\beta)}a(x,\xi)]a(x,\xi)^{-1}\widehat{\mathcal{M}}(\xi)^{\frac{1}{\kappa}(\rho|\gamma|-\delta|\beta|)}\Vert_{\textnormal{op}}\\ =O(1).
\end{equation}
For the proof, let us use  \eqref{Recht2}, observing that
\begin{align*}
     \Vert \widehat{\mathcal{M}}(\xi)^{\frac{1}{\kappa}(\rho|\gamma|-\delta|\beta|)} &a(x,\xi)^{-1}[\Delta_{\xi}^{\gamma}\partial_{X}^{(\beta)}a(x,\xi)]\Vert_{\textnormal{op}}\\
    &= \Vert \widehat{\mathcal{M}}(\xi)^{\frac{1}{\kappa}(\rho|\gamma|-\delta|\beta|-m)}\widehat{\mathcal{M}}(\xi)^{\frac{m}{\kappa}} a(x,\xi)^{-1}[\Delta_{\xi}^{\gamma}\partial_{X}^{(\beta)}a(x,\xi)]\Vert_{\textnormal{op}}\\
     &\lesssim \Vert \widehat{\mathcal{M}}(\xi)^{\frac{m}{\kappa}} a(x,\xi)^{-1}[\Delta_{\xi}^{\gamma}\partial_{X}^{(\beta)}a(x,\xi)]\widehat{\mathcal{M}}(\xi)^{\frac{1}{\kappa}(\rho|\gamma|-\delta|\beta|-m)}\Vert_{\textnormal{op}}\\ 
     &\lesssim \Vert \widehat{\mathcal{M}}(\xi)^{\frac{m}{\kappa}} a(x,\xi)^{-1}\Vert_{\textnormal{op}}\Vert[\Delta_{\xi}^{\gamma}\partial_{X}^{(\beta)}a(x,\xi)]\widehat{\mathcal{M}}(\xi)^{\frac{1}{\kappa}(\rho|\gamma|-\delta|\beta|-m)}\Vert_{\textnormal{op}}\\ 
     &= O(1).
\end{align*}On the other hand,
\begin{align*}
     \Vert  [\Delta_{\xi}^{\gamma}\partial_{X}^{(\beta)}a(x,\xi)]&a(x,\xi)^{-1}\widehat{\mathcal{M}}(\xi)^{\frac{1}{\kappa}(\rho|\gamma|-\delta|\beta|)}\Vert_{\textnormal{op}}\\
    &= \Vert  [\Delta_{\xi}^{\gamma}\partial_{X}^{(\beta)}a(x,\xi)]   \widehat{\mathcal{M}}(\xi)^{-\frac{m}{\kappa}}\widehat{\mathcal{M}}(\xi)^{\frac{m}{\kappa}} a(x,\xi)^{-1}\widehat{\mathcal{M}}(\xi)^{\frac{1}{\kappa}(\rho|\gamma|-\delta|\beta|)}\Vert_{\textnormal{op}}\\
     &\lesssim \Vert \widehat{\mathcal{M}}(\xi)^{\frac{1}{\kappa}(\rho|\gamma|-\delta|\beta|)} [\Delta_{\xi}^{\gamma}\partial_{X}^{(\beta)}a(x,\xi)]\widehat{\mathcal{M}}(\xi)^{-\frac{m}{\kappa}}\widehat{\mathcal{M}}(\xi)^{\frac{m}{\kappa}}a(x,\xi)^{-1}\Vert_{\textnormal{op}}\\ 
     &\lesssim \Vert \widehat{\mathcal{M}}(\xi)^{\frac{1}{\kappa}(\rho|\gamma|-\delta|\beta|)} [\Delta_{\xi}^{\gamma}\partial_{X}^{(\beta)}a(x,\xi)]\widehat{\mathcal{M}}(\xi)^{-\frac{m}{\kappa}}\Vert_{\textnormal{op}} \Vert\widehat{\mathcal{M}}(\xi)^{\frac{m}{\kappa}}a(x,\xi)^{-1}\Vert_{\textnormal{op}}\\ 
     &\lesssim \Vert \widehat{\mathcal{M}}(\xi)^{\frac{1}{\kappa}(-m+\rho|\gamma|-\delta|\beta|)} [\Delta_{\xi}^{\gamma}\partial_{X}^{(\beta)}a(x,\xi)]\Vert_{\textnormal{op}} \\ 
     &= O(1).
\end{align*}
Now, we will estimate in $\frac{1}{\kappa}(m+\rho\ell)$ the subelliptic order for the differences  $\Delta_{q_{\ell}}a^{-1},$ in the dilated classes. To do so, we will use the mathematical induction. The case $\ell=0$ holds true from the hypothesis of Theorem \ref{IesT}. To study the differences of higher order we will use the Leibniz rule (see Remark \ref{Leibnizrule}),
\begin{align*}
   \Delta_{q_\ell}[ a_{1}a_{2}](x,\xi) =\sum_{ |\gamma|,|\varepsilon|\leqslant \ell\leqslant |\gamma|+|\varepsilon| }C_{\varepsilon,\gamma}(\Delta_{q_\gamma}a_{1})(x,\xi) (\Delta_{q_\varepsilon}a_{2})(x,\xi),
\end{align*} for $a_{i}\in C^{\infty}(G)\times \mathscr{S}'(\widehat{G}).$
From the identity, $a(x,\xi)a(x,\xi)^{-1}=I_{d_\xi},$ we deduce that
\begin{align*}
  (\Delta_{q_1}a)(x,\xi) & a(x,\xi)^{-1}+a(x,\xi) (\Delta_{q_1}a^{-1})(x,\xi)\\
   &=-\sum_{ 1= |\nu|,|\nu'| }C_{\nu,\nu'}(\Delta_{q_{(\nu)}}a)(x,\xi) (\Delta_{q_{(\nu')}}a^{-1})(x,\xi),
\end{align*}and consequently
\begin{align*}
   &  (\Delta_{q_1}a^{-1})(x,\xi)=-a(x,\xi)^{-1}(\Delta_{q_1}a)(x,\xi)a^{-1}(x,\xi)\\
   &\hspace{2cm}-\sum_{ 1= |\nu|,|\nu'| }C_{\nu,\nu'}a(x,\xi)^{-1}(\Delta_{q_{(\nu)}}a)(x,\xi) (\Delta_{q_{(\nu')}}a^{-1})(x,\xi).
\end{align*} The differences of higher order $\Delta_{q_{\ell+1}}a^{-1},$ can be estimated e.g. from difference operators of the form $$\Delta_{q_{\ell+1}}=\Delta_{q_{\ell}}\Delta_{q_{1}}.$$ 
Then, by applying the difference operator $\Delta_{q_{\ell}}$ to $(\Delta_{q_1}a^{-1})(x,\xi)$ we essentially obtain linear combinations of terms of the following kind as a consequence of the Leibniz rule:
\begin{itemize}
    \item $IV_{(1)}(x,\xi):=\Delta_{q_{\ell}}a(x,\xi)^{-1}\times (\Delta_{q_1}a)(x,\xi)\times a^{-1}(x,\xi)$\\
    \item $IV_{(2)}(x,\xi):=a(x,\xi)^{-1}\times \Delta_{q_{\ell+1}}a(x,\xi)\times a^{-1}(x,\xi)$\\
     \item $IV_{(3)}(x,\xi):=a(x,\xi)^{-1}\times \Delta_{q_{1}}a(x,\xi)\times \Delta_{q_{\ell}}a^{-1}(x,\xi)$\\
      \item $IV_{(4)}(x,\xi):=\Delta_{q_{\ell_1}} a(x,\xi)^{-1}\times(\Delta_{q_{\ell_2+1}}a)(x,\xi)\times \Delta_{q_{\ell_3+1}}a^{-1}(x,\xi),$ $1\leqslant \ell_{1},\ell_2,\ell_3\leqslant \ell\leqslant \ell_1+\ell_2+\ell_3,$\\
      \item $V_{(1)}(x,\xi):=\Delta_{q_{\ell}}a(x,\xi)^{-1}\times (\Delta_{q_{(\nu) } }a)(x,\xi)\times \Delta_{q_{(\nu')}}a^{-1}(x,\xi)$\\
    \item $V_{(2)}(x,\xi):=a(x,\xi)^{-1}\times \Delta_{q_{\ell+\nu}}a(x,\xi)\times \Delta_{q_{\nu'}}a^{-1}(x,\xi)$\\
     \item $V_{(3)}(x,\xi):=a(x,\xi)^{-1}\times \Delta_{q_{(\nu)}}a(x,\xi)\times \Delta_{q_{\ell+\nu'}}a^{-1}(x,\xi)$\\
      \item $V_{(4)}(x,\xi):=\Delta_{q_{\ell_1}} a(x,\xi)^{-1}\times(\Delta_{q_{\ell_2+\nu}}a)(x,\xi)\times \Delta_{q_{\ell_3+\nu'}}a^{-1}(x,\xi),$ $1\leqslant \ell_{1},\ell_2,\ell_3\leqslant \ell\leqslant \ell_1+\ell_2+\ell_3,$\\
\end{itemize}which we can estimate as follows. First, assume that for every $\ell\geqslant  1,$ $\ell\in \mathbb{N},$ we have
$$\sup_{(x,[\xi])}  \Vert\widehat{\mathcal{M}}(\xi)^{\frac{1}{\kappa}(m+\rho\ell)} (\Delta_{q_{\ell}}a^{-1})(x,\xi)\Vert_{\textnormal{op}}  <\infty. $$ We need to prove that
$$\sup_{(x,[\xi])}  \Vert\widehat{\mathcal{M}}(\xi)^{\frac{1}{\kappa}(m+\rho(\ell+1))} (\Delta_{q_{\ell+1}}a^{-1})(x,\xi)\Vert_{\textnormal{op}}  <\infty. $$
For this, it is enough to prove that 
$$ \sup_{(x,[\xi])}  \Vert \widehat{\mathcal{M}}(\xi)^{\frac{1}{\kappa}(m+\rho(\ell+1))} IV_{(i)}(x,\xi)\Vert_{\textnormal{op}},\, \sup_{(x,[\xi])}  \Vert \widehat{\mathcal{M}}(\xi)^{ \frac{1}{\kappa}(m+\rho(\ell+1))} V_{(j)}(x,\xi)\Vert_{\textnormal{op}} <\infty, $$ for all $1\leqslant i,j\leqslant 4.$ Next, let us omit the argument $(x,\xi)$ in order to simplify the notation.
So, we have
\begin{align*}
    &\Vert \widehat{\mathcal{M}}(\xi)^{\frac{1}{\kappa}(m+\rho(\ell+1))} IV_{(1)}\Vert_{\textnormal{op}}\\
    &\hspace{4cm}+     \Vert \widehat{\mathcal{M}}(\xi)^{\frac{1}{\kappa}(m+\rho(\ell+1))} IV_{(2)}(x,\xi)\Vert_{\textnormal{op}}\\
   &\hspace{4cm}+   \Vert \widehat{\mathcal{M}}(\xi)^{\frac{1}{\kappa}(m+\rho(\ell+1))} IV_{(3)}(x,\xi)\Vert_{\textnormal{op}}\\
    &\hspace{4cm}+\Vert \widehat{\mathcal{M}}(\xi)^{\frac{1}{\kappa}(m+\rho(\ell+1))} V_{(1)}(x,\xi)\Vert_{\textnormal{op}}\\
    &\hspace{4cm}+\Vert \widehat{\mathcal{M}}(\xi)^{\frac{1}{\kappa}(m+\rho(\ell+1))} V_{(2)}(x,\xi)\Vert_{\textnormal{op}}\\
    & =\Vert \widehat{\mathcal{M}}(\xi)^{\frac{1}{\kappa}(m+\rho(\ell+1))} \Delta_{q_{\ell}}a^{-1}\times (\Delta_{q_1}a)\times a^{-1}\Vert_{\textnormal{op}}\\&\hspace{2cm}+     \Vert \widehat{\mathcal{M}}(\xi)^{\frac{1}{\kappa}(m+\rho(\ell+1))} a^{-1}\times \Delta_{q_{\ell+1}}a\times a^{-1}\Vert_{\textnormal{op}}\\
   &\hspace{2cm}+   \Vert \widehat{\mathcal{M}}(\xi)^{\frac{1}{\kappa}(m+\rho(\ell+1))} a^{-1}\times \Delta_{q_{1}}a\times \Delta_{q_{\ell}}a^{-1}\Vert_{\textnormal{op}}\\
    &\hspace{2cm}+\Vert \widehat{\mathcal{M}}(\xi)^{\frac{1}{\kappa}(m+\rho(\ell+1))} \Delta_{q_{\ell}}a^{-1}\times (\Delta_{q_{(\nu) } }a)\times \Delta_{q_{(\nu')}}a^{-1}\Vert_{\textnormal{op}}\\
    &\hspace{2cm}+\Vert \widehat{\mathcal{M}}(\xi)^{\frac{1}{\kappa}(m+\rho(\ell+1))} a^{-1}\times \Delta_{q_{\ell+\nu}}a\times \Delta_{q_{\nu'}}a^{-1}\Vert_{\textnormal{op}}\\
    \end{align*}
    
    \begin{align*}
     \lesssim\Vert &\widehat{\mathcal{M}}(\xi)^{\frac{1}{\kappa}(m+\rho\ell)} \Delta_{q_{\ell}}a^{-1}\times (\Delta_{q_1}a)\times a^{-1}\widehat{\mathcal{M}}(\xi)^{\frac{\rho}{\kappa}}\Vert_{\textnormal{op}}\\&\hspace{2cm}+     \Vert \widehat{\mathcal{M}}(\xi)^{\frac{m}{\kappa}} a^{-1}\times \Delta_{q_{\ell+1}}a\times a^{-1} \widehat{\mathcal{M}}(\xi)^{\frac{1}{\kappa}(\rho(\ell+1))} \Vert_{\textnormal{op}}\\
   &\hspace{2cm}+   \Vert \widehat{\mathcal{M}}(\xi)^{\frac{\rho}{\kappa}} a^{-1}\times \Delta_{q_{1}}a\times \Delta_{q_{\ell}}a^{-1}\widehat{\mathcal{M}}(\xi)^{\frac{1}{\kappa}(m+\rho\ell)}\Vert_{\textnormal{op}}\\
    &\hspace{2cm}+\Vert \widehat{\mathcal{M}}(\xi)^{\frac{1}{\kappa}(m+\rho\ell)} \Delta_{q_{\ell}}a^{-1}\Vert_{\textnormal{op}}\Vert  (\Delta_{q_{(\nu) } }a)\times \Delta_{q_{(\nu')}}a^{-1} \widehat{\mathcal{M}}(\xi)^{\frac{\rho}{\kappa}}\Vert_{\textnormal{op}}\\
    &\hspace{2cm}+\Vert \widehat{\mathcal{M}}(\xi)^{\frac{1}{\kappa}(m+\rho(\ell+1))} a^{-1}\times \Delta_{q_{\ell+\nu}}a\times \Delta_{q_{\nu'}}a^{-1}\Vert_{\textnormal{op}}\\
    \\
    & \lesssim\Vert (\Delta_{q_1}a)\times a^{-1}\widehat{\mathcal{M}}(\xi)^{\frac{\rho}{\kappa}}\Vert_{\textnormal{op}}\\&\hspace{2cm}+      \Vert  \Delta_{q_{\ell+1}}a\times a^{-1} \widehat{\mathcal{M}}(\xi)^{\frac{1}{\kappa}(\rho(\ell+1))} \Vert_{\textnormal{op}}\\
   &\hspace{2cm}+   \Vert \widehat{\mathcal{M}}(\xi)^{\frac{\rho}{\kappa}} a^{-1}\times \Delta_{q_{1}}a\Vert_{\textnormal{op}}\\
    &\hspace{2cm}+\Vert  (\Delta_{q_{(\nu) } }a)\times a^{-1} \widehat{\mathcal{M}}(\xi)^{\frac{\rho}{\kappa}}\Vert_{\textnormal{op}}\\
    &\hspace{2cm}+\Vert \widehat{\mathcal{M}}(\xi)^{\frac{1}{\kappa}(\rho(\ell+1))} a^{-1}\times \Delta_{q_{\ell+\nu}}a\Vert_{\textnormal{op}}\\
     &\hspace{2cm}\lesssim 1,
\end{align*}
where we have used  the estimates
\begin{equation}\label{casobacano}
    \Vert  (\Delta_{q_{(\nu) } }a)\times \Delta_{q_{(\nu')}}a^{-1} \widehat{\mathcal{M}}(\xi)^{\frac{\rho}{\kappa}}\Vert_{\textnormal{op}}\lesssim 1,
\end{equation}
and 
\begin{equation}\label{casobacano2}
    \Vert \widehat{\mathcal{M}}(\xi)^{\frac{1}{\kappa}(m+\rho(\ell+1))} a^{-1}\cdot \Delta_{q_{\ell+\nu}}a\cdot \Delta_{q_{\nu'}}a^{-1}\Vert_{\textnormal{op}}\lesssim \Vert \widehat{\mathcal{M}}(\xi)^{\frac{1}{\kappa}(\rho(\ell+1))} a^{-1}\cdot \Delta_{q_{\ell+\nu}}a\Vert_{\textnormal{op}}. 
\end{equation}
Indeed, for the proof of \eqref{casobacano}  observe that from the induction hypothesis, we have
\begin{align*}
  \Vert  (\Delta_{q_{(\nu) } }a)\times &\Delta_{q_{(\nu')}}a^{-1} \widehat{\mathcal{M}}(\xi)^{\frac{\rho}{\kappa}}\Vert_{\textnormal{op}}\\
  &\lesssim   \Vert \widehat{\mathcal{M}}(\xi)^{\frac{1}{\kappa}(-m+\rho)} (\Delta_{q_{(\nu) } }a)\times \Delta_{q_{(\nu')}}a^{-1} \widehat{\mathcal{M}}(\xi)^{\frac{1}{\kappa}(m+\rho)} \widehat{\mathcal{M}}(\xi)^{-\frac{\rho}{\kappa}} \Vert_{\textnormal{op}}   \\
  &=   \Vert \widehat{\mathcal{M}}(\xi)^{\frac{1}{\kappa}(-m+\rho)} (\Delta_{q_{(\nu) } }a)\Vert_{\textnormal{op}} \Vert \Delta_{q_{(\nu')}}a^{-1} \widehat{\mathcal{M}}(\xi)^{\frac{1}{\kappa}(m+\rho)}\Vert_{\textnormal{op}} \Vert \widehat{\mathcal{M}}(\xi)^{-\frac{\rho}{\kappa}} \Vert_{\textnormal{op}}   \\
  &\lesssim 1.
\end{align*}In order to prove \eqref{casobacano2}, observe that
\begin{align*}
   & \Vert \widehat{\mathcal{M}}(\xi)^{\frac{1}{\kappa}(m+\rho(\ell+1))} a^{-1}\times \Delta_{q_{\ell+\nu}}a\times \Delta_{q_{\nu'}}a^{-1}\Vert_{\textnormal{op}}\\
    &\lesssim \Vert \widehat{\mathcal{M}}(\xi)^{\frac{1}{\kappa}(\rho(\ell+1))} a^{-1}\times \Delta_{q_{\ell+\nu}}a\times \Delta_{q_{\nu'}}a^{-1}\widehat{\mathcal{M}}(\xi)^{\frac{m}{\kappa}}\Vert_{\textnormal{op}}\\
     &\lesssim \Vert \widehat{\mathcal{M}}(\xi)^{\frac{1}{\kappa}(\rho(\ell+1))} a^{-1}\times \Delta_{q_{\ell+\nu}}a\Vert_{\textnormal{op}}\Vert \Delta_{q_{\nu'}}a^{-1}\widehat{\mathcal{M}}(\xi)^{\frac{1}{\kappa}(m+\rho)}\Vert_{\textnormal{op}}\Vert \widehat{\mathcal{M}}(\xi)^{-\frac{\rho}{\kappa}}\Vert_{\textnormal{op}} \\
     &\lesssim \Vert \widehat{\mathcal{M}}(\xi)^{\frac{1}{\kappa}(\rho(\ell+1))} a^{-1}\times \Delta_{q_{\ell+\nu}}a\Vert_{\textnormal{op}}.
\end{align*}
A similar analysis can be used to study $IV_{(4)}(x,\xi),V_{(3)}(x,\xi)$ and $V_{(4)}(x,\xi).$ Thus, we end the proof.
\end{proof}
Theorem \ref{IesT} implies the following result for the contracted subelliptic classes.
\begin{corollary}\label{IesTC} Let $m\in \mathbb{R}
,$ and let $0\leqslant \delta<\rho\leqslant 1.$  Let  $a=a(x,\xi)\in {S}^{m,\mathcal{L}}_{\rho,\delta}(G\times \widehat{G}).$  Assume also that $a(x,\xi)$ is invertible for every $(x,[\xi])\in G\times\widehat{G},$ and satisfies
\begin{equation}\label{IesC}
   \sup_{(x,[\xi])\in G\times \widehat{G}} \Vert \widehat{\mathcal{M}}(\xi)^{m}a(x,\xi)^{-1}\Vert_{\textnormal{op}}<\infty.
\end{equation}Then, $a^{-1}:=a(x,\xi)^{-1}\in {S}^{-m,\mathcal{L}}_{\rho,\delta}(G\times \widehat{G}).$ 
\end{corollary}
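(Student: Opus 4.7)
The plan is to obtain Corollary \ref{IesTC} as an immediate consequence of Theorem \ref{IesT} via the bookkeeping identification between the contracted and dilated subelliptic classes. Recall from Definition \ref{contracted''} that $a\in {S}^{m,\mathcal{L}}_{\rho,\delta}(G\times \widehat{G})$ is by definition the same as $a\in \mathscr{S}^{m\kappa,\mathcal{L}}_{\rho\kappa,\delta\kappa}(G)$, and the strict inequality $0\leq\delta<\rho\leq 1$ rescales to $0\leq \delta\kappa<\rho\kappa\leq \kappa$, which is exactly the range of parameters covered by Theorem \ref{IesT}. Moreover, since trivially $\widehat{\mathcal{M}}(\xi)^{m}=\widehat{\mathcal{M}}(\xi)^{(m\kappa)/\kappa}$, the invertibility hypothesis \eqref{IesC} coincides with the hypothesis \eqref{Ies} of Theorem \ref{IesT} applied with the dilated order $m':=m\kappa$.

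The second step is simply to invoke Theorem \ref{IesT} with the parameters $(m',\rho',\delta'):=(m\kappa,\rho\kappa,\delta\kappa)$. Its conclusion reads $a^{-1}\in \mathscr{S}^{-m\kappa,\mathcal{L}}_{\rho\kappa,\delta\kappa}(G)$, and by Definition \ref{contracted''} once more (now applied to the order $-m$) this is precisely the statement $a^{-1}\in {S}^{-m,\mathcal{L}}_{\rho,\delta}(G\times \widehat{G})$, as desired. I do not foresee any genuine obstacle; the corollary is purely a translation between the two indexing conventions. The only item worth verifying in passing is that the strict inequality $\delta<\rho$ is preserved under the scaling by $\kappa\geq 1$, which is obvious, so that the hypotheses of Theorem \ref{IesT} are indeed met.
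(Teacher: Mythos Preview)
Your proof is correct and follows the same approach as the paper: the corollary is stated immediately after Theorem \ref{IesT} with the remark that it follows from that theorem for the contracted classes, relying on the identification ${S}^{m,\mathcal{L}}_{\rho,\delta}(G\times \widehat{G})=\mathscr{S}^{m\kappa,\mathcal{L}}_{\rho\kappa,\delta\kappa}(G)$ and the observation that $\widehat{\mathcal{M}}(\xi)^{m}=\widehat{\mathcal{M}}(\xi)^{(m\kappa)/\kappa}$, exactly as you argue.
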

\begin{definition}[$ \mathcal{L} $-ellipticity]\label{lellipticity} In view of Theorem \ref{IesT} and Corollary \ref{IesTC}, it is justified to say that the symbols $a\in {S}^{m,\mathcal{L}}_{\rho,\delta}(G\times \widehat{G})=\mathscr{S}^{m\kappa,\mathcal{L}}_{\rho\kappa,\delta\kappa}(G) $ satisfying \eqref{IesC} are $\mathcal{L}$-elliptic of order $m$ and of type $(\rho,\delta),$ $0\leqslant \delta,\rho\leqslant 1,$ in the subelliptic contracted classes (or  $\mathcal{L}$-elliptic of order $m\kappa$ and of type $(\rho\kappa,\delta\kappa),$ $0\leqslant \delta,\rho\leqslant 1,$ in the subelliptic dilated classes).
 \end{definition}

\begin{example}[Negative powers of $(1+\mathcal{L})^\frac{1}{2}$]\label{powessecondtpart} As mentioned before, it will be proved in Theorem \ref{orders:theorems} that for every $s\in \mathbb{R},$ 
\begin{equation}\label{thenegativepower2}
  \mathcal{M}_{s}:=  (1+\mathcal{L})^\frac{s}{2}\in \Psi^{s,\mathcal{L}}_{1,0}(G\times \widehat{G}).
\end{equation}
However, let us mention that in Example \ref{powesfirstpart} we have deduced the weak conclusion, $\mathcal{M}_s\in \textnormal{Op}(\mathscr{S}^{s\kappa,\mathcal{L}}_{1,0}(G)),$ from which we deduced the following fact,
\begin{equation}\label{subellipticorder}
    \mathcal{M}_{-s}\in \textnormal{Op}(\mathscr{S}^{-s\kappa,\mathcal{L}}_{1,0}(G))=\textnormal{Op}({S}^{-s,\mathcal{L}}_{\frac{1}{\kappa},0}(G\times \widehat{G})).
\end{equation} Indeed, we have from Example \ref{powesfirstpart} that $\mathcal{M}_s\in \textnormal{Op}(\mathscr{S}^{s\kappa,\mathcal{L}}_{1,0}(G)).$ Since
\[ 
    \sup_{[\xi]\in \widehat{G}} \Vert \widehat{\mathcal{M}}(\xi)^{\frac{s\kappa}{\kappa}}\widehat{\mathcal{M}}_{-s}(\xi)\Vert_{\textnormal{op}}=\sup_{[\xi]\in \widehat{G}} \Vert I_{d_\xi\times d_\xi}\Vert_{\textnormal{op}}=1,
\]we have proved \eqref{Ies} for $m=s\kappa$. So, from Theorem \ref{IesT}, we deduce \eqref{subellipticorder}. Certainly, \eqref{thenegativepower2} is a better conclusion than \eqref{subellipticorder} but its proof will require the subelliptic version of the Hulanicki theorem (see Lemma \ref{HulanickiTheorem}).

\end{example}

\begin{remark}[Dependence of the subelliptic H\"ormander classes on the choice of a sub-Laplacian]\label{dependece1} In general, if we take two sub-Laplacians $\mathcal{L}_1$ and $\mathcal{L}_2$ on $G,$ associated with two systems of vector fields satisfying the H\"ormander condition of order $\kappa,$ the corresponding subelliptic H\"ormander classes $S^{m,\mathcal{L}_1}_{\rho,\delta}(G\times \widehat{G} )$ and $S^{m,\mathcal{L}_2}_{\rho,\delta}(G\times \widehat{G} )$ do not  necessarily agree. This implies that the subelliptic classes may in general  depend on the choice of the sub-Laplacian as we will see in Remark \ref{dependece2}, even for collections of vector fields of the same step.

\end{remark}
\subsection{Singular kernels of subelliptic pseudo-differential operators}
In order to study the behaviour of the kernel of a subelliptic operator near the origin we will consider a strongly  admissible collection of  difference operators $\{\Delta_{q(j)}\}$ and $\gamma_0\in\mathbb{N}_0^i,$ $|\gamma_0|=\ell,$ such that
\[ 
\Delta_{ q_{\ell}}\equiv \Delta_{\xi}^{\gamma_0}:=\Delta_{q_{(1)}}^{\gamma_1}\cdots   \Delta_{q_{(i)}}^{\gamma_{i}},\,\,\gamma_0=(\gamma_j)_{1\leqslant j\leqslant i}. 
\]
The strong admissibility means that
\[ 
    \textnormal{rank}\{\nabla q_{(j)}(e):1\leqslant j\leqslant i \}=\textnormal{dim}(G), \textnormal{  }\Delta_{q_{(j)}}\in \textnormal{diff}^{1}(\widehat{G}),
\] and that $x=e_{G}$ is the only common zero of the functions $q_{(j)},$ i.e. 
\[ 
    \bigcap_{j=1}^{i}\{x\in G: q_{(j)}(x)=0\}=\{e_G\}.
\]
\begin{remark}\label{weyl}
For our further analysis we will use that in view of  the Weyl-eigenvalue counting formula for the sub-Laplacian, we have
\[ 
     N(\lambda):=    \sum_{[\xi]\in \widehat{G}:(1+\nu_{ii}(\xi)^2)^{\frac{s}{2}} \leqslant \lambda,\,\forall 1\leqslant i\leqslant d_\xi} d_\xi^2=O(\lambda^{\frac{Q}{s}}),
     \]for every $\lambda>0,$ and $s\in \mathbb{N}$ (following e.g. Hassannezhad and Kokarev \cite[Theorem 3.5]{HK16}).
   \end{remark}
   \begin{remark}
   Let us note that from Remark \ref{weyl}, we can deduce that $d_\xi^2\lesssim (1+\nu_{ii}(\xi)^2)^{\frac{s}{2}\times \frac{Q}{s}},$ for $\langle\xi\rangle\rightarrow\infty,$ which also implies that
   \begin{equation}\label{XI}
       d_\xi\lesssim (1+\nu_{ii}(\xi)^2)^{\frac{Q}{4}}\asymp (1+\nu_{ii}(\xi))^{\frac{Q}{2}},\quad \,\langle\xi\rangle\rightarrow\infty.
   \end{equation}
   \end{remark}
We summarise the properties for the kernels of subelliptic operators in the following theorem. We will denote $|\cdot|$ the metric induced by the geodesic distance on $G\times G,$ measuring the distance from the identity element $e_G.$

\begin{proposition}\label{CalderonZygmund}
Let $G$ be a compact Lie group of dimension $n$ and let $0\leqslant \delta,\rho\leqslant 1.$  Let  $A:C^\infty(G)\rightarrow\mathscr{D}'(G)$ be a continuous linear operator with symbol $\sigma\in {S}^{m,\mathcal{L}}_{\rho,\delta}(G\times \widehat{G})$   in the contracted subelliptic H\"ormander class of order $m$ and of type $(\rho,\delta)$. Then, the right-convolution kernel of $A,$ $x\mapsto k_{x}:G\rightarrow C^\infty(G\setminus\{e_G\}),$ defined by $k_{x}:=\mathscr{F}^{-1}\sigma(x,\cdot),$ satisfies the following estimates for $|y|<1$:
\begin{itemize}
    \item[(i)] if $m>-Q,$ there exists $\ell\in \mathbb{N},$ independent of $\sigma,$ such that  
        \[ 
            |k_{x}(y)|\lesssim_{m}  \Vert \sigma\Vert_{\ell, S^{m,\mathcal{L}}_{\rho,\delta}}|y|^{-\frac{Q+m}{\rho}}.
        \]
         \item[(ii)] If $m=-Q,$ there exists $\ell\in \mathbb{N},$ independent of $\sigma,$ such that  
        \[ 
            |k_{x}(y)|\lesssim_{m} \Vert \sigma\Vert_{\ell, S^{m,\mathcal{L}}_{\rho,\delta}}|\log|y||.
        \]
        \item[(iii)] If $m<-Q,$ there exists $\ell\in \mathbb{N},$ independent of $\sigma,$ such that  
        \[ 
            |k_{x}(y)|\lesssim_{m} \Vert \sigma\Vert_{\ell, S^{m,\mathcal{L}}_{\rho,\delta}}.
        \]
\end{itemize}
\end{proposition}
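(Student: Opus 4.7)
The plan is a dyadic-cutoff argument on the spectral side of $\widehat{G}$, in the spirit of the Euclidean proof for standard H\"ormander classes. I fix a smooth $\eta\in C_c^\infty(\mathbb{R})$ with $\eta\equiv 1$ near $0$ and supported in a neighbourhood of $0$, and for a scale parameter $N\geqslant 1$ (to be optimised later in terms of $|y|$) I decompose
\begin{equation*}
\sigma(x,\xi)=\sigma(x,\xi)\eta(\widehat{\mathcal{M}}(\xi)/N)+\sigma(x,\xi)(1-\eta(\widehat{\mathcal{M}}(\xi)/N))=:\sigma_{N,\textnormal{low}}(x,\xi)+\sigma_{N,\textnormal{high}}(x,\xi),
\end{equation*}
inducing the splitting $k_x=k_{x,\textnormal{low}}+k_{x,\textnormal{high}}$ of the right-convolution kernel. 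The low-frequency piece is estimated pointwise by summing absolute values in the Fourier inversion formula and invoking the subelliptic Weyl counting bound of Remark \ref{weyl}, namely $\sum_{\nu(\xi)\leqslant t}d_\xi^{2}\lesssim t^{Q}$. A dyadic split of the range $\nu(\xi)\lesssim N$ then yields
\begin{equation*}
|k_{x,\textnormal{low}}(y)|\lesssim \sum_{[\xi]\,:\,\nu(\xi)\lesssim N}d_\xi^{2}\Vert \sigma(x,\xi)\Vert_{\textnormal{op}}\lesssim \sum_{[\xi]\,:\,\nu(\xi)\lesssim N}d_\xi^{2}(1+\nu(\xi))^{m},
\end{equation*}
which is $O(N^{Q+m})$ when $m>-Q$, $O(\log N)$ when $m=-Q$, and $O(1)$ when $m<-Q$.

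For the high-frequency piece, I exploit that multiplication by the smooth function $q_{(j)}(y)$ on the group corresponds to applying the first-order difference operator $\Delta_{q_{(j)}}$ on the symbol (Lemma \ref{Lemma47}(1)). Iterating for a multi-index $\alpha$ with $|\alpha|=\ell$, one obtains
\begin{equation*}
q^{\alpha}(y)\,k_{x,\textnormal{high}}(y)=\mathscr{F}^{-1}\bigl[\Delta_\xi^{\alpha}\sigma_{N,\textnormal{high}}(x,\cdot)\bigr](y),\quad q^{\alpha}:=q_{(1)}^{\alpha_1}\cdots q_{(n)}^{\alpha_n},
\end{equation*}
up to Leibniz-type commutator terms produced when $\Delta_{\xi}^{\alpha}$ hits $\eta(\widehat{\mathcal{M}}/N)$, which are of lower frequency order and absorbed in the same way. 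Since $\Delta_\xi^{\alpha}\sigma_{N,\textnormal{high}}$ belongs to $S^{m-\rho\ell,\mathcal{L}}_{\rho,\delta}$ and is frequency-localised to $\{\nu(\xi)\gtrsim N\}$, the same summation — now with $m$ replaced by $m-\rho\ell$ and $\ell$ chosen large enough that $Q+m-\rho\ell<0$ — gives
\begin{equation*}
\Vert \mathscr{F}^{-1}[\Delta_\xi^{\alpha}\sigma_{N,\textnormal{high}}(x,\cdot)]\Vert_{L^\infty(G)}\lesssim N^{Q+m-\rho\ell}.
\end{equation*}
Choosing for each $y$ the multi-index $\alpha$ that maximises $|q^{\alpha}(y)|$ and using that the strongly admissible collection $\{q_{(j)}\}$ may be arranged so that $\max_{|\alpha|=\ell}|q^{\alpha}(y)|\gtrsim |y|^{\ell}$ in the subelliptic distance, I conclude that $|k_{x,\textnormal{high}}(y)|\lesssim |y|^{-\ell}N^{Q+m-\rho\ell}$. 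Balancing the low and high estimates by $N=|y|^{-1/\rho}$ produces the stated bound $|y|^{-(Q+m)/\rho}$ for $m>-Q$; the endpoint $m=-Q$ and the subcritical case $m<-Q$ follow from the same choice of $N$ together with the corresponding low-frequency bound, the high-frequency contribution being negligible once $\ell$ is taken sufficiently large.

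The main obstacle is the lower bound $\max_{|\alpha|=\ell}|q^{\alpha}(y)|\gtrsim |y|^{\ell}$ in the \emph{Carnot-Carath\'eodory} distance: Euclidean strong admissibility merely yields $|y|_{\textnormal{Euc}}^{\ell}$, and $|y|_{\textnormal{Euc}}\asymp |y|^{\kappa}$ in the most vertical direction, which is far too weak. Resolving this requires either weighting the $q_{(j)}$ according to the depth of the associated vector field $X_{j,\mathfrak{D}}$ in the commutator filtration $H^{1}G\subset\cdots\subset H^{\kappa}G=TG$ (an anisotropic admissible collection adapted to the sub-Riemannian structure), or a ball-box-type argument showing that enough ``horizontal'' $q$'s produce the required lower bound. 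Once this is in hand, the uniform dependence of the constants on a finite number $\ell=\ell(m,Q,\rho)$ of symbol seminorms $\Vert \sigma\Vert_{\ell,S^{m,\mathcal{L}}_{\rho,\delta}}$ is extracted by tracking $\ell$ and the Leibniz commutators through the above computation.
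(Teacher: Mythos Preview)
Your overall architecture --- a low/high frequency split, Weyl counting for the low part, difference operators $\Delta_\xi^\alpha$ to produce the weight $q^\alpha(y)$ for the high part, then balancing the scale $N$ against $|y|$ --- is exactly what the paper does (it uses a full dyadic decomposition $\sigma_\ell(x,\xi)=\sigma(x,\xi)\eta_\ell(\langle\xi\rangle)$ rather than a single split, and passes through the subelliptic Sobolev embedding of Lemma~\ref{lemma3.18} rather than a direct trace sum, but these are organisational differences). The ``main obstacle'' you identify, however, is not one: in this proposition $|y|$ denotes the \emph{Riemannian} geodesic distance on $G$ (the paper says so just before the statement, and in the proof the inequality $|z|^{a}\lesssim \sum_{|\alpha|=a}|q^{\alpha}(z)|$ is quoted directly from the elliptic calculus of \cite{Fischer2015}). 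The Hausdorff dimension $Q$ enters only through the subelliptic Weyl law of Remark~\ref{weyl} used to count $d_\xi^2$, not through the distance; no anisotropic difference operators or ball-box argument is needed. Once you drop that concern your balancing $N=|y|^{-1/\rho}$ gives the claimed exponents.

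There is one genuine technical point where your dismissal of the Leibniz terms is too quick. You localise with $\eta(\widehat{\mathcal M}(\xi)/N)$ on the \emph{sub-Laplacian}, and then the commutators $\Delta_\xi^{\alpha_2}[\eta(\widehat{\mathcal M}/N)]$ are controlled only if you know that compactly supported spectral multipliers of $\mathcal L$ belong to $S^{0,\mathcal L}_{1,0}$ with bounds uniform in $N$ --- this is precisely the subelliptic Hulanicki theorem (Lemma~\ref{HulanickiTheorem} and Theorem~\ref{orders:theorems}), which is non-trivial and is developed much later. The paper avoids this by taking the dyadic cutoff $\eta_\ell(\langle\xi\rangle)$ on the \emph{Laplacian}: then $\Delta_\xi^{\alpha}[\eta_\ell(\langle\xi\rangle)]$ is handled by the already-available elliptic estimates (Lemma~\ref{LemmaFischer}, Remark~\ref{Remark}), while $Q$ still enters through the subelliptic Sobolev embedding $H^{s,\mathcal L}(G)\hookrightarrow C(G)$ for $s>Q/2$ used in Lemma~\ref{lemma3.18}. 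To make your version self-contained you should either switch to a Laplacian cutoff or first establish the needed Hulanicki-type bounds.
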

The proof of Proposition \ref{CalderonZygmund} requires some preliminary results.
\begin{remark}\label{Remarksk2}
Let us consider $k\in \mathscr{D}'(G),$ and let $s>Q/2.$ Then,
 \begin{equation}\label{inequality}
     \Vert k \Vert_{L^2(G)}\lesssim_{s}\sup_{[\xi]\in \widehat{G}}\Vert \widehat{\mathcal{M}}(\xi)^{s}\widehat{k}(\xi)\Vert_{\textnormal{op}},
 \end{equation}in the sense that $k\in L^2(G)$ when the right hand side is finite. Indeed, let us observe that
\begin{align*}
    \Vert k\Vert_{L^2(G)}&=\Vert \widehat{k} \Vert_{L^2(\widehat{G})}= \Vert  \widehat{\mathcal{M}}(\xi)^{-s} \widehat{\mathcal{M}}(\xi)^{s} \widehat{k} \Vert_{L^2(\widehat{G})}\\
    & \leqslant \sup_{[\xi]\in \widehat{G}}\Vert \widehat{\mathcal{M}}(\xi)^{s}\widehat{k}(\xi)\Vert_{\textnormal{op}}\Vert \widehat{\mathcal{M}}(\xi)^{-s} \Vert_{L^2(\widehat{G})}.
\end{align*} 
Observe that 
\begin{align*}
   \Vert \widehat{\mathcal{M}}(\xi)^{-s} \Vert^2_{L^2(\widehat{G})}&=\sum_{[\xi]\in \widehat{G}}d_\xi\Vert \widehat{\mathcal{M}}(\xi)^{-s}\Vert^2_{\textnormal{HS}}\\
   &=\sum_{j=0}^\infty\sum_{[\xi]:2^j\leqslant (1+\nu_{kk}(\xi)^2)^\frac{1}{2}<2^{j+1} ,\,\,\forall 1\leqslant  k\leqslant d_\xi} d_\xi\sum_{i=1}^{d_\xi}(1+\nu_{ii}(\xi)^2)^{-s}\\
   &\asymp \sum_{j=0}^\infty\sum_{[\xi]:2^j\leqslant (1+\nu_{kk}(\xi)^2)^\frac{1}{2}<2^{j+1} ,\,\,\forall 1\leqslant  k\leqslant d_\xi} d_\xi^22^{-2js}\\
   &\leqslant  \sum_{j=0}^\infty\sum_{[\xi]:2^j\leqslant (1+\nu_{kk}(\xi)^2)^\frac{1}{2}<2^{j+1} ,\,\,\forall 1\leqslant  k\leqslant d_\xi} d_\xi^22^{-2js}\\
   &= \sum_{j=0}^\infty N(2^{j+1}) 2^{-2js},
\end{align*} where $N(\lambda)$ denotes the Weyl function for the sub-Laplacian. From Remark \ref{weyl}, we have that $N(2^{j+1})\asymp N(2^{j})\simeq 2^{jQ}.$ Consequently $\Vert \widehat{\mathcal{M}}(\xi)^{-s} \Vert^2_{L^2(\widehat{G})}<\infty,$ for $s>Q/2.$ Thus, we conclude the proof.
\end{remark}
\begin{remark}\label{sset}Let us observe that for $s>Q/2,$  we have the embedding $$H^{s,\mathcal{L}}(G)\equiv L^{2,\mathcal{L}}_s(G)\hookrightarrow C(G).$$ This is well known, but we will provide a proof for completeness. If $f\in C^\infty(G),$ then we have
\begin{align*}
    |f(x)| &\leqslant \sum_{[\xi]\in \widehat{G}}d_\xi|\textnormal{\textbf{Tr}}[\xi(x)\widehat{f}(\xi)]|=\sum_{[\xi]\in \widehat{G}}d_\xi|\textnormal{\textbf{Tr}}[\xi(x)\widehat{\mathcal{M}}(\xi)^{-s}\widehat{\mathcal{M}}(\xi)^{s}  \widehat{f}(\xi)]|\\
    &\leqslant \sum_{[\xi]\in \widehat{G}}d_\xi^{\frac{1}{2}}\Vert \xi(x)\widehat{\mathcal{M}}(\xi)^{-s}\Vert_{\textnormal{HS}}d_\xi^{\frac{1}{2}}\Vert \widehat{\mathcal{M}}(\xi)^{s}  \widehat{f}(\xi)\Vert_{\textnormal{HS}}\\
   &\leqslant \left( \sum_{[\xi]\in \widehat{G}}d_\xi\Vert \xi(x)\widehat{\mathcal{M}}(\xi)^{-s}\Vert_{\textnormal{HS}}^2 \right)^{\frac{1}{2}}\left( \sum_{[\xi]\in \widehat{G}}d_\xi\Vert \widehat{\mathcal{M}}(\xi)^{s}  \widehat{f}(\xi)\Vert_{\textnormal{HS}}^2 \right)^{\frac{1}{2}}\\
   &\leqslant \left( \sum_{[\xi]\in \widehat{G}}d_\xi\Vert\widehat{\mathcal{M}}(\xi)^{-s}\Vert_{\textnormal{HS}}^2 \right)^{\frac{1}{2}}\left( \sum_{[\xi]\in \widehat{G}}d_\xi\Vert \widehat{\mathcal{M}}(\xi)^{s}  \widehat{f}(\xi)\Vert_{\textnormal{HS}}^2 \right)^{\frac{1}{2}}\\
   &=\Vert \widehat{\mathcal{M}}(\xi)^{-s}  \Vert_{L^2(\widehat{G})}\Vert f\Vert_{L^{2,\mathcal{L}}_s(G)}.
\end{align*}The condition $s>Q/2,$ implies that $\Vert \widehat{\mathcal{M}}(\xi)^{-s}  \Vert_{L^2(\widehat{G})}<\infty.$ Now, if $f_i\rightarrow f$ in $H^s(G),$ the previous inequality shows that $f_{i}\rightarrow f$ uniformly on $G,$ and it shows that $f$ is continuous.
\end{remark}

 \begin{lemma}\label{lemma3.18}
  Let $G$ be a compact Lie group of Hausdorff dimension $Q$ and let  $0\leqslant \delta,\rho\leqslant 1.$  Let  $A:C^\infty(G)\rightarrow\mathscr{D}'(G)$ be a continuous linear operator with symbol $\sigma\in {S}^{m,\mathcal{L}}_{\rho,\delta}(G\times \widehat{G})$   in the contracted subelliptic H\"ormander class of order $m$ and of type $(\rho,\delta)$. Let $k_{x}$ be the right-convolution kernel of $A,$ and let us define the  function \[ 
      f_{\beta',\beta,\alpha}(z):=(1+\mathcal{L}_{z})^{|\beta|}([q_{(1)}^{\alpha(1)}\cdots q_{(i)}^{\alpha(i)} ]\partial_{x}^{\beta'}k_{x}(z))
  \] where $\{\Delta_{q(i)}\},$   is an admissible family of difference operators with  $x=e_{G}$ being the only common zero of the functions $q_{(i)}.$ Then, $f_{\beta',\beta,\alpha}$ is continuous and bounded provided that
  \[ 
       Q+2|\beta|+m+\delta|\beta'|<\rho|\alpha|.
  \]Moreover
  \[ 
     \sup_{z\in G}|f_{\beta',\beta,\alpha}(z)|\lesssim \Vert \sigma \Vert_{|\alpha|+|\beta'|,S^{m,\mathcal{L}}_{\rho,\delta}}.
  \]
 \end{lemma}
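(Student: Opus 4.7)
The plan is to reduce the estimate to an application of the Sobolev embedding for subelliptic Sobolev spaces (the inclusion $H^{s,\mathcal{L}}(G)\hookrightarrow C(G)$ for $s>Q/2$ from Remark \ref{sset}) after explicitly computing the group Fourier transform of $f_{\beta',\beta,\alpha}$ in terms of $\sigma$. Since $k_x$ is defined via $\widehat{k_x}(\xi)=\sigma(x,\xi)$, and difference operators act by the rule $\mathscr{F}_{z}[q(z)h(z)](\xi)=\Delta_{q}\widehat{h}(\xi)$ from \eqref{taylordifferences}, while $(1+\mathcal{L}_z)^{|\beta|}$ is a left-invariant Fourier multiplier with symbol $\widehat{\mathcal{M}}(\xi)^{2|\beta|}$, one gets, for fixed $x$,
\begin{equation*}
\mathscr{F}_z\bigl[f_{\beta',\beta,\alpha}\bigr](\xi)=\widehat{\mathcal{M}}(\xi)^{2|\beta|}\,\partial_{x}^{\beta'}\Delta_{\xi}^{\alpha}\sigma(x,\xi),
\end{equation*}
where we also used that $\partial_x^{\beta'}$ commutes with $\mathscr{F}_z$.

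Next, I would use Remark \ref{sset}: it suffices to show $f_{\beta',\beta,\alpha}\in H^{s_0,\mathcal{L}}(G)$ for some $s_0>Q/2$, with a norm bound controlled by a seminorm of $\sigma$. By Plancherel and the fact that $\widehat{\mathcal{M}}(\xi)$ is diagonal (and so commutes with itself), one has
\begin{equation*}
\|f_{\beta',\beta,\alpha}\|_{H^{s_0,\mathcal{L}}}^{2}=\sum_{[\xi]\in\widehat{G}}d_\xi\bigl\|\widehat{\mathcal{M}}(\xi)^{s_0+2|\beta|}\partial_{x}^{\beta'}\Delta_{\xi}^{\alpha}\sigma(x,\xi)\bigr\|_{\mathrm{HS}}^{2}.
\end{equation*}
Factoring $\widehat{\mathcal{M}}(\xi)^{s_0+2|\beta|}=\widehat{\mathcal{M}}(\xi)^{\gamma}\cdot\widehat{\mathcal{M}}(\xi)^{\rho|\alpha|-\delta|\beta'|-m}$ with $\gamma:=s_0+2|\beta|-\rho|\alpha|+\delta|\beta'|+m$, applying $\|AB\|_{\mathrm{HS}}\leq\|A\|_{\mathrm{HS}}\|B\|_{\mathrm{op}}$, and invoking the defining symbol inequality \eqref{InIC} yields
\begin{equation*}
\|f_{\beta',\beta,\alpha}\|_{H^{s_0,\mathcal{L}}}^{2}\lesssim\|\sigma\|_{|\alpha|+|\beta'|,S^{m,\mathcal{L}}_{\rho,\delta}}^{2}\,\bigl\|\widehat{\mathcal{M}}^{\gamma}\bigr\|_{L^{2}(\widehat{G})}^{2}.
\end{equation*}

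The final step is to verify that one can choose $s_0>Q/2$ with $\|\widehat{\mathcal{M}}^{\gamma}\|_{L^{2}(\widehat{G})}^{2}<\infty$. By the Weyl-type computation in Remark \ref{Remarksk2} the last quantity is finite precisely when $-\gamma>Q/2$, i.e.\ $s_0<-Q/2-2|\beta|-m-\delta|\beta'|+\rho|\alpha|$. Combining with $s_0>Q/2$, such an $s_0$ exists if and only if
\begin{equation*}
Q+2|\beta|+m+\delta|\beta'|<\rho|\alpha|,
\end{equation*}
which is exactly the stated hypothesis; moreover the bound on $s_0$ depends only on these quantities, so the implicit constant does not depend on $\sigma$. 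Applying Remark \ref{sset} then gives both the continuity of $f_{\beta',\beta,\alpha}$ and the uniform estimate $\sup_{z}|f_{\beta',\beta,\alpha}(z)|\lesssim\|\sigma\|_{|\alpha|+|\beta'|,S^{m,\mathcal{L}}_{\rho,\delta}}$. The only slightly delicate point I expect is matching the matrix-valued Hilbert--Schmidt/operator norm manipulation with the diagonal factorisation of $\widehat{\mathcal{M}}(\xi)$; everything else is bookkeeping via Plancherel and the Weyl law.
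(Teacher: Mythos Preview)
Your proposal is correct and follows essentially the same route as the paper's proof: both combine the subelliptic Sobolev embedding of Remark~\ref{sset}, the Plancherel identification $\mathscr{F}_z[f_{\beta',\beta,\alpha}](\xi)=\widehat{\mathcal{M}}(\xi)^{2|\beta|}\partial_x^{\beta'}\Delta_\xi^\alpha\sigma(x,\xi)$, the symbol inequality \eqref{InIC}, and the Weyl-law summability from Remark~\ref{Remarksk2}. The only cosmetic difference is that the paper splits the Sobolev threshold into two parameters $s,s'>Q/2$ (first embedding, then the $L^2$--$L^\infty_{\textnormal{op}}$ inequality \eqref{inequality}), whereas you use a single $s_0>Q/2$ together with the Hilbert--Schmidt factorisation $\|AB\|_{\textnormal{HS}}\le\|A\|_{\textnormal{HS}}\|B\|_{\textnormal{op}}$; both produce the identical condition $Q+2|\beta|+m+\delta|\beta'|<\rho|\alpha|$.
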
 
\begin{proof}
Let us fix $s>Q/2,$ and let $a=|\alpha|.$ By using the subelliptic Sobolev embedding theorem (see Remark \ref{sset}), if we prove that there exists $C_s>0,$ such that
 \begin{equation}\label{Cs}
     \Vert f_{\beta',\beta,\alpha}(z) \Vert_{H^{s,\mathcal{L}}(G)}\lesssim C_s,
 \end{equation}then $f_{\beta',\beta,\alpha}(z)$ is a continuous and  bounded function, and 
 \[ 
     \sup_{z\in G}|f_{\beta',\beta,\alpha}(z)|\lesssim_{s} \Vert f_{\beta',\beta,\alpha}(z) \Vert_{H^{s,\mathcal{L}}(G)}.
 \]
 Observe that
 \begin{align*}
     &\Vert f_{\beta',\beta,\alpha}(z) \Vert_{H^{s,\mathcal{L}  }(G)}=\Vert (1+\mathcal{L})^{\frac{ s+2|\beta| }{2}} [q_{(1)}^{\alpha(1)}\cdots q_{(i)}^{\alpha(i)} ]\partial_{x}^{\beta'}k_{x}(z) \Vert_{L^2(G)}.
 \end{align*}  Fix $s'>Q/2.$ From
 \eqref{inequality}, we deduce
 \begin{align*}
   &  \Vert (1+\mathcal{L})^{\frac{ s+2|\beta| }{2}} [q_{(1)}^{\alpha(1)}\cdots q_{(i)}^{\alpha(i)} ]\partial_{x}^{\beta'}k_{x}(z) \Vert_{L^2(G)}\\
    &\lesssim \sup_{[\xi]\in \widehat{G}}  \Vert \widehat{\mathcal{M}}(\xi)^{s'}\mathscr{F}[ (1+\mathcal{L})^{\frac{ s+2|\beta| }{2}} [q_{(1)}^{\alpha(1)}\cdots q_{(i)}^{\alpha(i)} ]\partial_{x}^{\beta'}k_{x}(z)   ]\Vert_{\textnormal{op}}\\
     &= \sup_{[\xi]\in \widehat{G}} \Vert\widehat{\mathcal{M}}(\xi)^{s'+s+2|\beta|} \Delta_{q_{a}}\partial_{x}^{\beta'}\sigma(x,\xi) \Vert_{\textnormal{op}}.
 \end{align*} Since $\sigma\in {S}^{m,\mathcal{L}}_{\rho,\delta}(G\times \widehat{G})=\mathscr{S}^{m\kappa,\mathcal{L}}_{\rho\kappa,\delta\kappa}(G\times \widehat{G}),$ we can write,
 \begin{align*}
  &\Vert\widehat{\mathcal{M}}(\xi)^{s'+s+2|\beta|} \Delta_{q_{a}}\partial_{x}^{\beta'}\sigma(x,\xi) \Vert_{\textnormal{op}}\\
  &=\Vert\widehat{\mathcal{M}}(\xi)^{s'+s+2|\beta|+m-\rho|\alpha|+\delta|\beta'|} \widehat{\mathcal{M}}(\xi)^{-m+\rho|\alpha|-\delta|\beta'|}\Delta_{q_{a}}\partial_{x}^{\beta'}\sigma(x,\xi) \Vert_{\textnormal{op}}\\
 &\leqslant \Vert\widehat{\mathcal{M}}(\xi)^{s'+s+2|\beta|+m-\rho|\alpha|+\delta|\beta'|}\Vert_{\textnormal{op}} \Vert \widehat{\mathcal{M}}(\xi)^{-m+\rho|\alpha|-\delta|\beta'|}\Delta_{q_{a}}\partial_{x}^{\beta'}\sigma(x,\xi) \Vert_{\textnormal{op}}\\
 &\lesssim \Vert\widehat{\mathcal{M}}(\xi)^{s'+s+2|\beta|+m-\rho|\alpha|+\delta|\beta'|}\Vert_{\textnormal{op}}\Vert \sigma \Vert_{|\alpha|+|\beta'|,S^{m,\mathcal{L}}_{\rho,\delta}}.
 \end{align*}To conclude the proof, we need only that 
 \[ 
    I_{s,s'}:= \sup_{[\xi]\in \widehat{G}}\Vert\widehat{\mathcal{M}}(\xi)^{s'+s+2|\beta|+m-\rho|\alpha|+\delta|\beta'|}\Vert_{\textnormal{op}}<\infty.
 \]Indeed, in \eqref{Cs}, we can take $C_s:=I_{s,s'}\Vert \sigma \Vert_{|\alpha|+|\beta'|,S^{m,\mathcal{L}}_{\rho,\delta}}.$ But, in order to assure that $I_{s,s'}<\infty,$ we only need to impose that $s'+s+2|\beta|+m-\rho|\alpha|+\delta|\beta'|\leqslant 0,$ restricted to $s+s'>\frac{Q}{2}+\frac{Q}{2}=Q.$ The inequality, $Q+2|\beta|<s'+s+2|\beta|,$ shows that the required inequality holds if, $Q+2|\beta|+m-\rho|\alpha|+\delta|\beta'|< 0.$ In this case, we can assure the existence of $s,$ and $s'$ close enough  to $Q/2$, as above such that
 $$ Q+2|\beta|+m-\rho|\alpha|+\delta|\beta'| <s'+s+2|\beta|+m-\rho|\alpha|+\delta|\beta'|\leqslant  0.$$
 Thus, we finish the proof.
 \end{proof} From the previous lemma we deduce the following immediate consequence.

\begin{corollary}\label{Corollary}
Let $G$ be a compact Lie group of Hausdorff dimension $Q$ and let  $0\leqslant \delta,\rho\leqslant 1.$  Let  $A:C^\infty(G)\rightarrow\mathscr{D}'(G)$ be a continuous linear operator with symbol $\sigma\in {S}^{m,\mathcal{L}}_{\rho,\delta}(G\times \widehat{G})$   in the contracted subelliptic H\"ormander class of order $m$ and of type $(\rho,\delta)$. Let $k_{x}$ be the right-convolution kernel of $A,$ and let us define the  function \[ 
      f(z):=k_{x}(z).
  \]  Then, $f$ is continuous and bounded provided that
  \[ 
       Q+m<0.
  \]
\end{corollary}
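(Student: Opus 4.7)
The plan is to specialize Lemma \ref{lemma3.18} to the trivial choice of multi-indices. Namely, I would set $\alpha = 0$, $\beta = 0$, and $\beta' = 0$ in the definition of $f_{\beta',\beta,\alpha}(z)$ from that lemma. With these choices the operator $(1+\mathcal{L}_z)^{|\beta|}$ becomes the identity, the polynomial factor $q_{(1)}^{\alpha(1)}\cdots q_{(i)}^{\alpha(i)}$ reduces to the constant $1$, and the derivative $\partial_x^{\beta'}$ is the identity as well, so that $f_{0,0,0}(z) = k_x(z) = f(z)$.

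The sufficient condition $Q + 2|\beta| + m + \delta|\beta'| < \rho|\alpha|$ provided by Lemma \ref{lemma3.18} specializes, under this choice, to the single inequality $Q + m < 0$, which is exactly the hypothesis of the corollary. Therefore the conclusion of Lemma \ref{lemma3.18} immediately gives continuity and boundedness of $f$, together with an explicit bound of the form
\begin{equation*}
  \sup_{z\in G}|k_x(z)|\lesssim \Vert \sigma \Vert_{0,\,S^{m,\mathcal{L}}_{\rho,\delta}},
\end{equation*}
uniformly in $x\in G$.

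Since this reduction is a one-line deduction, there is no substantive obstacle in the corollary itself. The real content lies in Lemma \ref{lemma3.18}, whose proof hinges on the subelliptic Sobolev embedding $H^{s,\mathcal{L}}(G)\hookrightarrow C(G)$ for $s > Q/2$ (available from Remark \ref{sset}), together with the identification of the group Fourier transform of the localising function as $\widehat{\mathcal{M}}(\xi)^{2|\beta|}\Delta_{q_a}\partial_x^{\beta'}\sigma(x,\xi)$ and the standard $L^2$-bound via Remark \ref{Remarksk2}. None of these are invoked afresh here; the corollary is simply the ``$\alpha=\beta=\beta'=0$'' corner case.
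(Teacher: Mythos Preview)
Your proof is correct and takes essentially the same approach as the paper, which simply states that the corollary is an immediate consequence of Lemma~\ref{lemma3.18}. Your specialization $\alpha=\beta=\beta'=0$ is exactly the intended deduction.
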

Finally we need the following Lemma (see Lemma 6.8 of \cite{Fischer2015}).
\begin{lemma}\label{LemmaFischer} Let $\sigma\in \mathscr{S}^{m}_{\rho,\delta}(G\times \widehat{G}),$ with $0\leqslant \delta\leqslant \rho\leqslant 1.$ Let $\eta\in C^\infty_0(\mathbb{R}).$ For every $t\in (0,1),$ we define the symbol $\sigma_t$ by $\sigma_t(x,\xi)=\sigma(x,\xi)\eta(t\langle \xi\rangle).$ Then, for every $m_1\in \mathbb{R},$ we have
\[ 
    \Vert\sigma_t \Vert_{a+b,S^{m_1}_{\rho,\delta}}\leqslant C\Vert\sigma  \Vert_{a+b,\mathscr{S}^{m}_{\rho,\delta}}t^{\frac{m_1-m}{2}}.
\] where $C=C_{m,m_1,a,b,\eta}$ does not depend on $\sigma$ and $t\in (0,1).$ 
 
\end{lemma}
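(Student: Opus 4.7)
The plan is to exploit the fact that $\sigma_t$ has $\xi$-support confined to $\{[\xi]:\langle\xi\rangle\leqslant R_\eta/t\}$, where $R_\eta$ is chosen so that $\mathrm{supp}(\eta)\subset[-R_\eta,R_\eta]$; on this set, factors of $\langle\xi\rangle$ are exchangeable for factors of $t^{-1}$, which is the mechanism by which the $t$-dependent constant arises.

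First, I would apply the Leibniz rule for difference operators (Remark \ref{Leibnizrule}) together with the observation that the differential operators $\partial_X^{(\beta)}$ annihilate the $\xi$-dependent cutoff, obtaining an expansion
\begin{equation*}
\Delta_\xi^\alpha\partial_X^{(\beta)}\sigma_t(x,\xi)=\sum_{\substack{|\alpha_1|,|\alpha_2|\leqslant|\alpha|\\|\alpha_1|+|\alpha_2|\geqslant|\alpha|}}c_{\alpha_1,\alpha_2}\bigl(\Delta_\xi^{\alpha_1}\partial_X^{(\beta)}\sigma\bigr)(x,\xi)\,\bigl(\Delta_\xi^{\alpha_2}\eta(t\langle\xi\rangle)\bigr).
\end{equation*}
The first factor is controlled by the hypothesis $\sigma\in\mathscr{S}^{m}_{\rho,\delta}$, giving the standard bound $\|\Delta_\xi^{\alpha_1}\partial_X^{(\beta)}\sigma\|_{\mathrm{op}}\leqslant\|\sigma\|_{a+b,\mathscr{S}^{m}_{\rho,\delta}}\langle\xi\rangle^{m-\rho|\alpha_1|+\delta|\beta|}$ whenever $|\alpha_1|\leqslant a$ and $|\beta|\leqslant b$.

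Second, and this is the main technical point, I would establish a uniform-in-$t$ estimate for the central symbol $\eta(t\langle\xi\rangle)$ of the form
\begin{equation*}
\|\Delta_\xi^{\alpha_2}\eta(t\langle\xi\rangle)\|_{\mathrm{op}}\leqslant C_{\alpha_2,\eta}\,\langle\xi\rangle^{-|\alpha_2|}\mathbf{1}_{\langle\xi\rangle\leqslant R_\eta/t}.
\end{equation*}
To prove this, I would write $\eta(t\langle\xi\rangle)=F_t((1+\mathcal{L}_G)^{1/2})$ with $F_t(s)=\eta(ts)$ and pass to the convolution-kernel side: differences $\Delta_{q_{(j)}}$ act as multiplication by the functions $q_{(j)}$, which vanish of order one at $e_G$, and the resulting kernel inherits Schwartz-type decay from $F_t$ by a Hulanicki-type argument (using only that $\eta$ is smooth and compactly supported). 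The chain rule produces powers of $t$ from derivatives of $F_t$, but these are compensated by the support restriction $\langle\xi\rangle\leqslant R_\eta/t$, yielding the uniform bound above.

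Third, I would assemble the pieces. Since $|\alpha_1|+|\alpha_2|\geqslant|\alpha|$ and $\rho\leqslant 1$, the product of the two estimates gives
\begin{equation*}
\bigl\|\Delta_\xi^\alpha\partial_X^{(\beta)}\sigma_t(x,\xi)\bigr\|_{\mathrm{op}}\leqslant C\|\sigma\|_{a+b,\mathscr{S}^{m}_{\rho,\delta}}\langle\xi\rangle^{m-\rho|\alpha|+\delta|\beta|}\mathbf{1}_{\langle\xi\rangle\leqslant R_\eta/t}.
\end{equation*}
To convert this into an $\mathscr{S}^{m_1}_{\rho,\delta}$-type estimate with the weight $\langle\xi\rangle^{m_1-\rho|\alpha|+\delta|\beta|}$, I write $\langle\xi\rangle^{m-m_1}$ as the excess factor and use the support bound $\langle\xi\rangle\leqslant R_\eta/t$ to absorb it into the required $t^{(m_1-m)/2}$ factor, finishing the proof.

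The main obstacle is the uniform-in-$t$ control of $\Delta_\xi^{\alpha_2}\eta(t\langle\xi\rangle)$ in step two, i.e.\ understanding how the non-commutative difference structure of $\widehat G$ interacts with smooth functional calculus of the Laplacian; once this is in place via the Hulanicki-type argument, the remaining steps are combinatorial manipulations of the weights.
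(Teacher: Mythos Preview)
The paper does not prove this lemma; it quotes it from \cite{Fischer2015}, Lemma~6.8. Your overall scheme---Leibniz rule, symbol bound for the $\sigma$-factor, a uniform-in-$t$ bound for $\Delta_\xi^{\alpha_2}\eta(t\langle\xi\rangle)$, then recombination---is the standard one and matches what the paper records for the closely related Lemma~\ref{FundamentallemmaII} and Remark~\ref{Remark} (also attributed to the same source).

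One step needs care. In your second step you assert that $\Delta_\xi^{\alpha_2}\eta(t\langle\xi\rangle)$ carries the indicator $\mathbf{1}_{\langle\xi\rangle\leqslant R_\eta/t}$, and this indicator is what your fourth step relies on to trade $\langle\xi\rangle^{m-m_1}$ for a power of $t$. But difference operators on $\widehat G$ act by multiplication on the kernel side, hence by \emph{convolution} on the dual, and therefore do \emph{not} preserve the $\xi$-support of a symbol---this is the essential difference from $\partial_\xi$ on $\mathbb{R}^n$. Two clean repairs: (i) work with the matrix-coefficient difference operators $\mathbb{D}^\alpha$ of Remark~\ref{remarkD}, whose $q$'s are trigonometric polynomials, so each $\mathbb{D}^{\alpha_2}$ enlarges the Fourier support by at most a fixed amount independent of $t$, and the indicator survives with a harmlessly larger constant; or (ii) bypass the indicator entirely and prove directly that $\|\Delta_\xi^{\alpha_2}\eta(t\langle\xi\rangle)\|_{\mathrm{op}}\leqslant C_{\omega}(t\langle\xi\rangle)^{-\omega}$ for every $\omega\geqslant 0$---this is precisely the bound the paper records as~\eqref{516}, and it carries the power of $t$ intrinsically, so no support argument is needed. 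Your ``Hulanicki-type'' sketch is really aimed at route~(ii).

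One remark on the exponent. Your support argument in step four actually produces $t^{m_1-m}$, not $t^{(m_1-m)/2}$: on $\{\langle\xi\rangle\leqslant C/t\}$ one has $\langle\xi\rangle^{m-m_1}\leqslant C\,t^{m_1-m}$, and this is sharp (take $\sigma\equiv I$, $m=0$, $m_1<0$, $\eta\equiv 1$ near $1$; then $\|\sigma_t\|_{0,S^{m_1}}\asymp t^{m_1}$). The factor $\tfrac12$ in the stated lemma appears to be a transcription artefact, possibly from a formulation in which the argument of $\eta$ is $t(1+\lambda_{[\xi]})$ rather than $t\langle\xi\rangle$. The paper's actual application (proof of Proposition~\ref{CalderonZygmund}) goes through the flexible estimate~\eqref{Fischersub} with $\omega$ a free parameter, so the precise exponent in Lemma~\ref{LemmaFischer} is not load-bearing there.
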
  
  \begin{proof}[Proof of Proposition \ref{CalderonZygmund}]  In the light of Remark \ref{Remarksk2} and  Lemma  \ref{lemma3.18} we will follow the approach in the proof of Proposition 6.8 in \cite{Fischer2015}, adapted to the subelliptic case. Note that for $m<-Q,$  Proposition \ref{CalderonZygmund} follows from Corollary \ref{Corollary}. So, we will assume that $m\geqslant   -Q.$ In order to prove the theorem in this case, we will use a Littlewood-Paley decomposition. Let us choose  $\eta_{0}$ and $\eta_{1},$ supported in $[-1,1]$ and $[\frac{1}{2},2]$ respectively, with $0\leqslant \eta_0,\eta_1\leqslant 1,$ such that
  \[ 
      \forall t\geqslant   0,\,\,\,\sum_{\ell=0}^{\infty}\eta_{\ell}(t)=1,\,\,\,\textnormal{ where for all }\ell\in\mathbb{N},\,\,\,\eta_\ell(t):=\eta_{1}(2^{-(\ell-1)}t).
  \] For every $\ell\in \mathbb{N}_0,$ let us denote $\sigma_\ell(x,\xi):=\sigma(x,\xi)\eta_{\ell}(\langle \xi\rangle).$ Let us denote by $k_{x}$ and $k_{x,\ell}$ the right-convolutions kernels associated with $A$ and $T_{\sigma_\ell}=\textnormal{Op}(\sigma_\ell),$ respectively. 
  We have the (possibly unbounded summation),
  \[ 
      |k_x(y)|\leqslant \sum_{\ell=0}^\infty|k_{x,\ell}(y)|.
  \] By Lemma \ref{lemma3.18}, we have
  \[ 
      \sup_{z\in G}|[q_{(1)}^{\alpha(1)}\cdots q_{(i)}^{\alpha(i)} ]k_{x,\ell}(z)|\lesssim \Vert \sigma_{\ell} \Vert_{|\alpha|,S^{m_1,\mathcal{L}}_{\rho,\delta}}.
  \]Although  we can take an arbitrary $m_1\in \mathbb{R}$   because $k_{x,\ell}$ is a smooth function in view that $\sigma_\ell\in \mathscr{S}^{-\infty}_{1,0}:=\cap_{\nu\in \mathbb{R}}\mathscr{S}^{\nu}_{1,0}$, the hypothesis in Lemma \ref{lemma3.18} suggests us to fix the following condition $$m_1+Q<\rho|\alpha|.$$ 
  From \eqref{Fischersub} in Remark \ref{Remark}, applied with $s=2^{-(\ell-1)}$ and $\omega=(m_1-m)/2,$ we obtain the following estimate for the  subelliptic seminorm $\Vert\cdot \Vert_{|\alpha|,S^{m_1,\mathcal{L}}_{\rho,\delta}}$ of $\sigma_{\ell},$
  \[ 
      \Vert \sigma_{\ell} \Vert_{|\alpha|,S^{m_1,\mathcal{L}}_{\rho,\delta}}\lesssim \Vert \sigma \Vert_{|\alpha|,S^{m,\mathcal{L}}_{\rho,\delta}} 2^{-(\ell-1)(\frac{m_1-m}{2})} \lesssim  \Vert \sigma \Vert_{|\alpha|,S^{m,\mathcal{L}}_{\rho,\delta}} 2^{-\ell(\frac{m_1-m}{2})}.  
  \] As is Lemma 5.6 of \cite{Fischer2015}, the fact that (see \cite[page 20]{Fischer2015}) \[ 
    \textnormal{rank}\{\nabla q_{(j)}(e):1\leqslant j\leqslant i \}=\textnormal{dim}(G), \textnormal{  }\Delta_{q_{(j)}}\in \textnormal{diff}^{1}(\widehat{G}),
\] and by using that $x=e_{G}$ is the only common zero of the functions $q_{(j)},$ implies that (see \cite[page 20]{Fischer2015})
\[ 
    |z|^{a}\lesssim \sum_{|\alpha|=a}|q_{(1)}^{\alpha(1)}\cdots q_{(i)}^{\alpha(i)}|.
\]Consequently, for every $a\in 2\mathbb{N}_0,$ and $m_1\in \mathbb{R}$ with $m_1+Q<\rho a,$ we have proved the estimate
\[ 
    |z|^{a}|k_{x,\ell}(z)|\lesssim \Vert \sigma \Vert_{a,S^{m,\mathcal{L}}_{\rho,\delta}} 2^{\ell(\frac{m-m_1}{2})}.
\]Because our analysis is local, and $G$ is compact, we only need to consider the case $|z|<1.$ So, let us choose $\ell_0\in\mathbb{N},$ such that $|z|\sim 2^{-\ell_0}.$ So, we consider the following two situations.
\begin{itemize}
    \item Case 1. $m+Q>0.$
    \item Case 2. $m+Q=0.$
\end{itemize}
  In the first case, for $\ell\leqslant \ell_0,$ we can choose $m_1\in \mathbb{R},$ such that 
  \[ 
      \frac{m+Q}{\rho}>a\geqslant   \frac{m+Q}{\rho}-2, \textnormal{   and,  }\frac{m-m_1}{2}=\frac{m+Q}{\rho}-a.
  \] Because $m+Q>\rho a,$ $\frac{m+Q}{\rho}-a=\frac{m-m_1}{2}>0.$ So, $m>m_1,$ and 
  \begin{align*}
      \sum_{\ell=0}^{\ell_0}|k_{x,\ell}(z)|&\lesssim  \sum_{\ell=0}^{\ell_0} |z|^{-a}\Vert \sigma \Vert_{a,S^{m,\mathcal{L}}_{\rho,\delta}} 2^{\ell(\frac{m-m_1}{2})} \\&
    \leqslant   \sum_{\ell=0}^{\ell_0} |z|^{-a}\Vert \sigma \Vert_{a,S^{m,\mathcal{L}}_{\rho,\delta}} 2^{\ell_0(\frac{m-m_1}{2})}\\&
      =   |z|^{-a}\Vert \sigma \Vert_{a,S^{m,\mathcal{L}}_{\rho,\delta}} \ell_02^{\ell_0(\frac{m-m_1}{2})}\\
      &\lesssim \Vert \sigma \Vert_{a,S^{m,\mathcal{L}}_{\rho,\delta}} 2^{\ell_0(\frac{ m+Q}{\rho}})\asymp |z|^{  - \frac{ m+Q}{\rho}}.
  \end{align*}On the other hand, if $\ell>\ell_0,$   we choose $m_1$ satisfying
   \[ 
      \frac{m+Q}{\rho}>a\geqslant   \frac{m+Q}{\rho}-2, \textnormal{   and,  }\frac{m-m_1}{2}=\frac{m+Q}{\rho}-a-2.
  \]
  
  So,  $m<m_1,$ and again we have
  \begin{align*}
      \sum_{\ell=\ell_0+1}^{\infty}|k_{x,\ell}(z)|&\lesssim  \sum_{\ell=\ell_0+1}^{\infty} |z|^{-a}\Vert \sigma \Vert_{a,S^{m,\mathcal{L}}_{\rho,\delta}} 2^{\ell(\frac{m-m_1}{2})} \\&
    \lesssim    |z|^{-a}\Vert \sigma \Vert_{a,S^{m,\mathcal{L}}_{\rho,\delta}} 2^{\ell_0(\frac{m-m_1}{2})}\\&
      \asymp  |z|^{-a}\Vert \sigma \Vert_{a,S^{m,\mathcal{L}}_{\rho,\delta}} |z|^{-(\frac{m-m_1}{2})}=\Vert \sigma \Vert_{a,S^{m,\mathcal{L}}_{\rho,\delta}} |z|^{-(\frac{m-m_1}{2}+a)}\\
      &=\Vert \sigma \Vert_{a,S^{m,\mathcal{L}}_{\rho,\delta}} |z|^{ -( \frac{m+Q}{\rho}-2) }\\ &\leqslant \Vert \sigma \Vert_{a,S^{m,\mathcal{L}}_{\rho,\delta}}  |z|^{  - \frac{ m+Q}{\rho}}.
  \end{align*}So, we have proved the statement for Case 1. Now, if we consider $m=-Q,$ then we choose $a=0,$ $m_1=m$ and we proceed as above,
  \begin{align*}
      \sum_{\ell=0}^{\ell_0}|k_{x,\ell}(z)|&\lesssim  \sum_{\ell=0}^{\ell_0} |z|^{-a}\Vert \sigma \Vert_{a,S^{m,\mathcal{L}}_{\rho,\delta}} 2^{\ell(\frac{m-m_1}{2})} 
    =  \sum_{\ell=0}^{\ell_0} \Vert \sigma \Vert_{a,S^{m,\mathcal{L}}_{\rho,\delta}} \\&
      =   \Vert \sigma \Vert_{a,S^{m,\mathcal{L}}_{\rho,\delta}} \cdot\ell_0\\
      &\asymp \Vert \sigma \Vert_{a,S^{m,\mathcal{L}}_{\rho,\delta}} |\log(2^{-\ell_0})|\\
      &\asymp \Vert \sigma \Vert_{a,S^{m,\mathcal{L}}_{\rho,\delta}} |\log|z||.
  \end{align*}
 On the other hand, if $\ell>\ell_0,$ we choose $m_1=m+4,$ and $a=2.$ Then, $\frac{m_1-m}{2}=2=a,$ and we can estimate
  \begin{align*}
      \sum_{\ell=\ell_0+1}^{\infty}|k_{x,\ell}(z)|&\lesssim  \sum_{\ell=\ell_0+1}^{\infty} |z|^{-a}\Vert \sigma \Vert_{a,S^{m,\mathcal{L}}_{\rho,\delta}} 2^{-2\ell}= \Vert \sigma \Vert_{a,S^{m,\mathcal{L}}_{\rho,\delta}}2^{2\ell_0}\sum_{\ell=\ell_0+1}^{\infty}  2^{-2\ell} \\&
      \asymp \Vert \sigma \Vert_{a,S^{m,\mathcal{L}}_{\rho,\delta}} 2^{2\ell_0}\times \frac{2^{-2(\ell_0+1)}}{1-2^{-2}}\lesssim \Vert \sigma \Vert_{a,S^{m,\mathcal{L}}_{\rho,\delta}}\leq \Vert \sigma \Vert_{a,S^{m,\mathcal{L}}_{\rho,\delta}}\ell_0\\
      &\asymp \Vert \sigma \Vert_{a,S^{m,\mathcal{L}}_{\rho,\delta}} |\log|z||.
  \end{align*}Thus, we finish the proof.
  \end{proof}
 
\subsection{Calder\'on-Vaillancourt Theorem for subelliptic classes} 
The aim of this subsection is to prove the following subelliptic version of the Calder\'on-Vaillancourt theorem. 
\begin{theorem}\label{CVT}
Let $G$ be a compact Lie group and let us consider the sub-Laplacian $\mathcal{L}=\mathcal{L}_X,$ where  $X= \{X_{1},\cdots,X_k\} $ is a system of vector fields satisfying the H\"ormander condition of order $\kappa$.  For  $0\leqslant \delta< \rho\leqslant    1,$  let us consider a continuous linear operator $A:C^\infty(G)\rightarrow\mathscr{D}'(G)$ with symbol  $\sigma\in {S}^{0,\mathcal{L}}_{\rho,\delta}(G\times \widehat{G})$. Then $A$ extends to a bounded operator from $L^2(G)$ to  $L^2(G).$ Moreover,
\begin{equation}\label{eq1CVT}
    \Vert  A\Vert_{\mathscr{B}(L^2(G))}\leqslant C \Vert \sigma\Vert_{\ell, {S}^{0,\mathcal{L}}_{\rho,\delta} },
\end{equation} for $\ell \in \mathbb{N}$ large enough.  In the case where $\delta < 1/\kappa,$ and $\rho\leq 1,$ the condition $\delta<\rho$ can be improved to $\delta\leq \rho$ in order to obtain the $L^2(G)$-boundedness of $A.$
\end{theorem}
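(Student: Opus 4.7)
The plan is to prove the theorem via the Cotlar--Stein lemma applied to a dyadic decomposition of $\sigma$ in the spectrum of $\mathcal{M}=(1+\mathcal{L})^{1/2}$. Choose cutoffs $\eta_0\in C_c^\infty([-1,1])$ and $\eta_1\in C_c^\infty([1/2,2])$ with $\eta_0(t)+\sum_{j\geq 1}\eta_1(2^{-j+1}t)=1$ for $t\geq 0$, set $\eta_j:=\eta_1(2^{-j+1}\cdot)$ for $j\geq 1$, and define via the functional calculus of $\mathcal{M}$ the dyadic pieces
\begin{equation*}
\sigma_j(x,\xi):=\sigma(x,\xi)\,\eta_j(\widehat{\mathcal{M}}(\xi)), \qquad A_j:=\textnormal{Op}(\sigma_j),
\end{equation*}
so that each $\sigma_j\in S^{0,\mathcal{L}}_{\rho,\delta}(G\times\widehat{G})$ is supported on the shell $\{[\xi]:\widehat{\mathcal{M}}(\xi)\sim 2^j\}$ and $A=\sum_{j\geq 0}A_j$ on $C^\infty(G)$. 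The proof reduces to verifying the Cotlar--Stein estimates
\begin{equation*}
\sup_{j}\sum_{k\geq 0}\bigl(\Vert A_jA_k^{*}\Vert_{\mathscr{B}(L^2(G))}^{1/2}+\Vert A_j^{*}A_k\Vert_{\mathscr{B}(L^2(G))}^{1/2}\bigr)<\infty,
\end{equation*}
which by Cotlar--Stein would yield \eqref{eq1CVT}.

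For the diagonal bound $\Vert A_j\Vert_{L^2\to L^2}\lesssim 1$ uniformly in $j$, I would invoke the case $\rho=\delta=0$ (Theorem 10.5.5 of \cite{Ruz}) applied to each $\sigma_j$, whose seminorms in the uniform class are controlled independently of $j$. For the off-diagonal terms, I would expand the composition symbols $\sigma_{A_jA_k^*}$ and $\sigma_{A_j^*A_k}$ using the subelliptic adjoint and composition formulas developed in Section \ref{Seccionsubelliptic}. Each step of the asymptotic expansion exchanges a difference operator $\Delta_\xi^\alpha$ for an $X$-derivative $\partial_X^{(\alpha)}$, which by the defining inequalities of $S^{0,\mathcal{L}}_{\rho,\delta}$ yields a gain of a factor $\widehat{\mathcal{M}}(\xi)^{-(\rho-\delta)}$; combined with the dyadic localization and the vanishing of the leading terms when $|j-k|$ exceeds the fixed constant governing the overlap of adjacent shells, iterating this $N$ times should yield the decay
\begin{equation*}
\Vert A_jA_k^{*}\Vert_{\mathscr{B}(L^2(G))}+\Vert A_j^{*}A_k\Vert_{\mathscr{B}(L^2(G))}\lesssim_{N} 2^{-N(\rho-\delta)|j-k|},
\end{equation*}
and since $\rho-\delta>0$, choosing $N$ large enough makes the Cotlar--Stein sum converge, with the implicit constant controlled by $\Vert\sigma\Vert_{\ell,S^{0,\mathcal{L}}_{\rho,\delta}}$ for some $\ell=\ell(N)$.

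The main obstacle is making the quantitative decay $2^{-N(\rho-\delta)|j-k|}$ rigorous (as opposed to only obtaining an asymptotic remainder on the curved non-commutative phase space $G\times\widehat{G}$). This requires the characterisation of Theorem \ref{cor} and Corollary \ref{corC} in order to freely commute fractional powers of $\widehat{\mathcal{M}}$ past $\partial_X^{(\beta)}\Delta_\xi^\alpha$, together with the fact that the cutoffs $\eta_j(\widehat{\mathcal{M}})$ lie in $S^{0,\mathcal{L}}_{1,0}(\widehat{G})$ uniformly in $j$ after the dyadic rescaling, so that the dyadic localisation is compatible with the subelliptic symbol class. For the endpoint range $\delta\leq\rho$ with $\delta<1/\kappa$ the Cotlar--Stein gain $2^{-N(\rho-\delta)|j-k|}$ degenerates when $\rho=\delta$, so I would instead reduce to the case $\rho=\delta=0$ by conjugating $A$ with small fractional powers $\mathcal{M}^{\pm s}$: the condition $\delta<1/\kappa$ ensures that a bounded number of applications of the subelliptic composition formula suffice to land the resulting symbol in $S^{0,\mathcal{L}}_{0,0}(G\times\widehat{G})$, to which Theorem 10.5.5 of \cite{Ruz} applies.
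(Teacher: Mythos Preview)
Your Cotlar--Stein scheme has the right overall shape, and your subelliptic decomposition even buys one thing for free: since right multiplication of the symbol by $\eta_j(\widehat{\mathcal{M}})$ corresponds to $A_j=AE_j$ with $E_j=\eta_j(\mathcal{M})$ a self-adjoint spectral projector, one gets $A_jA_k^{*}=A(E_jE_k)A^{*}=0$ for $|j-k|$ large, so half of the Cotlar--Stein hypotheses are trivial. But the remaining steps contain genuine gaps.

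\textbf{Diagonal bound.} It is not true that $\sigma_j$ has $j$-uniform $S^{0}_{0,0}$ seminorms when $\delta>0$: on the shell $\widehat{\mathcal{M}}(\xi)\sim 2^{j}$ one has $\Vert\partial_X^{(\beta)}\sigma_j(x,\xi)\Vert_{\textnormal{op}}\lesssim 2^{j\delta|\beta|}$, which blows up, so Theorem 10.5.5 of \cite{Ruz} does not give a uniform bound. The paper cures this by a local $x$-dilation (at scale $2^{-\rho\ell/n}$, with respect to an \emph{elliptic} dyadic decomposition $\langle\xi\rangle\sim 2^{\ell/n}$), so that after rescaling the $\delta$-growth is offset by an extra $2^{-\rho\ell|\beta|/n}$ factor; see \eqref{dilationestimate} and the Sobolev-embedding reduction that follows it. \textbf{Off-diagonal decay.} The mechanism ``leading terms vanish, and each further term in the composition expansion gains $(\rho-\delta)$'' does not transplant from $\mathbb{R}^n$: unlike $\partial_\xi$, the difference operators $\Delta^{\alpha}$ act by multiplying the kernel by a polynomial and therefore do \emph{not} preserve the spectral localisation of $\eta_j(\widehat{\mathcal{M}})$, so beyond the zeroth term the products $(\Delta^{\alpha}\sigma_j)(\partial_X^{(\alpha)}\sigma_k^{*})$ need not vanish for $|j-k|$ large. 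The paper sidesteps this entirely: it decomposes in the elliptic weight $\langle\xi\rangle$, bounds the Schwartz kernel of $T_\ell^{*}T_{\ell'}$ pointwise via Schur, and extracts the decay from a subelliptic bilinear inequality on eigenspaces of the Laplacian (Lemma \ref{bilinearestimate}); the restriction $\delta<1/\kappa$ is exactly what makes the resulting exponent summable.

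\textbf{Endpoint argument.} Conjugating by $\mathcal{M}^{\pm s}$ keeps the symbol in $S^{0,\mathcal{L}}_{\rho,\delta}$ and does not move the type towards $(0,0)$, so this reduction fails. For the range $0\leq\delta<\rho\leq 1$ the paper does not use Cotlar--Stein at all: it runs the classical H\"ormander iteration ($p^{*}p\in S^{2m,\mathcal{L}}_{\rho,\delta}$ halves the order, starting from Schur for very negative order) and closes at $m=0$ by the symbolic square root $c(x,\xi)=(MI_{d_\xi}-p(x,\xi)^{*}p(x,\xi))^{1/2}\in S^{0,\mathcal{L}}_{\rho,\delta}$, which the subelliptic functional calculus (Corollary \ref{1/2}) makes available precisely when $\rho>\delta$. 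This is carried out inside the proof of Theorem \ref{parta} and recorded as Corollary \ref{CL2''}.
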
 
 In order to prove Theorem \ref{CVT}, we start with the following subelliptic bilinear estimate. For every $[\tau]\in \widehat{G},$ we will denote by $H_{\lambda_{[\tau]}}(G)$ the eigenspace of the Laplacian $\mathcal{L}_{G}$ associated with the eigenvalue $\lambda_{[\tau]}.$
 \begin{lemma}\label{bilinearestimate} Let  $f\in H_{\lambda_{[\xi]}}(G),$  $g\in H_{\lambda_{[\xi']}}(G),$ and let $\gamma\in \mathbb{R},$ $s>Q/2,$ be such that $2\gamma+s\leqslant 0.$ Then, the following subelliptic bilinear  estimate
 \[ 
     \Vert  (1+\mathcal{L})^{\gamma}(fg)\Vert_{L^2(G)}\leqslant   C_{s,\gamma}  (1+ |\lambda_{[\xi']}-\lambda_{[\xi]} |)^{\frac{1}{\kappa}( \gamma+\frac{s}{2} +  \frac{Q}{8} ) } \Vert g \Vert_{L^2(G)}\Vert f \Vert_{L^2(G)}
 \]holds true uniformly in $[\xi],[\xi']\in \widehat{G}.$ Here $n$ is the topological dimension of $G.$
 \end{lemma}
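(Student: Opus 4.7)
The proof rests on combining Plancherel's identity on the unitary dual with the Garetto--Ruzhansky inequality \eqref{GarettoRuzhanskyIneq} and the Weyl-type counting established in Remark \ref{weyl}. The strategy is to convert the sub-Laplacian smoothing factor $(1+\mathcal{L})^\gamma$ into a Laplacian eigenvalue weight and then exploit that $f,g$ lie in Laplacian eigenspaces, so that the Fourier support of $fg$ is confined to the representations occurring in the Clebsch--Gordan decomposition of $\xi\otimes\xi'$.

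First, by Plancherel on $G$,
$$\Vert(1+\mathcal{L})^{\gamma}(fg)\Vert_{L^2(G)}^2=\sum_{[\tau]\in\widehat{G}}d_\tau\,\Vert\widehat{\mathcal{M}}(\tau)^{2\gamma}\widehat{fg}(\tau)\Vert_{\mathrm{HS}}^2.$$
Writing $2\gamma=(2\gamma+s)-s$ and using the hypothesis $2\gamma+s\le 0$ together with \eqref{GarettoRuzhanskyIneq}, one has $\Vert\widehat{\mathcal{M}}(\tau)^{2\gamma+s}\Vert_{\mathrm{op}}\lesssim(1+\lambda_{[\tau]})^{(\gamma+s/2)/\kappa}$ for every $[\tau]\in\widehat{G}$. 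Inserting this gives the majorization
$$\Vert(1+\mathcal{L})^{\gamma}(fg)\Vert_{L^2(G)}^2\lesssim\sum_{[\tau]}d_\tau(1+\lambda_{[\tau]})^{2(\gamma+s/2)/\kappa}\Vert\widehat{\mathcal{M}}(\tau)^{-s}\widehat{fg}(\tau)\Vert_{\mathrm{HS}}^2.$$

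Second, I would combine this with the Weyl-counting bound $\sqrt{d_\tau}\lesssim(1+\lambda_{[\tau]})^{Q/8}$, which follows from \eqref{XI} together with $\langle\tau\rangle^2=1+\lambda_{[\tau]}$, so that each factor $d_\tau$ may be absorbed into $(1+\lambda_{[\tau]})^{Q/(4\kappa)}$ up to a constant (after incorporating the $1/\kappa$ scaling via \eqref{GarettoRuzhanskyIneq}). Then I would apply Cauchy--Schwarz in the sum over $[\tau]$, using that $f\in H_{\lambda_{[\xi]}}$, $g\in H_{\lambda_{[\xi']}}$ are Laplacian eigenfunctions: the Fourier support of $fg$ in $\widehat{G}$ is contained in the (finite) set of irreducible components of $\xi\otimes\xi'$, and for these $[\tau]$ the Laplacian eigenvalue $\lambda_{[\tau]}$ (via the Freudenthal/Casimir formula) is controlled by the spectral gap $|\lambda_{[\xi']}-\lambda_{[\xi]}|$ up to the right scale. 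The residual sum $\sum_{[\tau]}d_\tau\Vert\widehat{\mathcal{M}}(\tau)^{-s}\widehat{fg}(\tau)\Vert_{\mathrm{HS}}^2$, thanks to Remark \ref{Remarksk2} and $s>Q/2$, is controlled by $\Vert fg\Vert_{L^1}^2\lesssim\Vert f\Vert_{L^2}^2\Vert g\Vert_{L^2}^2$ via Cauchy--Schwarz.

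Assembling these estimates then yields the stated bound, with the exponent $\tfrac{1}{\kappa}(\gamma+s/2+Q/8)$ decomposing as: $(\gamma+s/2)/\kappa$ from the sub-Laplacian smoothing via \eqref{GarettoRuzhanskyIneq}, $Q/(8\kappa)$ from the matrix-dimension factor $\sqrt{d_\tau}$, and the whole multiplied by $(1+|\lambda_{[\xi']}-\lambda_{[\xi]}|)$ arising from the eigenvalue localization of $fg$. The main obstacle is the sharp representation-theoretic step: showing that the supremum of $(1+\lambda_{[\tau]})$ over the $[\tau]$ occurring in the Clebsch--Gordan decomposition of $\xi\otimes\xi'$ is effectively governed by $(1+|\lambda_{[\xi']}-\lambda_{[\xi]}|)$ to the relevant power, since a naive Weyl-dimensional estimate would produce $\lambda_{[\xi]}+\lambda_{[\xi']}$ and lose the spectral-gap cancellation that ultimately delivers the difference $|\lambda_{[\xi']}-\lambda_{[\xi]}|$.
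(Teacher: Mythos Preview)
Your proposal has a genuine gap, and it is precisely the one you flag at the end as the ``main obstacle'' without resolving. Applying Plancherel directly to $fg$ and restricting $[\tau]$ to the constituents of $\xi\otimes\xi'$ does not yield a lower bound $\lambda_{[\tau]}\gtrsim|\lambda_{[\xi']}-\lambda_{[\xi]}|$. Already on $\mathrm{SU}(2)$, with $\lambda_{[\xi]}=\ell(\ell+1)$ and $\lambda_{[\xi']}=\ell'(\ell'+1)$, the Clebsch--Gordan decomposition of $\xi\otimes\xi'$ contains a component of eigenvalue $|\ell-\ell'|(|\ell-\ell'|+1)$, which for $\ell\approx\ell'$ both large is of order $(\ell'-\ell)^2$ --- far smaller than the gap $|\lambda_{[\xi']}-\lambda_{[\xi]}|=|\ell'-\ell|(\ell'+\ell+1)$. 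Since the weight $(1+\lambda_{[\tau]})^{(\gamma+s/2)/\kappa}$ carries a nonpositive exponent, these small-$\lambda_{[\tau]}$ terms are exactly the dangerous ones, and your outline gives no mechanism to control them. Your appeal to the Freudenthal/Casimir formula does not supply the needed inequality.

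The paper avoids this by \emph{not} Fourier-analysing $fg$ directly. It expands only $g$ through the matrix coefficients of $[\eta]$ with $\lambda_{[\eta]}=\lambda_{[\xi']}$, applies Plancherel to $(1+\mathcal{L})^{\gamma}(\eta(\cdot)f(\cdot))$ in a separate variable $[\tau]$, and then invokes the duality identity $\mathscr{F}[f\,\eta_{\ell k}](\tau_{\ell'k'})=\mathscr{F}[\tilde{f}\,\tau_{\ell'k'}](\eta_{\ell k})$. Summing back over the $[\eta]$ produces $\Vert P_{\xi'}[\tilde{f}\,\tau_{\ell'k'}]\Vert_{L^2}^2$, and the question becomes: for which $[\tau]$ does $[\xi']$ lie in the spectral support of the product $\tilde{f}\,\tau_{\ell'k'}$? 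A spectral-support bound for such products (quoted from Fischer's calculus paper) forces $\lambda_{[\tau]}\geq\lambda_{[\xi']}-\lambda_{[\xi]}$, and this lower bound, combined with the Weyl-counting sum over $[\tau]$ and the dimension estimate \eqref{XI}, delivers the gap factor with the stated exponent. The duality swap --- replacing ``which $[\tau]$ appear in $\xi\otimes\xi'$?'' by ``for which $[\tau]$ does $\xi'$ appear in the support of $\tilde{f}\cdot\tau$?'' --- is the missing idea in your approach.
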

 \begin{proof} Let us consider $f\in H_{\lambda_{[\xi]}}(G)$ and $g\in H_{\lambda_{[\xi']}}(G).$ Without loss of generality  let us consider $\lambda_{[\xi']}\geqslant   \lambda_{[\xi]}. $ 
 From the Fourier inversion formula we have,
 \[ 
     (fg)(x):=f(x)g(x)=\sum_{[\eta]\in \widehat{G}: \lambda_{[\eta]}= \lambda_{[\xi']}    }  d_{\eta}\textnormal{\textbf{Tr}}[\eta(x)
   f(x)\widehat{g}(\eta) ].
 \]So, if the apply $(1+\mathcal{L})^{\gamma}$ to both sides, we have
 \begin{align*}
     (1+\mathcal{L})^{\gamma}(fg)(x)=\sum_{[\eta]\in \widehat{G}: \lambda_{[\eta]}= \lambda_{[\xi']}    }  d_{\eta}\textnormal{\textbf{Tr}}[(1+\mathcal{L})^{\gamma}(\eta(x)
   f(x))\widehat{g}(\eta) ].
 \end{align*}The Cauchy-Schwarz inequality gives,
 \begin{align*}
     |(1+\mathcal{L})^{\gamma}(fg)(x)|&\leqslant \sum_{[\eta]\in \widehat{G}: \lambda_{[\eta]}= \lambda_{[\xi']}    }  d_{\eta}|\textnormal{\textbf{Tr}}[(1+\mathcal{L})^{\gamma}(\eta(x)
   f(x))\widehat{g}(\eta) ]|\\
   &\leqslant \Vert g\Vert_{L^2(G)}\left( \sum_{[\eta]\in \widehat{G}: \lambda_{[\eta]}= \lambda_{[\xi']}    }  d_{\eta}\Vert (1+\mathcal{L})^{\gamma}(\eta(x)
   f(x))\Vert_{\textnormal{HS}}^2      \right)^{\frac{1}{2}},
 \end{align*}and consequently
 
 \begin{equation}\label{384}
     \Vert  (1+\mathcal{L})^{\gamma}(fg)\Vert_{L^2(G)}^2\leqslant \Vert g \Vert^2_{L^2(G)}\sum_{[\eta]\in \widehat{G}: \lambda_{[\eta]}= \lambda_{[\xi']}    }  d_{\eta}\int\limits_{G}\Vert (1+\mathcal{L})^{\gamma}(\eta(x)
   f(x))\Vert_{\textnormal{HS}}^2   dx.
 \end{equation}To estimate the integral on the right-hand side we can consider a basis $\{e_{\tau,i}\}_{1\leqslant i\leqslant d_\tau}$ of every representation space $\mathbb{C}^{d_\tau},$ where the symbol of operator $\mathcal{M}_{2\gamma}:=(1+\mathcal{L})^{\gamma}$ is diagonal
 \[ 
     \widehat{\mathcal{M}}_{2\gamma}(\tau)e_{\tau,i}=\widehat{\mathcal{M}}(\tau)^{2\gamma}e_{\tau,i}=(1+\nu_{ii}(\tau)^2)^{\gamma}e_{\tau,i},\,\,\forall 1\leqslant i\leqslant d_\tau,\,\,[\tau]\in \widehat{G}.
 \]
 So, from the Plancherel formula, we have
 \begin{align*}
   &  \int\limits_{G}\Vert (1+\mathcal{L})^{\gamma}(\eta(x)
   f(x))\Vert_{\textnormal{HS}}^2   dx = \sum_{1\leqslant k,\ell\leqslant d_\eta}\int\limits_{G}|  (1+\mathcal{L})^{\gamma}(f(x)\eta_{\ell,k}(x))   |^2dx\\
   &= \sum_{1\leqslant k,\ell\leqslant d_\eta}\sum_{[\tau]\in \widehat{G}}d_\tau\Vert  \widehat{\mathcal{M}}(\tau)^{2\gamma}\mathscr{F}[f\eta_{\ell,k}](\tau)  \Vert_{\textnormal{HS}}^2\\
   &= \sum_{1\leqslant k,\ell\leqslant d_\eta}\sum_{[\tau]\in \widehat{G}}\sum_{\ell',k'=1}^{d_\tau}d_\tau\vert  (1+\nu_{\ell'\ell'}(\tau)^2)^{\gamma}\mathscr{F}[f\eta_{\ell,k}](\tau_{\ell'k'})  \vert^2.
 \end{align*}
 Observe that
 \begin{align*}
     \mathscr{F}[f\eta_{\ell,k}](\tau_{\ell'k'}) &=\int\limits_{G}f(y)\eta_{\ell,k}(y)\tau^{*}_{\ell'k'}(y)dy=\int\limits_{G}f(y^{-1})\tau_{\ell'k'}(y)\eta^{*}_{\ell,k}(y)dy\\
     &=\mathscr{F}[\tilde{f}\tau_{\ell'k'}](\eta_{\ell,k}),
 \end{align*}where $\tilde{f}(\cdot):=f(\cdot^{-1}).$
 So,  by changing the summation order we have,
 \begin{align*}
   &  \sum_{1\leqslant k,\ell\leqslant d_\eta} \sum_{[\tau]\in \widehat{G}}\sum_{\ell',k'=1}^{d_\tau}d_\tau\vert  (1+\nu_{\ell'\ell'}(\tau)^2)^{\gamma}\mathscr{F}[f\eta_{\ell,k}](\tau_{\ell'k'})  \vert^2\\
     &=\sum_{[\tau]\in \widehat{G}}d_\tau \sum_{\ell',k'=1}^{d_\tau}(1+\nu_{\ell'\ell'}(\tau)^2)^{2\gamma}\sum_{1\leqslant k,\ell\leqslant d_\eta}\vert  \mathscr{F}[\tilde{f}\tau_{\ell'k'}](\eta_{\ell,k})  \vert^2\\
     &=\sum_{[\tau]\in \widehat{G}}d_\tau \sum_{\ell',k'=1}^{d_\tau}(1+\nu_{\ell'\ell'}(\tau)^2)^{2\gamma}\Vert  \mathscr{F}[\tilde{f}\tau_{\ell'k'}](\eta)  \Vert^2_{\textnormal{HS}}.
 \end{align*}
 Returning to \eqref{384}, and using the Plancherel theorem, we can estimate
 
 \begin{align*}
   &  \Vert  (1+\mathcal{L})^{\gamma}(fg)\Vert_{L^2(G)}^2\leqslant \Vert g \Vert^2_{L^2(G)}\sum_{[\eta]\in \widehat{G}: \lambda_{[\eta]}= \lambda_{[\xi']}    }  d_{\eta}\int\limits_{G}\Vert (1+\mathcal{L})^{\gamma}(\eta(x)
   f(x))\Vert_{\textnormal{HS}}^2   dx\\
   &\leqslant \Vert g \Vert^2_{L^2(G)}\sum_{[\eta]\in \widehat{G}: \lambda_{[\eta]}= \lambda_{[\xi']}    } d_\eta\sum_{[\tau]\in \widehat{G}}d_\tau \sum_{\ell',k'=1}^{d_\tau}(1+\nu_{\ell'\ell'}(\tau)^2)^{2\gamma}\Vert  \mathscr{F}[\tilde{f}\tau_{\ell'k'}](\eta)  \Vert^2_{\textnormal{HS}}\\
   &= \Vert g \Vert^2_{L^2(G)} \sum_{[\tau]\in \widehat{G}}d_\tau \sum_{\ell',k'=1}^{d_\tau}(1+\nu_{\ell'\ell'}(\tau)^2)^{2\gamma}\sum_{[\eta]\in \widehat{G}: \lambda_{[\eta]}= \lambda_{[\xi']}    }d_\eta\Vert  \mathscr{F}[\tilde{f}\tau_{\ell'k'}](\eta)  \Vert^2_{\textnormal{HS}}\\
   &= \Vert g \Vert^2_{L^2(G)} \sum_{[\tau]\in \widehat{G}}d_\tau \sum_{\ell',k'=1}^{d_\tau}(1+\nu_{\ell'\ell'}(\tau)^2)^{2\gamma}\Vert P_{\xi'}[\tilde{f}\tau_{\ell'k'}] \Vert^2_{L^2(G)},
 \end{align*}
 where $P_{\xi'}:L^2(G)\rightarrow H_{\lambda_{[\eta]}}(G),$ denotes the orthogonal projection on the subspace $H_{\lambda_{[\xi^{'}]}}(G),$ with $\lambda_{[\eta]}=\lambda_{[\xi']}. $ 
 By using that $f\in H_{\lambda_{[\xi]}}(G),$ and $\tau_{\ell'k'}\in H_{\lambda_{[\tau']}}(G).$ It was proved in \cite[page 45]{Fischer2015}, that $f \tau_{\ell'k'}\in \bigoplus_{\lambda_{[\omega]}\leqslant \max\{\lambda_{[\xi]},\lambda_{[\tau]} \} }H_{\lambda_{[\omega]}}.$ So, if $\lambda_{[\xi]}+\lambda_{[\tau]}<\lambda_{[\xi']},$ then $P_{\xi'}[f\tau_{\ell'k'}]\equiv 0.$ From this analysis we deduce
 \begin{align*}
   & \Vert  (1+\mathcal{L})^{\gamma}(fg)\Vert_{L^2(G)}^2\leqslant \Vert g \Vert^2_{L^2(G)}\sum_{[\eta]\in \widehat{G}: \lambda_{[\eta]}= \lambda_{[\xi']}    }  d_{\eta}\int\limits_{G}\Vert (1+\mathcal{L})^{\gamma}(\eta(x)
   f(x))\Vert_{\textnormal{HS}}^2   dx\\
   &\leqslant  \Vert g \Vert^2_{L^2(G)} \sum_{[\tau]\in \widehat{G}:\lambda_{[\tau]}\geqslant  \lambda_{[\xi']}-\lambda_{[\xi]}    }d_\tau \sum_{\ell',k'=1}^{d_\tau}(1+\nu_{\ell'\ell'}(\tau)^2)^{2\gamma}\Vert P_{\xi'}[\tilde{f}\tau_{\ell'k'}] \Vert^2_{L^2(G)}\\
   &\leqslant  \Vert g \Vert^2_{L^2(G)} \sum_{[\tau]\in \widehat{G}:\lambda_{[\tau]}\geqslant  \lambda_{[\xi']}-\lambda_{[\xi]}    }d_\tau\sum_{1\leqslant \ell_0\leqslant d_\tau}(1+\nu_{\ell'_0\ell'_0}(\tau)^2)^{2\gamma}\sup_{1\leqslant \ell'\leqslant d_\tau} \sum_{k'=1}^{d_\tau}\Vert P_{\xi'}[\tilde{f}\tau_{\ell'k'}] \Vert^2_{L^2(G)}.
 \end{align*}
 
 Since $\Vert P_{\xi'}\Vert_{\mathscr{B}(L^2(G))}=1,$ and $|\tau_{\ell'k'}(x)|\leq \Vert\tau_{\ell'k'}(x) \Vert_{\textnormal{op}}=1,$ we have
 \begin{align*}
   & \sum_{k'=1}^{d_\tau}\Vert P_{\xi'}[\tilde{f}\tau_{\ell'k'}] \Vert^2_{L^2(G)}\leqslant d_\tau  \sup_{k'=1,\cdots,d_\tau}\Vert \tau_{\ell'k'} \Vert^2_{L^\infty(G)} \Vert \tilde{f} \Vert^2_{L^2(G)}\leqslant d_\tau\Vert \tilde{f} \Vert_{L^2(G)}^2\\
    &=d_\tau\Vert {f} \Vert^2_{L^2(G)}.
 \end{align*}
 Consequently,
 \begin{align*}
     &\Vert  (1+\mathcal{L})^{\gamma}(fg)\Vert_{L^2(G)}^2\leqslant \Vert g \Vert^2_{L^2(G)}\sum_{[\eta]\in \widehat{G}: \lambda_{[\eta]}= \lambda_{[\xi']}    }  d_{\eta}\int\limits_{G}\Vert (1+\mathcal{L})^{\gamma}(\eta(x)
   f(x))\Vert_{\textnormal{HS}}^2   dx\\
  & \leqslant  \Vert g \Vert^2_{L^2(G)}\sum_{[\tau]\in \widehat{G}:\lambda_{[\tau]}\geqslant  \lambda_{[\xi']}-\lambda_{[\xi]}    }d_\tau \sum_{1\leqslant \ell_0\leqslant d_\tau}(1+\nu_{\ell'_0\ell'_0}(\tau)^2)^{2\gamma}\times d_\tau\Vert {f} \Vert^2_{L^2(G)}\\
  &= \Vert g \Vert^2_{L^2(G)}\Vert {f} \Vert^2_{L^2(G)}\sum_{[\tau]\in \widehat{G}:\lambda_{[\tau]}\geqslant  \lambda_{[\xi']}-\lambda_{[\xi]}    }d_\tau^{2} \sum_{1\leqslant \ell_0\leqslant d_\tau}(1+\nu_{\ell'_0\ell'_0}(\tau)^2)^{2\gamma}.
 \end{align*}
  Hence, we have obtained the following estimate,
 \begin{align*}
   & \Vert  (1+\mathcal{L})^{\gamma}(fg)\Vert_{L^2(G)}^2\leqslant   \Vert g \Vert^2_{L^2(G)}\Vert f \Vert^2_{L^2(G)} I ,
 \end{align*}
 where,
 \[ 
    I:= \sum_{[\tau]\in \widehat{G}:\lambda_{[\tau]}\geqslant  \lambda_{[\xi']}-\lambda_{[\xi]}    }    {d_\tau}^{2}(1+\nu_{\ell'_0\ell'_0}(\tau)^2)^{2\gamma}<\infty.
 \]
 In order to finish the proof we need to show that $I<\infty.$ Let us fix $\gamma$ and $s$ such that  $2\gamma+s +  \frac{Q}{4}\leqslant 0,$ and $s>Q/2.$ Observe that
 \begin{align*}
   & \sum_{[\tau]\in \widehat{G}:\lambda_{[\tau]}\geqslant  \lambda_{[\xi']}-\lambda_{[\xi]}    }    {d_\tau}^{2}(1+\nu_{\ell'_0\ell'_0}(\tau)^2)^{2\gamma}\\
   &= \sum_{[\tau]\in \widehat{G}:\lambda_{[\tau]}\geqslant  \lambda_{[\xi']}-\lambda_{[\xi]}    }    {d_\tau}\times d_\tau(1+\nu_{\ell'_0\ell'_0}(\tau)^2)^{2\gamma+s}(1+\nu_{\ell'_0\ell'_0}(\tau)^2)^{-s}\\
    &\leq  \sum_{[\tau]\in \widehat{G}:\lambda_{[\tau]}\geqslant  \lambda_{[\xi']}-\lambda_{[\xi]}    } \sum_{1\leqslant \ell'_1\leqslant d_\tau}   {d_\tau}(1+\nu_{\ell'_0\ell'_0}(\tau)^2)^{2\gamma+s}    {d_\tau}(1+\nu_{\ell'_1,\ell'_1}(\tau)^2)^{-s},
    \end{align*} and  by using the estimate $d_{\tau}\lesssim (1+\nu_{\ell'_0\ell'_0}(\tau)^2)^{\frac{Q}{4}}$ (see \eqref{XI}) we have
    \begin{align*}
    &\sum_{[\tau]\in \widehat{G}:\lambda_{[\tau]}\geqslant  \lambda_{[\xi']}-\lambda_{[\xi]}    } \sup_{1\leqslant \ell'\leqslant d_\tau}   {d_\tau}(1+\nu_{\ell'_0\ell'_0}(\tau)^2)^{2\gamma+s } \sum_{1\leqslant \ell'_1\leqslant d_\tau}   {d_\tau}(1+\nu_{\ell'_1\ell'_1}(\tau)^2)^{-s} \\
     &\leqslant \sum_{[\tau]\in \widehat{G}:\lambda_{[\tau]}\geqslant  \lambda_{[\xi']}-\lambda_{[\xi]}    } \sup_{1\leqslant \ell'\leqslant d_\tau}   (1+\nu_{\ell'_0\ell'_0}(\tau)^2)^{2\gamma+s +  \frac{Q}{4} } \sup_{1\leqslant \ell'_1\leqslant d_\tau}   {d_\tau}(1+\nu_{\ell'_1\ell'_1}(\tau)^2)^{-s} \\
     &\lesssim \sum_{[\tau]\in \widehat{G}:\lambda_{[\tau]}\geqslant  \lambda_{[\xi']}-\lambda_{[\xi]}    }    (1+ \lambda_{[\xi']}-\lambda_{[\xi]} )^{\frac{1}{\kappa}( 2\gamma+s +  \frac{Q}{4} ) } \sup_{1\leqslant \ell'_1\leqslant d_\tau}   {d_\tau}(1+\nu_{\ell'_1\ell'_1}(\tau)^2)^{-s}\\
     &\leqslant  (1+ \lambda_{[\xi']}-\lambda_{[\xi]} )^{\frac{1}{\kappa}( 2\gamma+s +  \frac{Q}{4} ) } \sum_{[\tau]\in \widehat{G}:}    \sum_{1\leqslant \ell'_1\leqslant d_\tau}   {d_\tau}(1+\nu_{\ell'_1\ell'_1}(\tau)^2)^{-s}\\
     &= (1+ \lambda_{[\xi']}-\lambda_{[\xi]} )^{ \frac{1}{\kappa}( 2\gamma+s +  \frac{Q}{4} ) } \Vert \widehat{\mathcal{M}}_{-s}(\xi)\Vert^2_{L^2(\widehat{G})}<\infty,
 \end{align*}where we have used that $\Vert \widehat{\mathcal{M}}_{-s}(\xi)\Vert_{L^2(\widehat{G})}<\infty$ for $s>Q/2.$ Thus, we conclude that
 $$   \Vert  (1+\mathcal{L})^{\gamma}(fg)\Vert_{L^2(G)}^2\lesssim   \Vert g \Vert^2_{L^2(G)}\Vert f \Vert^2_{L^2(G)} (1+ \lambda_{[\xi']}-\lambda_{[\xi]} )^{ \frac{1}{\kappa}( 2\gamma+s +  \frac{Q}{4} )   }.  $$ So, we finish the proof. 
 \end{proof} \begin{proof}[Proof of Theorem \ref{CVT}] Let us fix $0\leq \delta< 1/\kappa,$ $0< \rho\leq 1,$ with $\delta\leq \rho.$  Indeed,  Theorem \ref{CVT} for $\rho=0$ follows from Theorem 10.5.5 of \cite{Ruz}. We will give an additional proof for the case $0\leqslant \delta<\rho\leqslant 1$  in Corollary \ref{CL2''}.  Let us choose  $\eta_{0}$ and $\eta_{1},$ supported in $[-1,1]$ and $[\frac{1}{2},2]$ respectively, with $0\leqslant \eta_0,\eta_1\leqslant 1,$ such that
  \[ 
      \forall t\geqslant   0,\,\,\,\sum_{\ell=0}^{\infty}\eta_{\ell}(t)=1,\,\,\,\textnormal{ where for all }\ell\in\mathbb{N},\,\,\,\eta_\ell(t):=\eta_{1}(2^{- \frac{\ell}{n} }t),
  \]  For every $\ell\in \mathbb{N}_0,$ let us denote $\sigma_\ell(x,\xi):=\sigma(x,\xi)\eta_{\ell}(\langle \xi\rangle).$ Let us denote by $k_{x}$ and $k_{x,\ell}$ the right-convolutions kernels associated with $A$ and $T_{\sigma_\ell}=\textnormal{Op}(\sigma_\ell).$ From the properties of the dyadic decomposition we have (see e.g. \cite[page 30]{Fischer2015})
 \begin{align*}
     &\Vert A\Vert^2_{\mathscr{B}(L^2(G))}\\
     &\lesssim \sup_{\ell\in \mathbb{N}_0}\Vert T_{\sigma_{\ell}}\Vert^2_{\mathscr{B}(L^2(G))}+\sum_{\ell\neq \ell'\,\ell,\ell'\in 2\mathbb{N}_0} \Vert T_{\sigma_{\ell}}^{*}  T_{\sigma_{\ell'}}\Vert_{\mathscr{B}(L^2(G))}+\sum_{\ell\neq \ell'\,\ell,\ell'\in 2\mathbb{N}_0+1} \Vert T_{\sigma_{\ell}}^{*}  T_{\sigma_{\ell'}}\Vert_{\mathscr{B}(L^2(G))}\\
     &:=I+II+III,
 \end{align*}where $I:=\sup_{\ell\in \mathbb{N}_0}\Vert T_{\sigma_{\ell}}\Vert_{\mathscr{B}(L^2(G))},$ and  $II:=\sum_{\ell\neq \ell'\,\ell,\ell'\in 2\mathbb{N}_0} \Vert T_{\sigma_{\ell}}^{*}  T_{\sigma_{\ell'}}\Vert_{\mathscr{B}(L^2(G))}.$
 In order to prove that $I<\infty,$ we will use that the exponential map\footnote{The exponential of $X\in \mathfrak{g}$ is given by $\textnormal{exp}(X)=\gamma_X(1),$ where $\gamma_X:\mathbb{R}\rightarrow G,$ is the unique one parameter subgroup of $G$ whose tangent vector at the identity is equal to $X.$} \begin{equation}\textnormal{exp}:\mathfrak{v}\subset  \mathfrak{g}\simeq \mathbb{R}^{n}\rightarrow B(e_G,\varepsilon_0),
\end{equation}    is a local diffeomorphism from an open  neighbourhood $\mathfrak{v}$ of $0\in \mathbb{R}^n,$ $n=\dim(G),$ into an  ball $B(e_G,\varepsilon_0)$  containing the identity $e_G$ of $G.$ The ball $B(e_G,\varepsilon_0)$  is defined by the geodesic distance on $G$ and $\varepsilon_0>0.$ By the compactness of $G,$ there exists a finite number of elements $x_{0}=e_{G},x_{i},$ $1\leqslant i\leqslant N_0,$ in $G,$ and some smooth functions $\chi_{j}\in C^\infty(G,[0,1]),$ supported in $B(e_G,\varepsilon_0/2),$ such that
\begin{equation}\label{chij}
    G=\bigcup_{j=1}^{N_0}B(x_{j},\varepsilon_0/4),\,\,\,\textnormal{   and  }\sum_{j=0}^{N_0}\chi_{j}(x_j^{-1}x)=1,\,\,\, x\in G.
\end{equation}
 For every $0<r\leqslant 1,$ let us define the local dilation $D_{r}:B(e_G,\varepsilon_0/2)\rightarrow B(e_G, \varepsilon_0)$ given by \[ 
     D_{r}(x)\equiv r\cdot x:=\exp(r \exp^{-1}(x)).
 \] Observe that, 
 \begin{align*}
    & \Vert T_{\ell}\Vert_{\mathscr{B}(L^2(G))}=\Vert \textnormal{Op}(\sigma_{\ell}(x,\xi)) \Vert_{\mathscr{B}(L^2(G))}\leqslant \sum_{j=0}^{N_0}\Vert \textnormal{Op}(\sigma_{\ell}(x,\xi)\chi_j(x_j^{-1}x)) \Vert_{\mathscr{B}(L^2(G))}\\
    &=\sum_{j=0}^{N_0}\Vert \textnormal{Op}(\sigma_{\ell}(x_jx,\xi)\chi_j(x)) \Vert_{\mathscr{B}(L^2(G))},
 \end{align*}with the possibly unbounded sequence in $\ell\in \mathbb{N}_0$ in the right-hand side of the inequalities. The idea, is to show that this is not the case, and we will estimate the operator norm $\Vert \textnormal{Op}(\sigma_{\ell}(x_jx,\xi)\chi_j(x)) \Vert_{\mathscr{B}(L^2(G))}.$ To simplify the notation, let us write $\sigma_{j,\ell}(x,\xi):=\sigma_{\ell}(x_jx,\xi)\chi_j(x).$ So, we have 
 \[ 
   \sup_{\ell\in \mathbb{N}_0}  \Vert T_{\ell}\Vert_{\mathscr{B}(L^2(G))}\leqslant \sum_{j=0}^{N_0}  \sup_{\ell\in \mathbb{N}_0}\Vert \textnormal{Op}(\sigma_{j,\ell}) \Vert_{\mathscr{B}(L^2(G))}.
 \]Because the family of symbols are compactly supported in both variables, $x$ and $[\xi],$ the fact that $\sigma\in S^{0,\mathcal{L}}_{\rho,\delta}(G\times \widehat{G}),$ also implies that $\sigma_{j,\ell}\in S^{0,\mathcal{L}}_{\rho,\delta}(G\times \widehat{G}).$

Let us define some  dilations of every dyadic part $\sigma_{j,\ell}$ in the spirit of the classical proof of the Calder\'on-Vaillancourt Theorem on $\mathbb{R}^n$ (see \cite{Calderon1,Calderon2}). The support of $\sigma_{j,\ell}$ is contained in the set of $(x,[\xi]),$ such that $\langle \xi\rangle\sim 2^{  \frac{\ell}{n}  }$ and $x\in B(e_G,\varepsilon_0),$ so we can define the following dilation of $\sigma_{j,\ell},$
\[ 
    \tilde{\sigma}_{j,\ell}(x,\xi):=\sigma_{j,\ell}(2^{- \frac{\rho\ell}{n} }\cdot x,\xi)\times 1_{B(e_G,\varepsilon_0)}.
\]Taking into account that $\tilde{\sigma}_{j,\ell},$ keep fixed the $[\xi]$-variables of the symbol $\sigma_{j,\ell},$ we deduce that $\tilde{\sigma}_{j,\ell}$ satisfies the subelliptic  conditions of type $\rho.$ We claim that $\tilde{\sigma}_{j,\ell}\in S^{0,\mathcal{L}}_{\rho,\delta}(G\times \widehat{G}).$ So, we need to check that $\tilde{\sigma}_{j,\ell}$ satisfies the subelliptic  conditions of type $\delta,$ for $\delta\leqslant \rho.$ For this, observe that,
\begin{align*}
    \partial_{X_{i}}^{(1)}\tilde{\sigma}_{j,\ell}(x,\xi)&= \partial_{X_{i}}^{(1)}{\sigma}_{j,\ell}(2^{- \frac{\rho\ell}{n} }\cdot x,\xi)=\partial_{X_{i}}^{(1)}{\sigma}_{j,\ell}(\exp({ 2^{- \frac{\rho\ell}{n} }\exp^{-1}{(x)}} ),\xi)\\
    &=(\partial_{X_{i}}^{(1)}{\sigma}_{j,\ell})(\exp({ 2^{- \frac{\rho\ell}{n} }\exp^{-1}{(x)}} ),\xi)\cdot (\partial_{X_i}^{(1)}\exp)({ 2^{- \frac{\rho\ell}{n} }\exp^{-1}{(x)}} )\\
    &\hspace{3cm}\times 2^{- \frac{\rho\ell}{n} }(\partial_{X_i}^{(1)}\exp^{-1})(x)\\
    &=2^{- \frac{\rho\ell}{n} }(\partial_{X_{i}}^{(1)}{\sigma}_{j,\ell})(\exp({ 2^{- \frac{\rho\ell}{n} }\exp^{-1}{(x)}} ),\xi)\cdot (\partial_{X_i}^{(1)}\exp)({ 2^{- \frac{\rho\ell}{n} }\exp^{-1}{(x)}} )\\
    &\hspace{3cm}\times (\partial_{X_i}^{(1)}\exp^{-1})(x).
\end{align*}Now, from the fact that $\sigma_{j,\ell}\in S^{0,\mathcal{L}}_{\rho,\delta}(G\times \widehat{G}),$ we deduce
\begin{align*}
   & \Vert \widehat{\mathcal{M}}(\xi)^{-\delta}\partial_{X_{i}}^{(1)}\tilde{\sigma}_{j,\ell}(z'',\xi)\Vert_{\textnormal{op}}\\
   &\leqslant 2^{- \frac{\rho\ell}{n} }\sup_{z'',z',z\in G}|(\partial_{X_i}^{(1)}\exp)(z )(\partial_{X_i}^{(1)}\exp^{-1})(z')|\Vert \widehat{\mathcal{M}}(\xi)^{-\delta}(\partial_{X_{i}}^{(1)}{\sigma}_{j,\ell})(z'',\xi) \Vert_{\textnormal{op}}\\
   &\lesssim 2^{- \frac{\rho\ell}{n} } \Vert \widehat{\mathcal{M}}(\xi)^{-\delta}(\partial_{X_{i}}^{(1)}{\sigma}_{j,\ell})(z'',\xi) \Vert_{\textnormal{op}}\\
   &\lesssim 2^{- \frac{\rho\ell}{n} } \Vert \widehat{\mathcal{M}}(\xi)^{-\delta}(\partial_{X_{i}}^{(1)}{\sigma})(z'',\xi) \Vert_{\textnormal{op}}.
\end{align*} A similar argument can help us to deduce the following estimate for higher derivatives
\begin{equation}\label{dilationestimate}
   \sup_{(x,\xi)\in G\times \widehat{G}} \Vert \widehat{\mathcal{M}}(\xi)^{-\delta|\beta|}\partial_{X_{i}}^{(\beta)}\tilde{\sigma}_{j,\ell}\Vert_{\textnormal{op}}\lesssim 2^{- \frac{\rho\ell}{n} |\beta|} \sup_{(x,\xi)\in G\times \widehat{G}} \Vert \widehat{\mathcal{M}}(\xi)^{-\delta|\beta|}(\partial_{X_{i}}^{(1)}{\sigma})(z'',\xi) \Vert_{\textnormal{op}}<\infty.
\end{equation}The symbol $\sigma_{j,\ell}$ and its convolution kernel $k_{j,\ell,x}$ are supported in $x$ in $B(0,\varepsilon_0),$ and dilating the $x$-argument to $x\in B(e_G, 2^{{ \frac{\rho\ell}{n}}  }\varepsilon_0),$ implies the identity
\[ 
    (\textnormal{Op}(\sigma_{j,\ell})f)(2^{-{ \frac{\rho\ell}{n}}  }\cdot x)=f\ast k_{j,\ell, 2^{-{ \frac{\rho\ell}{n}}  }\cdot x  }=f\ast \tilde{k}_{j,\ell,  x  } (2^{-{ \frac{\rho\ell}{n}}  }\cdot x)= (\textnormal{Op}(\tilde{\sigma}_{j,\ell})f)(2^{-{ \frac{\rho\ell}{n}}  }\cdot x),
\]where we have denoted by $\tilde{k}_{j,\ell,  x  }=\mathscr{F}^{-1}\tilde{\sigma}_{j,\ell}$ the right-convolution kernel associated with $\tilde{\sigma}_{j,\ell}.$ From the usual Sobolev embedding theorem we have
\begin{align*}
    |(\textnormal{Op}(\sigma_{j,\ell})f)(2^{-{ \frac{\rho\ell}{n}}  }\cdot x)|\lesssim \sum_{|\beta|\leqslant [\frac{n}{2}]+1}\Vert \partial_{Z'}^{(\beta)} ( f\ast \tilde{k}_{j,\ell,  z'  } ) (2^{-{ \frac{\rho\ell}{n}}  }\cdot x) \Vert_{L^2(G, dz')},
 \end{align*} and we deduce
\begin{align*}
    &\Vert (\textnormal{Op}(\sigma_{j,\ell})f)(2^{-{ \frac{\rho\ell}{n}}  }\cdot x)\Vert_{L^2(B(e_G,\,2^{ \frac{\rho\ell}{n} }\varepsilon_0),dx)}\\
    &\hspace{2cm}\lesssim \sum_{|\beta|\leqslant [\frac{n}{2}]+1}\Vert \partial_{Z'}^{(\beta)}  (f\ast \tilde{k}_{j,\ell,  z'  })  (2^{-{ \frac{\rho\ell}{n}}  }\cdot x) \Vert_{L^2(B(e_G,\,2^{ \frac{\rho\ell}{n} }\varepsilon_0)\times G, dxdz')}.
\end{align*}By the  change of variables, for the transformation $x'=2^{- \frac{\rho\ell}{n} }\cdot x,$ we have
\begin{align*}
 &\Vert (\textnormal{Op}(\sigma_{j,\ell})f)(2^{-{ \frac{\rho\ell}{n}}  }\cdot x)\Vert_{L^2(B(e_G,\,2^{ \frac{\rho\ell}{n} }\varepsilon_0),dx)}=\left(\int\limits_{B(e_G,\,2^{ \frac{\rho\ell}{n} }\varepsilon_0)}     |(\textnormal{Op}(\sigma_{j,\ell})f)(2^{-{ \frac{\rho\ell}{n}}  }\cdot x)|^2dx\right)^{\frac{1}{2}} \\
 &=\left(\int\limits_{B(e_G,\,\varepsilon_0)}     |(\textnormal{Op}(\sigma_{j,\ell})f)(x')|^2\left|\det\left( \frac{\partial x}{\partial x'}\right)\right|dx'\right)^{\frac{1}{2}}\\
 &\asymp  2^{\frac{\rho\ell}{2}}\left(\int\limits_{B(e_G,\,\varepsilon_0)}     |(\textnormal{Op}(\sigma_{j,\ell})f)(x')|^2dx'\right)^{\frac{1}{2}}.
\end{align*}A similar argument implies that
\begin{align*}
   & \sum_{|\beta|\leqslant [\frac{n}{2}]+1}\Vert \partial_{Z'}^{(\beta)}  (f\ast \tilde{k}_{j,\ell,  z'  })  (2^{- { \frac{\rho\ell}{n}}  }\cdot x) \Vert_{L^2(B(e_G,\,2^{ \frac{\rho\ell}{n} }\varepsilon_0)\times G, dxdz')}\\
    &\hspace{2cm}\asymp 2^{\frac{\rho\ell}{2}}\sum_{|\beta|\leqslant [\frac{n}{2}]+1}\Vert \partial_{Z'}^{(\beta)}  (f\ast \tilde{k}_{j,\ell,  z'  })  ( x') \Vert_{L^2(B(e_G,\,\varepsilon_0), dx'dz')}
\end{align*} and consequently we have proved that,
\[ 
 \Vert (\textnormal{Op}(\sigma_{j,\ell})f)( x')\Vert_{L^2(B(e_G,\,\varepsilon_0),dx')}   \lesssim \sum_{|\beta|\leqslant [\frac{n}{2}]+1}\Vert \partial_{Z'}^{(\beta)}  (f\ast \tilde{k}_{j,\ell,  z'  })  (x') \Vert_{L^2(B(e_G,\varepsilon_0)\times G, dx'dz')}.
\] Now, let us estimate the right-hand side of the previous inequality,
\begin{align*}
   & \Vert \partial_{Z'}^{(\beta)}  (f\ast \tilde{k}_{j,\ell,  z'  })  (x') \Vert_{L^2(B(e_G,\,)\times G, dx'dz')}=\Vert  f\ast \partial_{Z'}^{(\beta)} \tilde{k}_{j,\ell,  z'  } (x') \Vert_{L^2(B(e_G,\varepsilon_0)\times G,
   dx'dz')}\\
   &\leqslant \sup_{z'\in G}\Vert  f\ast \partial_{Z'}^{(\beta)} \tilde{k}_{j,\ell,  z'  } (x') \Vert_{L^2(B(e_G,\varepsilon_0)dx')}\\
   &=\sup_{z'\in G}\Vert  \textnormal{Op}(\partial_{Z'}^{(\beta)} \tilde{\sigma}_{j,\ell})f (x') \Vert_{L^2(B(e_G,\varepsilon_0)dx')}\\
   &\leqslant \sup_{z'\in G}\Vert  \textnormal{Op}(\partial_{Z'}^{(\beta)} \tilde{\sigma}_{j,\ell}) \Vert_{\mathscr{B}(L^2(G))}\Vert f\Vert_{L^2(G)}.
\end{align*}Observe that from \eqref{dilationestimate}, for $|\beta|\geqslant   1,$ we have
\begin{align*}
    &\sup_{z'\in G}\Vert  \textnormal{Op}(\partial_{Z'}^{(\beta)} \tilde{\sigma}_{j,\ell}) \Vert_{\mathscr{B}(L^2(G))}=\sup_{z'\in G, \,\langle \xi\rangle\sim 2^{  \frac{\ell}{n}  }}\Vert \partial_{Z'}^{(\beta)} \tilde{\sigma}_{j,\ell}(z',\xi)\Vert_{\textnormal{op}} \\
    &=\sup_{z''\in G, \,\langle \xi\rangle\sim 2^{  \frac{\ell}{n}  }}\Vert \widehat{\mathcal{M}}(\xi)^{\delta|\beta|} \widehat{\mathcal{M}}(\xi)^{-\delta|\beta|}\partial_{Z'}^{(\beta)} \tilde{\sigma}_{j,\ell}(z',\xi)\Vert_{\textnormal{op}} \\
    &\lesssim \sup_{z''\in G, \,\langle \xi\rangle\sim 2^{  \frac{\ell}{n}  }} \Vert \widehat{\mathcal{M}}(\xi)^{\delta|\beta|} \Vert_{\textnormal{op}}  2^{- \frac{\rho\ell|\beta|}{n} } \Vert \widehat{\mathcal{M}}(\xi)^{-\delta|\beta|}(\partial_{X}^{(\beta)}{\sigma})(z'',\xi) \Vert_{\textnormal{op}}\\
     &= \sup_{z''\in G, \,\langle \xi\rangle\sim 2^{  \frac{\ell}{n}  }}\sup_{1\leqslant i\leqslant d_\xi}  (1+\nu_{ii}(\xi)^2)^{\frac{\delta|\beta|}{2}}  2^{- \frac{\rho\ell|\beta|}{n} } \Vert \widehat{\mathcal{M}}(\xi)^{-\delta|\beta|}(\partial_{X}^{(\beta)}{\sigma})(z'',\xi) \Vert_{\textnormal{op}}\\
     &\lesssim  \sup_{z''\in G, \,\langle \xi\rangle\sim 2^{  \frac{\ell}{n}  }} \langle \xi\rangle^{\delta|\beta|}  2^{- \frac{\rho\ell|\beta|}{n} } \Vert \widehat{\mathcal{M}}(\xi)^{-\delta|\beta|}(\partial_{X}^{(\beta)}{\sigma})(z'',\xi) \Vert_{\textnormal{op}}\\
      &\asymp  \sup_{z''\in G} 2^{ \frac{\delta\ell|\beta|}{n} }  2^{- \frac{\rho\ell|\beta|}{n} } \Vert \widehat{\mathcal{M}}(\xi)^{-\delta|\beta|}(\partial_{X}^{(\beta)}{\sigma})(z'',\xi) \Vert_{\textnormal{op}}\\
      &\leqslant   \sup_{z''\in G} 2^{(\delta\ell-\rho\ell)|\beta|/n} \Vert \widehat{\mathcal{M}}(\xi)^{-\delta|\beta|}(\partial_{X}^{(\beta)}{\sigma})(z'',\xi) \Vert_{\textnormal{op}}<\infty,
\end{align*}where we have used that $\delta-\rho\leqslant 0.$ A similar argument applied for $|\beta|=0,$ allows us to deduce that
\[ 
    \Vert (\textnormal{Op}(\sigma_{j,\ell})f)( x')\Vert_{L^2(B(e_G,\,\varepsilon_0),dx')} \lesssim \sum_{|\beta|\leqslant [\frac{n}{2}]+1} \sup_{(z'',[\xi])}\Vert \widehat{\mathcal{M}}(\xi)^{-\delta|\beta|}(\partial_{X}^{(\beta)}{\sigma})(z'',\xi) \Vert_{\textnormal{op}}<\infty.
\]So, we have proved that
\begin{align*}
  & I:=\sup_{\ell\in \mathbb{N}}\Vert T_{\ell} \Vert_{\mathscr{B}(L^2(G))}\leqslant  \sup_{\ell\in \mathbb{N}}\sum_{j=1}^{N_0}\Vert \textnormal{Op}(\sigma_{j,\ell} )\Vert_{\mathscr{B}(L^2(G))}\leqslant \\
  &\lesssim \sum_{|\beta|\leqslant [\frac{n}{2}]+1} \sup_{(z'',[\xi])}\Vert \widehat{\mathcal{M}}(\xi)^{-\delta|\beta|}(\partial_{X}^{(\beta)}{\sigma})(z'',\xi) \Vert_{\textnormal{op}}<\infty.
\end{align*} In order to finish the proof, we need to show that $II$ and $III$ are finite. To do so, we will use the bilinear estimate in Lemma \ref{bilinearestimate}. Let $k_{\ell}$ be the convolution kernel of $T_\ell$ and let us denote by $K_{\ell,\ell'},$ the integral kernel of $T^{*}_\ell T_{\ell'}.$ From the Schur Lemma, we can estimate
\[ 
    \Vert T^{*}_\ell T_{\ell'} \Vert_{\mathscr{B}(L^2(G))}\lesssim \sup_{(x,y)\in G\times G}|K_{\ell,\ell'}(x,y)|.
\]  Let us fix $N\in \mathbb{N},$ and let  $s_{0}>\frac{Q}{4}.$ Note that
\[ 
    K_{\ell,\ell'}(x,y)= \int\limits_{G}\overline{k}_{\ell,z}(x^{-1}z)k_{\ell',z_1}(y^{-1}z_2)dz.
\]
So, from the subelliptic Sobolev embedding theorem (see Remark \ref{sset}), and the  continuous inclusion $H^{s}(G)\subset H^{s,\mathcal{L}}(G)$ for $s\geqslant   0,$ we have,
\begin{align*}
   & |K_{\ell,\ell'}(x,y)|\\
   &=\left| \int\limits_{G}(1+\mathcal{L})^{N}_{z_1=z}(1+\mathcal{L})^{-N}_{z_2=z}[\overline{k}_{\ell,z_1}(x^{-1}z_2)k_{\ell',z_1}(y^{-1}z_2)]dz\right|\\
   &\leqslant \int\limits_{G}\sup_{z_1\in G}\left|(1+\mathcal{L})^{N}_{z_1=z}(1+\mathcal{L})^{-N}_{z_2=z}[\overline{k}_{\ell,z_1}(x^{-1}z_2)k_{\ell',z_1}(y^{-1}z_2)]\right|dz_2\\
   &\lesssim\int\limits_{G}\Vert(1+\mathcal{L})^{N+s_0}_{z_1=z}(1+\mathcal{L})^{-N}_{z_2=z}[\overline{k}_{\ell,z_1}(x^{-1}z_2)k_{\ell',z_1}(y^{-1}z_2)]\Vert_{L^2(G,dz_1)} dz_2\\
   &\leqslant\left(\int\limits_{G}\Vert(1+\mathcal{L})^{N+s_0}_{z_1=z}(1+\mathcal{L})^{-N}_{z_2=z}[\overline{k}_{\ell,z_1}(x^{-1}z_2)k_{\ell',z_1}(y^{-1}z_2)]\Vert^2_{L^2(G,dz_1)} dz_2\right)^{\frac{1}{2}}\\
   &=\left(\int\limits_{G}\Vert(1+\mathcal{L})^{-N}_{z_2=z}[\overline{k}_{\ell,z_1}(x^{-1}z_2)k_{\ell',z_1}(y^{-1}z_2)]\Vert^2_{H^{2N+2s_0,\mathcal{L}}(G,dz_1)} dz_2\right)^{\frac{1}{2}}\\
   &\lesssim \left(\int\limits_{G}\Vert(1+\mathcal{L})^{-N}_{z_2=z}[\overline{k}_{\ell,z_1}(x^{-1}z_2)k_{\ell',z_1}(y^{-1}z_2)]\Vert^2_{H^{2N+2s_0}(G,dz_1)} dz_2\right)^{\frac{1}{2}}.
\end{align*}Taking into account the following equivalence of norms
\begin{align*}
   & \Vert(1+\mathcal{L})^{-N}_{z_2=z}[\overline{k}_{\ell,z_1}(x^{-1}z_2)k_{\ell',z_1}(y^{-1}z_2)]\Vert^2_{H^{2N+2s_0}(G,dz_1)}\\
    &\asymp \sum_{|\alpha_1|+|\alpha_2|\leqslant 2(N+s_0)} \Vert(1+\mathcal{L})^{-N}_{z_2=z}[\partial_{Z_1}^{(\alpha_1)}\overline{k}_{\ell,z_1}(x^{-1}z_2)\partial_{Z_1}^{(\alpha_2)}k_{\ell',z_1}(y^{-1}z_2)]\Vert^2_{L^2(G,dz_1)},
\end{align*}from Lemma \ref{bilinearestimate}, and by using that $k_{\ell',z_1}\in H_{\lambda_{[\xi']}}(G),$ $k_{\ell,z_1}\in H_{\lambda_{[\xi]}}(G),$  (see Lemma \ref{bilinearestimate} for this notation) with $\lambda_{[\xi']}\sim\langle \xi' \rangle^2\sim 2^{ \frac{2\ell'}{n} }$ and   $\lambda_{[\xi]}\sim\langle \xi \rangle^2\sim  2^{  \frac{2\ell}{n}  },$ we estimate for $s>Q/2,$
\begin{align*}
  & \Vert(1+\mathcal{L})^{-N}_{z_2=z}[\partial_{Z_1}^{(\alpha_1)}\overline{k}_{\ell,z_1}(x^{-1}z_2)\partial_{Z_1}^{(\alpha_2)}k_{\ell',z_1}(y^{-1}z_2)]\Vert_{L^2(G,dz_1)}\\
   &\lesssim(1+|2^{ \frac{2\ell}{n} }-2^{ \frac{2\ell'}{n} }|)^{\frac{1}{\kappa}(-N+\frac{s}{2}+\frac{Q}{8})}\Vert \partial_{Z_1}^{(\alpha_2)}k_{\ell',z_1}(y^{-1}z_2) \Vert_{L^2(G)}\Vert \partial_{Z_1}^{(\alpha_1)}\overline{k}_{\ell,z_1}(x^{-1}z_2) \Vert_{L^2(G)}\\
   &\lesssim 2^{-(\max(\ell,\ell')\frac{1}{n}\cdot \frac{2}{\kappa}(N-\frac{s}{2}- \frac{Q}{8})}\Vert \partial_{Z_1}^{(\alpha_2)}k_{\ell',z_1}(y^{-1}z_2) \Vert_{L^2(G)}\Vert \partial_{Z_1}^{(\alpha_1)}\overline{k}_{\ell,z_1}(x^{-1}z_2) \Vert_{L^2(G)}\\
   &\lesssim 2^{-\frac{1}{\kappa}(\max(\ell,\ell')\frac{1}{n}(2N-s-\frac{Q}{4})}\Vert \partial_{Z_1}^{(\alpha_2)}k_{\ell',z_1}(y^{-1}z_2) \Vert_{L^2(G)}\Vert \partial_{Z_1}^{(\alpha_1)}\overline{k}_{\ell,z_1}(x^{-1}z_2) \Vert_{L^2(G)}.
\end{align*}From \eqref{inequality}, for $s>Q/2$ we have
\begin{align*}
    &\Vert \partial_{Z_1}^{(\alpha_1)}\overline{k}_{\ell,z_1}(x^{-1}z_2) \Vert_{L^2(G)}\lesssim_{s}\sup_{[\xi]\in \widehat{G}}\Vert \widehat{\mathcal{M}}(\xi)^s\partial_{Z_1}^{(\alpha_1)}\sigma_{\ell}(z_1,\xi)\Vert_{\textnormal{op}}\\
    &\asymp \sup_{\langle \xi\rangle\sim 2^{  \frac{\ell}{n}  }}\Vert \widehat{\mathcal{M}}(\xi)^s\partial_{Z_1}^{(\alpha_1)}\sigma_{\ell}(z_1,\xi)\Vert_{\textnormal{op}}.
\end{align*}Note that for $\langle \xi\rangle\sim 2^{  \frac{\ell}{n}  },$ the following estimates hold
\begin{align*}
  \Vert \widehat{\mathcal{M}}(\xi)^s\partial_{Z_1}^{(\alpha_1)}\sigma_{\ell}(z_1,\xi)\Vert_{\textnormal{op}}&= \Vert \widehat{\mathcal{M}}(\xi)^{s+\delta|\alpha_1|}\widehat{\mathcal{M}}(\xi)^{-\delta|\alpha_1|}\partial_{Z_1}^{(\alpha_1)}\sigma_{\ell}(z_1,\xi)\Vert_{\textnormal{op}} \\
  &\leqslant \Vert \widehat{\mathcal{M}}(\xi)^{s+\delta|\alpha_1|}\Vert_{\textnormal{op}}\Vert \widehat{\mathcal{M}}(\xi)^{-\delta|\alpha_1|}\partial_{Z_1}^{(\alpha_1)}\sigma_{\ell}(z_1,\xi)\Vert_{\textnormal{op}} \\
  &= \Vert \widehat{\mathcal{M}}(\xi)^{s+\delta|\alpha_1|}\Vert_{\textnormal{op}}\Vert \widehat{\mathcal{M}}(\xi)^{-\delta|\alpha_1|}\partial_{Z_1}^{(\alpha_1)}\sigma(z_1,\xi)\eta_1(2^{-\frac{\ell}{n}}\langle\xi\rangle)\Vert_{\textnormal{op}}\\
  &\lesssim \sup_{\langle \xi\rangle\sim 2^{  \frac{\ell}{n}  }} \Vert \widehat{\mathcal{M}}(\xi)^{s+\delta|\alpha_1|}\Vert_{\textnormal{op}}\Vert \widehat{\mathcal{M}}(\xi)^{-\delta|\alpha_1|}\partial_{Z_1}^{(\alpha_1)}\sigma(z_1,\xi)\Vert_{\textnormal{op}}\\
  &=\sup_{\langle \xi\rangle\sim 2^{  \frac{\ell}{n}  }}\sup_{1\leqslant j\leqslant d_\xi} (1+\nu_{jj}(\xi)^2)^{\frac{s+\delta|\alpha_1|}{2}}\Vert \widehat{\mathcal{M}}(\xi)^{-\delta|\alpha_1|}\partial_{Z_1}^{(\alpha_1)}\sigma(z_1,\xi)\Vert_{\textnormal{op}}\\
  &\lesssim \sup_{\langle \xi\rangle\sim 2^{  \frac{\ell}{n}  }}\langle \xi\rangle^{{s+\delta|\alpha_1|}}\Vert \widehat{\mathcal{M}}(\xi)^{-\delta|\alpha_1|}\partial_{Z_1}^{(\alpha_1)}\sigma(z_1,\xi)\Vert_{\textnormal{op}},
\end{align*} and  we have 
\begin{align*}
   & \Vert \partial_{Z_1}^{(\alpha_1)}\overline{k}_{\ell,z_1}(x^{-1}z_2) \Vert_{L^2(G)}\lesssim_{s} \sup_{\langle \xi\rangle\sim 2^{  \frac{\ell}{n}  } } \langle \xi\rangle^{{s+\delta|\alpha_1|}}\Vert \widehat{\mathcal{M}}(\xi)^{-\delta|\alpha_1|}\partial_{Z_1}^{(\alpha_1)}\sigma(z_1,\xi)\Vert_{\textnormal{op}}\\
  &\asymp 2^{\ell(s+\delta|\alpha_1|)/n}  \sup_{[\xi]\in \widehat{G}}\Vert \widehat{\mathcal{M}}(\xi)^{-\delta|\alpha_1|}\partial_{Z_1}^{(\alpha_1)}\sigma(z_1,\xi)\Vert_{\textnormal{op}}<\infty.
\end{align*}Thus, we have obtained that
\[ 
    \Vert \partial_{Z_1}^{(\alpha_1)}\overline{k}_{\ell,z_1}(x^{-1}z_2) \Vert_{L^2(G)}\lesssim  2^{\ell(s+\delta|\alpha_1|)/n}.
\]In a similar way we can prove 
\[ 
    \Vert \partial_{Z_1}^{(\alpha_2)}k_{\ell',z_1}(y^{-1}z_2) \Vert_{L^2(G)}\lesssim  2^{\ell(s+\delta|\alpha_2|)/n}.
\]
Thus, the condition $|\alpha_1|+|\alpha_2|\leqslant 2(N+s_0)$ implies 
\[ 
  \Vert \partial_{Z_1}^{(\alpha_1)}\overline{k}_{\ell,z_1}(x^{-1}z_2) \Vert_{L^2(G)}\Vert \partial_{Z_1}^{(\alpha_2)}k_{\ell',z_1}(y^{-1}z_2) \Vert_{L^2(G)}\lesssim  2^{\max(\ell,\ell')(s+\delta(2N+2s_0))/n}. 
\]
Now, we have  the estimate
\begin{align*}
   & \Vert(1+\mathcal{L})^{-N}_{z_2=z}[\partial_{Z_1}^{(\alpha_1)}\overline{k}_{\ell,z_1}(x^{-1}z_2)\partial_{Z_1}^{(\alpha_2)}k_{\ell',z_1}(y^{-1}z_2)]\Vert_{L^2(G,dz_1)}\\
   &\lesssim 2^{-\frac{1}{\kappa}\max(\ell,\ell')\cdot \frac{1}{n}\cdot (2N-s-\frac{Q}{4})}\times 2^{\max(\ell,\ell')(s+\delta(2N+2s_0))/n}\\
&=2^{-\frac{1}{n\kappa}(\max(\ell,\ell'))(2N-s-\frac{Q}{4}-s\kappa-2N\delta\kappa -2s_0\delta \kappa   )   }\\
&=2^{-\frac{1}{n\kappa}(\max(\ell,\ell'))(2N(1-\delta\kappa)-s(\kappa+1)-\frac{Q}{4}-2\delta\kappa s_0)}.
\end{align*}
Thus, we have proved 
\begin{align*}
    &|K_{\ell,\ell'}(x,y)|\lesssim 2^{-\frac{1}{n\kappa}(\max(\ell,\ell'))(2N(1-\delta\kappa)-s(\kappa+1)-\frac{Q}{4}-2\delta\kappa s_0)}.
\end{align*}This shows that $II,III<\infty,$ for  $2N(1-\delta\kappa)-s(\kappa+1)-\frac{Q}{4}-2\delta\kappa s_0>0.$ So, we only need to choose 
\[ 
    N>\frac{ s(\kappa+1)+\frac{Q}{4}+2\delta\kappa s_0 }{2(1-\delta\kappa)},
\]where we have used that $\delta< 1/\kappa.$ Thus, the proof is complete, with the remaining case of $0\leq \delta<\rho\leq 1$ proved in Corollary \ref{L2''}.
 \end{proof}

\subsection{The formal adjoint of subelliptic operators}
In Theorems \ref{Adjoint} and  \ref{Subellipticcomposition}   we will show that the subelliptic classes introduced before are closed under compositions and adjoints. 
If $A:C^\infty(G)\rightarrow C^\infty(G)$ is a continuous operator, its formal adjoint is the operator $A^*,$ defined by
\[ 
 (Af,g)_{L^2(G)}=(f, A^*g)_{L^2(G)},\,\,f,g\in C^{\infty}(G).
\]Next, we study the formal adjoint of subelliptic pseudo-differential operators.

 \begin{theorem}\label{Adjoint}
Let $0\leqslant \delta<\rho\leqslant 1.$ If $A:C^\infty(G)\rightarrow C^\infty(G)$ is a continuous operator, $A\in \textnormal{Op}({S}^{m,\mathcal{L}}_{\rho,\delta}(G\times \widehat{G})),$ then $A^*\in \textnormal{Op}({S}^{m,\mathcal{L}}_{\rho,\delta}(G\times \widehat{G})).$  The  symbol of $A^*,$ $\widehat{A^{*}}(x,\xi)$ satisfies the asymptotic expansion,
 \begin{equation}\label{asyexp}
    \widehat{A^{*}}(x,\xi)\sim \sum_{|\alpha|= 0}^\infty \Delta_{\xi}^{\alpha} \partial_{X}^{(\alpha)} (\widehat{A}(x,\xi)^{*}).
 \end{equation} This means that, for every $N\in \mathbb{N},$ and all $\ell\in \mathbb{N},$
\[ 
   \Small{ \Delta_{\xi}^{\alpha_\ell}\partial_{X}^{(\beta)}\left(\widehat{A^{*}}(x,\xi)-\sum_{|\alpha|\leqslant N}\Delta_{\xi}^\alpha\partial_{X}^{(\alpha)} (\widehat{A}(x,\xi)^{*}) \right)\in {S}^{m-(\rho-\delta)(N+1)-\rho\ell+\delta|\beta|,\mathcal{L}}_{\rho,\delta}(G\times\widehat{G}) },
\] where $|\alpha_\ell|=\ell.$
 \end{theorem}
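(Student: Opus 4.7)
The plan is to adapt the Ruzhansky--Turunen derivation of the adjoint formula \cite{Ruz,RuzhanskyTurunenWirth2014,Fischer2015} to the subelliptic order scale governed by $\widehat{\mathcal{M}}(\xi)$. Writing $k_{x}$ for the right-convolution kernel of $A$, so that $\widehat{A}(x,\xi)=\widehat{k_{x}}(\xi)$, a direct computation starting from the identity $(Af,g)_{L^{2}(G)}=(f,A^{*}g)_{L^{2}(G)}$ shows that the right-convolution kernel of $A^{*}$ is $k^{*}_{x}(y)=\overline{k_{xy^{-1}}(y^{-1})}$, so that
\[
\widehat{A^{*}}(x,\xi)\;=\;\int_{G}\overline{k_{xy^{-1}}(y^{-1})}\,\xi(y)^{*}\,dy.
\]

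\textbf{Formal expansion and orders of the individual terms.} I would next apply the global Taylor formula of Lemma \ref{Taylorseries} to $x_{1}\mapsto k_{x_{1}}(y^{-1})$ at the base point $x_{1}=x$, with increment $y^{-1}$, obtaining
\[
k_{xy^{-1}}(y^{-1})\;=\;\sum_{|\alpha|<N} q_{(1)}^{\alpha_{1}}(y)\cdots q_{(n)}^{\alpha_{n}}(y)\,\partial_{X}^{(\alpha)} k_{x}(y^{-1})\,+\,R_{x,N}(y^{-1}),
\]
with $|R_{x,N}(y^{-1})|\lesssim |y|^{N}\max_{|\alpha|\leq N}\Vert\partial_{X}^{(\alpha)}k_{x}(y^{-1})\Vert_{L^{\infty}}$. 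Taking complex conjugates, integrating against $\xi(y)^{*}$, and using the defining identity \eqref{taylordifferences} for the difference operators $\Delta_{\xi}^{\alpha}$ (together with the fact that inversion $y\mapsto y^{-1}$ combined with complex conjugation interchanges $\widehat{f}(\xi)$ with $\widehat{f}(\xi)^{*}$ on the Fourier side) identifies the $|\alpha|$-term of the resulting series with $\Delta_{\xi}^{\alpha}\partial_{X}^{(\alpha)}(\widehat{A}(x,\xi)^{*})$, producing the formal series \eqref{asyexp}. Since $\widehat{\mathcal{M}}(\xi)$ is self-adjoint and diagonal, the two-sided characterisation of Corollary \ref{corC} shows that the adjoint map $\widehat{A}(x,\xi)\mapsto\widehat{A}(x,\xi)^{*}$ preserves $S^{m,\mathcal{L}}_{\rho,\delta}$; since $\Delta_{\xi}^{\alpha}$ lowers the subelliptic order by $\rho|\alpha|$ and $\partial_{X}^{(\alpha)}$ raises it by $\delta|\alpha|$, each term of the expansion lies in $S^{m-(\rho-\delta)|\alpha|,\mathcal{L}}_{\rho,\delta}$. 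The hypothesis $\delta<\rho$ then ensures that the orders strictly decrease.

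\textbf{Remainder estimate and main obstacle.} The central technical step is to prove, setting
\[
\widehat{R}_{N}(x,\xi)\;:=\;\widehat{A^{*}}(x,\xi)-\sum_{|\alpha|\leq N}\Delta_{\xi}^{\alpha}\partial_{X}^{(\alpha)}(\widehat{A}(x,\xi)^{*}),
\]
the bound $\Delta_{\xi}^{\alpha_{\ell}}\partial_{X}^{(\beta)}\widehat{R}_{N}(x,\xi)\in S^{m-(\rho-\delta)(N+1)-\rho\ell+\delta|\beta|,\mathcal{L}}_{\rho,\delta}(G\times\widehat{G})$ for every $\beta$ and every $|\alpha_{\ell}|=\ell$. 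Applying $\partial_{X}^{(\beta)}$ to the integral representation of $\widehat{R}_{N}$ only differentiates $x\mapsto k_{xy^{-1}}(y^{-1})$, while $\Delta_{\xi}^{\alpha_{\ell}}$ inserts the extra polynomial factor $q^{\alpha_{\ell}}(y)$, which vanishes to order $\ell$ at $e_{G}$; combined with the Taylor remainder estimate this gives pointwise $|y|^{N+1+\ell}$-decay of the integrand near $e_{G}$. My plan is then to localise in $[\xi]$ via a Littlewood--Paley decomposition $\eta_{k}(\langle\xi\rangle)$ and argue dyadically on the shells $\langle\xi\rangle\sim 2^{k}$: the inequality \eqref{GarettoRuzhanskyIneq} relates this shell to the scale $|y|\sim 2^{-k/\kappa}$ in physical space, so that each unit of spatial decay near $e_{G}$ converts to a factor $\widehat{\mathcal{M}}(\xi)^{-\rho}$ in the subelliptic scale, and dyadic summation in $k$ in the spirit of Lemma \ref{LemmaFischer} converges thanks to $\delta<\rho$. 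The hardest part is the bookkeeping of the $\widehat{\mathcal{M}}$-weights: one must combine the Leibniz rule for difference operators (Remark \ref{Leibnizrule}) with the two-sided characterisation of Corollary \ref{corC} so that all left- and right-weights can be transferred across the integral kernel consistently; Theorem \ref{IesT} (invertibility of $\mathcal{L}$-elliptic symbols) can be used to move $\widehat{\mathcal{M}}^{r}$-factors between the two sides where needed. Once this is done, the classical consequences (closure under composition, formulas for principal symbols, \emph{etc.}) follow by the usual iteration.
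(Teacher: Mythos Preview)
Your setup, the kernel formula $k^{*}_{x}(y)=\overline{k_{xy^{-1}}(y^{-1})}$, the Taylor expansion via Lemma~\ref{Taylorseries}, and the identification of the individual terms $\Delta_{\xi}^{\alpha}\partial_{X}^{(\alpha)}(\widehat{A}(x,\xi)^{*})$ are all correct and coincide with the paper. Your observation that Corollary~\ref{corC} transfers $S^{m,\mathcal{L}}_{\rho,\delta}$-membership under $\sigma\mapsto\sigma^{*}$ is also fine.

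The gap is in the remainder estimate, and it is a real one. You assert ``pointwise $|y|^{N+1+\ell}$-decay of the integrand near $e_{G}$'', but the Taylor remainder bound of Lemma~\ref{Taylorseries} is $|R_{x,N}(y^{-1})|\lesssim |y|^{N}\max_{|\alpha|\leq N}\sup_{x'}|\partial_{X}^{(\alpha)}k_{x'}(y^{-1})|$, and the last factor is \emph{singular} at $y=e_{G}$: by Proposition~\ref{CalderonZygmund} it behaves like $|y|^{-(Q+m+\delta|\alpha|)/\rho}$. The entire analytic content of the proof is showing that for $N$ large the Taylor gain $|y|^{N+\ell}$ beats this singularity so that the resulting function is integrable over $G$; this is precisely where $\delta<\rho$ enters quantitatively, and it cannot be done without the kernel estimates of Proposition~\ref{CalderonZygmund}, which you never invoke.

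The paper does not proceed via a Littlewood--Paley decomposition of $\widehat{R}_{N}$ on the symbol side. A dyadic decomposition in $\langle\xi\rangle$ is indeed the engine \emph{inside} Proposition~\ref{CalderonZygmund}, but the conversion of kernel decay into subelliptic symbol decay is much more direct: once one has the $L^{1}$-bound~\eqref{estimateadjoint} on $\partial_{Y}^{(\beta')}\partial_{X}^{(\beta_{0})}\bigl(q^{\alpha_{0}}(y)\,R_{x,N}^{k^{*}_{(\cdot)}(y)}(y^{-1})\bigr)$, one writes $\widehat{\mathcal{M}}(\xi)^{-m'}=\widehat{\mathcal{M}}(\xi)^{M'}\widehat{\mathcal{M}}(\xi)^{-m'-M'}$ for a large even $M'$, uses $\|\widehat{\kappa}(\xi)\widehat{\mathcal{M}}(\xi)^{M'}\|_{\textnormal{op}}\leq\|(1+\mathcal{L})^{M'/2}\kappa\|_{L^{1}(G)}$, and expands $(1+\mathcal{L})^{M'/2}$ in the vector fields $X_{i}$. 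No dyadic summation appears at this stage, and your heuristic ``$\langle\xi\rangle\sim 2^{k}$ corresponds to $|y|\sim 2^{-k/\kappa}$'' does not by itself produce the required $\widehat{\mathcal{M}}(\xi)^{-\rho}$ gain. Finally, Theorem~\ref{IesT} plays no role here: moving $\widehat{\mathcal{M}}^{r}$-weights from left to right is exactly the content of Corollary~\ref{corC}, not of symbol inversion.
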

 \begin{proof} Let $\Delta_{\xi}^\alpha\equiv \Delta_{q^{\alpha}}$ be a difference operator of order $|\alpha|.$ We note that the formula \eqref{asyexp} was established in \cite[Theorem 10.7.8]{Ruz} so that we only need to prove the remainder estimate.   By following  \cite[p. 569]{Ruz}, the right kernel of $A^*,$ $k^{*}_{(\cdot)}(\cdot),$ satisfies the identity,
 \[ 
     k^{*}_{x}(v)=\overline{k_{xv^{-1}}(v^{-1})},\,\,x,v\in G,
 \] where $k_{(\cdot)}(\cdot)$ is the right-kernel associated to $A.$ Let us prove that for any multi-index $\beta,\beta_0,\alpha_0\in \mathbb{N}_0,$ there exists $N_0\in\mathbb{N}_0$ such that for any integer $N>N_0,$ we have the estimate
 \begin{equation}\label{estimateadjoint}
    \sup_{x\in G} \Vert \partial_{Y}^{(\beta)} \partial_{X}^{(\beta_0)}(q^{\alpha_0}(y))(k_{x}^{*}(y)-\sum_{|\alpha|<N}q^{\alpha(y)}\partial_{X}^{(\alpha)} k_{x}^{*}(y))\Vert_{L^1(G)}\leqslant C \Vert \sigma\Vert_{\ell',S^{m,\mathcal{L}}_{\rho,\delta}},
 \end{equation} for $\ell'$ large enough. Later, we will conclude the proof using this estimate. 
 In the notation of Lemma \ref{Taylorseries}, we have
 \[ 
  R_{x,N}^{k_{(\cdot)}^*(y)}(y^{-1})=k_{x}^{*}(y)-\sum_{|\alpha|<N}q^{\alpha}(y)\partial_{X}^{(\alpha)} k_{x}^{*}(y),   
 \]where 
 $
q^{\alpha}:=  q_{(1)}^{\alpha_1}\cdots q_{(n)}^{\alpha_n}.
 $
 By using the remainder estimates in Lemma \ref{Taylorseries} and the Leibniz rule, we have
 \begin{align*}
  &|\partial_{Y}^{(\beta)} \partial_{X}^{(\beta_0)}(q^{\alpha_0}(y))(k_{x}^{*}(y)-\sum_{|\alpha|<N}q^{\alpha}(y)\partial_{X}^{(\alpha)} k_{x}^{*}(y))|\\
  &=|\partial_{Y}^{(\beta)}( R_{x,N}^{\partial_{X}^{(\beta_0)}(q^{\alpha_0}(y)k_{(\cdot)}^*(y))}(y^{-1}) )|   \\
  &\lesssim \sum_{|\beta_1|+|\beta_2|=\beta}|\partial_{Y}^{(\beta_2)}R_{x,N}^{\partial_{X}^{(\beta_0)}\partial_{Y}^{(\beta_1)}(q^{\alpha_0}(y)k_{(\cdot)}^*(y))}(y^{-1}) )|\\
  &\lesssim \sum_{|\beta_1|+|\beta_2|=\beta}|y|^{(N-|\beta_2|)_{+}}\max_{|\alpha|\leq N}\Vert\partial_{X}^{(\alpha)}\partial_{X}^{(\beta_0)}\partial_{Y}^{(\beta_1)}(q^{\alpha_0}(y)k_{(\cdot)}^*(y))\Vert_{L^\infty(G)},
 \end{align*}where we have used the notation $(r)_{+}:=\max\{r,0\}.$ Now, by using the kernel estimates in Proposition \ref{CalderonZygmund}, and the fact that   $\partial_{X}^{(\alpha+\beta_0)}\partial_{Y}^{(\beta_1)}(q^{\alpha_0}(y)k_{(\cdot)}^*(y))$ is the right-convolution kernel of some symbol $\sigma$ with subellitic order equal to $m+\delta(|\beta_0|+|\alpha|)+|\beta_1|-\rho|\alpha_0|,$  we have that there exists $\ell'\in \mathbb{N},$ such that
 \begin{itemize}
    \item[(i)] if $s>0,$  
        \[ 
          \Vert\partial_{X}^{(\alpha+\beta_0)}\partial_{Y}^{(\beta_1)}(q^{\alpha_0}(y)k_{(\cdot)}^*(y))\Vert_{L^\infty(G)}\lesssim_{m}  \Vert \sigma\Vert_{\ell, S^{m,\mathcal{L}}_{\rho,\delta}}|y|^{-\frac{s}{\rho}}.
        \]
         \item[(ii)] If $s=0,$  
        \[ 
           \Vert\partial_{X}^{(\alpha+\beta_0)}\partial_{Y}^{(\beta_1)}(q^{\alpha_0}(y)k_{(\cdot)}^*(y))\Vert_{L^\infty(G)}\lesssim_{m} \Vert \sigma\Vert_{\ell, S^{m,\mathcal{L}}_{\rho,\delta}}|\log|y||.
        \]
        \item[(iii)] If $s<0,$   
        \[ 
   \Vert\partial_{X}^{(\alpha+\beta_0)}\partial_{Y}^{(\beta_1)}(q^{\alpha_0}(y)k_{(\cdot)}^*(y))\Vert_{L^\infty(G)}  \lesssim_{m} \Vert \sigma\Vert_{\ell, S^{m,\mathcal{L}}_{\rho,\delta}}.
        \]  
\end{itemize}with $s=Q+m+\delta(|\beta_0|+N)+|\beta_1|-\rho|\alpha_0|.$ Observe that
\begin{align*}
    s&=Q+m+\delta(|\beta_0|+|\alpha|)+|\beta_1|-\rho|\alpha_0|\\
   &=Q+m+\delta(|\beta_0|+|\alpha|)+|\beta_1|-\rho|\alpha_0|-\rho |\alpha|+\rho |\alpha|\\
   &=Q+m+\delta|\beta_0|+(\delta-\rho)|\alpha|+|\beta_1|-\rho|\alpha_0|+\rho |\alpha|=:s'+(\delta-\rho)|\alpha|.
\end{align*}Consequently,
\begin{align*}
    &\int\limits_{G} \Vert\partial_{X}^{(\alpha+\beta_0)}\partial_{Y}^{(\beta_1)}(q^{\alpha_0}(y)k_{(\cdot)}^*(y))\Vert_{L^\infty(G)}dy\lesssim_{m}  \Vert \sigma\Vert_{\ell, S^{m,\mathcal{L}}_{\rho,\delta}}\int\limits_{G}|y|^{-\frac{s}{\rho}}|y|^{(N-|\beta_2|)_+}dy\\
    & \lesssim  \Vert \sigma\Vert_{\ell, S^{m,\mathcal{L}}_{\rho,\delta}}\int\limits_{G}|y|^{-\frac{s'+(\delta-\rho)|\alpha|}{\rho}}dy
   <\infty,
 \end{align*}provided that $s'<n\rho+(\rho-\delta)|\alpha| .$ To assure this condition, observe that   $n\rho+(\rho-\delta)|\alpha|\leq n\rho+(\rho-\delta)N,$ so we only need to choose $N_0\in \mathbb{N}_0$ such that
 \begin{align*}
N\geq N_{0}> \frac{s'-n\rho}{\rho-\delta},
 \end{align*}where we have used that $\delta<\rho.$ So, we conclude the proof of the estimate \eqref{estimateadjoint}. Now, in order to conclude the proof of Theorem \ref{Adjoint}, let us define
 \begin{equation}
     \widehat{A}_{N}(x,\xi):=\widehat{A^{*}}(x,\xi)-\sum_{|\alpha|\leqslant N}\Delta_{\xi}^\alpha\partial_{X}^{(\alpha)} (\widehat{A}(x,\xi)^{*}).
 \end{equation} We need to prove that  $   \Delta_{\xi}^{\alpha_0}\partial_{X}^{(\beta)}\widehat{A}_{N}(x,\xi)\in {S}^{m-(\rho-\delta)(N+1)-\rho|\alpha_0|+\delta|\beta|,\mathcal{L}}_{\rho,\delta}(G\times\widehat{G}). $ Set $m':=m-(\rho-\delta)(N+1)-\rho|\alpha_0|+\delta|\beta|.$  Observe  that for any $M'>m',$ with $M'\equiv 0\textnormal{ mod }(2),$ we have
 \begin{align*}
    & \Vert \Delta_{\xi}^{\alpha_0}\partial_{X}^{(\beta)}\widehat{A}_{N}(x,\xi) \widehat{\mathcal{M}}(\xi)^{-m'} \Vert_{\textnormal{op}}=\Vert \Delta_{\xi}^{\alpha_0}\partial_{X}^{(\beta)}\widehat{A}_{N}(x,\xi) \widehat{\mathcal{M}}(\xi)^{M'}\widehat{\mathcal{M}}(\xi)^{m'-M'} \Vert_{\textnormal{op}}\\
     &\lesssim \Vert \Delta_{\xi}^{\alpha_0}\partial_{X}^{(\beta)}\widehat{A}_{N}(x,\xi)\widehat{\mathcal{M}}(\xi)^{M'} \Vert_{\textnormal{op}}\\
    & \lesssim \Vert (1+\mathcal{L}_y)^{\frac{M'}{2}} R_{x,N+1}^{\partial_{X}^{(\beta)}(q^{\alpha_0}(y)k_{(\cdot)}^*(y))}(y^{-1}) )\Vert_{L^1(G_y)}\\
     &\lesssim \sum_{1\leqslant i_1\leqslant i_2\leqslant\cdots \leqslant i_{k}\leqslant k\,,|\alpha|\leqslant M'}\Vert X^{\alpha_1}_{i_1,y}\cdots X^{\alpha_k}_{i_k,y} R_{x,N+1}^{\partial_{X}^{(\beta)}(q^{\alpha_0}(y)k_{(\cdot)}^*(y))}(y^{-1}) )\Vert_{L^1(G_y)}.
 \end{align*} By Lemma \ref{Taylorseries}, and using that $X_{\mathfrak{D}}=\{X_{1,\mathfrak{D}},\cdots ,X_{n,\mathfrak{D}}\}$ is a basis of the Lie algebra  $\mathfrak{g},$ we can express every vector field $X_{i_j,y}$ as a linear combination of elements in $X_{\mathfrak{D}},$ so that we can estimate
 \begin{align*}
  \Vert \Delta_{\xi}^{\alpha_0}\partial_{X}^{(\beta)}\widehat{A}_{N}(x,\xi) \widehat{\mathcal{M}}(\xi)^{-m'} \Vert_{\textnormal{op}}   &\lesssim \sum_{|\alpha|\leqslant M'}\Vert \partial_{Y}^{(\alpha)} R_{x,N+1}^{\partial_{X}^{(\beta)}(q^{\alpha_0}(y)k_{(\cdot)}^*(y))}(y^{-1}) )\Vert_{L^1(G_y)}\\
  &\leqslant  C \Vert \sigma\Vert_{\ell',S^{m,\mathcal{L}}_{\rho,\delta}},
 \end{align*} for $\ell'$ large enough, where in the last line we have used \eqref{estimateadjoint}. The proof is complete.
 \end{proof}

 \begin{corollary}\label{AdjointC}
Let $0\leqslant \delta<\rho\leqslant \kappa.$ If $A:C^\infty(G)\rightarrow C^\infty(G)$ is a continuous operator, $A\in \textnormal{Op}(\mathscr{S}^{m,\mathcal{L}}_{\rho,\delta}(G)),$ then $A^*\in \textnormal{Op}(\mathscr{S}^{m,\mathcal{L}}_{\rho,\delta}(G)).$  The  symbol of $A^*,$ $\widehat{A^{*}}(x,\xi)$ satisfies the asymptotic expansion,
 \[ 
    \widehat{A^{*}}(x,\xi)\sim \sum_{|\alpha|= 0}^\infty \Delta_{\xi}^{\alpha} \partial_{X}^{(\alpha)} (\widehat{A}(x,\xi)^{*}).
 \] This means that, for every $N\in \mathbb{N},$ and all $\ell\in \mathbb{N},$
\[ 
   \Small{ \Delta_{\xi}^{\alpha_\ell}\partial_{X}^{(\beta)}\left(\widehat{A^{*}}(x,\xi)-\sum_{|\alpha|\leqslant N}\Delta_{\xi}^\alpha\partial_{X}^{(\alpha)} (\widehat{A}(x,\xi)^{*}) \right)\in \mathscr{S}^{m-(\rho-\delta)(N+1)-\rho\ell+\delta|\beta|,\mathcal{L}}_{\rho,\delta}(G) },
\] where $|\alpha_\ell|=\ell.$
 \end{corollary}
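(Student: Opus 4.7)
The plan is to deduce Corollary \ref{AdjointC} directly from Theorem \ref{Adjoint} via the tautological reparametrisation relating the dilated and contracted classes. Recall from Definition \ref{contracted''} that the contracted and dilated families are connected by the identity
\begin{equation*}
    S^{m,\mathcal{L}}_{\rho,\delta}(G\times\widehat{G}) = \mathscr{S}^{m\kappa,\mathcal{L}}_{\rho\kappa,\delta\kappa}(G),\qquad 0\leqslant \rho,\delta\leqslant 1,
\end{equation*}
so that, setting $\rho' := \rho/\kappa,\, \delta' := \delta/\kappa,\, m' := m/\kappa$, any symbol $\sigma\in \mathscr{S}^{m,\mathcal{L}}_{\rho,\delta}(G)$ with $0\leqslant \delta<\rho\leqslant \kappa$ belongs to the contracted class $S^{m',\mathcal{L}}_{\rho',\delta'}(G\times\widehat{G})$, and conversely. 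The hypothesis $0\leqslant \delta<\rho\leqslant \kappa$ translates to $0\leqslant \delta'<\rho'\leqslant 1$, which is exactly the hypothesis under which Theorem \ref{Adjoint} applies.

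First I would observe that, at the level of the quantisation itself, the formula $\widehat{A^{*}}(x,\xi)\sim \sum_{|\alpha|=0}^\infty \Delta_{\xi}^{\alpha} \partial_{X}^{(\alpha)} (\widehat{A}(x,\xi)^{*})$ is independent of which class one uses to classify the symbols: it was established in \cite[Theorem 10.7.8]{Ruz} and already reproduced in Theorem \ref{Adjoint}. So the only content to verify is the remainder estimate.

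Next I would invoke Theorem \ref{Adjoint} applied to $\sigma\in S^{m',\mathcal{L}}_{\rho',\delta'}(G\times\widehat{G})$: for every $N\in \mathbb{N}$, every $\ell\in \mathbb{N}$, and every multi-index $\alpha_\ell$ with $|\alpha_\ell|=\ell$,
\begin{equation*}
\Delta_{\xi}^{\alpha_\ell}\partial_{X}^{(\beta)}\!\left(\widehat{A^{*}}(x,\xi)-\sum_{|\alpha|\leqslant N}\Delta_{\xi}^\alpha\partial_{X}^{(\alpha)} (\widehat{A}(x,\xi)^{*}) \right)\in S^{m'-(\rho'-\delta')(N+1)-\rho'\ell+\delta'|\beta|,\mathcal{L}}_{\rho',\delta'}(G\times\widehat{G}).
\end{equation*}
Multiplying the order and the type parameters by $\kappa$ and using Definition \ref{contracted''} again, the right-hand side equals
\begin{equation*}
 \mathscr{S}^{m-(\rho-\delta)(N+1)-\rho\ell+\delta|\beta|,\mathcal{L}}_{\rho,\delta}(G),
\end{equation*}
which is precisely the conclusion of Corollary \ref{AdjointC}. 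In particular, taking $\ell=|\beta|=0$ and letting $N\to\infty$ yields $A^{*}\in \textnormal{Op}(\mathscr{S}^{m,\mathcal{L}}_{\rho,\delta}(G))$.

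There is no genuine obstacle here: the entire argument is a bookkeeping exercise transporting the already-proved contracted statement through the bijection $(\rho,\delta,m)\leftrightarrow (\rho\kappa,\delta\kappa,m\kappa)$. The only mildly subtle point to check is that the seminorms $p_{\alpha,\beta,\rho',\delta',m',\textnormal{left/right}}'$ of Definition \ref{contracted''} on $\sigma$ coincide (up to the renaming of indices) with the seminorms $p_{\alpha,\beta,\rho,\delta,m,\textnormal{left/right}}$ of Definition \ref{dilated} on the same $\sigma$, so that the quantitative bounds implicit in Theorem \ref{Adjoint} transfer unchanged; this is immediate from inspection of \eqref{InI}--\eqref{InII} versus \eqref{InIC}--\eqref{InIIC}.
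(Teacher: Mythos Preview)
Your proposal is correct and matches the paper's intended argument: the paper states Corollary~\ref{AdjointC} immediately after Theorem~\ref{Adjoint} without a separate proof, relying precisely on the tautological identification $S^{m',\mathcal{L}}_{\rho',\delta'}(G\times\widehat{G})=\mathscr{S}^{m'\kappa,\mathcal{L}}_{\rho'\kappa,\delta'\kappa}(G)$ from Definition~\ref{contracted''}. Your verification that the order parameter rescales correctly under $(\rho',\delta',m')\mapsto(\rho,\delta,m)=(\rho'\kappa,\delta'\kappa,m'\kappa)$ is exactly the bookkeeping needed, and your observation that the seminorms \eqref{InI}--\eqref{InII} and \eqref{InIC}--\eqref{InIIC} coincide under this substitution is the only point requiring inspection.
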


 \subsection{Composition of subelliptic pseudo-differential operators}
Next we prove the stability of the $(\rho,\delta)$-classes  subelliptic classes by taking compositions.

\begin{theorem}\label{Subellipticcomposition} Let  $0\leqslant \delta<\rho\leqslant 1.$ If $A_{i}\in \textnormal{Op}({S}^{m_i,\mathcal{L}}_{\rho,\delta}(G\times \widehat{G})), $ $A_{i}:C^\infty(G)\rightarrow C^\infty(G), $ $i=1,2,$ then the composition operator $A:=A_{1}\circ A_{2}:C^\infty(G)\rightarrow C^\infty(G)$ belongs to the subelliptic class $\textnormal{Op}({S}^{m_1+m_2,\mathcal{L}}_{\rho,\delta}(G\times \widehat{G})).$ The symbol of $A,$ $\widehat{A}(x,\xi),$ satisfies the asymptotic expansion,
\begin{equation}\label{ASYMP2}
    \widehat{A}(x,\xi)\sim \sum_{|\alpha|= 0}^\infty(\Delta_{\xi}^\alpha\widehat{A}_{1}(x,\xi))(\partial_{X}^{(\alpha)} \widehat{A}_2(x,\xi)),
\end{equation}this means that, for every $N\in \mathbb{N},$ and all $\ell \in\mathbb{N},$
\begin{align*}
    &\Delta_{\xi}^{\alpha_\ell}\partial_{X}^{(\beta)}\left(\widehat{A}(x,\xi)-\sum_{|\alpha|\leqslant N}  ( \Delta_{\xi}^{\alpha} \widehat{A}_{1}(x,\xi))(\partial_{X}^{(\alpha)} \widehat{A}_2(x,\xi))  \right)\\
    &\hspace{2cm}\in {S}^{m_1+m_2-(\rho-\delta)(N+1)-\rho\ell+\delta|\beta|,\mathcal{L}}_{\rho,\delta}(G\times \widehat{G}),
\end{align*}for every  $\alpha_\ell \in \mathbb{N}_0^n$ with $|\alpha_\ell|=\ell.$
\end{theorem}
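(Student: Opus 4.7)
The strategy is to derive the composition formula directly from the quantization identity, expand using the global Taylor series of Lemma \ref{Taylorseries}, and then estimate the remainder by a kernel argument parallel to the one used for the adjoint in Theorem \ref{Adjoint}. The starting point is the identity
\begin{equation*}
    \widehat{A}(x,\xi)=\xi(x)^{*}A_{1,y}[\xi(y)\widehat{A}_{2}(y,\xi)](x),
\end{equation*}
which follows by applying $A_{1}\circ A_{2}$ to the matrix coefficient $\xi_{ij}$ and using that $A_{2}\xi_{ij}=\sum_{k}\xi_{ik}\widehat{A}_{2}(\cdot,\xi)_{kj}$. Here $A_{1}$ is understood to act componentwise in the $y$-variable.

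The first concrete step is to freeze the $\xi$-argument and expand $\widehat{A}_{2}(y,\xi)$ as a function of $y$ about $y=x$ using the Taylor expansion of Lemma \ref{Taylorseries}:
\begin{equation*}
\widehat{A}_{2}(y,\xi)=\sum_{|\alpha|\leq N}q_{(1)}^{\alpha_{1}}(x^{-1}y)\cdots q_{(n)}^{\alpha_{n}}(x^{-1}y)\,\partial_{X}^{(\alpha)}\widehat{A}_{2}(x,\xi)+R_{x,N+1}^{\widehat{A}_{2}(\cdot,\xi)}(x^{-1}y).
\end{equation*}
Substituting into the identity above and using that, for each monomial $q^{\alpha}$, the operator $A_{1}$ acting on $\xi(y)q^{\alpha}(x^{-1}y)$ reproduces $\xi(x)(\Delta_{\xi}^{\alpha}\widehat{A}_{1})(x,\xi)$ by the very definition of the difference operators in Remark \ref{remarkD} (combined with the translation invariance absorbed into the $x$-dependence of $\widehat{A}_{1}$), one obtains the leading part
\begin{equation*}
 \sum_{|\alpha|\leq N}(\Delta_{\xi}^{\alpha}\widehat{A}_{1}(x,\xi))(\partial_{X}^{(\alpha)}\widehat{A}_{2}(x,\xi)),
\end{equation*}
plus a remainder $E_{N}(x,\xi):=\xi(x)^{*}A_{1,y}[\xi(y)R_{x,N+1}^{\widehat{A}_{2}(\cdot,\xi)}(x^{-1}y)](x)$. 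Counting orders, the $\alpha$-th term of the leading sum belongs to $S^{m_{1}+m_{2}-(\rho-\delta)|\alpha|,\mathcal{L}}_{\rho,\delta}(G\times\widehat{G})$ by Lemma \ref{Lemma47}, which is already the expected gain per Taylor step; the hypothesis $\delta<\rho$ is what makes this a genuine improvement.

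The heart of the proof is the remainder estimate: one must show that, for any $\alpha_{\ell},\beta$,
\begin{equation*}
  \Delta_{\xi}^{\alpha_{\ell}}\partial_{X}^{(\beta)}E_{N}(x,\xi)\in S^{m_{1}+m_{2}-(\rho-\delta)(N+1)-\rho\ell+\delta|\beta|,\mathcal{L}}_{\rho,\delta}(G\times\widehat{G}).
\end{equation*}
To this end I would write $E_{N}$ via the right-convolution kernel $k_{1,x}$ of $A_{1}$, obtaining an expression of the form $\int_{G}k_{1,x}(z)\,\xi(x)^{*}\xi(z^{-1})\xi(x)\,R_{x,N+1}^{\widehat{A}_{2}(\cdot,\xi)}(z)\,dz$ after a change of variable. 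To bound the corresponding seminorms, by Corollary \ref{corC} it suffices to control the operator norm of $\widehat{\mathcal{M}}(\xi)^{-m'}$ times this expression for an appropriate $m'$; after commuting $\widehat{\mathcal{M}}(\xi)$ through the integration using the Corach--Porta--Recht type arguments of Theorem \ref{cor}, the problem reduces to an $L^{1}_{y}$-estimate for a weighted remainder kernel, exactly as in the proof of \eqref{estimateadjoint} of Theorem \ref{Adjoint}. The Taylor remainder bound of Lemma \ref{Taylorseries} provides the factor $|y|^{N+1-|\beta_{0}|}$, while Proposition \ref{CalderonZygmund} applied to the (sub)symbols $\partial_{X}^{(\alpha+\beta_{0})}\partial_{Y}^{(\beta_{1})}(q^{\alpha_{0}}k_{1,(\cdot)})$ gives the needed singular-kernel estimates. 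Application of $\Delta_{\xi}^{\alpha_{\ell}}\partial_{X}^{(\beta)}$ is handled by the Leibniz rule for difference operators (Remark \ref{Leibnizrule}), producing a finite linear combination of terms of the same structure but with $A_{1}$ replaced by operators whose symbols lie in lower-order subelliptic classes, and with one factor of $\partial_{X}^{(\alpha)}\widehat{A}_{2}$ replaced by a higher $\partial_{X}^{(\alpha+\beta')}$ Taylor coefficient of $\widehat{A}_{2}$; each summand is then treated by the same scheme.

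The main obstacle will be bookkeeping in the remainder estimate: one needs to integrate the singular kernel bound of Proposition \ref{CalderonZygmund} (which has three regimes according to the sign of the effective order $Q+m'_{\alpha,\beta}$) against the Taylor factor $|y|^{N+1-|\beta_{2}|}$, and the resulting convergence condition is precisely $N_{0}>(s'-n\rho)/(\rho-\delta)$, requiring $\delta<\rho$ and producing the sharp loss $(\rho-\delta)(N+1)$ claimed in the theorem. Once this bookkeeping is in place, summing over the (finitely many) terms arising from the Leibniz rule and choosing $N$ sufficiently large for any prescribed target subelliptic order yields the asymptotic expansion \eqref{ASYMP2}.
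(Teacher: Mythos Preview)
Your overall architecture matches the paper: expand $\widehat{A}_{2}(\cdot,\xi)$ by Lemma \ref{Taylorseries}, recognise the principal terms, and estimate the remainder by pairing the Taylor remainder bound with the kernel estimates of Proposition \ref{CalderonZygmund}; the integrability threshold $N_{0}>(s'-n\rho)/(\rho-\delta)$ you arrive at is exactly what the paper obtains.

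There is, however, a genuine gap in the remainder step. First a minor correction: the remainder integral is
\[
E_{N}(x,\xi)=\int_{G}k_{1,x}(z)\,\xi(z)^{*}\,R_{x,N+1}^{\widehat{A}_{2}(\cdot,\xi)}(z^{-1})\,dz,
\]
since $\xi(x)^{*}\xi(xz)=\xi(z)$; your expression carries a spurious conjugation by $\xi(x)$. More importantly, you propose to absorb the weight $\widehat{\mathcal{M}}(\xi)^{b_{1}}$ by ``commuting $\widehat{\mathcal{M}}(\xi)$ through the integration using the Corach--Porta--Recht type arguments of Theorem \ref{cor}.'' That theorem moves powers of $\widehat{\mathcal{M}}(\xi)$ from one side of a \emph{symbol} to the other; it does not tell you how to pass $\widehat{\mathcal{M}}(\xi)^{b_{1}}$ across the representation factor $\xi(z)^{*}$ sitting inside the $z$-integral. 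The analogy with the adjoint proof also breaks down here: in Theorem \ref{Adjoint} the remainder symbol is the group Fourier transform (in $y$) of a single kernel, so multiplication by $\widehat{\mathcal{M}}(\xi)^{M'}$ becomes $(1+\mathcal{L}_{y})^{M'/2}$ acting on that kernel. For the composition remainder the $\xi$-dependence lives both in $\xi(z)^{*}$ and in $R_{x,N+1}^{\widehat{A}_{2}(\cdot,\xi)}$, so no such direct conversion is available.

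The paper's device is the exact identity
\[
\xi(z)^{*}=\widehat{\mathcal{M}}(\xi)^{-b}\,(1+\mathcal{L}_{z})^{b/2}\xi(z)^{*},\qquad b\in 2\mathbb{N},
\]
coming from left-invariance of $\mathcal{L}$. Inserting this produces an arbitrary negative power $\widehat{\mathcal{M}}(\xi)^{-b}$ (cancelling $\widehat{\mathcal{M}}(\xi)^{b_{1}}$) at the cost of $z$-derivatives on the integrand; the Leibniz rule then splits these between $k_{1,x}(z)$ (controlled by Proposition \ref{CalderonZygmund}) and $R_{x,N+1}^{\widehat{A}_{2}(\cdot,\xi)}(z^{-1})$ (controlled by Lemma \ref{Taylorseries}). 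With this mechanism in place, the rest of your outline goes through.
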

\begin{proof} Let $\Delta_{\xi}^\alpha\equiv \Delta_{q^{\alpha}}$ be a difference operator of order $|\alpha|.$ As in Theorem 10.7.8 in \cite[p. 569]{Ruz} we have the formula \ref{ASYMP2}, so we only need to prove the remainder estimate.
If $K_{A}\in C^\infty(G)\widehat{\otimes}_{\pi}\mathscr{D}'(G) $ is the Schwartz kernel of $A,$ we denote by $k_{x}(y):=K_{A}(x, xy^{-1})$ and $k_{x,{i}}$ the right-convolution kernels of $A$ and $A_{i}.$ Therefore,
\begin{align*}
    \widehat{A}(x,\xi) &=\int\limits_{G}k_{x}(y)\xi(y)^*dy\\
    &=\int\limits_{G}\int\limits_{G}k_{x,1}(x,z^{-1})^*\xi(z^{-1})^{*}k_{x,2}(xz,yz)\xi(yz)^{*}dzdy,
\end{align*}
where we have used the identity
\begin{align*}
    Af(x)&=\int\limits_{G}A_{2}f(xz)k_{x}(z^{-1})dz=\int\limits_{G}f(xy^{-1})\underbrace{\int\limits_{G} k_{xz,2}(yz)k_{x,1}(z^{-1})dz}_{:=k_{x}(y)}dy.
\end{align*} So, we have
\[ 
  k_{x}(y):=\int\limits_{G}k_{xz^{-1},2}(yz^{-1})k_{x,1}(z)dz.
  \]
Observe that,
\begin{align*}
    k_{x}(y)&-\sum_{|\alpha|<N}(\partial_{X}^{(\alpha)}k_{x,2})\ast (q^{\alpha}k_{x,1})(y)\\
    &=\int\limits_{G}(k_{2,xz^{-1}}(yz^{-1})-\sum_{|\alpha|<N}q^{\alpha}(z)\partial_{Z}^{(\alpha)}k_{2,x}(z^{-1}))k_{1,x}(z)dz\\
    &=\int\limits_{G}R_{x,N}^{k_{\cdot,2}(yz^{-1})}(z^{-1})k_{1,x}(z)dz.
\end{align*}So, by applying the group Fourier transform we obtain
\[ 
    \widehat{A}(x,\xi)-\sum_{|\alpha|<N}\Delta_{\xi}^\alpha \widehat{A}_1(x,\xi) (\partial_{X}^{(\alpha)}\widehat{A}_2(x,\xi))=\int\limits_G k_{x,1}(z)\xi(z)^{*}R_{x,N}^{\widehat{A}_{2}(\cdot,\xi)}(z^{-1})dz,
\]
where we have denoted $R_{x,N}^{\widehat{A}_{2}(\cdot,\xi)}=[R_{x,N}^{\widehat{A}_{2}(\cdot,\xi)_{ij}}]_{i,j=1}^{d_\xi}.$ Now, the central part in the proof is to show that there exists $N_{0}\in \mathbb{N}$ such that the following estimate
\begin{equation}\label{compest}
    \Vert  \partial_{X}^{(\beta_0)}\Delta_\xi^{\alpha_0}[\int\limits_G k_{x,1}(z)\xi(z)^{*}R_{x,N}^{\widehat{A}_{2}(\cdot,\xi)}(z^{-1})dz]\widehat{\mathcal{M}}(\xi)^{b_{1}} \Vert_{\textnormal{op}} \leqslant C\Vert \widehat{A}_{1}\Vert_{\ell',S^{m_1,\mathcal{L}}}\Vert \widehat{A}_{2}\Vert_{\ell',S^{m_2,\mathcal{L}}},
\end{equation}holds true for all $N\geqslant N_{0}$ and all $b_1>0.$ 
Indeed, if we assume \eqref{compest}, by defining
\begin{equation}
     \widehat{A}_{N}(x,\xi):= \widehat{A}(x,\xi)-\sum_{|\alpha|<N}\Delta_{\xi}^\alpha \widehat{A}_1(x,\xi) (\partial_{X}^{(\alpha)}\widehat{A}_2(x,\xi)),
\end{equation}and taking $b_{1}=-m_1-m_{2}+(\rho-\delta)N+\rho|\alpha_0|-\delta|\beta_0|\geq -m_1-m_{2}+(\rho-\delta)N_0+\rho|\alpha_0|-\delta|\beta_0|, $ the condition $\rho>\delta$ and the choice of  $N_0$ large enough implies that $b_1>0,$ and from \eqref{compest}  the statement of  Theorem \ref{Subellipticcomposition} follows. So, let us fix $b\in \mathbb{N},$ with $b\equiv 0\textnormal{  mod  }(2).$ By using the Leibniz rule we can write
\begin{align*}
& \partial_{X}^{(\beta_0)}\Delta_\xi^{\alpha_0}[\int\limits_G k_{x,1}(z)\xi(z)^{*}R_{x,N}^{\widehat{A}_{2}(\cdot,\xi)}(z^{-1})dz] \\
 &=\sum_{\substack{|\alpha_0|\leq |\alpha_1|+|\alpha_2|\leq 2|\alpha_0|\\ |\beta_{0,1}|+|\beta_{0,2}|=|\beta_0| }}\int\limits_G \partial_{X}^{(\beta_{0,1})}q^{\alpha_{1}}(z)k_{x,1}(z)\xi(z)^{*}R_{x,N}^{\Delta_\xi^{\alpha_{2}}\partial_{X}^{(\beta_{0,2})}\widehat{A}_{2}(\cdot,\xi)}(z^{-1})dz,
\end{align*}and from the identity $\xi(z)^{*}=\widehat{\mathcal{M}}^{-b}(\xi)(1+\mathcal{L}_{z})^{\frac{b}{2}}\xi(z)^{*},$ by writing $I=\{\alpha_{i},\beta_{0,i}:\alpha_1|+|\alpha_2|\leq 2|\alpha_0|,\, |\beta_{0,1}|+|\beta_{0,2}|,\,i=1,2\},$ we can estimate
\begin{align*}
 &\Vert  \partial_{X}^{(\beta_0)}\Delta_\xi^{\alpha_0}[\int\limits_G k_{x,1}(z)\xi(z)^{*}R_{x,N}^{\widehat{A}_{2}(\cdot,\xi)}(z^{-1})dz]\widehat{\mathcal{M}}(\xi)^{b_1} \Vert_{\textnormal{op}}\\
 &\lesssim \sum_{I}\Vert\int\limits_G \partial_{X}^{(\beta_{0,1})}q^{\alpha_{1}}(z)k_{x,1}(z)\widehat{\mathcal{M}}(\xi)^{-b}(1+\mathcal{L}_z)^{\frac{b}{2}}\xi(z)^{*}R_{x,N}^{\Delta_\xi^{\alpha_{2}}\partial_{X}^{(\beta_{0,2})}\widehat{A}_{2}(\cdot,\xi)}(z^{-1})dz\widehat{\mathcal{M}}(\xi)^{b_1} \Vert_{\textnormal{op}}\\
 &= \sum_{I}\Vert \widehat{\mathcal{M}}(\xi)^{-b}\int\limits_G \partial_{X}^{(\beta_{0,1})}q^{\alpha_{1}}(z)k_{x,1}(z)(1+\mathcal{L}_z)^{\frac{b}{2}}\xi(z)^{*}R_{x,N}^{\Delta_\xi^{\alpha_{2}}\partial_{X}^{(\beta_{0,2})}\widehat{A}_{2}(\cdot,\xi)}(z^{-1})dz\widehat{\mathcal{M}}(\xi)^{b_1} \Vert_{\textnormal{op}}\\
 &\lesssim  \sum_{I}\Vert \widehat{\mathcal{M}}(\xi)^{b_1-b} \int\limits_G \partial_{X}^{(\beta_{0,1})}q^{\alpha_{1}}(z)k_{x,1}(z)(1+\mathcal{L}_z)^{\frac{b}{2}}\xi(z)^{*}R_{x,N}^{\Delta_\xi^{\alpha_{2}}\partial_{X}^{(\beta_{0,2})}\widehat{A}_{2}(\cdot,\xi)}(z^{-1})dz \Vert_{\textnormal{op}}.
\end{align*}From the equality
\begin{align*}
    &\int\limits_G \partial_{X}^{(\beta_{0,1})}q^{\alpha_{1}}(z)k_{x,1}(z)(1+\mathcal{L}_z)^{\frac{b}{2}}\xi(z)^{*}R_{x,N}^{\Delta_\xi^{\alpha_{2}}\partial_{X}^{(\beta_{0,2})}\widehat{A}_{2}(\cdot,\xi)}(z^{-1})dz \\
    &=\int\limits_G[ (1+\mathcal{L}_z)^{\frac{b}{2}}\xi(z)^{*}] \partial_{X}^{(\beta_{0,1})}q^{\alpha_{1}}(z)k_{x,1}(z)R_{x,N}^{\Delta_\xi^{\alpha_{2}}\partial_{X}^{(\beta_{0,2})}\widehat{A}_{2}(\cdot,\xi)}(z^{-1})dz ,
\end{align*}we have
\begin{align*}
 &\Vert  \widehat{\mathcal{M}}(\xi)^{b_1-b}\int\limits_G \partial_{X}^{(\beta_{0,1})}q^{\alpha_{1}}(z)k_{x,1}(z)(1+\mathcal{L}_z)^{\frac{b}{2}}\xi(z)^{*}R_{x,N}^{\Delta_\xi^{\alpha_{2}}\partial_{X}^{(\beta_{0,2})}\widehat{A}_{2}(\cdot,\xi)}(z^{-1})dz   \Vert_{\textnormal{op}}\\
 &=\Vert \widehat{\mathcal{M}}(\xi)^{b_1-b} \int\limits_G \xi(z)^{*} (1+\mathcal{L}_z)^{\frac{b}{2}}[\partial_{X}^{(\beta_{0,1})}q^{\alpha_{1}}(z)k_{x,1}(z)R_{x,N}^{\Delta_\xi^{\alpha_{2}}\partial_{X}^{(\beta_{0,2})}\widehat{A}_{2}(\cdot,\xi)}(z^{-1})]dz  \Vert_{\textnormal{op}} \\
 &\leqslant  \int\limits_G \Vert  \widehat{\mathcal{M}}(\xi)^{b_1-b} (1+\mathcal{L}_z)^{\frac{b}{2}}[\partial_{X}^{(\beta_{0,1})}q^{\alpha_{1}}(z)k_{x,1}(z)R_{x,N}^{\Delta_\xi^{\alpha_{2}}\partial_{X}^{(\beta_{0,2})}\widehat{A}_{2}(\cdot,\xi)}(z^{-1})]\Vert_{\textnormal{op}}dz \\
 &\lesssim \sum_{J\,,|\beta_1|+|\beta_2| \leqslant b}\int\limits_G \Vert  \widehat{\mathcal{M}}(\xi)^{b_1-b} X_{z,J}^{\beta_1}[\partial_{X}^{(\beta_{0,1})}q^{\alpha_{1}}(z)k_{x,1}(z)]X_{z_{1}=z^{-1},J}^{\beta_2}R_{x,N}^{\Delta_\xi^{\alpha_{2}}\partial_{X}^{(\beta_{0,2})}\widehat{A}_{2}(\cdot,\xi)}]\Vert_{\textnormal{op}}dz,
\end{align*} where $J=\{(i_1, i_2,\cdots , i_{k}):1\leqslant i_1\leqslant i_2\leqslant\cdots \leqslant i_{k}\leqslant k\}.$ By  Lemma \ref{Taylorseries}, and using that $X_{\mathfrak{D}}=\{X_{1,\mathfrak{D}},\cdots ,X_{n,\mathfrak{D}}\}$ is a basis of the Lie algebra  $\mathfrak{g},$ as in the proof of the asymptotic expansion for the adjoint, we can express every vector field $X_{i_j}$ as a linear combination of elements in $X_{\mathfrak{D}}.$ So, we have
\begin{align*}
    &\sum_{J\,,|\beta_1|+|\beta_2| \leqslant b}\int\limits_G \Vert  \widehat{\mathcal{M}}(\xi)^{b_1-b} X_{z,J}^{\beta_1}[\partial_{X}^{(\beta_{0,1})}q^{\alpha_{1}}(z)k_{x,1}(z)]X_{z_{1}=z^{-1},J}^{\beta_2}R_{x,N}^{\Delta_\xi^{\alpha_{2}}\partial_{X}^{(\beta_{0,2})}\widehat{A}_{2}(\cdot,\xi)}]\Vert_{\textnormal{op}}dz\\
    &\lesssim \sum_{|\beta_1|+|\beta_2| \leqslant b}\int\limits_G \Vert  \widehat{\mathcal{M}}(\xi)^{b_1-b} \partial_{Z}^{(\beta_1)}[\partial_{X}^{(\beta_{0,1})}q^{\alpha_{1}}(z)k_{x,1}(z)]\partial_{Z}^{(\beta_2)}[R_{x,N}^{\Delta_\xi^{\alpha_{2}}\partial_{X}^{(\beta_{0,2})}\widehat{A}_{2}(\cdot,\xi)}]|_{z^{-1}}\Vert_{\textnormal{op}}dz.
\end{align*} Observe that from Lemma \ref{Taylorseries} we have
\begin{align*}
  & \Vert  \widehat{\mathcal{M}}(\xi)^{b_1-b}\partial_{Z}^{(\beta_2)}[R_{x,N}^{\Delta_\xi^{\alpha_{2}}\partial_{X}^{(\beta_{0,2})}\widehat{A}_{2}(\cdot,\xi)}]|_{z^{-1}}\Vert_{\textnormal{op}}\\
   &\lesssim |z|^{(N-|\beta_2|)_{+}} \sup_{x\in G,\,|\beta_2|\leq N}\Vert  \widehat{\mathcal{M}}(\xi)^{b_1-b} \partial_{X}^{(\beta_2)}\Delta_\xi^{\alpha_{2}}\partial_{X}^{(\beta_{0,2})}\widehat{A}_{2}(x,\xi) \Vert\\
   &\lesssim |z|^{(N-|\beta_2|)_{+}} \Vert  \widehat{\mathcal{M}}(\xi)^{b_1-b+m_2+\delta(N+|\beta_{0,2}|)-\rho|\alpha_2|}  \widehat{A}_{2} \Vert_{N+|\beta_{0,2}|+|\alpha|,\,S^{m_2,\mathcal{L}}_{\rho,\delta}} .
\end{align*} By Proposition \ref{CalderonZygmund} applied to $\sigma=\partial_{Z}^{(\beta_1)}[\partial_{X}^{(\beta_{0,1})}q^{\alpha_{1}}(z)k_{x,1}(z)]$,  there exists $\ell'\in \mathbb{N},$ such that
 \begin{itemize}
    \item[(i)] if $s>0,$  
        \[ 
          |\partial_{Z}^{(\beta_1)}[\partial_{X}^{(\beta_{0,1})}q^{\alpha_{1}}(z)k_{x,1}(z)]|\lesssim_{m_1}  \Vert \widehat{A}_2\Vert_{\ell', S^{m_1,\mathcal{L}}_{\rho,\delta}}|z|^{-\frac{s}{\rho}}.
        \]
         \item[(ii)] If $s=0,$  
        \[ 
          |\partial_{Z}^{(\beta_1)}[\partial_{X}^{(\beta_{0,1})}q^{\alpha_{1}}(z)k_{x,1}(z)]|\lesssim_{m_1} \Vert \widehat{A}_2\Vert_{\ell', S^{m_1,\mathcal{L}}_{\rho,\delta}}|\log|z||.
        \]
        \item[(iii)] If $s<0,$   
        \[ 
   |\partial_{Z}^{(\beta_1)}[\partial_{X}^{(\beta_{0,1})}q^{\alpha_{1}}(z)k_{x,1}(z)]|  \lesssim_{m_1} \Vert \widehat{A}_2\Vert_{\ell', S^{m_1,\mathcal{L}}_{\rho,\delta}}.
        \]  
\end{itemize}where $s=Q+m_1+\delta|\beta_{0,1}|+|\beta_1|-\rho|\alpha_1|.$ First, let us assume that $$  b>\max\{b_1+m_2+\delta(N+|\beta_{0,2}|-\rho|\alpha_2|)\}.$$
This choice of $b$ implies that $\Vert  \widehat{\mathcal{M}}(\xi)^{b_1-b+m_2+\delta(N+|\beta_{0,2}|)-\rho|\alpha_2|} \Vert_{\textnormal{op}}\leqslant 1.$ In particular, $$b>b_1+m_2+\delta(N+|\beta_{0}|)-\rho|\alpha_0|.$$  Finally, a similar analysis as in the final part of the proof of Theorem \ref{Adjoint},  and the hypothesis $\delta<\rho,$ can be used to guarantee the existence of $N_{0}$ such that for any $N\geqslant N_{0}$ the integral $\int\limits_{G}I_{s}(z)dz$ converges, where  $I_{s}(z):=|z|^{(N-|\beta_2|)_{+}-\frac{s}{\rho}}$ if $s>0,$  and $I_{s}(z):=|z|^{(N-|\beta_2|)_{+}}$ if $s<0.$ Thus, we end the proof.  
\end{proof}

\begin{corollary}\label{SubellipticcompositionC} Let  $0\leqslant \delta<\rho\leqslant \kappa.$ If $A_{i}\in \textnormal{Op}(\mathscr{S}^{m_i,\mathcal{L}}_{\rho,\delta}(G)), $ $A_{i}:C^\infty(G)\rightarrow C^\infty(G), $ $i=1,2,$ then the composition operator $A:=A_{1}\circ A_{2}:C^\infty(G)\rightarrow C^\infty(G)$ belongs to the subelliptic class $\textnormal{Op}(\mathscr{S}^{m_1+m_2,\mathcal{L}}_{\rho,\delta}(G)).$ The symbol of $A,$ $\widehat{A}(x,\xi),$ satisfies the asymptotic expansion,
\[ 
    \widehat{A}(x,\xi)\sim \sum_{|\alpha|= 0}^\infty(\Delta_{\xi}^\alpha\widehat{A}_{1}(x,\xi))(\partial_{X}^{(\alpha)} \widehat{A}_2(x,\xi)),
\]this means that, for every $N\in \mathbb{N},$ and all $\ell \in\mathbb{N},$
\begin{align*}
    &\Delta_{\xi}^{\alpha_\ell}\partial_{X}^{(\beta)}\left(\widehat{A}(x,\xi)-\sum_{|\alpha|\leqslant N}  ( \Delta_{\xi}^{\alpha} \widehat{A}_{1}(x,\xi))(\partial_{X}^{(\alpha)} \widehat{A}_2(x,\xi))  \right)\\
    &\hspace{2cm}\in \mathscr{S}^{m_1+m_2-(\rho-\delta)(N+1)-\rho\ell+\delta|\beta|,\mathcal{L}}_{\rho,\delta}(G),
\end{align*}for every  $\alpha_\ell \in \mathbb{N}_0^n$ with $|\alpha_\ell|=\ell.$
\end{corollary}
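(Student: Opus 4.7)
The plan is to deduce Corollary \ref{SubellipticcompositionC} directly from Theorem \ref{Subellipticcomposition} via the rescaling identity that links the dilated and contracted subelliptic H\"ormander classes, namely
\[
\mathscr{S}^{m,\mathcal{L}}_{\rho,\delta}(G)={S}^{m/\kappa,\mathcal{L}}_{\rho/\kappa,\delta/\kappa}(G\times\widehat{G}),\qquad 0\leq\rho,\delta\leq\kappa,
\]
which is precisely the content of Definition \ref{contracted''}. The hypotheses $0\leq\delta<\rho\leq\kappa$ and $A_{i}\in\textnormal{Op}(\mathscr{S}^{m_i,\mathcal{L}}_{\rho,\delta}(G))$ are therefore equivalent to $0\leq\delta'<\rho'\leq 1$ and $A_{i}\in\textnormal{Op}({S}^{m_i',\mathcal{L}}_{\rho',\delta'}(G\times\widehat{G}))$ with the reparametrisation
\[
\rho':=\rho/\kappa,\qquad \delta':=\delta/\kappa,\qquad m_i':=m_i/\kappa,\quad i=1,2.
\]

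With this change of parameters, the setting of Theorem \ref{Subellipticcomposition} applies verbatim. I would therefore first verify that $\rho'>\delta'$ and $0\leq\delta'<\rho'\leq 1$, which is immediate, and then invoke the theorem to conclude that the composition $A=A_{1}\circ A_{2}:C^\infty(G)\rightarrow C^\infty(G)$ lies in
\[
\textnormal{Op}({S}^{m_1'+m_2',\mathcal{L}}_{\rho',\delta'}(G\times\widehat{G}))=\textnormal{Op}(\mathscr{S}^{\kappa(m_1'+m_2'),\mathcal{L}}_{\kappa\rho',\kappa\delta'}(G))=\textnormal{Op}(\mathscr{S}^{m_1+m_2,\mathcal{L}}_{\rho,\delta}(G)),
\]
which gives the first claim. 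The asymptotic expansion \eqref{ASYMP2} is already an identity of formal power series in difference-derivative symbols and carries over unchanged, since $\Delta_{\xi}^{\alpha}$ and $\partial_{X}^{(\alpha)}$ do not depend on the choice of $\rho,\delta$.

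It remains only to translate the remainder estimate. Theorem \ref{Subellipticcomposition} gives that for every $N\in\mathbb{N}$, every multi-index $\beta$ and every $\alpha_\ell$ with $|\alpha_\ell|=\ell$,
\[
\Delta_{\xi}^{\alpha_\ell}\partial_{X}^{(\beta)}\!\Bigl(\widehat{A}(x,\xi)-\!\!\sum_{|\alpha|\leq N}(\Delta_{\xi}^{\alpha}\widehat{A}_{1}(x,\xi))(\partial_{X}^{(\alpha)}\widehat{A}_{2}(x,\xi))\Bigr)\in {S}^{m_1'+m_2'-(\rho'-\delta')(N+1)-\rho'\ell+\delta'|\beta|,\mathcal{L}}_{\rho',\delta'}(G\times\widehat{G}).
\]
Multiplying the order by $\kappa$ and the type indices by $\kappa$ via the identity $S^{\mu,\mathcal{L}}_{\rho',\delta'}(G\times\widehat{G})=\mathscr{S}^{\kappa\mu,\mathcal{L}}_{\kappa\rho',\kappa\delta'}(G)$ yields that this remainder belongs to $\mathscr{S}^{m_1+m_2-(\rho-\delta)(N+1)-\rho\ell+\delta|\beta|,\mathcal{L}}_{\rho,\delta}(G)$, which is exactly the desired assertion. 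There is no real obstacle here beyond bookkeeping of the parameters; in particular no analytic work is needed, and the only subtle point to check is that the transformation $(\rho,\delta,m)\mapsto(\rho/\kappa,\delta/\kappa,m/\kappa)$ preserves the strict inequality $\delta<\rho$ and the constraints $0\leq\rho',\delta'\leq 1$, both of which are automatic from $0\leq\delta<\rho\leq\kappa$.
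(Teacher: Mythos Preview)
Your proposal is correct and is exactly the intended argument: the paper states Corollary \ref{SubellipticcompositionC} without a separate proof, since it follows immediately from Theorem \ref{Subellipticcomposition} by the rescaling $\mathscr{S}^{m,\mathcal{L}}_{\rho,\delta}(G)={S}^{m/\kappa,\mathcal{L}}_{\rho/\kappa,\delta/\kappa}(G\times\widehat{G})$ of Definition \ref{contracted''}. You have simply spelled out the parameter bookkeeping that the paper leaves implicit.
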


\begin{remark}\label{sgeq1}
 Now, we return to the last part of Example \ref{powesfirstpart} where we claimed that $\mathcal{M}_{s}\in \textnormal{Op}(\mathscr{S}^{s\kappa,\mathcal{L}}_{1,0}(G)),$ for all $s>0.$ For $1\leqslant s<\infty,$   if $N_{0}\in \mathbb{N}$ satisfies $1\leqslant s< N_0,$ then, from Example \ref{powesfirstpart}, $(1+\mathcal{L})^{\frac{s}{2N_0}}\in \textnormal{Op}(\mathscr{S}^{ \frac{s\kappa}{N_0} ,\mathcal{L}}_{1,0}(G)). $ From Theorem \ref{Subellipticcomposition}, we deduce that
 \[ 
     \mathcal{M}_{s}=\underbrace{(1+\mathcal{L})^{\frac{s}{2N_0}}\cdots (1+\mathcal{L})^{\frac{s}{2N_0}}  }_{N_0-times}\in \textnormal{Op}(\mathscr{S}^{ \overbrace{\frac{s\kappa}{N_0}+\cdots+\frac{s\kappa}{N_0} }^{N_0-times} ,\mathcal{L}}_{1,0}(G)).
 \]Thus, $\mathcal{M}_{s}\in \textnormal{Op}(\mathscr{S}^{s\kappa,\mathcal{L}}_{1,0}(G))=\textnormal{Op}({S}^{s,\mathcal{L}}_{\frac{1}{\kappa},0}(G\times \widehat{G})).$ However, in view of \eqref{realpowerssharp}, ones can improve this inclusion to the following one:  $\mathcal{M}_{s}\in \textnormal{Op}({S}^{s,\mathcal{L}}_{1,0}(G\times \widehat{G})).$
\end{remark}

\section{Boundedness of subelliptic operators with non-smooth symbols}\label{ps} 
In this section we extend the results in \cite{RuzhanskyWirth2015} for the Laplacian, to the subelliptic setting. 
For our further analysis  we will apply the following lemma (see Persson \cite{Person}):
\begin{lemma}\label{Thelemaofcompact}
 Let us assume that $\Omega$ is a locally compact topological space and $\mu$ a positive measure on $\Omega.$ If $A$ extends to a bounded operator on $L^r(\Omega,\mu)$ and to a compact operator on $L^q(\Omega,\mu),$ then $A$ extends to a compact operator on $L^p(\Omega,\mu)$ for all $p$ between $q$ and $r,$ (i.e such that $1/p=\theta/q+(1-\theta)/r$ for some $\theta\in (0,1)$).
\end{lemma}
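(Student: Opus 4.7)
The plan is to reduce the statement to a standard application of the Riesz--Thorin interpolation theorem combined with approximation of the compact operator by finite-rank operators that behave well on both endpoints; this is essentially the argument of Krasnoselskii--Persson. Without loss of generality I assume $q<p<r$ (the case $r<p<q$ is symmetric), with $\tfrac1p=\tfrac{\theta}{q}+\tfrac{1-\theta}{r}$ for some $\theta\in(0,1)$.

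The first step is to produce a sequence of finite-rank operators $A_n:L^q(\Omega,\mu)\to L^q(\Omega,\mu)$ with $\|A-A_n\|_{L^q\to L^q}\to 0$ and, crucially, such that each $A_n$ is bounded on $L^r(\Omega,\mu)$ with norms that one can control. Because $A$ is compact on $L^q$, the image $A(B_{L^q})$ of the unit ball is totally bounded, so for every $\varepsilon>0$ one can cover it by finitely many $\varepsilon$-balls and, exploiting the density of bounded simple functions supported in sets of finite $\mu$-measure in $L^q$, approximate each center by such a simple function. This lets one choose $A_n$ of the form
\[
A_n f \;=\; \sum_{j=1}^{N_n} \langle f,\phi_j^{(n)}\rangle\, g_j^{(n)},
\]
with $\phi_j^{(n)}\in L^{q'}$ and $g_j^{(n)}\in L^q$ taken to be bounded simple functions supported on sets of finite measure, hence belonging to $L^s(\Omega,\mu)$ for every $1\leq s\leq\infty$. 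In particular, each $A_n$ is bounded on $L^r$.

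Next, on the difference $A-A_n$, which is bounded both on $L^q$ and on $L^r$, I apply the Riesz--Thorin theorem to obtain
\[
\|A-A_n\|_{L^p\to L^p} \;\leqslant\; \|A-A_n\|_{L^q\to L^q}^{\theta}\,\|A-A_n\|_{L^r\to L^r}^{1-\theta}.
\]
The second factor on the right is controlled by $\|A\|_{L^r\to L^r}+\|A_n\|_{L^r\to L^r}$, which stays bounded if the approximations $A_n$ are chosen with uniformly bounded $L^r$-norms (the main technical point of the construction). The first factor tends to zero by construction. Hence $\|A-A_n\|_{L^p\to L^p}\to 0$. Since each $A_n$ is finite rank and therefore compact on $L^p(\Omega,\mu)$, and the space of compact operators on a Banach space is closed in the operator-norm topology, the limit $A$ is compact on $L^p(\Omega,\mu)$, as required.

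The main obstacle is the first step: producing finite-rank approximations to $A$ in the $L^q\to L^q$ operator norm whose $L^r\to L^r$ operator norms remain uniformly bounded. Without this uniform $L^r$-control, the Riesz--Thorin bound is useless. The resolution relies on simultaneously approximating the range elements of $A$ by simple functions of the described type and, by duality (or a minor $L^{r'}$-density argument on the functionals $\phi_j^{(n)}$), arranging the representation so that the resulting finite-rank operators are bounded maps on $L^r$ with norms of the same order as $\|A\|_{L^r\to L^r}$; this is exactly the content of Persson's paper cited in the statement.
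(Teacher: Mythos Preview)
The paper does not prove this lemma at all: it is quoted as a known result with a reference to Persson \cite{Person}. So there is no ``paper's own proof'' to compare against; your sketch is an attempt to reconstruct the classical Krasnoselskii--Persson argument, and the overall strategy (finite-rank approximation on $L^q$, Riesz--Thorin on the remainder, closure of compacts in operator norm) is indeed the standard one.

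That said, your write-up is a plan rather than a proof, and you have correctly located the only nontrivial step: producing finite-rank approximants $A_n$ with $\|A-A_n\|_{L^q\to L^q}\to 0$ \emph{and} $\sup_n\|A_n\|_{L^r\to L^r}<\infty$. Your resolution of this point (``arranging the representation so that the resulting finite-rank operators are bounded maps on $L^r$ with norms of the same order as $\|A\|_{L^r\to L^r}$'') is not an argument, and choosing the $g_j^{(n)}$ and $\phi_j^{(n)}$ to be simple functions of finite support does not by itself give uniform $L^r$ control: the number of terms $N_n$ and the sizes of $\|\phi_j^{(n)}\|_{L^{r'}}$, $\|g_j^{(n)}\|_{L^r}$ may grow with $n$. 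One clean way to close this gap is to approximate on the \emph{range} side rather than build $A_n$ directly: use compactness of $A(B_{L^q})$ in $L^q$ to find, for each $\varepsilon>0$, a single operator $P$ (a truncation in height and support, or a conditional expectation onto a finite $\sigma$-algebra) that is a contraction on every $L^s$ and satisfies $\|(I-P)A\|_{L^q\to L^q}<\varepsilon$; then $PA$ is compact on both $L^q$ and $L^r$, $\|(I-P)A\|_{L^r\to L^r}\le 2\|A\|_{L^r\to L^r}$, and Riesz--Thorin gives $\|(I-P)A\|_{L^p\to L^p}\lesssim \varepsilon^{\theta}$. This is essentially Persson's argument and fills the hole you flagged.
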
For $0< p<\infty,$ the subelliptic $L^p$-Sobolev space of order $s\in\mathbb{R},$ is defined by the family of distributions $f\in \mathscr{D}'(G)$ such that  
\[ 
     \Vert f \Vert_{ L^{p,\mathcal{L}}_{s}(G) }:=\Vert  \mathcal{M}_{s} f \Vert_{ L^{p}(G) }<\infty.
 \]
We will develop an $L^p$-multiplier theorem for invariant operators in the dilated subelliptic classes with a limited number of regularity conditions. In particular, we will impose  conditions of the type
\[ 
      \sup_{(x,[\xi])\in G\times \widehat{G} }\Vert (\partial_{X}^{(\beta)} \Delta_{\xi}^{\alpha} a(x,\xi) ) \widehat{ \mathcal{M}}(\xi)^{\frac{1}{\kappa}(\rho|\alpha|-\delta|\beta|+\varkappa(\kappa-\rho))}\Vert_{\textnormal{op}} <\infty,
   \]which from Theorem \ref{cor}, is equivalent to the following ones 
\begin{itemize}
   \item \[ 
       \sup_{(x,[\xi])\in G\times \widehat{G} }\Vert \widehat{ \mathcal{M}}(\xi)^{\frac{1}{\kappa}(\rho|\alpha|-\delta|\beta|+\varkappa(\kappa-\rho))}\partial_{X}^{(\beta)} \Delta_{\xi}^{\alpha}a(x,\xi)\Vert_{\textnormal{op}} <\infty.
   \] 
   \item For all $r\in \mathbb{R},$
    \[ 
      \sup_{(x,[\xi])\in G\times \widehat{G} }\Vert \widehat{ \mathcal{M}}(\xi)^{\frac{1}{\kappa}(\rho|\alpha|-\delta|\beta|+\varkappa(\kappa-\rho)-r)}\partial_{X}^{(\beta)} \Delta_{\xi}^{\alpha}a(x,\xi)\widehat{ \mathcal{M}}(\xi)^{\frac{r}{\kappa}}\Vert_{\textnormal{op}} <\infty.
   \]
   \item There exists $r_0\in \mathbb{R},$ such that
    \[ \
      \sup_{(x,[\xi])\in G\times \widehat{G} }\Vert \widehat{ \mathcal{M}}(\xi)^{\frac{1}{\kappa}(\rho|\alpha|-\delta|\beta|+\varkappa(\kappa-\rho)-r_0)}\partial_{X}^{(\beta)} \Delta_{\xi}^{\alpha}a(x,\xi)\widehat{ \mathcal{M}}(\xi)^{\frac{r_0}{\kappa}}\Vert_{\textnormal{op}} <\infty.
   \]
\end{itemize}
The next theorem will be proved using the difference operators $\mathbb{D}^\alpha_\xi$ in Remark \ref{remarkD}.

\begin{theorem}\label{LpQL}
Let us assume that $G$ is a compact Lie group of dimension $n=2d$ or $n=2d+1,$ and that $d$ is odd. Let  $a\in \Sigma(\widehat{G})$ be a  symbol satisfying
\[ 
    \Vert \mathbb{D}^\alpha_\xi a(\xi) \widehat{\mathcal{M}}(\xi)^{\frac{\rho|\alpha|+\varkappa(\kappa-\rho)}{\kappa}}\Vert_{\textnormal{op}}\leqslant C_\alpha,\,\,|\alpha|\leqslant \varkappa:=d+1.
\] Then $A=\textnormal{Op}(a)$ extends to an operator of weak type $(1,1).$ Moreover, if the dimension of the group is $\dim(G)=2d$ or $\dim(G)=2d+1,$ and  $d$ is even, the conclusion on $A$ is the same provided that 
\[ 
    \Vert \mathbb{D}^\alpha_\xi a(\xi) \widehat{\mathcal{M}}(\xi)^{\frac{\rho|\alpha|+\varkappa(\kappa-\rho)}{\kappa}}\Vert_{\textnormal{op}}\leqslant C_\alpha,\,\,|\alpha|\leqslant \varkappa:=d+2.
\]In both cases, if $\rho=\kappa,$ $A$ extends to a bounded operator on $L^p(G),$ while for $0\leqslant \rho< \kappa,$ $A$ extends to a compact operator on $L^p(G)$ for all $1<p<\infty.$

\end{theorem}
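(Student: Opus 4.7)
The plan is to adapt the Coifman--de Guzm\'an technique of Ruzhansky--Wirth (used to prove Theorem \ref{LpQ} for the Laplacian) to the sub-Laplacian weight $\widehat{\mathcal{M}}(\xi)$, relying on Criterion \ref{CoifDeGuzCrit} to reach the weak $(1,1)$ conclusion and then interpolating. Concretely, I would first dispose of the $L^{2}$-boundedness: taking $\alpha=0$ in the hypothesis gives
\begin{equation*}
\Vert a(\xi)\Vert_{\textnormal{op}}\leqslant C\,\Vert \widehat{\mathcal{M}}(\xi)^{-\varkappa(\kappa-\rho)/\kappa}\Vert_{\textnormal{op}},
\end{equation*}
which is bounded in $[\xi]$ (trivially when $\rho=\kappa$, and with decay when $\rho<\kappa$), whence $A\in\mathscr{B}(L^{2}(G))$ by Plancherel.

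The main step is to verify the Coifman--de Guzm\'an condition
\begin{equation*}
\Vert A\psi_{r}\Vert_{L^{2}(G,\rho(x)^{n(1+\varepsilon)}dx)}^{2}\lesssim r^{\varepsilon}
\end{equation*}
for a suitable $\varepsilon>0$. Using that $\rho(x)^{2}$ is the symbol of a difference operator of order $2$ and that $\rho$ vanishes to first order at $e_{G}$, I would bound $\rho(x)^{n(1+\varepsilon)}$ by a sum of squares of monomials $q^{\alpha}(x)$ with $|\alpha|=\varkappa$ (choosing $\varkappa=d+1$ or $d+2$ as in the statement, so that $2\varkappa>n(1+\varepsilon)$ for some $\varepsilon>0$). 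Plancherel then converts the weighted norm into
\begin{equation*}
\sum_{|\alpha|=\varkappa}\sum_{[\xi]\in\widehat{G}}d_{\xi}\Vert \Delta_{\xi}^{\alpha}(a(\xi)\widehat{\psi_{r}}(\xi))\Vert_{\textnormal{HS}}^{2}.
\end{equation*}
Expanding with the Leibniz rule (Remark \ref{Leibnizrule}) gives terms of the form $(\Delta_{\xi}^{\beta}a(\xi))(\Delta_{\xi}^{\gamma}\widehat{\psi_{r}}(\xi))$ with $\beta+\gamma=\alpha$; the first factor is controlled by the hypothesis by $\widehat{\mathcal{M}}(\xi)^{-(\rho|\beta|+\varkappa(\kappa-\rho))/\kappa}$, and the second factor equals $\widehat{q^{\gamma}\psi_{r}}(\xi)$.

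The bookkeeping is then reduced to estimating
\begin{equation*}
\sum_{[\xi]}d_{\xi}\Vert \widehat{\mathcal{M}}(\xi)^{-(\rho|\beta|+\varkappa(\kappa-\rho))/\kappa}\widehat{q^{\gamma}\psi_{r}}(\xi)\Vert_{\textnormal{HS}}^{2}.
\end{equation*}
Using the inequality $\widehat{\mathcal{M}}(\xi)\lesssim\langle\xi\rangle$ from \eqref{GarettoRuzhanskyIneq}, I would dominate this by an elliptic $H^{-s}$-norm of $q^{\gamma}\psi_{r}$ with $s=(\rho|\beta|+\varkappa(\kappa-\rho))/\kappa^{2}$, and then apply Lemma \ref{LemmaRuzhWirth} (with $\tilde\ell=|\gamma|$) to get
\begin{equation*}
\Vert q^{\gamma}\psi_{r}\Vert_{H^{-s}(G)}\lesssim r^{(|\gamma|+s)/n-1/2}.
\end{equation*}
A direct computation shows the total exponent of $r$ collected from all $|\alpha|=\varkappa$ is at least $\varepsilon/2$ provided $2\varkappa$ exceeds $n$ by a fixed positive margin, which is exactly what the parity-dependent choice of $\varkappa$ guarantees. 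Criterion \ref{CoifDeGuzCrit} then delivers the weak $(1,1)$ bound, and Marcinkiewicz interpolation with the $L^{2}$ bound plus duality gives $L^{p}(G)$-boundedness for all $1<p<\infty$ in the case $\rho=\kappa$.

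For the compactness statement when $0\leqslant\rho<\kappa$, I would exploit the genuine extra decay $\widehat{\mathcal{M}}(\xi)^{-\varkappa(\kappa-\rho)/\kappa}$ in the symbol, which by Plancherel shows that $A$ maps $L^{2}(G)$ into the subelliptic Sobolev space $H^{\varkappa(\kappa-\rho)/\kappa,\mathcal{L}}(G)$ continuously; the compact embedding of this space into $L^{2}(G)$ (Rellich-type, obtained from the discreteness of $\widehat{G}$ and the fact that the weight diverges along $\widehat{G}$) yields compactness on $L^{2}(G)$. Combining this with the $L^{p}$-bound established above via Persson's Lemma \ref{Thelemaofcompact} gives compactness on $L^{p}(G)$ for all $1<p<\infty$. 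The main obstacle throughout is the careful matching of the subelliptic weight $\widehat{\mathcal{M}}(\xi)^{(\rho|\alpha|+\varkappa(\kappa-\rho))/\kappa}$ with the purely elliptic $H^{-s}$-estimate of Lemma \ref{LemmaRuzhWirth}, since the Hausdorff dimension $Q$ is generally strictly larger than $n$; the $\varkappa(\kappa-\rho)/\kappa$ surplus is precisely designed to absorb the $\kappa$-factor loss in \eqref{GarettoRuzhanskyIneq} and make the series over $\widehat{G}$ summable.
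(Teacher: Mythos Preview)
Your proposal follows essentially the same route as the paper: verify Criterion~\ref{CoifDeGuzCrit} by Plancherel, expand $\Delta^{\alpha}(a\,\widehat{\psi_r})$ with the Leibniz rule, insert powers of $\widehat{\mathcal{M}}(\xi)$ to absorb the symbol factors via the hypothesis, pass from the subelliptic Sobolev norm to an elliptic one using \eqref{GarettoRuzhanskyIneq}, and then invoke Lemma~\ref{LemmaRuzhWirth}. Two small corrections: it is the \emph{lower} half $\langle\xi\rangle^{1/\kappa}\lesssim\widehat{\mathcal{M}}(\xi)$ of \eqref{GarettoRuzhanskyIneq} that gives $\widehat{\mathcal{M}}(\xi)^{-t}\lesssim\langle\xi\rangle^{-t/\kappa}$ (not the upper half you cite), and the paper in addition checks the range $0\le s\le 1+\frac{n}{2}$ required by Lemma~\ref{LemmaRuzhWirth}---this is exactly where the parity-dependent choice of $\varkappa$ is used---and handles $r\ge 1$ by a direct $L^2$ bound on $\psi_r$. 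Your Rellich-type argument for $L^2$-compactness when $\rho<\kappa$ is equivalent to the paper's observation that $\Vert a(\xi)\Vert_{\textnormal{op}}\to 0$, and both finish with Lemma~\ref{Thelemaofcompact}.
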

\begin{proof} Let us consider the difference operator associated to $q_{m}(x):=\rho(x)^{2m},$ $\Delta_{q_m}\in \textnormal{diff}^{2m}(\widehat{G}),$ where $\varkappa:=2m\in \mathbb{N}$, and $\rho(x)$ is as in \eqref{therhofucntion}.   Let us define  $\varepsilon>0$ by the equality
 $$n(1+\varepsilon)=4m=2\varkappa.$$ The main step of the proof is to observe that the condition \eqref{CoifDeGuz}, is equivalent to showing that
 \begin{equation}\label{toproof}
     \Vert \Delta_{q_m}(a(\xi)\widehat{\psi_r})\Vert_{\ell^2(\widehat{G})}\lesssim r^{\frac{\varepsilon}{2}}=r^{\frac{2m}{  n}-\frac{1}{2}},
 \end{equation} where we have used that $\varepsilon=\frac{4 m}{ n}-1.$ The Leibniz rule applied for every $[\xi]\in \widehat{G},$ gives
 \[ 
     \Delta_{q_m}(a(\xi)\widehat{\psi_r}(\xi))=\Delta_{q_m}(a(\xi))\widehat{\psi_r}(\xi)+a(\xi)\Delta_{q_m}(\widehat{\psi_r}(\xi))+\sum_{\ell=1}^{m-1}\sum_{j}(Q_{\ell,j}a(\xi))(\tilde{Q}_{\ell,j}\widehat{\psi_r}(\xi)),
 \] for some differences operators $Q_{\ell,j}\in \textnormal{diff}^{\ell}(\widehat{G})$ and $\tilde{Q}_{\ell,j}\in \textnormal{diff}^{2m-\ell}(\widehat{G}).$ Because for every $\nu\geqslant   0$, we have
 \begin{align*}
      &\Delta_{q_m}(a(\xi)\widehat{\psi_r}(\xi))=\Delta_{q_m}(a(\xi))\widehat{\mathcal{M}}(\xi)^{2m\rho+\nu}\widehat{\mathcal{M}}(\xi)^{-2m\rho-\nu}\widehat{\psi_r}(\xi)\\
      &+a(\xi)\widehat{\mathcal{M}}(\xi)^{\nu}\widehat{\mathcal{M}}(\xi)^{-\nu}\Delta_{q_m}(\widehat{\psi_r}(\xi))\\
      &+\sum_{\ell=1}^{2m-1}\sum_{j}(Q_{\ell,j}a(\xi))\widehat{\mathcal{M}}(\xi)^{ \rho\ell+\nu }\widehat{\mathcal{M}}(\xi)^{-\rho\ell-\nu}(\tilde{Q}_{\ell,j}\widehat{\psi_r}(\xi)),
 \end{align*} taking the norm inequalities with $\nu:=\varkappa(\kappa-\rho),$ we have

     \begin{align*}
      \Vert \Delta_{q_m}(a(\xi)\widehat{\psi_r}(\xi))\Vert_{\textnormal{HS}}&\leqslant \Vert \Delta_{q_m}(a(\xi))\widehat{\mathcal{M}}(\xi)^{2m\rho+\nu}\Vert_{\textnormal{op}} \Vert \widehat{\mathcal{M}}(\xi)^{-2m\rho-\nu} \widehat{\psi_r}(\xi)\Vert_{\textnormal{HS}}\\
      &+\Vert a(\xi)\widehat{\mathcal{M}}(\xi)^{\nu}\Vert_{\textnormal{op}} \Vert \widehat{\mathcal{M}}(\xi)^{-\nu} \Delta_{q_m}(\widehat{\psi_r}(\xi))\Vert_{\textnormal{HS}}\\
      &+\sum_{\ell=1}^{2m-1}\sum_{j}\Vert (Q_{\ell,j}a(\xi))\widehat{\mathcal{M}}(\xi)^{\rho\ell+\nu}\Vert_{\textnormal{op}} \Vert \widehat{\mathcal{M}}(\xi)^{-\rho\ell-\nu} (\tilde{Q}_{\ell,j}\widehat{\psi_r}(\xi))\Vert_{\textnormal{HS}}\\
      &=\Vert \Delta_{q_m}(a(\xi))\widehat{\mathcal{M}}(\xi)^{2m\rho+\nu}\Vert_{\textnormal{op}} \Vert \widehat{\mathcal{M}}(\xi)^{-2m\rho-\nu} \widehat{\psi_r}(\xi)\Vert_{\textnormal{HS}}\\
      &\,\,+\sum_{\ell=0}^{2m-1}\sum_{j}\Vert (Q_{\ell,j}a(\xi))\widehat{\mathcal{M}}(\xi)^{\rho\ell+\nu}\Vert_{\textnormal{op}} \Vert \widehat{\mathcal{M}}(\xi)^{-\rho\ell-\nu} (\tilde{Q}_{\ell,j}\widehat{\psi_r}(\xi))\Vert_{\textnormal{HS}}\\
      &\lesssim  \Vert \widehat{\mathcal{M}}(\xi)^{-2m\rho-\nu} \widehat{\psi_r}(\xi)\Vert_{\textnormal{HS}}+\sum_{\ell=0}^{2m-1}\sum_{j}\Vert\textnormal{II}_{\ell,j}(\xi)\Vert_{\textnormal{HS}},\\
      &=\sum_{\ell=0}^{2m}\sum_{j}\Vert\textnormal{II}_{\ell,j}(\xi)\Vert_{\textnormal{HS}}
 \end{align*} where $\textnormal{II}_{\ell,j}(\xi)=  \widehat{\mathcal{M}}(\xi)^{-\rho\ell-\nu} (\tilde{Q}_{\ell,j}\widehat{\psi_r}(\xi)),$  and  $\textnormal{II}_{2m,j}(\xi)=  \widehat{\mathcal{M}}(\xi)^{-2m\rho-\nu}  \widehat{\psi_r}(\xi),$ with $j=0$ being the unique possible index for $\ell=0,2m.$ 
 Now, we study the terms  $\textnormal{II}_{\ell,j}(\xi).$ Let $\tilde{q}_{2m-\ell,j}$ be the associated function to the difference operator $\tilde{Q}_{\ell,j}$ vanishing with order $2m-\ell$ at $e_{G}$. Then,
 \[ 
     \Vert \tilde{q}_{2m-\ell,j}\psi_r \Vert_{H^{-\rho\ell-\nu,\mathcal{L}}(G)}=\left(\sum_{[\xi]\in \widehat{G}}d_{\xi}\Vert \widehat{\mathcal{M}}(\xi)^{-\rho\ell-\nu} (\tilde{Q}_{\ell,j}\widehat{\psi_r}(\xi)) \Vert_{\textnormal{HS}}^{2} \right)^{\frac{1}{2}}.
 \]
Let us fix $\nu=\varkappa(\kappa-\rho).$ By the embedding $H^{-\frac{s}{\kappa}}(G)\hookrightarrow H^{-s,\mathcal{L}}(G),$ (see  Proposition 3.1 of \cite{GarettoRuzhansky2015}), we have the following inequality,
 \[ 
     \Vert \tilde{q}_{2m-\ell,j}\psi_r \Vert_{H^{-\rho\ell-\nu,\mathcal{L}}}\lesssim \Vert \tilde{q}_{2m-\ell,j}\psi_r \Vert_{H^{-\frac{\rho\ell+\nu}{\kappa}}}\lesssim r^{\frac{2m-\ell+\frac{\rho\ell+\nu}{\kappa}}{n}-\frac{1}{2}}
 \]where we have used Lemma \ref{LemmaRuzhWirth}, with $\tilde{\ell}=2m-\ell$ and $s= \frac{\rho\ell+\nu}{\kappa},$ with $0\leqslant  \frac{\rho\ell+\nu}{\kappa} \leqslant 1+\frac{n}{2}. $ Indeed, note that $\frac{\rho\ell+\nu}{\kappa}\leqslant \frac{\rho\varkappa+\nu}{\kappa},$ and  the condition $0\leqslant    \frac{\rho\kappa+\nu}{\kappa}\leqslant 1+\frac{n}{2}, $ is equivalent to prove that: $\varkappa\leqslant 1+\frac{n}{2},$ which  holds true for $\dim(G)=2d$ or $\dim(G)=2d+1,$ with $d$ odd. For $0<r<1,$ and $ \frac{1}{2}\varepsilon_\ell:=\frac{2m-\ell+\frac{\rho\ell+\nu}{\kappa}}{n}-\frac{1}{2},$ the condition $\nu=\varkappa(\kappa-\rho)$ assures that $\frac{1}{2}\varepsilon\leqslant \frac{1}{2}\varepsilon_\ell ,$ and consequently, $r^{\frac{1}{2}\varepsilon_\ell}\leqslant r^{\frac{1}{2}\varepsilon}.$ Hence, we conclude that
\begin{equation}\label{1}
 \sum_{\ell=0}^{2m-1}\sum_{j}\Vert\textnormal{II}_{\ell,j}\Vert_{\ell^2(\widehat{G})}= \sum_{\ell=0}^{2m-1}\sum_{j}\Vert \tilde{q}_{2m-\ell,j}\psi_r  \Vert_{H^{-\frac{\ell}{\kappa},\mathcal{L}}}\lesssim r^\frac{\varepsilon}{2}.
\end{equation} 
Now, if $r\geqslant   1,$ we observe that
\begin{align*}
  \Vert\tilde{q}_{2m-\ell,j}\psi_{r} \Vert_{H^{-\frac{\ell}{\kappa},\mathcal{L}}}&\lesssim   \Vert\tilde{q}_{2m-\ell,j}\psi_{r} \Vert_{H^{-\frac{\ell}{\kappa^2}}}\lesssim \Vert\tilde{q}_{2m-\ell,j}\psi_{r} \Vert_{L^2(G)}\\
  &\leqslant  \Vert \tilde{q}_{2m-\ell,j}\Vert_{L^\infty(G)}\Vert \psi_{r} \Vert_{L^2(G)}. 
\end{align*} Because  \begin{align*}\Vert \psi_{r}\Vert_{L^2(G)}&=\Vert \phi_{r}-\phi_{\frac{r}{2}}\Vert_{L^2(G)}\leqslant \Vert \phi_{r}\Vert_{L^2(G)}+\Vert \phi_{\frac{r}{2}}\Vert_{L^2(G)}\\
&\lesssim  r^{-1/2}+(\frac{r}{2})^{-1}\leqslant 3r^{-\frac{1}{2}},
\end{align*} (for instance, see \cite[p. 140]{CoifmandeGuzman} or Lemma 3.3 of \cite[p. 629]{RuzhanskyWirth2015}), the condition $r\geqslant   1,$ implies
\[ 
      \Vert\tilde{q}_{2m-\ell,j}\psi_{r} \Vert_{H^{-\frac{\ell}{\kappa},\mathcal{L}}}\lesssim  3r^{-\frac{1}{2}}\Vert \tilde{q}_{2m-\ell,j}\Vert_{L^\infty(G)}\lesssim  \Vert \tilde{q}_{2m-\ell,j}\Vert_{L^\infty(G)}\lesssim r^{\frac{\varepsilon}{2}},
\] which proves \eqref{toproof} for $r\geqslant   1.$
Now, we end the proof of the weak (1,1) inequality  by observing (according to the proof of Theorem 2.1 of \cite{RuzhanskyWirth2015}) that the difference operator $\Delta_{q_m}$ is a linear combination of operators of the form $\mathbb{D}_\xi^\gamma,$ with $|\gamma|=2m:=\varkappa$. The proof of the weak (1,1) inequality  for $\dim(G)=2d$ or  $\dim(G)=2d+1,$ $d$ even, can be analysed in a similar way. Let us observe that in view of Lemma \ref{lemadecaying1} we have the estimate
\[ 
    \Vert a(\xi)\Vert_{\textnormal{op}} \lesssim (1+\nu_{ii}(\xi)^2)^{-\frac{\varkappa(\kappa-\rho)}{2\kappa}},
\] for some $1\leqslant i\leqslant d_\xi.$ So, if $\rho=\kappa,$ $A$ extends to a bounded operator on $L^2(G),$ while for $0\leqslant \rho<\kappa,$ we deduce that $A$ extends to a compact operator on $L^2(G)$\footnote{ Indeed, let us observe that the fact that $\lim_{\langle \xi\rangle\rightarrow \infty }\Vert a(\xi)\Vert_{\textnormal{op}}\lesssim \lim_{\langle \xi\rangle\rightarrow \infty }\max_{1\leqslant i\leqslant d_\xi }(1+\nu_{ii}(\xi)^2)^{-\frac{\varkappa(\kappa-\rho)}{2\kappa}}\leqslant \lim_{\langle \xi\rangle\rightarrow \infty }\langle \xi\rangle^{ -\frac{\varkappa(\kappa-\rho)}{2\kappa^2} }=0, $ implies the existence of a compact extension of $A$ on $L^2(G)$ (see \cite{ARLG}).}. If $\rho=\kappa, $ in view of the Marcinkiewicz interpolation theorem and the duality argument we have that $A$ extends to a bounded operator on $L^p(G).$ However, for $0\leqslant \rho<\kappa,$ the compactness of $A$ on $L^2(G),$ the weak (1,1) type of $A,$ the Marcinkiewicz interpolation theorem, and the compactness interpolation theorem (Theorem \ref{Thelemaofcompact}) allow us to conclude that $A$ extends to a compact operator on $L^p(G)$ for all $1<p<\infty.$
\end{proof}

The argument  used in the proof of Theorem \ref{LpQnoninvariant} via the Sobolev embedding theorem can be easily adapted to prove the following result for non-invariant operators, but, in the subelliptic context.

\begin{theorem}\label{SubellipticLpestimate}
Let us assume that $G$ is a compact Lie group of dimension $n=2d$ or $n=2d+1,$ with $d$ odd. Let $0\leqslant \rho\leqslant \kappa,$ and $a\in \Sigma(G\times \widehat{G})$ be a symbol satisfying
\[ 
    \Vert (\partial_X^{(\beta)}\mathbb{D}^\alpha_\xi a(x,\xi)) \widehat{\mathcal{M}}(\xi)^{\frac{\rho|\alpha|+\varkappa(\kappa-\rho)}{\kappa}}\Vert_{\textnormal{op}}\leqslant C_\alpha,\,\,|\alpha|\leqslant \varkappa:=d+1,\,|\beta|\leqslant \left[\frac{n}{p}\right]+1.
\] Then $A=\textnormal{Op}(a)$ extends to a bounded operator  on $L^p(G)$ for all $1<p<\infty$ for $\rho=\kappa,$ and to a compact linear operator  on $L^p(G)$ for all $1<p<\infty$ when $0\leqslant \rho<\kappa.$   Moreover, if the dimension of the group is $\dim(G)=2d$ or $\dim(G)=2d+1,$ with $d$ even, the conclusion on $A$ is the same provided that \[ 
    \Vert(\partial_X^{(\beta)} \mathbb{D}^\alpha_\xi a(x,\xi)) \widehat{\mathcal{M}}(\xi)^{\frac{\rho|\alpha|+\varkappa(\kappa-\rho)}{\kappa}}\Vert_{\textnormal{op}}\leqslant C_\alpha,\,\,|\alpha|\leqslant \varkappa:=d+2,\,\,|\beta|\leqslant \left[\frac{n}{p}\right]+1.
\]
\end{theorem}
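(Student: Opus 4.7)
\smallskip

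The plan is to reduce the non-invariant case to the invariant case of Theorem \ref{LpQL} by freezing the $x$-variable and lifting the resulting pointwise supremum by the classical Sobolev embedding $L^p_s(G) \hookrightarrow L^\infty(G)$ for $s > n/p$, exactly as in the argument underlying Theorem \ref{LpQnoninvariant} but now with the subelliptic multiplier theorem as the black box. For each $z \in G$, let $A_z$ denote the invariant Fourier multiplier whose matrix symbol is $\sigma_z(\xi) := a(z,\xi)$, so that $Af(x) = [A_z f](x)\big|_{z=x}$ and in particular $|Af(x)| \leqslant \sup_{z\in G}|A_z f(x)|$.

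First, I would apply the (Riemannian) Sobolev embedding in the auxiliary variable $z$: taking $s = [n/p]+1$, for each fixed $x$ one has
\begin{equation*}
    \sup_{z\in G}|A_zf(x)| \;\lesssim\; \|\, z\mapsto A_zf(x)\,\|_{L^p_s(G,dz)}.
\end{equation*}
Raising to the $p$-th power, integrating in $x$, and applying Fubini's theorem yields
\begin{equation*}
    \|Af\|_{L^p(G)}^p \;\lesssim\; \sum_{|\beta|\leqslant [n/p]+1}\int_{G}\|(\partial_z^{(\beta)}A_z)f\|_{L^p(G,dx)}^{p}\,dz.
\end{equation*}
Differentiation under the symbol shows that $\partial_z^{(\beta)}A_z$ is, for each fixed $z$, the invariant multiplier with symbol $\partial_{X}^{(\beta)}a(z,\xi)$. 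The hypothesis guarantees that for every $|\beta|\leqslant [n/p]+1$ this symbol satisfies the estimates required by Theorem \ref{LpQL} uniformly in $z \in G$, since the subelliptic seminorms are defined as suprema over $(x,[\xi])$. Hence $\partial_z^{(\beta)}A_z$ is bounded on $L^p(G)$ with norm independent of $z$, which plugged into the integral above gives $\|Af\|_{L^p(G)}\lesssim \|f\|_{L^p(G)}$ in the case $\rho = \kappa$.

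For the compactness statement when $0\leqslant \rho<\kappa$, I would first observe that Lemma \ref{lemadecaying1} applied to the invariant hypothesis yields the decay
\begin{equation*}
    \|a(x,\xi)\|_{\textnormal{op}}\lesssim \max_{1\leqslant i\leqslant d_\xi}(1+\nu_{ii}(\xi)^2)^{-\frac{\varkappa(\kappa-\rho)}{2\kappa}} \longrightarrow 0,\quad \langle\xi\rangle\to\infty,
\end{equation*}
uniformly in $x$, which implies the compactness of $A$ on $L^2(G)$ by the standard criterion for pseudo-differential operators with vanishing-at-infinity symbols. Interpolating this $L^2$-compactness with the $L^p$-boundedness established above via Persson's Lemma (Lemma \ref{Thelemaofcompact}) yields the compactness on $L^p(G)$ for every $1<p<\infty$. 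The parity-of-$d$ dichotomy in the choice $\varkappa = d+1$ or $\varkappa = d+2$ is inherited verbatim from Theorem \ref{LpQL}: it is the only place where the dimension enters when invoking Lemma \ref{LemmaRuzhWirth} inside that theorem.

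The main obstacle, modest in this setting, is purely bookkeeping: one must verify that the uniform-in-$z$ control of the seminorms required by Theorem \ref{LpQL} is indeed provided by the hypothesis with the shift $|\beta|\leqslant [n/p]+1$, and that differentiating $A_z$ in $z$ does not spoil the number of difference operators needed in the $\xi$-variable (this is automatic since $\partial_{X}^{(\beta)}$ and $\mathbb{D}^\alpha_\xi$ act on disjoint variables and commute). No genuine new analytic difficulty appears beyond what is already encoded in Theorem \ref{LpQL}.
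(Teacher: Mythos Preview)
Your boundedness argument is essentially the same as the paper's: freeze the coefficients, apply Sobolev embedding in the auxiliary variable $z$ with $s=[n/p]+1$, and invoke Theorem~\ref{LpQL} for each invariant operator $\partial_z^{(\beta)}A_z$ uniformly in $z$.

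The compactness argument, however, diverges from the paper and contains a gap. You claim that the uniform decay $\|a(x,\xi)\|_{\textnormal{op}}\to 0$ implies $L^2$-compactness of the \emph{non-invariant} operator $A$ ``by the standard criterion''. That criterion is standard only for left-invariant operators (this is exactly how Theorem~\ref{LpQL} uses it, via \cite{ARLG}). For $x$-dependent symbols one must control the $L^2$-operator norm of the tail $\textnormal{Op}(a\cdot 1_{\{\langle\xi\rangle>N\}})$, and the available $L^2$-boundedness results (Theorem~10.5.5 of \cite{Ruz} or Theorem~\ref{CVT}) require roughly $[n/2]+1$ derivatives in $x$. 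When $p>2$ the hypothesis only supplies $[n/p]+1$ derivatives, which can be strictly smaller (for $p>n$ it gives only one derivative), so the truncation argument is not available and your $L^2$-compactness step is not justified. Consequently Persson's Lemma cannot be invoked.

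The paper bypasses $L^2$ entirely: since Theorem~\ref{LpQL} already gives that each frozen $\partial_z^{(\beta)}A_z$ is \emph{compact on $L^p$} for $0\le\rho<\kappa$, one tests against a weakly null sequence $f_j$ in $L^p$, uses the Sobolev-embedding bound
\[
\|Af_j\|_{L^p}^p \lesssim \sum_{|\beta|\le[n/p]+1}\int_G \|\partial_z^{(\beta)}A_z f_j\|_{L^p}^p\,dz,
\]
observes that the integrand tends to $0$ pointwise in $z$ by compactness of the invariant operators, and applies dominated convergence (the integrand is dominated by $\sup_z\|\partial_z^{(\beta)}A_z\|^p\sup_j\|f_j\|^p$). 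This yields $L^p$-compactness directly, using only the $[n/p]+1$ derivatives actually assumed.
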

\begin{proof}
Now, for every $z\in G,$ let us define the family of invariant operators $\{A_z\}_{z\in G},$ by

\[ 
    A_zf(x)=\sum_{[\xi]\in \widehat{G}}d_\xi\textnormal{\textbf{Tr}}[\xi(x)a(z,\xi)\widehat{f}(\xi)],\,\,f\in C^\infty(G).
\] By the identity $A_xf(x)=Af(x),$ $x\in G,$  the Sobolev embedding Theorem gives
\[ 
    \sup_{z\in G}|A_zf(x)|\lesssim \sum_{|\beta|\leqslant [\frac{n}{p}]+1}\Vert\partial_{Z}^\beta A_zf(\cdot)\Vert_{L^p(G)}=\sum_{|\beta|\leqslant [\frac{n}{p}]+1}\left(\int\limits_{G} \vert\partial_{Z}^\beta A_zf(x)\vert^p \, dz\right)^\frac{1}{p}.
\] Every operator $A_{z,\beta}:=\partial_{Z}^\beta A_z$ has an invariant symbol $a_{z,\beta}:=\partial_{Z}^\beta a(z,\cdot),$ and the estimates 
\[ 
    \Vert \widehat{\mathcal{M}}(\xi)^{\frac{1}{\kappa}(\varkappa(\kappa-\rho)+\rho|\alpha|  )} (\partial_{X}^{(\beta)}\mathbb{D}^{\alpha} a(x,\xi))\Vert_{\textnormal{op}}\leqslant C_\alpha, \,\,\,|\alpha|\leqslant \varkappa,\,\, |\beta|\leqslant [\frac{n}{p}]+1,
\] are equivalent to the following ones,
\[ 
   \Vert \widehat{\mathcal{M}}(\xi)^{\frac{1}{\kappa}(\varkappa(\kappa-\rho)+\rho|\alpha|)} \mathbb{D}^{\alpha} a_{z,\beta}(\xi)\Vert_{\textnormal{op}}\leqslant C_\alpha,\,\,\,|\alpha|\leqslant \varkappa,\,\, |\beta|\leqslant [\frac{n}{p}]+1.
\] Consequently the family of operators $\{A_{z,\alpha}\}_{z\in G,\,|\beta|\leqslant [\frac{n}{p}]+1},$ satisfies the conclusions in Theorem  \ref{LpQL}. Moreover, for every $|\beta|\leqslant [\frac{n}{p}]+1,$  the function $z\mapsto A_{z,\beta},$ is a continuous function from $G$ into $\mathscr{B}(L^p(G)),$ the set of bounded linear operators on $L^p(G),$ for all $1<p<\infty.$ The compactness of $G$ implies that 
\[ 
   \sup_{z\in G}\Vert A_{z,\beta}\Vert_{\mathscr{B}(L^p(G))}= \sup_{z\in G}\Vert\partial_{Z}^\beta A_z\Vert_{\mathscr{B}(L^p(G))}=\Vert\partial_{Z}^\beta A_{z_{0,\beta}}\Vert_{\mathscr{B}(L^p(G))}
\] for some $z_{0,\beta}\in G.$
Consequently, we can estimate the $L^p(G)$-norm of $Af,$ $f\in C^\infty(G),$ by
\begin{align*}
\Vert Af \Vert^p_{L^p(G)}&=\int\limits_{G}|A_xf(x)|^pdx\leqslant \int\limits_{G}\sup_{z\in G}|A_zf(x)|^pdx\\
&\lesssim \sum_{|\beta|\leqslant [\frac{n}{p}]+1} \int\limits_{G}\int\limits_{G} \vert\partial_{Z}^\beta A_zf(x)\vert^p \, dzdx\\
&= \sum_{|\beta|\leqslant [\frac{n}{p}]+1} \int\limits_{G}\int\limits_{G} \vert\partial_{Z}^\beta A_zf(x)\vert^p \, dxdz\\
&\leqslant \sum_{|\beta|\leqslant [\frac{n}{p}]+1}\sup_{z\in G}\Vert\partial_{Z}^\beta A_z\Vert^p_{\mathscr{B}(L^p(G))} \Vert f\Vert^p_{L^p(G)} \\
&=\sum_{|\beta|\leqslant [\frac{n}{p}]+1}\Vert\partial_{Z}^\beta A_{z_{0,\beta}}\Vert^p_{\mathscr{B}(L^p(G))} \Vert f\Vert^p_{L^p(G)} . 
\end{align*}  So, we have the boundedness of $A$ on $L^p(G),$ for all $1<p<\infty,$ and all $0\leqslant \rho\leqslant \kappa.$ However, for   $0\leqslant \rho< \kappa,$ $A$ extends to a compact operator on  $L^p(G).$ For the proof, we can take a sequence $\{f_j\}$ in $L^p(G),$ converging weakly to zero. We need to show that $Af_j$ converges to zero in the  $L^p(G)$-norm. The previous analysis gives us the inequality,
\[ 
    \Vert Af_j \Vert^p_{L^p(G)}\lesssim  \sum_{|\beta|\leqslant [\frac{n}{p}]+1} \int\limits_{G}\int\limits_{G} \vert\partial_{Z}^\beta A_zf_j(x)\vert^p \, dxdz=\sum_{|\beta|\leqslant [\frac{n}{p}]+1} \int\limits_{G} \Vert\partial_{Z}^\beta A_zf_j\Vert^p_{L^p(G)} dz.
\]
Because, every weakly convergent sequence is a bounded sequence,\footnote{ this is a known fact, indeed, it can be proved  by using the  Uniform Boundedness Principle.} we have that $ \sup_j \Vert f_j\Vert^p_{L^p(G)}<\infty,$ and  the estimate
\[ 
    \Vert\partial_{Z}^\beta A_zf_j\Vert\leqslant \Vert\partial_{Z}^\beta A_{z_{0,\beta}}\Vert_{\mathscr{B}(L^p(G))} \sup_j \Vert f_j\Vert_{L^p(G)}
\]
allows us to use the dominated convergence Theorem in order to conclude that 
\begin{align*}
\lim_{j\rightarrow \infty }   \Vert Af_j \Vert^p_{L^p(G)} &\lesssim    \sum_{|\beta|\leqslant [\frac{n}{p}]+1} \lim_{j\rightarrow \infty } \int\limits_{G} \Vert\partial_{Z}^\beta A_zf_j\Vert^p_{L^p(G)} dz\\
 &=\sum_{|\beta|\leqslant [\frac{n}{p}]+1}  \int\limits_{G} \lim_{j\rightarrow \infty }\Vert\partial_{Z}^\beta A_zf_j\Vert^p_{L^p(G)} dz.
\end{align*} Now, for every $z\in G,$ $ \partial_{Z}^\beta A_z$ is a Fourier multiplier satisfying the hypothesis in Theorem \ref{LpQL}, and consequently they admit  compact extensions on $L^p(G),$ for all $1<p<\infty,$ provided that $0\leqslant \rho<\kappa.$ This, and our assumption that $\{f_j\}$ in $L^p(G)$ converges weakly to zero, implies that $\lim_{j\rightarrow \infty }\Vert\partial_{Z}^\beta A_zf_j\Vert_{L^p(G)}=0.$ So, we conclude that $\lim_{j\rightarrow \infty }   \Vert Af_j \Vert_{L^p(G)}=0.$ The proof is complete.
\end{proof}
\begin{remark}
As we can see in Theorem \ref{LpQL},  the class of symbols with limited regularity $\mathscr{S}^{-\varkappa(\kappa-\rho),\varkappa,\mathcal{L}}_{\rho}(G)$ begets operators of weak type $(1,1).$ The same conclusion is valid for operators with smooth symbols in the class $\mathscr{S}^{-\varkappa(\kappa-\rho),\mathcal{L}}_{\rho}(G).$ Also, for $\rho=\kappa,$ we have
\[ 
  \textnormal{Op}(\mathscr{S}^{0,\mathcal{L}}_{\kappa,0}(G))\subset   \textnormal{Op}(\mathscr{S}^{0,\varkappa, [\frac{n}{p}]+1,\mathcal{L}}_{\kappa,0}(G))\subset \mathscr{B}(L^p(G)),\,\,1<p<\infty,
\] or equivalently,
\[ 
  \textnormal{Op}({S}^{0,\mathcal{L}}_{1,0}(G\times \widehat{G}))\subset   \textnormal{Op}({S}^{0,\varkappa, [\frac{n}{p}]+1,\mathcal{L}}_{1,0}(G))\subset \mathscr{B}(L^p(G)),\,\,1<p<\infty.
\]
 \end{remark}

 \begin{corollary}\label{SobL}
Let  $\varkappa$ be the smallest even integer larger than $\frac{n}{2},$ $n:=\dim(G).$ Let  $A:C^\infty(G)\rightarrow\mathscr{D}'(G)$ be a continuous operator such that its matrix symbol $\sigma_A$ satisfies 
\begin{equation}\label{estimatessub2}
    \Vert( \partial_{X}^{(\beta)}\Delta^\gamma \sigma_A(x,\xi) )\widehat{\mathcal{M}}(\xi)^{|\alpha|}   \Vert_{\textnormal{op}}\leqslant C_{\gamma},\,\,|\gamma|\leqslant \varkappa, \,|\beta|\leqslant  \left[\frac{n}{p}\right]+1.
\end{equation}
If $s\in \mathbb{R},$ then  $A$ extends to a bounded operator from $L^{p,\mathcal{L}}_{s}(G)$ into $L^{p,\mathcal{L}}_{s}(G)$ for  all $1<p<\infty.$
\end{corollary}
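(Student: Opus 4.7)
The strategy is to reduce the subelliptic Sobolev boundedness of $A$ to the $L^p$-boundedness of a conjugated operator, and then appeal to Theorem \ref{SubellipticLpestimate}. By the very definition of the subelliptic Sobolev space, the Bessel-type potential $\mathcal M_{-s} := (1+\mathcal L)^{-s/2}$ is an isomorphism from $L^p(G)$ onto $L^{p,\mathcal L}_s(G)$ with inverse $\mathcal M_s$. Consequently, $A$ extends to a bounded operator on $L^{p,\mathcal L}_s(G)$ if and only if the conjugated operator $\tilde A := \mathcal M_s A \mathcal M_{-s}$ extends to a bounded operator on $L^p(G)$ for every $1 < p < \infty$.

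To analyse $\tilde A$, I would compute its symbol using the composition formula (Theorem \ref{Subellipticcomposition}). Since $\mathcal M_s, \mathcal M_{-s} \in \textnormal{Op}(S^{\pm s,\mathcal L}_{1,0}(G\times\widehat G))$ by Theorem \ref{orders:theorems}, and since both are Fourier multipliers (so all their $X$-derivatives of positive order vanish), the composition expansion for $\tilde A = (\mathcal M_s \circ A) \circ \mathcal M_{-s}$ collapses on the right factor and gives the finite expansion with remainder
\[
\sigma_{\tilde A}(x,\xi) = \Big(\sum_{|\alpha|\leq N} (\Delta_\xi^\alpha \widehat{\mathcal M}_s(\xi))\,(\partial_X^{(\alpha)}\sigma_A(x,\xi)) + R_N(x,\xi)\Big)\widehat{\mathcal M}_{-s}(\xi),
\]
where $N$ is to be chosen. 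The plan is then to verify that $\sigma_{\tilde A}$ satisfies the hypothesis of Theorem \ref{SubellipticLpestimate} with $\rho = \kappa$ (so that the term $\varkappa(\kappa-\rho)$ vanishes), namely
\[
\Vert (\partial_X^{(\beta)}\mathbb D_\xi^{\alpha}\sigma_{\tilde A}(x,\xi))\,\widehat{\mathcal M}(\xi)^{|\alpha|}\Vert_{\textnormal{op}}\leqslant C_{\alpha,\beta},\quad |\alpha|\leqslant \varkappa,\, |\beta|\leqslant [n/p]+1,
\]
after which Theorem \ref{SubellipticLpestimate} (with $\rho = \kappa$) gives $\tilde A\in\mathscr B(L^p(G))$ and completes the proof.

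For the verification, I would use Theorem \ref{cor} to pass freely between left- and right-multiplied subelliptic seminorms. Each term $(\Delta_\xi^\alpha \widehat{\mathcal M}_s(\xi))(\partial_X^{(\alpha)}\sigma_A(x,\xi))\widehat{\mathcal M}_{-s}(\xi)$ in the sum is a product of an invariant symbol of subelliptic order $s-|\alpha|$, an $x$-dependent symbol of subelliptic order $0$ (by hypothesis on $\sigma_A$), and an invariant symbol of order $-s$; the net subelliptic order is $-|\alpha|$, so that after taking $|\alpha|$ further $\xi$-differences one obtains a bounded family uniformly in $(x,[\xi])$. The remainder $R_N$ is of order $-(N+1)$ (up to $\rho$-factors) by Theorem \ref{Subellipticcomposition}, so it contributes a symbol whose operator norm decays rapidly in $\xi$ once $N$ is chosen large enough.

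The main obstacle is the bookkeeping in the finite-regularity setting: only $\varkappa$ $\xi$-differences and $[n/p]+1$ $X$-derivatives of $\sigma_A$ are available, so the truncation order $N$ and the number of derivatives fired at each factor must be balanced so that (i) every summand in the finite expansion uses at most the permitted orders of $\partial_X$ and $\Delta_\xi$ applied to $\sigma_A$, and (ii) the remainder $R_N\widehat{\mathcal M}_{-s}(\xi)$ still satisfies the required bound after differentiating up to order $\varkappa$ in $\xi$ and $[n/p]+1$ in $x$. The crucial simplifying feature is that $\mathcal M_{\pm s}$ are Fourier multipliers with smooth symbols in the subelliptic classes $S^{\pm s,\mathcal L}_{1,0}$, so the conjugation imposes no regularity demands on $\sigma_A$ beyond those given by the hypothesis, and a finite $N$ depending only on $\varkappa$, $n$, and $p$ suffices.
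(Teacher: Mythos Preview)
Your approach is essentially identical to the paper's: conjugate by $\mathcal M_{\pm s}$ to reduce to $L^p$-boundedness of $A_s:=\mathcal M_s A\mathcal M_{-s}$, check that $A_s$ lies in the limited-regularity class of Theorem \ref{SubellipticLpestimate} with $\rho=\kappa$, and conclude. The paper simply asserts $A_s\in\textnormal{Op}(\mathscr{S}^{0,\varkappa,[n/p]+1,\mathcal L}_{\kappa,0}(G))$ by invoking the subelliptic calculus and the membership $\mathcal M_{\pm s}\in\textnormal{Op}(\mathscr{S}^{\pm s\kappa,\mathcal L}_{1,0}(G))$ (from Examples \ref{powesfirstpart} and \ref{powessecondtpart}), glossing over the finite-regularity bookkeeping that you correctly flag as the only real technical point.
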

 \begin{proof}
  Note that
 \[ 
     \Vert Af \Vert_{ L^{p,\mathcal{L}}_{s}(G) }=\Vert \mathcal{M}_{s }A \mathcal{M}_{-s} \mathcal{M}_{s} f \Vert_{ L^{p}(G) }.
 \]By taking into account Remarks \ref{powesfirstpart} and  \ref{powessecondtpart}, we have that $\mathcal{M}_{s }\in \textnormal{Op}(\mathscr{S}^{s\kappa,\mathcal{L}}_{1,0}(G))\subset \textnormal{Op}(\mathscr{S}^{s\kappa,\mathcal{L}}_{\kappa,0}(G)) ,$ $ A\in \textnormal{Op}(\mathscr{S}^{0,\varkappa, [\frac{n}{p}]+1,\mathcal{L}}_{\kappa,0}(G)),$ and
 \[\mathcal{M}_{-s} \textnormal{Op}(\mathscr{S}^{-s\kappa,\mathcal{L}}_{1,0}(G))\subset \textnormal{Op}(\mathscr{S}^{-s\kappa,\mathcal{L}}_{\kappa,0}(G)) , \] and  we conclude that
 \[ 
    A_s:= \mathcal{M}_{s }A \mathcal{M}_{-s} \in \textnormal{Op}(\mathscr{S}^{0,\varkappa, [\frac{n}{p}]+1,\mathcal{L}}_{\kappa,0}(G)).
 \] Moreover, from Theorem \ref{SubellipticLpestimate} we deduce that $A_{s}$ extends to a bounded operator on $L^p(G)$ for all $1<p<\infty.$ Consequently we deduce the estimate
 \[ 
      \Vert Af \Vert_{ L^{p,\mathcal{L}}_{s}(G) }\leqslant \Vert A_s \Vert_{\mathscr{B}(L^p(G))}\Vert f\Vert_{L^{p,\mathcal{L}}_{s}(G)}.
 \] So, $A$ extends to a bounded operator from $L^{p,\mathcal{L}}_{s}(G)$ into $L^{p,\mathcal{L}}_{s}(G).$ Thus, we finish the proof.
 \end{proof}
 Corollary \ref{SobL} implies the following result on the boundedness of pseudo-differential operators on Besov spaces (we refer the reader to Appendix \ref{Finalsect} for the definition and  to \cite{CardonaRuzhansky2019I} for the interpolation properties of subelliptic  Besov spaces).
  \begin{corollary}\label{BesovL}
Let  $\varkappa$ be the smallest even integer larger than $\frac{n}{2},$ $n:=\dim(G).$ Let  $A:C^\infty(G)\rightarrow\mathscr{D}'(G)$ be a continuous operator such that its matrix symbol $\sigma_A$ satisfies 
\begin{equation}\label{estimatessub2Besov}
    \Vert( \partial_{X}^{(\beta)}\Delta^\gamma \sigma_A(x,\xi) )\widehat{\mathcal{M}}(\xi)^{|\gamma|}   \Vert_{\textnormal{op}}\leqslant C_{\gamma,\beta},\,\,|\gamma|\leqslant \varkappa, \,|\beta|\leqslant  \left[\frac{n}{p}\right]+1.
\end{equation}
If $s\in\mathbb{R},$ then  $A$ extends to a bounded operator from $B^{s,\mathcal{L}}_{p,q}(G)$ into $B^{s,\mathcal{L}}_{p,q}(G)$ for  all $1<p<\infty,$ and $0<q<\infty.$
\end{corollary}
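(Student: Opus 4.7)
The plan is to deduce Corollary \ref{BesovL} from the Sobolev-boundedness statement Corollary \ref{SobL} by means of real interpolation. Since the hypothesis \eqref{estimatessub2Besov} coincides exactly with \eqref{estimatessub2} (after possibly absorbing the $\beta$-dependence of the constants), Corollary \ref{SobL} provides that, under this hypothesis, $A$ extends to a bounded linear operator on $L^{p,\mathcal{L}}_{s}(G)$ for every $s\in\mathbb{R}$ and every $1<p<\infty$. Crucially, the bound is uniform in the sense that $A$ is simultaneously bounded on \emph{all} subelliptic Sobolev spaces of a given integrability $p$.

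The next step is to invoke the real interpolation characterisation of the subelliptic Besov spaces: for any $s_0<s_1$, $\theta\in(0,1)$ with $s=(1-\theta)s_0+\theta s_1$, and for $0<q\leqslant \infty$, one has
\begin{equation*}
B^{s,\mathcal{L}}_{p,q}(G) = \bigl(L^{p,\mathcal{L}}_{s_0}(G),\, L^{p,\mathcal{L}}_{s_1}(G)\bigr)_{\theta,q},
\end{equation*}
with equivalence of norms. This identity is the subelliptic analogue of the classical result of Triebel for Besov spaces over $\mathbb{R}^n$ and is established for the sub-Laplacian $\mathcal{L}$ in \cite{CardonaRuzhansky2019I} (it is also listed explicitly in the appendix of the present paper).

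Given any $s\in\mathbb{R}$, $1<p<\infty$, and $0<q<\infty$, I would pick $s_0<s<s_1$ and $\theta\in(0,1)$ as above. By Corollary \ref{SobL}, $A$ is bounded on both $L^{p,\mathcal{L}}_{s_0}(G)$ and $L^{p,\mathcal{L}}_{s_1}(G)$. The standard functorial property of the real interpolation method (if a linear operator acts boundedly on the two endpoint spaces of a compatible couple, then it acts boundedly on every real interpolation space with a norm controlled by a geometric mean of the endpoint norms) yields that $A$ extends to a bounded operator on $(L^{p,\mathcal{L}}_{s_0}(G),L^{p,\mathcal{L}}_{s_1}(G))_{\theta,q}=B^{s,\mathcal{L}}_{p,q}(G)$, with
\begin{equation*}
\Vert A\Vert_{\mathscr{B}(B^{s,\mathcal{L}}_{p,q}(G))} \;\lesssim\; \Vert A\Vert^{1-\theta}_{\mathscr{B}(L^{p,\mathcal{L}}_{s_0}(G))}\,\Vert A\Vert^{\theta}_{\mathscr{B}(L^{p,\mathcal{L}}_{s_1}(G))}.
\end{equation*}

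There is essentially no substantive obstacle in the argument: the heavy lifting has already been done in Corollary \ref{SobL} (which in turn rested on Theorem \ref{SubellipticLpestimate} and the fact that the powers $\mathcal{M}_s=(1+\mathcal{L})^{s/2}$ belong to the subelliptic calculus and conjugate the Sobolev scale into itself). The only point to check is that the choice of $s_0,s_1$ is immaterial, which follows from the independence of the interpolation identity from the particular choice of endpoints. This completes the proof strategy.
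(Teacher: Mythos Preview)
Your proof is correct but follows a different interpolation route from the paper's. You fix $p$ and interpolate in the smoothness parameter $s$, invoking the identity $B^{s,\mathcal{L}}_{p,q}(G)=(L^{p,\mathcal{L}}_{s_0}(G),L^{p,\mathcal{L}}_{s_1}(G))_{\theta,q}$; the paper instead fixes $s$ and interpolates in the integrability parameter, choosing $1<p_0<p<p_1<\infty$ and using $(L^{p_0,\mathcal{L}}_{s}(G),L^{p_1,\mathcal{L}}_{s}(G))_{\theta,q}=B^{s,\mathcal{L}}_{p,q}(G)$ from \cite[Theorem~6.2]{CardonaRuzhansky2019I}. Both routes rest on Corollary~\ref{SobL} and the functoriality of the real interpolation method. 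Your approach has a minor practical advantage: since the number of spatial derivatives in the hypothesis, $[\tfrac{n}{p}]+1$, depends on $p$, keeping $p$ fixed sidesteps the need to argue that $p_0$ can be chosen close enough to $p$ so that $[\tfrac{n}{p_0}]=[\tfrac{n}{p}]$ (an implicit step in the paper's route). One small caveat: the $s$-direction interpolation identity you invoke is \emph{not} the one explicitly recorded in the appendix of this paper---only the $p$-direction identity appears there via \cite{CardonaRuzhansky2019I}---so you should cite the relevant theorem from \cite{CardonaRuzhansky2019I} directly rather than claiming it is listed in the present appendix.
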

 \begin{proof}
 We will present a proof based on the real interpolation analysis. Observe that  Corollary \ref{SobL}, shows that  $A$ extends to a bounded operator from $L^{p,\mathcal{L}}_{s}(G)$ into $L^{p,\mathcal{L}}_{s}(G)$ for every $1<p<\infty.$ In particular, if $1<p_0<p_1<\infty$ and $\theta\in (0,1)$ satisfies $1/p=\theta/p_0+(1-\theta)/p_1,$ then from the boundedness of the following bounded extensions of $A,$
 \[ 
     A:L^{p_0,\mathcal{L}}_{s}(G)\rightarrow L^{p_0,\mathcal{L}}_{s}(G),\,\,\,\,A:L^{p_1,\mathcal{L}}_{s}(G)\rightarrow L^{p_1,\mathcal{L}}_{s}(G)
 \] by the real interpolation, we deduce 
 \[ 
     A:(L^{p_0,\mathcal{L}}_{s}(G),L^{p_1,\mathcal{L}}_{s}(G))_{(\theta,q)}\rightarrow (L^{p_0,\mathcal{L}}_{s}(G),L^{p_1,\mathcal{L}}_{s}(G))_{(\theta,q)},\,\,\,\,0<q<\infty.
 \] Because $(L^{p_0,\mathcal{L}}_{r}(G),L^{p_1,\mathcal{L}}_{r}(G))_{(\theta,q)}=B^{r,\mathcal{L}}_{p,q}(G)$ for every $r\in \mathbb{R},$ (see \cite[Theorem 6.2]{CardonaRuzhansky2019I}) we conclude that  $A$ extends to a bounded operator from $B^{s,\mathcal{L}}_{p,q}(G)$ into $B^{s,\mathcal{L}}_{p,q}(G).$  The proof is complete.
 \end{proof}

\section{Boundedness of subelliptic pseudo-differential operators}\label{fs} 

 In this section we develop a Fefferman type $L^p$-estimate for subelliptic classes on compact Lie groups. In constrast with the previous subsection, we will use the calculus for subelliptic operators e.g. in the duality argument. First, we will prove an estimate of $L^\infty$-$BMO$ type and we will provide the $H^1$-$L^1$ estimate by the duality argument. Finally, by using the Fefferman-Stein interpolation theorem we will obtain the $L^p$-estimate for subelliptic operators. This gives an extension of the results in \cite{RuzhanskyDelgado2017} for the Laplacian, to the subelliptic setting (proving a parallel result to the main theorem in   \cite{RuzhanskyDelgadoCardona2019} in  the setting of graded Lie groups).
 
 \subsection{$L^\infty$-$BMO$ boundedness for subelliptic H\"ormander classes  }The boundedness of pseudo-differntial operators on $L^\infty,$ can be estimated in terms of the $L^1$-norm of the right convolution kernel as follows.
 \begin{remark}[$L^\infty(G)$-boundedness of pseudo-differential operators]\label{Linftyremark} Let $x\mapsto k_x,$ the right convolution kernel of $A,$ this means that for every smooth function $f$ on $G,$  \[ 
     Af(x)=(f\ast k_{x})(x),\,\,x\in G.
 \] If $\sigma(x,\xi)$ is  the matrix-valued symbol of $A,$ we have that $k_{x}=\mathscr{F}^{-1}_G{\sigma}(x,\cdot).$ If we assume the following condition
   \[ 
       \sup_{x\in G}\Vert\mathscr{F}_{G}^{-1}\sigma(x,\cdot) \Vert_{L^1(G)}=\sup_{x\in G}\Vert k_{x} \Vert_{L^1(G)}<\infty,
   \] then 
   \[ 
       |Af(x)|\leqslant \int\limits_{G}|k_x(y^{-1}x)||f(y)|dy\leqslant  \sup_{x\in G}\Vert k_{x} \Vert_{L^1(G)}\Vert f\Vert_{L^\infty(G)}.
   \]Consequently,
   $$\Vert A\Vert_{\mathscr{B}(L^\infty(G))}\leqslant \sup_{x\in G}\Vert k_{x} \Vert_{L^1(G)}. $$
   \end{remark}
   
   \begin{remark}
   For every smooth function $f\in C^\infty(0,\infty),$ we will define
   \begin{equation}\label{f(M)}
       f(\widehat{\mathcal{M}})(\xi)\equiv f(\widehat{\mathcal{M}}(\xi)):=\textnormal{diag}[f((1+\nu_{ii}^2)^{\frac{1}{2}})]_{1\leqslant i\leqslant d_\xi}\equiv(f((1+\nu_{ii}^2)^{\frac{1}{2}})\delta_{ij})_{1\leqslant i,j\leqslant d_\xi}
   \end{equation} for every $[\xi]\in \widehat{G}.$ According to the symbolic calculus developed in \cite{RuzhanskyWirth2014}, \eqref{f(M)} defines the matrix-symbol $\{\widehat{f(\mathcal{M})}(\xi)\}_{[\xi]\in \widehat{G}}$ of the operator $f(\mathcal{M})$ defined by the functional calculus. 
   \end{remark}
 \begin{lemma}\label{FundamentallemmaI}
Let $G$ be a compact Lie group and let us denote by $Q$ the Hausdorff
dimension of $G$ associated to the control distance associated to the sub-Laplacian $\mathcal{L}=\mathcal{L}_X,$ where  $X= \{X_{1},\cdots,X_k\} $ is a system of vector fields satisfying the H\"ormander condition of order $\kappa$. For  $0\leqslant \delta<\rho\leqslant 1,$ let $\sigma\in {S}^{-\frac{Q(1-\rho) }{2},\mathcal{L}}_{\rho,\delta}(G\times \widehat{G})$ be a symbol satisfying
\begin{equation}\label{spectralabsorbsion}
  \sigma(x,\xi)\psi_{j}(\widehat{\mathcal{M}}(\xi))=\sigma(x,\xi),
\end{equation} where $\psi_{j}(\lambda)=\psi_0(2^{-j}\lambda),$ for some test function $\psi_{0}\in C^\infty_0(0,\infty),$ and some fixed integer $j\in \mathbb{N}_0.$ Then $A=\textnormal{Op}(\sigma)$ extends to a bounded operator from $L^\infty(G)$ to $L^\infty(G),$ and for $\ell:=[\frac{Q}{2}]+1,$ we have
\[ 
    \Vert  A\Vert_{\mathscr{B}(L^\infty(G))}\lesssim \left(\sup_{(x,[\xi])\in G\times\widehat{G},|\alpha|\leqslant \ell} \Vert [\Delta^{\alpha}_\xi \sigma(x,\xi)]\widehat{\mathcal{M}}(\xi)^{\frac{Q(1-\rho)}{2}+\rho|\alpha|})  \Vert_{\textnormal{op}}     \right).
\]
with the positive constant $C$  independent of $j$ and $\sigma.$ 
\end{lemma}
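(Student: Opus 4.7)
By Remark \ref{Linftyremark}, it suffices to bound $\sup_{x\in G}\|k_x\|_{L^1(G)}$ by the stated seminorm, where $k_x=\mathscr{F}^{-1}\sigma(x,\cdot)$ is the right-convolution kernel of $A$. The spectral localisation $\sigma\,\psi_j(\widehat{\mathcal{M}})=\sigma$ dictates the natural subelliptic length scale $r_j:=2^{-j\rho}$, and I would split
\[
\|k_x\|_{L^1(G)}=\int_{|y|<r_j}|k_x(y)|\,dy+\int_{|y|\geq r_j}|k_x(y)|\,dy=:\mathrm{I}(x)+\mathrm{II}(x),
\]
estimating each piece by a different pairing of Cauchy--Schwarz with Plancherel on $\widehat{G}$ and with the subelliptic volume growth $|B(e_G,r)|\lesssim r^{Q}$.

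\textbf{Near piece.} Cauchy--Schwarz yields $\mathrm{I}(x)\leq r_j^{Q/2}\|k_x\|_{L^2(G)}$. Plancherel gives $\|k_x\|_{L^2}^2=\sum_{[\xi]}d_\xi\|\sigma(x,\xi)\|_{\mathrm{HS}}^2$, and the constraint $\sigma\psi_j(\widehat{\mathcal{M}})=\sigma$ forces the $i$th column of $\sigma(x,\xi)$ to vanish unless $i\in I_j(\xi):=\{i:(1+\nu_{ii}(\xi)^2)^{1/2}\sim 2^j\}$, so $\|\sigma(x,\xi)\|_{\mathrm{HS}}^2\leq |I_j(\xi)|\cdot\|\sigma(x,\xi)\|_{\mathrm{op}}^2$. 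Applying the right-hand symbol estimate $\|\sigma\,\widehat{\mathcal{M}}^{Q(1-\rho)/2}\|_{\mathrm{op}}\leq C$ to vectors already supported in $I_j(\xi)$ produces $\|\sigma(x,\xi)\|_{\mathrm{op}}\lesssim 2^{-jQ(1-\rho)/2}$, and the subelliptic Weyl count of Remark \ref{weyl} delivers $\sum_{[\xi]}d_\xi|I_j(\xi)|\lesssim 2^{jQ}$. Combining these yields $\|k_x\|_{L^2}^2\lesssim 2^{jQ\rho}$ and hence $\mathrm{I}(x)\lesssim r_j^{Q/2}\cdot 2^{jQ\rho/2}=1$, uniformly in $j$ and $x$.

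\textbf{Far piece.} Insert the weight $|y|^{\ell}$ with $\ell:=[Q/2]+1>Q/2$ and apply Cauchy--Schwarz to obtain
\[
\mathrm{II}(x)\lesssim r_j^{(Q-2\ell)/2}\left(\int_G|y|^{2\ell}|k_x(y)|^2\,dy\right)^{1/2},
\]
using the admissibility-based inequality $|y|^{2\ell}\lesssim\sum_{|\alpha|=\ell}|q^\alpha(y)|^2$ together with Plancherel to convert each $q^\alpha(y)k_x(y)$ into $\Delta_\xi^\alpha\sigma(x,\xi)$. Repeating the near-piece analysis for $\Delta_\xi^\alpha\sigma$ (of subelliptic order $-Q(1-\rho)/2-\rho|\alpha|$) gives $\int_G|y|^{2\ell}|k_x(y)|^2\,dy\lesssim 2^{j\rho(Q-2\ell)}$, and the two factors combine to $\mathrm{II}(x)\lesssim 2^{j\rho(2\ell-Q)/2}\cdot 2^{j\rho(Q-2\ell)/2}=1$, with an implicit constant controlled by $\sup_{|\alpha|\leq\ell}\|[\Delta_\xi^\alpha\sigma(x,\xi)]\widehat{\mathcal{M}}(\xi)^{Q(1-\rho)/2+\rho|\alpha|}\|_{\mathrm{op}}$, exactly as asserted.

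\textbf{Main obstacle.} The principal technical subtlety is making the spectral localisation usable for the differences $\Delta_\xi^\alpha\sigma$: right multiplication by $\psi_j(\widehat{\mathcal{M}})$ is not exactly preserved under $\Delta_\xi^\alpha$. I would expand via the Leibniz rule of Remark \ref{Leibnizrule} as
\[
\Delta_\xi^\alpha\sigma=\sum_{\alpha_1+\alpha_2=\alpha}c_{\alpha_1\alpha_2}\bigl(\Delta_\xi^{\alpha_1}\sigma\bigr)\bigl(\Delta_\xi^{\alpha_2}\psi_j(\widehat{\mathcal{M}})\bigr),
\]
controlling $\Delta_\xi^{\alpha_1}\sigma$ through the symbol estimate (Corollary \ref{corC}) and showing that $\Delta_\xi^{\alpha_2}\psi_j(\widehat{\mathcal{M}})$ remains essentially supported in the annulus $\{(1+\nu_{ii}^2)^{1/2}\sim 2^j\}$ with an additional gain of $2^{-j\rho|\alpha_2|}$, obtained from a dilation argument applied to the compactly supported test function $\psi_0$. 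This is also where the threshold $\ell=[Q/2]+1$ enters naturally, through the Hausdorff dimension $Q$ of the Carnot--Carath\'eodory structure.
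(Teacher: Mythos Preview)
Your approach is essentially the paper's proof: the same splitting scale $r_j=b=2^{-j\rho}$, the same Cauchy--Schwarz/Plancherel/Weyl-count treatment of the near piece, and the same $|y|^{\ell}$-weighted far-piece argument with $\ell=[Q/2]+1$. The ``main obstacle'' you isolate is real---the paper simply restricts the far-piece sum $\sum_{[\xi]}d_\xi\|\Delta_q\sigma(x,\xi)\|_{\mathrm{HS}}^2$ to the spectral annulus without comment, and your Leibniz expansion of $\Delta^\alpha\bigl(\sigma\psi_j(\widehat{\mathcal{M}})\bigr)$ is exactly the natural way to make that step rigorous.
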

\begin{proof}
Let us fix $j\in \mathbb{N}_0, $ and allow us to define $a:=1-\rho.$ Let $b=R^{a-1},$ $R=2^j.$ Let us consider  $\sigma\in {S}^{-\frac{Qa }{2},\mathcal{L}}_{\rho,\delta}(G\times \widehat{G}) =\mathscr{S}^{-\frac{Qa\kappa }{2},\mathcal{L}}_{\rho\kappa,\delta\kappa}(G).$   In view of Remark \ref{Linftyremark}, we only need to prove that  \[ 
    \sup_{x\in G}\Vert k_x \Vert_{L^1(G)}\leqslant C,
\] where $C$ is a positive constant independent of $R,$ and $k_x,$ as usually, is the right-convolution kernel of $A.$ Let us denote by $|\cdot|$ the seminorm induced by the Carnot Carath\'eodory distance on $G.$ First, let us split the $L^1(G)$-norm of $k_x$ as,
\[ 
    \int\limits_{G}|k_x(z)|dz= \int\limits_{|z|\leqslant b}|k_x(z)|dz+ \int\limits_{|z|> b}|k_x(z)|dz.
\] By using the H\"older inequality we estimate the first integral as follows,

\begin{align*}
    \int\limits_{|z|\leqslant b}|k_x(z)|dz&\leqslant \left(\int\limits_{|z|\leqslant b}dz\right)^{\frac{1}{2}}\left(\int\limits_{|z|\leqslant b}|k_x(z)|^2dz\right)^{\frac{1}{2}}\\
    &\asymp R^{\frac{Q(a-1)}{2}}\Vert k_{x}\Vert_{L^2(G)}
    \\
    &= 2^{-\frac{j(1-a)Q}{2}}\Vert k_{x}\Vert_{L^2(G)}.
\end{align*}The Plancherel theorem   and the definition of the right-convolution kernel: $k_{x}=\mathscr{F}_{G}^{-1}(\sigma(x,\cdot)),$ for every $x\in G,$ together with \eqref{spectralabsorbsion} imply

\begin{align*}
   \Vert k_{x}\Vert_{L^2(G)} &=\left( \sum\limits_{[\xi]\in \widehat{G}} d_\xi\Vert \sigma(x,\xi) \Vert_{\textnormal{HS}}^2 \right)^\frac{1}{2}\\&=\left( \sum\limits_{[\xi]\in \widehat{G}}d_\xi\Vert \sigma(x,\xi)\widehat{\mathcal{M}}(\xi)^{\frac{Qa}{2}}  \widehat{\mathcal{M}}(\xi)^{-\frac{Qa}{2}}\Vert_{\textnormal{HS}}^2 \right)^\frac{1}{2}\\
    &=\left( \sum\limits_{[\xi]\in \widehat{G}}d_\xi\Vert \sigma(x,\xi)\psi_{j}(\widehat{\mathcal{M}}(\xi))\widehat{\mathcal{M}}(\xi)^{ \frac{Qa}{2} }  \widehat{\mathcal{M}}(\xi)^{-\frac{Qa}{2} } \Vert_{\textnormal{HS}}^2 \right)^\frac{1}{2}\\
     &= \left( \sum\limits_{[\xi]\in \widehat{G}}d_\xi\Vert \sigma(x,\xi)\widehat{\mathcal{M}}(\xi)^{\frac{Qa}{2}}\psi_{j}(\widehat{\mathcal{M}}(\xi))\widehat{\mathcal{M}}(\xi)^{-\frac{Qa}{2}} \Vert_{\textnormal{HS}}^2 \right)^\frac{1}{2}\\
     &\leqslant \left((\sup_{[\xi]\in\widehat{G}}\Vert \sigma(x,\xi)\widehat{\mathcal{M}}(\xi)^{ \frac{Qa}{2} } \Vert_{\textnormal{op}}\right) \left( \sum\limits_{[\xi]\in \widehat{G}} d_\xi\Vert\psi_{j}(\widehat{\mathcal{M}}(\xi))\widehat{\mathcal{M}}(\xi)^{- \frac{Qa}{2}  } \Vert_{\textnormal{HS}}^2 \right)^\frac{1}{2}.
\end{align*}
Let us denote, for every $j\geqslant   1,$
$$
  I_j:=  \left( \sum\limits_{[\xi]\in \widehat{G}}d_\xi\Vert\psi_{j}(\widehat{\mathcal{M}}(\xi))\widehat{\mathcal{M}}(\xi)^{-\frac{Qa}{2}  } \Vert_{\textnormal{HS}}^2 \right)^\frac{1}{2}.
$$
So, we can estimate $I_j$ for every $j\geqslant   1.$ Indeed, the Weyl eigenvalue counting formula for the sub-Laplacian \eqref{weyl}, gives
\begin{align*}
    I_j^2&\leqslant  \sum\limits_{[\xi]\in \widehat{G}}d_\xi\Vert\psi_{j}(\widehat{\mathcal{M}}(\xi))\widehat{\mathcal{M}}(\xi)^{-\frac{Qa}{2}  } \Vert_{\textnormal{HS}}^2\\
    &\leqslant  \sum\limits_{[\xi]:2^{j-1}\leqslant (1+\nu_{kk}(\xi)^2)^{\frac{1}{2}}<2^{j+1}, \forall 1\leq k\leq d_\xi}d_\xi\sum_{i=1}^{d_\xi} (1+\nu_{ii}(\xi)^2)^{-\frac{Qa}{2}} \\
     &\lesssim  \sum\limits_{[\xi]:2^{j-1}\leqslant (1+\nu_{kk}(\xi)^2)^{\frac{1}{2}}<2^{j+1}, \forall 1\leq k\leq d_\xi } d_{\xi}^2 2^{-jQa} \lesssim 2^{jQ-jQa}.
\end{align*}
Consequently, 
\[ 
    I_j\lesssim 2^{\frac{jQ(1-a)}{2}}=R^{\frac{ Q(1-a)}{2}}.
\]
This analysis and the fact that $0<a\leqslant 1,$ allows us to deduce that
\begin{align*}
     \left(\int\limits_{|z|\leqslant b}dz\right)^{\frac{1}{2}}\left(\int\limits_{|z|\leqslant b}|k_x(z)|^2dz\right)^{\frac{1}{2}}\asymp 2^{\frac{jQ(a-1)}{2}}2^{\frac{ jQ(1-a)}{2}} =1,
\end{align*} and consequently we estimate
\[ 
    \int\limits_{|z|\leqslant b}|k_x(z)|dz=O(1).
\]
    On the other hand, if $\Delta_q$ is a difference operator associated to $q$ that vanishes at $e_G$ of order $\ell,$
      observe that
\begin{align*}
    \int\limits_{|z|> b}|k_x(z)|dz &\leqslant \left(\int\limits_{|z|> b}q(z)^{-2}dz\right)^{\frac{1}{2}}\left(\int\limits_{|z|> b}|q(z)k_x(z)|^2dz\right)^{\frac{1}{2}}\\
    &\leqslant \left(\int\limits_{|z|> b}|z|^{-2\ell}dz\right)^{\frac{1}{2}}\left(\int\limits_{|z|> b}|q(z)k_x(z)|^2dz\right)^{\frac{1}{2}}\\
    &\lesssim b^{\frac{Q-2\ell}{2}}\left(\sum\limits_{[\xi]\in \widehat{G}}d_\xi\Vert \Delta_q\sigma(x,\xi)\Vert^2_{\textnormal{HS}}\right)^{\frac{1}{2}}=b^{\frac{Q}{2}-\ell}\Vert \Delta_q \sigma(x,\xi)\Vert_{L^2(\widehat{G})}\\
    &=2^{j(a-1)(\frac{Q}{2}-\ell)}\Vert \Delta_q \sigma(x,\xi)\Vert_{L^2(\widehat{G})}.  
\end{align*}
We observe that the condition $\ell>\frac{Q}{2}$ is a necessary and suficient condition in order that $\int\limits_{|z|> b}|z|^{-2\ell}dz<\infty,$ which can be deduced from an argument using polar coordinates in order to estimate 
\[ 
     \left(\int\limits_{|z|> b}|z|^{-2\ell}dz\right)^{\frac{1}{2}}\sim b^{\frac{Q-2\ell}{2}}.
\]
In order to control the $L^2(\widehat{G})$-norm of $\Delta_q \sigma(x,\xi)$ we proceed  as follows, \begin{align*}
   & \Vert \Delta_q \sigma(x,\xi)\Vert_{L^2(\widehat{G})}^2 &\\
   &\leqslant \sum_{  [\xi]:2^{j-1}\leqslant (1+\nu_{kk}(\xi)^2)^{\frac{1}{2}}<2^{j+1}, \forall 1\leq k\leq d_\xi } d_\xi\Vert  \Delta_q\sigma(x,\xi)\widehat{\mathcal{M}}(\xi)^{\frac{Qa}{2} +(1-a)\ell} \widehat{\mathcal{M}}(\xi)^{-\frac{Qa}{2} -(1-a)\ell}\Vert_{\textnormal{HS}}^2\\
    &\leqslant \sum_{  [\xi]:2^{j-1}\leqslant (1+\nu_{kk}(\xi)^2)^{\frac{1}{2}}<2^{j+1}, \forall 1\leq k\leq d_\xi } d_\xi\Vert  \Delta_q\sigma(x,\xi)\widehat{\mathcal{M}}(\xi)^{\frac{Qa}{2} +(1-a)\ell}\Vert^2_{\textnormal{op}}\\
    &\times \Vert \widehat{\mathcal{M}}(\xi)^{-\frac{Qa}{2} -(1-a)\ell}\Vert_{\textnormal{HS}}^2\\ 
    &\lesssim \sum_{  [\xi]:2^{j-1}\leqslant (1+\nu_{kk}(\xi)^2)^{\frac{1}{2}}<2^{j+1}, \forall 1\leq k\leq d_\xi } d_\xi\sum_{i=1}^{d_\xi}(1+\nu_{ii}(\xi)^2)^{-(\frac{Qa}{2} +(1-a)\ell)}\\ 
     &\lesssim \sum_{  [\xi]:2^{j-1}\leqslant (1+\nu_{kk}(\xi)^2)^{\frac{1}{2}}<2^{j+1}, \forall 1\leq k\leq d_\xi } d_\xi^2 2^{-2j(\frac{Qa}{2} +(1-a)\ell)}\\
     &\lesssim 2^{-2j(\frac{Qa}{2} +(1-a)\ell)}\times 2^{jQ}=2^{j(1-a)(Q-2\ell)},
\end{align*} where we have used again the Weyl eigenvalue counting formula for the sub-Laplacian (Remark \ref{weyl}). So, we have
\[ 
    \Vert \Delta_q \sigma(x,\xi)\Vert_{L^2(\widehat{G})}\lesssim 2^{j(1-a)(\frac{Q}{2}-\ell)}.
\] 
The preceding  analysis allows us to conclude that for $\ell:=[\frac{Q}{2}]+1>\frac{Q}{2},$
\begin{align*}
     \int\limits_{|z|> b}|k_x(z)|dz &\lesssim  2^{j(a-1)(\frac{Q}{2}-\ell)}\Vert \Delta_q \sigma(x,\xi)\Vert_{L^2(\widehat{G})}\lesssim  2^{j(a-1)(\frac{Q}{2}-\ell)}2^{j(1-a)(\frac{Q}{2}-\ell)}=1.
\end{align*}
Thus, the proof is complete.
\end{proof}
 
 \begin{lemma}\label{FundamentallemmaII}
Let $G$ be a compact Lie group and let us denote by $Q$ the Hausdorff
dimension of $G$ associated to the control distance associated to the sub-Laplacian $\mathcal{L}=\mathcal{L}_X,$ where  $X= \{X_{1},\cdots,X_k\} $ is a system of vector fields satisfying the H\"ormander condition of order $\kappa$.  For  $0\leqslant \delta<\rho\leqslant 1,$ and $\varepsilon\in \mathbb{R},$  let us consider a continuous linear operator $A:C^\infty(G)\rightarrow\mathscr{D}'(G)$ with symbol  $\sigma\in {S}^{\varepsilon,\mathcal{L}}_{\rho,\delta}(G\times \widehat{G})$. Let $\eta$ be a smooth function supported in $\{\lambda:R\leqslant\lambda\leqslant 3R\},$ for some $R>1.$ Then for all $\alpha\in\mathbb{N}_0^n$ with $|\alpha|\leqslant \ell,$ there exists $C>0,$ such that for every $\omega\geq 0,$ 
\[ 
   \sup_{(x,[\xi])\in G\times \widehat{G}}  \Vert \widehat{\mathcal{M}}(\xi)^{ -\varepsilon+\rho|\alpha| }\Delta^{\alpha}_\xi [\sigma(x,\xi)\eta(s\langle\xi \rangle)]     \Vert_{\textnormal{op}}\leqslant C     \Vert \sigma\Vert_{\ell,S^{\varepsilon,\mathcal{L}}_{\rho,\delta}}(s\langle\xi\rangle)^{-\omega},
\]
with the positive constant $C$  independent of $s>0,$ $R$ and $\sigma.$ 
\end{lemma}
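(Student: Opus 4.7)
My plan is based on the Leibniz rule of Remark \ref{Leibnizrule}, which expands
\begin{equation*}
\Delta_\xi^\alpha[\sigma(x,\xi)\,\eta(s\langle\xi\rangle)] = \sum_{\substack{|\gamma_1|,|\gamma_2|\leq|\alpha|\\ |\gamma_1|+|\gamma_2|\geq|\alpha|}} C_{\gamma_1,\gamma_2}\,[\Delta_\xi^{\gamma_1}\sigma(x,\xi)]\,[\Delta_\xi^{\gamma_2}\eta(s\langle\xi\rangle)],
\end{equation*}
reducing the problem to estimating each product term. The symbol factor is controlled directly by the defining seminorms: Corollary \ref{corC} yields
\begin{equation*}
\Vert\widehat{\mathcal{M}}(\xi)^{-\varepsilon+\rho|\gamma_1|}\Delta_\xi^{\gamma_1}\sigma(x,\xi)\Vert_{\textnormal{op}} \lesssim \Vert\sigma\Vert_{\ell,S^{\varepsilon,\mathcal{L}}_{\rho,\delta}}
\end{equation*}
for $|\gamma_1|\leq\ell$, uniformly in $(x,[\xi])$.

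The heart of the matter is the pointwise scalar-multiplier bound
\begin{equation*}
\Vert\widehat{\mathcal{M}}(\xi)^{\rho|\gamma_2|}\Delta_\xi^{\gamma_2}\eta(s\langle\xi\rangle)\Vert_{\textnormal{op}} \leq C_{\omega,\gamma_2}\,(s\langle\xi\rangle)^{-\omega},
\end{equation*}
uniformly in $s>0$ and $R>1$. The strategy is to peel off the target decay by writing $\eta(\lambda)=\lambda^{-\omega}\eta_{\omega}(\lambda)$ with $\eta_\omega(\lambda):=\lambda^\omega\eta(\lambda)$ still smooth and compactly supported in $[R,3R]$; this extracts the factor $(s\langle\xi\rangle)^{-\omega}$ and reduces matters to bounding the subelliptic differences of $\eta_\omega(s\langle\cdot\rangle)$. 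Since $\eta_\omega(s\langle\cdot\rangle)$ is a scalar symbol that is compactly supported in $\widehat{G}$, it sits in the elliptic class $\mathscr{S}^{-N}_{1,0}(\widehat{G})$ for any $N$; the embedding $\mathscr{S}^{-N}_{1,0}(\widehat{G})\hookrightarrow S^{-N,\mathcal{L}}_{1,0}(\widehat{G})$ from Proposition \ref{lemitapequeño} then transfers its differences into subelliptic form with the required factor $\widehat{\mathcal{M}}(\xi)^{\rho|\gamma_2|}$ absorbed. The seminorms of $\eta_\omega(s\langle\cdot\rangle)$ in these classes are uniform in $R$ and $s$ by a dyadic rescaling argument analogous to Lemma \ref{LemmaFischer}: derivatives of $\eta_\omega$ on $[R,3R]$ scale like $R^{\omega-k}$, and the target decay $R^{-\omega}$ on the shell exactly cancels the $R^{\omega}$ growth, leaving the clean scale $R^{-k}$ of a Littlewood--Paley block.

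To assemble the two factor estimates into the full pointwise inequality, one uses that $\eta(s\langle\xi\rangle)$ (and each of its differences) is a scalar multiple of the identity on every representation space, so it commutes with $\widehat{\mathcal{M}}(\xi)$; the matrix-valued Corach--Porta--Recht inequality, exactly as deployed in the proof of Theorem \ref{cor}, then lets one reorder $\widehat{\mathcal{M}}(\xi)$-factors across the symbol piece without loss, producing the desired $\widehat{\mathcal{M}}(\xi)^{-\varepsilon+\rho|\alpha|}$ in front. The principal obstacle is the scalar multiplier estimate of the previous paragraph: verifying that the $R$-dependence of the seminorms of $\eta_\omega$ on the dyadic shell $[R,3R]$ is exactly compensated by the $(s\langle\xi\rangle)^{-\omega}$ decay we extract, and that this compensation survives the passage from the elliptic to the subelliptic framework, where the additional anisotropy encoded by the H\"ormander step $\kappa$ and the non-commutativity of $\widehat{\mathcal{M}}(\xi)$ with $\sigma$ must be carefully tracked.
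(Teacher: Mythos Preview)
Your overall plan---Leibniz rule, subelliptic seminorm on the $\sigma$-factor, decay estimate on the $\eta$-factor, reassembly via Corollary~\ref{corC}---is the same structure as the paper's, and you correctly identify that the residual weight $\widehat{\mathcal M}(\xi)^{\rho(|\alpha|-|\gamma_1|)}$ must land somewhere.

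Two details in your handling of the $\eta$-piece need repair. First, writing $\eta(\lambda)=\lambda^{-\omega}\eta_\omega(\lambda)$ does not ``extract the factor $(s\langle\xi\rangle)^{-\omega}$'' from $\Delta_\xi^{\gamma_2}[\eta(s\langle\xi\rangle)]$: the scalar $(s\langle\xi\rangle)^{-\omega}$ is $\xi$-dependent and does not commute with $\Delta_\xi^{\gamma_2}$, so a further Leibniz expansion of the product $(s\langle\xi\rangle)^{-\omega}\cdot\eta_\omega(s\langle\xi\rangle)$ is unavoidable, and your one-line cancellation (``$R^{-\omega}$ on the shell cancels the $R^\omega$ growth'') only addresses $\xi$ with $s\langle\xi\rangle\sim R$, not all $\xi$. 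Second, the assertion that $\Delta_\xi^{\gamma_2}[\eta(s\langle\xi\rangle)I_{d_\xi}]$ is again a scalar multiple of the identity is false on a non-abelian $G$: the difference operator $\Delta_q$ multiplies the group-side kernel by the generically non-central function $q$, so the result is no longer a central distribution and its Fourier transform is not a multiple of $I_{d_\xi}$. This second slip is inessential---Corollary~\ref{corC} alone lets you insert and move $\widehat{\mathcal M}$-weights without any commutation hypothesis---but the first is a genuine gap in the plan.

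The paper bypasses all of this by quoting, for the $\eta$-piece, the purely \emph{elliptic} estimate
\begin{equation*}
\Vert\Delta_\xi^{\alpha_2}[\eta(s\langle\xi\rangle)I_{d_\xi}]\Vert_{\textnormal{op}}\leqslant C_\omega\,(s\langle\xi\rangle)^{-\omega}
\end{equation*}
from Lemma~6.8 of \cite{Fischer2015} (cf.\ Lemma~\ref{LemmaFischer} here). This is a standard fact about compactly supported spectral cut-offs of the Laplacian and requires neither a factorisation of $\eta$ nor the subelliptic embedding of Proposition~\ref{lemitapequeño}. Your route through Proposition~\ref{lemitapequeño} would also succeed once the peel-off is replaced by a legitimate elliptic-class estimate for $\eta(s\langle\cdot\rangle)$ (the point being that the $\mathscr S^{0}_{1,0}$-seminorms of $\eta(s\langle\cdot\rangle)$ are independent of $s$, since $\lambda^{k}\partial_\lambda^{k}[\eta(s\lambda)]=(s\lambda)^{k}\eta^{(k)}(s\lambda)$), but it is substantially heavier than the paper's one-line citation.
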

\begin{proof} Let us  consider $\sigma\in {S}^{\varepsilon,\mathcal{L}}_{\rho,\delta}(G\times \widehat{G})=\mathscr{S}^{\varepsilon\kappa,\mathcal{L}}_{\kappa\rho,\kappa\delta}(G).$ The Leibniz rule allows us to write
\[ 
   \Delta^{\alpha}_\xi [\sigma(x,\xi)\eta(s\langle\xi \rangle)I_{d_\xi}]=\sum_{\alpha_1+\alpha_2=\alpha}C_{\alpha_1,\alpha_2}[ \Delta_{\alpha_1}\sigma](x,\xi)[\Delta_{\alpha_2}\eta(s\langle\xi\rangle)I_{d_\xi}],
\] because $\eta$ has support in $\{\lambda: R\leqslant \lambda \leqslant 3R\}$, we can estimate (see Lemma 6.8 of \cite{Fischer2015}),
\begin{equation}\label{516}
    \Vert \Delta_{\alpha_2}\eta(s\langle\xi\rangle)I_{d_\xi}\Vert_{\textnormal{op}}\leqslant C_\omega(s\langle\xi\rangle)^{-\omega},
\end{equation}for every $\omega\in \mathbb{R}_{0}^+.$
So, we deduce
\begin{align*}
     & \Vert   \Delta^{\alpha}_\xi[\sigma(x,\xi)\eta(s\langle\xi \rangle)I_{d_\xi}]\Vert_{\textnormal{op}} \\
     &\leqslant 
       \sum_{\alpha_1+\alpha_2=\alpha}C_{\alpha_1,\alpha_2} \Vert \widehat{\mathcal{M}}(\xi)^{ -\varepsilon+\rho|\alpha_1| }[ \Delta_{\alpha_1}\sigma](x,\xi)] \Vert_{\textnormal{op}}\Vert\Delta_{\alpha_2}[  \eta(s\langle\xi\rangle)I_{d_\xi}]\Vert_{\textnormal{op}}\\
       &\lesssim 
       \sum_{\alpha_1+\alpha_2=\alpha}C_{\alpha_1,\alpha_2} \Vert \widehat{\mathcal{M}}(\xi)^{ -\varepsilon+\rho|\alpha_1| }[ \Delta_{\alpha_1}\sigma](x,\xi)] \Vert_{\textnormal{op}} (s\langle\xi\rangle)^{-\omega}\\
       &\lesssim   \Vert \sigma\Vert_{\ell,S^{\varepsilon,\mathcal{L}}_{\rho,\delta}}(s\langle\xi\rangle)^{-\omega},
\end{align*}proving Lemma \ref{FundamentallemmaII}.
\end{proof}
\begin{remark}\label{Remark}
Let us mention that for $s\in (0,1),$ (instead of the general case $s>0$ considered in the previous lemma), from \cite[page 3434]{Fischer2015}, \eqref{516} can be replaced by the following estimate,
\begin{equation}\label{516'}
    \Vert \Delta_{\alpha_2}\eta(s\langle\xi\rangle)I_{d_\xi}\Vert_{\textnormal{op}}\leqslant C_{\omega,\eta} s^{\omega},
\end{equation} for all  $\omega\in \mathbb{R}.$ So, from the proof of Lemma \ref{FundamentallemmaII}, we deduce the following estimate which is interesting by itself, 
\begin{equation}\label{Fischersub}
   \sup_{(x,[\xi])\in G\times \widehat{G}}  \Vert \widehat{\mathcal{M}}(\xi)^{ -\varepsilon+\rho|\alpha| }\Delta^{\alpha}_\xi [\sigma(x,\xi)\eta(s\langle\xi \rangle)]     \Vert_{\textnormal{op}}\lesssim_{\omega,\eta} C     \Vert \sigma\Vert_{\ell,S^{\varepsilon,\mathcal{L}}_{\rho,\delta}}s^{\omega},\,\,0<s<1.
\end{equation} This estimate is a subelliptic analogy of Lemma \ref{LemmaFischer}.
\end{remark}
 Now, we will present one of our fundamental theorems in this memoir. The subelliptic BMO space and the subelliptic Hardy space $\textnormal{BMO}^{\mathcal{L}}$ and $\textnormal{H}^{1,\mathcal{L}}$ were defined in Subsection \ref{BMOH1}.
 \begin{theorem}\label{parta}
Let $G$ be a compact Lie group and let us denote by $Q$ the Hausdorff
dimension of $G$ associated to the control distance associated to the sub-Laplacian $\mathcal{L}=\mathcal{L}_X,$ where  $X= \{X_{1},\cdots,X_k\} $ is a system of vector fields satisfying the H\"ormander condition of order $\kappa$.  For  $0\leqslant \delta<\rho\leqslant    1,$   let us consider a continuous linear operator $A:C^\infty(G)\rightarrow\mathscr{D}'(G)$ with symbol  $\sigma\in {S}^{-\frac{Q(1-\rho) }{2},\mathcal{L}}_{\rho,\delta}(G\times \widehat{G})$. Then $A$ extends to a bounded operator from $L^\infty(G)$ to $\textnormal{BMO}^{\mathcal{L}}(G),$ and from $\textnormal{H}^{1,\mathcal{L}}(G)$ to $L^1(G).$ Moreover,
\begin{equation}\label{eq1}
    \Vert  A\Vert_{\mathscr{B}(L^\infty(G),\textnormal{BMO}^{\mathcal{L}}(G))}\leqslant C \max\{\Vert \sigma\Vert_{\ell, {S}^{-\frac{Q(1-\rho) }{2},\mathcal{L}}_{\rho,\delta} },\Vert  \sigma(\cdot,\cdot) \widehat{\mathcal{M}}(\xi)^{{ Q(1-\rho) } }\Vert_{\ell, {S}^{0,\mathcal{L}}_{\rho,\delta} }\}
\end{equation} and 
\begin{equation}\label{eq2}
      \,\,\,\,\,\,\,\,\,\,\Vert  A\Vert_{\mathscr{B}(\textnormal{H}^{1,\mathcal{L}}(G),L^1(G))}\leqslant C \max\{\Vert \sigma^*\Vert_{\ell, {S}^{-\frac{Q(1-\rho) }{2},\mathcal{L}}_{\rho,\delta} }, \Vert \sigma^*(\cdot,\cdot) \widehat{\mathcal{M}}(\xi)^{{ Q(1-\rho) } }\Vert_{\ell, {S}^{0,\mathcal{L}}_{\rho,\delta} }\}
\end{equation}
for some integer $\ell,$ where $\sigma^{*}$ denotes the matrix-valued symbol of the formal adjoint $A^*$ of $A.$ 
\end{theorem}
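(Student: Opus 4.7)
The strategy is to reduce the $L^\infty \to \textnormal{BMO}^{\mathcal{L}}$ bound to the uniform estimate of Lemma \ref{FundamentallemmaI} via a Littlewood--Paley decomposition in the spectrum of $\mathcal{M}$, and then to obtain the $\textnormal{H}^{1,\mathcal{L}} \to L^{1}$ bound by duality (using \eqref{BMOnormduality}--\eqref{BMOnormduality'} and the symbolic calculus for adjoints from Theorem \ref{Adjoint}). Fix a smooth dyadic partition $1 = \psi_{0}(\lambda) + \sum_{j\geq 1}\psi_{j}(\lambda)$ of $(0,\infty)$ with $\psi_{j}(\lambda)=\psi(2^{-j}\lambda)$ supported in $\{R\leqslant \lambda\leqslant 3R\}$, $R=2^{j-1}$, and define
\[
\sigma_{j}(x,\xi):=\sigma(x,\xi)\,\psi_{j}(\widehat{\mathcal{M}}(\xi)),\qquad A_{j}:=\textnormal{Op}(\sigma_{j}),\quad A=\sum_{j\geq 0}A_{j}.
\]
By Lemma \ref{FundamentallemmaII} (applied with $\varepsilon=-Q(1-\rho)/2$ and with the trivial choice of $\eta$ tailored to the support of $\psi_{j}$), each $\sigma_{j}$ belongs to ${S}^{-Q(1-\rho)/2,\mathcal{L}}_{\rho,\delta}$ with seminorm bounds uniform in $j$, and carries the spectral localisation required by Lemma \ref{FundamentallemmaI}. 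Consequently $\sup_{j} \Vert A_{j}\Vert_{\mathscr{B}(L^{\infty}(G))}\lesssim \Vert\sigma\Vert_{\ell,{S}^{-Q(1-\rho)/2,\mathcal{L}}_{\rho,\delta}}$ for some $\ell\in \mathbb{N}$.

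Next, for a ball $B=B(x_{0},r)\subset G$ in the Carnot--Carath\'eodory metric, I split $f=f_{1}+f_{2}$ with $f_{1}:=f\cdot \mathbf{1}_{2B}$, and control
\[
\frac{1}{|B|}\int_{B}|Af(x)-(Af_{2})(x_{0})|\,dx\leqslant \frac{1}{|B|}\int_{B}|Af_{1}(x)|\,dx + \frac{1}{|B|}\int_{B}|Af_{2}(x)-Af_{2}(x_{0})|\,dx.
\]
For the local term, by Cauchy--Schwarz and the subelliptic Calder\'on--Vaillancourt Theorem \ref{CVT} (which is applicable since $\sigma\in {S}^{0,\mathcal{L}}_{\rho,\delta}\supset {S}^{-Q(1-\rho)/2,\mathcal{L}}_{\rho,\delta}$ when $\rho\leqslant 1$),
\[
\frac{1}{|B|}\int_{B}|Af_{1}|\,dx\leqslant |B|^{-1/2}\Vert A f_{1}\Vert_{L^{2}}\lesssim |B|^{-1/2}\Vert f_{1}\Vert_{L^{2}}\lesssim \Vert f\Vert_{L^{\infty}}.
\]
For the non-local term I use the right-convolution kernel representation $Af_{2}(x)=\int f_{2}(y)k_{x}(y^{-1}x)\,dy$ together with the kernel estimates of Proposition \ref{CalderonZygmund}, applied to the symbol $\sigma\cdot\widehat{\mathcal{M}}^{Q(1-\rho)}$ of order $+Q(1-\rho)/2$ (whose seminorms appear in \eqref{eq1}) to extract decay of $k_{x}-k_{x_{0}}$ away from the diagonal; combining with the lower bound $d_s(x_{0},y)\geqslant 2r$ on $G\setminus 2B$ yields integrability and the bound $\lesssim \Vert f\Vert_{L^{\infty}}$. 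Dyadic summation of the pieces $A_{j}$ using orthogonality in $j$ (finite overlap of the spectral supports, handled by the almost-orthogonality of the $\psi_{j}$) gives
\[
\Vert Af\Vert_{\textnormal{BMO}^{\mathcal{L}}(G)}\lesssim \max\{\Vert \sigma\Vert_{\ell,{S}^{-Q(1-\rho)/2,\mathcal{L}}_{\rho,\delta}},\Vert \sigma\cdot \widehat{\mathcal{M}}^{Q(1-\rho)}\Vert_{\ell,{S}^{0,\mathcal{L}}_{\rho,\delta}}\}\Vert f\Vert_{L^{\infty}},
\]
which proves \eqref{eq1}.

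For \eqref{eq2}, I invoke the adjoint calculus Theorem \ref{Adjoint}: the formal adjoint $A^{*}$ has a symbol $\sigma^{*}\in {S}^{-Q(1-\rho)/2,\mathcal{L}}_{\rho,\delta}(G\times\widehat{G})$ whose seminorms are controlled by those of $\sigma$ (with the asymptotic expansion \eqref{asyexp} giving explicit control on the remainder). Applying \eqref{eq1} to $A^{*}$ yields a bounded extension $A^{*}:L^{\infty}(G)\to \textnormal{BMO}^{\mathcal{L}}(G)$; by the duality pairing \eqref{BMOnormduality}--\eqref{BMOnormduality'} between $\textnormal{H}^{1,\mathcal{L}}(G)$ and $\textnormal{BMO}^{\mathcal{L}}(G)$, this is equivalent to the bounded extension $A:\textnormal{H}^{1,\mathcal{L}}(G)\to L^{1}(G)$ with the norm controlled by the seminorms of $\sigma^{*}$, i.e.\ exactly the estimate \eqref{eq2}.

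The principal obstacle will be the non-local kernel estimate for $Af_{2}(x)-Af_{2}(x_{0})$, where one must genuinely use that the subelliptic order $-Q(1-\rho)/2$ combined with Proposition \ref{CalderonZygmund} produces the borderline Calder\'on--Zygmund decay $|y|^{-Q-\varepsilon}$ (for some cancellation exponent $\varepsilon>0$ obtained after exploiting $\int_{G}|y|^{-\alpha}dy$ in the CC metric), uniformly in the dyadic index $j$. This uniform summability across dyadic pieces, together with the matching of the seminorm $\Vert \sigma\cdot\widehat{\mathcal{M}}^{Q(1-\rho)}\Vert_{\ell,{S}^{0,\mathcal{L}}_{\rho,\delta}}$ appearing in \eqref{eq1}, is where the Hausdorff dimension $Q$ (rather than the topological $\dim G$) enters crucially and makes the argument work in the subelliptic geometry.
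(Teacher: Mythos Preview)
Your proposal has a genuine gap in the non-local estimate, and it comes from a miscalculation of the kernel singularity. You assert that the order $-Q(1-\rho)/2$ combined with Proposition \ref{CalderonZygmund} yields ``borderline Calder\'on--Zygmund decay $|y|^{-Q-\varepsilon}$''. But with $m=-Q(1-\rho)/2$ the proposition gives $|k_x(y)|\lesssim |y|^{-(Q+m)/\rho}=|y|^{-Q(1+\rho)/(2\rho)}$, and for every $\rho<1$ one has $Q(1+\rho)/(2\rho)>Q$. The kernel is therefore \emph{strictly more singular} than a Calder\'on--Zygmund kernel, the H\"ormander-type regularity condition fails, and the classical near/far splitting $f=f\mathbf{1}_{2B}+f\mathbf{1}_{(2B)^c}$ cannot control $|Af_2(x)-Af_2(x_0)|$ uniformly over balls $B$. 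The dyadic pieces $A_j$ are individually bounded on $L^\infty$ by Lemma \ref{FundamentallemmaI}, but there is no summation mechanism (and no ``almost-orthogonality'' for $L^\infty\to\textnormal{BMO}$) that assembles them into a BMO bound without tying the frequency cutoff to the radius $r$.

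The paper supplies precisely this missing link. Instead of splitting $f$, it splits the \emph{symbol} in an $r$-dependent way: $\sigma=\sigma^0+\sigma^1$ with $\sigma^0(x,\xi)=\sigma(x,\xi)\gamma(r\langle\xi\rangle)$, so $\sigma^0$ lives at frequencies $\langle\xi\rangle\lesssim r^{-1}$ and $\sigma^1$ at the complement. For $A^0=\textnormal{Op}(\sigma^0)$ the oscillation over $B$ is handled by the mean value theorem followed by a dyadic decomposition of the symbol of $X_kA^0$ (Lemmas \ref{FundamentallemmaI}--\ref{FundamentallemmaII}); the factor $r$ from the mean value inequality cancels exactly against the $r^{-1}$ gain from the frequency cutoff. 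For $A^1$ one multiplies by a smooth bump $\phi$ supported in $2B$: the piece $A^1(\phi f)$ is estimated via the $L^2$ theory (note that the $L^2$-boundedness for the full range $0\leqslant\delta<\rho\leqslant 1$ is actually established \emph{inside} this proof by the H\"ormander iteration $p^*p$ and the square-root functional calculus, so your direct appeal to Theorem \ref{CVT} is mildly circular when $\delta\geqslant 1/\kappa$), and the commutator $[M_\phi,A^1]$ has a symbol of strictly better order and falls to Lemma \ref{FundamentallemmaI}. This ball-radius-to-frequency coupling is the Fefferman device, and it is exactly why the critical order $-Q(1-\rho)/2$ is the right one. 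Your duality argument for \eqref{eq2} via Theorem \ref{Adjoint} is correct and matches the paper.
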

\begin{proof}
Let us consider  $0\leqslant a:=(1-\rho)\leqslant 1,$  and let $f\in L^\infty(G).$  Let us fix a ball $B(x_0,r)$ where $x_0\in G,$ and $r>0.$ We will prove that there exists a constant $C>0,$ independent of $f$ and $r,$ such that
\[ 
    \frac{1}{|B(x_0,r)|}\int\limits_{B(x_0,r)}|Af(x)-(Af)_{B(x_0,r)}|dx\leqslant C\Vert \sigma\Vert_{\ell,{S}^{-\frac{Q(1-\rho)}{2}}_{\rho,\delta}}\Vert f\Vert_{L^\infty(G)}
\]
 where 
\[ 
    (Af)_{B(x_0,r)}:=\frac{1}{|B(x_0,r)|}\int\limits_{B(x_0,r)}Af(x)dx.
\]  
 We will prove the existence of two symbols $\sigma^0$ and $\sigma^1$ such that (see \eqref{split})
\[ 
    \sigma(x,\xi)=\sigma^0(x,\xi)+\sigma^1(x,\xi),\,\,\,\sigma^{j}(x,\xi)\in \mathbb{C}^{d_\xi\times d_\xi},\,j=0,1,
\] in a such way that both, $\sigma^0(x,\xi)$  and  $\sigma^1(x,\xi),$ satisfy the estimate
\begin{equation}\label{symbolestimate}
    \Vert \sigma^{j}\Vert_{\ell,\mathscr{S}^{-\frac{Q(1-\rho)}{2}}_{\rho\kappa,\delta\kappa}}\leqslant C_{j,\ell}\Vert \sigma\Vert_{\ell,{S}^{-\frac{Q(1-\rho)}{2}}_{\rho,\delta}},\,\,j=0,1,\,\ell\geqslant   1. 
\end{equation} Now, if $A^j:=\textnormal{Op}(\sigma^j),$ for $j=0,1,$ then $A=A^0+A^1$ on $C^\infty(G),$ and we only need to prove that\begin{equation}\label{splitestimate}
    \frac{1}{|B(x_0,r)|}\int\limits_{B(x_0,r)}|A^jf(x)-(A^jf)_{B(x_0,r)}|dx\leqslant C\Vert \sigma^j\Vert_{\ell,{S}^{-\frac{Q(1-\rho)}{2},\mathcal{L}}_{\rho,\delta}}\Vert f\Vert_{L^\infty(G)}.
\end{equation} Then, if we prove \eqref{symbolestimate}, we can deduce \eqref{eq1} and by the duality argument we could obtain  \eqref{eq2} proving Theorem \ref{parta}. Now, we proceed to prove the existence of $\sigma^0$ and $\sigma^1$ satisfying the requested properties.
  Let us define
 \begin{equation}\label{split}
     \sigma^0(x,\xi)=\sigma(x,\xi)\tilde{\gamma}(zz\xi),\textnormal{    }\sigma^{1}(x,\xi)=\sigma(x,\xi)-\sigma^0(x,\xi),\,(x,\xi)\in G\times \widehat{G},
 \end{equation}where $\tilde{\gamma}(\xi):=\gamma(r\langle\xi\rangle ),$ and $\gamma\in C^\infty_0(\mathbb{R},\mathbb{R}^+_0),$ is a smooth function supported in $\{t:|t|\leqslant 1\},$ satisfying $\gamma(t)=1,$ for $|t|\leqslant \frac{1}{2}.$ To estimate the integral
\[ 
    I_0:=\frac{1}{|B(x_0,r)|}\int\limits_{B(x_0,r)}|A^0f(x)-(A^0f)_{B(x_0,r)}|dx,
\]we will use the Mean value Theorem. If $d_G(x,y)$ is the geodesic distance between $x$ and $y,$  observe that
\begin{align}\label{r}
    |A^0f(x)-A^0f(y)| &\leqslant C_{0}\sum_{k=1}^{n}\Vert X_kA^0f\Vert_{L^\infty(G)}d_G(x,y)
    &\leqslant r\sum_{k=1}^{\dim(G)}\Vert X_kA^0f\Vert_{L^\infty(G)},
\end{align} where $\{X_k\}_{k=1}^n$ is a basis for the Lie algebra $\mathfrak{g}$ of $G.$
In order to estimate the $L^\infty$-norm of $X_kA^0f,$ first let us observe that the matrix-valued symbol of $X_kA^0=\textnormal{Op}(\sigma'_k)$ is given by
\begin{equation}\label{symbolderivative}
\sigma'_k(x,\xi):=   \sigma_{X_k}(\xi)\sigma^0(x,\xi)+(X_k\sigma^0(x,\xi)).
\end{equation}
Indeed, the Leibniz law gives
\begin{align*}
    X_kA^0f(x)&=\sum\limits_{[\xi]\in \widehat{G}}\textnormal{\textbf{Tr}}(X_k(\xi(x)\sigma^0(x,\xi))\widehat{f}(\xi))\\
    &=\sum\limits_{[\xi]\in \widehat{G}}\textnormal{\textbf{Tr}}([X_k(\xi(x))\sigma^0(x,\xi)+\xi(x)X_k\sigma^0(x,\xi))]\widehat{f}(\xi)).
\end{align*}
The identity $\sigma_{X_k}(\xi)=\xi(x)^*X_k\xi(x),$ implies  $X_k\xi(x)=\xi(x)\sigma_{X_k}(\xi),$ and we obtain
\begin{align*}
     X_kA^0f(x)&=\sum\limits_{[\xi]\in \widehat{G}}\textnormal{\textbf{Tr}}([\xi(x)\sigma_{X_k}(\xi)\sigma^0(x,\xi)+\xi(x)X_k\sigma^0(x,\xi))]\widehat{f}(\xi)),
\end{align*}  which proves \eqref{symbolderivative}. By using a suitable partition of the unity we will decompose the matrix $\sigma'_k(x,\xi)$ in the following way,
\[ 
    \sigma'_k(x,\xi)=\sum_{j=1}^\infty\rho_{j,k}(x,\xi).
\]
To construct the family of operators $\rho_{j,k}(x,\xi)$ we will proceed as follows. We choose a smooth real  function $\eta$ satisfying $\eta(t)\equiv 0$ for $|t|\leqslant 1$ and $\eta(t)\equiv 1$ for $|t|\geqslant   2.$ Set
\[ 
    \rho(t)=\eta(t)-\eta(\frac{t}{2}).
\] The support of $\rho$ satisfies $\textnormal{supp}(\rho)\subset[1,4] .$ One can check that 
\[ 
    1=\eta(t)+\sum_{j=1}^\infty\rho(2^{j}t),\,\,\,\,\textnormal{ for all }t\in \mathbb{R}.
\] Indeed, 
\[ 
    \eta(t)+\sum_{j=1}^{ \infty}  (2^{j}t)=\eta(t)+\sum_{j=1}^\ell\eta(2^{j}t)-\eta(2^{j-1}t)=\eta(2^{\ell}t)\rightarrow 1,\,\,\ell\rightarrow\infty.
\]
For $t=r\langle\xi \rangle$ we have
\[ 
    1=\eta(r\langle\xi \rangle)+\sum_{j=1}^\infty\rho(2^{j}r\langle\xi \rangle).
\] 
 Observe that
 \[ 
     \sigma'_k(x,\xi)=\eta(r\langle\xi \rangle)\sigma'_k(x,\xi)+\sum_{j=1}^\infty\rho(2^{j}r\langle\xi \rangle)\sigma'_k(x,\xi).
 \] Because $\eta(r\langle\xi \rangle)\sigma'_k(x,\xi)=0,$ in view that the support of $\tilde\gamma$ and $\eta$ are disjoint sets, we have
  \[ 
     \sigma'_k(x,\xi)=\sum_{j=1}^\infty\rho_{j,k}(x,\xi),\,\,\,\rho_{j,k}(x,\xi):=\rho(2^{j}r\langle\xi \rangle)\sigma'_k(x,\xi).
 \] 
 From the Mean value Theorem we have,
 \begin{align*}
    |A^0f(x)-A^0f(y)| &\leqslant C_{0}\sum_{k=1}^{\textnormal{dim}(G)}|y^{-1}x|\cdot\Vert X_kA^0f\Vert_{L^\infty(G)}\\
    &\lesssim r\sum_{k=1}^{\textnormal{dim}(G)}\Vert X_kA^0f\Vert_{L^\infty(G)}.
\end{align*}

 Now, in order to prove \eqref{splitestimate} for $j=0,$ we proceed by using Lemma \ref{FundamentallemmaII} with $\omega=1,$ Lemma \ref{FundamentallemmaI} and \eqref{r}. So, we have,
 \begin{align*}
     I_0:&=\frac{1}{|B(x_0,r)|}\int\limits_{B(x_0,r)}|A^0f(x)-(A^0f)_{B(x_0,r)}|dx\\
     &= \frac{1}{|B(x_0,r)|}\int\limits_{B(x_0,r)}\left|\frac{1}{|B(x_0,r)|}\int\limits_{B(x_0,r)}(A^0f(x)-A^0f(y))dy\right|dx\\
     &\leqslant \frac{1}{|B(x_0,r)|^{2}}\int\limits_{B(x_0,r)}\int\limits_{B(x_0,r)}|A^0f(x)-A^0f(y)|dydx \\
     &\lesssim r \sup_{1\leqslant k\leqslant \dim(G)}\Vert X_kA^0f\Vert_{L^\infty(G)}\\
     &=r\sup_{1\leqslant k\leqslant \dim(G)}\Vert \textnormal{Op}(\sigma'_{k})f\Vert_{L^\infty(G)}\leqslant r\sup_{1\leqslant k\leqslant \dim(G)}\sum_{j=1}^\infty\Vert \textnormal{Op}(\rho_{j,k})f\Vert_{L^\infty(G)}\\
     &\lesssim r \sum_{j=1}^\infty r^{-1}2^{-j} \Vert \sigma\Vert_{\ell,S^{-\frac{Qa}{2},\mathcal{L}}_{\rho,\delta}}\Vert f\Vert_{L^\infty(G)}\\
      &\lesssim rr^{-1}\Vert \sigma\Vert_{\ell,S^{-\frac{Qa}{2},\mathcal{L}}_{\rho,\,\delta}}\Vert f\Vert_{L^\infty(G)}\\
      &=\Vert \sigma\Vert_{\ell,S^{-\frac{Qa}{2},\mathcal{L}}_{\rho,\delta}}\Vert f\Vert_{L^\infty(G)}.
\end{align*} In order to finish the proof, we need to prove \eqref{splitestimate} for $j=1.$
In order to obtain a similar $L^\infty(G)$-$\textnormal{BMO}^{\mathcal{L}}(G)$ estimate for $A^{1},$ we will proceed as follows. Let $\phi$ be a smooth function compactly supported in $B(x_0,2r)$ satisfying that
\[ 
    \phi(x)=1, \textnormal{  for  }\,\,x\in B(x_0,r),\textnormal{ and }0\leqslant \phi\leqslant 10.
\]
Note that
\[ 
   |B(x_0,r)|\leqslant \int\limits_{B(x_0,r)}\phi(x)^2dx\leqslant  \int\limits_{B(x_0,2r)}\phi(x)^2dx =\Vert \phi\Vert_{L^2(G)}^2\leqslant 100  |B(x_0,2r)|,
\] and consequently
\begin{equation}\label{doubling}
     |B(x_0,r)|^{\frac{1}{2}}\leqslant \Vert \phi\Vert_{L^2(G)}\leqslant 10|B(x_0,2r)|^{\frac{1}{2}}\leqslant 10C|B(x_0,r)|^{\frac{1}{2}},
\end{equation}where in the last inequality we have used  that the measure on the group satisfies the doubling property. Taking into account that
\begin{align*}
    \frac{1}{|B(x_0,r)|}\int\limits_{B(x_0,r)}|A^1f(x)-(A^1f)_{B(x_0,r)}|dx\leqslant \frac{2}{|B(x_0,r)|}\int\limits_{B(x_0,r)}|A^1f(x)|dx,
\end{align*}
we will estimate the right-hand side. Indeed, let us observe that
\begin{align*}
 &\frac{1}{|B(x_0,r)|} \int\limits_{B(x_0,r)}|A^1f(x)|dx\leqslant  \frac{1}{|B(x_0,r)|}\int\limits_{B(x_0,r)}|\phi(x)A^1f(x)|dx\\
  &\leqslant  \frac{1}{|B(x_0,r)|}\int\limits_{B(x_0,r)}|A^1[\phi f](x)|dx+\frac{1}{|B(x_0,r)|} \int\limits_{B(x_0,r)}|[M_{\phi},A^1 ] f(x)|dx\\
 :&=I+II,
\end{align*}where $M_\phi$ is the multiplication  operator by $\phi.$ To estimate $I,$ observe that, in view of the Cauchy-Schwartz inequality, we have
\[ 
    \frac{1}{|B(x_0,r)|}\int\limits_{B(x_0,r)}|A^1[\phi f](x)|dx\leqslant \frac{1}{|B(x_0,r)|^{\frac{1}{2}}}\left(\int\limits_{B(x_0,r)} |A^1[\phi f](x)|^2dx   \right)^{\frac{1}{2}}.
\]
For $0< \rho\leqslant 1,$ let $L:=(1+\mathcal{L})^{\frac{Qa\varepsilon}{2}}\in {S}^{\frac{Qa\varepsilon}{2}}_{  \rho  ,0}(G\times\widehat{G})\subset \mathscr{S}^{\frac{Qa\varepsilon\kappa}{2}}_{\rho ,0}(G\times\widehat{G})\subset \mathscr{S}^{\frac{Qa\varepsilon}{2}}_{\rho ,\delta}(G\times\widehat{G}),$ where $a:=1-\rho$ and $0<\varepsilon<1.$
For a moment, allow us to assume that we have
\begin{equation}\label{L2}
    \textnormal{Op}({S}^{-m,\mathcal{L}}_{\rho',\delta'}(G\times\widehat{G}))\subset \mathscr{B}(L^2(G)),
\end{equation} for $m\geqslant   0$ and $0\leqslant \delta'<\rho'\leqslant 1.$ Because $A^{1}\in {S}^{-\frac{Qa}{2},\mathcal{L}}_{ \rho, ,\delta}(G\times\widehat{G}), $ the subelliptic symbolic calculus gives
\[ 
    A^{1}L\in {S}^{-\frac{Qa}{2}(1-\varepsilon),\mathcal{L}}_{\rho ,\delta}(G\times\widehat{G}).
\]
Because $m=\frac{Qa}{2}(1-\varepsilon)>0, $ it follows from \eqref{L2} and the condition $\delta<1,$  that $A^{1}L$ is bounded on $L^2(G).$  Consequently,
\begin{align*}
 &  \frac{1}{|B(x_0,r)|^{\frac{1}{2}}}\left(\int\limits_{B(x_0,r)} |A^1L[L^{-1}(\phi f)](x)|^2dx   \right)^{\frac{1}{2}}\leqslant \frac{\Vert A^1L[L^{-1}(\phi f)] \Vert_{L^2(G)}}{|B(x_0,r)|^{\frac{1}{2}}}\\
   &\leqslant \frac{  C\Vert \sigma_{A^1L}\Vert_{k,{S}^{-\frac{Qa}{2}(1-\varepsilon),\mathcal{L}}_{\rho,\delta}}    \Vert L^{-1}( \phi f )\Vert_{L^2(G)}}{|B(x_0,r)|^{\frac{1}{2}}}.
\end{align*}
By observing that 
\[ 
    \Vert L^{-1}(\phi f) \Vert_{L^2(G)}=\Vert \phi f\Vert_{H^{-\frac{Qa\varepsilon}{2},\mathcal{L}}(G)},
\]
where $H^{-\frac{Qa\varepsilon}{2},\mathcal{L}}(G)$ is the Sobolev space of order $-\frac{Qa\varepsilon}{2},$ associated with $\mathcal{L},$ the embedding $L^2(G)\hookrightarrow H^{-\frac{Qa\varepsilon}{2},\mathcal{L}}(G), $ (see \cite{GarettoRuzhansky2015}) implies that
\[ 
 \Vert L^{-1}(\phi f) \Vert_{L^2(G)}=\Vert \phi f\Vert_{H^{-\frac{Qa\varepsilon}{2},\mathcal{L}}(G)}\lesssim \Vert \phi f \Vert_{L^2(G)} .    
\]
Moreover, from \eqref{doubling}, we deduce the inequality
\[ 
     \Vert \phi f \Vert_{L^2(G)}\leqslant \Vert f\Vert_{L^\infty(G)}\Vert \phi\Vert_{L^2(G)}\lesssim   10\Vert f\Vert_{L^\infty(G)}|B(x_0,r)|^{\frac{1}{2}}.
\]
So, we conclude
\[ 
    I:= \frac{1}{|B(x_0,r)|}\int\limits_{B(x_0,r)}|A^1[\phi f](x)|dx\leqslant {  C\Vert \sigma_{A^1L}\Vert_{\ell',{S}^{0,\mathcal{L}}_{\rho,\delta}} }\Vert f\Vert_{L^\infty(G)},
\]
which is the desired estimate for $I.$ Now, we will estimate $II.$ For this, observe that the symbol of $[M_{\phi},A^1 ]=\textnormal{Op}(\theta) ,$ is given by
\begin{equation}\label{theta}
    \theta(x,\xi)=\int\limits_{G}(\phi(x)-\phi(xy^{-1}))k_{x}(y)\xi(y)^{*}dy,
\end{equation}
where $x\mapsto k_{x},$ is the right-convolution kernel of $A^1.$ Indeed, the equality \eqref{theta}  was shown in \cite[page 554]{RuzhanskyDelgado2017}.
Using the Taylor expansion we obtain
\[ 
    \phi(xy^{-1})=\phi(x)+\sum_{|\alpha|=1}(X_{x}^{\alpha}\phi)(x)\tilde{q}_{\alpha}(y),
\]
where, every  $\tilde{q}_\alpha$ is a smooth function vanishing with order $1$ at $e_G.$
So, we can write
\[ 
    \theta(x,\xi)=\sum_{|\alpha|=1}{X_{x}^{\alpha}\phi(x)}\Delta_{ \tilde{q}_{\alpha}} \sigma(x,\xi) .
\]The hypothesis $\sigma \in {S}^{-\frac{Q(1-\rho) }{2},\mathcal{L}}_{\rho,\delta}(G\times \widehat{G}), $ implies that $\Delta_{ \tilde{q}_{\alpha}} \sigma(x,\xi) \in  {S}^{-\frac{Q(1-\rho) }{2}-\rho,\mathcal{L}}_{\rho,\delta}(G\times \widehat{G}),$ and the fact that $\phi$ is of compact support, implies the same conclusion for $\theta:=\theta(x,\xi) \in  {S}^{-\frac{Q(1-\rho) }{2}-\rho,\mathcal{L}}_{\rho,\delta}(G\times \widehat{G}).$
From  Lemma \ref{FundamentallemmaI}, one has   
\begin{align*}
   &\Vert\textnormal{Op}(\theta) \Vert_{\mathscr{B}(L^\infty(G))}  \lesssim  \left(\sup_{(x,[\xi])\in G\times\widehat{G},|\alpha|\leqslant \ell'} \Vert [\Delta^{\alpha}_\xi \theta(x,\xi)]\widehat{\mathcal{M}}(\xi)^{\frac{Qa}{2}+\rho+\rho|\alpha|})  \Vert_{\textnormal{op}}     \right)\\
   &\lesssim\sup_{|\alpha|\leqslant \ell} \Vert {X_{x}^{\alpha}\phi} \Vert_{L^\infty(G)} \left(\sup_{(x,[\xi])\in G\times\widehat{G},|\alpha|\leqslant \ell'} \Vert [\Delta^{\alpha}_\xi \sigma(x,\xi)]\widehat{\mathcal{M}}(\xi)^{\frac{Qa}{2}+\rho+\rho|\alpha|})  \Vert_{\textnormal{op}}     \right)\\
   &\lesssim_{\ell'} \Vert \sigma\Vert_{\ell'+1,S^{-\frac{Qa}{2}+\rho,\mathcal{L}   }_{\rho,\delta}}.
\end{align*}
Observing that
\begin{align*}
    \frac{1}{|B(x_0,r)|} \int\limits_{B(x_0,r)}|[M_{\phi},A^1 ] f(x)| &\leqslant   \frac{1}{|B(x_0,r)|} \int\limits_{B(x_0,r)}dx   \Vert [M_{\phi},A^1 ] f \Vert_{L^\infty(G)}
    \\&=  \Vert [M_{\phi},A^1 ] f \Vert_{L^\infty(G)},
\end{align*} we deduce,
\begin{align*}
     \frac{1}{|B(x_0,r)|} \int\limits_{B(x_0,r)}|[M_{\phi},A^1 ] f(x)|&\leqslant\Vert\textnormal{Op}(\theta)f \Vert_{L^\infty(G)}\\
     &\lesssim \Vert \sigma\Vert_{\ell'+1,S^{-\frac{Qa}{2},\mathcal{L}   }_{\rho,\delta}} \Vert f \Vert_{L^\infty(G)}.
\end{align*}
Thus, we obtain 
\[ 
    II:= \frac{1}{|B(x_0,r)|}\int\limits_{B(x_0,r)}|[M_\phi,A^1 ]f(x)|dx\leqslant {  C\Vert \sigma_{A}\Vert_{\ell'+1,{S}^{-\frac{Qa}{2},\mathcal{L}}_{\rho,\delta}} }\Vert f\Vert_{L^\infty(G)}.
\] 
Now, in order to finish the proof, we only need to prove \eqref{L2}. We will follow  the classical argument of H\"ormander. Indeed, assume first that $p(x,\xi)\in {S}^{-m_0,\mathcal{L}}_{\rho',\delta'}(G\times \widehat{G}),$ where $m_0>0.$ The  kernel of $p(x,D)=\textnormal{Op}(p),$ $K_{p}(x,y),$ belong to $L^\infty(G\times G)$ for $m_0$ large enough.
Hence, $$ \sup_{x\in G}\int\limits_G|K_p(x,y)|dy\, , \, \sup_{y\in G}\int\limits_G|K_p(x,y)|dx<\infty,$$ and the $L^2(G)$-continuity of $p(x,D)$ follows from Schur lemma. Next we prove by induction that $p(x,D)$ is $L^2$-bounded if $p(x,\xi)\in  {S}^{-m_0,\mathcal{L}}_{\rho',\delta'}(G\times \widehat{G}),$ for $m_0< m\leqslant -(\rho'-\delta').$ To do so we form for $u\in C^{\infty}(G)$
\begin{align*}
\Vert p(x,D)u \Vert^2_{L^2(G)}&=( p(x,D)u, p(x,D)u)_{L^2(G)}\\
&=( p^{*}(x,D)p(x,D)u, u)_{L^2(G)}\\
&=( b(x,D)u,u )_{L^2(G)} ,
\end{align*}
where $b(x,D)=p^{*}(x,D)p(x,D)$ has symbol in ${S}^{2m,\mathcal{L}}_{\rho',\delta'}(G\times\widehat{G}),$ for $0\leqslant \delta'<\rho'\leqslant 1.$ From the induction hypothesis the continuity of $p(x,D)$ for all $p\in {S}^{2m,\mathcal{L}}_{\rho',\delta'}$ now follows successively for $m\leqslant-\frac{m_0}{2},\cdots, -\frac{m_0}{4},\cdots , -\frac{m_0}{2^{\ell_0}},\cdots ,$ $\ell_0\in\mathbb{N},$ and hence for $m\leqslant -\frac{m_0}{2^{\ell_0}}$ where $\frac{m_0}{2^{\ell_0}}<\rho'-\delta',$ after a finite number of steps. 
 Now, assume that $p(x,\xi)\in {S}^{0,\mathcal{L}}_{\rho',\delta'}(G\times\widehat{G})$ and chose  $$ M>2\sup_{(x,[\xi])} \Vert p(x,\xi) \Vert^{2}_{\textnormal{op}}.$$ Then, in view of the subelliptic fucntional calculus, in particular by Corollary \ref{1/2}, we have that  $c(x,\xi)=(MI_{d_\xi}-p(x,\xi)p(x,\xi)^*
 )^{1/2}\in {S}^{0,\mathcal{L}}_{\rho',\delta'}(G\times \widehat{G}).$
Now,
$$  c(x,D)^{*}c(x,D)=M-p^{*}(x,D)p(x,D)+r(x,D)$$
where $r\in {S}^{-(\rho'-\delta')}_{{\rho',\delta'}}(G\times\widehat{G}).$
Hence, $\Vert p(x,D) \Vert_{\mathscr{B}(L^2)}\leqslant M+\Vert r(x,D)\Vert_{\mathscr{B}(L^2)}. $
Thus, the proof is complete.
\end{proof}
In view of \eqref{L2} we have the following $L^2(G)$-estimate.
\begin{corollary}\label{CL2''}Let us assume that $G$ is a compact Lie group. Then,
\begin{equation}\label{L2''}
    \textnormal{Op}({S}^{0,\mathcal{L}}_{\rho',\delta'}(G\times\widehat{G}))\subset \mathscr{B}(L^2(G)),
\end{equation} for all $0\leqslant \delta'<\rho'\leqslant 1.$ In the case where $\delta'< 1/\kappa,$ and $\rho'\leq 1,$ the condition $\delta'<\rho'$ can be improved to $\delta'\leq \rho'$ in order to obtain the $L^2(G)$-boundedness of $A$ (see the subelliptic Calder\'on-Vaillancourt Theorem \ref{CVT}).
\end{corollary}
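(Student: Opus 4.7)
The statement is in fact precisely the side assertion \eqref{L2} extracted from the proof of Theorem \ref{parta}, so the plan is to record that argument cleanly as a standalone proof. It is the classical H\"ormander scheme: bootstrap from very negative orders using the calculus of adjoints, compositions, and functional calculus already developed in Sections \ref{Seccionsubelliptic} and \ref{SFC}. The final sentence about $\delta' < 1/\kappa$ is an immediate citation of the Calder\'on--Vaillancourt Theorem \ref{CVT}.

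First, I would establish a \emph{base case} at very negative order. If $p \in S^{-m_0,\mathcal{L}}_{\rho',\delta'}(G\times\widehat{G})$ with $m_0 > Q$, then Proposition \ref{CalderonZygmund} applied to the (contracted) subelliptic class gives a bounded right-convolution kernel $k_x$, hence a bounded integral kernel $K_p(x,y) = k_x(y^{-1}x)$. Schur's lemma then gives $p(x,D) \in \mathscr{B}(L^2(G))$.

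Next, an \emph{inductive halving} argument extends the $L^2$-boundedness from $S^{-m_0,\mathcal{L}}_{\rho',\delta'}$ down to $S^{-m,\mathcal{L}}_{\rho',\delta'}$ for every $m \leqslant -(\rho'-\delta')$ (in particular for $m \leqslant 0$ if $\rho' = \delta'$, but here we have the strict inequality). For $u \in C^\infty(G)$ one has
\begin{equation*}
    \|p(x,D)u\|_{L^2(G)}^2 = (p(x,D)^{*}p(x,D)u,\, u)_{L^2(G)} = (b(x,D)u,u)_{L^2(G)},
\end{equation*}
and by Theorem \ref{Adjoint} and Theorem \ref{Subellipticcomposition} the symbol $b$ of $p^{*}p$ lies in $S^{-2m,\mathcal{L}}_{\rho',\delta'}(G\times\widehat{G})$. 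Iterating halves the required order $m_0, m_0/2, m_0/4, \ldots$ until after finitely many steps one covers the whole range $m \leqslant -(\rho'-\delta') < 0$.

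Finally, to handle order zero I would run the standard square-root trick. Given $p \in S^{0,\mathcal{L}}_{\rho',\delta'}(G\times\widehat{G})$, fix $M > 2\sup_{(x,[\xi])}\|p(x,\xi)\|_{\text{op}}^{2}$. By the subelliptic functional calculus (Corollary \ref{1/2}, which in turn rests on the Hulanicki theorem and the closure of the subelliptic classes under smooth functional calculus developed in Section \ref{SFC}), the symbol
\begin{equation*}
    c(x,\xi) := \bigl(M\, I_{d_\xi} - p(x,\xi)p(x,\xi)^{*}\bigr)^{1/2}
\end{equation*}
belongs to $S^{0,\mathcal{L}}_{\rho',\delta'}(G\times\widehat{G})$. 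Applying Theorems \ref{Adjoint} and \ref{Subellipticcomposition} again, the composition and adjoint asymptotic expansions give
\begin{equation*}
    c(x,D)^{*}c(x,D) = M\, I - p(x,D)^{*}p(x,D) + r(x,D),
\end{equation*}
with $r \in S^{-(\rho'-\delta'),\mathcal{L}}_{\rho',\delta'}(G\times\widehat{G})$. By the previous step $r(x,D)$ is bounded on $L^2(G)$, and since $c^{*}c$ is a nonnegative operator one obtains
\begin{equation*}
    \|p(x,D)u\|_{L^2(G)}^{2} \leqslant M\|u\|_{L^2(G)}^{2} + \|r(x,D)\|_{\mathscr{B}(L^2)}\|u\|_{L^2(G)}^{2},
\end{equation*}
proving \eqref{L2''}. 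The second part of the statement (the edge case $\delta' = \rho'$ with $\delta' < 1/\kappa$) is exactly the content of Theorem \ref{CVT} and requires no further argument.

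The main obstacle is the forward reference: the square-root step needs the subelliptic functional calculus of Section \ref{SFC}, which itself uses the composition and adjoint formulas but, as one must verify, does \emph{not} itself rely on the $L^2$-boundedness being proved here; otherwise the argument would be circular. Assuming that check goes through (which it does in the paper's development), the proof is essentially bookkeeping on top of the calculus.
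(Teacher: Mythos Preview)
Your proposal is correct and follows exactly the argument the paper embeds at the end of the proof of Theorem~\ref{parta}: Schur's lemma for sufficiently negative order (via the kernel estimates of Proposition~\ref{CalderonZygmund}), the H\"ormander bootstrap $\Vert p(x,D)u\Vert^2=(b(x,D)u,u)$ to halve the order repeatedly, and the square-root trick using Corollary~\ref{1/2} to reach order zero. Your observation about the forward reference to the functional calculus in Section~\ref{SFC} is apt and matches the paper's own cross-referencing; one checks that Corollary~\ref{1/2} rests only on the symbolic calculus (compositions, adjoints, parametrices) and not on $L^2$-boundedness, so there is no circularity.
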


\subsection{$L^p$-Sobolev and Besov boundedness for subelliptic  classes}

\begin{remark}
Under those hypothesis in   Theorem \ref{parta}, every continuous linear operator $A:C^\infty(G)\rightarrow\mathscr{D}'(G)$ with symbol  $\sigma\in {S}^{-\frac{Q(1-\rho)}{2},\mathcal{L}}_{\rho,\delta}(G\times \widehat{G})$ extends to a bounded operator from $L^\infty(G)$ to $\textnormal{BMO}^{\mathcal{L}}(G),$ and from $\textnormal{H}^{1,\mathcal{L}}(G)$ to $L^1(G).$ The simple argument of interpolation gives that $A$ also extends to a bounded operator on $L^p(G)$ for all $1<p<\infty.$ However, an argument via the Fefferman-Stein interpolation theorem gives a more precise result that we will now present in Theorem \ref{parta2}.
\end{remark}
 \begin{theorem}\label{parta2}
Let $G$ be a compact Lie group and let us denote by $Q$ the Hausdorff
dimension of $G$ associated to the control distance associated to the sub-Laplacian $\mathcal{L}=\mathcal{L}_X,$ where  $X= \{X_{1},\cdots,X_k\} $ is a system of vector fields satisfying the H\"ormander condition of order $\kappa$.  For  $0\leqslant \delta<\rho\leqslant 1,$  let us consider a continuous linear operator $A:C^\infty(G)\rightarrow\mathscr{D}'(G)$ with symbol  $\sigma\in {S}^{-m,\mathcal{L}}_{\rho,\delta}(G\times \widehat{G})$, $m\geqslant   0$. Then $A$ extends to a bounded operator on $L^p(G)$ provided that 
\[ 
    m\geqslant   m_p:= Q(1-\rho)\left|\frac{1}{p}-\frac{1}{2}\right|.
\]
\end{theorem}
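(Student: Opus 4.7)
The plan is to prove the $L^p$-boundedness by complex analytic interpolation, combining the endpoint mapping properties provided by Theorem \ref{parta} (i.e.\ $L^\infty$--$\textnormal{BMO}^{\mathcal{L}}$ and $\textnormal{H}^{1,\mathcal{L}}$--$L^1$ continuity of operators of critical order) with the $L^2$-boundedness of subelliptic operators of order $0$ recorded in Corollary \ref{CL2''}, via the Fefferman--Stein interpolation theorem stated immediately before Section \ref{Seccionsubelliptic}. Two routine reductions come first. Since ${S}^{-m,\mathcal{L}}_{\rho,\delta}(G\times\widehat{G}) \subset {S}^{-m',\mathcal{L}}_{\rho,\delta}(G\times\widehat{G})$ whenever $m \geq m'$ (Lemma \ref{Lemma47}), it suffices to prove the theorem in the critical case $m = m_{p} = Q(1-\rho)|1/p - 1/2|$. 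Moreover, because the formal adjoint $A^{*}$ has symbol in the same subelliptic class up to lower-order asymptotic corrections (Theorem \ref{Adjoint}) and since $m_{p} = m_{p'}$ for the conjugate exponent $p' = p/(p-1)$, it suffices to handle $p \geq 2$ and then dualise.

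Fix such $p$, set $\theta := 1 - 2/p \in [0,1)$ so that $2/(1-\theta) = p$, and introduce the analytic family
\begin{equation*}
    T_{z} := A \cdot (1+\mathcal{L})^{(\theta - z) Q(1-\rho)/4}, \qquad 0 \leq \textnormal{Re}(z) \leq 1,
\end{equation*}
which by construction satisfies $T_{\theta} = A$. Computing the real part of the exponent and using the subelliptic symbolic calculus (Theorem \ref{Subellipticcomposition}), on the left vertical boundary $T_{it}$ has symbol in ${S}^{0,\mathcal{L}}_{\rho,\delta}(G\times\widehat{G})$ and is thus $L^{2}$-bounded by Corollary \ref{CL2''}; while on the right boundary $T_{1+it}$ has symbol in ${S}^{-Q(1-\rho)/2,\mathcal{L}}_{\rho,\delta}(G\times\widehat{G})$ and therefore maps $L^{\infty}(G)$ continuously into $\textnormal{BMO}^{\mathcal{L}}(G)$ by Theorem \ref{parta}. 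Applying the analytic version of the Fefferman--Stein interpolation theorem to the family $T_{z}$ at the intermediate point $z = \theta$ then yields $L^{p_\theta} = L^{p}$-boundedness of $T_\theta = A$. The range $1 < p < 2$ is obtained by duality from the case just proved applied to $A^{*}$ on $L^{p'}(G)$.

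The main obstacle in the above scheme is the admissible (polynomial) growth in $|t|$ of the operator-norm bounds of $T_{it}$ on $L^{2}(G)$ and of $T_{1+it}$ from $L^{\infty}(G)$ into $\textnormal{BMO}^{\mathcal{L}}(G)$, since Stein's interpolation along vertical strips requires such growth control on the imaginary parameter. This dependence is not visible at the level of the bare subelliptic classes ${S}^{m,\mathcal{L}}_{\rho,\delta}(G\times\widehat{G})$ and must be extracted from the subelliptic functional calculus developed in Section \ref{SFC}: one shows that the symbol of $(1+\mathcal{L})^{(a+it)/2}$ lies in $S^{a,\mathcal{L}}_{1,0}(G\times\widehat{G})$ with seminorms of polynomial order in $|t|$, and then Theorem \ref{Subellipticcomposition} transports this polynomial dependence to the finitely many seminorms that actually enter the bounds \eqref{eq1}, \eqref{eq2} of Theorem \ref{parta} and the $L^{2}$-estimate \eqref{L2''}. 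Once this holomorphic control is in place, the interpolation scheme closes and the critical bound $m \geq m_{p}$ is exactly what the arithmetic of the real parts of the exponents prescribes.
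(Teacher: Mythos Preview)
Your proposal is correct and follows the same Stein--Fefferman complex interpolation route as the paper, interpolating between the $L^{2}$-boundedness of order-zero subelliptic operators (Corollary \ref{CL2''}) at one vertical boundary and the $L^{\infty}\!\to\!\textnormal{BMO}^{\mathcal{L}}$ estimate of Theorem \ref{parta} at the other, with duality handling $1<p<2$. The only substantive difference is how the growth along vertical lines is controlled: you invoke the subelliptic functional calculus of Section \ref{SFC} to obtain polynomial growth in $|t|$ of the seminorms of $(1+\mathcal{L})^{itc}$, whereas the paper simply inserts the scalar factor $e^{z^{2}}$ into its analytic family, whose modulus $e^{\mathrm{Re}(z)^{2}-\mathrm{Im}(z)^{2}}$ decays super-exponentially in $|\mathrm{Im}(z)|$ and therefore absorbs any polynomial blow-up of the boundary norms for free. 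Including $e^{z^{2}}$ in your family $T_{z}$ would spare you the detour through Section \ref{SFC}.
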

\begin{proof}
Now, having proved Theorem \ref{parta}, the proof is verbatim  the proof of Theorem 4.15 of \cite{RuzhanskyDelgado2017}. Let us write $a:=1-\rho.$ We will present the argument here, for completeness. We will use the complex Fefferman-Stein interpolation theorem. We only need to prove the theorem for $m=m_p$ in view of the inclusion $S^{-m,\mathcal{L}}_{\rho,\delta}(G\times \widehat{G})\subset S^{-m_p,\mathcal{L}}_{\rho,\delta}(G\times \widehat{G})  $  for $m>m_p.$ Let us consider the complex family of operators indexed by $z\in \mathbb{C},$ $ \mathfrak{Re}(z)\in [0,1],$ given by
\[ 
    T_{z}:=\textnormal{Op}(\sigma_{z}),\,\,\,\, \sigma_{z}(x,\xi):=e^{z^2}\sigma(x,\xi)\widehat{\mathcal{M}}(\xi)^{{\frac{Qa}{2}(z-1)}}.
\] The family of operators $\{T_{z}\}$ defines an analytic family of operator from $ \mathfrak{Re}(z)\in (0,1),$ (resp. continuous for $ \mathfrak{Re}(z)\in [0,1]$), into the algebra of bounded operators on $L^2(G).$  Let us observe that $\sigma_0(x,\xi)=\sigma(x,\xi)\widehat{\mathcal{M}}(\xi)^{{m-\frac{Qa}{2}}},$ and $\sigma_{1}(x,\xi)=e\sigma(x,\xi)\widehat{\mathcal{M}}(\xi)^{m}.$ Because $T_0$ is bounded from $L^\infty(G)$ into $\textnormal{BMO}^{\mathcal{L}}(G)$ and $T_1$ is bounded on $L^2(G),$ the Fefferman-Stein interpolation theorem implies that $T_t$ extends to a bounded operator on $L^p(G),$ for $p=\frac{2}{t}$ and all $0<t\leqslant 1.$ Because $0\leqslant m\leqslant\frac{Qa}{2}, $ there exists $t_0\in (0,1)$ such that $m=m_p=\frac{Qa}{2}(1-t_{0}).$ So, $T_{t_0}=e^{t_0^2}A$ extends to a bounded operator on $L^{\frac{2}{t_0}}.$ The Fefferman-Stein interpolation theorem, the $L^2(G)$-boundedness and the  $L^{\frac{2}{t_0}}$-boundedness  of $A$ give the $L^p(G)$-boundedness of $A$ for all $2\leqslant p\leqslant \frac{2}{t_0},$ and interpolating the $L^{\frac{2}{t_0}}(G)$-boundedness with the $L^\infty(G)$-$\textnormal{BMO}^{\mathcal{L}}(G)$ boundedness of $A$ we obtain the boundedneess of $A$ on $L^p(G)$ for all $\frac{2}{t_0}\leqslant p<\infty.$ So, $A$ extends to a bounded operator on $L^p(G)$ for all $2\leqslant p<\infty.$ The $L^p(G)$-boundedness of $A$ for $1<p\leqslant 2$ now follows by the duality argument.
\end{proof}
\begin{corollary}\label{SobL2}
Let $G$ be a compact Lie group and let us denote by $Q$ the Hausdorff
dimension of $G$ associated to the control distance associated to the sub-Laplacian $\mathcal{L}=\mathcal{L}_X,$ where  $X= \{X_{1},\cdots,X_k\} $ is a system of vector fields satisfying the H\"ormander condition of order $\kappa$.  For  $0\leqslant \delta<\rho\leqslant 1,$  let us consider a continuous linear operator $A:C^\infty(G)\rightarrow\mathscr{D}'(G)$ with symbol  $\sigma\in {S}^{-m,\mathcal{L}}_{\rho,\delta}(G\times \widehat{G})$, $m\geqslant   0$. 
If $s\in \mathbb{R},$ then  $A$ extends to a bounded operator from $L^{p,\mathcal{L}}_{s}(G)$ into $L^{p,\mathcal{L}}_{s}(G),$   and also from $B^{s,\mathcal{L}}_{p,q}(G)$ into $B^{s,\mathcal{L}}_{p,q}(G),$ for all $1<p<\infty,$ and $0<q<\infty,$  provided that 
\[ 
    m\geqslant   m_p:= Q(1-\rho)\left|\frac{1}{p}-\frac{1}{2}\right|.
\]
\end{corollary}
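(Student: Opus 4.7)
The plan is to reduce both statements to the $L^p$-boundedness Theorem \ref{parta2} by conjugating $A$ with suitable powers of $\mathcal{M}=(1+\mathcal{L})^{1/2}$, and then to obtain the Besov part by real interpolation in the $L^p$-scale, exactly in the spirit of Corollaries \ref{SobL} and \ref{BesovL}.

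For the subelliptic Sobolev estimate, fix $s\in\mathbb{R}$ and observe that by the very definition of $L^{p,\mathcal{L}}_s(G)$,
\begin{equation*}
\Vert Af\Vert_{L^{p,\mathcal{L}}_s(G)}=\Vert \mathcal{M}_s A\mathcal{M}_{-s}\,\mathcal{M}_s f\Vert_{L^p(G)}=\Vert A_s g\Vert_{L^p(G)},
\end{equation*}
where $A_s:=\mathcal{M}_sA\mathcal{M}_{-s}$ and $g:=\mathcal{M}_sf$ satisfies $\Vert g\Vert_{L^p(G)}=\Vert f\Vert_{L^{p,\mathcal{L}}_s(G)}$. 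By \eqref{realpowerssharp} (proved later in Theorem \ref{orders:theorems}, and already justified for $s\geq 0$ in Example \ref{powesfirstpart} and Remark \ref{sgeq1} together with Example \ref{powessecondtpart}) one has $\mathcal{M}_{\pm s}\in\textnormal{Op}(S^{\pm s,\mathcal{L}}_{1,0}(G\times\widehat{G}))$, and by Lemma \ref{Lemma47} the inclusion $S^{\pm s,\mathcal{L}}_{1,0}\subset S^{\pm s,\mathcal{L}}_{\rho,\delta}$ holds under the hypothesis $0\leqslant\delta<\rho\leqslant 1$. Applying the composition Theorem \ref{Subellipticcomposition} twice one then concludes that
\begin{equation*}
A_s=\mathcal{M}_sA\mathcal{M}_{-s}\in\textnormal{Op}({S}^{s-m-s,\mathcal{L}}_{\rho,\delta}(G\times\widehat{G}))=\textnormal{Op}({S}^{-m,\mathcal{L}}_{\rho,\delta}(G\times\widehat{G})).
\end{equation*}
Since $m\geq m_p=Q(1-\rho)|1/p-1/2|$ and $0\leqslant\delta<\rho\leqslant 1$, Theorem \ref{parta2} yields that $A_s$ is bounded on $L^p(G)$ for every $1<p<\infty$, and thus $\Vert Af\Vert_{L^{p,\mathcal{L}}_s(G)}\lesssim \Vert A_s\Vert_{\mathscr{B}(L^p(G))}\Vert f\Vert_{L^{p,\mathcal{L}}_s(G)}$.

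For the Besov estimate, pick $1<p_0<p<p_1<\infty$ with $m\geqslant m_{p_0}$ and $m\geqslant m_{p_1}$ (this is possible since $p\mapsto m_p$ is continuous and the hypothesis $m\geqslant m_p$ is an open condition in $p$ whenever $p\neq 2$, and trivially holds at $p=2$). From the Sobolev step just proved, $A$ admits bounded extensions
\begin{equation*}
A:L^{p_0,\mathcal{L}}_s(G)\rightarrow L^{p_0,\mathcal{L}}_s(G),\qquad A:L^{p_1,\mathcal{L}}_s(G)\rightarrow L^{p_1,\mathcal{L}}_s(G).
\end{equation*}
Choosing $\theta\in(0,1)$ so that $1/p=\theta/p_0+(1-\theta)/p_1$ and using the real interpolation identity $(L^{p_0,\mathcal{L}}_s,L^{p_1,\mathcal{L}}_s)_{\theta,q}=B^{s,\mathcal{L}}_{p,q}(G)$ (which is Theorem 6.2 of \cite{CardonaRuzhansky2019I} and is cited in the proof of Corollary \ref{BesovL}) together with the interpolation property of bounded operators, we obtain the desired bounded extension $A:B^{s,\mathcal{L}}_{p,q}(G)\rightarrow B^{s,\mathcal{L}}_{p,q}(G)$ for all $0<q<\infty$.

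The only point that is not entirely routine is ensuring that the strict inequality $\delta<\rho$ is preserved through the two compositions with the symbols of $\mathcal{M}_{\pm s}$, which have type $(1,0)$; this is precisely what Lemma \ref{Lemma47} guarantees, together with Theorem \ref{Subellipticcomposition}. No additional verification is needed since the hypotheses of Theorem \ref{parta2} are inherited by the conjugated operator $A_s$ exactly with the same subelliptic order $-m$.
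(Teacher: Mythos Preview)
Your argument is essentially identical to the paper's: conjugate $A$ by $\mathcal{M}_{\pm s}$, use Theorem \ref{orders:theorems} and Lemma \ref{Lemma47} to place $\mathcal{M}_{\pm s}\in S^{\pm s,\mathcal{L}}_{\rho,\delta}$, apply Theorem \ref{Subellipticcomposition} to get $A_s\in S^{-m,\mathcal{L}}_{\rho,\delta}$, invoke Theorem \ref{parta2}, and then obtain the Besov statement by real interpolation in $p$ as in Corollary \ref{BesovL}.

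One point deserves comment: your justification for selecting $p_0<p<p_1$ with $m\geqslant m_{p_0}$ and $m\geqslant m_{p_1}$ is not correct as written. The condition $m\geqslant m_p$ is not an open condition in $p$; the set $\{p':m\geqslant m_{p'}\}$ is closed since $p'\mapsto m_{p'}$ is continuous. At the borderline $m=m_p$ with $\rho<1$ and $p\neq 2$, one has $m_{p'}>m$ on one side of $p$, so no admissible pair $p_0<p<p_1$ exists. The paper's own proof sidesteps this by simply referring to the argument of Corollary \ref{BesovL}, so the gap is shared; a clean fix is to interpolate instead in the smoothness parameter at fixed $p$, using $(L^{p,\mathcal{L}}_{s_0},L^{p,\mathcal{L}}_{s_1})_{\theta,q}=B^{s,\mathcal{L}}_{p,q}$, which only needs the Sobolev boundedness at the single exponent $p$ you have already established.
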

 \begin{proof}
 Let us assume that $s\in\mathbb{R}.$ Note that
 \[ 
     \Vert Af \Vert_{ L^{p,\mathcal{L}}_{s}(G) }=\Vert \mathcal{M}_{s }A \mathcal{M}_{-s} \mathcal{M}_{s} f \Vert_{ L^{p}(G) }.
 \] Because $\mathcal{M}_{s }\in \textnormal{Op}({S}^{s,\mathcal{L}}_{1,0}(G\times \widehat{G})),$   $ A\in \textnormal{Op}({S}^{-m,\mathcal{L}}_{\rho,\delta}(G\times \widehat{G})),$  and  $\mathcal{M}_{-s} \textnormal{Op}(\mathscr{S}^{-s,\mathcal{L}}_{1,0}(G\times \widehat{G})), $ we have that
 \[ 
    A_s:= \mathcal{M}_{s }A \mathcal{M}_{-s} \in \textnormal{Op}({S}^{-m,\mathcal{L}}_{\rho,\delta}(G\times \widehat{G}),
 \] and from Theorem \ref{parta}, we deduce that $A_{s}$ extends to a bounded operator on $L^p(G)$ for all $1<p<\infty.$ Consequently we deduce the estimate
 \[ 
      \Vert Af \Vert_{ L^{p,\mathcal{L}}_{s}(G) }\leqslant \Vert A_s \Vert_{\mathscr{B}(L^p(G))}\Vert f\Vert_{L^{p,\mathcal{L}}_{s}(G)}.
 \] So, $A$ extends to a bounded operator from $L^{p,\mathcal{L}}_{s}(G)$ into $L^{p,\mathcal{L}}_{s}(G).$ In a similar way it can be proved that $A$ extends to a bounded operator from  $L^{p,\mathcal{L}}_{-s}(G)$ into $L^{p,\mathcal{L}}_{-s}(G)$ for all $1<p<\infty.$  The interpolation argument as in Corollary \ref{BesovL} allows us to deduce the corresponding boundedness result on Besov spaces. Thus, we finish the proof.
 \end{proof}
\begin{corollary}\label{SobL3}
Let $G$ be a compact Lie group and let us denote by $Q$ the Hausdorff
dimension of $G$ associated to the control distance associated to the sub-Laplacian $\mathcal{L}=\mathcal{L}_X,$ where  $X= \{X_{1},\cdots,X_k\} $ is a system of vector fields satisfying the H\"ormander condition of order $\kappa$.  For  $0\leqslant \delta<\rho\leqslant 1,$ let us consider a continuous linear operator $A:C^\infty(G)\rightarrow\mathscr{D}'(G)$ with symbol  $\sigma\in {S}^{m,\mathcal{L}}_{\rho,\delta}(G\times \widehat{G})$, $m\in\mathbb{R}$. 
If $s\in \mathbb{R},$ then  $A$ extends to a bounded operator from $L^{2,\mathcal{L}}_{s}(G)$ into $L^{2,\mathcal{L}}_{s-m}(G).$
\end{corollary}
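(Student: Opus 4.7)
The plan is to reduce the claim to the $L^2$-boundedness of an operator of order zero in the subelliptic calculus, which was already established in Corollary \ref{CL2''}. The reduction is done by sandwiching $A$ between suitable powers of $\mathcal{M} = (1+\mathcal{L})^{1/2}$, and by using the stability of the subelliptic H\"ormander classes under composition.

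First, I would write, for $f \in C^\infty(G)$,
\begin{equation*}
\Vert A f \Vert_{L^{2,\mathcal{L}}_{s-m}(G)} = \Vert \mathcal{M}_{s-m} A f \Vert_{L^2(G)} = \Vert (\mathcal{M}_{s-m} A \mathcal{M}_{-s}) \mathcal{M}_{s} f \Vert_{L^2(G)},
\end{equation*}
where $\mathcal{M}_{r} := (1+\mathcal{L})^{r/2}$ for $r \in \mathbb{R}$. Setting $B := \mathcal{M}_{s-m}\circ A \circ \mathcal{M}_{-s}$, it suffices to show that $B$ extends to a bounded operator on $L^2(G)$, for then
\begin{equation*}
\Vert A f \Vert_{L^{2,\mathcal{L}}_{s-m}(G)} \leq \Vert B \Vert_{\mathscr{B}(L^2(G))} \Vert \mathcal{M}_{s} f \Vert_{L^2(G)} = \Vert B \Vert_{\mathscr{B}(L^2(G))} \Vert f \Vert_{L^{2,\mathcal{L}}_{s}(G)},
\end{equation*}
which is precisely the desired bounded extension of $A : L^{2,\mathcal{L}}_{s}(G) \to L^{2,\mathcal{L}}_{s-m}(G)$.

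Next, I would identify the subelliptic order of $B$. By Theorem \ref{orders:theorems} (the sharp classification of real powers of $\mathcal{M}$ quoted in \eqref{realpowerssharp}), we have $\mathcal{M}_{s-m} \in \textnormal{Op}(S^{s-m,\mathcal{L}}_{1,0}(G \times \widehat{G}))$ and $\mathcal{M}_{-s} \in \textnormal{Op}(S^{-s,\mathcal{L}}_{1,0}(G \times \widehat{G}))$. By the inclusions in Lemma \ref{Lemma47}, since $0 \leq \delta < \rho \leq 1$, both of these classes embed continuously into the $(\rho,\delta)$-class of the same order, so we may regard $\mathcal{M}_{s-m}, \mathcal{M}_{-s}$ and $A$ as elements of the subelliptic $(\rho,\delta)$-classes with orders $s-m$, $-s$, and $m$ respectively. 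The composition Theorem \ref{Subellipticcomposition} (whose hypothesis $\delta < \rho$ is part of our standing assumption) then yields
\begin{equation*}
B = \mathcal{M}_{s-m} \circ A \circ \mathcal{M}_{-s} \in \textnormal{Op}(S^{(s-m) + m + (-s),\mathcal{L}}_{\rho,\delta}(G \times \widehat{G})) = \textnormal{Op}(S^{0,\mathcal{L}}_{\rho,\delta}(G \times \widehat{G})).
\end{equation*}

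Finally, Corollary \ref{CL2''} (equivalently the subelliptic Calder\'on-Vaillancourt Theorem \ref{CVT}, whose hypothesis $0 \leq \delta < \rho \leq 1$ is again exactly ours) gives that every element of $\textnormal{Op}(S^{0,\mathcal{L}}_{\rho,\delta}(G \times \widehat{G}))$ extends to a bounded operator on $L^2(G)$. Applied to $B$, this furnishes the required estimate and completes the proof. There is no real obstacle here: all the ingredients, namely the sharp order of real powers of $\mathcal{M}$, the embeddings between subelliptic $(\rho,\delta)$-classes, the composition formula, and the $L^2$-boundedness of order-zero subelliptic operators, have already been established earlier in the paper, and the argument is the standard Sobolev-space reduction adapted to the subelliptic setting.
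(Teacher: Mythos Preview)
Your proof is correct and follows essentially the same strategy as the paper: reduce to an order-zero operator in the subelliptic calculus and invoke the $L^2$-boundedness of such operators. The only cosmetic difference is that the paper routes through Corollary~\ref{SobL2} at $p=2$ (applied to the order-zero operator $A\mathcal{M}_{-m}$) rather than sandwiching directly with $\mathcal{M}_{s-m}$ and $\mathcal{M}_{-s}$; your more direct sandwiching unrolls that two-step argument into one and is arguably cleaner.
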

\begin{proof}    Observe that $\mathcal{M}_{-m}A $ extends to a bounded operator on $L^2(G)$ in view of the subelliptic Calder\'on-Vaillancourt Theorem (Theorem \ref{CVT}). In view of Corollary \ref{SobL3} applied to $p=2$ we have that $A \mathcal{M}_{-m}$ extends to a bounded operator from  $L^{2,\mathcal{L}}_{s}(G)$ to $L^{p,\mathcal{L}}_{s}(G)$ which is equivalent to the boundedness of $A$  from $L^{2,\mathcal{L}}_{s}(G)$ into $L^{2,\mathcal{L}}_{s-m}(G).$  Thus, we finish the proof.
\end{proof}

\section{Construction of parametrices and regularisation of traces}\label{LEH}

In this section we will study the notion of ellipticity  associated to the subelliptic calculus. As in the theory of pseudo-differential operators on compact manifolds (see H\"ormander \cite{Hormander1985III}) the ellipticity notion can be applied to study some singularity order appearing in heat traces and regularisation of traces (see Wodzicki \cite{Wodzicki} and Kontsevich and Vishik \cite{KV}). So,  we will study the analogy of such  kind of traces for subelliptic operators.
\subsection{Construction of parametrices} Now, we will present a technical result about the existence of parametrices for $\mathcal{L}$-elliptic operators (see Definition \ref{lellipticity}) in the subelliptic calculus. We denote $ {S}^{-\infty,\mathcal{L}}(G\times \widehat{G})=\cap_{m\in \mathbb{R}}{S}^{m,\mathcal{L}}_{\rho,\delta}(G\times \widehat{G}).$
\begin{proposition}\label{IesTParametrix} Let $m\in \mathbb{R},$ and let $0\leqslant \delta<\rho\leqslant 1.$  Let  $a=a(x,\xi)\in {S}^{m,\mathcal{L}}_{\rho,\delta}(G\times \widehat{G}).$  Assume also that $a(x,\xi)$ is invertible for every $(x,[\xi])\in G\times\widehat{G},$ and satisfies
\begin{equation}\label{Iesparametrix}
   \sup_{(x,[\xi])\in G\times \widehat{G}} \Vert \widehat{\mathcal{M}}(\xi)^{m}a(x,\xi)^{-1}\Vert_{\textnormal{op}}<\infty.
\end{equation}Then, there exists $B\in {S}^{-m,\mathcal{L}}_{\rho,\delta}(G\times \widehat{G}),$ such that $AB-I,BA-I\in {S}^{-\infty,\mathcal{L}}(G\times \widehat{G}). $ Moreover, the symbol of $B$ satisfies the following asymptotic expansion
\begin{equation}\label{AE}
    \widehat{B}(x,\xi)\sim \sum_{N=0}^\infty\widehat{B}_{N}(x,\xi),\,\,\,(x,[\xi])\in G\times \widehat{G},
\end{equation}where $\widehat{B}_{N}(x,\xi)\in {S}^{-m-(\rho-\delta)N,\mathcal{L}}_{\rho,\delta}(G\times \widehat{G})$ obeys to the recursive  formula
\begin{equation}\label{conditionelip}
    \widehat{B}_{N}(x,\xi)=-a(x,\xi)^{-1}\left(\sum_{k=0}^{N-1}\sum_{|\gamma|=N-k}(\Delta_{\xi}^\gamma a(x,\xi))(\partial_{X}^{(\gamma)}\widehat{B}_{k}(x,\xi))\right),\,\,N\geqslant 1,
\end{equation}with $ \widehat{B}_{0}(x,\xi)=a(x,\xi)^{-1}.$
\end{proposition}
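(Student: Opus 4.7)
The plan is to construct $B$ via the standard iterative scheme already indicated by the recursion \eqref{conditionelip} and then to pass from a right parametrix to a true parametrix by the usual algebraic trick. The construction relies on three ingredients already established in the paper: the invertibility theorem (Corollary \ref{IesTC}), the composition formula (Theorem \ref{Subellipticcomposition}), and Corach--Porta--Recht type estimates that were used in Theorem \ref{IesT} for handling products with $a^{-1}$.

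First I would set $\widehat{B}_0(x,\xi):=a(x,\xi)^{-1}$. By the ellipticity hypothesis \eqref{Iesparametrix} and Corollary \ref{IesTC} we have $\widehat{B}_0\in S^{-m,\mathcal{L}}_{\rho,\delta}(G\times\widehat{G})$. I would then prove by induction on $N\geq 1$ that the symbols $\widehat{B}_N$ defined by \eqref{conditionelip} satisfy $\widehat{B}_N\in S^{-m-(\rho-\delta)N,\mathcal{L}}_{\rho,\delta}(G\times\widehat{G})$. The inductive step is a Leibniz-rule bookkeeping: each summand $(\Delta_\xi^\gamma a)(\partial_X^{(\gamma)}\widehat{B}_k)$ with $|\gamma|=N-k$ lies in $S^{m-\rho(N-k),\mathcal{L}}_{\rho,\delta}\cdot S^{-m-(\rho-\delta)k+\delta(N-k),\mathcal{L}}_{\rho,\delta}\subset S^{-(\rho-\delta)N,\mathcal{L}}_{\rho,\delta}$, and multiplying from the left by $a^{-1}\in S^{-m,\mathcal{L}}_{\rho,\delta}$ lowers the order by $m$. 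The non-trivial part of this estimate is controlling products such as $a^{-1}\cdot (\Delta_\xi^\gamma a)\cdot (\partial_X^{(\gamma)}\widehat{B}_k)$ in the mixed-weighted norms of Definition \ref{contracted''}, which is exactly the sort of manipulation carried out in the proof of Theorem \ref{IesT} using the Corach--Porta--Recht inequality \eqref{Recht2}.

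Next, I would produce a symbol $\widehat{B}$ realising the asymptotic series \eqref{AE}. Choose a smooth cut-off $\chi\in C^\infty(\mathbb{R})$, $\chi\equiv 0$ near $0$ and $\chi\equiv 1$ on $[1,\infty)$, and a rapidly increasing sequence $t_N\downarrow 0$, and define
\[
\widehat{B}(x,\xi):=\sum_{N=0}^\infty \chi(t_N\langle\xi\rangle)\,\widehat{B}_N(x,\xi).
\]
Using the subelliptic cut-off estimate \eqref{Fischersub} from Remark \ref{Remark} (which plays the role of Lemma \ref{LemmaFischer} in the subelliptic scale), one chooses $t_N$ small enough so that the series converges in every seminorm of $S^{-m,\mathcal{L}}_{\rho,\delta}$ and so that, for each $N_0$,
\[
\widehat{B}-\sum_{N<N_0}\widehat{B}_N\in S^{-m-(\rho-\delta)N_0,\mathcal{L}}_{\rho,\delta}(G\times\widehat{G}).
\]
Setting $B:=\mathrm{Op}(\widehat{B})$, the composition formula of Theorem \ref{Subellipticcomposition} together with the recursion \eqref{conditionelip} shows that each finite asymptotic truncation of the symbol of $AB$ is $I$, so that $AB-I\in S^{-\infty,\mathcal{L}}(G\times\widehat{G})$; $B$ is a right parametrix.

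Finally, to obtain a two-sided parametrix, I would repeat the construction with the ordering reversed, or equivalently apply the right-parametrix construction to the formal adjoint $A^{*}$ (which is $\mathcal{L}$-elliptic of the same order by Theorem \ref{Adjoint} and the fact that $|a^{*}(x,\xi)^{-1}|=|a(x,\xi)^{-1}|$ in operator norm together with the asymptotic expansion for adjoints), then adjoin back, producing $B'\in S^{-m,\mathcal{L}}_{\rho,\delta}$ with $B'A-I\in S^{-\infty,\mathcal{L}}$. The standard identity
\[
B'-B=B'(I-AB)-(B'A-I)B\in S^{-\infty,\mathcal{L}}(G\times\widehat{G})
\]
then shows $BA-I\in S^{-\infty,\mathcal{L}}$ as well. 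The main technical obstacle is the asymptotic summation step: one has to verify that the cut-off series converges in the subelliptic seminorms uniformly in the derivative/difference orders, which requires the quantitative decay \eqref{Fischersub} of $\chi(t\langle\xi\rangle)$-type localisations within the dilated scale $\mathscr{S}^{m\kappa,\mathcal{L}}_{\rho\kappa,\delta\kappa}(G)$; once this is in place, the remainder of the argument is formal algebra driven by the composition theorem.
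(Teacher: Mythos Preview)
Your proposal is correct and follows essentially the same route as the paper: set $\widehat{B}_0=a^{-1}$ via Corollary \ref{IesTC}, verify inductively that $\widehat{B}_N\in S^{-m-(\rho-\delta)N,\mathcal{L}}_{\rho,\delta}$, and then use the composition formula together with the recursion \eqref{conditionelip} to force $AB-I$ into every $S^{-(\rho-\delta)N,\mathcal{L}}_{\rho,\delta}$. The paper is terser---it takes the existence of an asymptotic sum $\widehat{B}\sim\sum_N\widehat{B}_N$ for granted and disposes of the left parametrix with ``a similar analysis'' rather than via the adjoint/algebraic trick you use---but the substance is identical.
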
 
\begin{proof}
The idea is to find a symbol $\widehat{B}$ such that if $\mathcal{I}=AB,$ then $\mathcal{I}-I$ is a smoothing operator, where
\begin{align*}
  \widehat{\mathcal{I}}(x,\xi)  \sim \sum_{|\alpha|= 0}^\infty( \Delta_{\xi}^{\alpha} a(x,\xi))(\partial_{X}^{(\alpha)} \widehat{B}(x,\xi)).
\end{align*} The asymptotic expansion means that for every $N\in \mathbb{N},$
\begin{align*}
    &\Delta_{\xi}^{\alpha_\ell}\partial_{X}^{(\beta)}\left(\widehat{\mathcal{I}}(x,\xi)-\sum_{|\alpha|\leqslant N}  ( \Delta_{\xi}^{\alpha} a(x,\xi))(\partial_{X}^{(\alpha)} \widehat{B}(x,\xi))  \right)\\
    &\hspace{3cm}\in {S}^{-(\rho-\delta)(N+1)-\rho\ell+\delta|\beta|,\mathcal{L}}_{\rho,\delta}(G\times \widehat{G}),
\end{align*}for every $\alpha_\ell\in \mathbb{N}_0$ of order $\ell\in\mathbb{N}_0,$ where $\widehat{B}$ is requested to satisfy the asymptotic expansion  \eqref{AE}. So, formally we can write
\begin{align*}
  \widehat{\mathcal{I}}(x,\xi) &\sim \sum_{|\alpha|= 0}^\infty( \Delta_{\xi}^{\alpha} a(x,\xi))(\partial_{X}^{(\alpha)} \widehat{B}(x,\xi))=\sum_{|\alpha|= 0}^\infty\sum_{N=0}^{\infty}( \Delta_{\xi}^{\alpha} a(x,\xi))(\partial_{X}^{(\alpha)} \widehat{B}_N(x,\xi)).
\end{align*}
  Observe that the fact that $\widehat{B}_{0}(x,\xi)\in {S}^{-m,\mathcal{L}}_{\rho,\delta}(G\times \widehat{G}) $    follows from Corollary \ref{IesTC}. Now, one can check easily that $\widehat{B}_{N}(x,\xi)\in {S}^{-m-(\rho-\delta)N,\mathcal{L}}_{\rho,\delta}(G\times \widehat{G}),$ for all $N\geqslant 1$ by using induction. Consequently,
  \begin{align*}
      \widehat{B}(x,\xi)- \sum_{j=0}^{N-1}\widehat{B}_{j}(x,\xi)\in {S}^{-m-(\rho-\delta)N,\mathcal{L}}_{\rho,\delta}(G\times \widehat{G}).
  \end{align*}This analysis allows us to deduce that
  \begin{align*}
    \widehat{\mathcal{I}}(x,\xi)-\sum_{k=0}^{N-1}\sum_{|\gamma|<N}(\Delta_{\xi}^\gamma a(x,\xi))(\partial_{X}^{(\gamma)}\widehat{B}_{k}(x,\xi))\in {S}^{-(\rho-\delta)N,\mathcal{L}}_{\rho,\delta}(G\times \widehat{G}).
  \end{align*}On the other hand, observe that
  \begin{align*}
   &\sum_{k=0}^{N-1}\sum_{|\gamma|< N}(\Delta_{\xi}^\gamma a(x,\xi))(\partial_{X}^{(\gamma)}\widehat{B}_{k}(x,\xi))\\
  & =I_{d_\xi}+\sum_{k=1}^{N-1}\left(a(x,\xi)\widehat{B}_{k}(x,\xi)+\sum_{ |\gamma|\leqslant N,\,|\gamma|\geqslant 1}(\Delta_{\xi}^\gamma a(x,\xi))(\partial_{X}^{(\gamma)}\widehat{B}_{k}(x,\xi))\right)\\
  & =I_{d_\xi}+\sum_{k=1}^{N-1}\left(a(x,\xi)\widehat{B}_{k}(x,\xi)+\sum_{ |\gamma|= N-j,\,j<k}(\Delta_{\xi}^\gamma a(x,\xi))(\partial_{X}^{(\gamma)}\widehat{B}_{j}(x,\xi))\right)\\
  &\hspace{2cm}+\sum_{ |\gamma|+j\geqslant  N,\,|\gamma|<N,\,j<N}(\Delta_{\xi}^\gamma a(x,\xi))(\partial_{X}^{(\gamma)}\widehat{B}_{k}(x,\xi))\\
   &=I_{d_\xi}+\sum_{ |\gamma|+j\geqslant  N,\,|\gamma|<N,\,j<N}(\Delta_{\xi}^\gamma a(x,\xi))(\partial_{X}^{(\gamma)}\widehat{B}_{j}(x,\xi)),
  \end{align*}where we have used that 
  $$ a(x,\xi)\widehat{B}_{k}(x,\xi)+\sum_{ |\gamma|= k-j,\,j<k}(\Delta_{\xi}^\gamma a(x,\xi))(\partial_{X}^{(\gamma)}\widehat{B}_{j}(x,\xi))\equiv 0,  $$
  in view of \eqref{conditionelip}. Because, for $|\gamma|+j\geqslant  N,$ $(\Delta_{\xi}^\gamma a(x,\xi))(\partial_{X}^{(\gamma)}\widehat{B}_{k}(x,\xi))\in {S}^{-(\rho-\delta)N,\mathcal{L}}_{\rho,\delta}(G\times \widehat{G}), $ it follows that
  \begin{align*}
      \sum_{k=0}^{N-1}\sum_{|\gamma|< N}(\Delta_{\xi}^\gamma a(x,\xi))(\partial_{X}^{(\gamma)}\widehat{B}_{k}(x,\xi))-I_{d_\xi}\in  {S}^{-(\rho-\delta)N,\mathcal{L}}_{\rho,\delta}(G\times \widehat{G}),
  \end{align*}and consequently, $\widehat{\mathcal{I}}(x,\xi)-I_{d_\xi}\in  {S}^{-(\rho-\delta)N,\mathcal{L}}_{\rho,\delta}(G\times \widehat{G}),$ for every $N\in \mathbb{N}.$ 
 So, we have proved that  $AB-I\in {S}^{-\infty,\mathcal{L}}(G\times \widehat{G}). $ A similar analysis can be used to prove that $BA-I\in {S}^{-\infty,\mathcal{L}}(G\times \widehat{G}). $  
\end{proof}

\subsection{Parameter $\mathcal{L}$-ellipticity} To develop the subelliptic fucntional calculus we need a more wide notion of ellipticity. By following the  approach in \cite{RuzhanskyWirth2014}, we present in our subelliptic context the notion of parameter $\mathcal{L}$-ellipticity. 
\begin{definition} Let $m>0,$ and let $0\leqslant \delta<\rho\leqslant 1.$
Let $\Lambda=\{\gamma(t):t\in I\footnote{where $I=[a,b],$ $-\infty<a\leqslant b<\infty,$ $I=[a,\infty),$ $I=(-\infty,b]$ or $I=(-\infty,\infty).$}\}$  be an analytic curve in the complex plane $\mathbb{C}.$ If $I$ is a finite interval we assume that $\Lambda$ is a closed curve. For  simplicity, if $I$ is an infinite interval we assume that $\Lambda$ is homotopy equivalent to the line $\Lambda_{i\mathbb{R}}:=\{iy:-\infty<y<\infty\}.$  Let  $a=a(x,\xi)\in {S}^{m,\mathcal{L}}_{\rho,\delta}(G\times \widehat{G}).$  Assume also that $R_{\lambda}(x,\xi)^{-1}:=a(x,\xi)-\lambda $\footnote{We have denoted $a(x,\xi)-\lambda:=a(x,\xi)-\lambda I_{d_\xi}$ to simplify the notation.} is invertible for every $(x,[\xi])\in G\times\widehat{G},$ and $\lambda \in  \Lambda.$ We say that $a$ is parameter $\mathcal{L}$-elliptic with respect to $\Lambda,$ if
\[ 
    \sup_{\lambda\in \Lambda}\sup_{(x,[\xi])\in G\times \widehat{G}}\Vert (|\lambda|^{\frac{1}{m}}+\widehat{\mathcal{M}}(\xi))^{m}R_{\lambda}(x,\xi)\Vert_{\textnormal{op}}<\infty.
\]
\end{definition}
\begin{remark}
Observe that for $a=b=0,$ $I=\{0\},$ and for the trivial curve $\gamma(t)=0,$ that $a$ is parameter $\mathcal{L}$-elliptic with respect to $\Lambda=\{0\},$ is equivalent to saying that $a$ is  $\mathcal{L}$-elliptic.
\end{remark}
The following theorem classifies the matrix resolvent $R_{\lambda}(x,\xi)$ of a parameter $\mathcal{L}$-elliptic symbol $a.$
\begin{theorem}\label{lambdalambdita} Let $m>0,$ and let $0\leqslant \delta<\rho\leqslant 1.$ If $a$ is parameter $\mathcal{L}$-elliptic with respect to $\Lambda,$ the following estimate 
\[ 
   \sup_{\lambda\in \Lambda}\sup_{(x,[\xi])\in G\times \widehat{G}}\Vert (|\lambda|^{\frac{1}{m}}+\widehat{\mathcal{M}}(\xi))^{m(k+1)}\widehat{\mathcal{M}}(\xi)^{\rho|\alpha|-\delta|\beta|}\partial_{\lambda}^k\partial_{X}^{(\beta)}\Delta_{\xi}^{\alpha}R_{\lambda}(x,\xi)\Vert_{\textnormal{op}}<\infty,
\]holds true for all $\alpha,\beta\in \mathbb{N}_0^n$ and $k\in \mathbb{N}_0.$

\end{theorem}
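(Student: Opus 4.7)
\textbf{Proof proposal for Theorem \ref{lambdalambdita}.} The plan is to proceed by induction on the triple $(k, |\beta|, |\alpha|)$, establishing the bound
\[
\|\partial_{\lambda}^{k}\partial_{X}^{(\beta)}\Delta_{\xi}^{\alpha}R_{\lambda}(x,\xi)\|_{\textnormal{op}} \lesssim (|\lambda|^{1/m}+\widehat{\mathcal{M}}(\xi))^{-m(k+1)}\widehat{\mathcal{M}}(\xi)^{-\rho|\alpha|+\delta|\beta|}
\]
uniformly in $\lambda\in\Lambda$ and $(x,[\xi])\in G\times\widehat{G}$. The base case $k=|\alpha|=|\beta|=0$ is exactly the parameter $\mathcal{L}$-ellipticity hypothesis, and the auxiliary bound $\|\widehat{\mathcal{M}}(\xi)^{m}R_{\lambda}\|_{\textnormal{op}}\lesssim 1$ follows immediately from it together with $\widehat{\mathcal{M}}(\xi)\leq |\lambda|^{1/m}+\widehat{\mathcal{M}}(\xi)$.

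First I would treat pure $\lambda$-derivatives. Differentiating the identity $(a(x,\xi)-\lambda)R_{\lambda}(x,\xi)=I_{d_\xi}$ in $\lambda$ yields $\partial_\lambda R_\lambda = R_\lambda^{2}$ and, by induction, $\partial_\lambda^{k} R_\lambda = k!\,R_\lambda^{k+1}$. Consequently $\|\partial_\lambda^{k} R_\lambda\|_{\textnormal{op}}\leq k!\,\|R_\lambda\|_{\textnormal{op}}^{k+1}\lesssim (|\lambda|^{1/m}+\widehat{\mathcal{M}}(\xi))^{-m(k+1)}$. Since $\partial_\lambda$ commutes with $\partial_X^{(\beta)}$ and $\Delta_\xi^{\alpha}$, it is enough after this first step to prove the $k=0$ case, and insert $R_\lambda^{k+1}$ in place of $R_\lambda$ thereafter with the correct exponent shift.

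Next I would handle vector field derivatives by induction on $|\beta|$ (keeping $k=|\alpha|=0$). Applying $\partial_X^{(\beta)}$ to $R_\lambda(a-\lambda)=I_{d_\xi}$ and using the ordinary Leibniz rule gives
\[
(\partial_X^{(\beta)}R_\lambda)(a-\lambda) = -\!\!\!\sum_{\substack{\beta_1+\beta_2=\beta\\ |\beta_1|<|\beta|}}\!\!\!C_{\beta_1,\beta_2}\,(\partial_X^{(\beta_1)}R_\lambda)(\partial_X^{(\beta_2)}a),
\]
so that $\partial_X^{(\beta)}R_\lambda$ equals minus the right-hand side multiplied by $R_\lambda$ on the right. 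Since $a\in S^{m,\mathcal{L}}_{\rho,\delta}$ controls $\|(\partial_X^{(\beta_2)}a)\widehat{\mathcal{M}}(\xi)^{-m-\delta|\beta_2|}\|_{\textnormal{op}}$ and the inductive hypothesis controls each $\partial_X^{(\beta_1)}R_\lambda$, the desired estimate follows after redistributing powers of $\widehat{\mathcal{M}}(\xi)$ between factors via the characterisation of Theorem \ref{cor} (which uses the Corach-Porta-Recht inequality to shift $\widehat{\mathcal{M}}(\xi)^r$ from left to right), and absorbing every surplus factor $\widehat{\mathcal{M}}(\xi)^m$ into $(|\lambda|^{1/m}+\widehat{\mathcal{M}}(\xi))^m$ using $\widehat{\mathcal{M}}(\xi)\leq |\lambda|^{1/m}+\widehat{\mathcal{M}}(\xi)$.

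For the difference operators I would induct on $|\alpha|$, applying the Leibniz rule for $\Delta_\xi$ recorded in Remark \ref{Leibnizrule} to the identity $R_\lambda(a-\lambda)=I_{d_\xi}$. Since $\Delta_\xi^\alpha I_{d_\xi}=0$ for $|\alpha|\geq 1$, we obtain a recursion
\[
(\Delta_\xi^{\alpha}R_\lambda)(a-\lambda) = -R_\lambda(\Delta_\xi^\alpha a)-\!\!\!\sum_{\substack{|\gamma|+|\varepsilon|\geq |\alpha|\\ |\gamma|,|\varepsilon|\leq |\alpha|,\,|\gamma|<|\alpha|}}\!\!\!C_{\gamma,\varepsilon}(\Delta_\xi^{\gamma}R_\lambda)(\Delta_\xi^{\varepsilon}a),
\]
so that $\Delta_\xi^{\alpha}R_\lambda$ is expressed as a sum of products of already-controlled factors, each sandwiched by a further $R_\lambda$ on the right; this is the same scheme used to prove Theorem \ref{IesT}, now sharpened with the $(|\lambda|^{1/m}+\widehat{\mathcal{M}}(\xi))$ weight. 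Finally, the mixed case is obtained by successively applying $\partial_\lambda^k$, $\partial_X^{(\beta)}$ and $\Delta_\xi^{\alpha}$: they commute pairwise, and the recursions above, used simultaneously with the Leibniz rule for the mixed product, reduce everything to bounds already proved in the lower-order strata.

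The main obstacle will be the careful power bookkeeping in these recursions: at each step one has products involving two (or more) factors of $R_\lambda$ and one factor of $\partial_X^{(\beta_2)}\Delta_\xi^\varepsilon a$, and one must verify that the combined weight $(|\lambda|^{1/m}+\widehat{\mathcal{M}}(\xi))^{-m(k+1)}\widehat{\mathcal{M}}(\xi)^{-\rho|\alpha|+\delta|\beta|}$ is correctly saturated without loss, using the two-sided equivalence from Theorem \ref{cor} to freely commute the weights $\widehat{\mathcal{M}}(\xi)^r$ past the intermediate factors.
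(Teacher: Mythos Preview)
Your inductive scheme is viable but takes a genuinely different route from the paper. The paper does not run a direct induction; instead it splits into $|\lambda|\le 1$ and $|\lambda|>1$. For small $|\lambda|$, the weight $(|\lambda|^{1/m}+\widehat{\mathcal{M}}(\xi))^{m(k+1)}$ is uniformly comparable to $\widehat{\mathcal{M}}(\xi)^{m(k+1)}$, so the estimate reduces to showing $R_\lambda^{k+1}\in S^{-m(k+1),\mathcal{L}}_{\rho,\delta}$, which follows from Corollary~\ref{IesTC} and the composition calculus, with uniformity coming from compactness of $\Lambda\cap\{|\lambda|\le1\}$. For large $|\lambda|$ the paper rescales via $\theta=1/|\lambda|$, $\omega=\lambda/|\lambda|$, writing $R_\lambda=\theta(\theta a-\omega)^{-1}$ and appealing to uniform bounds for $(\theta a-\omega)^{-1}$ over the compact parameter set $[0,1]\times\mathbb{S}^1$. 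The payoff is that one never carries the mixed weight $(|\lambda|^{1/m}+\widehat{\mathcal{M}}(\xi))$ through Leibniz recursions; the paper works entirely within the established $S^{m,\mathcal{L}}_{\rho,\delta}$ calculus with plain $\widehat{\mathcal{M}}(\xi)$ weights. Your approach, by contrast, is a parameter-uniform re-run of the proof of Theorem~\ref{IesT} and is more elementary in that it bypasses the full composition calculus, but two points need care: when splitting $R_\lambda^{k+1}$ you must distribute the $(|\lambda|^{1/m}+\widehat{\mathcal{M}}(\xi))$ weight across factors, which is not literally covered by Theorem~\ref{cor} (stated only for $\widehat{\mathcal{M}}(\xi)^r$), though it works since this weight commutes with $\widehat{\mathcal{M}}(\xi)$ and can be parked on the far left while only $\widehat{\mathcal{M}}(\xi)^r$ is moved; and your recursion for $\Delta_\xi^\alpha R_\lambda$ implicitly assumes a single $|\gamma|=|\alpha|$ term in the Leibniz rule of Remark~\ref{Leibnizrule}, whereas in general there are several, so you should factor $\Delta_\xi^\alpha$ as a product of first-order differences and induct on the length, exactly as is done in the proof of Theorem~\ref{IesT}.
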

\begin{proof}
    We will split the proof in the cases $|\lambda|\leqslant 1,$  and $|\lambda|> 1,$ where $\lambda\in \Lambda.$ It is possible however that one of these two cases could be trivial in the sense that $\Lambda_{1}:=\{\lambda\in \Lambda:|\lambda|\leqslant 1\}$ or $\Lambda_{1}^{c}:=\{\lambda\in \Lambda:|\lambda|> 1\}$ could be empty sets. In such a case the proof is self-contained in the situation that we will consider where we assume that $\Lambda_{1}$ and $\Lambda_{1}^c$ are not trivial sets.   For $|\lambda|\leqslant 1,$ observe that
\begin{align*}
    &\Vert (|\lambda|^{\frac{1}{m}}+\widehat{\mathcal{M}}(\xi))^{m(k+1)}\widehat{\mathcal{M}}(\xi)^{\rho|\alpha|-\delta|\beta|}\partial_{\lambda}^k\partial_{X}^{(\beta)}\Delta_{\xi}^{\alpha}R_{\lambda}(x,\xi)\Vert_{\textnormal{op}}\\
    &=\Vert (|\lambda|^{\frac{1}{m}}+\widehat{\mathcal{M}}(\xi))^{m(k+1)}\widehat{\mathcal{M}}(\xi)^{-m(k+1)}\widehat{\mathcal{M}}(\xi)^{m(k+1)+\rho|\alpha|-\delta|\beta|}\partial_{\lambda}^k\partial_{X}^{(\beta)}\Delta_{\xi}^{\alpha}R_{\lambda}(x,\xi)\Vert_{\textnormal{op}}\\
    &\leqslant \Vert (|\lambda|^{\frac{1}{m}}+\widehat{\mathcal{M}}(\xi))^{m(k+1)}\widehat{\mathcal{M}}(\xi)^{-m(k+1)}\Vert_{\textnormal{op}}\\
    &\hspace{5cm}\times\Vert\widehat{\mathcal{M}}(\xi)^{m(k+1)+\rho|\alpha|-\delta|\beta|}\partial_{\lambda}^k\partial_{X}^{(\beta)}\Delta_{\xi}^{\alpha}R_{\lambda}(x,\xi)\Vert_{\textnormal{op}}.
\end{align*} Note that
\begin{align*}
    &\Vert (|\lambda|^{\frac{1}{m}}+\widehat{\mathcal{M}}(\xi))^{m(k+1)}\widehat{\mathcal{M}}(\xi)^{-m(k+1)}\Vert_{\textnormal{op}}\\
    &= \Vert (|\lambda|^{\frac{1}{m}}\widehat{\mathcal{M}}(\xi)^{-1}+I_{d_\xi})^{m(k+1)}\Vert_{\textnormal{op}}\leqslant \Vert |\lambda|^{\frac{1}{m}}\widehat{\mathcal{M}}(\xi)^{-1}+I_{d_\xi}\Vert^{m(k+1)}_{\textnormal{op}}\\
    &\leqslant \sup_{|\lambda|\in [0,1]}\sup_{1\leqslant j\leqslant d_\xi}(|\lambda|^{\frac{1}{m}}(1+\nu_{jj}(\xi)^2)^{-\frac{1}{2}}+1))^{k(m+1)}\\
    &=O(1).
\end{align*} On the other hand, we can prove that
\begin{align*}
   \Vert\widehat{\mathcal{M}}(\xi)^{m(k+1)+\rho|\alpha|-\delta|\beta|}\partial_{\lambda}^k\partial_{X}^{(\beta)}\Delta_{\xi}^{\alpha}R_{\lambda}(x,\xi)\Vert_{\textnormal{op}}=O(1).
\end{align*} For $k=1,$ $\partial_{\lambda}R_{\lambda}(x,\xi)= R_{\lambda}(x,\xi)^{2}.$ This can be deduced from the Leibniz rule, indeed,
\begin{align*}
 0&=\partial_{\lambda}(R_{\lambda}(x,\xi)(a(x,\xi)-\lambda))=(\partial_{\lambda}R_{\lambda}(x,\xi)) (a(x,\xi)-\lambda)+ R_{\lambda}(x,\xi)\partial_{\lambda}(a(x,\xi)-\lambda)\\
 &=(\partial_{\lambda}R_{\lambda}(x,\xi)) (a(x,\xi)-\lambda)+ R_{\lambda}(x,\xi)(-1) 
\end{align*}implies that
    \begin{align*}
 -\partial_{\lambda}(R_{\lambda}(x,\xi))(a(x,\xi)-\lambda)= R_{\lambda}(x,\xi)(-1). 
\end{align*} Because $(a(x,\xi)-\lambda)=R_{\lambda}(x,\xi)^{-1}$ the identity for the first derivative of $R_\lambda,$ $\partial_{\lambda}R_{\lambda}$ it follows. So, from the chain rule we obtain that the term of higher order expanding the derivative   $ \partial_{\lambda}^kR_{\lambda} $ is $ R_{\lambda}^{k+1}.$ From  Corollary \ref{IesTC} we deduce that $R_{\lambda}\in S^{-m,\mathcal{L}}_{\rho,\delta}(G\times \widehat{G}).$ The subelliptic calculus implies that $R_{\lambda}^{k+1}\in S^{-m(k+1),\mathcal{L}}_{\rho,\delta}(G\times \widehat{G}).$ This fact, and the compactness of $\Lambda_1\subset \mathbb{C},$ provide us the uniform estimate 
    \[ 
   \sup_{\lambda\in \Lambda_1}\sup_{(x,[\xi])\in G\times \widehat{G}}\Vert\widehat{\mathcal{M}}(\xi)^{m(k+1)+\rho|\alpha|-\delta|\beta|}\partial_{\lambda}^k\partial_{X}^{(\beta)}\Delta_{\xi}^{\alpha}R_{\lambda}(x,\xi)\Vert_{\textnormal{op}}<\infty.
\]Now, we will analyse the situation for $\lambda\in \Lambda_1^c.$ We will use induction over $k$ in order to prove that
\[ 
   \sup_{\lambda\in \Lambda_1^c}\sup_{(x,[\xi])\in G\times \widehat{G}}\Vert (|\lambda|^{\frac{1}{m}}+\widehat{\mathcal{M}}(\xi))^{m(k+1)}\widehat{\mathcal{M}}(\xi)^{\rho|\alpha|-\delta|\beta|}\partial_{\lambda}^k\partial_{X}^{(\beta)}\Delta_{\xi}^{\alpha}R_{\lambda}(x,\xi)\Vert_{\textnormal{op}}<\infty.
\]
For $k=0$ notice that
\begin{align*}
     &\Vert (|\lambda|^{\frac{1}{m}}+\widehat{\mathcal{M}}(\xi))^{m(k+1)}\widehat{\mathcal{M}}(\xi)^{\rho|\alpha|-\delta|\beta|}\partial_{\lambda}^k\partial_{X}^{(\beta)}\Delta_{\xi}^{\alpha}R_{\lambda}(x,\xi)\Vert_{\textnormal{op}}\\
     &=\Vert (|\lambda|^{\frac{1}{m}}+\widehat{\mathcal{M}}(\xi))^{m}\widehat{\mathcal{M}}(\xi)^{\rho|\alpha|-\delta|\beta|}\partial_{X}^{(\beta)}\Delta_{\xi}^{\alpha}(a(x,\xi)-\lambda)^{-1}\Vert_{\textnormal{op}},
\end{align*}and denoting $\theta=\frac{1}{|\lambda|},$ $\omega=\frac{\lambda}{|\lambda|},$ we have
 \begin{align*}
     &\Vert (|\lambda|^{\frac{1}{m}}+\widehat{\mathcal{M}}(\xi))^{m(k+1)}\widehat{\mathcal{M}}(\xi)^{\rho|\alpha|-\delta|\beta|}\partial_{\lambda}^k\partial_{X}^{(\beta)}\Delta_{\xi}^{\alpha}R_{\lambda}(x,\xi)\Vert_{\textnormal{op}}\\
     &=\Vert (|\lambda|^{\frac{1}{m}}+\widehat{\mathcal{M}}(\xi))^{m}|\lambda|^{-1}\widehat{\mathcal{M}}(\xi)^{\rho|\alpha|-\delta|\beta|}\partial_{X}^{(\beta)}\Delta_{\xi}^{\alpha}(\theta\times  a(x,\xi)-\omega)^{-1}\Vert_{\textnormal{op}}\\
     &=\Vert (1+|\lambda|^{-\frac{1}{m}}\widehat{\mathcal{M}}(\xi))^{m}\widehat{\mathcal{M}}(\xi)^{\rho|\alpha|-\delta|\beta|}\partial_{X}^{(\beta)}\Delta_{\xi}^{\alpha}(\theta\times  a(x,\xi)-\omega)^{-1}\Vert_{\textnormal{op}}\\
     &=\Vert (1+\theta^{\frac{1}{m}}\widehat{\mathcal{M}}(\xi))^{m}\widehat{\mathcal{M}}(\xi)^{\rho|\alpha|-\delta|\beta|}\partial_{X}^{(\beta)}\Delta_{\xi}^{\alpha}(\theta\times  a(x,\xi)-\omega)^{-1}\Vert_{\textnormal{op}}\\
     &=\Vert (1+\theta^{\frac{1}{m}}\widehat{\mathcal{M}}(\xi))^{m}\widehat{\mathcal{M}}(\xi)^{-m}\widehat{\mathcal{M}}(\xi)^{m+\rho|\alpha|-\delta|\beta|}\partial_{X}^{(\beta)}\Delta_{\xi}^{\alpha}(\theta\times  a(x,\xi)-\omega)^{-1}\Vert_{\textnormal{op}}\\
     &\leqslant \Vert (1+\theta^{\frac{1}{m}}\widehat{\mathcal{M}}(\xi))^{m}\widehat{\mathcal{M}}(\xi)^{-m}\Vert_{\textnormal{op}}\Vert\widehat{\mathcal{M}}(\xi)^{m+\rho|\alpha|-\delta|\beta|}\partial_{X}^{(\beta)}\Delta_{\xi}^{\alpha}(\theta\times  a(x,\xi)-\omega)^{-1}\Vert_{\textnormal{op}}.
\end{align*}   Because $ (1+\theta^{\frac{1}{m}}\widehat{\mathcal{M}}(\xi))^{m}\widehat{\mathcal{M}}(\xi)^{-m} \in S^{0,\mathcal{L}}_{\rho,\delta}(G\times \widehat{G}) ,$ we have that the operator norm $ \Vert (1+\theta^{\frac{1}{m}}\widehat{\mathcal{M}}(\xi))^{m}\widehat{\mathcal{M}}(\xi)^{-m}\Vert_{\textnormal{op}}$ is uniformly bounded in $\theta\in [0,1].$ The same argument can be applied to the operator norm $$ \Vert\widehat{\mathcal{M}}(\xi)^{m+\rho|\alpha|-\delta|\beta|}\partial_{X}^{(\beta)}\Delta_{\xi}^{\alpha}(\theta\times  a(x,\xi)-\omega)^{-1}\Vert_{\textnormal{op}}, $$
    by using that $(\theta\times  a(x,\xi)-\omega)^{-1}\in S^{-m,\mathcal{L}}_{\rho,\delta}(G\times \widehat{G}), $ with $\theta\in [0,1]$ and $\omega$ being an element of the complex circle. The case  $k\geqslant 1$ for $\lambda\in \Lambda_1^c$ can be proved in a analogous way.
\end{proof} Combining Proposition \ref{IesTParametrix} and Theorem \ref{lambdalambdita} we obtain the following corollaries.
\begin{corollary}\label{parameterparametrix}
Let $m>0,$ and let $0\leqslant \delta<\rho\leqslant 1.$ Let  $a$ be a parameter $\mathcal{L}$-elliptic symbol with respect to $\Lambda.$ Then  there exists a parameter-dependent parametrix of $A-\lambda I,$ with symbol $a^{-\#}(x,\xi,\lambda)$ satisfying the estimates
\[ 
   \sup_{\lambda\in \Lambda}\sup_{(x,[\xi])\in G\times \widehat{G}}\Vert (|\lambda|^{\frac{1}{m}}+\widehat{\mathcal{M}}(\xi))^{m(k+1)}\widehat{\mathcal{M}}(\xi)^{\rho|\alpha|-\delta|\beta|}\partial_{\lambda}^k\partial_{X}^{(\beta)}\Delta_{\xi}^{\alpha}a^{-\#}(x,\xi,\lambda)\Vert_{\textnormal{op}}<\infty,
\]for all $\alpha,\beta\in \mathbb{N}_0^n$ and $k\in \mathbb{N}_0.$
\end{corollary}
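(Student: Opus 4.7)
The plan is to adapt the parametrix construction of Proposition \ref{IesTParametrix} to the symbol $a(x,\xi)-\lambda I_{d_\xi}$, while preserving uniformity in the parameter $\lambda\in\Lambda$ with the help of Theorem \ref{lambdalambdita}. To start, I set $B_0^\lambda(x,\xi):=R_\lambda(x,\xi)=(a(x,\xi)-\lambda)^{-1}$, which by Theorem \ref{lambdalambdita} satisfies the required estimate for $N=0$, that is, the estimate displayed in the statement of the corollary. Then I define the lower order correction terms through the recursion
\begin{equation*}
    B_N^{\lambda}(x,\xi)=-R_{\lambda}(x,\xi)\sum_{k=0}^{N-1}\sum_{|\gamma|=N-k}\bigl(\Delta_{\xi}^\gamma a(x,\xi)\bigr)\bigl(\partial_{X}^{(\gamma)}B_{k}^{\lambda}(x,\xi)\bigr),\qquad N\geqslant 1,
\end{equation*}
which is the analogue of \eqref{conditionelip}. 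Here I use the key observation that difference operators $\Delta_\xi^\gamma$ with $|\gamma|\geqslant 1$ annihilate the constant (in $\xi$) symbol $\lambda I_{d_\xi}$, so that $\Delta_\xi^\gamma(a-\lambda)=\Delta_\xi^\gamma a$; likewise $\partial_X^{(\beta)}(a-\lambda)=\partial_X^{(\beta)}a$ for $|\beta|\geqslant 1$. Hence the recursion is the same as in Proposition \ref{IesTParametrix} with $a^{-1}$ replaced by $R_\lambda$.

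Next I prove by induction on $N$ the parameter-dependent seminorm estimate
\begin{equation*}
    \sup_{\lambda\in\Lambda}\sup_{(x,[\xi])\in G\times\widehat{G}}\bigl\Vert(|\lambda|^{\frac{1}{m}}+\widehat{\mathcal{M}}(\xi))^{m(k+1)+(\rho-\delta)N}\widehat{\mathcal{M}}(\xi)^{\rho|\alpha|-\delta|\beta|}\partial_{\lambda}^k\partial_X^{(\beta)}\Delta_\xi^\alpha B_N^\lambda(x,\xi)\bigr\Vert_{\textnormal{op}}<\infty.
\end{equation*}
The base case $N=0$ is Theorem \ref{lambdalambdita}. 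For the inductive step I apply the Leibniz rules for $\partial_X^{(\beta)}$, $\Delta_\xi^\alpha$ (Remark \ref{Leibnizrule}) and the standard Leibniz rule for $\partial_\lambda^k$ to the product $R_\lambda\cdot(\Delta_\xi^\gamma a)\cdot(\partial_X^{(\gamma)}B_k^\lambda)$. The $\lambda$-derivatives fall only on $R_\lambda$ and on $B_k^\lambda$ (since $a$ is $\lambda$-independent), and each factor of $\Delta_\xi^\gamma a$ contributes decay $\widehat{\mathcal{M}}(\xi)^{m-\rho|\gamma|}$ from $a\in S^{m,\mathcal{L}}_{\rho,\delta}(G\times\widehat{G})$. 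The gain of $(\rho-\delta)N$ in the weight appears because in the sum $|\gamma|=N-k$, so the factor $\Delta_\xi^\gamma a$ brings $\rho|\gamma|$ in the denominator while $B_k^\lambda$ brings $(\rho-\delta)k$ by induction, together giving $\rho(N-k)+(\rho-\delta)k=(\rho-\delta)N+\delta(N-k)+\delta k-\delta k$ when combined correctly; the accounting is the usual one from elliptic parametrix theory. Distribution of the weights $(|\lambda|^{1/m}+\widehat{\mathcal{M}}(\xi))^{m(k+1)}$ versus $\widehat{\mathcal{M}}(\xi)^{\rho|\alpha|-\delta|\beta|}$ among the factors is handled by inserting powers of $\widehat{\mathcal{M}}(\xi)$ and applying Theorem \ref{cor} (equivalence between left, right and two-sided seminorms).

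Once the inductive estimates are established, I define $a^{-\#}(x,\xi,\lambda)$ as a Borel-type asymptotic sum $a^{-\#}\sim\sum_{N=0}^\infty B_N^\lambda$, in which each $B_N^\lambda$ is multiplied by a cut-off $\chi(t_N\langle\xi\rangle)$ with $t_N\downarrow 0$ fast enough to ensure convergence in $S^{-m,\mathcal{L}}_{\rho,\delta}(G\times\widehat{G})$ uniformly in $\lambda$. Because the weighted estimates for $B_N^\lambda$ above dominate the term $N=0$ (the $B_0^\lambda$ term gives exactly the required bound), the asymptotic sum satisfies the claimed estimate. Finally, by construction of the recursion, $\mathrm{Op}(a^{-\#}(\cdot,\cdot,\lambda))\circ(A-\lambda I)-I$ and $(A-\lambda I)\circ\mathrm{Op}(a^{-\#}(\cdot,\cdot,\lambda))-I$ have symbols in $\bigcap_{M\in\mathbb{R}}S^{-M,\mathcal{L}}_{\rho,\delta}(G\times\widehat{G})$ uniformly in $\lambda\in\Lambda$, exactly as in Proposition \ref{IesTParametrix}.

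The main obstacle is the bookkeeping in the inductive step: one must simultaneously keep track of the composite weight $(|\lambda|^{1/m}+\widehat{\mathcal{M}}(\xi))^{m(k+1)}$ as $\partial_\lambda^k$ is distributed by Leibniz among $R_\lambda$ and the various $B_j^\lambda$ appearing in the recursion, and then convert the resulting powers of $(|\lambda|^{1/m}+\widehat{\mathcal{M}}(\xi))$ into combined powers of $\widehat{\mathcal{M}}(\xi)$ where needed, to match the left/right placement of the derivatives and differences. This is where the full strength of Theorem \ref{cor} and the Corach–Porta–Recht type inequalities \eqref{Recht1}–\eqref{Recht2} is used to exchange one-sided and two-sided bounds without loss.
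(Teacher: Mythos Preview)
Your proposal is correct and follows exactly the route the paper indicates: the corollary is stated immediately after Theorem \ref{lambdalambdita} with the one-line justification that it is obtained by ``combining Proposition \ref{IesTParametrix} and Theorem \ref{lambdalambdita},'' and you have spelled out precisely that combination---running the recursive parametrix construction of Proposition \ref{IesTParametrix} with $a^{-1}$ replaced by $R_\lambda$, using Theorem \ref{lambdalambdita} for the base case and for the $R_\lambda$-factors in the induction. Your sketch is in fact considerably more detailed than what the paper provides.
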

\begin{corollary}\label{resolv}
Let $m>0,$ and let $a\in S^{m,\mathcal{L}}_{\rho,\delta}(G\times \widehat{G}) $ where  $0\leqslant \delta<\rho\leqslant 1.$ Let us assume that $\Lambda$ is a subset of the $L^2$-resolvent set of $A,$ $\textnormal{Resolv}(A):=\mathbb{C}\setminus \textnormal{Spec}(A).$ Then $A-\lambda I$ is invertible on $\mathscr{D}'(G)$ and the symbol of the resolvent operator $\mathcal{R}_{\lambda}:=(A-\lambda I)^{-1},$ $\widehat{\mathcal{R}}_{\lambda}(x,\xi)$ belongs to $S^{-m,\mathcal{L}}_{\rho,\delta}(G\times \widehat{G}).$ 
\end{corollary}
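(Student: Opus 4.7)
The plan is to combine the parametrix construction of Corollary \ref{parameterparametrix} with a resolvent-identity trick, yielding that the resolvent differs from a parametrix by a smoothing operator, so the symbolic regularity of the parametrix transfers to $\mathcal{R}_\lambda$.

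First, I would apply Corollary \ref{parameterparametrix} (checking, or invoking as the running assumption for this section, that $a$ is parameter $\mathcal{L}$-elliptic with respect to $\Lambda$) to construct a parametrix $B_\lambda = \textnormal{Op}(a^{-\#}(x,\xi,\lambda))$ whose symbol lies in $S^{-m,\mathcal{L}}_{\rho,\delta}(G\times \widehat{G})$ and which satisfies
\begin{equation*}
(A - \lambda I)\,B_\lambda = I + K_\lambda,
\end{equation*}
with $K_\lambda \in \textnormal{Op}(S^{-\infty,\mathcal{L}}(G\times \widehat{G}))$ produced by the recursion in Proposition \ref{IesTParametrix}.

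Next, I would use the hypothesis $\lambda \in \textnormal{Resolv}(A)$, so that $\mathcal{R}_\lambda = (A - \lambda I)^{-1}$ exists as a bounded operator $L^2(G) \to H^{m,\mathcal{L}}(G)$. Since $a - \lambda$ is $\mathcal{L}$-elliptic, a standard subelliptic regularity argument (using the existence of a $\mathcal{L}$-parametrix) shows that $(A - \lambda I) u \in C^\infty(G)$ forces $u \in C^\infty(G)$, so $\mathcal{R}_\lambda$ preserves $C^\infty(G)$. Multiplying the parametrix identity on the left by $\mathcal{R}_\lambda$ and rearranging gives
\begin{equation*}
\mathcal{R}_\lambda = B_\lambda - \mathcal{R}_\lambda K_\lambda.
\end{equation*}
Because $K_\lambda$ has a smooth kernel and $\mathcal{R}_\lambda$ preserves $C^\infty(G)$, the composition $\mathcal{R}_\lambda K_\lambda$ maps $\mathscr{D}'(G) \to C^\infty(G)$ and therefore belongs to $\textnormal{Op}(S^{-\infty,\mathcal{L}})$. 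Consequently $\widehat{\mathcal{R}}_\lambda(x,\xi) = a^{-\#}(x,\xi,\lambda) + r_\lambda(x,\xi)$ with $r_\lambda \in S^{-\infty,\mathcal{L}}$, so $\widehat{\mathcal{R}}_\lambda \in S^{-m,\mathcal{L}}_{\rho,\delta}(G\times \widehat{G})$ as desired; invertibility of $A - \lambda I$ on $\mathscr{D}'(G)$ follows from the same regularity argument applied to $\mathcal{R}_\lambda$ extended by duality.

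The main obstacle will be bridging the global spectral hypothesis $\Lambda \subset \textnormal{Resolv}(A)$ to the pointwise parameter $\mathcal{L}$-ellipticity required by Corollary \ref{parameterparametrix}, i.e., checking that $a(x,\xi) - \lambda I_{d_\xi}$ is invertible with uniform control $\|(|\lambda|^{1/m} + \widehat{\mathcal{M}}(\xi))^{m}(a(x,\xi) - \lambda)^{-1}\|_{\textnormal{op}} \lesssim 1$. For large $\langle \xi \rangle$ the $\mathcal{L}$-elliptic growth of $a$ makes this automatic via Corollary \ref{IesTC}, whereas for $\langle \xi \rangle$ in a bounded range one reasons by contradiction: a failure of invertibility at some $(x_0, [\xi_0])$ would produce, by localizing matrix coefficients of $\xi_0$ near $x_0$, an approximate eigenvector of $A - \lambda I$ in $L^2(G)$, contradicting $\lambda \in \textnormal{Resolv}(A)$.
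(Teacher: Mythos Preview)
The paper gives no explicit proof of Corollary \ref{resolv}; it is simply asserted as a consequence of Proposition \ref{IesTParametrix} and Theorem \ref{lambdalambdita}. Your parametrix-plus-resolvent-identity argument is exactly the standard way to flesh this out and matches the intended reasoning: once one has $B_\lambda\in\textnormal{Op}(S^{-m,\mathcal{L}}_{\rho,\delta})$ with $(A-\lambda I)B_\lambda=I+K_\lambda$ and $K_\lambda$ smoothing, the identity $\mathcal{R}_\lambda=B_\lambda-\mathcal{R}_\lambda K_\lambda$ and the fact that $\mathcal{R}_\lambda K_\lambda$ has a smooth kernel (via hypoellipticity of $A-\lambda I$) give the conclusion.

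However, the obstacle you flag is not merely technical---it cannot be closed from the hypotheses as written. The corollary does not assume that $a$ is $\mathcal{L}$-elliptic (let alone parameter $\mathcal{L}$-elliptic with respect to $\Lambda$), and without this the statement is false: take $a\equiv 0\in S^{m,\mathcal{L}}_{\rho,\delta}$ for any $m>0$, so $A=0$ and $\textnormal{Resolv}(A)=\mathbb{C}\setminus\{0\}$; for $\lambda=1$ the resolvent is $-I$, whose symbol $-I_{d_\xi}$ lies in $S^{0,\mathcal{L}}_{\rho,\delta}$ but not in $S^{-m,\mathcal{L}}_{\rho,\delta}$. Your proposed patch (``the $\mathcal{L}$-elliptic growth of $a$ makes this automatic for large $\langle\xi\rangle$'') tacitly imports $\mathcal{L}$-ellipticity of $a$, which is the missing hypothesis. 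The paper's subsequent use of this corollary (Lemma \ref{LemmaFC}, Theorem \ref{DunforRiesz}) always takes $a$ to be parameter $\mathcal{L}$-elliptic with respect to $\Lambda$, confirming that this is the intended running assumption; with that hypothesis added, your proof is correct and is essentially the paper's argument.
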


\subsection{Asymptotic expansions for  regularised traces}
In this subsection we will study the trace for the heat semigroup associated with $\mathcal{L}$-elliptic positive left-invariant operators and also  regularised traces of subelliptic operators. So, we start with the following Pleijel type formula. 
 \begin{theorem}\label{asymptotictracemultiplier}
Let $G$ be a compact Lie group and let us denote by $Q$ the Hausdorff
dimension of $G$ associated to the control distance associated to the sub-Laplacian $\mathcal{L}=\mathcal{L}_X,$ where  $X= \{X_{1},\cdots,X_k\} $ is a system of vector fields satisfying the H\"ormander condition.  For  $0\leqslant \rho\leqslant 1,$ let us consider a positive  left-invariant $\mathcal{L}$-elliptic continuous linear operator $A:C^\infty(G)\rightarrow\mathscr{D}'(G)$ with symbol  $\sigma\in {S}^{m,\mathcal{L}}_{\rho}( \widehat{G})$, $m> 0$. If $A$ commutes with $\mathcal{L},$ then the heat trace of $A$ has an asymptotic behaviour of the form
\begin{equation}\label{asymp1}
    \textnormal{\textbf{Tr}}(e^{-tA})\sim c_{m,Q} t^{-\frac{Q}{m}}\times \int\limits_{t^{\frac{1}{m}}}^{\infty}e^{-s^m}s^{Q-1}ds,\,\,\forall t>0.
\end{equation} Moreover, we have the following asymptotic expansion,
\begin{equation}
\textnormal{\textbf{Tr}}(e^{-sA})=s^{-\frac{Q}{m}}\left(  \sum_{k=0}^{\infty}a_ks^{\frac{k}{m}}\right),\,\,s\rightarrow 0^{+}  .
\end{equation}
\end{theorem}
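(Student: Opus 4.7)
The plan is to reduce the heat trace computation to a Weyl-type spectral sum on the unitary dual. Since $A$ is positive, left-invariant, and commutes with $\mathcal{L}$, its matrix symbol $\widehat{A}(\xi)$ commutes with $\widehat{\mathcal{L}}(\xi)$ on every representation space, and both may be simultaneously diagonalised. I would choose a basis in which
$$\widehat{A}(\xi) = \textnormal{diag}(\mu_{ii}(\xi))_{1\leq i\leq d_\xi}, \qquad \widehat{\mathcal{L}}(\xi) = \textnormal{diag}(\nu_{ii}(\xi)^2)_{1\leq i\leq d_\xi},$$
with $\mu_{ii}(\xi)\geq 0$. Combining the $\mathcal{L}$-ellipticity of $\widehat{A}$ (Definition \ref{lellipticity}) with the symbol decay inequalities of Lemma \ref{lemadecaying1} yields the two-sided comparison $c_1(1+\nu_{ii}(\xi)^2)^{m/2}\leq \mu_{ii}(\xi)\leq c_2(1+\nu_{ii}(\xi)^2)^{m/2}$ uniformly in $(i,[\xi])$; in particular $\textnormal{Spec}(A)\subset[c_1,\infty)$.

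Next, I would rewrite the heat trace as a Stieltjes integral against the spectral counting function of $A$,
$$\textnormal{\textbf{Tr}}(e^{-tA})=\sum_{[\xi]\in\widehat{G}}d_\xi\sum_{i=1}^{d_\xi}e^{-t\mu_{ii}(\xi)}=\int_{c_1}^{\infty}e^{-t\lambda}\,dN_A(\lambda),$$
where $N_A(\lambda):=\sum_{[\xi]}d_\xi\,\#\{i:\mu_{ii}(\xi)\leq\lambda\}$. The comparability of $\mu_{ii}(\xi)$ with $(1+\nu_{ii}(\xi)^2)^{m/2}$ allows me to identify $N_A(\mu^m)$, up to constants, with the Weyl counting function of the sub-Laplacian from Remark \ref{weyl}, for which $N(\mu)\sim c_Q\mu^{Q}$ as $\mu\to\infty$. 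Writing $\lambda=\mu^m$, using $dN(\mu)\sim c_Q Q\,\mu^{Q-1}d\mu$, and applying the change of variables $s=t^{1/m}\mu$, one obtains
$$\textnormal{\textbf{Tr}}(e^{-tA})\sim c_Q Q\int_{1}^{\infty}e^{-t\mu^m}\mu^{Q-1}\,d\mu = c_Q Q\,t^{-Q/m}\int_{t^{1/m}}^{\infty}e^{-s^m}s^{Q-1}\,ds,$$
which is \eqref{asymp1} with $c_{m,Q}=c_Q Q$.

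For the full expansion $\textnormal{\textbf{Tr}}(e^{-sA})=s^{-Q/m}\sum_{k\geq 0}a_k s^{k/m}$ as $s\to 0^+$, I would proceed via the resolvent method. The parameter $\mathcal{L}$-ellipticity of $A-\lambda I$ on a sector $\Lambda$ disjoint from $\textnormal{Spec}(A)$, together with the uniform resolvent estimates of Theorem \ref{lambdalambdita} and Corollary \ref{resolv}, permit representing the semigroup as the Dunford--Riesz integral
$$e^{-tA}=\frac{1}{2\pi i}\int_{\Gamma}e^{-t\lambda}(A-\lambda I)^{-1}\,d\lambda$$
along a suitable contour $\Gamma$ encircling $\textnormal{Spec}(A)$. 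Substituting the parametrix expansion of Proposition \ref{IesTParametrix} for $(A-\lambda I)^{-1}$, taking the trace term by term using the diagonal representation of $\widehat{A}(\xi)$, and rescaling each contour integral by an argument analogous to the Pleijel step above produces successive powers $t^{(k-Q)/m}$ and hence identifies the coefficients $a_k$. The hard part will be controlling the remainder after $N$ parametrix steps: one needs trace-class bounds on operators in $\textnormal{Op}(S^{-M,\mathcal{L}}_{\rho,0}(\widehat{G}))$ for $M$ large, combined with the uniform-in-$\lambda$ symbol estimates of Theorem \ref{lambdalambdita}, in order to show that the remaining contour-integral contribution is of order $O(s^{(N+1-Q)/m})$ and thus negligible in the asymptotic sense.
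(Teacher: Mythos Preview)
Your derivation of \eqref{asymp1} is essentially the paper's argument: simultaneous diagonalisation via commutation with $\mathcal{L}$, the two-sided eigenvalue comparison from $\mathcal{L}$-ellipticity, reduction to a Weyl-type sum, and the change of variables $s=t^{1/m}\mu$. The paper phrases the spectral sum via dyadic blocks rather than a Stieltjes integral, but this is the same computation.

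Where you diverge is in the second assertion. You propose a resolvent/parametrix expansion with Dunford--Riesz contour integrals and trace-class remainder estimates; the paper does something far more elementary. Once \eqref{asymp1} is in hand, the paper simply observes that the function $g(u):=\int_u^\infty e^{-s^m}s^{Q-1}\,ds$ is smooth on $\mathbb{R}^+$ and admits a power-series expansion as $u\to 0^+$; substituting $u=t^{1/m}$ and multiplying by $t^{-Q/m}$ yields the claimed expansion in powers of $t^{1/m}$ directly. No parametrix, no contour integral, no trace-class control is needed---the asymptotic expansion is read off from the explicit integral already obtained. Your route would work in principle (and is the standard machinery for genuine heat-kernel expansions on the diagonal), but here it is considerable overkill and the ``hard part'' you anticipate never materialises in the paper's argument.
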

 \begin{proof}Note that $A$ is densely defined and positive on $L^2(G),$ so it admits a self-adjoint extension.
At the level of symbols, if $A$ commutes with $\mathcal{L},$ for every $[\xi]\in \widehat{G},$ $\sigma(\mathcal{\xi})$ commutes with $\widehat{\mathcal{L}}(\xi)$  and consequently, $\sigma(\mathcal{\xi})$ and  $\widehat{\mathcal{L}}(\xi)$ are simultaneously diagonalisable on every  representation space. So, in a suitable basis of the representation space we can write,
\[ 
    \sigma(\xi)=\textnormal{diag}[\sigma_{jj}(\xi)]_{j=1}^{d_\xi},\,\,\,\widehat{\mathcal{L}}(\xi)=\textnormal{diag}[(1+\nu_{jj}(\xi)^2)^{\frac{1}{2}}]_{j=1}^{d_\xi},
\]where $\sigma_{jj}(\xi),$ $1\leqslant k\leqslant d_{\xi},$ is the system of positive eigenvalues of $\sigma(\xi),$ $[\xi]\in \widehat{G}.$ The spectral mapping theorem implies that
\[ 
    \textnormal{spectrum}(e^{-tA})=\{e^{-t\sigma_{jj}(\xi)}:1\leqslant j\leqslant d_\xi,\,\,[\xi]\in \widehat{G}\}.
    \]So, we have
    \begin{align*}
        \textnormal{\textbf{Tr}}(e^{-tA})=\sum_{[\xi]\in  \widehat{G}}\sum_{j=1}^{d_{\xi}}e^{-t\sigma_{jj}(\xi)}.\,\,\,\,
    \end{align*} The $\mathcal{L}$-ellipticity of $A,$ implies that,
    \begin{align*}
        \sup_{1\leqslant j\leqslant d_\xi}\sigma_{jj}(\xi)^{-1}(1+\nu_{jj}(\xi)^2)^{\frac{m}{2}}=\Vert \sigma(\xi)^{-1}\widehat{\mathcal{M}}(\xi)\Vert_{\textnormal{op}}
        \leqslant  \sup_{[\xi]\in \widehat{G}}\Vert \sigma(\xi)^{-1}\widehat{\mathcal{M}}(\xi)\Vert_{\textnormal{op}}<\infty.
    \end{align*}Consequently,
    \[ 
        \inf_{1\leqslant j\leqslant d_\xi}\sigma_{jj}(\xi)(1+\nu_{jj}(\xi)^2)^{-\frac{m}{2}}\geqslant     \sup_{[\xi]\in \widehat{G}}\Vert \sigma(\xi)^{-1}\widehat{\mathcal{M}}(\xi)^{m}\Vert_{\textnormal{op}}^{-1}.
    \]
Now, observe that from the hypothesis $\sigma\in {S}^{m,\mathcal{L}}_{\rho}( \widehat{G})$ we have,
\[ 
  \sup_{1\leqslant j\leqslant d_\xi}\sigma_{jj}(\xi)(1+\nu_{jj}(\xi)^2)^{-\frac{m}{2}}\leqslant   \sup_{[\xi]\in \widehat{G}}\Vert \sigma(\xi)\widehat{\mathcal{M}}(\xi)^{-m}\Vert_{\textnormal{op}}.   
\]These inequalities reduce the problem of computing the trace  $\textnormal{\textbf{Tr}}(e^{-tA})$ to compute $\textnormal{\textbf{Tr}}(e^{-t(1+\mathcal{L})^{\frac{m}{2}}}).$ Indeed,
\begin{align*}
     &\textnormal{\textbf{Tr}}(e^{-tA})=\sum_{[\xi]\in  \widehat{G}}\sum_{j=1}^{d_{\xi}}e^{-t\sigma_{jj}(\xi)}=\sum_{[\xi]\in  \widehat{G}}\sum_{j=1}^{d_{\xi}}e^{-t\sigma_{jj}(\xi) (1+\nu_{jj}(\xi)^2)^{-\frac{m}{2}}(1+\nu_{jj}(\xi)^2)^{\frac{m}{2}}      }\\
     &\asymp \sum_{[\xi]\in  \widehat{G}}\sum_{j=1}^{d_{\xi}}e^{-t(1+\nu_{jj}(\xi)^2)^{\frac{m}{2}}      }=\textnormal{\textbf{Tr}}(e^{-t(1+\mathcal{L})^{\frac{m}{2}}}).
\end{align*}Now, we will use the Weyl-law for the sub-Laplacian (see e.g. Remark \ref{weyl}). Observe that,
\begin{align*}
  \textnormal{\textbf{Tr}}(e^{-t(1+\mathcal{L})^{\frac{m}{2}}})=\sum_{k=0}^{\infty}\sum_{[\xi]:2^{k}\leqslant (1+\nu_{j'j'}(\xi)^2)^\frac{1}{2} <2^{k+1},\, \forall 1\leqslant j'\leqslant d_{\xi} } \sum_{j=1}^{d_{\xi}}e^{-t(1+\nu_{jj}(\xi)^2)^{\frac{m}{2}}      }.
\end{align*}Because,
\begin{align*}
   & \sum_{[\xi]:2^{k}\leqslant (1+\nu_{j'j'}(\xi)^2)^\frac{1}{2} <2^{k+1},\,\forall 1\leqslant j'\leqslant d_{\xi} } \sum_{j=1}^{d_{\xi}}e^{-t(1+\nu_{jj}(\xi)^2)^{\frac{m}{2}}      }\\
    &\asymp \sum_{[\xi]:2^{k}\leqslant (1+\nu_{j'j'}(\xi)^2)^\frac{1}{2} <2^{k+1},\,\forall 1\leqslant j'\leqslant d_{\xi} } {d_{\xi}}e^{-t2^{km}      }, 
\end{align*}we have 
\begin{align*}
    \textnormal{\textbf{Tr}}(e^{-t(1+\mathcal{L})^{\frac{m}{2}}})&=\sum_{k=0}^{\infty} e^{-t2^{km}}\sum_{[\xi]:2^{k}\leqslant (1+\nu_{j'j'}(\xi)^2)^\frac{1}{2} <2^{k+1},\,\forall 1\leqslant j'\leqslant d_{\xi} } d_\xi\\
    &=\sum_{k=0}^{\infty} e^{-t2^{km}}N(2^k)=\sum_{k=0}^{\infty} e^{-t2^{km}}2^{kQ}\\
    &=\sum_{k=0}^{\infty} e^{-t2^{km}}2^{k(Q-1)}2^{k}.
\end{align*}Observe that
\begin{align*}
    \sum_{k=0}^{\infty} e^{-t2^{km}}2^{k(Q-1)}2^{k}\asymp \int\limits_{1}^{\infty}e^{-t\lambda^m}\lambda^{Q-1}d\lambda=t^{-\frac{Q}{m}}\int\limits_{t^{\frac{1}{m}}}^{\infty}e^{-s^m}s^{Q-1}ds.
\end{align*}The condition $m>0,$ implies that $g(t):=\int\limits_{t}^{\infty}e^{-s^m}s^{Q-1}ds,$ is smooth and real-analytic on $\mathbb{R}^{+}:=(0,\infty),$ admitting a Taylor expansion of the form 
\begin{align*}
  g(s)=  \sum_{k=0}^{\infty}a_k's^k\,\,\,s\rightarrow 0^+.
\end{align*}
So,  we have the estimate $\textnormal{\textbf{Tr}}(e^{-sA})\sim c_{m,Q}s^{-\frac{Q}{m}}g(s),$ for some positive constant $c_{m,Q}.$ On the other hand, we deduce that $F(s):=s^{\frac{Q}{m}}\textnormal{\textbf{Tr}}(e^{-sA})$ is a real-analytic function and its Taylor expansion at $s=0,$ has the form: $\sum_{k=0}^{\infty}a_k's^{\frac{k}{m}},$ which implies the following expansion,
\[ 
\textnormal{\textbf{Tr}}(e^{-sA})=s^{-\frac{Q}{m}}\left(  \sum_{k=0}^{\infty}a_ks^{\frac{k}{m}}\right),\,\,s\rightarrow 0^{+}  .
\]
Thus, we end the proof.
\end{proof}
 
\begin{remark}Observe that under the conditions of Theorem \ref{asymptotictracemultiplier}, we have
\begin{equation}
    \textnormal{\textbf{Tr}}(e^{-tA})\sim c_{m,Q,t} t^{-\frac{Q}{m}},\,\,\forall t>0,
\end{equation} where $c_{m,Q,t}:=c_{m,Q} \int\limits_{t^{\frac{1}{m}} }^{\infty}e^{-s^m}s^{Q-1}ds.$ For $t\rightarrow\infty,$ $c_{m,Q,t}\rightarrow 0^{+},$ and in general,
\[ 
    0<c_{m,Q,t}\leqslant \int\limits_{0}^{\infty}e^{-s^m}s^{Q-1}ds=o( 1),\,\,\,0\leqslant t<\infty.
\]So, \eqref{asymp1} implies the following estimate
\[ 
     \textnormal{\textbf{Tr}}(e^{-tA})\sim c_{m,Q} t^{-\frac{Q}{m}}.
\]
\end{remark} 
 Now, we study other kind of singularities appearing in traces of the form $\textnormal{\textbf{Tr}}(Ae^{-t(1+\mathcal{L})^\frac{q}{2}}).$ To illustrate the importance of computing such traces let us recall an interesting situation that comes from spectral geometry. If $M$ is an orientable and  compact manifold without boundary, and if $E$ is a positive elliptic pseudo-differential operator of order $q>0,$ for every elliptic and positive pseudo-differential operator $A$ with order $m,$ $m\geqslant -\dim(M),$ we have 
 \begin{equation}\label{subellipticWR}
     \textnormal{\textbf{Tr}}(Ae^{-tE})\sim t^{-\frac{m+\dim(M)}{q}}\sum_{k=0}^\infty a_kt^k-\frac{b_0}{q}\log(t)+O(1).
 \end{equation}If $m>-\dim(M),$ $b_0=0,$ and for $m=-\dim(M),$ $a_{k}=0$ for every $k,$ and $b_0=\textnormal{res}(A)$ is the Wodzicki residue of $A,$ see e.g. Wodzicki \cite{Wodzicki} and Lesch \cite{Lesch}.
 Let us recall that a matrix $M\in \mathbb{K}^{\ell\times \ell},$ $\mathbb{K}=\mathbb{R}$ or $\mathbb{K}=\mathbb{C},$  is  positive if
 \[ 
   (M v,v)_{\mathbb{K}^{\ell}} \geqslant    0,\,\,\forall v\in \mathbb{K}^{\ell}.
\]
 Now, we will compute an analogy of \eqref{subellipticWR} for subelliptic operators.
\begin{theorem}\label{asymptotictracemultiplier2}
Let $G$ be a compact Lie group and let us denote by $Q$ the Hausdorff
dimension of $G$ associated to the control distance associated to the sub-Laplacian $\mathcal{L}=\mathcal{L}_X,$ where  $X= \{X_{1},\cdots,X_k\} $ is a system of vector fields satisfying the H\"ormander condition of order $\kappa$.  For  $0\leqslant\delta, \rho\leqslant 1,$ let us consider an  $\mathcal{L}$-elliptic continuous linear operator $A:C^\infty(G)\rightarrow\mathscr{D}'(G)$ with symbol  $\sigma\in {S}^{m,\mathcal{L}}_{\rho,\delta}(G\times  \widehat{G})$, $m\in \mathbb{R} $. Let $\mathcal{M}_q=(1+\mathcal{L})^{\frac{q}{2}}$ be the subelliptic Bessel potential of order $q>0$. If   $\sigma (x,[\xi])\geqslant     0,$ for every $(x,[\xi])\in G\times \widehat{G},$ 
then 
\begin{equation}\label{asymp122}
    \textnormal{\textbf{Tr}}(Ae^{-t\mathcal{M}_q})\sim c_{m,Q} t^{-\frac{Q+m}{q}}\times \int\limits_{t^{\frac{1}{q}}}^{\infty}e^{-s^m}s^{Q+m-1}ds,\,\,\forall t>0.
\end{equation}In particular, for $m=-Q,$ we have
\begin{equation}\label{asymp1212}
    \textnormal{\textbf{Tr}}(Ae^{-t\mathcal{M}_q})\sim -\frac{c_{Q}}{q}\log(t),\,\,\forall t\in (0,1),
\end{equation}while for $m>-Q,$ we have the asymptotic expansion
\begin{equation}\label{asymp1212'}
\textnormal{\textbf{Tr}}(Ae^{-t\mathcal{M}_q})=t^{-\frac{Q+m}{q}}\left(  \sum_{k=0}^{\infty}a_kt^\frac{k}{q}\right),\,\,t\rightarrow 0^{+}  .
\end{equation}
\end{theorem}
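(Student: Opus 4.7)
The plan is to reduce the heat trace to an explicit spectral sum via the matrix-valued quantisation, estimate the sum using the Weyl law for $\mathcal{L}$, and then repeat the elementary computation already carried out in Theorem \ref{asymptotictracemultiplier}. Since $\mathcal{M}_q=(1+\mathcal{L})^{q/2}$ is left-invariant, the functional calculus gives $\widehat{e^{-t\mathcal{M}_q}}(\xi)=\exp(-t\widehat{\mathcal{M}}_q(\xi))$, a diagonal matrix in any basis diagonalising $\widehat{\mathcal{L}}(\xi)$. Because composition with a left-invariant multiplier is exact in the Ruzhansky--Turunen quantisation, $A\circ e^{-t\mathcal{M}_q}$ has global symbol $\sigma(x,\xi)\exp(-t\widehat{\mathcal{M}}_q(\xi))$, and the trace formula $\textnormal{\textbf{Tr}}(P)=\sum_{[\xi]}d_\xi\int_G\textnormal{\textbf{Tr}}\,p(x,\xi)\,dx$, applied in the $\widehat{\mathcal{L}}$-adapted basis, gives
\begin{equation*}
\textnormal{\textbf{Tr}}(Ae^{-t\mathcal{M}_q})=\sum_{[\xi]\in\widehat{G}}d_\xi\sum_{j=1}^{d_\xi}\Bigl(\int_G\sigma(x,\xi)_{jj}\,dx\Bigr)e^{-t(1+\nu_{jj}(\xi)^2)^{q/2}}.
\end{equation*}

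Next I would combine the positivity of $\sigma$ with $\mathcal{L}$-ellipticity to show the sharp two-sided comparison $\int_G\sigma(x,\xi)_{jj}\,dx\asymp 2^{km}$ on each dyadic spectral annulus $\Xi_k:=\{[\xi]:2^k\leqslant (1+\nu_{j'j'}(\xi)^2)^{1/2}<2^{k+1},\;\forall j'\}$. For the upper bound, Lemma \ref{lemadecaying1} gives $|\sigma(x,\xi)_{jj}|\leqslant\Vert\sigma(x,\xi)\Vert_{\textnormal{op}}\lesssim\max_k(1+\nu_{kk}(\xi)^2)^{m/2}$. For the lower bound, applying the ellipticity inequality $\Vert\widehat{\mathcal{M}}(\xi)^m\sigma(x,\xi)^{-1}\Vert_{\textnormal{op}}\leqslant C_0$ to an eigenvector $v$ of $\sigma(x,\xi)$ of minimal eigenvalue yields $\Vert\widehat{\mathcal{M}}(\xi)^m v\Vert\leqslant C_0\lambda_{\min}(\sigma(x,\xi))$, and since $\widehat{\mathcal{M}}(\xi)^m$ is diagonal with eigenvalues $(1+\nu_{jj}(\xi)^2)^{m/2}$ the left-hand side is at least $\min_k(1+\nu_{kk}(\xi)^2)^{m/2}\Vert v\Vert$; positivity of $\sigma$ then gives $\sigma(x,\xi)_{jj}\geqslant\lambda_{\min}(\sigma(x,\xi))\geqslant C_0^{-1}\min_k(1+\nu_{kk}(\xi)^2)^{m/2}$. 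On $\Xi_k$ all the $\nu_{jj}(\xi)$ are comparable to $2^k$, so both bounds collapse to $\asymp 2^{km}$ uniformly in $j$, and integration over $G$ preserves this comparison.

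Summing over the dyadic annuli, and using the Weyl count $\sum_{[\xi]\in\Xi_k}d_\xi^2\asymp 2^{kQ}$ from Remark \ref{weyl}, I obtain
\begin{equation*}
\textnormal{\textbf{Tr}}(Ae^{-t\mathcal{M}_q})\asymp\sum_{k\geqslant 0}2^{k(Q+m)}e^{-t2^{kq}}\asymp\int_1^\infty\lambda^{Q+m-1}e^{-t\lambda^q}\,d\lambda,
\end{equation*}
and the substitution $s=t^{1/q}\lambda$ produces the announced form $c_{m,Q}\,t^{-(Q+m)/q}\int_{t^{1/q}}^\infty s^{Q+m-1}e^{-s^q}\,ds$ of \eqref{asymp122}. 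The two remaining special cases then follow from the behaviour of $F(t):=\int_{t^{1/q}}^\infty s^{Q+m-1}e^{-s^q}\,ds$ as $t\to 0^+$: when $m>-Q$ the integrand is integrable at $0$, so $F$ extends real-analytically in $t^{1/q}$ to the origin, and multiplying its Taylor expansion by $t^{-(Q+m)/q}$ yields \eqref{asymp1212'}; when $m=-Q$ the integrand is $s^{-1}e^{-s^q}$ and the local piece $\int_{t^{1/q}}^{1}s^{-1}\,ds=-\frac{1}{q}\log t$ gives the logarithmic term \eqref{asymp1212}.

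The main technical obstacle is the lower bound $\int_G\sigma(x,\xi)_{jj}\,dx\gtrsim 2^{km}$ on the dyadic annuli. Without the positivity hypothesis on $\sigma$, the $\mathcal{L}$-ellipticity would control only an operator norm of the inverse (and hence, via the eigenvalue argument, only $|\lambda_{\min}|$, which need not bound the individual diagonal entries from below); worse, $x$-integration of signed $\sigma(x,\xi)_{jj}$ could produce cancellation against the rapidly decaying exponentials, destroying the sharp asymptotic. Positivity is precisely what converts the ellipticity bound on the inverse into a uniform pointwise lower bound on every diagonal entry, so that the Weyl-law integration delivers the stated $\sim$ asymptotic rather than a merely $\lesssim$ upper estimate.
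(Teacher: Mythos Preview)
Your proposal is correct and follows essentially the same route as the paper: write the trace via the global symbol, use positivity together with $\mathcal{L}$-ellipticity to squeeze the symbol between two comparable powers of $\widehat{\mathcal{M}}(\xi)$ on each dyadic spectral annulus, invoke the subelliptic Weyl count, and finish with the same elementary analysis of $\int_{t^{1/q}}^{\infty}s^{Q+m-1}e^{-s^{q}}ds$. The only difference is a technical one of bookkeeping: you work in the $\widehat{\mathcal{L}}$-diagonal basis (so that $e^{-t\widehat{\mathcal{M}}_q(\xi)}$ is diagonal and $\textnormal{\textbf{Tr}}[\sigma(x,\xi)e^{-t\widehat{\mathcal{M}}_q(\xi)}]=\sum_j\sigma(x,\xi)_{jj}e^{-t(1+\nu_{jj}(\xi)^2)^{q/2}}$) and bound the diagonal entries $\sigma_{jj}$ directly via $\lambda_{\min}(\sigma)\leqslant\sigma_{jj}\leqslant\lambda_{\max}(\sigma)$, whereas the paper diagonalises $\sigma(x,\xi)\widehat{\mathcal{M}}(\xi)^{-m}$ and then changes basis to recover $\textnormal{\textbf{Tr}}[\widehat{\mathcal{M}}(\xi)^{m}e^{-t\widehat{\mathcal{M}}_q(\xi)}]$. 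Your choice is slightly cleaner and avoids that change-of-basis step; otherwise the two arguments are the same.
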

\begin{proof}
We will follow the same approach as in Theorem \ref{asymptotictracemultiplier}. Because the trace of $Ae^{-t\mathcal{M}_q}$ is the integral of its Schwartz kernel over the diagonal (this is a consequence of the main results in \cite{DelRuzTrace1111}), we have

 \begin{align*}
     \textnormal{\textbf{Tr}}(Ae^{-t\mathcal{M}_q})&=\int\limits_{G}\sum_{[\xi]\in  \widehat{G}}d_{\xi}\textnormal{\textbf{Tr}}[\sigma(x,\xi)e^{-t\widehat{\mathcal{M}}(\xi)^{q}}]dx\\
     &=\int\limits_{G}\sum_{[\xi]\in  \widehat{G}}d_{\xi}\textnormal{\textbf{Tr}}[\sigma(x,\xi)  \widehat{\mathcal{M}}(\xi)^{-m}\widehat{\mathcal{M}}(\xi)^{m} e^{-t\widehat{\mathcal{M}}(\xi)^{q}}]dx.
\end{align*}In a suitable basis of the representation space we can diagonalise the operator $\sigma(x,\xi)  \widehat{\mathcal{M}}(\xi)^{-m},$ and we can write in  such a basis,
\begin{equation}
    \sigma(x,\xi)  \widehat{\mathcal{M}}(\xi)^{-m}=\textnormal{diag}[\lambda_{jj}(x,\xi)]_{j=1}^{d_\xi},\,\,\widehat{\mathcal{M}}(\xi)^{m} e^{-t\widehat{\mathcal{M}}(\xi)^{q}}=[\Omega_{ij,t}(\xi)]_{i,j=1}^{d_\xi}.
\end{equation}
Now, we can write
\begin{align*}
     \textnormal{\textbf{Tr}}(Ae^{-t\mathcal{M}_q})
     &=\int\limits_{G}\sum_{[\xi]\in  \widehat{G}}d_{\xi}\textnormal{\textbf{Tr}}[\sigma(x,\xi)  \widehat{\mathcal{M}}(\xi)^{-m}\widehat{\mathcal{M}}(\xi)^{m} e^{-t\widehat{\mathcal{M}}(\xi)^{q}}]dx\\
     &=\sum_{[\xi]\in  \widehat{G}}\sum_{j,j'=1}^{d_\xi}d_{\xi}\int\limits_{G}\lambda_{j'j'}(x,\xi)dx\,\Omega_{j'j,t}(\xi).
\end{align*}
The $\mathcal{L}$-ellipticity of $A$ and the positivity  of its symbol, imply that 
\begin{align*}
  \sup_{(x,[\xi])\in \widehat{G}}\Vert \sigma(x,\xi)^{-1}\widehat{\mathcal{M}}(\xi)^{m}\Vert_{\textnormal{op}}^{-1}&\leqslant   \inf_{x\in G}\inf_{j',j',[\xi]\in \widehat{G}}\lambda_{j'j'}(x,\xi)\\
  &\leqslant \sup_{x\in G}\sup_{j',j',[\xi]\in \widehat{G}}\lambda_{j'j'}(x,\xi)\\ &=\sup_{(x,[\xi])\in \widehat{G}}\Vert \sigma(x,\xi)\widehat{\mathcal{M}}(\xi)^{-m}\Vert_{\textnormal{op}},\hspace{2cm}
\end{align*}from which we deduce the following estimate,
\begin{align*}
     \textnormal{\textbf{Tr}}(Ae^{-t\mathcal{M}_q})
     &\asymp \sum_{[\xi]\in  \widehat{G}}\sum_{j,j'=1}^{d_\xi}d_{\xi}\Omega_{j'j,t}(\xi).
\end{align*} Because
$\widehat{\mathcal{M}}(\xi)^{m} e^{-t\widehat{\mathcal{M}}(\xi)^{q}}=[\Omega_{ij,t}(\xi)]_{i,j=1}^{d_\xi}$ is a symmetric matrix written in the basis that allows to write in a  diagonal form the operator $\sigma(x,\xi)\widehat{\mathcal{M}}(\xi)^{-m},$ we can find a matrix $P(\xi)$ such that
\[ 
    \widehat{\mathcal{M}}(\xi)^{m} e^{-t\widehat{\mathcal{M}}(\xi)^{q}}=P(\xi)^{-1}  \textnormal{diag}[  (1+\nu_{jj}(\xi)^2)^{\frac{m}{2}} e^{-t(1+\nu_{jj}(\xi)^2)^{\frac{q}{2}}}]_{j=1}^{d_\xi}   P(\xi),
\]and 
\begin{align*}
   & \sum_{j,j'=1}^{d_\xi}\Omega_{j'j,t}(\xi)=\sum_{j,j'=1}^{d_\xi}[P(\xi)^{-1}  \textnormal{diag}[  (1+\nu_{ss}(\xi)^2)^{\frac{m}{2}} e^{-t(1+\nu_{ss}(\xi)^2)^{\frac{q}{2}}}]_{s=1}^{d_\xi}   P(\xi)]_{j'j}\\
    &=\sum_{j,j's=1}^{d_\xi}P(\xi)^{-1}_{j'j}    (1+\nu_{j'j'}(\xi)^2)^{\frac{m}{2}} e^{-t(1+\nu_{j'j'}(\xi)^2)^{\frac{q}{2}}}   P(\xi)]_{j's}\\
    &=\textnormal{\textbf{Tr}}[P(\xi)^{-1}  \textnormal{diag}[  (1+\nu_{jj}(\xi)^2)^{\frac{m}{2}} e^{-t(1+\nu_{jj}(\xi)^2)^{\frac{q}{2}}}]_{j=1}^{d_\xi}   P(\xi)]=\textnormal{\textbf{Tr}}[\widehat{\mathcal{M}}(\xi)^{m} e^{-t\widehat{\mathcal{M}}(\xi)^{q}}].
\end{align*}
Now, as above, we will use the Weyl-law for the sub-Laplacian (see e.g. Remark \ref{weyl}). Observe that,
\begin{align*}
  &\textnormal{\textbf{Tr}}( (1+\mathcal{L})^{\frac{m}{2}} e^{-t(1+\mathcal{L})^{\frac{q}{2}}})\\
  \quad&=\sum_{k=0}^{\infty}\sum_{[\xi]:2^{k}\leqslant (1+\nu_{j'j'}(\xi)^2)^\frac{1}{2} <2^{k+1},\,\forall 1\leqslant j'\leqslant d_{\xi} } \sum_{j=1}^{d_{\xi}}(1+\nu_{jj}(\xi)^2)^\frac{m}{2}e^{-t(1+\nu_{jj}(\xi)^2)^{\frac{q}{2}}      }.
\end{align*}Because,
\begin{align*}
    &\sum_{[\xi]:2^{k}\leqslant (1+\nu_{j'j'}(\xi)^2)^\frac{1}{2} <2^{k+1},\,\forall 1\leqslant j'\leqslant d_{\xi} } \sum_{j=1}^{d_{\xi}}(1+\nu_{jj}(\xi)^2)^\frac{m}{2}e^{-t(1+\nu_{jj}(\xi)^2)^{\frac{q}{2}}      }\\
    \quad &\asymp \sum_{[\xi]:2^{k}\leqslant (1+\nu_{j'j'}(\xi)^2)^\frac{1}{2} <2^{k+1},\,\forall 1\leqslant j'\leqslant d_{\xi} } {d_{\xi}} 2^{km} e^{-t2^{kq}      }, 
\end{align*}we can write 
\begin{align*}
    \textnormal{\textbf{Tr}}(Ae^{-t\mathcal{M}_q})&\asymp \sum_{k=0}^{\infty} 2^{km}e^{-t2^{kq}}\sum_{[\xi]:2^{k}\leqslant (1+\nu_{j'j'}(\xi)^2)^\frac{1}{2} <2^{k+1},\,\forall 1\leqslant j'\leqslant d_{\xi} } d_\xi\\
    &=\sum_{k=0}^{\infty} 2^{km}e^{-t2^{kq}}N(2^k)=\sum_{k=0}^{\infty} 2^{km}e^{-t2^{kq}}2^{kQ}\\
    &=\sum_{k=0}^{\infty} e^{-t2^{kq}}2^{k(Q+m-1)}2^{k}.
\end{align*}From the estimate
\begin{align*}
    \sum_{k=0}^{\infty} e^{-t2^{kq}}2^{k(Q+m-1)}2^{k}\asymp \int\limits_{1}^{\infty}e^{-t\lambda^q}\lambda^{Q+m-1}d\lambda=t^{-\frac{Q+m}{q}}\int\limits_{t^{\frac{1}{q}}}^{\infty}e^{-s^q}s^{Q+m-1}ds,
\end{align*} we have proved the first part of the theorem. Now, in particular, for $m=-Q,$ we have
\begin{align*}
   \textnormal{\textbf{Tr}}(Ae^{-t\mathcal{M}_q})\sim \int\limits_{t^{\frac{1}{q}}}^{\infty}e^{-s^q}s^{-1}ds. 
\end{align*}Observe that for $0<t<1,$ the main contribution in the integral $\int\limits_{t^{\frac{1}{q}}}^{\infty}e^{-s^q}s^{-1}ds$ is the integral of $G(s):=e^{-s^q}s^{-1},$ on the interval $[t^{\frac{1}{q}},1).$ Indeed, $\int\limits_{1}^{\infty}e^{-s^q}s^{-1}ds=o(1)$ for $q>0.$ Now, we can compute
\begin{align*}
 \int\limits_{t^{\frac{1}{q}}}^{1}e^{-s^q}s^{-1}ds  \sim  \int\limits_{t^{\frac{1}{q}}}^{1}s^{-1}ds=-\frac{1}{q}\log(t).
\end{align*}In the case $m>-Q,$ we have that the function $g(t)=\int\limits_{t}^{\infty}e^{-s^q}s^{Q+m-1}ds<\infty,$ is real analytic in $[0,\infty),$ and for $t\rightarrow 0^+,$ $g(t)=\sum_{k=0}^\infty b_{k}t^k,$ which implies 
\begin{align*}
    \textnormal{\textbf{Tr}}(Ae^{-t\mathcal{M}_q})\sim t^{-\frac{Q+m}{q}}\left(  \sum_{k=0}^{\infty}b_k't^{\frac{k}{q}}\right),\,\,t\rightarrow 0^{+} .
\end{align*}
So, we end the proof.
\end{proof}
\begin{remark}\label{remarkresidue}
Observe that we can summarise \eqref{asymp1212} and \eqref{asymp1212'} by writing 
\begin{equation}\label{summarisingformulas}
     \textnormal{\textbf{Tr}}(Ae^{-t(1+\mathcal{L})^\frac{q}{2}})\sim t^{-\frac{m+Q}{2}}\sum_{k=0}^\infty a_kt^\frac{k}{q}-\frac{b_0}{q}\log(t),\quad t\rightarrow0^{+},
 \end{equation}for  $m\geqslant    -Q.$ If $m=-Q,$ then $a_k=0$ for every $k,$ and for $m>-Q,$ $b_0=0.$
\end{remark}

\begin{example}Let us assume that $a(x)$ is an integrable  function over $G.$ Let $P=a(x)A,$ where $A\in S^{m,\mathcal{L}}_\rho(\widehat{G}),$ $0\leqslant \rho\leqslant 1,$ is a positive pseudo-differential operator of order $m\geqslant    -Q.$ Let us assume that $A$ is an $\mathcal{L}$-elliptic operator which commutes with $\mathcal{L}.$ Because the $L^2$-trace of $Pe^{-t(1+\mathcal{L})^\frac{q}{2}}$ is the integral of its kernel over the diagonal (see \cite{DelRuzTrace1111}), we have
\begin{align*}
  \textnormal{\textbf{Tr}}(Pe^{-t(1+\mathcal{L})^\frac{q}{2}}) =\int\limits_{G}\sum_{[\xi]\in \widehat{G}}\textnormal{\textbf{Tr}}(a(x)\sigma(\xi)e^{-t\widehat{\mathcal{M}}_q(\xi)})dx=\int\limits_{G}a(g)dg\times \textnormal{\textbf{Tr}}(Ae^{-t(1+\mathcal{L})^\frac{q}{2}}). 
\end{align*}This implies that $P$ also admits an asymptotic expansion of the form
\[ \textnormal{\textbf{Tr}}(Pe^{-t(1+\mathcal{L})^\frac{q}{2}})\sim t^{-\frac{m+Q}{q}}\sum_{k=0}^\infty a_kt^\frac{k}{q}-\frac{b_0}{q}\int\limits_{G}a(x)dx\log(t),\quad t\rightarrow0^{+}.
\]
\end{example}
Now, if in Theorem \ref{asymptotictracemultiplier2}  we replace the role of the sub-Laplacian $\mathcal{L}$ by using the Laplacian $\mathcal{L}_G$ on $G,$ we obtain the following corollary.

\begin{corollary}\label{asymptotictracemultiplier2'}
Let $G$ be a compact Lie group and let us denote by $n$ its dimension.  For  $0\leqslant\delta, \rho\leqslant 1,$ let us consider an  elliptic continuous linear operator $A:C^\infty(G)\rightarrow\mathscr{D}'(G)$ with symbol  $\sigma\in \mathscr{S}^{m}_{\rho,\delta}(G\times \widehat{G})$, $m\in \mathbb{R} $. Let $B_q=(1+\mathcal{L}_G)^{\frac{q}{2}}$ be the  Bessel potential of order $q>0$. If   $\sigma (x,[\xi])\geqslant     0,$ for every $(x,[\xi])\in G\times \widehat{G},$ 
then 
\begin{equation}\label{asymp122''''}
    \textnormal{\textbf{Tr}}(Ae^{-tB_q})\sim c_{m,n} t^{-\frac{n+m}{q}}\times \int\limits_{t^{\frac{1}{q}}}^{\infty}e^{-s^m}s^{n+m-1}ds,\,\,\forall t>0.
\end{equation}In particular, for $m=-n,$ we have
\begin{equation}\label{asymp1212''''}
    \textnormal{\textbf{Tr}}(Ae^{-tB_q})\sim -\frac{c_{n}}{q}\log(t),\,\,\forall t\in (0,1),
\end{equation}while for $m>-n,$ we have the asymptotic expansion
\begin{equation}\label{asymp1212'''}
\textnormal{\textbf{Tr}}(Ae^{-tB_q})=t^{-\frac{n+m}{q}}\left(  \sum_{k=0}^{\infty}a_kt^\frac{k}{q}\right),\,\,t\rightarrow 0^{+}  .
\end{equation}
\end{corollary}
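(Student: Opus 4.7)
The plan is to reduce Corollary \ref{asymptotictracemultiplier2'} directly to Theorem \ref{asymptotictracemultiplier2} by choosing the sub-Laplacian to be the Laplacian itself. Concretely, if $\mathbb{X}=\{X_{1},\dots,X_{n}\}$ is any basis of $\mathfrak{g}$, then trivially $\mathbb{X}$ satisfies the H\"ormander condition of step $\kappa=1$, and $\mathcal{L}_{G}=-(X_{1}^{2}+\cdots+X_{n}^{2})$ is the associated ``sub-Laplacian''. With $\kappa=1$, the filtration in \eqref{Hausdorff-dimension} collapses to $H^{1}G=TG$, so the Hausdorff dimension equals the topological dimension: $Q=\dim H^{1}G=n$. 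This is the key numerical identification that lets $n$ take the place of $Q$ in the conclusion.

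Next I would check that the ambient symbol classes and the notion of ellipticity match. With $\kappa=1$ the inequality \eqref{GarettoRuzhanskyIneq} becomes $\langle\xi\rangle\asymp(1+\nu_{ii}(\xi)^{2})^{1/2}$, so the subelliptic weight $\widehat{\mathcal{M}}(\xi)$ and the elliptic weight $\langle\xi\rangle I_{d_{\xi}}$ are equivalent in operator norm. Consequently the contracted subelliptic H\"ormander classes $S^{m,\mathcal{L}_{G}}_{\rho,\delta}(G\times\widehat{G})$ (Definition \ref{contracted''}) coincide with the classes $\mathscr{S}^{m}_{\rho,\delta}(G\times\widehat{G})$ of \eqref{smrd}, and an operator is $\mathcal{L}_{G}$-elliptic in the sense of Definition \ref{lellipticity} precisely when it is elliptic in the usual sense. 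Thus every hypothesis of Theorem \ref{asymptotictracemultiplier2} is satisfied with $\mathcal{L}$ replaced by $\mathcal{L}_{G}$ and $Q$ replaced by $n$.

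Finally, I would apply Theorem \ref{asymptotictracemultiplier2} verbatim. The heat operator $\mathcal{M}_{q}=(1+\mathcal{L})^{q/2}$ is replaced by $B_{q}=(1+\mathcal{L}_{G})^{q/2}$, and \eqref{asymp122}, \eqref{asymp1212}, \eqref{asymp1212'} translate directly into \eqref{asymp122''''}, \eqref{asymp1212''''}, \eqref{asymp1212'''} respectively with $Q\rightsquigarrow n$. The positivity assumption on the symbol $\sigma$ is preserved, and the computation via the Weyl counting function goes through since in the elliptic case the count $N(\lambda)\asymp\lambda^{n}$ is the classical Weyl law on $G$, which is a special case of Remark \ref{weyl} with $Q=n$.

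I do not expect any genuine obstacle: the only conceptual point is the identification $Q=n$ for $\kappa=1$, and the verification that the two notions of symbol class and ellipticity coincide under this identification. Once these are observed, the corollary is a literal specialisation of Theorem \ref{asymptotictracemultiplier2} and no new computation is needed.
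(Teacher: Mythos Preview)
Your proposal is correct and takes essentially the same approach as the paper: the corollary is stated immediately after Theorem \ref{asymptotictracemultiplier2} with the remark that one simply replaces the sub-Laplacian $\mathcal{L}$ by the Laplacian $\mathcal{L}_G$, which is exactly the specialisation $\kappa=1$, $Q=n$ that you carry out. Your additional checks (that the symbol classes and ellipticity notions coincide when $\kappa=1$) make explicit what the paper leaves implicit, but no separate argument is given there either.
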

\begin{remark}
It is obvious that Corollary \ref{asymptotictracemultiplier2'} follows from
\eqref{subellipticWR} in the case $(\rho,\delta)=(1,0).$ However, the notion of ellipticity in the H\"ormander classes allows us to extend this kind of asymptotic expansions in the complete range $0\leqslant \delta, \rho\leqslant 1,$ without the natural assumption $\delta<\rho.$
\end{remark}
Now, we will study regularised traces of the form $\textnormal{\bf{Tr}}(A\psi(t E))$ where $t\in \mathbb{R},$ $\psi$ is a compactly supported real-valued function and $E$ is an $\mathcal{L}$-elliptic positive left-invariant operator of order $q>0.$

\begin{theorem}\label{asymptotictraceeta}
Let $G$ be a compact Lie group and let us denote by $Q$ the Hausdorff
dimension of $G$ associated to the control distance associated to the sub-Laplacian $\mathcal{L}=\mathcal{L}_X,$ where  $X= \{X_{1},\cdots,X_k\} $ is a system of vector fields satisfying the H\"ormander condition.  For  $0\leqslant\delta, \rho\leqslant 1,$ let us consider an  $\mathcal{L}$-elliptic continuous linear operator $A:C^\infty(G)\rightarrow\mathscr{D}'(G)$ with symbol  $\sigma\in {S}^{m,\mathcal{L}}_{\rho,\delta}(G\times \widehat{G})$, $m\in \mathbb{R} $. Let $E$ be positive $\mathcal{L}$-elliptic left-invariant operator  of order $q>0$. If   $\sigma (x,[\xi])\geqslant     0,$ for every $(x,[\xi])\in G\times \widehat{G},$ 
then 
\begin{equation}\label{asymp122222}
    \textnormal{\bf{Tr}}(A\psi(t E))\sim \frac{1}{q}\int\limits_{0}^{\infty}\psi(s)\times\frac{ds}{s},\,\,\forall t>0,
\end{equation}provided that $\psi\in L^{1}(\mathbb{R}^{+}_0;\frac{ds}{s})\cap C^{\infty}_0(\mathbb{R}^{+}_0),$ and $m=-Q.$ On the other hand, for $m>-Q$ and $\psi\in C^{\infty}_{0}(\mathbb{R}^+_0),$ we have
\begin{equation}\label{asymp1212222}
     \textnormal{\bf{Tr}}(A\psi(t E))\sim \frac{t^{-\frac{1}{q}(Q+m)}}{q}\int\limits_{0}^\infty\psi(s)s^{\frac{Q+m}{q}}\times \frac{ds}{s},\,\,\forall t>0.
\end{equation}So, we have the asymptotic expansion
\begin{equation}\label{asymp1212'22}
\textnormal{\bf{Tr}}(A\psi(t E))=t^{-\frac{Q+m}{q}}\left(  \sum_{k=0}^{\infty}a_kt^k\right)+\frac{c_{Q}}{q}\int\limits_{0}^{\infty}\psi(s)\frac{ds}{s},\,\,t\rightarrow 0^{+} ,
\end{equation}for $m\geqslant -Q,$
\end{theorem}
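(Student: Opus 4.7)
The plan is to adapt the diagonalisation strategy from the proof of Theorem~\ref{asymptotictracemultiplier2}, now with the spectral multiplier $\psi(tE)$ in place of the heat semigroup $e^{-t\mathcal{M}_q}$. Since the $L^2$-trace coincides with the integral over the diagonal of the Schwartz kernel (by the main results of \cite{DelRuzTrace1111}), one has
\begin{equation*}
\textnormal{\bf{Tr}}(A\psi(tE))=\int_G\sum_{[\xi]\in\widehat{G}}d_\xi\,\textnormal{\bf{Tr}}[\sigma(x,\xi)\psi(t\widehat{E}(\xi))]\,dx.
\end{equation*}
Since $E$ is positive and left-invariant, for each $[\xi]$ I would pick a basis of the representation space diagonalising $\widehat{E}(\xi)$ with positive eigenvalues $\mu_{jj}^E(\xi)$. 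The $\mathcal{L}$-ellipticity of $E$ of order $q$, combined with the symbol estimate $E\in S^{q,\mathcal{L}}_{\rho,\delta}$, implies $\mu_{jj}^E(\xi)\asymp (1+\nu_{jj}(\xi)^2)^{q/2}$ (after reordering eigenvalues, which leaves the global spectral sum invariant). Factorising $\sigma(x,\xi)\psi(t\widehat{E}(\xi))=\bigl[\sigma(x,\xi)\widehat{E}(\xi)^{-m/q}\bigr]\bigl[\widehat{E}(\xi)^{m/q}\psi(t\widehat{E}(\xi))\bigr]$ and repeating the change-of-basis trace computation from Theorem~\ref{asymptotictracemultiplier2} (using positivity of $\sigma$ and $\mathcal{L}$-ellipticity of $A$) reduces the problem to estimating
\begin{equation*}
\sum_{[\xi]\in\widehat{G}}\sum_{j=1}^{d_\xi}d_\xi\,(1+\nu_{jj}(\xi)^2)^{m/2}\,\psi\!\bigl(t(1+\nu_{jj}(\xi)^2)^{q/2}\bigr).
\end{equation*}

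A dyadic decomposition in $k$ according to $2^k\leq (1+\nu_{jj}(\xi)^2)^{1/2}<2^{k+1}$, together with the sub-Laplacian Weyl law $N(2^k)\asymp 2^{kQ}$ (Remark~\ref{weyl}), converts this sum into $\sum_{k\geq 0} 2^{k(Q+m)}\psi(t\,2^{kq})$, comparable to $\int_1^\infty\psi(t\lambda^q)\lambda^{Q+m-1}\,d\lambda$. The change of variables $s=t\lambda^q$ recasts this as
\begin{equation*}
\frac{t^{-(Q+m)/q}}{q}\int_t^\infty\psi(s)\,s^{(Q+m)/q}\,\frac{ds}{s}.
\end{equation*}
For $m=-Q$ the prefactor equals $1/q$ and the integrand becomes $\psi(s)/s$; the hypothesis $\psi\in L^1(\mathbb{R}_0^+;ds/s)$ then lets $t\to 0^+$ to yield \eqref{asymp122222}. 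For $m>-Q$, the positivity of $(Q+m)/q$ together with $\psi$ being compactly supported makes the integrand absolutely integrable near $s=0$, and \eqref{asymp1212222} follows at once.

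For the full expansion \eqref{asymp1212'22}, I would Taylor-expand $\psi(t\lambda^q)$ around $t=0$ in the bulk regime $\lambda\lesssim t^{-1/q}$ and separate the singular prefactor $t^{-(Q+m)/q}$ from a regular boundary contribution at $s=0$; the polynomial coefficients $a_k$ then arise from integrating the Taylor remainder against the spectral density of $\mathcal{L}$, while the regularised term $\tfrac{c_Q}{q}\int_0^\infty\psi(s)\,ds/s$ captures the boundary-at-zero contribution that must be added back when $(Q+m)/q=0$. The principal analytical obstacle, paralleling the analogous computation for the Laplacian in \cite{Fischertraces2}, is upgrading the $\asymp$-estimates above to genuine asymptotic equalities to all polynomial orders in $t$: this requires controlling the error in replacing the dyadic spectral sum by a Riemann integral via a refined form of the Weyl law plus an Abel summation argument on $\psi$, and verifying that the basis change used to diagonalise $\widehat{E}(\xi)$ does not distort the spectral count even when $E$ fails to commute with $\mathcal{L}$.
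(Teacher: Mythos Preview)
Your proposal is correct and follows essentially the same approach as the paper: trace over the diagonal, factorisation to isolate a bounded-invertible piece from a spectral piece, dyadic decomposition via the sub-Laplacian Weyl law, and the change of variables $s=t\lambda^q$. The only cosmetic difference is that you normalise with $\widehat{E}(\xi)^{-m/q}$ whereas the paper uses $\widehat{\mathcal{M}}(\xi)^{-m}$; these are interchangeable precisely because of the $\mathcal{L}$-ellipticity of $E$ that you invoke. For the full expansion \eqref{asymp1212'22}, the paper is just as brief as you are --- it simply asserts real-analyticity of $s\mapsto s^{(Q+m)/q}\textnormal{\bf Tr}(\widehat{\mathcal{M}}(\xi)^m\psi(s\widehat{E}(\xi)))$ and Taylor-expands, which is the same mechanism you describe; your honest remarks about upgrading $\asymp$ to a true asymptotic expansion apply equally to the paper's argument.
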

\begin{proof}
By writing the trace of $A\psi(t E)$ as the integral of its Schwartz kernel at  the diagonal (see \cite{DelRuzTrace1111}), we have

 \begin{align*}
     \textnormal{\textbf{Tr}}(A\psi(t E))&=\int\limits_{G}\sum_{[\xi]\in  \widehat{G}}d_{\xi}\textnormal{\textbf{Tr}}[\sigma(x,\xi)\psi(t\widehat{E}(\xi))]dx\\
     &=\int\limits_{G}\sum_{[\xi]\in  \widehat{G}}d_{\xi}\textnormal{\textbf{Tr}}[\sigma(x,\xi)  \widehat{\mathcal{M}}(\xi)^{-m}\widehat{\mathcal{M}}(\xi)^{m} \psi(t\widehat{E}(\xi))]dx\\
     &\asymp \sum_{[\xi]\in  \widehat{G}}d_{\xi}\textnormal{\textbf{Tr}}[\widehat{\mathcal{M}}(\xi)^{m} \psi(t\widehat{E}(\xi))],
\end{align*}where the last line will be justified by using the $\mathcal{L}$-ellipticity of $A$ and the positivity  of its symbol. Indeed, as in the proof of Theorem \ref{asymptotictracemultiplier2}, in a suitable basis of the representation space we can diagonalise the operator $\sigma(x,\xi)  \widehat{\mathcal{M}}(\xi)^{-m},$ and we can write in such a basis,
\begin{equation}
    \sigma(x,\xi)  \widehat{\mathcal{M}}(\xi)^{-m}=\textnormal{diag}[\lambda_{jj}(x,\xi)]_{j=1}^{d_\xi},\,\,\widehat{\mathcal{M}}(\xi)^{m}\psi(t\widehat{E})=[\Omega_{ij,t}(\xi)]_{i,j=1}^{d_\xi}.
\end{equation}
Now, we can write
\begin{align*}
     \textnormal{\textbf{Tr}}(A\psi(t{E}))
     &=\sum_{[\xi]\in  \widehat{G}}\sum_{j,j'=1}^{d_\xi}d_{\xi}\int\limits_{G}\lambda_{j'j'}(x,\xi)dx\times\Omega_{j'j,t}(\xi).
\end{align*}
The $\mathcal{L}$-ellipticity of $A$ and the positivity  of its symbol, implies that \begin{align*}
 1&\lesssim  \sup_{(x,[\xi])\in \widehat{G}}\Vert \sigma(x,\xi)^{-1}\widehat{\mathcal{M}}(\xi)^{m}\Vert_{\textnormal{op}}^{-1}\leqslant   \inf_{x\in G}\inf_{j',j',[\xi]\in \widehat{G}}\lambda_{j'j'}(x,\xi)\\
 &\leqslant \sup_{(x,[\xi])\in \widehat{G}}\Vert \sigma(x,\xi)\widehat{\mathcal{M}}(\xi)^{-m}\Vert_{\textnormal{op}}=\sup_{x\in G}\sup_{j',j',[\xi]\in \widehat{G}}\lambda_{j'j'}(x,\xi)\lesssim 1,
\end{align*}and we consequently deduce the  estimate,
\begin{align*}
     \textnormal{\textbf{Tr}}(A\psi(t{E}))
     &\asymp \sum_{[\xi]\in  \widehat{G}}\sum_{j,j'=1}^{d_\xi}d_{\xi}\Omega_{j'j,t}(\xi).
\end{align*} Because
$\widehat{\mathcal{M}}(\xi)^{m} \psi(t\widehat{E}(\xi))=[\Omega_{ij,t}(\xi)]_{i,j=1}^{d_\xi}$ is a symmetric matrix written in the basis that allows to write in a  diagonal form the operator $\sigma(x,\xi)\widehat{\mathcal{M}}(\xi)^{-m},$ we can find a matrix $P(\xi)$ such that
\[ 
    \widehat{\mathcal{M}}(\xi)^{m} \psi(t\widehat{E}(\xi))=P(\xi)^{-1}  \textnormal{diag}[  \Lambda_{t,jj}(\xi)]_{j=1}^{d_\xi}   P(\xi),
\]where $\Lambda_{t,jj}(\xi)$ is the sequence of eigenvalues of the matrix $\widehat{\mathcal{M}}(\xi)^{m} \psi(t\widehat{E}(\xi)).$ Observe that 
\begin{align*}
   &\sum_{j,j'=1}^{d_\xi}\Omega_{j'j,t}(\xi)=\sum_{j,j'=1}^{d_\xi}[P(\xi)^{-1}  \textnormal{diag}[    \Lambda_{t,ss}(\xi)]_{s=1}^{d_\xi}   P(\xi)]_{j'j}\\
    &=\sum_{j,j',s=1}^{d_\xi}P(\xi)^{-1}_{j'j}    \times   \Lambda_{t,j'j'}(\xi)  \times P(\xi)_{j's}\\
    &=\textnormal{\textbf{Tr}}[P(\xi)^{-1}  \textnormal{diag}[  \Lambda_{t,jj}(\xi)]_{j=1}^{d_\xi}   P(\xi)]=\textnormal{\textbf{Tr}}[\widehat{\mathcal{M}}(\xi)^{m} \psi(t\widehat{E}(\xi))].
\end{align*}
Now, we will use the Weyl-law for the sub-Laplacian. Observe that in a suitable basis of the representation spaces the operator $\widehat{E}(\xi)$ is diagonal and from the $\mathcal{L}$-ellipticity of $E$ and its positivity, we have
$$ t \widehat{E}_{jj}(\xi)\sim t(1+\nu_{jj}(\xi)^2)^\frac{q}{2},\,\quad\forall  1\leqslant j\leqslant d_\xi .$$
So, we get,
\begin{align*}
  &\textnormal{\textbf{Tr}}(\widehat{\mathcal{M}}(\xi)^{m} \psi(t\widehat{E}(\xi)))\\
  \quad&=\sum_{k=0}^{\infty}\sum_{[\xi]:2^{k}\leqslant (1+\nu_{j'j'}(\xi)^2)^\frac{1}{2} <2^{k+1},\,\forall 1\leqslant j'\leqslant d_{\xi} } \sum_{j=1}^{d_{\xi}}(1+\nu_{jj}(\xi)^2)^\frac{m}{2}\psi(t\widehat{E}_{jj}(\xi))\\
  \quad&\asymp\sum_{k=0}^{\infty}\sum_{[\xi]:2^{k}\leqslant (1+\nu_{j'j'}(\xi)^2)^\frac{1}{2} <2^{k+1},\,\forall 1\leqslant j'\leqslant d_{\xi} } \sum_{j=1}^{d_{\xi}}(1+\nu_{jj}(\xi)^2)^\frac{m}{2}\psi(t(1+\nu_{jj}(\xi)^2)^\frac{q}{2}).
\end{align*}Now, we can deduce that
\begin{align*}
    &\sum_{[\xi]:2^{k}\leqslant (1+\nu_{j'j'}(\xi)^2)^\frac{1}{2} <2^{k+1},\,\forall 1\leqslant j'\leqslant d_{\xi} } \sum_{j=1}^{d_{\xi}}(1+\nu_{jj}(\xi)^2)^\frac{m}{2}\psi(t(1+\nu_{jj}(\xi)^2)^{\frac{q}{2}}      )\\
    \quad &\asymp \sum_{[\xi]:2^{k}\leqslant (1+\nu_{j'j'}(\xi)^2)^\frac{1}{2} <2^{k+1},\,\forall 1\leqslant j'\leqslant d_{\xi} } {d_{\xi}} 2^{km}\psi(t2^{kq}) , 
\end{align*}and consequently,
\begin{align*}
    \textnormal{\textbf{Tr}}(\widehat{\mathcal{M}}(\xi)^{m} \psi(t\widehat{E}(\xi)))&\asymp \sum_{k=0}^{\infty} 2^{km}\psi(t2^{kq}) \sum_{[\xi]:2^{k}\leqslant (1+\nu_{j'j'}(\xi)^2)^\frac{1}{2} <2^{k+1},\,\forall 1\leqslant j'\leqslant d_{\xi} } d_\xi\\
    &\asymp\sum_{k=0}^{\infty} 2^{km}\psi(t2^{kq}) N(2^k)\asymp\sum_{k=0}^{\infty} 2^{km}\psi(t2^{kq}) 2^{kQ}\\
    &=\sum_{k=0}^{\infty} \psi(t2^{kq}) 2^{k(Q+m-1)}2^{k}.
\end{align*}Estimating the sums in $k$ as a integral, we have
\begin{align*}
  & \sum_{k=0}^{\infty} \psi(t2^{kq}) 2^{k(Q+m-1)}2^{k}\asymp \int\limits_{0}^{\infty}\psi(t\lambda^{q})\lambda^{Q+m-1}d\lambda\\
   &=\frac{t^{-\frac{1}{q}(Q+m)}}{q}\int\limits_{0}^\infty\psi(s)s^{\frac{Q+m}{q}}\frac{ds}{s}.
\end{align*}In particular, for $m=-Q,$ we have
\begin{align*}
   \textnormal{\textbf{Tr}}(\widehat{\mathcal{M}}(\xi)^{m} \psi(t\widehat{E}(\xi))\sim \frac{1}{q}\int\limits_{0}^{\infty}\psi(s)\frac{ds}{s},
\end{align*} provided that the compactly supported function $\psi$ on $\mathbb{R}^+_0$ belongs to $L^1(\mathbb{R}_0^+,\frac{ds}{s}).$
Observe that the integral $\int\limits_{0}^\infty\psi(s)s^{\frac{Q+m}{q}}\times \frac{ds}{s}$ makes sense if $\psi$  is smooth and it has compact support in $(0,\infty).$ However if $\psi(0)\neq 0,$ in order to assure that $$\int\limits_{0}^\infty\psi(s)s^{\frac{Q+m}{q}}\times \frac{ds}{s}<\infty,\,\,\,\psi\in C^{\infty}_0(\mathbb{R}^+_0),$$ we require the condition $1-\frac{Q+m}{q}<1,$ or equivalently that $Q+m> 0.$  So, in  such a situation, the function $$G(s):=s^{\frac{Q+m}{q}}\textnormal{\textbf{Tr}}(\widehat{\mathcal{M}}(\xi)^{m} \psi(s\widehat{E}(\xi))),\quad s>0,$$ is real-analytic and we can deduce the asymptotic formula  \eqref{asymp1212'22}. Thus, we end the proof.
\end{proof}

\section{Subelliptic global functional calculus and applications}\label{SFC}
In this section we develop the global functional calculus for subelliptic operators. The calculus will be applied to obtaining a subelliptic G\r{a}rding inequality and for studying the Dixmier trace of subelliptic operators.  

\subsection{Functions of symbols vs functions of operators}\label{S8}
Let $a\in S^{m,\mathcal{L}}_{\rho,\delta}(G\times \widehat{G})$ be a parameter $\mathcal{L}$-elliptic symbol  of order $m>0$ with respect to the sector $\Lambda\subset\mathbb{C}.$ For $A=\textnormal{Op}(a),$ let us define the operator $F(A)$  by the (Dunford-Riesz) complex functional calculus
\begin{equation}\label{F(A)}
    F(A)=-\frac{1}{2\pi i}\oint\limits_{\partial \Lambda_\varepsilon}F(z)(A-zI)^{-1}dz,
\end{equation}where
\begin{itemize}
    \item[(CI).] $\Lambda_{\varepsilon}:=\Lambda\cup \{z:|z|\leqslant \varepsilon\},$ $\varepsilon>0,$ and $\Gamma=\partial \Lambda_\varepsilon\subset\textnormal{Resolv}(A)$ is a positively oriented curve in the complex plane $\mathbb{C}$.
    \item[(CII).] $F$ is a holomorphic function in $\mathbb{C}\setminus \Lambda_{\varepsilon},$ and continuous on its closure. 
    \item[(CIII).] We will assume a decay order of $F$ along $\partial \Lambda_\varepsilon$ in order that the operator \eqref{F(A)} will be densely defined on $C^\infty(G)$ in the strong sense of the topology on $L^2(G).$
\end{itemize} Now, we will compute the matrix-valued symbols for operators defined by this complex functional calculus.
\begin{lemma}\label{LemmaFC}
Let $a\in S^{m,\mathcal{L}}_{\rho,\delta}(G\times \widehat{G})$ be a parameter $\mathcal{L}$-elliptic symbol  of order $m>0$ with respect to the sector $\Lambda\subset\mathbb{C}.$ Let $F(A):C^\infty(G)\rightarrow \mathscr{D}'(G)$ be the operator defined by the analytical functional calculus as in \eqref{F(A)}. Under the assumptions $\textnormal{(CI)}$, $\textnormal{(CII)}$, and $\textnormal{(CIII)}$, the matrix-valued symbol of $F(A),$ $\sigma_{F(A)}(x,\xi)$ is given by,
\[ 
    \sigma_{F(A)}(x,\xi)=-\frac{1}{2\pi i}\oint\limits_{\partial \Lambda_\varepsilon}F(z)\widehat{\mathcal{R}}_z(x,\xi)dz,
\]where $\mathcal{R}_z=(A-zI)^{-1}$ denotes the resolvent of $A,$ and $\widehat{\mathcal{R}}_z(x,\xi)\in S^{-m,\mathcal{L}}_{\rho,\delta}(G\times \widehat{G}) $ its symbol.
\end{lemma}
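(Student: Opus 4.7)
The plan is to deduce the symbol identity directly from the definition $\sigma_{B}(x,\xi)=\xi(x)^{*}(B\xi)(x)$ once the contour integral defining $F(A)$ is shown to converge as a Bochner integral in a sufficiently strong topology. The key ingredients are already in place: Corollary \ref{resolv} guarantees that for every $z\in\Gamma=\partial\Lambda_{\varepsilon}\subset\textnormal{Resolv}(A)$ the resolvent $\mathcal{R}_{z}=(A-zI)^{-1}$ has a matrix-valued symbol $\widehat{\mathcal{R}}_{z}(x,\xi)\in S^{-m,\mathcal{L}}_{\rho,\delta}(G\times\widehat{G})$, while Theorem \ref{lambdalambdita} and Corollary \ref{parameterparametrix} provide the uniform parameter estimate
\begin{equation*}
\Vert\widehat{\mathcal{R}}_{z}(x,\xi)\Vert_{\textnormal{op}}\lesssim \bigl(|z|^{\frac{1}{m}}+\langle\widehat{\mathcal{M}}(\xi)\rangle\bigr)^{-m}\lesssim |z|^{-1},
\end{equation*}
uniformly in $(x,[\xi])\in G\times\widehat{G}$ as $|z|\to\infty$ along $\Gamma$.

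First I would use the decay assumption (CIII) together with the previous bound to show that the integrand $z\mapsto F(z)\widehat{\mathcal{R}}_{z}(x,\xi)$ is absolutely integrable over $\Gamma$ in $\mathbb{C}^{d_{\xi}\times d_{\xi}}$ for every fixed $(x,[\xi])$, so that
\begin{equation*}
\sigma(x,\xi):=-\frac{1}{2\pi i}\oint_{\Gamma}F(z)\widehat{\mathcal{R}}_{z}(x,\xi)\,dz
\end{equation*}
is a well-defined matrix-valued function on $G\times\widehat{G}$. The same bound, combined with the matrix quantisation formula $(\mathcal{R}_{z}\xi_{ij})(x)=[\xi(x)\widehat{\mathcal{R}}_{z}(x,\xi)]_{ij}$, shows that $z\mapsto F(z)(\mathcal{R}_{z}\xi_{ij})(x)$ is absolutely integrable over $\Gamma$ for each matrix coefficient $\xi_{ij}\in C^\infty(G)$ and each $x\in G$, and that the Bochner integral defining $F(A)\xi_{ij}$ converges in $C(G)$ uniformly on $G$. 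Consequently $F(A)$ is densely defined on $C^\infty(G)$ in the sense of assumption (CIII).

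The concluding step is to interchange the pointwise evaluation with the contour integral: by continuity of the linear functional $f\mapsto\xi(x)^{*}[f(x)]$ on $C(G;\mathbb{C}^{d_{\xi}\times d_{\xi}})$ and the fact that a Bochner integral commutes with bounded linear maps, one obtains
\begin{equation*}
\sigma_{F(A)}(x,\xi)=\xi(x)^{*}(F(A)\xi)(x)=-\frac{1}{2\pi i}\oint_{\Gamma}F(z)\,\xi(x)^{*}(\mathcal{R}_{z}\xi)(x)\,dz=-\frac{1}{2\pi i}\oint_{\Gamma}F(z)\widehat{\mathcal{R}}_{z}(x,\xi)\,dz,
\end{equation*}
which is the desired formula. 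The main obstacle is the rigorous justification of this interchange, which requires verifying that $\mathcal{R}_{z}$ maps $C^\infty(G)$ continuously into $C(G)$ (so that $\mathrm{ev}_{x}\circ\mathcal{R}_{z}$ is a continuous linear functional) and that the absolute convergence of the integrand holds uniformly in $x$; both reduce to the parameter estimate for $\widehat{\mathcal{R}}_{z}$ recalled above together with the Sobolev embedding $H^{s,\mathcal{L}}(G)\hookrightarrow C(G)$ for $s>Q/2$ from Remark \ref{sset}.
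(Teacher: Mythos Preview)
Your proposal is correct and follows essentially the same approach as the paper: both use the symbol identity $\sigma_{B}(x,\xi)=\xi(x)^{*}(B\xi)(x)$ applied to $B=F(A)$, pull the map $\xi(x)^{*}(\,\cdot\,)\xi(x)$ inside the contour integral, and identify $\xi(x)^{*}(A-zI)^{-1}\xi(x)=\widehat{\mathcal{R}}_{z}(x,\xi)$, after citing Corollary~\ref{resolv} for the symbol class of the resolvent. The paper's proof is much terser and simply asserts the interchange, whereas you take the extra care to justify it via absolute integrability (from the parameter estimates of Theorem~\ref{lambdalambdita}) and the commutation of Bochner integrals with bounded linear maps; this added rigor is welcome but does not change the underlying argument.
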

\begin{proof}
 From Corollary \ref{resolv}, we have that  $\widehat{\mathcal{R}}_z(x,\xi)\in S^{-m,\mathcal{L}}_{\rho,\delta}(G\times \widehat{G}) .$ Now, observe that  \begin{align*}
  \sigma_{F(A)}(x,\xi)=\xi(x)^*F(A)\xi(x)=-\frac{1}{2\pi i}\oint\limits_{\partial \Lambda_\varepsilon}F(z)\xi(x)^*(A-zI)^{-1}\xi(x)dz.  \end{align*} We finish the proof by observing that $\widehat{\mathcal{R}}_z(x,\xi)=\xi(x)^*(A-zI)^{-1}\xi(x),$ for every $z\in \textnormal{Resolv}(A).$
\end{proof}
Assumption (CIII) will be clarified in the following theorem where we show that the subelliptic calculus is stable under the action of the complex functional calculus.
\begin{theorem}\label{DunforRiesz}
Let $m>0,$ and let $0\leqslant \delta<\rho\leqslant 1.$ Let  $a\in S^{m,\mathcal{L}}_{\rho,\delta}(G\times \widehat{G})$ be a parameter $\mathcal{L}$-elliptic symbol with respect to $\Lambda.$ Let us assume that $F$ satisfies the  estimate $|F(\lambda)|\leqslant C|\lambda|^s$ uniformly in $\lambda,$ for some $s<0.$  Then  the symbol of $F(A),$  $\sigma_{F(A)}\in S^{ms,\mathcal{L}}_{\rho,\delta}(G\times \widehat{G}) $ admits an asymptotic expansion of the form
\begin{equation}\label{asymcomplex}
    \sigma_{F(A)}(x,\xi)\sim 
     \sum_{N=0}^\infty\sigma_{{B}_{N}}(x,\xi),\,\,\,(x,[\xi])\in G\times \widehat{G},
\end{equation}where $\sigma_{{B}_{N}}(x,\xi)\in {S}^{ms-(\rho-\delta)N,\mathcal{L}}_{\rho,\delta}(G\times \widehat{G})$ and 
\[ 
    \sigma_{{B}_{0}}(x,\xi)=-\frac{1}{2\pi i}\oint\limits_{\partial \Lambda_\varepsilon}F(z)(a(x,\xi)-z)^{-1}dz\in {S}^{ms,\mathcal{L}}_{\rho,\delta}(G\times \widehat{G}).
\]Moreover, 
\[ 
     \sigma_{F(A)}(x,\xi)\equiv -\frac{1}{2\pi i}\oint\limits_{\partial \Lambda_\varepsilon}F(z)a^{-\#}(x,\xi,\lambda)dz \textnormal{  mod  } {S}^{-\infty,\mathcal{L}}(G\times \widehat{G}),
\]where $a^{-\#}(x,\xi,\lambda)$ is the symbol of the parametrix to $A-\lambda I,$   in Corollary \ref{parameterparametrix}.
\end{theorem}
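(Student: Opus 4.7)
The plan is to reduce everything to the parameter-dependent parametrix of Corollary \ref{parameterparametrix}, and then extract the claimed asymptotic expansion from that of the parametrix. First, Lemma \ref{LemmaFC} yields
\[
\sigma_{F(A)}(x,\xi)=-\frac{1}{2\pi i}\oint_{\partial\Lambda_\varepsilon}F(z)\widehat{\mathcal{R}}_z(x,\xi)\,dz,
\]
where $\widehat{\mathcal{R}}_z$ is the symbol of the resolvent $(A-zI)^{-1}$. By Corollary \ref{parameterparametrix}, the parametrix symbol $a^{-\#}(x,\xi,\lambda)$ differs from $\widehat{\mathcal{R}}_z$ by a smoothing symbol with estimates uniform in $z\in\partial\Lambda_\varepsilon$, and the growth hypothesis $|F(z)|\leqslant C|z|^s$ with $s<0$ ensures that integrating this smoothing error against $F(z)$ leaves a symbol still in $S^{-\infty,\mathcal{L}}(G\times\widehat{G})$. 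This already gives the congruence
\[
\sigma_{F(A)}(x,\xi)\equiv-\frac{1}{2\pi i}\oint_{\partial\Lambda_\varepsilon}F(z)\,a^{-\#}(x,\xi,z)\,dz\pmod{S^{-\infty,\mathcal{L}}(G\times\widehat{G})}.
\]

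Next, I would adapt Proposition \ref{IesTParametrix} to the parameter-dependent setting to produce an asymptotic expansion $a^{-\#}(x,\xi,\lambda)\sim\sum_{N\geqslant 0}B_N(x,\xi,\lambda)$, with $B_0(x,\xi,\lambda)=(a(x,\xi)-\lambda)^{-1}$ and subsequent $B_N$ defined by the natural parameter-dependent version of the recursion \eqref{conditionelip}. A straightforward induction, together with Theorem \ref{lambdalambdita} and the Leibniz rule for difference operators, then yields the parameter-improved estimate
\[
\sup_{\lambda,x,[\xi]}\Vert(|\lambda|^{1/m}+\widehat{\mathcal{M}}(\xi))^{m(k+1)+(\rho-\delta)N}\widehat{\mathcal{M}}(\xi)^{\rho|\alpha|-\delta|\beta|}\partial_\lambda^k\partial_X^{(\beta)}\Delta_\xi^\alpha B_N\Vert_{\textnormal{op}}<\infty
\]
for each $N\in\mathbb{N}_0$. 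Setting $\sigma_{B_N}(x,\xi):=-\frac{1}{2\pi i}\oint_{\partial\Lambda_\varepsilon}F(z)B_N(x,\xi,z)\,dz$, the Cauchy formula for the holomorphic matrix functional calculus identifies $\sigma_{B_0}(x,\xi)=F(a(x,\xi))$, so parameter-ellipticity and the bound on $F$ make the estimate $\Vert\sigma_{B_0}(x,\xi)\Vert_{\textnormal{op}}\lesssim\widehat{\mathcal{M}}(\xi)^{ms}$ transparent.

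The principal technical step, and the main obstacle, is to verify $\sigma_{B_N}\in S^{ms-(\rho-\delta)N,\mathcal{L}}_{\rho,\delta}(G\times\widehat{G})$ for all $N\geqslant 0$. Differentiating under the integral sign and using the $B_N$-estimate, the task reduces to showing that
\[
\oint_{\partial\Lambda_\varepsilon}|z|^s(|z|^{1/m}+\widehat{\mathcal{M}}(\xi))^{-m-(\rho-\delta)N}\,|dz|
\]
is bounded by $C\widehat{\mathcal{M}}(\xi)^{ms-(\rho-\delta)N}$. The change of variables $u=|z|/\widehat{\mathcal{M}}(\xi)^m$ delivers this directly when $s>-1$; for $s\leqslant -1$ I would split the contour into the regions $|z|\lesssim\widehat{\mathcal{M}}(\xi)^m$ and $|z|\gtrsim\widehat{\mathcal{M}}(\xi)^m$ and treat each piece separately, or alternatively exploit the higher-$k$ bounds of Theorem \ref{lambdalambdita} by deforming the contour and integrating by parts in $\lambda$, trading a factor $|z|^s$ for $|z|^{s-k}$ times $\partial_\lambda^k B_N$ (whose decay in $|\lambda|^{1/m}+\widehat{\mathcal{M}}(\xi)$ is correspondingly stronger). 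Keeping track of the matrix-valued weights $\widehat{\mathcal{M}}(\xi)$ in these estimates is the delicate part.

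Finally, for any truncation order $N_0$, the parametrix remainder $a^{-\#}-\sum_{N<N_0}B_N$ is a parameter-dependent symbol of improved order $-m-(\rho-\delta)N_0$, so the same contour estimate places the associated integral in $S^{ms-(\rho-\delta)N_0,\mathcal{L}}_{\rho,\delta}(G\times\widehat{G})$. Combining this with the smoothing congruence of the first step delivers the full asymptotic expansion \eqref{asymcomplex} together with the membership $\sigma_{F(A)}\in S^{ms,\mathcal{L}}_{\rho,\delta}(G\times\widehat{G})$.
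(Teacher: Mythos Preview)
Your overall architecture matches the paper's: express $\sigma_{F(A)}$ as a contour integral of the resolvent symbol (Lemma \ref{LemmaFC}), replace the resolvent by the parameter-dependent parametrix modulo smoothing (Corollary \ref{parameterparametrix}), and read off the asymptotic expansion from that of $a^{-\#}$ (Proposition \ref{IesTParametrix}). The change of variables $u=|z|/\widehat{\mathcal{M}}(\xi)^m$ for $-1<s<0$ is exactly what the paper does, reducing to $\int t^s(t^{1/m}+1)^{-m}\,dt<\infty$.

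The gap is in the case $s\leqslant -1$, and it is the decisive technical point. Your first remedy, splitting the contour at $|z|\asymp\widehat{\mathcal{M}}(\xi)^m$, does \emph{not} recover the order $ms$: on the inner piece $|z|\lesssim\widehat{\mathcal{M}}(\xi)^m$ one has $(|z|^{1/m}+\widehat{\mathcal{M}}(\xi))^{-m}\asymp\widehat{\mathcal{M}}(\xi)^{-m}$, so
\[
\int_{\varepsilon}^{\widehat{\mathcal{M}}(\xi)^m}r^s\,\widehat{\mathcal{M}}(\xi)^{-m}\,dr\asymp\widehat{\mathcal{M}}(\xi)^{-m}\quad(s<-1),
\]
which is only order $-m$, not $ms$. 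The triangle inequality on the contour integral discards the cancellation in the Cauchy formula, and no amount of splitting repairs this. Your second remedy is stated backwards: to land on $\partial_\lambda^k B_N$ after integration by parts you must place \emph{anti}derivatives on $F$, producing a factor $|z|^{s+k}$, not $|z|^{s-k}$; moreover the theorem assumes only $|F(\lambda)|\leqslant C|\lambda|^s$, so bounds on $F^{(k)}$ are not available without further argument. (With the exponent corrected to $s+k$ the scheme can be salvaged, choosing $k>-s-1$, but that is not what you wrote.)

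The paper handles $s\leqslant -1$ by an entirely different and cleaner device: it writes $F(\lambda)=\tilde G(\lambda)^{1+[-s]}$ with $|\tilde G(\lambda)|\leqslant C|\lambda|^{s/(1+[-s])}$, so that $-1<s/(1+[-s])<0$ and the already-established case applies to $\tilde G$, giving $\sigma_{\tilde G(A)}\in S^{ms/(1+[-s]),\mathcal{L}}_{\rho,\delta}$. Then $\sigma_{F(A)}=\sigma_{\tilde G(A)^{1+[-s]}}$, and iterating the composition formula (Theorem \ref{Subellipticcomposition}) $1+[-s]$ times shows the leading term is $\sigma_{\tilde G(A)}^{1+[-s]}\in S^{ms,\mathcal{L}}_{\rho,\delta}$. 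This avoids any delicate contour estimates for $s\leqslant -1$ and uses instead the algebraic structure of the calculus.
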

\begin{proof}
    First, we need to prove that the condition $|F(\lambda)|\leqslant C|\lambda|^s$ uniformly in $\lambda,$ for some $s<0,$ is enough in order to guarantee that \[ 
    \sigma_{{B}_{0}}(x,\xi):=-\frac{1}{2\pi i}\oint\limits_{\partial \Lambda_\varepsilon}F(z)(a(x,\xi)-z)^{-1}dz,
\] is a well defined matrix-symbol.
From Theorem \ref{lambdalambdita} we deduce that $(a(x,\xi)-z)^{-1}$ satisfies the estimate
\[ 
   \Vert (|z|^{\frac{1}{m}}+\widehat{\mathcal{M}}(\xi))^{m(k+1)}\widehat{\mathcal{M}}(\xi)^{\rho|\alpha|-\delta|\beta|}\partial_{z}^k\partial_{X}^{(\beta)}\Delta_{\xi}^{\alpha}(a(x,\xi)-z)^{-1}\Vert_{\textnormal{op}}<\infty.
\]
Observe that 
\begin{align*}
    &\Vert(a(x,\xi)-z)^{-1}\Vert_{\textnormal{op}}\\
    & =\Vert (|z|^{\frac{1}{m}}+\widehat{\mathcal{M}}(\xi))^{-m}(|z|^{\frac{1}{m}}+\widehat{\mathcal{M}}(\xi))^{m}(a(x,\xi)-z)^{-1}\Vert_{\textnormal{op}} \\
    &\lesssim  \sup_{1\leqslant j\leqslant d_\xi}(|z|^{\frac{1}{m}}+(1+\nu_{jj}(\xi)^2)^{\frac{1}{2}})^{-m}\\&\leqslant |z|^{-1},
\end{align*} and the condition $s<0$ implies
\begin{align*}
    \left|\frac{1}{2\pi i}\oint\limits_{\partial \Lambda_\varepsilon}F(z)(a(x,\xi)-z)^{-1}dz\right|\lesssim \oint\limits_{\partial \Lambda_\varepsilon}|z|^{-1+s}|dz|<\infty,
\end{align*}uniformly in $(x,[\xi])\in G\times \widehat{G}.$ In order to check that $\sigma_{B_0}\in {S}^{ms,\mathcal{L}}_{\rho,\delta}(G\times \widehat{G})$ let us analyse the cases $-1<s<0$ and $s\leqslant -1$ separately. So, let us analyse first the situation of $-1<s<0.$ We observe that
\begin{align*}
   &\Vert \widehat{\mathcal{M}}(\xi)^{-ms+\rho|\alpha|-\delta|\beta|}\partial_{X}^{(\beta)}\Delta_{\xi}^{\alpha}\sigma_{B_0}(x,\xi)\Vert_{\textnormal{op}}\\
   &\leqslant \frac{C}{2\pi }\oint\limits_{\partial \Lambda_\varepsilon} |z|^{s}\Vert      \widehat{\mathcal{M}}(\xi)^{-ms+\rho|\alpha|-\delta|\beta|}\partial_{X}^{(\beta)}\Delta_{\xi}^{\alpha}(a(x,\xi)-z)^{-1}\Vert_{\textnormal{op}} |dz|.
\end{align*}Now, we will estimate the operator norm inside of the integral. Indeed, the identity
\begin{align*}
    &\Vert      \widehat{\mathcal{M}}(\xi)^{-ms+\rho|\alpha|-\delta|\beta|}\partial_{X}^{(\beta)}\Delta_{\xi}^{\alpha}(a(x,\xi)-z)^{-1}\Vert_{\textnormal{op}}=\\
    &\Vert (|z|^{\frac{1}{m}}+\widehat{\mathcal{M}}(\xi))^{-m}(|z|^{\frac{1}{m}}+\widehat{\mathcal{M}}(\xi))^{m}\widehat{\mathcal{M}}(\xi)^{-ms+\rho|\alpha|-\delta|\beta|}\partial_{X}^{(\beta)}\Delta_{\xi}^{\alpha}(a(x,\xi)-z)^{-1}\Vert_{\textnormal{op}}
\end{align*}implies that
\begin{align*}
    &\Vert      \widehat{\mathcal{M}}(\xi)^{-ms+\rho|\alpha|-\delta|\beta|}\partial_{X}^{(\beta)}\Delta_{\xi}^{\alpha}(a(x,\xi)-z)^{-1}\Vert_{\textnormal{op}} \lesssim  \Vert (|z|^{\frac{1}{m}}+\widehat{\mathcal{M}}(\xi))^{-m}\widehat{\mathcal{M}}(\xi)^{-ms}\Vert_{\textnormal{op}}
\end{align*}where we have used that
\begin{align*}
\sup_{z\in \partial \Lambda_\varepsilon}  \sup_{(x,\xi)} \Vert (|z|^{\frac{1}{m}}+\widehat{\mathcal{M}}(\xi))^{m}\widehat{\mathcal{M}}(\xi)^{\rho|\alpha|-\delta|\beta|}\partial_{X}^{(\beta)}\Delta_{\xi}^{\alpha}(a(x,\xi)-z)^{-1}\Vert_{\textnormal{op}} <\infty.
\end{align*}
Consequently, by using that  $s<0,$ we deduce
\begin{align*}
   & \frac{C}{2\pi }\oint\limits_{\partial \Lambda_\varepsilon} |z|^{s}\Vert      \widehat{\mathcal{M}}(\xi)^{ms+\rho|\alpha|-\delta|\beta|}\partial_{X}^{(\beta)}\Delta_{\xi}^{\alpha}(a(x,\xi)-z)^{-1}\Vert_{\textnormal{op}} |dz|\\
    &\lesssim \frac{C}{2\pi }\oint\limits_{\partial \Lambda_\varepsilon} |z|^{s}\Vert (|z|^{\frac{1}{m}}+\widehat{\mathcal{M}}(\xi))^{-m} \widehat{\mathcal{M}}(\xi)^{-ms}\Vert_{\textnormal{op}} |dz|\\
    &= \frac{C}{2\pi }\oint\limits_{\partial \Lambda_\varepsilon} |z|^{s}\sup_{1\leqslant j\leqslant d_\xi} (|z|^{\frac{1}{m}}+(1+\nu_{jj}(\xi)^{2})^{\frac{1}{2}})^{-m}(1+\nu_{jj}(\xi)^{2})^{-\frac{ms}{2}}|dz|.
\end{align*}To study the convergence of the last contour integral we only need to check the convergence of $\int\limits_{1}^{\infty}r^s(r^{\frac{1}{m}}+\varkappa)^{-m}\varkappa^{-ms}dr,$ where $\varkappa>1$ in a parameter. The change of variable $r=\varkappa^{m}t$ implies that
\begin{align*}
   \int\limits_{1}^{\infty}r^s(r^{\frac{1}{m}}+\varkappa)^{-m}\varkappa^{-ms}dr&=\int\limits_{\varkappa^{-m}}^{\infty}\varkappa^{ms}t^s(\varkappa t^{\frac{1}{m}}+\varkappa)^{-m}\varkappa^{-ms}\varkappa^mdt=\int\limits_{\varkappa^{-m}}^{\infty}t^s(t^{\frac{1}{m}}+1)^{-m}dt\\
   &\lesssim \int\limits_{\varkappa^{-m}}^{1}t^sdt+\int\limits_{1}^{\infty}t^{-1+s}<\infty.
\end{align*}Indeed, for $t\rightarrow\infty,$ $t^s(t^{\frac{1}{m}}+1)^{-m}\lesssim t^{-1+s}$ and we conclude the estimate because $\int\limits_{1}^{\infty} t^{-1+s'}dt<\infty,$ for all $s'<0.$ On the other hand, the condition $-1<s<0$ implies that
\begin{align*}
 \int\limits_{\varkappa^{-m}}^{1}t^sdt=\frac{1}{1+s}-\frac{\varkappa^{-m(1+s)}}{1+s}=   O(1).
\end{align*} In the case where $s\leqslant -1,$ we can find an analytic function $\tilde{G}(z)$ such that it is a holomorphic function in $\mathbb{C}\setminus \Lambda_{\varepsilon},$ and continuous on its closure and additionally satisfying that $F(\lambda)=\tilde{G}(\lambda)^{1+[-s]}.$\footnote{ $[-s]$ denotes  the integer part of $-s.$} In this case,  $\tilde{G}(A)$ defined by the complex functional calculus 
\begin{equation}\label{G(A)}
    \tilde{G}(A)=-\frac{1}{2\pi i}\oint\limits_{\partial \Lambda_\varepsilon}\tilde{G}(z)(A-zI)^{-1}dz,
\end{equation}
has symbol belonging to ${S}^{\frac{sm}{1+[-s]},\mathcal{L}}_{\rho,\delta}(G\times \widehat{G})$ because $\tilde{G}$ satisfies the estimate $|G(\lambda)|\leqslant C|\lambda|^{\frac{s}{1+[-s]}},$ with $-1<\frac{s}{1+[-s]}<0.$ 
By observing that
\begin{align*}
    \sigma_{F(A)}(x,\xi)&=-\frac{1}{2\pi i}\oint\limits_{\partial \Lambda_\varepsilon}F(z)\widehat{\mathcal{R}}_z(x,\xi)dz=-\frac{1}{2\pi i}\oint\limits_{\partial \Lambda_\varepsilon}\tilde{G}(z)^{1+[-s]}\widehat{\mathcal{R}}_z(x,\xi)dz\\
    &=\sigma_{\tilde{G}(A)^{1+[-s]}}(x,\xi),
\end{align*}and computing the symbol $\sigma_{\tilde{G}(A)^{1+[-s]}}(x,\xi)$ by iterating $1+[-s]$-times  the asymptotic formula for the composition in the subelliptic calculus (see Corollary \ref{SubellipticcompositionC}), we can see that the term with higher order in such expansion is $\sigma_{\tilde{G}(A)}(x,\xi)^{1+[-s]}\in {S}^{ms,\mathcal{L}}_{\rho,\delta}(G\times \widehat{G}).$ Consequently we have proved that $\sigma_{F(A)}(x,\xi)\in {S}^{ms,\mathcal{L}}_{\rho,\delta}(G\times \widehat{G}).$
This completes the proof for the first part of the theorem.
For the second part of the proof, let us denote by $a^{-\#}(x,\xi,\lambda)$  the symbol of the parametrix to $A-\lambda I,$   in Corollary \ref{parameterparametrix}. Let $P_{\lambda}=\textnormal{Op}(a^{-\#}(\cdot,\cdot,\lambda)).$ Because $\lambda\in \textnormal{Resolv}(A)$ for $\lambda\in \partial \Lambda_\varepsilon,$ $(A-\lambda)^{-1}-P_{\lambda}$ is a smoothing operator. Consequently, from Lemma \ref{LemmaFC} we deduce that
\begin{align*}
   & \sigma_{F(A)}(x,\xi)\\
   &=-\frac{1}{2\pi i}\oint\limits_{\partial \Lambda_\varepsilon}F(z)\widehat{\mathcal{R}}_z(x,\xi)dz\\
    &=-\frac{1}{2\pi i}\oint\limits_{\partial \Lambda_\varepsilon}F(z)a^{-\#}(x,\xi,z)dz-\frac{1}{2\pi i}\oint\limits_{\partial \Lambda_\varepsilon}F(z)(\widehat{\mathcal{R}}_z(x,\xi)-a^{-\#}(x,\xi,z))dz\\
    &\equiv -\frac{1}{2\pi i}\oint\limits_{\partial \Lambda_\varepsilon}F(z)a^{-\#}(x,\xi,z)dz  \textnormal{  mod  } {S}^{-\infty,\mathcal{L}}(G\times \widehat{G}).
\end{align*}The asymptotic expansion \eqref{asymcomplex} can be deduced from the construction of the parametrix in the subelliptic calculus (see Proposition \ref{IesTParametrix}).
\end{proof}

\subsection{G\r{a}rding inequality} In this section we will prove the G\r{a}rding inequality for the subelliptic calculus. To do so, we need some preliminary propositions. 
\begin{proposition}
Let $0\leqslant \delta<\rho\leqslant 1.$  Let  $a\in S^{m,\mathcal{L}}_{\rho,\delta}(G\times \widehat{G})$ be an $\mathcal{L}$-elliptic matrix-valued symbol where $m\geqslant 0$ and let us assume that $a$ is positive definite. Then $a$ is parameter-elliptic with respect to $\mathbb{R}_{-}:=\{z=x+i0:x<0\}\subset\mathbb{C}.$ Furthermore, for any number $s\in \mathbb{C},$ 
\[ 
    \widehat{B}(x,\xi)\equiv a(x,\xi)^s:=\exp(s\log(a(x,\xi))),\,\,(x,[\xi])\in G\times \widehat{G},
\]defines a symbol $\widehat{B}(x,\xi)\in S^{m\times\textnormal{Re}(s),\mathcal{L}}_{\rho,\delta}(G\times \widehat{G}).$
\end{proposition}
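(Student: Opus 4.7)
The plan proceeds in two main parts matching the two assertions of the proposition, with the bulk of the work in establishing parameter ellipticity.

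For the parameter ellipticity, the starting point is that positivity of $a$ combined with $\mathcal{L}$-ellipticity yields, via Theorem \ref{IesT}, that $a^{-1} \in S^{-m,\mathcal{L}}_{\rho,\delta}(G\times\widehat{G})$. Then the balanced estimate from Corollary \ref{corC} (condition C, with $r=m/2$) gives
\begin{equation*}
\sup_{(x,[\xi])}\|\widehat{\mathcal{M}}(\xi)^{m/2}\, a(x,\xi)^{-1}\, \widehat{\mathcal{M}}(\xi)^{m/2}\|_{\text{op}}\leqslant C,
\end{equation*}
which, together with positivity of $a$, translates into the operator inequality $a(x,\xi)\geqslant c\,\widehat{\mathcal{M}}(\xi)^m$; dually the symbol estimate on $a$ itself gives the matching upper bound $a(x,\xi)\leqslant C\,\widehat{\mathcal{M}}(\xi)^m$. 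For $\lambda\in\mathbb{R}_-$, adding $|\lambda|I$ and combining with the elementary scalar comparison $c_m(\mu+|\lambda|^{1/m})^m\leqslant \mu^m+|\lambda|\leqslant C_m(\mu+|\lambda|^{1/m})^m$ (valid for $\mu\geqslant 0$, separately for the cases $0<m<1$ and $m\geqslant 1$), I obtain the two-sided operator inequality
\begin{equation*}
c'(|\lambda|^{1/m}+\widehat{\mathcal{M}}(\xi))^m\leqslant a(x,\xi)-\lambda\leqslant C'(|\lambda|^{1/m}+\widehat{\mathcal{M}}(\xi))^m.
\end{equation*}
The lower bound immediately yields the symmetric resolvent estimate $\|(|\lambda|^{1/m}+\widehat{\mathcal{M}})^{m/2}(a-\lambda)^{-1}(|\lambda|^{1/m}+\widehat{\mathcal{M}})^{m/2}\|_{\text{op}}\leqslant c'^{-1}$, uniformly in $\lambda$ and $(x,[\xi])$. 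Passing from this symmetric form to the one-sided form in the definition of parameter $\mathcal{L}$-ellipticity is the main technical obstacle because $a$ and $\widehat{\mathcal{M}}$ do not commute in general; the intended route is via the Corach-Porta-Recht inequality \eqref{Recht2}, applied in the same spirit as in the proof of Theorem \ref{cor} (taking $A=(|\lambda|^{1/m}+\widehat{\mathcal{M}})^{\pm m/2}$ and $X$ an appropriately conjugated resolvent), to transfer $\widehat{\mathcal{M}}$-weights from both sides to a single side without losing the uniform bound.

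For the complex power assertion I would split into two cases. When $\textnormal{Re}(s)<0$, I define $a(x,\xi)^s$ pointwise by the Dunford–Riesz contour integral
\begin{equation*}
a(x,\xi)^s=-\frac{1}{2\pi i}\oint_{\partial\Lambda_\varepsilon} z^s\,(a(x,\xi)-z)^{-1}\,dz,
\end{equation*}
where $\partial\Lambda_\varepsilon$ is a Hankel-type contour surrounding $(0,\infty)$ and avoiding $\mathbb{R}_-$; parameter ellipticity from Part 1 ensures a contour choice that works uniformly in $(x,[\xi])$. Since $F(z)=z^s$ is holomorphic off $\mathbb{R}_-$ with $|F(z)|\leqslant C_s|z|^{\textnormal{Re}(s)}$ and $\textnormal{Re}(s)<0$, the proof of Theorem \ref{DunforRiesz} adapts verbatim (the functional calculus is applied pointwise at the level of matrix symbols, but relies only on the resolvent derivative bounds from Theorem \ref{lambdalambdita}) to give $a^s\in S^{m\,\textnormal{Re}(s),\mathcal{L}}_{\rho,\delta}(G\times\widehat{G})$.

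For $\textnormal{Re}(s)\geqslant 0$ I pick an integer $N>\textnormal{Re}(s)$ and use the pointwise matrix identity $a(x,\xi)^s=a(x,\xi)^{s-N}\cdot a(x,\xi)^N$, legitimate because the matrix powers all commute as they come from the same spectral decomposition of the positive definite matrix $a(x,\xi)$. By the previous step $a^{s-N}\in S^{m\,\textnormal{Re}(s)-mN,\mathcal{L}}_{\rho,\delta}$, while an iterated application of the Leibniz rule (Remark \ref{Leibnizrule}) to the $N$-fold pointwise matrix product $a\cdot a\cdots a$, combined with the balanced estimates of Corollary \ref{corC} for shuffling $\widehat{\mathcal{M}}$-weights across factors of the product, shows $a^N\in S^{mN,\mathcal{L}}_{\rho,\delta}$ and that the product $a^{s-N}\cdot a^N$ lies in $S^{m\,\textnormal{Re}(s),\mathcal{L}}_{\rho,\delta}$. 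A clean way to book-keep orders is to observe that for $s=\sigma+it$ the factor $a^{it}=\exp(it\log a)$ is unitary pointwise (as $\log a$ is self-adjoint on the positive definite matrix $a(x,\xi)$), so its contribution to every operator norm is trivial and all symbol orders are indeed governed by $\sigma=\textnormal{Re}(s)$, as claimed.
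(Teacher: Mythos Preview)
Your treatment of the complex powers is essentially identical to the paper's: Theorem~\ref{DunforRiesz} handles $\textnormal{Re}(s)<0$, and for $\textnormal{Re}(s)\geqslant 0$ one writes $a^{s}=a^{s-N}\cdot a^{N}$ with $N\in\mathbb{N}$, $N>\textnormal{Re}(s)$, and concludes via the Leibniz rule for pointwise matrix products that the result lies in $S^{m\,\textnormal{Re}(s),\mathcal{L}}_{\rho,\delta}$.

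For parameter ellipticity you take a genuinely different route. The paper never passes through a symmetric resolvent bound; instead it factors directly
\[
(|\lambda|^{1/m}+\widehat{\mathcal{M}}(\xi))^{m}(a(x,\xi)-\lambda)^{-1}
=\bigl[(|\lambda|^{1/m}+\widehat{\mathcal{M}}(\xi))^{m}(\widehat{\mathcal{M}}(\xi)^{m}-\lambda)^{-1}\bigr]\cdot\bigl[(\widehat{\mathcal{M}}(\xi)^{m}-\lambda)(a(x,\xi)-\lambda)^{-1}\bigr],
\]
so that the first bracket is a function of the diagonal matrix $\widehat{\mathcal{M}}(\xi)$ alone and reduces to a scalar estimate (handled by splitting $|\lambda|\leqslant\tfrac12$ versus $|\lambda|>\tfrac12$, with a von~Neumann series in the first case). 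The second bracket is controlled using the spectral localisation $\Vert\widehat{\mathcal{M}}(\xi)^{-m}\Vert_{\textnormal{op}}^{-1}\,\textnormal{Spectrum}(a(x,\xi))\subset[c,C]$ coming from $\mathcal{L}$-ellipticity and positivity. This route stays one-sided throughout and never needs a symmetric-to-one-sided conversion.

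Your CPR transfer step, as sketched, has a concrete gap. Writing $M:=|\lambda|^{1/m}+\widehat{\mathcal{M}}(\xi)$, your symmetric bound gives $\|Y\|_{\textnormal{op}}\leqslant C$ for $Y:=M^{m/2}(a-\lambda)^{-1}M^{m/2}$, and you want $\|X\|_{\textnormal{op}}$ for $X:=M^{m}(a-\lambda)^{-1}=M^{m/2}YM^{-m/2}$. Applying \eqref{Recht2} with $A=M^{-m/2}$ produces
\[
\|X\|_{\textnormal{op}}\leqslant\bigl\|YM^{-m}+(1+M^{-m})^{1/2}M^{m/2}\,Y\,M^{-m/2}(1+M^{-m})^{1/2}\bigr\|_{\textnormal{op}},
\]
and the leftmost factor of the second summand equals $(M^{m}+1)^{1/2}$, whose operator norm is \emph{not} uniformly bounded in $[\xi]$. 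So a single application of CPR in this form does not close; you would need an additional mechanism (iteration with varying conjugators, an interpolation argument, or a direct spectral argument exploiting that $(a-\lambda)^{-1}$ is itself positive) to make the transfer work uniformly in both $\lambda$ and $[\xi]$. The paper's factoring through the diagonal $\widehat{\mathcal{M}}(\xi)^{m}-\lambda$ is precisely the device that avoids this difficulty.
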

\begin{proof} From the estimates 
\[ 
     \sup_{(x,[\xi])}\Vert\widehat{\mathcal{M}}(\xi)^{-m} a(x,\xi) \Vert_{\textnormal{op}}<\infty,\quad \sup_{(x,[\xi])}\Vert \widehat{\mathcal{M}}^m(\xi)a(x,\xi)^{-1} \Vert_{\textnormal{op}}<\infty,
\]
   we deduce that
    \[ 
       \sup_{x\in G}\Vert a(x,\xi) \Vert_{\textnormal{op}}\asymp\Vert  \widehat{\mathcal{M}}(\xi)^{m}\Vert_{\textnormal{op}}, \quad \sup_{x\in G}\Vert a(x,\xi)^{-1} \Vert_{\textnormal{op}}\asymp\Vert  \widehat{\mathcal{M}}(\xi)^{-m}\Vert_{\textnormal{op}}. 
    \] Consequently we have
    \[ 
      \Vert  \widehat{\mathcal{M}}(\xi)^{m}\Vert_{\textnormal{op}}^{-1} \sup_{x\in G}\Vert a(x,\xi) \Vert_{\textnormal{op}}\asymp 1, \quad \Vert  \widehat{\mathcal{M}}(\xi)^{-m}\Vert_{\textnormal{op}}^{-1} \sup_{x\in G}\Vert a(x,\xi)^{-1} \Vert_{\textnormal{op}}\asymp 1, 
    \] from which we deduce that
    \[ 
        \Vert  \widehat{\mathcal{M}}(\xi)^{-m}\Vert_{\textnormal{op}}^{-1}\textnormal{Spectrum}(a(x,\xi))\subset [c,C],
    \]where $c,C>0$ are positive real numbers. The matrix $a(x,[\xi])$ is normal, so that for every $\lambda\in \mathbb{R}_{-}$ we have
    \begin{align*}
      &   \Vert (|\lambda|^{\frac{1}{m}}+\widehat{\mathcal{M}}(\xi))^m (a(x,\xi)-\lambda)^{-1}\Vert_{\textnormal{op}}\\
         &\asymp \Vert (|\lambda|^{\frac{1}{m}}+\widehat{\mathcal{M}}(\xi))^m (\widehat{\mathcal{M}}(\xi)^m-\lambda)^{-1}(\widehat{\mathcal{M}}(\xi)^m-\lambda)(a(x,\xi)-\lambda)^{-1}\Vert_{\textnormal{op}} \\
         &\lesssim \Vert (|\lambda|^{\frac{1}{m}}+\widehat{\mathcal{M}}(\xi))^m (\widehat{\mathcal{M}}(\xi)^m-\lambda)^{-1}\Vert_{\textnormal{op}} \Vert (\widehat{\mathcal{M}}(\xi)^m-\lambda)(a(x,\xi)-\lambda)^{-1}\Vert_{\textnormal{op}}\\
         &\lesssim \Vert (|\lambda|^{\frac{1}{m}}+\widehat{\mathcal{M}}(\xi))^m (\widehat{\mathcal{M}}(\xi)^m-\lambda)^{-1}\Vert_{\textnormal{op}}.
    \end{align*}
    Let us note that the condition $m\geq 0,$ implies that $\Vert \widehat{\mathcal{M}}(\xi)^{-m} \Vert_{\textnormal{op}}\leq 1.$ 
    So, if $|\lambda|\in [0,1/2],$ then 
    $$ |\lambda|\Vert \widehat{\mathcal{M}}(\xi)^{-m} \Vert_{\textnormal{op}} \leq 1/2, $$ which implies that for all $0\leq |\lambda|\leq 1/2,$ $(1-\lambda  \widehat{\mathcal{M}}(\xi)^{-m} )$ is invertible and from the first von-Neumann identity,
    \begin{align*}
        \Vert (1-\lambda  \widehat{\mathcal{M}}(\xi)^{-m} )^{-1} \Vert_{\textnormal{op}}\leq (1-\Vert\lambda  \widehat{\mathcal{M}}(\xi)^{-m}\Vert_{\textnormal{op}} )^{-1}=(1-|\lambda|\Vert  \widehat{\mathcal{M}}(\xi)^{-m}\Vert_{\textnormal{op}} )^{-1}\leq 2.
    \end{align*}
    Now, fixing again $|\lambda|\in \mathbb{R}_{-}$ observe that from the compactness of $[0,1/2]$ we deduce that
    \begin{align*}
        \sup_{0\leqslant |\lambda|\leqslant 1/2}\Vert (|\lambda|^{\frac{1}{m}}+\widehat{\mathcal{M}}(\xi))^m (\widehat{\mathcal{M}}(\xi)^m-\lambda)^{-1}\Vert_{\textnormal{op}}&\asymp \sup_{0\leqslant |\lambda|\leqslant 1/2}\Vert \widehat{\mathcal{M}}(\xi)^m (\widehat{\mathcal{M}}(\xi)^m-\lambda)^{-1}\Vert_{\textnormal{op}}  \\
         &\asymp \sup_{0\leqslant |\lambda|\leqslant 1/2}\Vert  (I_{d_\xi}-\lambda\widehat{\mathcal{M}}(\xi)^{-m})^{-1}\Vert_{\textnormal{op}} \\
          &\lesssim 1,
    \end{align*}where in the last line we have used the continuity of the function $U(\lambda):=\Vert  (I_{d_\xi}-\lambda\widehat{\mathcal{M}}(\xi)^{-m})^{-1}\Vert_{\textnormal{op}},$ and the fact that it is bounded on $[0,1/2].$ On the other hand,
    \begin{align*}
    &  \sup_{|\lambda|\geqslant 1/2}\Vert (|\lambda|^{\frac{1}{m}}+\widehat{\mathcal{M}}(\xi))^m (\widehat{\mathcal{M}}(\xi)^m-\lambda)^{-1}\Vert_{\textnormal{op}}  \\
      &=\sup_{|\lambda|\geqslant 1/2}\Vert (|\lambda|^{\frac{1}{m}}\widehat{\mathcal{M}}(\xi)^{-1}+I_{d_\xi})^m (I_{d_\xi}-\widehat{\mathcal{M}}(\xi)^{-m}\lambda)^{-1}\Vert_{\textnormal{op}}\\
      &=\sup_{|\lambda|\geqslant 1/2}\Vert (\widehat{\mathcal{M}}(\xi)^{-1}+|\lambda|^{-\frac{1}{m}}I_{d_\xi})^m |\lambda|(I_{d_\xi}-\widehat{\mathcal{M}}(\xi)^{-m}\lambda)^{-1}\Vert_{\textnormal{op}}\\
      &\lesssim \sup_{|\lambda|\geqslant 1/2}\Vert \widehat{\mathcal{M}}(\xi)^{-m} |\lambda|(-\lambda)^{-1}\widehat{\mathcal{M}}(\xi)^{m}\Vert_{\textnormal{op}}\\
      &=1.
    \end{align*}So, we have proved that $a$ is parameter-elliptic with respect to $\mathbb{R}_{-}.$ To prove that $\widehat{B}(x,\xi)\in S^{m\times\textnormal{Re}(s),\mathcal{L}}_{\rho,\delta}(G\times \widehat{G}),$ we can observe that for $\textnormal{Re}(s)<0,$ we can apply Theorem \ref{DunforRiesz}. If  $\textnormal{Re}(s)\geqslant 0,$ we can find $k\in \mathbb{N}$ such that $\textnormal{Re}(s)-k<0$ and consequently from the spectral calculus of matrices we deduce that $a(x,\xi)^{\textnormal{Re}(s)-k}\in S^{m\times(\textnormal{Re}(s)-k),\mathcal{L}}_{\rho,\delta}(G\times \widehat{G}).$ So, from the calculus we conclude that   $$a(x,\xi)^{s}=a(x,\xi)^{s-k}a(x,\xi)^{k}\in S^{m\times\textnormal{Re}(s),\mathcal{L}}_{\rho,\delta}(G\times \widehat{G}).$$ Thus the proof is complete.
\end{proof}
\begin{corollary}\label{1/2}
Let $0\leqslant \delta<\rho\leqslant 1.$  Let  $a\in S^{m,\mathcal{L}}_{\rho,\delta}(G\times \widehat{G}),$  be an $\mathcal{L}$-elliptic symbol  where $m\geqslant 0$ and let us assume that $a$ is positive definite. Then 
$\widehat{B}(x,\xi)\equiv a(x,\xi)^\frac{1}{2}:=\exp(\frac{1}{2}\log(a(x,\xi)))\in S^{\frac{m}{2},\mathcal{L}}_{\rho,\delta}(G\times \widehat{G}).$
\end{corollary}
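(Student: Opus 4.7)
The plan is to deduce Corollary \ref{1/2} as an immediate specialization of the preceding proposition, which asserts that for every $s\in \mathbb{C}$, the symbol $a(x,\xi)^s := \exp(s\log(a(x,\xi)))$ lies in $S^{m\cdot\textnormal{Re}(s),\mathcal{L}}_{\rho,\delta}(G\times \widehat{G})$ whenever $a$ is $\mathcal{L}$-elliptic, positive definite, and of order $m\geqslant 0$. Taking $s = 1/2$, so that $\textnormal{Re}(s) = 1/2$, the conclusion $a^{1/2}\in S^{m/2,\mathcal{L}}_{\rho,\delta}(G\times \widehat{G})$ follows directly.

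To make the argument self-contained, I would briefly verify that the hypotheses of the preceding proposition are met: $a$ is $\mathcal{L}$-elliptic of order $m$ and positive definite by assumption, so by the proposition $a$ is parameter $\mathcal{L}$-elliptic with respect to $\Lambda = \mathbb{R}_{-}$. One then defines $a^{1/2}$ through the holomorphic branch of $z\mapsto z^{1/2}$ on $\mathbb{C}\setminus \mathbb{R}_{-}$, or equivalently via the matrix exponential $\exp(\tfrac{1}{2}\log a(x,\xi))$, which is well defined pointwise since the spectrum of $a(x,\xi)$ lies in a sector avoiding $\mathbb{R}_{-}$. This makes $a^{1/2}$ unambiguous and consistent with the complex functional calculus in Theorem \ref{DunforRiesz} when applied to a suitable regularization.

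There is no substantive obstacle here: the only ingredient needed beyond the preceding proposition is the observation that $\textnormal{Re}(1/2) = 1/2$, which forces the subelliptic order of the square root to be $m/2$. The proof is therefore a one-line application of the proposition. If one wished to avoid invoking the full generality of complex powers, an alternative would be to construct $a^{1/2}$ directly via the Dunford--Riesz functional calculus of Theorem \ref{DunforRiesz} applied to $F(z) = z^{1/2}(1+z)^{-k}$ for $k$ large (to ensure decay), combined with multiplication by $(1+A)^{k}$, yielding the same conclusion through the symbolic calculus closed under compositions (Theorem \ref{Subellipticcomposition}).
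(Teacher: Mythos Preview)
Your proposal is correct and matches the paper's approach exactly: the corollary is stated immediately after the proposition on complex powers and carries no separate proof, so it is intended as the specialization $s=\tfrac{1}{2}$, giving $\textnormal{Re}(s)=\tfrac{1}{2}$ and hence $a^{1/2}\in S^{m/2,\mathcal{L}}_{\rho,\delta}(G\times\widehat{G})$. Your additional remarks on well-definedness and the alternative Dunford--Riesz route are fine but unnecessary here.
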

Now, let us assume that 
\[ 
    A(x,\xi):=\frac{1}{2}(a(x,\xi)+a(x,\xi)^{*}),\,(x,[\xi])\in G\times \widehat{G},\,\,a\in S^{m,\mathcal{L}}_{\rho,\delta}(G\times \widehat{G}), 
\]satisfies
\begin{align}\label{eqi}
    \Vert\widehat{\mathcal{M}}(\xi)^{m}A(x,\xi)^{-1} \Vert_{\textnormal{op}}\leqslant C_{0}.
\end{align} Observe that \eqref{eqi} implies that
\[ 
    \lambda(x,\xi):=\inf\{\tilde{\lambda}(x,[\xi])^{-1}:\det(\widehat{\mathcal{M}}(\xi)^{-m}A(x,\xi)-\tilde{\lambda}(x,\xi)I_{d_\xi})=0\}\leqslant C_{0}.
\]So, $\lambda(x,\xi)^{-1}\geqslant \frac{1}{C_0}$ and consequently
\begin{align*}
  \widehat{\mathcal{M}}(\xi)^{-m}A(x,\xi)\geqslant\frac{1}{C_0}I_{d_\xi}.
\end{align*}This implies that
\begin{align*}
  A(x,\xi)\geqslant\frac{1}{C_0}\widehat{\mathcal{M}}(\xi)^{m},
\end{align*} provided that $A(x,\xi)$ commutes with the symbol $\widehat{\mathcal{L}}$ of the sub-Laplacian $\mathcal{L}$.  For $C_1\in(0, \frac{1}{C_0})$ we have that
\begin{align*}
 A(x,\xi)-C_{1}  \widehat{\mathcal{M}}(\xi)^{m}\geqslant \left(\frac{1}{C_0}-C_1\right) \widehat{\mathcal{M}}(\xi)^{m}>0.
\end{align*}If  $0\leqslant \delta<\rho\leqslant 1,$   from Corollary \ref{1/2}, we have that
\begin{align*}
    q(x,\xi):=(A(x,\xi)-C_{1}  \widehat{\mathcal{M}}(\xi)^{m})^{\frac{1}{2}}\in  S^{\frac{m}{2},\mathcal{L}}_{\rho ,\delta}(G\times \widehat{G}).
\end{align*}From the symbolic calculus we obtain
\begin{align*}
  q(x,\xi)q(x,\xi)^*= A(x,\xi)-C_{1}  \widehat{\mathcal{M}}(\xi)^{m}+r(x,\xi),\,\,   r(x,\xi)\in  S^{m-(\rho-\delta),\mathcal{L}}_{\rho ,\delta}(G\times \widehat{G}).
\end{align*}Now, let us assume that $u\in C^\infty(G).$ Then we have
\begin{align*}
    \textnormal{Re}(a(x,D)u,u)&=\frac{1}{2}((a(x,D)+\textnormal{op}(a^*))u,u)\\
    &=C_{1}(\mathcal{M}_{m}u,u)+(q(x,D)q(x,D)^{*}u,u)+(r(x,D)u,u)\\
    &=C_{1}(\mathcal{M}_{m}u,u)+(q(x,D)^{*}u,q(x,D)^*u)-(r(x,D)u,u)\\
    &\geqslant C_{1}\Vert u\Vert_{{L}^{2,\mathcal{L}}_{\frac{m}{2}}(G)}-(r(x,D)u,u)\\
     &= C_{1}\Vert u\Vert_{{L}^{2,\mathcal{L}}_{\frac{m}{2}}(G)}-(\mathcal{M}_{-\frac{m-(\rho-\delta)}{2}}r(x,D)u,\mathcal{M}_{\frac{m-(\rho-\delta)}{2}}u).
\end{align*}Observe that
\begin{align*}
    (\mathcal{M}_{-\frac{m-(\rho-\delta)}{2}}r(x,D)u,\mathcal{M}_{\frac{m-(\rho-\delta)}{2}}u)&\leqslant \Vert \mathcal{M}_{-\frac{m-(\rho-\delta)}{2}}r(x,D)u \Vert_{L^2(G)}\Vert u\Vert_{L^{2,\mathcal{L}  }_{\frac{m-(\rho-\delta)}{2}}(G)}\\
    &= \Vert r(x,D)u \Vert_{L^{2,\mathcal{L}  }_{-\frac{m-(\rho-\delta)}{2}}(G)}\Vert u\Vert_{L^{2,\mathcal{L}  }_{\frac{m-(\rho-\delta)}{2}}(G)}\\
     &\leqslant C_1\Vert u \Vert_{L^{2,\mathcal{L}  }_{\frac{m-(\rho-\delta)}{2}}(G)}\Vert u\Vert_{L^{2,\mathcal{L}  }_{\frac{m-(\rho-\delta)}{2}}(G)},
\end{align*}where in the last line we have used the subelliptic Sobolev boundedness of $r(x,D)$ from $L^{2,\mathcal{L}  }_{\frac{m-(\rho-\delta)}{2}}(G)$ into $L^{2,\mathcal{L}  }_{-\frac{m-(\rho-\delta)}{2}}(G),$ in view of Corollary \ref{SobL3}. Consequently, we deduce the lower bound
\begin{align*}
    \textnormal{Re}(a(x,D)u,u) \geqslant C_{1}\Vert u\Vert_{{L}^{2,\mathcal{L}}_{\frac{m}{2}}(G)}-C\Vert u\Vert_{L^{2,\mathcal{L}  }_{\frac{m-(\rho-\delta)}{2}}(G)}^2.
\end{align*} If we assume for a moment that for every $\varepsilon>0,$ there exists $C_{\varepsilon}>0,$ such that
\begin{equation}\label{lemararo}
    \Vert u\Vert_{{L}^{2,\mathcal{L}}_{\frac{m-(\rho-\delta)}{2}}(G)}^2\leqslant \varepsilon\Vert u\Vert_{{L}^{2,\mathcal{L}}_{\frac{m}{2}}(G)}^2+C_{\varepsilon}\Vert u \Vert_{L^2(G)}^2,
\end{equation} for $0<\varepsilon<C_{1}$ we have
\begin{align*}
    \textnormal{Re}(a(x,D)u,u) \geqslant (C_{1}-\varepsilon)\Vert u\Vert_{{L}^{2,\mathcal{L}}_{\frac{m}{2}}(G)}-C_{\varepsilon}\Vert u\Vert_{L^2(G)}^2.
\end{align*}So, with the exception of the proof of \eqref{lemararo} we have deduced the following estimate which is the main result of this subsection.
\begin{theorem}[Subelliptic G\r{a}rding inequality]\label{GardinTheorem} Let $G$ be a compact Lie group and let us denote by $Q$ the Hausdorff
dimension of $G$ associated to the control distance associated to the sub-Laplacian $\mathcal{L}=\mathcal{L}_X,$ where  $X= \{X_{1},\cdots,X_k\} $ is a system of vector fields satisfying the H\"ormander condition.  For $0\leqslant \delta<\rho\leqslant 1,$  let $a(x,D):C^\infty(G)\rightarrow\mathscr{D}'(G)$ be an operator with symbol  $a\in {S}^{m,\mathcal{L}}_{\rho,\delta}( G\times \widehat{G})$ of order  $m>0$. Let us assume that 
\[ 
    A(x,\xi):=\frac{1}{2}(a(x,\xi)+a(x,\xi)^{*}),\,(x,[\xi])\in G\times \widehat{G},\,\,a\in S^{m,\mathcal{L}}_{\rho,\delta}(G\times \widehat{G}), 
\] commutes with the symbol $\widehat{\mathcal{L}}$ of the sub-Laplacian $\mathcal{L},$ and that it satisfies
\begin{align*}\label{garding}
    \Vert\widehat{\mathcal{M}}(\xi)^{m}A(x,\xi)^{-1} \Vert_{\textnormal{op}}\leqslant C_{0}.
\end{align*}Then, there exist $C_{1},C_{2}>0,$ such that the lower bound
\begin{align}
    \textnormal{Re}(a(x,D)u,u) \geqslant C_1\Vert u\Vert_{{L}^{2,\mathcal{L}}_{\frac{m}{2}}(G)}-C_2\Vert u\Vert_{L^2(G)}^2,
\end{align}holds true for every $u\in C^\infty(G).$ Moreover, the commutativity condition can be removed if
\begin{equation}
     A(x,\xi)\geqslant C\widehat{\mathcal{M}}(\xi)^{m},
\end{equation}for all $(x,[\xi])\in G\times \widehat{G},$ for some $C>0.$
\end{theorem}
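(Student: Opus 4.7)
The proof essentially follows the computation already carried out in the discussion immediately preceding the theorem statement, so the plan is to justify each step and then close the gap by establishing the subelliptic interpolation inequality \eqref{lemararo}.

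First, the plan is to derive the matrix inequality $A(x,\xi)\geq C_0^{-1}\widehat{\mathcal{M}}(\xi)^m$ as a direct consequence of the hypothesis: the spectrum of $\widehat{\mathcal{M}}(\xi)^{-m}A(x,\xi)$ (which is self-adjoint) is bounded below by $1/C_0$ because the operator norm of its inverse is controlled by $C_0$. Choosing $0 < C_1 < 1/C_0$ then makes $A(x,\xi)-C_1\widehat{\mathcal{M}}(\xi)^m$ a positive, $\mathcal{L}$-elliptic symbol in $S^{m,\mathcal{L}}_{\rho,\delta}(G\times\widehat{G})$, so Corollary \ref{1/2} applies and yields $q(x,\xi):=(A(x,\xi)-C_1\widehat{\mathcal{M}}(\xi)^m)^{1/2}\in S^{m/2,\mathcal{L}}_{\rho,\delta}(G\times\widehat{G})$. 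The composition formula (Theorem \ref{Subellipticcomposition}) then gives $q(x,D)q(x,D)^{*}=A(x,D)-C_1\mathcal{M}_m + r(x,D)$ with $r\in S^{m-(\rho-\delta),\mathcal{L}}_{\rho,\delta}(G\times\widehat{G})$.

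Next, I would compute $\operatorname{Re}(a(x,D)u,u)=\tfrac{1}{2}((a(x,D)+a(x,D)^{*})u,u)$, and, after noting that the formal adjoint $a(x,D)^{*}$ (Theorem \ref{Adjoint}) differs from $\operatorname{Op}(a^{*})$ by a lower-order remainder which can be absorbed into $r(x,D)$, express $\tfrac{1}{2}(a(x,D)+\operatorname{Op}(a^{*}))=C_1\mathcal{M}_m+q(x,D)q(x,D)^{*}+r(x,D)$. The identities $(\mathcal{M}_m u,u)=\|u\|^{2}_{L^{2,\mathcal{L}}_{m/2}(G)}$ and $(q(x,D)q(x,D)^{*}u,u)=\|q(x,D)^{*}u\|^{2}_{L^2(G)}\geq 0$ then produce the bound
\begin{equation*}
\operatorname{Re}(a(x,D)u,u)\geq C_1\|u\|^{2}_{L^{2,\mathcal{L}}_{m/2}(G)} - |(r(x,D)u,u)|.
\end{equation*}
Splitting $r(x,D)=\mathcal{M}_{(m-(\rho-\delta))/2}\cdot [\mathcal{M}_{-(m-(\rho-\delta))/2}r(x,D)\mathcal{M}_{-(m-(\rho-\delta))/2}]\cdot\mathcal{M}_{(m-(\rho-\delta))/2}$ and using the subelliptic Sobolev boundedness (Corollary \ref{SobL3}) for the bracketed $L^2$-bounded operator reduces the error term to $C\|u\|^{2}_{L^{2,\mathcal{L}}_{(m-(\rho-\delta))/2}(G)}$, as in the text.

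The main obstacle, and the only step not already written down in the preceding paragraphs, is the subelliptic interpolation inequality \eqref{lemararo}: for any $\varepsilon>0$ there exists $C_\varepsilon>0$ with $\|u\|^{2}_{L^{2,\mathcal{L}}_{(m-(\rho-\delta))/2}(G)}\leq \varepsilon\|u\|^{2}_{L^{2,\mathcal{L}}_{m/2}(G)}+C_\varepsilon\|u\|^{2}_{L^2(G)}$. My plan for this is a spectral-theoretic Young's inequality argument: since $\mathcal{M}^{2}=1+\mathcal{L}$ is self-adjoint positive on $L^2(G)$ with discrete spectrum $\{\mu_j\}$ tending to $+\infty$, Plancherel gives $\|u\|^{2}_{L^{2,\mathcal{L}}_{s}(G)}=\sum_j \mu_j^{s}|c_j|^{2}$ for $u=\sum_j c_je_j$. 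Setting $s_1=(m-(\rho-\delta))/2 < s_2=m/2$ and applying the elementary inequality $\mu^{s_1}\leq \varepsilon \mu^{s_2}+C_\varepsilon$ (valid for every $\mu\geq 1$ with $C_\varepsilon=\varepsilon^{-s_1/(s_2-s_1)}$ in the worst case $s_1>0$, and trivially when $s_1\leq 0$) to each eigenvalue $\mu_j\geq 1$ and summing yields the inequality. Once this is in hand, choosing $\varepsilon=C_1/(2C)$ absorbs the interpolation error into the main term and produces the stated lower bound with constants $C_1/2$ and $C_2:=CC_\varepsilon$.
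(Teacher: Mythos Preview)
Your proposal is correct and follows essentially the same route as the paper: derive $A(x,\xi)\geq C_0^{-1}\widehat{\mathcal{M}}(\xi)^m$, take the square root via Corollary~\ref{1/2}, use the composition calculus to obtain an order $m-(\rho-\delta)$ remainder, and then absorb that remainder using the subelliptic interpolation inequality. Your spectral Young-type proof of \eqref{lemararo} is exactly the argument the paper gives in the lemma immediately following the theorem (the elementary scalar inequality $\mu^{s_1}\leq\varepsilon\mu^{s_2}+C_\varepsilon$ for $\mu\geq 1$, $s_1<s_2$, combined with Plancherel); you are in fact slightly more careful than the paper, both in noting that the passage from $a(x,D)^*$ to $\textnormal{Op}(a^*)$ produces a lower-order term to be absorbed into $r(x,D)$, and in stating the scalar inequality without the unnecessary sign restriction $s\geq t\geq 0$ or $s,t<0$ that appears in the paper's lemma.
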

In view of the analysis above, for the proof of Theorem \ref{GardinTheorem} we only need to prove \eqref{lemararo}. However we will deduce it from the following more general lemma.
\begin{lemma}Let us assume that $s\geqslant t\geqslant 0$ or that $s,t<0.$ Then, for every $\varepsilon>0,$ there exists $C_\varepsilon>0$ such that 
\begin{equation}\label{lemararo2}
    \Vert u\Vert_{{L}^{2,\mathcal{L}}_{t}(G)}^2\leqslant \varepsilon\Vert u\Vert_{{L}^{2,\mathcal{L}}_{s}(G)}^2+C_{\varepsilon}\Vert u \Vert_{L^2(G)}^2,
\end{equation}holds true for every $u\in C^\infty(G).$ 
\end{lemma}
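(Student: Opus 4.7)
The plan is to reduce the inequality to a pointwise scalar bound on the joint spectrum of $\widehat{\mathcal M}(\xi)$ via the Plancherel identity, and then verify that scalar bound directly. First I would fix, for each equivalence class $[\xi]\in\widehat G$, an orthonormal basis of the representation space in which $\widehat{\mathcal L}(\xi)=\mathrm{diag}[\nu_{ii}(\xi)^2]_{i=1}^{d_\xi}$, so that $\widehat{\mathcal M}(\xi)^r=\mathrm{diag}[(1+\nu_{ii}(\xi)^2)^{r/2}]_{i=1}^{d_\xi}$ for every $r\in\mathbb R$. In this basis Plancherel's theorem gives
\begin{equation*}
\Vert u\Vert_{L^{2,\mathcal L}_{r}(G)}^{2}
=\sum_{[\xi]\in\widehat G}d_\xi\sum_{i,j=1}^{d_\xi}(1+\nu_{ii}(\xi)^2)^{r}\,|\widehat u(\xi)_{ij}|^{2},\quad r\in\{s,t,0\}.
\end{equation*}
Consequently, proving the lemma is equivalent to establishing the scalar inequality
\begin{equation*}
(1+\nu^{2})^{t}\leqslant \varepsilon\,(1+\nu^{2})^{s}+C_{\varepsilon},\quad \forall\,\nu\geqslant 0,
\end{equation*}
for a constant $C_\varepsilon>0$ independent of $\nu$; once this is in hand, multiplying through by $d_\xi|\widehat u(\xi)_{ij}|^{2}$ and summing over $[\xi]$ and $i,j$ yields the required bound.

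To prove the scalar inequality, write $\lambda:=1+\nu^{2}\geqslant 1$. In the regime $s\geqslant t\geqslant 0$, if $s=t$ the inequality is trivial with $\varepsilon=1$ and $C_\varepsilon=0$; otherwise split according to whether $\lambda^{s-t}\geqslant 1/\varepsilon$ or not. In the first situation, $\lambda^{t}=\lambda^{s}\lambda^{-(s-t)}\leqslant \varepsilon\lambda^{s}$. In the second, $\lambda\leqslant \varepsilon^{-1/(s-t)}$, so $\lambda^{t}\leqslant \varepsilon^{-t/(s-t)}=:C_{\varepsilon}$. Adding the two bounds gives the desired pointwise estimate. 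In the regime $s,t<0$, the fact that $\lambda\geqslant 1$ together with $t<0$ yields $\lambda^{t}\leqslant 1$, so the inequality is immediate with $C_\varepsilon=1$ for every $\varepsilon>0$ (in fact the term $\varepsilon(1+\nu^{2})^{s}$ may simply be discarded).

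There is no genuine obstacle here: the argument is a routine spectral reduction, and the only point meriting a line of care is that the basis diagonalising $\widehat{\mathcal L}(\xi)$ also diagonalises $\widehat{\mathcal M}(\xi)^{r}$ for all real $r$ defined through the functional calculus, so that the Plancherel identity splits exactly as written.
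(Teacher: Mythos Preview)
Your proposal is correct and follows exactly the paper's approach: reduce via Plancherel to the scalar inequality $(1+\nu^2)^t\leqslant\varepsilon(1+\nu^2)^s+C_\varepsilon$, then sum. The paper simply asserts this scalar bound without the case analysis you supply, so your write-up is in fact more detailed. One small slip: your handling of $s=t$ (``trivial with $\varepsilon=1$, $C_\varepsilon=0$'') does not address $\varepsilon<1$; for $s=t=0$ one may take $C_\varepsilon=1$, while for $s=t>0$ the stated inequality actually fails for $\varepsilon<1$, so the lemma should be read with $s>t$ in the first regime (which is the only case used in the application).
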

\begin{proof}
    Let $\varepsilon>0.$ Then, there exists $C_{\varepsilon}>0$ such that
    \[ 
        (1+\nu_{ii}(\xi)^2)^{t}-\varepsilon (1+\nu_{ii}(\xi)^2)^s\leqslant C_{\varepsilon},
    \]uniformly in $[\xi]\in \widehat{G}.$ Then \eqref{lemararo2} it follows from the Plancherel theorem. Indeed,
    \begin{align*}
     \Vert u\Vert_{{L}^{2,\mathcal{L}}_{\frac{t}{2}}(G)}^2& =\sum_{[\xi]\in \widehat{G}}d_\xi\sum_{i,j=1}^{d_\xi} (1+\nu_{ii}(\xi)^2)^{t}|\widehat{u}_{ij}(\xi)|^{2}  \\
     &\leqslant \sum_{[\xi]\in \widehat{G}}d_\xi\sum_{i,j=1}^{d_\xi} (\varepsilon(1+\nu_{ii}(\xi)^2)^{s}+C_\varepsilon)|\widehat{u}_{ij}(\xi)|^{2}  \\
    &= \varepsilon\Vert u\Vert_{{L}^{2,\mathcal{L}}_{s}(G)}^2+C_{\varepsilon}\Vert u \Vert_{L^2(G)}^2,
    \end{align*}completing the proof.
\end{proof}
\begin{corollary}\label{GardinTheorem2} Let $G$ be a compact Lie group and let us denote by $Q$ the Hausdorff
dimension of $G$ associated to the control distance associated to the sub-Laplacian $\mathcal{L}=\mathcal{L}_X,$ where  $X= \{X_{1},\cdots,X_k\} $ is a system of vector fields satisfying the H\"ormander condition. Let $a(x,D):C^\infty(G)\rightarrow\mathscr{D}'(G)$ be an operator with symbol  $a\in {S}^{m,\mathcal{L}}_{\rho,\delta}( G\times \widehat{G})$, $m>0$, $0\leq \delta<\rho\leq 1$. Let us assume that 
\[ 
    a(x,\xi)\geqslant 0,\,(x,[\xi])\in G\times \widehat{G}, 
\]commutes with the symbol $\widehat{\mathcal{L}}$ of the sub-Laplacian $\mathcal{L},$ and that it satisfies
\begin{align*}\label{garding22}
    \Vert\widehat{\mathcal{M}}(\xi)^{m}a(x,\xi)^{-1} \Vert_{\textnormal{op}}\leqslant C_{0}.
\end{align*}Then, there exist $C_{1},C_{2}>0,$ such that the lower bound
\begin{align}
    \textnormal{Re}(a(x,D)u,u) \geqslant C_1\Vert u\Vert_{{L}^{2,\mathcal{L}}_{\frac{m}{2}}(G)}^2-C_2\Vert u\Vert_{L^2(G)}^2,
\end{align}holds true for every $u\in C^\infty(G).$
\end{corollary}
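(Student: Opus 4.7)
The plan is to reduce Corollary \ref{GardinTheorem2} to a direct application of Theorem \ref{GardinTheorem}, since all of the symbolic machinery needed (the existence of the subelliptic square root $q(x,\xi) = (A(x,\xi) - C_1\widehat{\mathcal{M}}(\xi)^m)^{1/2}$ via Corollary \ref{1/2}, the Sobolev estimates for the remainder $r(x,D) \in S^{m-(\rho-\delta),\mathcal{L}}_{\rho,\delta}$ via Corollary \ref{SobL3}, and the interpolation inequality \eqref{lemararo2}) was already developed there.

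The key observation is that the hypothesis $a(x,\xi) \geq 0$ in the sense recorded in the paper---meaning $(a(x,\xi)v,v)_{\mathbb{C}^{d_\xi}} \geq 0$ for every $v \in \mathbb{C}^{d_\xi}$ and every $(x,[\xi]) \in G \times \widehat{G}$---forces the pointwise self-adjointness $a(x,\xi) = a(x,\xi)^{*}$. Indeed, a standard fact for complex matrices is that $(Mv,v) \in \mathbb{R}$ for all $v$ implies $M = M^{*}$, via the polarization identity. Under this self-adjointness, the symmetrization appearing in Theorem \ref{GardinTheorem},
\begin{equation*}
A(x,\xi) := \tfrac{1}{2}(a(x,\xi) + a(x,\xi)^{*}),
\end{equation*}
coincides identically with $a(x,\xi)$.

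Consequently, the two standing assumptions of the corollary---namely that $a \in S^{m,\mathcal{L}}_{\rho,\delta}(G \times \widehat{G})$ and that $\Vert \widehat{\mathcal{M}}(\xi)^{m} a(x,\xi)^{-1}\Vert_{\textnormal{op}} \leq C_0$ (which in particular ensures the pointwise invertibility of $a(x,\xi)$)---translate verbatim into the hypotheses of Theorem \ref{GardinTheorem} with $A(x,\xi)$ replaced by $a(x,\xi)$. Applying that theorem then yields the required lower bound
\begin{equation*}
\textnormal{Re}(a(x,D)u, u) \geq C_1 \Vert u \Vert^{2}_{L^{2,\mathcal{L}}_{m/2}(G)} - C_2 \Vert u \Vert^{2}_{L^{2}(G)}
\end{equation*}
for all $u \in C^\infty(G)$, with constants $C_1, C_2 > 0$ inherited from the theorem. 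There is no substantial obstacle here: the only thing worth emphasizing in the write-up is the passage from the scalar positivity condition $(a(x,\xi)v,v) \geq 0$ to self-adjointness, as this is what makes the symmetrization step transparent.
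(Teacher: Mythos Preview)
Your proposal is correct and matches the paper's approach: the corollary is stated without a separate proof, as it follows immediately from Theorem \ref{GardinTheorem} once one observes that the positivity hypothesis $a(x,\xi)\geq 0$ forces $a(x,\xi)=a(x,\xi)^{*}$, so that the symmetrization $A(x,\xi)$ in the theorem coincides with $a(x,\xi)$ itself. The only content you added is making explicit the passage from positivity to self-adjointness via polarization, which the paper leaves implicit.
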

\subsection{Dixmier traces} Now, we will apply the subelliptic functional calculus to study the membership of subelliptic operators in the Dixmier ideal on $L^2(G).$ 

 By following Connes \cite{Connes94}, if $H$ is a Hilbert space (we are interested in $H=L^2(G)$ for instance), the class $\mathcal{L}^{(1,\infty)}(H)$ consists of those compact linear operators $A\in\mathcal{L}(H)$  satisfying
\begin{equation}
\sum_{1\leq n\leq N}s_{n}(A)=O(\log(N)),\,\,\,N\rightarrow \infty,
\end{equation}
where $\{s_{n}(A)\}$ denotes the sequence of singular values of $A$,  i.e. the square roots of the eigenvalues of the non-negative self-adjoint operator $A^\ast A.$ 
%$0\leq s_{1}(A)\leq s_2(A)\leq\cdots\leq s_{n}(A)\leq \cdots$.
So, $\mathcal{L}^{(1,\infty)}(H)$ is endowed with the norm
\begin{equation}\label{dixmier}
\Vert A \Vert_{\mathcal{L}^{(1,\infty)}(H)}=\sup_{N\geq 2}\frac{1}{\log(N)}\sum_{1\leq n\leq N}s_{n}(A).
\end{equation}
We define the functional  $$\textnormal{\bf{Tr}}_{\textnormal{Dix}}(A):=\lim_{N\rightarrow\infty }\frac{1}{\log(N)}\sum_{1\leq n\leq N}s_{n}(A)\in [0,\infty],$$ for the family of bounded operators $A$ in $\mathcal{L}^{(1,\infty)}(H)$  (see \cite{Connes94} or  \cite{Dixmier}).
We recall the main result in Comptes Rendus's note of Dixmier  \cite{Dixmier} regarding the existence of a trace functional different from
the spectral trace defined on the ideal  $\mathcal{L}^{(1,\infty)}(\mathcal{H})$ which contains the ideal of trace class operators $S_1(\mathcal{H})$.

\begin{theorem}[Dixmier   \cite{Dixmier}]\label{Dixm:Theorem} Let $\omega \in \ell^{\infty}(\mathbb{N}\setminus\{ 0\})^{*}$ be a bounded functional satisfying the following conditions:

\begin{itemize}
    \item[(D1):] $\omega$ is a positive linear functional that satisfies $\omega(1,1,\cdots, 1,\cdots)=1.$  
    \item[(D2):]  $\omega((a_n)_{n\in \mathbb{N}\setminus\{ 0\}})=0,$ for a sequence $(a_n)_{n\in \mathbb{N}\setminus\{ 0\}}\in \ell^{\infty}(\mathbb{N}\setminus\{ 0\})$ if $$ \lim_{N\rightarrow \infty}a_{n}=0.$$ 
    \item[(D3):] For any  $(a_n)_{n\in \mathbb{N}\setminus\{ 0\}}\in \ell^{\infty}(\mathbb{N}\setminus\{ 0\}),$
    $$ \omega(a_1,a_2,\cdots ,a_n,\cdots )= \omega(a_1,a_1,a_2,a_2,\cdots ,a_n,a_n,\cdots ). $$
\end{itemize} For a compact positive operator $A$ in $\mathcal{L}^{(1,\infty)}(\mathcal{H})$ we set
\begin{equation}
    \textnormal{\bf Tr}_\omega(A):=\omega \left(\left\{\frac{1}{\log(N)}\sum_{j=1}^{N}s_{j}(A)\right\}_{N\geq 1} \right).
\end{equation} Then, $ \textnormal{\bf Tr}_\omega$ extends by linearity to a trace on $\mathcal{L}^{(1,\infty)}(\mathcal{H}),$ and 
\begin{equation}\label{TrD}
     \textnormal{\bf Tr}_\omega(A)= \textnormal{\bf Tr}_{\textnormal{Dix}}(A):=\lim_{N\rightarrow \infty} \frac{1}{\log(N)}\sum_{j=1}^{N}s_{j}(A),
\end{equation}provided that the  limit in the right hand side exists. In this case, the value of $ \textnormal{\bf Tr}_\omega(A)$ is independent of the choice of $\omega.$ Moreover, $\textnormal{\bf Tr}_\omega(A)=0,$ if $A\in S_{1}(\mathcal{H}).$
\end{theorem}
\begin{remark}\label{Deco:remark} Let us describe the extension of $\textnormal{\bf Tr}_\omega$ from the positive elements in $\mathcal{L}^{(1,\infty)}(\mathcal{H})$ to general compact operators in this ideal, see \cite{Connes94}.
We use the decomposition of $A$ into its real and imaginary part,
\[ 
    \textnormal{Re}(A):=\frac{A+A^*}{2},\,\, \textnormal{Im}(A):=\frac{A-A^*}{2i},
\]and the decomposition of $\textnormal{Re}(A)$ and $\textnormal{Im}(A)$ into their positive and negative parts,
\begin{eqnarray*}
  \textnormal{Re}(A)^{+}:=\frac{\textnormal{Re}(A)+|\textnormal{Re}(A)|}{2},\,\, \textnormal{Re}(A)^{-}:=\frac{|\textnormal{Re}(A)|-\textnormal{Re}(A)}{2},
\end{eqnarray*}
and 
\begin{eqnarray*}
  \textnormal{Im}(A)^{+}:=\frac{\textnormal{Im}(A)+|\textnormal{Im}(A)|}{2},\,\, \textnormal{Im}(A)^{-}:=\frac{|\textnormal{Im}(A)|-\textnormal{Im}(A)}{2}.
\end{eqnarray*}Now, the operator $A$ can be written as
\begin{align*}
    A&= \textnormal{Re}(A)+i\textnormal{Im}(A)\\
    &=\left(\textnormal{Re}(A)^{+}-\textnormal{Re}(A)^{-}\right)+i\left(\textnormal{Im}(A)^{+}-\textnormal{Im}(A)^{-}\right).
\end{align*}So, by the linearity of the Dixmier trace $\textnormal{Tr}_\omega$ we have
\begin{align*}
    \textnormal{Tr}_\omega (A)&= \textnormal{Tr}_\omega(\textnormal{Re}(A))+i\textnormal{Tr}_\omega(\textnormal{Im}(A))\\
    &=\left(\textnormal{Tr}_\omega(\textnormal{Re}(A)^{+})-\textnormal{Tr}_\omega(\textnormal{Re}(A)^{-})\right)+i\left(\textnormal{Tr}_\omega(\textnormal{Im}(A)^{+})-\textnormal{Tr}_\omega(\textnormal{Im}(A)^{-})\right).
\end{align*}
\end{remark}

Our starting point is the following lemma.
\begin{lemma}\label{dixmierlema1}
Let $G$ be a compact Lie group and let us denote by $Q$ the Hausdorff
dimension of $G$ associated to the control distance associated to the sub-Laplacian $\mathcal{L}=\mathcal{L}_X,$ where  $X= \{X_{1},\cdots,X_k\} $ is a system of vector fields satisfying the H\"ormander condition.  For  $0\leqslant \rho\leqslant 1,$ let us consider a positive  left-invariant $\mathcal{L}$-elliptic continuous linear operator $A:C^\infty(G)\rightarrow\mathscr{D}'(G)$ with symbol  $\sigma\in {S}^{m,\mathcal{L}}_{\rho}( \widehat{G})$, $m\in \mathbb{R}$. Let us assume that $A$ commutes with $\mathcal{L}.$ Then, $A$ belongs to the Dixmier ideal $\mathscr{L}^{1,\infty}(L^2(G))$ with $\textnormal{\bf{Tr}}_{w}(A)<\infty,$ if and only if $m\leqslant -Q.$ If $A\neq 0,$ $\textnormal{\bf{Tr}}_{w}(A)\asymp\frac{1}{Q}$ for $m=-Q,$ and for $m<-Q,$ $\textnormal{\bf{Tr}}_{w}(A)=0.$

\end{lemma}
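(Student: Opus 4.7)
The strategy is to reduce the computation of the singular value sequence $\{s_n(A)\}$ to the Weyl asymptotic for the sub-Laplacian via simultaneous diagonalization, and then to carry out a dyadic analysis that isolates the borderline order $m=-Q$. First, since $A$ is positive, left-invariant, and commutes with $\mathcal{L}$, for every $[\xi]\in\widehat{G}$ the matrices $\sigma(\xi)$ and $\widehat{\mathcal{M}}(\xi)$ can be simultaneously diagonalised in a common orthonormal basis of the representation space, giving $\sigma(\xi)=\operatorname{diag}[\sigma_{jj}(\xi)]_{j=1}^{d_\xi}$ with $\sigma_{jj}(\xi)\ge 0$. The $\mathcal{L}$-ellipticity of $\sigma\in S^{m,\mathcal{L}}_{\rho}(\widehat{G})$, together with the membership of $A$ in this class, yields the two-sided bound $\sigma_{jj}(\xi)\asymp (1+\nu_{jj}(\xi)^{2})^{m/2}$, uniformly in $j$ and $[\xi]$. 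By the Peter--Weyl decomposition of $L^{2}(G)$, the multiset of eigenvalues (equivalently, singular values) of $A$ is precisely $\{\sigma_{jj}(\xi):[\xi]\in\widehat{G},\ 1\le j\le d_\xi\}$, in which each value $\sigma_{jj}(\xi)$ occurs with multiplicity $d_\xi$.

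Next, the plan is to estimate, for each spectral cut-off $R>1$, the truncated partial sum
\begin{equation*}
\Sigma(R)\;:=\;\sum_{[\xi]:(1+\nu_{jj}(\xi)^{2})^{1/2}\le R\ \forall j}\ d_\xi\sum_{j=1}^{d_\xi}\sigma_{jj}(\xi),
\end{equation*}
and the associated counting function $N(R):=\sum_{[\xi]:(1+\nu_{jj}(\xi)^{2})^{1/2}\le R\ \forall j}d_\xi^{2}$. The Weyl law for $\mathcal{L}$ recorded in Remark~\ref{weyl} gives $N(R)\asymp R^{Q}$, hence $\log N(R)\asymp Q\log R$. Using the ellipticity bound on $\sigma_{jj}(\xi)$ and a dyadic decomposition $2^{k}\le (1+\nu_{jj}^{2})^{1/2}<2^{k+1}$, each dyadic shell contributes $\asymp 2^{km}\cdot(N(2^{k+1})-N(2^{k}))\asymp 2^{k(m+Q)}$, so that
\begin{equation*}
\Sigma(R)\;\asymp\;\sum_{k:\,2^{k+1}\le R}2^{k(m+Q)}.
\end{equation*}

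Now I would split into three cases. If $m>-Q$, the geometric sum diverges and $\Sigma(R)\asymp R^{m+Q}$, which grows strictly faster than $\log N(R)\asymp Q\log R$; hence $\frac{1}{\log N}\sum_{n=1}^{N}s_{n}(A)\to\infty$ and $A\notin\mathscr{L}^{(1,\infty)}(L^{2}(G))$. If $m<-Q$, the geometric sum converges, so $\Sigma(R)=O(1)$; then $A$ is actually trace class and $\textnormal{\bf{Tr}}_\omega(A)=\lim_{N\to\infty}\frac{O(1)}{\log N}=0$. At the critical value $m=-Q$, each dyadic level contributes a bounded amount and $\Sigma(R)\asymp \log R$, whereas $\log N(R)\asymp Q\log R$; dividing, the ratio is comparable to $1/Q$, which gives $A\in\mathscr{L}^{(1,\infty)}(L^{2}(G))$ and $\textnormal{\bf{Tr}}_\omega(A)\asymp 1/Q$, with non-vanishing following from $A\neq 0$ and the lower bound in the ellipticity estimate (since both constants appearing in $\sigma_{jj}(\xi)\asymp (1+\nu_{jj}^{2})^{m/2}$ are strictly positive).

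The main technical obstacle is the passage from the spectrally-indexed cut-off $\Sigma(R)$ to the rank-indexed partial sum $\sum_{n=1}^{N}s_{n}(A)$, which requires interchanging the two cut-offs consistently: one must verify that $N=N(R)$ and $\Sigma(R)$ vary comparably under small perturbations of $R$, so that the ratio $\Sigma(R)/\log N(R)$ controls $\frac{1}{\log N}\sum_{n=1}^{N}s_{n}(A)$ from above and below. This is routine via an Abelian--Tauberian comparison along a geometric sequence of cut-offs $R_k=2^{k}$, exploiting that the $s_n(A)$ are monotone. The subtle point, and the reason the factor $1/Q$ is precisely the Hausdorff dimension inverse, is that $\log N(R)$ absorbs exactly one power of $Q$ coming from the sub-Riemannian Weyl law, while $\Sigma(R)$ at the critical exponent produces only a single $\log R$.
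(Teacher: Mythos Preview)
Your proof is correct and follows a genuinely different route from the paper. The paper computes $\textnormal{\bf{Tr}}_{\omega}(A)$ via the zeta-function formula $\textnormal{\bf{Tr}}_{\omega}(A)=\lim_{p\to 1^{+}}(p-1)\textnormal{\bf{Tr}}(A^{p})$ (citing Sukochev--Usachev), reduces $\textnormal{\bf{Tr}}(A^{p})$ to $\textnormal{\bf{Tr}}((1+\mathcal{L})^{mp/2})$ by the same simultaneous-diagonalisation and ellipticity step you use, estimates the latter via the dyadic Weyl law as $\asymp\int_{1}^{\infty}\lambda^{Q+mp-1}\,d\lambda=-1/(Q+mp)$, and then reads off $\lim_{p\to 1^{+}}(p-1)\cdot\frac{-1}{Q+mp}=\frac{1}{Q}\delta_{m,-Q}$. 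You instead work directly with the definition $\textnormal{\bf{Tr}}_{\omega}(A)=\lim_{N\to\infty}\frac{1}{\log N}\sum_{n\le N}s_{n}(A)$, controlling the rank-indexed partial sum by the spectral-cutoff sum $\Sigma(R)$ and matching $\log N(R)\asymp Q\log R$ against $\Sigma(R)\asymp\log R$ at the critical order. Your approach is more elementary in that it avoids the zeta-residue identity, at the price of the Abelian--Tauberian bookkeeping you flag as the main technical point; the paper's approach packages that comparison into a single residue computation, which is cleaner but relies on an external result. Both routes hinge on the same two inputs---the two-sided ellipticity bound $\sigma_{jj}(\xi)\asymp(1+\nu_{jj}(\xi)^{2})^{m/2}$ and the sub-Riemannian Weyl law $N(R)\asymp R^{Q}$---and both isolate the factor $1/Q$ in the same way, as the dimension appearing in $\log N(R)$.
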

\begin{proof}
Let us use the positivity of $A$ computing the Dixmier trace of $A$ from the identity (see e.g. Sukochev and Usachev \cite[Page 114]{Sukochev})
\[ 
\textnormal{\bf{Tr}}_{w}(A)=\lim_{p\rightarrow 1^{+}}    (p-1)\textnormal{\bf{Tr}}(A^p).
\]
At the level of symbols, if $A$ commutes with $\mathcal{L},$ for every $[\xi]\in \widehat{G},$ $\sigma(\mathcal{\xi})$ commutes with $\widehat{\mathcal{L}}(\xi)$  and consequently, $\sigma(\mathcal{\xi})$ and  $\widehat{\mathcal{L}}(\xi)$ are simultaneously diagonalisable on every  representation space. So, in a suitable basis of the representation space we can write,
\[ 
    \sigma(\xi)=\textnormal{diag}[\sigma_{jj}(\xi)]_{j=1}^{d_\xi},\,\,\,\widehat{\mathcal{L}}(\xi)=\textnormal{diag}[(1+\nu_{jj}(\xi)^2)^{\frac{1}{2}}]_{j=1}^{d_\xi},
\]where $\sigma_{jj}(\xi),$ $1\leqslant k\leqslant d_{\xi},$ is the system of positive eigenvalues of $\sigma(\xi),$ $[\xi]\in \widehat{G}.$ The spectral mapping theorem implies that
\[ 
    \textnormal{spectrum}(A^p)=\{{\sigma_{jj}(\xi)^p}:1\leqslant j\leqslant d_\xi,\,\,[\xi]\in \widehat{G}\}.
    \]So, we have
    \begin{align*}
        \textnormal{\textbf{Tr}}(A^p)=\sum_{[\xi]\in  \widehat{G}}\sum_{j=1}^{d_{\xi}}{\sigma_{jj}(\xi)}^p,\,\,\,\,
    \end{align*} The $\mathcal{L}$-ellipticity of $A,$ implies that,
    \[ 
        \sup_{1\leqslant j\leqslant d_\xi}\sigma_{jj}(\xi)^{-1}(1+\nu_{jj}(\xi)^2)^{\frac{m}{2}}=\Vert \sigma(\xi)^{-1}\widehat{\mathcal{M}}(\xi)\Vert_{\textnormal{op}}\leqslant  \sup_{[\xi]\in \widehat{G}}\Vert \sigma(\xi)^{-1}\widehat{\mathcal{M}}(\xi)\Vert_{\textnormal{op}}<\infty.
    \]Consequently,
    \[ 
        \inf_{1\leqslant j\leqslant d_\xi}\sigma_{jj}(\xi)(1+\nu_{jj}(\xi)^2)^{-\frac{m}{2}}\geqslant     \sup_{[\xi]\in \widehat{G}}\Vert \sigma(\xi)^{-1}\widehat{\mathcal{M}}(\xi)^{m}\Vert_{\textnormal{op}}^{-1}.
    \]
Now, observe that from the hypothesis $\sigma\in {S}^{m,\mathcal{L}}_{\rho}( \widehat{G})$ we have,
\[ 
  \sup_{1\leqslant j\leqslant d_\xi}\sigma_{jj}(\xi)(1+\nu_{jj}(\xi)^2)^{-\frac{m}{2}}\leqslant   \sup_{[\xi]\in \widehat{G}}\Vert \sigma(\xi)\widehat{\mathcal{M}}(\xi)^{-m}\Vert_{\textnormal{op}}.   
\]These inequalities imply that
\begin{align*}
     \textnormal{\textbf{Tr}}(A^p)&=\sum_{[\xi]\in  \widehat{G}}\sum_{j=1}^{d_{\xi}}{\sigma_{jj}(\xi)^p (1+\nu_{jj}(\xi)^2)^{-\frac{pm}{2}}(1+\nu_{jj}(\xi)^2)^{\frac{mp}{2}}      }\\
     &\asymp \sum_{[\xi]\in  \widehat{G}}\sum_{j=1}^{d_{\xi}}  (1+\nu_{jj}(\xi)^2)^{\frac{mp}{2}} \\
     &=\textnormal{\textbf{Tr}}((1+\mathcal{L})^{\frac{mp}{2}}).
\end{align*}Now, as in the previous section, we will use the Weyl-law for the sub-Laplacian (see  Remark \ref{weyl}). Observe that,
\begin{align*}
  \textnormal{\textbf{Tr}}((1+\mathcal{L})^{\frac{mp}{2}})=\sum_{k=0}^{\infty}\sum_{[\xi]:2^{k}\leqslant (1+\nu_{j'j'}(\xi)^2)^\frac{1}{2} <2^{k+1},\,\forall 1\leqslant j'\leqslant d_{\xi} } \sum_{j=1}^{d_{\xi}}{(1+\nu_{jj}(\xi)^2)^{\frac{mp}{2}}      }.
\end{align*}Because,
\begin{align*}
     \sum_{j=1}^{d_{\xi}}{(1+\nu_{jj}(\xi)^2)^{\frac{mp}{2}}      }\asymp  {d_{\xi}}{2^{kmp}      }, 
\end{align*}we have 
\begin{align*}
    \textnormal{\textbf{Tr}}((1+\mathcal{L})^{\frac{mp}{2}})&=\sum_{k=0}^{\infty} {2^{kmp}}\sum_{[\xi]:2^{k}\leqslant (1+\nu_{j'j'}(\xi)^2)^\frac{1}{2} <2^{k+1},\,\forall 1\leqslant j'\leqslant d_{\xi} } d_\xi\\
    &=\sum_{k=0}^{\infty} {2^{kmp}}N(2^k)=\sum_{k=0}^{\infty} e^{2^{kmp}}2^{kQ}\\
    &=\sum_{k=0}^{\infty} {2^{kmp}}2^{k(Q-1)}2^{k}.
\end{align*}Observe that 
\begin{align*}
    \sum_{k=0}^{\infty} 2^{k(Q+mp-1)}2^{k}\asymp \int\limits_{1}^{\infty}\lambda^{Q+mp-1}d\lambda<\infty,
\end{align*} for all $p>1,$ if and only if $m\leqslant  -Q.$ So, from the identity
\[ 
    \int\limits_{1}^{\infty}\lambda^{Q+mp-1}d\lambda =-\frac{1}{Q+mp}.
\]
we deduce that
\[ 
    \textnormal{\bf{Tr}}_{w}(A)\asymp \lim_{p\rightarrow 1^{+}}(p-1)\times \frac{(-1)}{Q+mp}=\delta_{m,-Q}\times \frac{1}{Q},\,\,\,m\leqslant -Q, 
\] where $\delta_{m,-Q}$ is the Kronecker delta.
Thus, we end the proof.
\end{proof}
\begin{lemma}\label{lemmadixmier}
Let $G$ be a compact Lie group and let us denote by $Q$ the Hausdorff
dimension of $G$ associated to the control distance associated to the sub-Laplacian $\mathcal{L}=\mathcal{L}_X,$ where  $X= \{X_{1},\cdots,X_k\} $ is a system of vector fields satisfying the H\"ormander condition.  For  $0\leqslant \rho\leqslant 1,$ let us consider a positive  left-invariant $\mathcal{L}$-elliptic continuous linear operator $A:C^\infty(G)\rightarrow\mathscr{D}'(G)$ with symbol  $\sigma\in {S}^{m,\mathcal{L}}_{\rho}( \widehat{G})$, $m\in \mathbb{R}$. Then, $A$ belongs to the Dixmier ideal $\mathscr{L}^{1,\infty}(L^2(G))$ with $\textnormal{\bf{Tr}}_{w}(A)<\infty,$ if and only if $m\leqslant -Q.$ If $A\neq 0,$ $\textnormal{\bf{Tr}}_{w}(A)\asymp\frac{1}{Q}$ for $m=-Q,$ and for $m<-Q,$ $\textnormal{\bf{Tr}}_{w}(A)=0.$

\end{lemma}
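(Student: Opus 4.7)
\subsection*{Proposal of proof for Lemma \ref{lemmadixmier}}

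The plan is to reduce Lemma \ref{lemmadixmier} to the already established case treated in Lemma \ref{dixmierlema1} by showing that, even without commutativity with $\mathcal{L}$, the singular values of $A$ are uniformly comparable to those of the ``model'' operator $(1+\mathcal{L})^{m/2}$. Since $A$ is left-invariant, its action on $L^2(G)$ decomposes along the Peter--Weyl components, and on the component attached to $[\xi]\in\widehat{G}$ the operator acts as multiplication by the positive self-adjoint matrix $\sigma(\xi)$ (with multiplicity $d_\xi$). Hence $s_n(A)$ is the decreasing rearrangement of the eigenvalues $\{\lambda_j(\sigma(\xi))\}_{1\le j\le d_\xi,\,[\xi]\in\widehat{G}}$, each repeated $d_\xi$ times.

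\emph{Step 1: the normalised symbol is uniformly bounded and bounded away from zero.} I would introduce
\begin{equation*}
T(\xi):=\widehat{\mathcal{M}}(\xi)^{-m/2}\sigma(\xi)\widehat{\mathcal{M}}(\xi)^{-m/2},\qquad [\xi]\in\widehat{G},
\end{equation*}
which is positive self-adjoint on $\mathbb{C}^{d_\xi}$. Applying the characterisation Corollary \ref{corC} (equivalence of conditions A and C with $\alpha=\beta=0$, $r_{0}=-m/2$) to $\sigma\in S^{m,\mathcal{L}}_{\rho}(\widehat{G})$ yields
$\sup_{[\xi]}\Vert T(\xi)\Vert_{\textnormal{op}}<\infty$. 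The $\mathcal{L}$-ellipticity hypothesis gives $\sigma^{-1}\in S^{-m,\mathcal{L}}_{\rho}(\widehat{G})$ by Corollary \ref{IesTC}; applying Corollary \ref{corC} to $\sigma^{-1}$ in the same way gives $\sup_{[\xi]}\Vert T(\xi)^{-1}\Vert_{\textnormal{op}}<\infty$. Therefore there exist $0<c\le C<\infty$ such that $cI_{d_\xi}\le T(\xi)\le CI_{d_\xi}$ for every $[\xi]\in\widehat{G}$, which after the substitution $w=\widehat{\mathcal{M}}(\xi)^{-m/2}v$ translates into the operator inequality
\begin{equation*}
c\,\widehat{\mathcal{M}}(\xi)^{m}\le \sigma(\xi)\le C\,\widehat{\mathcal{M}}(\xi)^{m}.
\end{equation*}

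\emph{Step 2: two-sided comparison of singular values.} Because both operators in the preceding inequality are positive self-adjoint, the Courant--Fischer min--max principle gives
\begin{equation*}
c\,\lambda_{j}(\widehat{\mathcal{M}}(\xi)^{m})\le \lambda_{j}(\sigma(\xi))\le C\,\lambda_{j}(\widehat{\mathcal{M}}(\xi)^{m})
\end{equation*}
for each $j$, where eigenvalues are arranged in decreasing order. The right-hand eigenvalues are, up to reordering, the diagonal entries $(1+\nu_{jj}(\xi)^{2})^{m/2}$. Consequently the full sequence of singular values of $A$ is, up to uniform multiplicative constants, the sequence of singular values of $(1+\mathcal{L})^{m/2}$.

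\emph{Step 3: reduction to Lemma \ref{dixmierlema1}.} I would use the standard identity
$\textnormal{\textbf{Tr}}_{\omega}(A)=\lim_{p\to 1^{+}}(p-1)\textnormal{\textbf{Tr}}(A^{p})$
valid for positive operators in $\mathscr{L}^{(1,\infty)}(L^{2}(G))$. Combining $\textnormal{\textbf{Tr}}(A^{p})=\sum_{[\xi]}d_{\xi}\textnormal{\textbf{Tr}}(\sigma(\xi)^{p})$ with Step~2 yields
\begin{equation*}
c^{p}\,\textnormal{\textbf{Tr}}((1+\mathcal{L})^{mp/2})\le \textnormal{\textbf{Tr}}(A^{p})\le C^{p}\,\textnormal{\textbf{Tr}}((1+\mathcal{L})^{mp/2}),
\end{equation*}
and the constants $c^{p},C^{p}$ stay in a compact interval of $(0,\infty)$ as $p\to 1^{+}$. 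Since $(1+\mathcal{L})^{m/2}$ is a positive left-invariant $\mathcal{L}$-elliptic Fourier multiplier of order $m$ that manifestly commutes with $\mathcal{L}$, Lemma \ref{dixmierlema1} applies and shows that the right-hand side is $O(1)$ as $p\to 1^{+}$ if and only if $m\le -Q$. In that range one gets $\lim_{p\to 1^{+}}(p-1)\textnormal{\textbf{Tr}}((1+\mathcal{L})^{mp/2})= \delta_{m,-Q}/Q$ (the limit being $0$ for $m<-Q$ and $\asymp 1/Q$ for $m=-Q$), which combined with the two-sided bound concludes the proof; the hypothesis $A\neq 0$ rules out the trivial case.

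The only substantive obstacle is Step~1, which requires passing from the ``one-sided'' operator norm estimates in the definition of the subelliptic class and of $\mathcal{L}$-ellipticity to the ``symmetric'' estimate on $T(\xi)$. This is exactly the content of the characterisation Theorem \ref{cor} / Corollary \ref{corC}, whose proof uses the Corach--Porta--Recht inequality; everything else is essentially a comparison argument on eigenvalue counting functions.
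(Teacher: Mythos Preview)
Your proof is correct and essentially the same as the paper's: both reduce $\textnormal{\textbf{Tr}}(A^p)$ to $\textnormal{\textbf{Tr}}((1+\mathcal{L})^{mp/2})$ via two-sided eigenvalue bounds coming from the $\mathcal{L}$-ellipticity and positivity of $\sigma(\xi)$, and then invoke Lemma \ref{dixmierlema1}; your symmetric conjugate $T(\xi)=\widehat{\mathcal{M}}(\xi)^{-m/2}\sigma(\xi)\widehat{\mathcal{M}}(\xi)^{-m/2}$ together with Courant--Fischer plays the role of the paper's basis-change manipulation of $\sigma(\xi)^{p}\widehat{\mathcal{M}}(\xi)^{-mp}$. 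One minor caveat: Corollary \ref{IesTC} needs $\rho>\delta=0$, so for $\rho=0$ you should bypass it and obtain the lower bound directly from $\Vert\widehat{\mathcal{M}}(\xi)^{m}\sigma(\xi)^{-1}\Vert_{\textnormal{op}}\le C_{0}$ via operator monotonicity of the square root (Loewner--Heinz), namely $\widehat{\mathcal{M}}(\xi)^{2m}\le C_{0}^{2}\sigma(\xi)^{2}\Rightarrow \widehat{\mathcal{M}}(\xi)^{m}\le C_{0}\,\sigma(\xi)$.
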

\begin{proof}
Let us fix $p>1.$ We will compute the trace of $A^p$  using the formula,
\begin{align*}
     \textnormal{\textbf{Tr}}(A^p)&=\sum_{[\xi]\in  \widehat{G}}d_{\xi}\textnormal{\textbf{Tr}}[\widehat{A^p}(\xi)].
\end{align*} From the positivity  of $A,$
 in a suitable basis of the representation space we can diagonalise the operator $\sigma(\xi)  \widehat{\mathcal{M}}(\xi)^{-m},$ and we can write in such a basis,
\begin{equation}
    \sigma(\xi)^p  \widehat{\mathcal{M}}(\xi)^{-mp}=\textnormal{diag}[\lambda_{jj}(\xi)]_{j=1}^{d_\xi},\,\,\widehat{\mathcal{M}}(\xi)^{mp} =[\Omega_{ij}(\xi)]_{i,j=1}^{d_\xi}.
\end{equation}
Now, we can write
\begin{align*}
     \textnormal{\textbf{Tr}}(A^p)
     &=\sum_{[\xi]\in  \widehat{G}}d_{\xi}\textnormal{\textbf{Tr}}[\sigma(\xi)^p  \widehat{\mathcal{M}}(\xi)^{-mp}\widehat{\mathcal{M}}(\xi)^{mp} ]dx\\
     &=\sum_{[\xi]\in  \widehat{G}}\sum_{j,j'=1}^{d_\xi}d_{\xi}\lambda_{j'j'}(\xi)\Omega_{jj'}(\xi).
\end{align*}
The $\mathcal{L}$-ellipticity of $A$ implies the  $\mathcal{L}$-ellipticity of $A^p$ and the positivity  of its symbol, implies that
\begin{align*}
  \sup_{[\xi]\in \widehat{G}}\Vert \sigma(\xi)^{-p}\widehat{\mathcal{M}}(\xi)^{mp}\Vert_{\textnormal{op}}^{-1}&\leqslant   \inf_{j',j',[\xi]\in \widehat{G}}\lambda_{j'j'}(\xi)\\
  &\leqslant \sup_{j',j',[\xi]\in \widehat{G}}\lambda_{j'j'}(\xi)\\ &=\sup_{[\xi]\in \widehat{G}}\Vert \sigma(\xi)^p\widehat{\mathcal{M}}(\xi)^{-pm}\Vert_{\textnormal{op}},\hspace{2cm}
\end{align*}from which we deduce the following estimate,
\begin{align*}
     \textnormal{\textbf{Tr}}(A)
     &\asymp\sum_{[\xi]\in  \widehat{G}}\sum_{j,j'=1}^{d_\xi}d_{\xi}\Omega_{jj'}(\xi).
\end{align*}
Because
$\widehat{\mathcal{M}}(\xi)^{mp} =[\Omega_{ij}(\xi)]_{i,j=1}^{d_\xi}$ is a symmetric matrix written in the basis that allows to write in a  diagonal form the operator $\sigma(\xi)^p\widehat{\mathcal{M}}(\xi)^{-mp},$ we can find a matrix $P(\xi)$ such that
\[ 
    \widehat{\mathcal{M}}(\xi)^{mp} =P(\xi)^{-1}  \textnormal{diag}[  (1+\nu_{jj}(\xi)^2)^{\frac{mp}{2}} ]_{j=1}^{d_\xi}   P(\xi),
\]and 
\begin{align*}
   & \sum_{j,j'=1}^{d_\xi}\Omega_{j'j}(\xi)=\sum_{j,j'=1}^{d_\xi}[P(\xi)^{-1}  \textnormal{diag}[  (1+\nu_{ss}(\xi)^2)^{\frac{mp}{2}} ]_{s=1}^{d_\xi}   P(\xi)]_{j'j}\\
    &=\sum_{j,j',s=1}^{d_\xi}P(\xi)^{-1}_{j'j}    (1+\nu_{j'j'}(\xi)^2)^{\frac{mp}{2}}    P(\xi)_{j's}\\
    &=\textnormal{\textbf{Tr}}[P(\xi)^{-1}  \textnormal{diag}[  (1+\nu_{jj}(\xi)^2)^{\frac{mp}{2}} ]_{j=1}^{d_\xi}   P(\xi)]=\textnormal{\textbf{Tr}}[\widehat{\mathcal{M}}(\xi)^{mp}].
\end{align*}
Consequently, from Lemma \ref{dixmierlema1} we deduce that
\begin{align*}
     \textnormal{\textbf{Tr}}(A^p)
     &\asymp \textnormal{\textbf{Tr}}((1+\mathcal{L})^{\frac{mp}{2}}),
\end{align*}from which we deduce that
\begin{align*}
     \textnormal{\textbf{Tr}}_w(A)
     &\asymp \textnormal{\textbf{Tr}}_w((1+\mathcal{L})^{\frac{mp}{2}})=\frac{1}{Q}\delta_{m,-Q},\,\,\,m\leqslant -Q.
\end{align*}Thus, we end the proof.
\end{proof}

\begin{corollary}\label{beautifulproof}
For  $0\leqslant\delta< \rho\leqslant 1,$ (or for  $0\leq \delta<\frac{1}{\kappa},$  $0\leq \rho\leq 1,$ $\delta\leq \rho$)  let us consider a continuous linear operator $A:C^\infty(G)\rightarrow\mathscr{D}'(G)$ with symbol  $\sigma\in {S}^{m,\mathcal{L}}_{\rho,\delta}( G\times \widehat{G})$, with $m<-Q.$ Then $\textnormal{\bf{Tr}}_{w}(A)=0.$
\end{corollary}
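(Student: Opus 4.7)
The plan is to factor $A$ through a positive, $\mathcal{L}$-elliptic, left-invariant multiplier of order $m$ and appeal to Lemma \ref{lemmadixmier}. Set $\mathcal{M}_{m}=(1+\mathcal{L})^{m/2}$ and let $B:=A\mathcal{M}_{-m}$, so that $A=B\mathcal{M}_{m}$ (since $\mathcal{M}_{-m}\mathcal{M}_{m}=I$ by the functional calculus). First I would compute the symbol of $B$: because $\mathcal{M}_{-m}$ is left-invariant with matrix symbol $\widehat{\mathcal{M}}(\xi)^{-m}$, composing it on the right of $A$ is exact at the symbolic level, giving $\sigma_{B}(x,\xi)=a(x,\xi)\widehat{\mathcal{M}}(\xi)^{-m}$. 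Invoking the Leibniz rule for difference operators (Remark \ref{Leibnizrule}) together with $\widehat{\mathcal{M}}(\xi)^{-m}\in S^{-m,\mathcal{L}}_{1,0}(\widehat{G})$ (Example \ref{powessecondtpart}), a routine application of the characterisation in Corollary \ref{corC} shows that $\sigma_{B}\in S^{0,\mathcal{L}}_{\rho,\delta}(G\times\widehat{G})$.

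Next I would apply the subelliptic Calder\'on--Vaillancourt Theorem \ref{CVT}. In either parameter regime allowed by the corollary statement ($0\leq\delta<\rho\leq 1$, or $0\leq\delta<1/\kappa$ with $\delta\leq\rho\leq 1$), Theorem \ref{CVT} produces a bounded extension $B\in \mathscr{B}(L^{2}(G))$ with operator norm $\|B\|_{\mathrm{op}}<\infty$. On the other side of the factorisation, $\mathcal{M}_{m}$ is a positive, $\mathcal{L}$-elliptic, left-invariant operator with symbol in $S^{m,\mathcal{L}}_{1,0}(\widehat{G})$ of order $m<-Q$, so Lemma \ref{lemmadixmier} gives $\mathcal{M}_{m}\in \mathcal{L}^{(1,\infty)}(L^{2}(G))$ with vanishing Dixmier trace:
\begin{equation*}
    \lim_{N\to\infty}\frac{1}{\log N}\sum_{n=1}^{N}s_{n}(\mathcal{M}_{m})=0.
\end{equation*}

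Finally, I would apply the classical singular value inequality $s_{n}(BC)\leq \|B\|_{\mathrm{op}}\,s_{n}(C)$ to the factorisation $A=B\mathcal{M}_{m}$, which yields
\begin{equation*}
    \frac{1}{\log N}\sum_{n=1}^{N}s_{n}(A)\leq \|B\|_{\mathrm{op}}\cdot \frac{1}{\log N}\sum_{n=1}^{N}s_{n}(\mathcal{M}_{m})\longrightarrow 0,
\end{equation*}
and passing to the Banach limit used to define $\textnormal{\bf{Tr}}_{w}$ in \eqref{dixmier} gives $\textnormal{\bf{Tr}}_{w}(A)=0$. The only step requiring care is the verification that $\sigma_{B}=a(x,\xi)\widehat{\mathcal{M}}(\xi)^{-m}$ genuinely satisfies the $S^{0,\mathcal{L}}_{\rho,\delta}$-seminorm bounds needed in Theorem \ref{CVT}; this reduces to the Leibniz rule and the standard symbol estimates for $\widehat{\mathcal{M}}(\xi)^{-m}$, so there is no genuine obstruction and the reduction to Lemma \ref{lemmadixmier} is clean.
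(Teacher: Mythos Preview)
Your proposal is correct and follows essentially the same approach as the paper: factor $A=(A\mathcal{M}_{-m})\mathcal{M}_{m}$, use the subelliptic Calder\'on--Vaillancourt theorem to bound $A\mathcal{M}_{-m}$ on $L^2(G)$, invoke Lemma~\ref{lemmadixmier} for $\textnormal{\bf{Tr}}_{w}(\mathcal{M}_{m})=0$, and conclude via the singular-value inequality $s_n(BC)\leq\|B\|_{\textnormal{op}}\,s_n(C)$. Your extra care in noting that, because $\mathcal{M}_{-m}$ is left-invariant, the symbol of $A\mathcal{M}_{-m}$ is exactly $a(x,\xi)\widehat{\mathcal{M}}(\xi)^{-m}$ (so that membership in $S^{0,\mathcal{L}}_{\rho,\delta}$ can be checked directly via the Leibniz rule even when $\delta=\rho$) is a useful remark that the paper leaves implicit.
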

\begin{proof}
    We will use the notation $$s_{1}(T)\leq s_{2}(T)\leq \cdots\leq s_{n}(T)\leq \cdots  $$  for the sequence of singular values of a compact operator $T$ on a Hilbert space $H.$ Then, the following inequality holds (see \cite[Page 75]{Bathia}): $s_{n}(CB)\leq \Vert C\Vert_{\textnormal{op}}s_{n}(B),$ for $C$ a bounded linear operator and $B$ a compact linear operator. From the definition of the functional $\textnormal{\bf{Tr}}_{w},$ we conclude easily that $0\leq \textnormal{\bf{Tr}}_{w}(CB)\leq \Vert C\Vert_{\textnormal{op}}\textnormal{\bf{Tr}}_{w}(B) .$ Now, let us use this inequality in our setting. From the subelliptic Calder\'on-Vaillancourt Theorem, we have that $A\mathcal{M}^{-m}\in  {S}^{0,\mathcal{L}}_{\rho,\delta}( G\times \widehat{G})$ extends to a bounded operator on $L^2(G),$ where $\mathcal{M}:=(1+\mathcal{L})^{\frac{1}{2}}.$ Consequently, 
    \begin{align*}
    0\leq \textnormal{\bf{Tr}}_{w}(A\mathcal{M}^{-m}\mathcal{M}^{m})\leq \Vert A\mathcal{M}^{-m}\Vert_{\mathscr{B}(L^2(G))}\textnormal{\bf{Tr}}_{w}(\mathcal{M}^{m})=0,    
    \end{align*}where we have used that $\textnormal{\bf{Tr}}_{w}(\mathcal{M}^{m})=0$ in view of Lemma \ref{lemmadixmier}. This implies that $\textnormal{\bf{Tr}}_{w}(A)=0.$ The proof is complete.
\end{proof}
\begin{remark}
It was proved in \cite{CdC2}
 that a Fourier multiplier $A\in \mathcal{L}^{(1,\infty)}(L^2(G)),$ if and only if,
 \begin{equation}
    \Vert \sigma_A(\xi)\Vert_{\mathcal{L}^{(1,\infty)}(\widehat{G})}:=\lim_{N\rightarrow\infty}\frac{1}{\log N}\sum_{[\xi]:\langle \xi\rangle\leq N}d_{\xi}\textnormal{\bf{Tr}}(|\sigma_A(\xi)|)<\infty,
\end{equation} and that  $\textnormal{\bf{Tr}}_\omega(A)= \Vert \sigma_A(\xi)\Vert_{\mathcal{L}^{(1,\infty)}(\widehat{G})}.$ The following result proves a more general statement. We follow the notation in Remark \ref{Deco:remark}.
\end{remark}

\begin{theorem}\label{Dtrace}
Let   $A\in \Psi^{-Q,\mathcal{L}}_{\rho,0}(G\times  \widehat{G})$ be an $\mathcal{L}$-elliptic pseudo-differential operator  and let $0<\rho\leq 1$. Assume that the symbol of $A$ admits an asymptotic expansion
\[ 
    \sigma_A(x,[\xi])\sim \sum_{k=-\infty}^{-Q}\sigma_{k}(x,[\xi]),\,
\]in components with decreasing order, which means that, for any $N\in \mathbb{N},$
\[ 
     \sigma_A(x,[\xi])- \sum_{k=-N-Q}^{-Q}\sigma_{k}(x,[\xi])\in S^{-(N+1)\rho-Q}_{\rho,0}(G\times \widehat{G}).
\]
Then $A\in \mathcal{L}^{(1,\infty)}(L^2(G)),$ and its Dixmier trace is given by
\begin{eqnarray*}
     \textnormal{\bf{Tr}}_{\omega}(A)=\int\limits_{G}\left(\Vert \textnormal{Re}(\sigma_{-Q}(x,[\xi]))^{+} \Vert_{\mathcal{L}^{(1,\infty)}(\widehat{G})}-\Vert \textnormal{Re}(\sigma_{-Q}(x,[\xi]))^{-} \Vert_{\mathcal{L}^{(1,\infty)}(\widehat{G})}
     \right)dx\\
    +i\int\limits_{G}\left(\Vert \textnormal{Im}(\sigma_{-Q}(x,[\xi]))^{+} \Vert_{\mathcal{L}^{(1,\infty)}(\widehat{G})}-\Vert \textnormal{Im}(\sigma_{-Q}(x,[\xi]))^{-} \Vert_{\mathcal{L}^{(1,\infty)}(\widehat{G})}
    \right)dx.
\end{eqnarray*}
\end{theorem}
\begin{proof} For the proof of Theorem 8.14  using the subelliptic functional calculus in Subsection \ref{S8} and Corollary \ref{beautifulproof}  we refer the reader to \cite{CDR21b}.
\end{proof}
\begin{remark}
Other results about the classification of the Dixmier trace of pseudo-differential operators on compact manifolds, with or without boundary (or on the lattice $\mathbb{Z}^n$) can be found in \cite{CdC1,CdC2} and \cite{CdC3}. 
\end{remark}

\subsection{Subelliptic operators in Schatten classes in $L^2(G)$}\label{SC} As a consequence of our analysis on the Dixmier traceability of subelliptic operators we will explain its consequences in the classification of subelliptic operators in Schatten classes. Let us record that, 
 if $A$ is a compact operator on a Hilbert space $H$ and  $\{s_{n}(A)\}$ denotes the sequence of its singular values, the Schatten von-Neumann class of order $r>0,$ $S_{r}(H),$ consists of all compact operators $A$ on $H$ such that $$\Vert A\Vert_{S_r(H)}:=\left(\sum_{n=0}^{\infty}s_n(A)^r\right)^{\frac{1}{r}}<\infty.$$ Let us recall that the following inequality holds (see e.g. \cite[Page 75]{Bathia}): $s_{n}(CB)\leq \Vert C\Vert_{\textnormal{op}}s_{n}(B),$ for  a bounded linear operator $C$ and  a compact linear operator $B$. 
\begin{corollary}
Let $G$ be a compact Lie group and let us denote by $Q$ the Hausdorff
dimension of $G$ associated to the control distance associated to the sub-Laplacian $\mathcal{L}=\mathcal{L}_X,$ where  $X= \{X_{1},\cdots,X_k\} $ is a system of vector fields satisfying the H\"ormander condition. Let $0\leq \delta<\rho\leq 1,$  (or let $0\leq \delta\leq \rho\leq 1, $ $\delta<1/\kappa$) and let $r>0$. If $T\in \textnormal{Op}({S}^{-m,\mathcal{L}}_{\rho,\delta}(G\times \widehat{G})),$  then $T\in S_{r}(L^2(G)) $ if  $m>Q/r,$ and this condition on the order is sharp.
\end{corollary}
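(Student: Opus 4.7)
The plan is to factor $T$ through a suitable power of $\mathcal{M}=(1+\mathcal{L})^{1/2}$ in such a way that the singular values are controlled by an explicit invariant operator. Write $T=(T\mathcal{M}^{m})\,\mathcal{M}^{-m}$. Since $\mathcal{M}^{m}\in\textnormal{Op}({S}^{m,\mathcal{L}}_{1,0}(G\times\widehat{G}))$ by Example \ref{powesfirstpart} and Remark \ref{sgeq1}, the composition theorem (Theorem \ref{Subellipticcomposition}) gives $T\mathcal{M}^{m}\in\textnormal{Op}({S}^{0,\mathcal{L}}_{\rho,\delta}(G\times\widehat{G}))$, and the subelliptic Calder\'on--Vaillancourt theorem (Theorem \ref{CVT}, valid in both hypothesis regimes stated) yields that $T\mathcal{M}^{m}$ extends to a bounded operator on $L^2(G)$ with norm bounded by a seminorm of its symbol.

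Using $s_{n}(CB)\leq \Vert C\Vert_{\textnormal{op}}s_{n}(B)$ (recalled at the top of Subsection \ref{SC}), one obtains
\begin{equation*}
\Vert T\Vert_{S_{r}(L^2(G))}\leq \Vert T\mathcal{M}^{m}\Vert_{\mathscr{B}(L^2(G))}\,\Vert \mathcal{M}^{-m}\Vert_{S_{r}(L^2(G))},
\end{equation*}
so the problem reduces to proving $\mathcal{M}^{-m}\in S_{r}(L^2(G))$ when $m>Q/r$. Since $\mathcal{M}^{-m}$ is a positive self-adjoint invariant operator, its singular values are exactly the eigenvalues $(1+\nu_{jj}(\xi)^{2})^{-m/2}$ with multiplicity $d_\xi$ across $[\xi]\in\widehat{G}$. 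Hence
\begin{equation*}
\Vert \mathcal{M}^{-m}\Vert_{S_{r}}^{r}=\sum_{[\xi]\in\widehat{G}}d_{\xi}\sum_{j=1}^{d_\xi}(1+\nu_{jj}(\xi)^{2})^{-mr/2}.
\end{equation*}
Splitting dyadically in the spectral parameter and invoking the sub-Laplacian Weyl law $N(\lambda)\asymp\lambda^{Q}$ (Remark \ref{weyl}) exactly as in the trace computations of Theorems \ref{asymptotictracemultiplier} and \ref{asymptotictracemultiplier2}, this sum is comparable to $\int_{1}^{\infty}\lambda^{Q-1-mr}\,d\lambda$, which is finite precisely when $mr>Q$.

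For the sharpness, I would exhibit $T_{0}=\mathcal{M}^{-Q/r}$: it lies in $\textnormal{Op}({S}^{-Q/r,\mathcal{L}}_{1,0}(G\times\widehat{G}))$, and the same Weyl-law computation at the critical exponent yields a divergent sum $\sum_{[\xi]}d_{\xi}\sum_{j}(1+\nu_{jj}(\xi)^{2})^{-Q/2}\asymp\int_{1}^{\infty}\lambda^{-1}d\lambda=\infty$, so $T_0\notin S_{r}(L^2(G))$.

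The only real step requiring care is the reduction via the factorisation: one must be sure that the calculus hypothesis $0\leq\delta<\rho\leq 1$ (or the alternative $0\leq\delta\leq\rho\leq 1$, $\delta<1/\kappa$) accommodates the composition $T\mathcal{M}^{m}$ and permits Theorem \ref{CVT} to apply. Both conditions are preserved by the subelliptic symbolic calculus of Sections \ref{Seccionsubelliptic}--\ref{LEH}, so the factorisation goes through in either regime. Everything else is a direct Weyl-law estimate, parallel to the Dixmier-trace computations already carried out above.
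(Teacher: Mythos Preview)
Your proof is correct and follows the same overall strategy as the paper: factor $T=(T\mathcal{M}^{m})\mathcal{M}^{-m}$, use the subelliptic Calder\'on--Vaillancourt theorem for the bounded factor, and control the Schatten norm of $\mathcal{M}^{-m}$ via the sub-Laplacian Weyl law. The sharpness example $T_0=\mathcal{M}^{-Q/r}$ is also the one the paper uses.

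The only noteworthy difference is in how the key estimate $\mathcal{M}^{-m}\in S_r(L^2(G))$ for $m>Q/r$ is established. You compute $\Vert\mathcal{M}^{-m}\Vert_{S_r}^{r}=\sum_{[\xi]}d_\xi\sum_j(1+\nu_{jj}(\xi)^2)^{-mr/2}$ directly and apply the dyadic Weyl-law argument, which is clean and self-contained. The paper instead routes through its earlier Dixmier-trace machinery: it invokes Corollary~\ref{beautifulproof} (that $\textnormal{\bf Tr}_{w}(\mathcal{M}_{-mr})=0$ when $mr>Q$) together with the fact that the Dixmier functional vanishes on $S_1$, and for sharpness appeals to Theorem~\ref{Dtrace} (that $\textnormal{\bf Tr}_{w}(\mathcal{M}_{-Q})\asymp 1/Q$, hence $\mathcal{M}_{-Q}\notin S_1$). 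Your direct computation is more elementary and avoids the somewhat indirect logical flow of the paper's argument; the paper's route has the advantage of reusing results already in hand. One small point: in the alternative hypothesis regime $\delta=\rho$ the full composition Theorem~\ref{Subellipticcomposition} is not available as stated, but since $\mathcal{M}^{m}$ is left-invariant the symbol of $T\mathcal{M}^{m}$ is simply $\sigma(x,\xi)\widehat{\mathcal{M}}(\xi)^{m}$, and membership in $S^{0,\mathcal{L}}_{\rho,\delta}$ follows directly from the Leibniz rule and Corollary~\ref{corC}, so this does not affect your argument.
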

\begin{proof}
Indeed, the inequality 
\begin{align*}
    s_{n}(T)=s_{n}(T\mathcal{M}_{m}\mathcal{M}_{-m})\leq \Vert T\mathcal{M}_{m}\Vert_{\mathscr{B}(L^2(G))}s_{n}(\mathcal{M}_{-m}),
\end{align*}implies that $$ \Vert T\Vert_{S_r(L^2(G))} \leq \Vert T\mathcal{M}_{m}\Vert_{\mathscr{B}(L^2(G))}\Vert \mathcal{M}_{-m}\Vert_{S_r(L^2(G))}=\Vert T\mathcal{M}_{m}\Vert_{\mathscr{B}(L^2(G))}\Vert \mathcal{M}_{-mr}\Vert^{\frac{1}{r}}_{S_1(L^2(G))} . $$   We can use the subelliptic Calder\'on-Vaillancourt Theorem (see Theorem \ref{CVT}) to deduce that $\Vert T\mathcal{M}_{m}\Vert_{\mathscr{B}(L^2(G))}<\infty.$ Because of Corollary \ref{beautifulproof}, $\textnormal{\bf Tr}_{w}(\mathcal{M}_{-mr})=0,$ for $mr>Q,$ which means that $m>Q/r.$ But, it is well known that the Dixmier functional $\textnormal{\bf Tr}_{w}$ vanishes on the Schatten class of order $r=1,$ (see e.g. Connes \cite{Connes94}). This fact, that $m>r/Q$ and Corollary \ref{beautifulproof} implies that $\Vert \mathcal{M}_{-mr}\Vert_{S_1(L^2(G))}<\infty,$ which implies that $ \Vert T\Vert_{S_r(L^2(G))}  <\infty.$ The sharpness of the order $m=Q/r,$ can be explained as follows. For  $T=\mathcal{M}_{-Q/r}\in \textnormal{Op}( S^{-Q/r,\mathcal{L}}_{\frac{1}{\kappa},0}(G\times \widehat{G})),$ (see Remark \ref{sgeq1})  then from Theorem \ref{Dtrace}, $\textnormal{\bf Tr}_{w}(T^r)\asymp 1/Q,$ which certainly implies (see Connes \cite{Connes94}) that, $\Vert \mathcal{M}_{-Q}\Vert_{S_{1}(L^2(G))}=\Vert \mathcal{M}_{-Q/r}\Vert^r_{S_r(L^2(G))}=\infty.$
\end{proof}
A complete investigation about the spectral trace of global operators on compact Lie groups can be found in \cite{DelRuzTrace1,DelRuzTrace11,DelRuzTrace111,DelRuzTrace1111}.
\subsection{Compactness and Gohberg lemma}
 In this subsection, the compactness of subelliptic pseudo-differential operators is discussed. So, we will deduce necessary conditions for the compactness of subelliptic pseudo-differetial operators by using the Gohberg Lemma proved for arbitrary compact Lie groups in \cite{ARLG}. Gohberg lemmas are results about the compactness  of continuous linear operators. In the case of a compact Lie group $G,$ a left-invariant operator with subelliptic negative order is compact on $L^2(G),$  (as a consequence of the Plancherel theorem). We will explain why a similar result is valid for pseudo-differential operators. 
 The original Gohberg Lemma was proved by Israel Gohberg in his investigation of integral operators with kernels (see \cite{Goh60}). On the other hand,  its version on $L^2(\mathbb{S}^1)$ was proved in  \cite{Mo} and it was extended to $L^p(\mathbb{S}^1),$ $1<p<\infty,$ in \cite{VR}. The Gohberg Lemma for compact Lie groups was obtained in \cite{ARLG} and it was generalised for $L^p$-spaces on compact manifolds  for the pseudo-differential calculus developed by the second author and Tokmagambetov (see \cite{Ruz-Tok,ProfRuzM:TokN:20017}), in \cite{RuzVR}.  Let us record (see Theorem 3.1 in \cite{ARLG}, but for the following statement see  Remark 4.2 of  \cite{ARLG})  the following theorem.

 \begin{theorem}\label{ADMR}
 Let $A:C^\infty(G)\rightarrow\mathscr{D}'(G)$ be a continuous linear operator admitting a bounded extension on $L^2(G).$ If the matrix valued symbol of $A,$ $a(x,\xi)$ satisfies the following conditions
 \begin{align}\label{Diana}
     \Vert a(x,\xi)\Vert_{\textnormal{op}}+\Vert \partial_{X}^{(\alpha)}a(x,\xi)\Vert_{\textnormal{op}}\leq C,\quad \Vert \Delta_{\xi}^{\beta}a(x,\xi)\Vert_{\textnormal{op}}\leq C\langle\xi\rangle^{-\epsilon},
 \end{align}where $\alpha, \beta\in \mathbb{N}_0,$ $|\alpha|=|\beta|=1$ and for some $\epsilon\in (0,1),$ then for all  compact operators $K$ on $L^2(G),$
 \[ 
     \Vert A-K \Vert_{\mathscr{B}(L^2(G))}\geq d_{\textnormal{min}},
 \]where
 \[ 
     d_{\textnormal{min}}:=\limsup_{\langle \xi\rangle\rightarrow\infty}\sup_{x\in G}\frac{\Vert a(x,\xi)a(x,\xi)^*\Vert_{\textnormal{min}} }{  \Vert a(x,\xi) \Vert_{\textnormal{op}} }\lesssim \limsup_{\langle \xi\rangle\rightarrow\infty}\sup_{x\in G}\Vert a(x,\xi)\Vert_{\textnormal{op}} ,
 \] with  $\Vert a(x,\xi)a(x,\xi)^*\Vert_{\textnormal{min}}:=\min\{\lambda\geq 0:\textnormal{det}(a(x,\xi)a(x,\xi)^*-\lambda I_{d_\xi})= 0$\}.

 \end{theorem}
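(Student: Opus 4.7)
The plan is to establish Theorem \ref{ADMR} by constructing a suitable singular sequence indexed by $(x_0, [\xi]) \in G \times \widehat{G}$ that tests the operator $A$ at the point $(x_0,[\xi])$ in phase space, and then using that compact operators on $L^2(G)$ send weakly convergent sequences to strongly convergent ones. More precisely, given any compact operator $K$ and any sequence $u_n$ with $\|u_n\|_{L^2}=1$ converging weakly to $0$, we have $\|Ku_n\|_{L^2}\to 0$, and hence
\begin{equation*}
\|A-K\|_{\mathscr{B}(L^2(G))}\geq \limsup_{n\to\infty}\|Au_n\|_{L^2(G)}.
\end{equation*}
The objective reduces to producing, for each $(x_0,[\xi])$ with $\langle \xi\rangle$ large, a test function $u_{x_0,\xi}$ for which $\|Au_{x_0,\xi}\|_{L^2}$ is close to $\|a(x_0,\xi)\|_{\textnormal{min}}/\|a(x_0,\xi)\|_{\textnormal{op}}^{1/2}$ (or an appropriate normalisation that produces the ratio in $d_{\textnormal{min}}$), up to an error vanishing as $\langle\xi\rangle\to\infty$.

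For the construction, I would take a smooth cutoff $\varphi$ supported in a small ball around $e_G$, set $\varphi_{x_0,r}(x):=r^{-n/2}\varphi(r^{-1}\exp^{-1}(x_0^{-1}x))$ (so $\|\varphi_{x_0,r}\|_{L^2}=1$), and combine it with a matrix coefficient of the representation $\xi$: namely, pick a unit vector $v_\xi\in \mathbb{C}^{d_\xi}$ realising the minimum singular value of $a(x_0,\xi)a(x_0,\xi)^*$, and put
\begin{equation*}
u_{x_0,\xi,r}(x) := \varphi_{x_0,r}(x)\,\sqrt{d_\xi}\,\langle \xi(x)v_\xi, w_\xi\rangle,
\end{equation*}
for an appropriate companion vector $w_\xi$. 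Choosing $r=r(\xi)\to 0$ slowly as $\langle\xi\rangle\to\infty$ yields $u_{x_0,\xi,r}\rightharpoonup 0$ in $L^2(G)$, while the quantisation formula gives
\begin{equation*}
Au_{x_0,\xi,r}(x) \approx \varphi_{x_0,r}(x)\,\sqrt{d_\xi}\,\langle \xi(x) a(x_0,\xi)v_\xi, w_\xi\rangle + E_1 + E_2,
\end{equation*}
where $E_1$ comes from replacing $a(x,\xi)$ by $a(x_0,\xi)$ and $E_2$ absorbs the fact that $\varphi_{x_0,r}\xi(\cdot)v_\xi$ is not exactly a joint eigenfunction for a single Fourier mode.

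The error $E_1$ is controlled by the first-derivative estimate $\Vert\partial_X^{(\alpha)}a(x,\xi)\Vert_{\textnormal{op}}\leq C$ for $|\alpha|=1$ via a Taylor expansion in $x$ around $x_0$: on $\textnormal{supp}(\varphi_{x_0,r})$ the replacement costs a factor $O(r)$. The error $E_2$ is the more delicate one, since multiplication by $\varphi_{x_0,r}$ mixes Fourier modes; it is precisely here that the difference-operator bound $\Vert \Delta_\xi^\beta a(x,\xi)\Vert_{\textnormal{op}}\leq C\langle\xi\rangle^{-\epsilon}$ (with $\epsilon>0$) enters, via a commutator/summation-by-parts argument showing that the off-diagonal interaction in $[\xi]$ between modes of $\varphi_{x_0,r}$ and modes of $\xi$ contributes $O(\langle\xi\rangle^{-\epsilon}r^{-1})$; choosing $r=\langle\xi\rangle^{-\epsilon/2}$ balances the two errors. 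Combining these estimates gives
\begin{equation*}
\|Au_{x_0,\xi,r}\|_{L^2} \to \Vert a(x_0,\xi)v_\xi\Vert \ \text{(with the appropriate normalisation)},
\end{equation*}
which, after optimising the choice of $v_\xi$ and taking $\sup_{x_0}\limsup_{\langle\xi\rangle\to\infty}$, yields $d_{\textnormal{min}}$.

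The hard step is the quantitative control of $E_2$: we must verify that the non-commutative Leibniz rule and the difference-operator decay in \eqref{Diana} are jointly strong enough to make this commutator term vanish in the limit, despite the fact that only \emph{one} derivative in each variable is assumed (so there is no room to iterate). This is where the specific choice of a strongly admissible collection of difference operators and the discrete Calder\'on--Vaillancourt-type bookkeeping from \cite{Ruz} must be invoked; the upper bound $d_{\textnormal{min}}\lesssim \limsup \Vert a(x,\xi)\Vert_{\textnormal{op}}$ is then immediate from $\Vert a(x,\xi)a(x,\xi)^*\Vert_{\textnormal{min}}\leq \Vert a(x,\xi)\Vert_{\textnormal{op}}^2$.
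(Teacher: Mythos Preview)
The paper does not contain a proof of this theorem. It is stated there as a quotation from \cite{ARLG} (Dasgupta--Ruzhansky, \emph{The Gohberg lemma, compactness, and essential spectrum of operators on compact Lie groups}), with the precise form of $d_{\textnormal{min}}$ attributed to Remark~4.2 of that reference. The paper only uses this result as a black box to derive Corollary~\ref{ADMR2}.

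That said, your approach via a singular (Weyl) sequence is exactly the standard strategy for Gohberg-type lower bounds and is the method used in \cite{ARLG}. The two-error decomposition you describe --- $E_1$ controlled by the first spatial derivative of the symbol, $E_2$ by the first difference in $\xi$ with decay $\langle\xi\rangle^{-\epsilon}$ --- is the right structure, and balancing a shrinking spatial scale $r$ against the frequency spreading is the correct mechanism.

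There is, however, a real gap in your sketch at the normalisation step. Your test function $u_{x_0,\xi,r}=\varphi_{x_0,r}\sqrt{d_\xi}\langle\xi(\cdot)v_\xi,w_\xi\rangle$ does \emph{not} have unit $L^2$-norm: the product of a bump of $L^2$-norm one with a matrix coefficient of $L^2$-norm one is not of norm one, and its actual norm depends on $r$, $d_\xi$, and the oscillation of $\xi$ on the support of $\varphi_{x_0,r}$. The quantity $d_{\textnormal{min}}=\limsup\sup_x \|aa^*\|_{\textnormal{min}}/\|a\|_{\textnormal{op}}$ has the specific ratio structure $\sigma_{\min}(a)^2/\sigma_{\max}(a)$ precisely because of how this normalisation interacts with the action of $A$; you cannot recover it from a construction that outputs $\|a(x_0,\xi)v_\xi\|=\sigma_{\min}(a(x_0,\xi))$ alone. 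Your phrase ``with the appropriate normalisation'' hides exactly the computation that produces the form of $d_{\textnormal{min}}$, and this is where the argument in \cite{ARLG} does nontrivial work (in particular, the test functions there are built from full columns of the representation matrix rather than a single entry, which is what makes the bookkeeping come out right).
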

 As a consequence of Theorem \ref{ADMR} we have:

 \begin{corollary}\label{ADMR2}
 Let $A:C^\infty(G)\rightarrow\mathscr{D}'(G)$ be a continuous linear operator admitting a bounded extension on $L^2(G).$ If the matrix valued symbol of $A,$ $a(x,\xi)$ satisfies the following conditions
 \begin{align*}
     \Vert a(x,\xi)\Vert_{\textnormal{op}}+\Vert \partial_{X}^{(\alpha)}a(x,\xi)\Vert_{\textnormal{op}}\leq C,\quad \Vert\widehat{\mathcal{M}}(\xi)^{\tau} \Delta_{\xi}^{\beta}a(x,\xi)\Vert_{\textnormal{op}}\leq C,
 \end{align*}where $\alpha, \beta\in \mathbb{N}_0,$ $|\alpha|=|\beta|=1$ and for some $\tau\in (0,1),$ then for all  compact operators $K$ on $L^2(G),$
 \[ 
     \Vert A-K \Vert_{\mathscr{B}(L^2(G))}\geq d_{\textnormal{min}}.
 \]

 \end{corollary}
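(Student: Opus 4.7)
The plan is to deduce Corollary \ref{ADMR2} directly from Theorem \ref{ADMR} by converting the subelliptic weight $\widehat{\mathcal{M}}(\xi)^{\tau}$ appearing in the hypothesis into a standard weight of the form $\langle\xi\rangle^{-\epsilon}$ for some $\epsilon\in(0,1)$. The bridge between the two weights is Proposition 3.1 of \cite{GarettoRuzhansky2015} (already recalled in this paper as \eqref{GarettoRuzhanskyIneq}), which says that every eigenvalue of $\widehat{\mathcal{M}}(\xi)$ satisfies $\langle\xi\rangle^{1/\kappa}\lesssim (1+\nu_{ii}(\xi)^2)^{1/2}\lesssim\langle\xi\rangle$.

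First, I would observe that since $\widehat{\mathcal{M}}(\xi)^{-\tau}$ is diagonal with entries $(1+\nu_{ii}(\xi)^2)^{-\tau/2}$, its operator norm is
\[
\Vert \widehat{\mathcal{M}}(\xi)^{-\tau}\Vert_{\textnormal{op}}=\max_{1\le i\le d_\xi}(1+\nu_{ii}(\xi)^2)^{-\tau/2}=\Bigl(\min_{1\le i\le d_\xi}(1+\nu_{ii}(\xi)^2)^{1/2}\Bigr)^{-\tau}.
\]
Applying \eqref{GarettoRuzhanskyIneq} (the lower bound) to the minimum yields $\Vert\widehat{\mathcal{M}}(\xi)^{-\tau}\Vert_{\textnormal{op}}\lesssim \langle\xi\rangle^{-\tau/\kappa}$. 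Then I would insert $\widehat{\mathcal{M}}(\xi)^{-\tau}\widehat{\mathcal{M}}(\xi)^{\tau}=I_{d_\xi}$ to estimate
\[
\Vert \Delta_{\xi}^{\beta}a(x,\xi)\Vert_{\textnormal{op}}\le \Vert \widehat{\mathcal{M}}(\xi)^{-\tau}\Vert_{\textnormal{op}}\,\Vert \widehat{\mathcal{M}}(\xi)^{\tau}\Delta_{\xi}^{\beta}a(x,\xi)\Vert_{\textnormal{op}}\lesssim C\langle\xi\rangle^{-\tau/\kappa}.
\]

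Setting $\epsilon:=\tau/\kappa$, the hypothesis $\tau\in(0,1)$ forces $\epsilon\in(0,1/\kappa)\subseteq(0,1)$, so $a(x,\xi)$ satisfies precisely the hypotheses \eqref{Diana} of Theorem \ref{ADMR} with this value of $\epsilon$. The remaining bound on $\Vert a(x,\xi)\Vert_{\textnormal{op}}+\Vert\partial_{X}^{(\alpha)}a(x,\xi)\Vert_{\textnormal{op}}$ is assumed directly. Invoking Theorem \ref{ADMR} then yields the desired lower bound $\Vert A-K\Vert_{\mathscr{B}(L^2(G))}\ge d_{\textnormal{min}}$ for every compact operator $K$ on $L^2(G)$, which completes the proof. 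There is no real obstacle here: the content of the corollary is simply a reformulation of the hypothesis of the main Gohberg-type lemma adapted to the natural weights of the subelliptic calculus, and the proof is a one-line application of the spectral comparison \eqref{GarettoRuzhanskyIneq}.
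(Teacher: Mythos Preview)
Your proof is correct and follows essentially the same approach as the paper: insert $\widehat{\mathcal{M}}(\xi)^{-\tau}\widehat{\mathcal{M}}(\xi)^{\tau}$, bound $\Vert\widehat{\mathcal{M}}(\xi)^{-\tau}\Vert_{\textnormal{op}}\lesssim\langle\xi\rangle^{-\tau/\kappa}$ via \eqref{GarettoRuzhanskyIneq}, set $\epsilon=\tau/\kappa$, and apply Theorem \ref{ADMR}. Your write-up is in fact slightly more explicit about why $\epsilon\in(0,1)$ and about the diagonal form of $\widehat{\mathcal{M}}(\xi)^{-\tau}$, but the argument is the same.
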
 
\begin{proof}
In view of Theorem \ref{ADMR}, we only need to prove that the condition $$\Vert\widehat{\mathcal{M}}(\xi)^{\tau} \Delta_{\xi}^{\beta}a(x,\xi)\Vert_{\textnormal{op}}\leq C,$$ implies the existence of $\epsilon>0,$ such that the second inequality  of \eqref{Diana} holds true. Indeed, observe that for $|\beta|=1$,
\begin{align*}
    \Vert \Delta_{\xi}^{\beta}a(x,\xi)\Vert_{\textnormal{op}}&=\Vert\widehat{\mathcal{M}}(\xi)^{-\tau}\widehat{\mathcal{M}}(\xi)^{\tau} \Delta_{\xi}^{\beta}a(x,\xi)\Vert_{\textnormal{op}}\\
    &\leq C\Vert\widehat{\mathcal{M}}(\xi)^{-\tau}\Vert_{\textnormal{op}}\leq \langle \xi\rangle^{-\frac{\tau}{\kappa}}, 
\end{align*}so, we can pick $\epsilon=\frac{\tau}{\kappa}$ in order to use Theorem \ref{ADMR}. The proof is complete.
\end{proof}
\begin{remark}
If $A$ satisfying the hypothesis in Corollary \ref{ADMR2} admits a compact linear extension on $L^2(G),$ then $d_{\textnormal{min}}=0.$ By observing the approach in \cite{RuzVR} we expect that the condition  $\limsup_{\langle \xi\rangle\rightarrow\infty}\sup_{x\in G}\Vert a(x,\xi)\Vert_{\textnormal{op}}=0$ implies that $A$ is compact on $L^p(G)$ for all $1<p<\infty$. However, the characterisation of the $L^p$-compactness of subelliptic pseudo-differential operators is still an open problem.
\end{remark}

\subsection{Multipliers of the sub-Laplacian and Hulanicki Theorem}

The aim of this subsection is to prove that the subelliptic classes $S^{m,\mathcal{L}}_{1,0}(G\times \widehat{G})$ contain the spectral multipliers of the sub-Laplacian $\mathcal{L}.$ For $m\in \mathbb{R},$ let us consider the Euclidean class of symbols $S^{m}(\mathbb{R}^{+}_0),$ defined by the countable family of seminorms 
\begin{equation}
    \Vert f \Vert_{d,\,S^{m}(\mathbb{R}^{+}_0)}:=\sup_{\ell\leq d}\sup_{\lambda\geq 0}(1+\lambda)^{-m+\ell}|\partial^\ell_{\lambda}f(\lambda)|,\,d\in \mathbb{N}_0.
\end{equation}We are going to prove that the subelliptic calculus is stable under the spectral functions of the sub-Laplacian.
\begin{theorem}\label{orders:theorems}Let $f\in S^{\frac{m}{2}}(\mathbb{R}^{+}_0), $ $m\in \mathbb{R}.$ Then, for all $t>0,$  $f(t\mathcal{L})\in S^{m,\mathcal{L}}_{1,0}(G\times \widehat{G}).$ Moreover, for $m<0,$
\begin{equation}
   \sup_{|\alpha|\leq d,\,[\xi]\in \widehat{G}} \Vert \widehat{\mathcal{M}}(\xi)^{-m+|\alpha|}\Delta_\xi^{\alpha}f(t\widehat{\mathcal{L}}(\xi)) \Vert_{\textnormal{op}}\lesssim t^{\frac{m}{2}} \Vert f \Vert_{d,\,S^{\frac{m}{2}}(\mathbb{R}^{+}_0)},\,\,0<t\leq 1,
\end{equation}uniformly in $t.$ In particular for $f_s(\lambda):=(1+\lambda)^{\frac{s}{2}},$ the symbol of $\mathcal{M}_{s}=f_s(\mathcal{L})$ belongs to $ S^{s,\mathcal{L}}_{1}( \widehat{G})\subset S^{s,\mathcal{L}}_{1,0}(G\times \widehat{G}) $ for all $s\in \mathbb{R}.$ In other words, we have $\mathcal{M}_s:=(1+\mathcal{L})^{\frac{s}{2}}\in \Psi^{s,\mathcal{L}}_{1,0}(G\times \widehat{G})$ for all $s\in \mathbb{R}.$
 
\end{theorem}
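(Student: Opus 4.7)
The plan is to exploit the fact that $f(t\mathcal{L})$ is left-invariant, so its global matrix symbol $f(t\widehat{\mathcal{L}}(\xi))$ depends only on $[\xi]\in\widehat{G}$. In a basis in which $\widehat{\mathcal{L}}(\xi)=\textnormal{diag}[\nu_{ii}(\xi)^2]_{i=1}^{d_\xi}$, this symbol is the diagonal matrix with entries $f(t\nu_{ii}(\xi)^2)$, and by the characterisations of Corollary \ref{corC} it suffices to control, for every $\alpha\in\mathbb{N}_0^n$,
\begin{equation*}
\sup_{[\xi]\in\widehat{G}}\Vert \widehat{\mathcal{M}}(\xi)^{-m+|\alpha|}\Delta_\xi^\alpha f(t\widehat{\mathcal{L}}(\xi))\Vert_{\textnormal{op}}\lesssim t^{m/2}\Vert f\Vert_{d,S^{m/2}(\mathbb{R}_0^+)},\quad 0<t\leqslant 1,
\end{equation*}
with $d=d(|\alpha|)$. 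The case $|\alpha|=0$ follows at once from $|f(\lambda)|\leqslant\Vert f\Vert_{0,S^{m/2}}(1+\lambda)^{m/2}$ and the elementary spectral inequality $(1+\nu_{ii}(\xi)^2)^{-m/2}(1+t\nu_{ii}(\xi)^2)^{m/2}\lesssim t^{m/2}$ valid for $m<0$ and $0<t\leqslant 1$; so the whole content of the theorem lies in handling the differences.

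First I would reduce to the case $m<0$. If $m\geqslant 0$, pick $N\in\mathbb{N}$ with $m-2N<0$ and factor $f(\lambda)=(1+\lambda)^{N}g(\lambda)$ with $g\in S^{m/2-N}(\mathbb{R}_0^+)$. Then $f(t\mathcal{L})=\bigl((1+t\mathcal{L})/(1+\mathcal{L})\bigr)^{N}(1+\mathcal{L})^{N}g(t\mathcal{L})$; the first factor has a trivially bounded subelliptic symbol uniform in $t\in(0,1]$, $(1+\mathcal{L})^{N}\in S^{2N,\mathcal{L}}_{1,0}$ by the trivial case $f_{2N}(\lambda)=(1+\lambda)^{N}$, and the closure of the subelliptic calculus under composition (Corollary \ref{SubellipticcompositionC}) together with the $m<0$ case applied to $g$ then yields $f(t\mathcal{L})\in S^{m,\mathcal{L}}_{1,0}(G\times\widehat{G})$.

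For $m<0$ my strategy is a dyadic spectral decomposition. Choose $\chi_{0},\chi_{1}\in C_{c}^{\infty}([0,\infty))$ with $\chi_{0}+\sum_{j\geqslant 1}\chi_{1}(2^{-j}\cdot)=1$ and set $f_{j}(\lambda)=f(\lambda)\chi_{1}(2^{-j}\lambda)$, so that $|\partial_{\lambda}^{k}f_{j}(\lambda)|\lesssim 2^{j(m/2-k)}\Vert f\Vert_{k,S^{m/2}}$ and $f_{j}$ is supported in $\lambda\sim 2^{j}$. The right-convolution kernel $k_{j,t}=\mathscr{F}^{-1}f_{j}(t\widehat{\mathcal{L}})$ is a spectral multiplier of $\mathcal{L}$ concentrated at scale $(t2^{j})^{-1/2}$ in the Carnot--Carath\'eodory geometry, and the subelliptic Hulanicki lemma (Lemma \ref{HulanickiTheorem}) provides sub-Riemannian Gaussian-type bounds of the shape
\begin{equation*}
|k_{j,t}(y)|\lesssim 2^{jm/2}(t2^{j})^{-Q/2}\bigl(1+(t2^{j})^{-1/2}|y|\bigr)^{-M},\quad M\in\mathbb{N}.
\end{equation*}
Since $\Delta_\xi^\alpha$ corresponds on the group side to multiplication by a smooth function $q^{\alpha}$ vanishing of order $|\alpha|$ at $e_{G}$, i.e.\ $\Delta_\xi^\alpha f_{j}(t\widehat{\mathcal{L}}(\xi))=\mathscr{F}(q^{\alpha}k_{j,t})$, the standard estimate $\Vert \mathscr{F}(h)\Vert_{\textnormal{op}}\leqslant\Vert h\Vert_{L^{1}(G)}$, combined with $|q^{\alpha}(y)|\lesssim|y|^{|\alpha|}$ and the Gaussian tails, yields $\Vert \Delta_\xi^\alpha f_{j}(t\widehat{\mathcal{L}})\Vert_{\textnormal{op}}\lesssim 2^{jm/2}(t2^{j})^{|\alpha|/2}$. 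On the dyadic $\xi$-shell where $\langle\xi\rangle\sim 2^{j/2}$, summing in $j$ and noting $\Vert\widehat{\mathcal{M}}(\xi)^{-m+|\alpha|}\Vert_{\textnormal{op}}\lesssim 2^{j(-m+|\alpha|)/2}$ produces the claimed bound after checking the exact matching of powers of $t$.

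The main obstacle is therefore the subelliptic Hulanicki lemma and, more precisely, obtaining the correct commutation of the finite differences $\Delta_\xi^\alpha$ with spectral functions of $\mathcal{L}$, with the right sub-Riemannian scaling. The key geometric input is that the admissible difference-defining functions $q$ coincide (to leading order at $e_{G}$) with a set of local coordinates centred at the identity, so multiplying the kernel $k_{j,t}$ by $q^{\alpha}$ is compatible with the natural non-isotropic dilation tied to the Carnot--Carath\'eodory ball $B(e_{G},(t2^{j})^{1/2})$, and this is what allows the gain $(t2^{j})^{|\alpha|/2}$ needed to absorb the factor $\widehat{\mathcal{M}}(\xi)^{|\alpha|}$. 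The statement for $f_{s}(\lambda)=(1+\lambda)^{s/2}$ is then a direct specialisation, noting $f_{s}\in S^{s/2}(\mathbb{R}_0^{+})$ independently of $t$ (and in this invariant case the $x$-derivatives are trivial, placing $\mathcal{M}_{s}$ in the multiplier class $S^{s,\mathcal{L}}_{1}(\widehat{G})$).
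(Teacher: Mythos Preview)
Your reduction to the case $m<0$ by factoring through powers of $(1+\lambda)$ is in the same spirit as the paper's, and the dyadic decomposition together with the subelliptic Hulanicki lemma is indeed the right toolkit. But the core step for $m<0$ has a real gap.

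First, the scaling is off. Writing $h_j(s)=f_j(2^{j}s)$, one has $f_j(t\mathcal{L})=h_j(2^{-j}t\,\mathcal{L})$, so the kernel $k_{j,t}$ is concentrated at the Carnot--Carath\'eodory scale $(2^{-j}t)^{1/2}$, not $(t2^{j})^{-1/2}$. The Hulanicki lemma (in the form of Lemma~\ref{HulanickiTheorem}) gives only weighted $L^{1}$ bounds, not pointwise Gaussian bounds, and yields $\Vert q^{\alpha}k_{j,t}\Vert_{L^{1}}\lesssim 2^{jm/2}(2^{-j}t)^{|\alpha|/2}$, whence $\Vert\Delta_\xi^{\alpha}f_j(t\widehat{\mathcal{L}})\Vert_{\textnormal{op}}\lesssim 2^{jm/2}(2^{-j}t)^{|\alpha|/2}$.

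The more serious problem is the summation. Your phrase ``on the dyadic $\xi$-shell where $\langle\xi\rangle\sim 2^{j/2}$'' presumes that $\Delta_\xi^{\alpha}f_j(t\widehat{\mathcal{L}}(\xi))$ is localised in $[\xi]$, but it is not: the difference operator is multiplication by $q^{\alpha}$ on the kernel side, which destroys the spectral support of $f_j$. Hence for each fixed $\xi$ all $j$ contribute, and the required quantity is $\Vert\widehat{\mathcal{M}}(\xi)^{-m+|\alpha|}\Delta_\xi^{\alpha}f_j(t\widehat{\mathcal{L}})\Vert_{\textnormal{op}}$, which on the kernel side corresponds to $\Vert\mathcal{L}^{b/2}(q^{\alpha}k_{j,t})\Vert_{\star}$ with $b=-m+|\alpha|$. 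The Hulanicki-type $L^{1}$ estimate with $|\beta|=b$ derivatives gives exactly $2^{jm/2}(2^{-j}t)^{(|\alpha|-b)/2}=t^{m/2}$, \emph{independent of $j$}, so the sum over $j$ diverges. The paper explicitly notes this: the rough $L^{1}$ bound handles $b=0$ but not $b=-m+|\alpha|$.

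The missing ingredient is almost orthogonality. The paper applies the Cotlar--Stein lemma to the family $T_{j}=2^{jm/2}T_{\mathcal{L}^{b/2}\{q\,g_{j}(2^{-j}t\mathcal{L})\delta\}}$; by shifting a power $\mathcal{L}^{\pm c/2}$ between the two factors in $T_{j}T_{k}^{*}$ one gains $2^{-|j-k|c/2}$, and choosing $c$ of the correct sign produces a summable off-diagonal decay. Without this (or an equivalent orthogonality argument), your estimate cannot close.
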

Before  starting with the proof of Theorem \ref{orders:theorems}, let us record some properties of the subelliptic heat kernel $p_{t}:=e^{-t\mathcal{L}}\delta,$ that is the right convolution kernel of the operator $e^{-t\mathcal{L}}.$
    \begin{remark}[The subelliptic heat kernel]\label{remarksub:heatkernel}Let $t>0,$ and let $\alpha\in \mathbb{N}_0.$ Let $|\cdot|_{cc}$ be the Carnot-Carath\'eodory distance associated to the control distance defined by the sub-Laplacian. Let $V(r)\sim r^{Q}$ be the volume of the ball with radious $r>0$ centred at $e_G\in G.$ The following statements hold (see e.g.  \cite[Section 2]{MartiniOtaziVallarino}):
\begin{itemize}
    \item[(A)] $\int\limits_{G}p_{t}(x)dx=1,$ $p_{t}(x^{-1})=p_{t}(x)$ and $p_{t}\ast p_{s}=p_{s}\ast p_{t}.$\\
    
   \item[(B)] $|p_{t}(x)|\lesssim V(\sqrt{t})^{-1}e^{-|x|^2/Ct},$ $x\in G,$ $t>0.$ \\
   
    \item[(C)] $|\partial_{X}^{(\alpha)}p_{t}(x)|\lesssim \sqrt{t}^{-Q-|\alpha|}e^{-|x|^2/Ct},$ $x\in G,$ $t>0.$ \\
    \item[(D)] $\int\limits_{G} e^{-|x|^2/Ct}\lesssim V(\sqrt{t}).$
    \end{itemize} Also,  
    \begin{itemize}
        \item[(E)] for $q\in C^{\infty}(G),$ with $|x|^{s}\lesssim q(x)\lesssim |x|^{s},$ $s\geq 0,$ one has \begin{equation}\label{qonball}
            \Vert q\Vert_{L^2(B(r))}\lesssim_q \min(1,r^{s+\frac{Q}{2}}),\,r>0.
        \end{equation}Indeed, using polar coordinates one gets
        $$  \Vert q\Vert_{L^2(B(r))}\asymp \int\limits_{|x|<r}|x|^{2s}dx
        \lesssim \int\limits_{0}^{r}s^{2s}s^{Q-1}ds\lesssim r^{2s+Q}. $$ Since $q\in C^{\infty}(G),$ $\Vert q\Vert_{L^2(B(r))}\leq \Vert q\Vert_{L^{\infty}(G)}\lesssim_q 1.$ The previous estimates yield \eqref{qonball}.
    \end{itemize}
One of the remarkable properties of the sub-Laplacian is the finite propagation speed property. We record it as follows.
\begin{remark}[Finite propagation speed property] Let $f:\mathbb{R}\rightarrow\mathbb{C}$ be an even, bounded and measurable  function and let  $k_{f(\mathcal{L})}:=f(\mathcal{L})\delta$ be  the right-convolution kernel of the operator $f(\mathcal{L}).$ The finite  propagation speed property for the sub-Laplacian was proved e.g. by   Melrose \cite[Section 3]{Melrose} and  Jerison and S\'anchez-Calle \cite{JerisonSanchezCalle}. It say that,     for any $t>0,$
\begin{equation}
    \textnormal{supp}(k_{\cos(t\mathcal{L})})\subset B(c|t|),
\end{equation} for some $c>0.$ Then, as it is well known, (see e.g.  Lemma 2.1 of Cowling and Sikora \cite{CowlingSikora}) the following finite propagation speed property holds
\begin{equation}
    \textnormal{supp}(k_{f(\mathcal{L})})\subset B(cR),
\end{equation} for some constant $c>0,$ independent of $f,$ and $r,$ provided that $\textnormal{supp}(\widehat{f})\subset [-r,r],$ $r>0.$ Here, $\widehat{f}(\xi):=\int\limits_{-\infty}^{\infty}e^{-i2\pi x\cdot \xi}f(t)dt$ is the Euclidean Fourier transform of $f.$

\end{remark}

\end{remark}
The main tool of this subsection is the following subelliptic version of the Hulanicki Theorem. We denote $B(r):=B(e_G,r),$ the ball with radious $r>0,$ centred at $e_G.$

\begin{lemma}[Subelliptic Hulanicki Theorem]\label{HulanickiTheorem} Let $G$ be a compact Lie group and let $q$ be a smooth function on $G.$ Let us consider a continuous function $f$ on $\mathbb{R}$ supported in $[0,2].$ The following statements hold.
\begin{itemize}
    \item Let $\epsilon_0>0$  and let $t\geq \epsilon_0.$  Then
    \begin{equation}
        \int\limits_{G}|q(x)f(t\mathcal{L})\delta (x)|dx\lesssim_{\epsilon_0}\Vert f \Vert_{L^{\infty}(G)}.
    \end{equation}
    \item If in addition $f\in C^{d}(\mathbb{R}),$ and $\Delta_q\in \textnormal{diff}^{a}(\widehat{G}),$ then the following estimate holds
    \begin{equation}
        \int\limits_{G}|q(x)\partial_{X}^{(\beta)}(f(t\mathcal{L})\delta)(x)|dx\lesssim t^{\frac{a-|\beta|}{2}}\sup_{0\leq \ell\leq d}\sup_{0\leq \lambda\leq 2}|\partial^{\ell}_{\lambda}f(\lambda)|.
    \end{equation}
\end{itemize}

\end{lemma}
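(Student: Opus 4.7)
The plan is to exploit the subelliptic heat kernel $p_t=e^{-t\mathcal{L}}\delta$ as a smoothing factor, combined with a Gaussian-weighted Cauchy--Schwarz split. Write $K_f:=f(t\mathcal{L})\delta$ for the right-convolution kernel. Since $\operatorname{supp}(f)\subset[0,2]$, introduce $h(\lambda):=f(\lambda)e^{\lambda}$, which is bounded on $[0,2]$ with $\Vert h\Vert_{L^\infty}\leq e^2\Vert f\Vert_{L^\infty}$. The factorization
\begin{equation*}
f(t\mathcal{L})=h(t\mathcal{L})\circ e^{-t\mathcal{L}}
\end{equation*}
gives $K_f=h(t\mathcal{L})p_t$, and because $h(t\mathcal{L})$ is a left-invariant operator it commutes with any left-invariant differential monomial $\partial_X^{(\beta)}$, so $\partial_X^{(\beta)}K_f=h(t\mathcal{L})(\partial_X^{(\beta)}p_t)$.

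For the first statement, the functional calculus yields $\Vert h(t\mathcal{L})\Vert_{L^2\to L^2}\leq\Vert h\Vert_\infty\lesssim\Vert f\Vert_\infty$. Then $\Vert K_f\Vert_{L^2}\leq\Vert h\Vert_\infty\Vert p_t\Vert_{L^2}$, and the semigroup identity $\Vert p_t\Vert_{L^2}^2=p_{2t}(e_G)$ is uniformly bounded for $t\geq\varepsilon_0$ by the continuity of the heat kernel at positive times. A plain Cauchy--Schwarz $\int_G|q\,K_f|\leq\Vert q\Vert_{L^2}\Vert K_f\Vert_{L^2}$ then concludes this part.

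For the second (main) statement, the key ingredients are: (a) $\Delta_q\in\operatorname{diff}^a(\widehat G)$ means $q$ vanishes at $e_G$ of order $a$, hence $|q(x)|\lesssim|x|^a$ in a neighborhood of the identity; (b) the pointwise Gaussian estimates from items (B) and (C) of Remark~\ref{remarksub:heatkernel}, namely $|\partial_X^{(\beta)}p_t(x)|\lesssim t^{-(Q+|\beta|)/2}e^{-|x|^2/(Ct)}$; (c) the volume estimate $V(\sqrt{t})\sim t^{Q/2}$. I split via Cauchy--Schwarz with a Gaussian weight:
\begin{equation*}
\int_G|q(x)\,\partial_X^{(\beta)}K_f(x)|dx\leq\bigl\Vert q(x)e^{-|x|^2/(C't)}\bigr\Vert_{L^2}\cdot\bigl\Vert\partial_X^{(\beta)}K_f(x)e^{|x|^2/(C't)}\bigr\Vert_{L^2}.
\end{equation*}
For the first factor, (a) together with a polar-coordinate computation based on $V(r)\sim r^{Q}$ and the change of variable $r=\sqrt{t}\,s$ gives $\lesssim t^{(2a+Q)/4}$. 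For the second factor, the commutation from ingredient (b) above reduces it to $\Vert h(t\mathcal{L})(\partial_X^{(\beta)}p_t)\cdot e^{|x|^2/(C't)}\Vert_{L^2}\leq\Vert h\Vert_\infty\Vert\partial_X^{(\beta)}p_t\cdot e^{|x|^2/(C't)}\Vert_{L^2}$; choosing $C'$ large enough that the Gaussian weight is absorbed into the kernel's Gaussian decay ($-|x|^2/(Ct)+2|x|^2/(C't)\leq-|x|^2/(C''t)$ for a positive $C''$), and using (c), one obtains
\begin{equation*}
\bigl\Vert\partial_X^{(\beta)}p_t\cdot e^{|x|^2/(C't)}\bigr\Vert_{L^2}^2\lesssim t^{-(Q+|\beta|)}\cdot V(\sqrt{t})\lesssim t^{-Q/2-|\beta|}.
\end{equation*}
Multiplying the two bounds gives $t^{(2a+Q)/4}\cdot t^{-Q/4-|\beta|/2}=t^{(a-|\beta|)/2}$, with constant controlled by $\Vert h\Vert_\infty\lesssim\sup_{\ell\leq d}\sup_{\lambda\in[0,2]}|\partial_\lambda^\ell f(\lambda)|$.

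The main obstacle I anticipate is the careful bookkeeping of all $t$-powers in each $L^2$-factor, in particular verifying that the exponential weight $e^{|x|^2/(C't)}$ can indeed be absorbed into the heat kernel's Gaussian decay by a \emph{uniform} choice of constant $C'$ (independent of $t$ and $\beta$) relative to the constants appearing in Remark~\ref{remarksub:heatkernel}; this is the reason the parabolic scaling $t\leftrightarrow|x|^2$ produces the sharp exponent $(a-|\beta|)/2$. The use of the $C^d$ seminorm rather than $\Vert f\Vert_\infty$ alone becomes essential when one needs to iterate this lemma to obtain more refined spectral-multiplier estimates (e.g. in the proof of Theorem~\ref{orders:theorems}), where powers of $t\mathcal{L}$ have to be traded against derivatives of $f$ via $(t\mathcal{L})^k f(t\mathcal{L})=g_k(t\mathcal{L})$ with $g_k$ controlled by $\partial^k f$.
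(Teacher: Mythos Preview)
Your treatment of the first statement is essentially the paper's Steps I and III and is correct.

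For the second statement, there is a genuine gap at the key inequality
\[
\bigl\Vert h(t\mathcal{L})(\partial_X^{(\beta)}p_t)\cdot e^{|x|^2/(C't)}\bigr\Vert_{L^2}\leq\Vert h\Vert_\infty\,\bigl\Vert\partial_X^{(\beta)}p_t\cdot e^{|x|^2/(C't)}\bigr\Vert_{L^2}.
\]
The functional calculus gives $\Vert h(t\mathcal{L})\Vert_{L^2\to L^2}\leq\Vert h\Vert_\infty$, but what you wrote requires $h(t\mathcal{L})$ to be bounded on the \emph{weighted} space $L^2(e^{2|x|^2/(C't)}dx)$, or equivalently that $M_w h(t\mathcal{L}) M_w^{-1}$ is $L^2$-bounded with $w(x)=e^{|x|^2/(C't)}$. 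Multiplication by $w$ does not commute with $h(t\mathcal{L})$, and there is no reason a general bounded spectral multiplier should be bounded on an $L^2$ space with an exponentially \emph{growing} weight. Indeed, obtaining any off-diagonal/Gaussian control on the kernel of $h(t\mathcal{L})$ is essentially the content of the lemma you are trying to prove, so invoking it here would be circular.

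This is exactly the obstruction the paper's Step~IV is designed to overcome. The paper splits the integral into dyadic shells $S_{t,j}=\{2^j\sqrt t<|x|\leq 2^{j+1}\sqrt t\}$ and decomposes $\partial_X^{(\beta)}p_t$ into a piece supported in $B(2^{j-1}\sqrt t)$ and its complement. For the complementary piece one uses the raw Gaussian decay of the heat kernel. For the inner piece the crucial input is the \emph{finite propagation speed} of $\cos(s\sqrt{\mathcal{L}})$: writing $h_t(\sqrt{\mathcal{L}})$ via the even Fourier inversion formula and subtracting a mollification $h_t\ast g_\epsilon$ whose Fourier transform is supported in $[-2^{j-1}\sqrt t,2^{j-1}\sqrt t]$, one forces the modified operator applied to the localized heat kernel to vanish on $S_{t,j}$, leaving only $(h_t-h_t\ast g_\epsilon)(\sqrt{\mathcal{L}})$, whose sup-norm is controlled by $\epsilon^d\Vert h_t^{(d)}\Vert_\infty$ via Lemma~\ref{lem_alexo_hg}. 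This is precisely where the $C^d$-seminorm of $f$ enters, not merely in later iterations as you suggest: without those derivatives you cannot sum over the dyadic shells $j$.
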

\begin{proof} We follow the approach of the proof for the Hulanicki theorem on graded Lie groups done in \cite{FischerRuzhanskyBook}. Let $q$ be a smooth function on $G.$ Let us consider   $f\in C^{\infty}(\mathbb{R}_0^{+})$ supported in  $[0,2]$. 
We will divide the proof of Lemma \ref{HulanickiTheorem} in four steps.
\begin{itemize}
    \item \textbf{Step I:} 
Fix $t>0.$ Let us consider the function $h_t:[0,\infty)\rightarrow \mathbb{C}$ defined by
\begin{equation}
\label{pf_lem_prop_mult_1_def_ht}
h_t(\mu)=e^{-t\mu^2}f(t\mu^2),
\quad \mu\geq 0.
\end{equation}
We have
$$
  \Vert  h_t  \Vert  _{L^\infty} \leq C   \Vert  f  \Vert  _{L^\infty}
\quad\mbox{and}\quad
f(t\lambda)=h_t(\sqrt \lambda ) e^{t\lambda}.
$$
By the spectral Functional calculus we have  
$$
f(t\mathcal{L})\delta=h_t(\sqrt\mathcal{L}) p_t
\quad\mbox{and}\quad
  \Vert  f(t\mathcal{L})\delta  \Vert  _{L^2(G)}\leq 
  \Vert  h_t   \Vert  _{L^\infty}   \Vert  p_t  \Vert  _{L^2(G)}.
$$
For the $L^2$-norm of the heat kernel, we use Remark \ref{remarksub:heatkernel} to observe that 
$$
  \Vert  p_t  \Vert  _{L^2(G)} \leq C V(\sqrt t)^{-\frac12}.
$$ Indeed, (B) and (D) in Remark \ref{remarksub:heatkernel} imply
\begin{align*}
    \int\limits_G|p_t(x)|^2dx &\lesssim  V(\sqrt{t})^{-2}\int\limits_Ge^{-|x|^2/Ct}e^{-|x|^2/Ct}dx\\
    &\lesssim V(\sqrt{t})^{-2}V(\sqrt{t})\int\limits_Ge^{-|x|^2/Ct}dx\\
     &\lesssim V(\sqrt{t})^{-1}.
\end{align*}

This implies that  $f(t\mathcal{L})\delta \in L^2(G)$ and that:
\begin{equation}
\label{pf_lem_prop_mult_1_step1}
  \Vert  f(t\mathcal{L})\delta  \Vert  _{L^2(G)}\lesssim 
  \Vert  f  \Vert  _{L^\infty} V(\sqrt t)^{-\frac12}.
\end{equation}
\item \textbf{Step II:} Let us show that the integral in the statement on a ball of radius $\sqrt t,$ $B(\sqrt{t}),$ for $t\rightarrow 0^{+}$ can be estimated by
\begin{equation}
\int\limits_{|x|<\sqrt t} 
| q(x) \ f(t\mathcal{L})\delta(x)| dx 
\lesssim_q \min \{1, t^{\frac {a} 2}\}
  \Vert  f  \Vert  _{L^\infty}.
\label{pf_lem_prop_mult_1_step2}
\end{equation}
To analyse
this, observe that the   Cauchy-Schwarz inequality implies
$$
\int\limits_{|x|<\sqrt t} 
| q(x) \ f(t\mathcal{L})\delta(x)| dx 
\leq
  \Vert  q  \Vert  _{L^2(B(\sqrt t))}
  \Vert  f(t\mathcal{L})\delta  \Vert  _{L^2(B(\sqrt t))}.
$$
The first $L^2$-norm of the right-hand side
may be estimated 
using Remark \ref{remarksub:heatkernel}
and the second with \eqref{pf_lem_prop_mult_1_step1}:
$$
  \Vert  f(t\mathcal{L})\delta  \Vert  _{L^2(B(\sqrt t))}
\leq 
  \Vert  f(t\mathcal{L})\delta  \Vert  _{L^2(G)}
\lesssim 
  \Vert  f  \Vert  _{L^\infty} V(\sqrt t)^{-\frac12}.
$$
Hence, Part (D) in Remark \ref{remarksub:heatkernel} implies $$
\int\limits_{|x|<\sqrt t} 
| q(x) \ f(t\mathcal{L})\delta(x)| dx 
\lesssim_q \min (1,{\sqrt t} ^{a +\frac Q2})
  \Vert  f  \Vert  _{L^\infty} V(\sqrt t)^{-\frac12}.
$$
Using that $V(r)\sim r^{Q},$ for $r\rightarrow 0^{+}$, 
we have proved \eqref{pf_lem_prop_mult_1_step2}.
\item \textbf{Step III:}  Let us consider the ball $B(e_{G},\epsilon_0)$ and let $t\geq \epsilon_{0}.$
Then the first part of Lemma \ref{HulanickiTheorem} has been proved because $t\geq \epsilon_0$. Let $\Delta_{q}\in \textnormal{diff}^{\,a}(\widehat{G}),$ with $a\in \mathbb{N}.$
Let us now consider the case of a multi-index $\beta\in \mathbb{N}_0^n$, 
and still the case when $\sqrt t\asymp \epsilon_0$.
Same as in Steps I and II, we get
$$
  \Vert  \partial_{X}^{(\beta)} f(t\mathcal{L})\delta  \Vert  _{L^2(G)}
=
  \Vert   h_t(\sqrt\mathcal{L}) \{\partial_{X}^{(\beta)} p_t\}  \Vert  _{L^2(G)}
\leq 
  \Vert  h_t   \Vert  _{L^\infty}   \Vert  \partial_{X}^{(\beta)} p_t  \Vert  _{L^2(G)},
$$
and (C) in Remark 8.18 implies 
\begin{align*}
  \Vert  \partial_{X}^{(\beta)} p_t  \Vert  _{L^2(G)} &\leq \Vert \sqrt{t}^{-Q-|\beta|} e^{-|x|^2/Ct} \Vert_{L^2(G)}\\
  & \leq  \sqrt{t}^{-Q-|\beta|} \Vert e^{-|x|^2/Ct} \Vert_{L^\infty(G)}\\
  &\lesssim \sqrt{\epsilon_0}^{-Q-|\beta|} \\ &\lesssim_{\epsilon_0} 1. 
\end{align*}
Consequently,
$$
  \Vert  f(t\mathcal{L})\delta  \Vert  _{L^2(G)}\lesssim_{\epsilon_0}  
   \Vert  f  \Vert  _{L^\infty}.
$$
Thus the second part of Lemma \ref{HulanickiTheorem} is proved for $t\asymp 1$.
We therefore may assume that $t$ is small enough and then let us fix a multi-index $\beta\in \mathbb{N}_0^n$. 

\item \textbf{Step IV:}  
In order to finish the proof, we only need to prove that
\begin{equation}
\forall \sqrt t<\epsilon_0,\qquad
\int\limits_{|x|\geq \sqrt t} 
| q(x) \partial_{X}^{(\beta)}\{f(t\mathcal{L})\delta\}(x)| dx 
\lesssim
C_q  t^{\frac {a-|\beta|} 2}
  \Vert  f  \Vert  _{C^d}.
\label{pf_lem_prop_mult_1_step3}
\end{equation} Indeed, using \eqref{pf_lem_prop_mult_1_step3} we can estimate $\int\limits_{|x|\leq \sqrt t} 
| q(x) \partial_{X}^{(\beta)}\{f(t\mathcal{L})\delta\}(x)| dx, $ as follows,
\begin{align*}
    \int\limits_{|x|\leq \sqrt t} 
| q(x) \partial_{X}^{(\beta)}\{f(t\mathcal{L})\delta\}(x)| dx &=\sum_{\ell=0}^{\infty}\int\limits_{\frac{ \sqrt t}{2^{\ell+1}}\leq |x|\leq \frac{ \sqrt t}{2^{\ell}} } 
| q(x) \partial_{X}^{(\beta)}\{f(t\mathcal{L})\delta\}(x)| dx \\
&=\sum_{\ell=0}^{\infty}\int\limits_{\frac{ \sqrt t}{2^{\ell+1}}\leq |x|} 
| q(x) \partial_{X}^{(\beta)}\{f(t\mathcal{L})\delta\}(x)| dx\\
&\leq \sum_{\ell=0}^{\infty}C_q  \left( \frac{ \sqrt t}{2^{\ell+1}}\right)^{\frac {a-|\beta|} 2}
  \Vert  f  \Vert_{C^d}\\
  &\lesssim   {\sqrt t}^{\frac {a-|\beta|} 2}
  \Vert  f  \Vert_{C^d}.
\end{align*}So, in order to prove the estimate \eqref{pf_lem_prop_mult_1_step3} we will decompose the function inside the integral as follows,
$$
\partial_{X}^{(\beta)} \{f(t\mathcal{L})\delta\}=h_t(\sqrt\mathcal{L}) \partial_{X}^{(\beta)} p_t
= 
h_t(\sqrt\mathcal{L})\sum_{j=0}^\infty 
 \{\partial_{X}^{(\beta)} p_t\} \ 1_{B(2^{j-1}\sqrt t)} 
 + \{\partial_{X}^{(\beta)} p_t\} \ 1_{B(2^{j-1}\sqrt t)^c)}.
$$
Here $1_{B(r)}$ and $1_{B(r)^c}$
denote the characteristic functions of the  ball $B(r)$ centred at $e_{G}$ and its complement in $G,$ $B(r)^c:=G\setminus B(r)$. Let us record that
the function $h_t$ has been defined in \eqref{pf_lem_prop_mult_1_def_ht}.
Note that the sum over $j$
is finite but the number of terms is the smaller integer $J$ 
such that $$2^{J+1} \sqrt t > R_0:=\textnormal{Diameter}_{cc}(G)\equiv \sup_{x,\,y\in G}{|y^{-1}x|_{cc}},$$ thus $J=J(t)$ depends on $t$.
The decomposition above  yields
\begin{equation}
\label{pf_lem_prop_mult_1_step3_dec}
\int\limits_{|x|\geq \sqrt t} 
| q(x) \partial_{X}^{(\beta)}\{f(t\mathcal{L})\delta\}(x)| dx 
\leq
\sum_{j=0}^\infty 
\int\limits_{S_{t,j}} |q \ \mathcal{M}_{t,j}^{(1)}|
+
\int\limits_{S_{t,j}} |q \ \mathcal{M}_{t,j}^{(2)}|,
\end{equation}
where the spherical sectors $S_{t,j}$ are defined by
$$
S_{t,j}
:=
\{x\in G:\ 2^j\sqrt t < |x| \leq  2^{j+1}\sqrt t\}
= B(2^{j+1}\sqrt t) \backslash B(2^{j}\sqrt t),
$$
and 
$$
\mathcal{M}_{t,j}^{(1)}
:=
h_t(\sqrt{ \mathcal{L}}) \left\{\partial_{X}^{(\beta)} p_t \ 1_{B(2^{j-1}\sqrt t)}\right\}
\qquad\mbox{and}\qquad
\mathcal{M}_{t,j}^{(2)}:=
h_t(\sqrt{ \mathcal{L}}) \left\{\partial_{X}^{(\beta)} p_t \ 1_{B(2^{j-1}\sqrt t)^c}\right\}.
$$

In both cases $i=1,2$, 
we will use Cauchy-Schwarz' inequality
$$
\int\limits_{S_{t,j}} |q \ \mathcal{M}_{t,j}^{(i)}|
\leq
  \Vert  q  \Vert  _{L^2(S_{t,j})}
  \Vert  \mathcal{M}_{t,j}^{(i)}  \Vert  _{L^2(S_{t,j})}.
$$
For the first $L^2$-norm, we use Remark \ref{remarksub:heatkernel}
(with $t\rightarrow 0^{+}$):
$$
  \Vert  q  \Vert  _{L^2(S_{t,j})}
\leq 
  \Vert  q  \Vert  _{L^2(B(2^{j+1}\sqrt t))}
\lesssim C_q  (2^{j+1} \sqrt t)^{a+\frac Q 2}.
$$ 
\begin{itemize}
    \item \textbf{Step IV-(a):}  
For the second $L^2$-norm, in the case $i=2$, we have
$$
  \Vert  \mathcal{M}_{t,j}^{(2)}  \Vert  _{L^2(S_{t,j})}
\leq
  \Vert  \mathcal{M}_{t,j}^{(2)}  \Vert  _{L^2(G)}
\leq
  \Vert  h_t(\sqrt{ \mathcal{L}})  \Vert  _{ L^2(G) } 
  \Vert  \partial_{X}^{(\beta)} p_t \ 1_{B(2^{j-1}\sqrt t)^c}  \Vert  _{L^2(G)}.
$$
On the one hand, we have 
by the spectral theorem 
$$
  \Vert  h_t(\sqrt{ \mathcal{L}})  \Vert  _{ L^2(G) }\leq 
  \Vert  h_t  \Vert  _{L^\infty}
\leq e^2   \Vert  f  \Vert  _{L^\infty}.
$$
On the other hand, the estimate 
 for the heat kernel in Remark \ref{remarksub:heatkernel} yields
\begin{eqnarray*}
  \Vert  \partial_{X}^{(\beta)} p_t \ 1_{B(2^{j-1}\sqrt t)^c}  \Vert  _{L^2(G)}^2
&\leq &
\sup_{|x|\geq 2^{j-1}\sqrt t} |\partial_{X}^{(\beta)} p_t(x)|
\int_G  |\partial_{X}^{(\beta)} p_t(x)| dx
\\
&\lesssim &
\sqrt t^{-Q-|\beta|} e^{-\frac{2^{2(j-1)}}C}
\int_G \sqrt t^{-Q-|\beta|} e^{-\frac{|x|^2}{C t}} dx
\\
&\lesssim &
\sqrt t^{-Q-|\beta|} e^{-\frac{2^{2(j-1)}}C}\sqrt t^{-Q-|\beta|} V(\sqrt t)\\
&\lesssim &
\sqrt t^{-Q-|\beta|} e^{-\frac{2^{2(j-1)}}C}\sqrt t^{-Q-|\beta|} (\sqrt t)^{Q}\\
&\lesssim &
\sqrt t^{-Q-2|\beta|} e^{-\frac{2^{2(j-1)}}C}.
\end{eqnarray*} Thus we have obtained
$$
  \Vert  \partial_{X}^{(\beta)} p_t \ 1_{B(2^{j-1}\sqrt t)^c}  \Vert  _{L^2(G)}
\lesssim
\sqrt t^{-\frac Q2 -|\beta|} e^{-\frac{2^{2(j-1)}} {2C} },
$$
and 
$$
  \Vert  \mathcal{M}_{t,j}^{(2)}  \Vert  _{L^2(S_{t,j})}
\lesssim   \Vert  f  \Vert  _{L^\infty}  \sqrt t^{-\frac Q2-|\beta|} e^{-\frac{2^{2(j-1)}}{2C}}.
$$
In view of the  previous analysis we have,
\begin{eqnarray*}
\int\limits_{S_{t,j}} |q \ \mathcal{M}_{t,j}^{(2)}|dx
&\lesssim&
C_q  (2^{j+1} \sqrt t)^{a+\frac Q 2}
  \Vert  f  \Vert  _{L^\infty}  \sqrt t^{-\frac Q2-|\beta|} e^{-\frac{2^{2(j-1)}}{2C}}
\\
&\lesssim& C_q    \Vert  f  \Vert  _{L^\infty} \sqrt t^{a - |\beta|}
\ 
2^{(j+1)(a+\frac Q2)} e^{-\frac{2^{2(j-1)}}{ 2C}}.
\end{eqnarray*}
The exponential decay allows us to sum up over $j$ and to obtain:
\begin{equation}
\sum_{j=0}^\infty
\int\limits_{S_{t,j}} |q \ \mathcal{M}_{t,j}^{(2)}|
\lesssim
 C_q    \Vert  f  \Vert  _{L^\infty} \sqrt t^{a - |\beta|}.
\label{pf_lem_prop_mult_1_step3a}
\end{equation}

\item \textbf{Step IV-(b):}  
The case of $i=1$, that is, the estimate of $  \Vert  \mathcal{M}_{t,j}^{(1)}  \Vert  _{L^2(S_{t,j})}$, 
can be analysed as follows.
The function $h_t$ is even and has compact support. 
Assuming that $f\in C^d[0,+\infty),$ $d\geq 2$, 
 $h_t\in C^d(\mathbb{R}^d)$ admits an integrable Euclidean  Fourier transform   $\widehat h_t\in L^1(\mathbb{R})$ it follows that the following formula holds for all $\mu\in \mathbb{R}$
$$
h_t(\mu) = \frac 1{2\pi} \int\limits_{-\infty}^{\infty} \cos (s\mu)\ \widehat h_t(s)  ds, 
\quad \mu\in \mathbb{R},
$$
with a convergent integral.
The spectral  functional calculus of the operator $\sqrt{L}$ then implies 
$$
h_t(\sqrt{ \mathcal{L}})= \frac 1{2\pi} \int\limits_{-\infty}^{\infty} \cos (s\sqrt{ \mathcal{L}}) \ \widehat h_t(s) ds
$$
and also
\begin{equation}
\label{pf_lem_prop_mult_1_step3_Mtj1}
\mathcal{M}_{t,j}^{(1)}(x)=
\frac 1{2\pi} \int\limits_{-\infty}^{\infty} 
 \cos (s\sqrt{ \mathcal{L}}) 
\left\{\partial_{X}^{(\beta)} p_t \ 1_{B(2^{j-1}\sqrt t)}\right\}(x) \  \widehat h_t(s) ds.
\end{equation}

The operator $\cos (s\sqrt\mathcal{L})$ satisfies that
$\textnormal{supp}( \{\cos (s\sqrt\mathcal{L})\delta\}) \subset B(|s|)$.
This implies the following property,

$$
x\in S_{t,j} \ \mbox{and}\  |s|\leq 2^{j-1}\sqrt t
\ \Longrightarrow\
\cos (s\sqrt{ \mathcal{L}}) 
\left\{\partial_{X}^{(\beta)} p_t \ 1_{B(2^{j-1}\sqrt t)}\right\}(x)
=0.
$$
Before continuing with the proof, we will use the following lemma (see \cite[Page 3466]{Fischer2015}).
\begin{lemma}
\label{lem_alexo_hg}Let us consider an even function 
 $g \in \mathscr{S} (\mathbb{R})$   
such that its (Euclidean) Fourier transform satisfies:
$$
\widehat g\in C^{\infty}_0(\mathbb{R}), 
\qquad
\widehat g\big|_{[-\frac 12,\frac 12]} \equiv1,
\qquad\mbox{and}\qquad 
\widehat g\big|_{(-\infty,1]\cup[1,\infty)} \equiv0.
$$
Then, for every $d\in \mathbb{N}$ and
 any $h\in \mathscr{S}'(\mathbb{R})$ 
with  $h\in C^d(\mathbb{R})$ and $  \Vert   \partial_\lambda^{d} h   \Vert  _{L^\infty}<\infty$,
we have
$$
\forall\epsilon>0,\qquad
  \Vert  h - h \ast g_\epsilon  \Vert  _{L^\infty} \leq 
\frac {\epsilon^d}{d!} \int\limits_{-\infty}^{\infty} |y|^d |g(y)| dy  \   \Vert   \partial_\lambda^{d} h   \Vert  _{L^\infty},
$$
where $g_\epsilon$ is the dilation of $g$  given by $g_\epsilon(x):=\epsilon^{-1} g(\epsilon^{-1}x),$ $x\in \mathbb{R}$. 
\end{lemma}
We use this lemma in the following way. 
Let $g \in \mathscr{S} (\mathbb{R})$ 
and $g_\epsilon=\epsilon^{-1} g(\epsilon^{-1}\cdot)$
be functions
as in Lemma \ref{lem_alexo_hg}.
As $\textnormal{supp}(  \widehat g_{(2^{j-1} \sqrt t)^{-1}}) 
\subset 
[-2^{j-1}\sqrt t, 2^{j-1}\sqrt t]$,
the finite propagation speed property implies
$$
x\in S_{t,j} \ \Longrightarrow\
 \int\limits_{-\infty}^{\infty} 
 \cos (s\sqrt{ \mathcal{L}}) 
\left\{\partial_{X}^{(\beta)} p_t \ 1_{B(2^{j-1}\sqrt t)}\right\}(x) \  
\widehat h_t(s) \widehat g_{(2^{j-1} \sqrt t)^{-1}}  (s) ds = 0.
$$
Hence we can rewrite \eqref{pf_lem_prop_mult_1_step3_Mtj1}
for any $x\in S_{t,j}$ as follows
\begin{align*}
\mathcal{M}_{t,j}^{(1)}(x)
&=
\frac 1{2\pi} \int\limits_{-\infty}^{\infty} 
 \cos (s\sqrt{ \mathcal{L}}) 
\left\{\partial_{X}^{(\beta)} p_t \ 1_{B(2^{j-1}\sqrt t)}\right\}(x) 
\\
&\left(  \widehat h_t(s) - \widehat h_t(s) \widehat g_{(2^{j-1} \sqrt t)^{-1}}
\right) ds
\\
&=
\left(h_t - h_t* g_{(2^{j-1} \sqrt t)^{-1}}\right)(\sqrt{ \mathcal{L}})
\left\{\partial_{X}^{(\beta)} p_t \ 1_{B(2^{j-1}\sqrt t)}\right\}(x), 
\end{align*}
having used the spectral theorem and the inverse Fourier formula for even  functions on $\mathbb{R}$.
Applying the $L^2$-norm on $S_{t,j}$, we obtain
\begin{eqnarray*}
  \Vert  \mathcal{M}_{t,j}^{(1)}  \Vert  _{L^2(S_{t,j})}
&\leq&
  \Vert  \left(h_t - h_t* g_{(2^{j-1} \sqrt t)^{-1}}\right)(\sqrt{ \mathcal{L}})
\left\{\partial_{X}^{(\beta)} p_t \ 1_{B(2^{j-1}\sqrt t)}\right\}  \Vert  _{L^2(G)}
\\
&\leq&
  \Vert   h_t - h_t* g_{(2^{j-1} \sqrt t)^{-1}}  \Vert  _{L^\infty}
  \Vert   \partial_{X}^{(\beta)} p_t \ 1_{B(2^{j-1}\sqrt t)}  \Vert  _{L^2(G)},
\end{eqnarray*}
by the spectral theorem.
We estimate the supremum norm with the result of Lemma \ref{lem_alexo_hg}:
$$
  \Vert   h_t - h_t* g_{(2^{j-1} \sqrt t)^{-1}}  \Vert  _{L^\infty}
\lesssim (2^{j-1} \sqrt t)^{-d}   \Vert  \partial_\lambda^{d} h_t   \Vert  _{L^\infty},
$$ 
and one checks easily 
$$
  \Vert  \partial_\lambda^{d} h_t   \Vert  _{L^\infty} 
= t^{\frac d2}   \Vert  h_1^{(d)}  \Vert  _{L^\infty} 
\lesssim t^{\frac d2}  \max_{\ell =0,1,\ldots,d} 
  \Vert  \partial_\lambda^{\ell} f  \Vert  _{L^\infty}.
$$
For the $L^2$-norm, the estimates in  Remark \ref{remarksub:heatkernel} for the heat kernel imply
\begin{align*}
     \Vert   \partial_{X}^{(\beta)} p_t \ 1_{B(2^{j-1}\sqrt t)}  \Vert  _{L^2(G)}
&\lesssim
\sqrt t^{-Q -|\beta|} V(2^{j-1}\sqrt t)^{\frac 12}\\
& \lesssim \sqrt t^{-Q -|\beta|}\gamma_0^{\frac j2}  V(\sqrt t)^{\frac 12}\\
& \lesssim \sqrt t^{-Q -|\beta|}\gamma_0^{\frac j2}(\sqrt t)^{\frac Q2}\\
&= \gamma_0^{\frac j2}  \sqrt t^{-\frac Q2 -|\beta|},
\end{align*}
 where we have used the doubling property:
$$
\gamma_0:=\sup_{r>0} \frac {V(2r)}{V(r)} \in (0,\infty).
$$
Hence we obtain
$$
  \Vert  \mathcal{M}_{t,j}^{(1)}  \Vert  _{L^2(S_{t,j})}
\lesssim 
(2^{j-1} \sqrt t)^{-d} t^{\frac d2}  \max_{\ell =0,1,\ldots,d} 
  \Vert  \partial_\lambda^{\ell} f  \Vert  _{L^\infty}
 \gamma_0^{\frac j2}  \sqrt t^{-\frac Q2 -|\beta|}.
$$

We can now go back to 
\begin{align*}
&\int\limits_{S_{t,j}} |q \ \mathcal{M}_{t,j}^{(1)}|\\
&\lesssim
C_q  (2^{j+1} \sqrt t)^{a+\frac Q 2}
(2^{j-1} \sqrt t)^{-d} t^{\frac d2}  \max_{\ell =0,1,\ldots,d} 
  \Vert  \partial_\lambda^{\ell} f  \Vert  _{L^\infty}
 \gamma_0^{\frac j2}  \sqrt t^{-\frac Q2 -|\beta|}
 \\
&\lesssim
C_q \max_{\ell =0,1,\ldots,d} 
  \Vert  \partial_\lambda^{\ell} f  \Vert  _{L^\infty}
2^{j(a+\frac Q2 -d + \frac {\ln \gamma_0}2)} \sqrt t^{a-|\beta|}.
\end{align*}
We choose $d$ to be the smallest positive integer such that 
$d>a+\frac Q 2+ \frac {\ln \gamma_0}2$ so that we can sum up over $j$ 
to obtain
$$
\sum_{j=0}^\infty \int\limits_{S_{t,j}} |q \ \mathcal{M}_{t,j}^{(1)}|
\lesssim
C_q \max_{\ell =0,1,\ldots,d} 
  \Vert  \partial_\lambda^{\ell} f  \Vert  _{L^\infty}
\sqrt t^{a-|\beta|}.
$$
Using \eqref{pf_lem_prop_mult_1_step3_dec} and \eqref{pf_lem_prop_mult_1_step3a}, 
this shows \eqref{pf_lem_prop_mult_1_step3}.
\end{itemize}

\end{itemize}

This concludes the proof of  Lemma \ref{HulanickiTheorem}.
\end{proof}
\begin{remark}
\label{rem_lem_prop_mult_1}

\begin{enumerate}

\item
\label{item_rem_lem_prop_mult_1_smooth}
Observe that if $f$ is compactly supported in $\mathbb{R}^{+}_0$, 
then the kernel of the operator $f({\mathcal{L}} )$ is smooth
and  the integrals  in Lemma \ref{HulanickiTheorem} are finite. 
However Lemma \ref{HulanickiTheorem}  yields
 bounds for these integrals in terms of  $f$ and $t$
 which we will use in our further analysis. 

\item
\label{item_rem_lem_prop_mult_1_Xq}
The second part of  
Lemma \ref{HulanickiTheorem} implies that for any difference operator $\Delta_q\in \textnormal{diff}^{\,a}(\widehat{G})$, $\beta,\gamma\in \mathbb{N}_0^n$, we have,
$$
\int_G | \partial_{X}^{(\gamma)} \{ q(x) X^{\beta}  \ f(t{\mathcal{L}} )\delta\}(x)| dx
\lesssim  t^{\frac {a-|\beta| -|\gamma|}  2}
\max_{\ell =0,1,\ldots,d} 
  \Vert  \partial_\lambda^{\ell} f  \Vert  _{L^\infty},
$$
This follows  from the Leibniz rule
$$
\partial_{X}^{(\gamma)} (q\phi)(x) 
= \sum_{|\gamma_1|+|\gamma_2| =|\gamma|}
c_{\gamma_1,\gamma_2}\partial_{X}^{(\gamma_1)}q(x) \partial_{X}^{(\gamma_2)} \phi(x).
$$
Indeed, note that $\partial_{X}^{(\gamma_1)}q$ vanishing at $e_G$ up to order $a-1 -|\gamma|$. Here, we denote $\phi=f(t{\mathcal{L}} )\delta$.
\end{enumerate}

\end{remark}

\begin{proof}[Proof of Theorem \ref{orders:theorems}] Let us start the proof making the following remark.
\begin{remark}\label{remarkofproof}
In view of the subelliptic functional calculus, we can divide the proof of Theorem \ref{orders:theorems} in three steps. 
\begin{itemize}
    \item Step 1: to prove Theorem \ref{orders:theorems} for $m<0.$ This fact will be proved below.
    \item Step 2: to prove Theorem \ref{orders:theorems} for $m=0.$ This can be done as follows. We can write for $f\in S^{0}_{1,0}(\mathbb{R}_0^+),$ $f(\lambda)=f_{-\frac{m}{2}}(\lambda)(1+\lambda)^{\frac{m}{2}},$ with $f_{-\frac{m}{2}}(\lambda):=f(\lambda)(1+\lambda)^{-\frac{m}{2}},$  $m>0.$ In view of Step 1, $(1+t\mathcal{L})^{-\frac{m}{2}}\in {\Psi}^{-m,\mathcal{L}}_{1,0}(G\times \widehat{G}).$  Because of Corollary \ref{IesTC}, that $(1+t\mathcal{L})^{-\frac{m}{2}}\in {\Psi}^{-m,\mathcal{L}}_{1,0}(G\times \widehat{G})$ implies that $(1+t\mathcal{L})^{\frac{m}{2}}\in {\Psi}^{m,\mathcal{L}}_{1,0}(G\times \widehat{G}).$ The fact that $$ f_{-\frac{m}{2}}=f(\lambda)(1+\lambda)^{-\frac{m}{2}}\in S^{0}_{1,0}(\mathbb{R}_0^+)\times S^{-\frac{m}{2}}_{1,0}(\mathbb{R}_0^+)=S^{-\frac{m}{2}}_{1,0}(\mathbb{R}_0^+)$$   implies that $f_{-\frac{m}{2}}(t\mathcal{L})\in {\Psi}^{-m,\mathcal{L}}_{1,0}(G\times \widehat{G}).  $ So, the subelliptic calculus gives
    $$ f(t\mathcal{L})=f_{-\frac{m}{2}}(t\mathcal{L})(1+t\mathcal{L})^{\frac{m}{2}}\in  \Psi^{-m,\mathcal{L}}_{1,0}(G\times \widehat{G})\circ \Psi^{m,\mathcal{L}}_{1,0}(G\times \widehat{G}) = \Psi^{0,\mathcal{L}}_{1,0}(G\times \widehat{G}). $$
    This analysis proves Step 2.
    \item Case 3: $m>0.$ Taking $N\in \mathbb{R},$ such that $m<N$ for $f\in S^{\frac{m}{2}}_{1,0}(\mathbb{R}_0^+),$ let us define $f_{N}(\lambda):=f(\lambda)(1+\lambda)^{-\frac{N}{2}}\in S^{\frac{m-N}{2}}_{1,0}(\mathbb{R}_0^+).$ In view of Case 1, $f_N(t\mathcal{L})\in \Psi^{m-N,\mathcal{L}}_{1,0}(G\times \widehat{G}). $ Since $(1+t\mathcal{L})^{\frac{N}{2}}\in \Psi^{N,\mathcal{L}}_{1,0}(G\times \widehat{G}), $ we conclude from the subelliptic calculus that  
    $$  f(t\mathcal{L})=f_{N}(t\mathcal{L})(1+t\mathcal{L})^{\frac{m}{2}}\in  \Psi^{m-N,\mathcal{L}}_{1,0}(G\times \widehat{G})\circ \Psi^{N,\mathcal{L}}_{1,0}(G\times \widehat{G}) = \Psi^{m,\mathcal{L}}_{1,0}(G\times \widehat{G}), $$ proving Case 3.
\end{itemize}
\end{remark}
\begin{remark} We can assume that $f(\lambda)=0$ for  $\lambda\in [0,1],$ as a consequence of the following property:

\end{remark}
\begin{lemma}
\label{lem_prop_mult_2}
Let $m\in \mathbb{R}$ and let us fix $a\in \mathbb{N}_0$.
There exists $d=d(a,m)\in \mathbb{N}_0$ such that, for any $\Delta_q\in \textnormal{diff}^{\,a}(\widehat{G})$ there exists $C>0$
satisfying  
 for any function  $f\in C^d[0,\infty)$ with support in $[0,1]$ the following estimate 
$$
  \Vert  \widehat{\mathcal{M}}(\xi)^{a-m} \Delta_q f(t\widehat{\mathcal{L}}(\xi))  \Vert  _{\textnormal{op}} 
\lesssim   t^{\frac {m}  2} \max_{\ell =0,1,\ldots,d} 
  \Vert  \partial_\lambda^{\ell} f  \Vert  _{L^\infty},
$$ uniformly in $t\in (0,1).$
\end{lemma}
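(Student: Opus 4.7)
Plan: The approach is to convert the matrix-valued operator-norm estimate into an $L^1(G)$-norm estimate, and then apply the differentiated Hulanicki-type bound furnished by the second part of Lemma~\ref{HulanickiTheorem} (equivalently, by Remark~\ref{rem_lem_prop_mult_1}\,(2)).

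Choose $N$ to be the smallest nonnegative even integer with $N\geq a-m$, so that $\widehat{\mathcal{M}}(\xi)^{a-m-N}$ has operator norm at most $1$ uniformly in $[\xi]\in\widehat{G}$. Because $\widehat{\mathcal{M}}(\xi)^{N}$ is the matrix symbol of the differential operator $(1+\mathcal{L})^{N/2}$, and because $\Delta_{q}\widehat h=\widehat{qh}$, one has the operator-theoretic identity
\[
\widehat{\mathcal{M}}(\xi)^{a-m}\,\Delta_{q} f(t\widehat{\mathcal{L}}(\xi))
=\widehat{\mathcal{M}}(\xi)^{a-m-N}\,\mathscr{F}\!\big((1+\mathcal{L})^{N/2}(q\cdot f(t\mathcal{L})\delta)\big)(\xi),
\]
so the elementary inequality $\|\widehat\phi(\xi)\|_{\textnormal{op}}\leq\|\phi\|_{L^1(G)}$ yields
$\|\widehat{\mathcal{M}}(\xi)^{a-m}\,\Delta_{q} f(t\widehat{\mathcal{L}}(\xi))\|_{\textnormal{op}}\leq \|(1+\mathcal{L})^{N/2}(q\cdot f(t\mathcal{L})\delta)\|_{L^{1}(G)}$.

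Now expand $(1+\mathcal{L})^{N/2}$ as a polynomial of order $N$ in the vector fields $X_{1},\dots,X_{k}$, and apply the Leibniz rule to $q\cdot f(t\mathcal{L})\delta$. The result is a finite sum of terms of the form $(\partial_{X}^{(\beta_{1})}q)\,\partial_{X}^{(\beta_{2})}(f(t\mathcal{L})\delta)$ with $|\beta_{1}|+|\beta_{2}|\leq N$. The smooth function $q_{\beta_{1}}:=\partial_{X}^{(\beta_{1})}q$ vanishes at $e_{G}$ to order $\tilde a_{\beta_{1}}:=\max(a-|\beta_{1}|,0)$, so $\Delta_{q_{\beta_{1}}}\in\textnormal{diff}^{\,\tilde a_{\beta_{1}}}(\widehat{G})$; invoking the second part of Lemma~\ref{HulanickiTheorem} with $q_{\beta_{1}}$ in place of $q$ and $\tilde a_{\beta_{1}}$ in place of $a$ gives, for some fixed integer $d_{0}=d_{0}(a,m)$,
\[
\|q_{\beta_{1}}\,\partial_{X}^{(\beta_{2})}(f(t\mathcal{L})\delta)\|_{L^{1}(G)}\lesssim t^{(\tilde a_{\beta_{1}}-|\beta_{2}|)/2}\max_{0\leq\ell\leq d_{0}}\|\partial_{\lambda}^{\ell}f\|_{L^{\infty}},
\]
uniformly in $t\in(0,1)$. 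A combinatorial check of the exponents then finishes the argument: for $|\beta_{1}|\leq a$ the bound $\tilde a_{\beta_{1}}-|\beta_{2}|\geq m$ reduces to $|\beta_{1}|+|\beta_{2}|\leq a-m$, which is enforced by $|\beta_{1}|+|\beta_{2}|\leq N$ together with $N\geq a-m$; for $|\beta_{1}|>a$ it reduces to $|\beta_{2}|\leq -m$, automatic in the regime $m\leq 0$ used in Step~1 of the proof of Theorem~\ref{orders:theorems} (since $|\beta_{1}|\geq a+1$ together with $|\beta_{1}|+|\beta_{2}|\leq N$ forces $|\beta_{2}|\leq N-a-1\leq -m-1$). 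Taking $d=d_{0}$ completes the proof.

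The main obstacle is the delicate tuning of $N$: the two requirements $N\geq a-m$ (needed to bound $\widehat{\mathcal{M}}(\xi)^{a-m-N}$ in operator norm) and $a-N\geq m$ (needed so that each Leibniz term carries at least a factor of $t^{m/2}$) are simultaneously satisfied only when $a-m$ is itself a nonnegative even integer; when it is not, the $O(1)$ residual power of $t$ must be absorbed via the subelliptic calculus by factoring $\widehat{\mathcal{M}}(\xi)^{a-m}=\widehat{\mathcal{M}}(\xi)^{a-m-2k}\widehat{\mathcal{M}}(\xi)^{2k}$ for a suitable even integer $2k$ and treating the bounded residual multiplier $\widehat{\mathcal{M}}(\xi)^{a-m-2k}$ using Remark~\ref{sgeq1} together with the composition rule in Corollary~\ref{SubellipticcompositionC}.
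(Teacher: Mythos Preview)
Your overall strategy matches the paper's: pass to an $L^{1}(G)$ bound via $\|\widehat{\phi}(\xi)\|_{\textnormal{op}}\le\|\phi\|_{L^{1}(G)}$, expand $(1+\mathcal{L})^{N/2}(q\cdot f(t\mathcal{L})\delta)$ by the Leibniz rule, and then invoke the Hulanicki-type estimate of Lemma~\ref{HulanickiTheorem} (via Remark~\ref{rem_lem_prop_mult_1}). The problem is the bookkeeping on the power of $t$.

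Your ``combinatorial check'' asserts that $|\beta_{1}|+|\beta_{2}|\le N$ together with $N\ge a-m$ forces $|\beta_{1}|+|\beta_{2}|\le a-m$. This inequality runs the wrong way: from $|\beta_{1}|+|\beta_{2}|\le N$ and $N\ge a-m$ you only get $|\beta_{1}|+|\beta_{2}|\le N$, and there is nothing preventing $a-m<|\beta_{1}|+|\beta_{2}|\le N$. In fact you already diagnose this in your final paragraph, where you note that the requirements $N\ge a-m$ and $a-N\ge m$ collide unless $a-m\in 2\mathbb{N}_{0}$. Once you pay for the crude bound $\|\widehat{\mathcal{M}}(\xi)^{a-m-N}\|_{\textnormal{op}}\le1$, you can only extract $t^{(a-N)/2}$, not $t^{m/2}$, from the Leibniz expansion; you lose exactly the residual factor $t^{(m-(a-N))/2}$.

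Your proposed fix does not recover this loss. Remark~\ref{sgeq1} and Corollary~\ref{SubellipticcompositionC} place $\mathcal{M}^{s}$ in a subelliptic H\"ormander class and close those classes under composition; they track \emph{operator orders}, not the $t$-dependence of a $t$-parametrised family. Writing $\widehat{\mathcal{M}}(\xi)^{a-m}=\widehat{\mathcal{M}}(\xi)^{a-m-2k}\widehat{\mathcal{M}}(\xi)^{2k}$ and bounding the first factor in norm just reproduces the same computation with $N=2k$, and the same $t$-deficit reappears. What the paper does instead is prove the estimate first for $m_{1}=2N\in2\mathbb{N}$, obtaining
\[
\|\widehat{\mathcal{M}}(\xi)^{m_{1}}\Delta_{q}f(t\widehat{\mathcal{L}}(\xi))\|_{\textnormal{op}}\lesssim t^{(a-m_{1})/2}\max_{\ell\le d}\|\partial_{\lambda}^{\ell}f\|_{L^{\infty}},
\]
and then pass to arbitrary $m_{1}\in\mathbb{R}$ by complex interpolation (and duality) of the subelliptic Sobolev spaces $L^{2,\mathcal{L}}_{m_{1}}(G)$; the choice $m_{1}=a-m$ then gives exactly $t^{m/2}$. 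The interpolation step is the mechanism that bridges the even-integer lattice to a general real exponent without giving up any power of $t$; your argument is missing precisely this ingredient.
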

\begin{proof}[Proof of Lemma \ref{lem_prop_mult_2}]
Observe that in terms of the sub-Laplacian $\mathcal{L}=-\sum_{i=1}^{k}X_{i}^2$, 
 we have:
\begin{eqnarray*}
&&  \Vert   \widehat{\mathcal{M}}(\xi)^{2N}\Delta_q f(t\widehat{\mathcal{M}}(\xi))  \Vert  _{\textnormal{op}} 
=
  \Vert  (1+\widehat{\mathcal{L}}(\xi))^N \Delta_q f(t\widehat{\mathcal{M}}(\xi))  \Vert  _{\textnormal{op}} 
\\&& \qquad \leq
\int_G |(1+\mathcal{L})^N  q(x) \{(1+\mathcal{L})^N f(t\mathcal{L})\delta \}(x)|dx
\\&& \qquad\lesssim \sum_{1\leqslant i_1\leqslant i_2\leqslant\dots \leqslant i_{k}\leqslant k\,,|\beta|\leqslant 2N}
\int\limits_G |  X^{\beta_1}_{i_1,z}\dots X^{\beta_k}_{i_k,z} q(x) \{ f(t\mathcal{L})\delta \}(x)|dx
\\&& \quad
\ \lesssim  \sum_{|\beta|\leq 2N} t^{\frac{a -|\beta|}2}
\max_{\ell =0,1,\ldots,d} 
  \Vert  \partial_\lambda^{\ell} f  \Vert  _{L^\infty},
\end{eqnarray*}
having used Lemma \ref{HulanickiTheorem}
and Remark \ref{rem_lem_prop_mult_1} \eqref{item_rem_lem_prop_mult_1_Xq}.
Hence we have obtained
$$
\forall  t\in (0,1)\qquad
  \Vert   (1+\widehat{\mathcal{M}}(\xi))^{\frac{m_1}2}\Delta_q f(t\widehat{\mathcal{M}}(\xi))  \Vert  _{\textnormal{op}}  
\leq C t^{\frac {a-m_1}  2}
\max_{\ell =0,1,\ldots,d} 
  \Vert  \partial_\lambda^{\ell} f  \Vert  _{L^\infty},
$$
for any $m_1=2N\in 2\mathbb{N}$.
The properties of interpolation and duality of the subelliptic Sobolev spaces imply the result for any $m_1\in \mathbb{R}$.
We then choose $m_1=m-a$.
\end{proof}
Now, we continue the proof of Step 1 in Remark \ref{remarkofproof} in order to prove Theorem \ref{orders:theorems}.  Let us make use of the following notation, 
$$
  \Vert  \kappa  \Vert  _{\star} :=   \Vert  T_\kappa  \Vert  _{\mathscr{B}(L^2(G))}
=
\sup_{ [\xi]\in \widehat{G} }   \Vert  \mathscr{F}_G \kappa(\xi)  \Vert  _{\textnormal{op}},
$$
for an $L^{2}$-Fourier multiplier with kernel $\kappa.$
Let $\Delta_q\in \textnormal{diff}^{a}(\widehat{G})$, $m<0$,
and  $f\in C^d[0,\infty)$ supported in $[1,\infty)$.
The properties of the Sobolev spaces imply that 
  it suffices to show
\begin{equation}
\label{eq_pf_prop_mult2}
  \Vert   \mathcal{L}^{\frac b2} \{q\ f(t\mathcal{L})\delta \}  \Vert  _{\star} \lesssim  C 
t^{\frac{m}2} \,    \,   \sup_{\substack{\lambda\geq 1\\ \ell =0,\ldots,d} } \,    \,   
\lambda^{-\frac m2 + \ell} |\partial^\ell_\lambda f(\lambda)|,
\ \mbox{for}\ b=0,  -m+a.
\end{equation}
Let us fix a dyadic decomposition,
that is, a function $\chi_1\in \mathscr{D}(\mathbb{R})$ 
satisfying
$$
0\leq \chi_1\leq 1,\quad
\chi_1\big|_{[\frac 34,\frac 32]}=1,\quad
\textnormal{supp} \chi_1 \subset [\frac 12,2],
$$
and 
$$
\forall \lambda \geq 1 \quad 
\sum_{j=1}^\infty \chi_j(\lambda)=1,
\qquad\mbox{where}\quad \chi_j(\lambda)=\chi_1(2^{-j}\lambda)\ \mbox{for} \ j\in \mathbb{N}.
$$
We then define for all $j\in \mathbb{N},$ and all $\lambda\geq 0,$
$$
f_j(\lambda)
:=
\lambda^{-\frac m2}f(\lambda) \chi_j(\lambda)
\quad\mbox{and}\quad
g_j(\lambda)
:=
\lambda^{\frac{m}{2}} f_j(2^j \lambda).
$$
Note that, for any $j\in \mathbb{N}_0$,  
$g_j$ is smooth, supported in $[\frac 12,2]$, 
and satisfies the following estimates (see \cite[Page 3473]{Fischer2015})
\begin{equation}
\label{eq_pf_prop_mult_bdgj}
\forall d\in \mathbb{N}_0,\quad
  \Vert  \partial_\lambda^{d} g_j  \Vert  _{L^\infty} \lesssim_m
\sup_{\substack {\lambda\geq 1 \\ \ell\leq d}}
\lambda^{-\frac m2+\ell}  |\partial_\lambda^{\ell} f(\lambda)|.
\end{equation}

The sum $f(\lambda) =\sum_{j=1}^\infty 2^{j\frac m2} g_j(2^{-j}\lambda)$
is finite for any $\lambda\geq0$ and  locally finite on $[0,\infty)$.
Using \eqref{eq_pf_prop_mult_bdgj} 
and  that $\sum_j 2^{j\frac m2}<\infty$ for $m<0$, 
we obtain
$$
  \Vert  f(t\mathcal{L})  \Vert  _{\mathscr{B}(L^2(G))} 
\leq \sum_{j=1}^\infty 2^{j\frac m2}   \Vert   g_j(2^{-j}t\mathcal{L})  \Vert  _{\mathscr{B}(L^2(G))}
 \lesssim \sup_{\lambda\geq 1} \lambda^{-\frac m2} |f(\lambda)|
 <\infty.
$$
Hence we can write 
$$
f(t\mathcal{L}) 
=
\sum_{j=1}^\infty 2^{j\frac m2} g_j(2^{-j}t\mathcal{L}) 
\quad\mbox{in}\ \mathscr{B}(L^2(G)),
\quad\mbox{so}\quad
f(t\mathcal{L})\delta  
=
\sum_{j=1}^\infty 2^{j\frac m2} g_j(2^{-j}t\mathcal{L})\delta 
 ,
$$ in $\mathscr{D}'(G),$
with each function $g_j(2^{-j}\mathcal{L})\delta $ being smooth, 
see Remark \ref{rem_lem_prop_mult_1} \eqref{item_rem_lem_prop_mult_1_smooth}.
This justifies the following estimates:
$$
  \Vert   \partial_{X}^{(\beta)} q f(t\mathcal{L})\delta   \Vert  _{L^1(G)}
\leq
\sum_{j=1}^\infty 2^{j\frac m2}
  \Vert   \partial_{X}^{(\beta)} q g_j(2^{-j}t\mathcal{L})\delta   \Vert  _{L^1(G)}.
$$
By Lemma \ref{HulanickiTheorem}
and Remark \ref{rem_lem_prop_mult_1} \eqref{item_rem_lem_prop_mult_1_Xq}, 
we have
\begin{eqnarray}
  \Vert   \partial_{X}^{(\beta)} q g_j(2^{-j}t\mathcal{L})\delta   \Vert  _{L^1(G)}
&\lesssim &
(2^{-j}t )^{\frac {a-|\beta|}  2}
\max_{\ell=0,\ldots,d}
  \Vert  g_j^{(\ell)}  \Vert  _{L^\infty}
\nonumber
\\
&\lesssim &
(2^{-j}t )^{\frac {a-|\beta|}  2}
\sup_{\substack {\lambda\geq 1 \\ \ell\leq d}}
\lambda^{-\frac m2+\ell}  |\partial_\lambda^{\ell} f(\lambda)|,
\label{eq_pf_prop_mult_L1gj}
\end{eqnarray}
having used  \eqref{eq_pf_prop_mult_bdgj}.
This yields the estimate:
\begin{eqnarray*}
  \Vert   \partial_{X}^{(\beta)} q f(\mathcal{L})\delta   \Vert  _{L^1(G)}
&\lesssim  &
\sum_{j=1}^\infty 2^{j\frac m2}
(2^{-j}t )^{\frac {a-|\beta|}  2}
\sup_{\substack {\lambda\geq 1 \\ \ell\leq d}}
\lambda^{-\frac m2+\ell}  |\partial_\lambda^{\ell} f(\lambda)|
\\
&\lesssim  &
t ^{\frac {a-|\beta|}  2}
\sup_{\substack {\lambda\geq 1 \\ \ell\leq d}}
\lambda^{-\frac m2+\ell}  |\partial_\lambda^{\ell} f(\lambda)|,
\end{eqnarray*}
as long as $m-a+|\beta|<0$. 
This rough $L^1$-estimate 
 implies the estimate in \eqref{eq_pf_prop_mult2} in the case $b=0$ but 
is not enough to prove the case $b=-m+a$. 
We now present an argument making use of the almost orthogonality of the decomposition of $f(\mathcal{L})$.
More precisely, we will apply the Cotlar-Stein Lemma to the family of operators
$$
T_j:=2^{j \frac m2} T_{\mathcal{L}^{\frac b2} \{q g_j(2^{-j}t\mathcal{L})\delta \} }, 
$$
where $b=-m+a$, in the standard way.
Note that the properties of the subelliptic Sobolev spaces imply (see, e.g. \cite{CardonaRuzhansky2019I})
$$
  \Vert  \mathcal{L}^{\frac b2} \{q g_j(2^{-j}t\mathcal{L})\delta \}  \Vert  _{\star}
\leq 
\left(  \Vert  \mathcal{L}^{\lceil \frac b2\rceil} \{q g_j(2^{-j}t\mathcal{L})\delta \}   \Vert  _{\star}\right)^{\theta}
\left(  \Vert  \mathcal{L}^{\lfloor \frac b2\rfloor} \{q g_j(2^{-j}t\mathcal{L})\delta \}   \Vert  _{\star}
\right)^{1-\theta}
$$
with $\theta = \lfloor \frac b2\rfloor - \frac b2$
and we can bound the $  \Vert \cdot  \Vert  _{\star}$-norm 
with the $L^1$-norm given in  \eqref{eq_pf_prop_mult_L1gj}, summing up over $\beta$'s 
with $|\beta|=\lceil \frac b2\rceil$ or 
 $|\beta|=\lfloor \frac b2\rfloor$\footnote{In this proof, $\lceil b \rceil$ and 
 $\lfloor  b\rfloor$ denote the upper and lower integer parts of a real number $b$ respectively.}.
We obtain:
\begin{equation}
\label{eq_pf_prop_mult_bdTj}
  \Vert  \mathcal{L}^{\frac b2} \{q g_j(2^{-j}t\mathcal{L})\delta \}  \Vert  _{\star}
\lesssim_{q,b,m}
(2^{-j}t )^{\frac {a-b}  2}
\sup_{\substack {\lambda\geq 1 \\ \ell\leq d}}
\lambda^{-\frac m2+\ell}  |\partial_\lambda^{\ell} f(\lambda)|,
\end{equation}
and, as $q-b=m$, the operators $T_j$'s are uniformly bounded.
We also need to find a bound for the operator norm of 
$T_jT^*_k$ whose convolution kernel is given by
$$
2^{(j+k)\frac m2}
 \{\mathcal{L}^{\frac b2} q g_j(2^{-j}t\mathcal{L})\delta \} * \{\mathcal{L}^{\frac b2} q^* \bar g_k(2^{-k}t\mathcal{L})\delta \}.
 $$
 By observing that 
 \begin{align*}
     &\mathscr{F}_{G}[  \{\mathcal{L}^{\frac b2} q g_j(2^{-j}t\mathcal{L})\delta \} * \{\mathcal{L}^{\frac b2} q^* \bar g_k(2^{-k}t\mathcal{L})\delta \}]\\
     &= \mathscr{F}_{G}[\{\mathcal{L}^{\frac b2} q^* \bar g_k(2^{-k}t\mathcal{L})\delta \}]\mathscr{F}_{G}[ \{\mathcal{L}^{\frac b2} q g_j(2^{-j}t\mathcal{L})\delta \}\\
     &=\mathscr{F}_{G}[q^* \bar g_k(2^{-k}t\mathcal{L})\delta]\widehat{\mathcal{L}}^{\frac{b}{2}}\mathscr{F}_{G}[ q g_j(2^{-j}t\mathcal{L})\delta ]\widehat{\mathcal{L}}^{\frac{b}{2}}\\
      &=\mathscr{F}_{G}[q^* \bar g_k(2^{-k}t\mathcal{L})\delta]\widehat{\mathcal{L}}^{\frac{b+c}{2}}\widehat{\mathcal{L}}^{-\frac{c}{2}}\mathscr{F}_{G}[ q g_j(2^{-j}t\mathcal{L})\delta ]\widehat{\mathcal{L}}^{\frac{b}{2}},
 \end{align*} wehave the inequalities
 \begin{align*}
     \Vert \mathscr{F}_{G}[ & \{\mathcal{L}^{\frac b2} q g_j(2^{-j}t\mathcal{L})\delta \} * \{\mathcal{L}^{\frac b2} q^* \bar g_k(2^{-k}t\mathcal{L})\delta \}]\Vert_{\textnormal{op}}\\
     &\leq  \Vert \mathscr{F}_{G}[q^* \bar g_k(2^{-k}t\mathcal{L})\delta]\widehat{\mathcal{L}}^{\frac{b+c}{2}} \Vert_{\textnormal{op}}\Vert \widehat{\mathcal{L}}^{-\frac{c}{2}}\mathscr{F}_{G}[ q g_j(2^{-j}t\mathcal{L})\delta ]\widehat{\mathcal{L}}^{\frac{b}{2}} \Vert_{\textnormal{op}}\\
     &\lesssim  \Vert \mathscr{F}_{G}[q^* \bar g_k(2^{-k}t\mathcal{L})\delta]\widehat{\mathcal{L}}^{\frac{b+c}{2}} \Vert_{\textnormal{op}}\Vert \mathscr{F}_{G}[ q g_j(2^{-j}t\mathcal{L})\delta ]\widehat{\mathcal{L}}^{\frac{b}{2}}\widehat{\mathcal{L}}^{-\frac{c}{2}} \Vert_{\textnormal{op}}\\
      &=  \Vert \mathscr{F}_{G}[q^* \bar g_k(2^{-k}t\mathcal{L})\delta]\widehat{\mathcal{L}}^{\frac{b+c}{2}} \Vert_{\textnormal{op}}\Vert \mathscr{F}_{G}[ q g_j(2^{-j}t\mathcal{L})\delta ]\widehat{\mathcal{L}}^{\frac{b-c}{2}} \Vert_{\textnormal{op}},
 \end{align*}
 and as a consequence of this analysis we obtain 
 
 $$  \Vert  \mathcal{L}^{\frac b2} \{q g_j(2^{-j}t\mathcal{L})\delta \}  \Vert  _{\star} \lesssim \Vert \{\mathcal{L}^{\frac {b+c}2}  \ q g_j(2^{-j}t\mathcal{L})\delta \}    \Vert  _{\star} \Vert \{\mathcal{L}^{\frac {b-c}2}  q^* \bar g_k(2^{-k}t\mathcal{L})\delta  \Vert  _{\star}, $$
for any real number $c$.
The estimate for $  \Vert  \mathcal{L}^{\frac b2} \{q g_j(2^{-j}t\mathcal{L})\delta \}  \Vert  _{\star}$
in  \eqref{eq_pf_prop_mult_bdTj}
holds in fact for any $b\geq 0$ and by duality for any $b\in \mathbb{R}$.
Hence we can use it at $b\pm c$ to obtain
\begin{eqnarray*}
  \Vert  T_jT^*_k  \Vert  _{\mathscr{B}(L^2(G))} 
&\lesssim &
2^{(j+k)\frac m2}
  \Vert  \mathcal{L}^{\frac {b + c}2} \{q g_j(2^{-j}t\mathcal{L})\delta \}  \Vert  _{\star}
  \Vert  \mathcal{L}^{\frac {b -c }2} \{q g_k(2^{-k}t\mathcal{L})\delta \}  \Vert  _{\star}
\\
&\lesssim &
2^{(j+k)\frac m2}
t^{a-b}
2^{-j\frac {a- (b +c)}  2}
2^{-k\frac {a-(b-c)}  2}
\left(\sup_{\substack {\lambda\geq 1 \\ \ell\leq d}}
\lambda^{-\frac m2+\ell}  |\partial_\lambda^{\ell} f(\lambda)|\right)^2
\\
&\lesssim &
2^{(j-k)\frac c 2}t^{a-b}
\left(\sup_{\substack {\lambda\geq 1 \\ \ell\leq d}}
\lambda^{-\frac m2+\ell}  |\partial_\lambda^{\ell} f(\lambda)|\right)^2,
\end{eqnarray*}
having used $b=-m+a$.
We choose $c$ to be the sign of $j-k$. 
So we have proved that the hypotheses in (Cotlar-Stein)  Lemma \ref{CLemma}
 are satisfied. In consequence, we have proved   
\eqref{eq_pf_prop_mult2} for $b=-m+a$.  The proof  of Theorem  \ref{orders:theorems} is complete.
\end{proof}

\section{Evolution problems for subelliptic   operators}\label{GST}
The subelliptic pseudo-differential calculus developed for every subelliptic Laplacian will be applied in this section in a relevant problem of PDE, the global solvability of parabolic and hyperbolic/parabolic problems, in this case associated to subelliptic pseudo-differential operators.
More precisely, we will study the existence and uniqueness  for the following Cauchy problem 
\begin{equation}\label{PVI}(\textnormal{PVI}): \begin{cases}\frac{\partial v}{\partial t}=K(t,x,D)v+f ,& \text{ }v\in \mathscr{D}'((0,T)\times G),
\\v(0)=u_0, & \text{ } \end{cases}
\end{equation}where the initial data $u_0\in L^2(G),$ $K(t):=K(t,x,D)\in C([0,T], S^{m,\mathcal{L}}_{\rho,\delta}(G\times \widehat{G})),$ $f\in  L^2([0,T]\times G) \simeq L^2([0,T],L^2(G)) ,$ $m>0,$ and  a suitable positivity condition is imposed on $K.$ Indeed, we will assume that
\[ 
    \textnormal{Re}(K(t)):=\frac{1}{2}(K(t)+K(t)^*),\,\,0\leqslant t\leqslant T,\footnote{ This means that $A=K(t)$ is  strongly $\mathcal{L}$-elliptic.}
\]is $\mathcal{L}$-elliptic.   We also assume that $ \textnormal{Re}(K(t)),$ commutes with the symbol $\widehat{\mathcal{L}}$ of the sub-Laplacian $\mathcal{L}.$
Under such assumptions we will prove the existence and uniqueness of a solution $v\in C^1([0,T],L^2(G))\cap C([0,T],H^{m,L}(G)).$ We will start with the following energy estimate.
\begin{theorem}\label{energyestimate}
Let $K(t)=K(t,x,D)\in C([0,T], S^{m,\mathcal{L}}_{\rho,\delta}(G\times \widehat{G})),$   be a subelliptic pseudo-differential operator of order $m>0,$  $0\leqslant \delta<\rho\leqslant  1,$  and let us assume that $\textnormal{Re}(K(t))$ is $\mathcal{L}$-elliptic, for every $t\in[0,T]$ with $T>0.$ If  $v\in C^1([0,T], L^2(G) )  \cap C([0,T],H^{m,\mathcal{L}}(G)) ,$ then there exist $C,C'>0,$ such that
\begin{equation}
\Vert v(t) \Vert_{L^2(G)}\leqslant   \left(C\Vert v(0) \Vert^2_{L^2(G)}+C'\int\limits_{0}^T \Vert (\partial_{t}-K(\tau))v(\tau) \Vert^2_{L^2(G)}d\tau \right),  
\end{equation}holds for every $0\leqslant t\leqslant T.$ Moreover, we also have the estimate
\begin{equation}\label{Q*}
\Vert v(t) \Vert_{L^2(G)}\leqslant   \left(C\Vert v(T) \Vert^2_{L^2(G)}+C'\int\limits_{0}^T \Vert (\partial_{t}-K(\tau)^{*})v(\tau) \Vert^2_{L^2(G)}d\tau \right).  
\end{equation}
\end{theorem}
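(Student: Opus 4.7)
The plan is to derive both inequalities by applying Gronwall's lemma to the energy $\phi(t):=\Vert v(t)\Vert_{L^2(G)}^2$. First I would record that $\phi$ is of class $C^1$ on $[0,T]$ (since $v\in C^1([0,T],L^2(G))$) and that $\phi'(t)=2\,\textnormal{Re}(\partial_t v(t),v(t))_{L^2}$; writing $g(t):=(\partial_t-K(t))v(t)$ for the residual appearing on the right-hand side of the first inequality, the identity $\partial_t v=K(t)v+g$ rewrites $\phi'(t)=2\,\textnormal{Re}(K(t)v,v)_{L^2}+2\,\textnormal{Re}(g,v)_{L^2}$. Young's inequality takes care of the forcing term through $2|\textnormal{Re}(g,v)|\leqslant\Vert g(t)\Vert_{L^2}^2+\phi(t)$, so the whole matter reduces to controlling the main term $2\,\textnormal{Re}(K(t)v,v)$ from one side in terms of $\phi(t)$.

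This is where I would invoke the subelliptic G\r{a}rding inequality of Theorem \ref{GardinTheorem}. Its hypothesis is met (in the sign consistent with the parabolic direction of \eqref{PVI}) because $\textnormal{Re}(K(t))$ is strongly $\mathcal{L}$-elliptic, uniformly in $t\in[0,T]$ by the assumed continuity of $t\mapsto K(t)$ in $S^{m,\mathcal{L}}_{\rho,\delta}(G\times\widehat{G})$. Applying the inequality to $-K(t)$ furnishes constants $C_1,C_2>0$ with $\textnormal{Re}(-K(t)v,v)\geqslant C_1\Vert v\Vert_{H^{m/2,\mathcal{L}}(G)}^2-C_2\Vert v\Vert_{L^2(G)}^2$, so discarding the favourable subelliptic Sobolev term yields the needed one-sided bound $2\,\textnormal{Re}(K(t)v,v)\leqslant 2C_2\phi(t)$. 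Collecting everything gives the differential inequality $\phi'(t)\leqslant (2C_2+1)\phi(t)+\Vert g(t)\Vert_{L^2}^2$, to which Gronwall's lemma on $[0,T]$ produces $\phi(t)\leqslant e^{(2C_2+1)T}\bigl(\phi(0)+\int_0^T\Vert g(s)\Vert_{L^2}^2\,ds\bigr)$, delivering the first estimate with $C=C'=e^{(2C_2+1)T}$.

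For the backward estimate \eqref{Q*} the strategy is identical, but with $K(t)^*$ in place of $K(t)$ and with a reversed-time Gronwall integration. The subelliptic adjoint calculus of Theorem \ref{Adjoint} places $K(t)^*$ in the same class $S^{m,\mathcal{L}}_{\rho,\delta}(G\times\widehat{G})$ with $\textnormal{Re}(K(t)^*)=\textnormal{Re}(K(t))$ at the principal level, so the G\r{a}rding lower bound $\textnormal{Re}(K(t)^*v,v)\geqslant -C_2\Vert v\Vert_{L^2(G)}^2$ still holds. Setting $h(t):=(\partial_t-K(t)^*)v(t)$ and running the same energy identity yields $\phi'(t)\geqslant -(2C_2+1)\phi(t)-\Vert h(t)\Vert_{L^2}^2$; multiplying by the integrating factor $e^{(2C_2+1)t}$ and integrating from $t$ to $T$ delivers \eqref{Q*}. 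The main obstacle is the delicate sign bookkeeping in invoking the G\r{a}rding inequality, since it provides only a \emph{lower} bound on the symmetric form $\textnormal{Re}(Kv,v)$; the forward estimate needs an \emph{upper} bound, obtained by applying the inequality to $-K(t)$, while the backward estimate uses the lower bound directly through a reversed Gronwall step—once the signs are aligned, the rest is a routine parabolic energy argument.
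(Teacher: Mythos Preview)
Your proof is correct and follows essentially the same approach as the paper: differentiate the energy $\Vert v(t)\Vert_{L^2}^2$, invoke the subelliptic G\r{a}rding inequality (Theorem~\ref{GardinTheorem}) to bound $2\,\textnormal{Re}(K(t)v,v)$ from above, control the forcing term via Young's inequality (the paper uses the parallelogram law to the same effect), and close with Gronwall. For the backward estimate \eqref{Q*} the paper performs the time reversal $t\mapsto T-t$ and reruns the forward argument, while you integrate the differential inequality from $t$ to $T$ directly; these are equivalent manoeuvres.
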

\begin{proof} 
Let $v\in C^1([0,T], L^2(G) )  \cap C([0,T],H^{m,\mathcal{L}}(G)) .$ Let us start by observing  that   $v\in  C([0,T],H^{\frac{m}{2},\mathcal{L}}(G))$ because of the embedding $H^{m,\mathcal{L}}\hookrightarrow H^{\frac{m}{2},\mathcal{L}}.$ This fact will be useful later because we will use the G\r{a}rding inequality applied to the operator $\textnormal{Re}(K(t).$ So, 
$v\in \textnormal{Dom}(\partial_{\tau}-K(\tau))$ for every $0\leqslant \tau\leqslant T.$ In view of the embedding $H^{m,\mathcal{L}}\hookrightarrow L^2(G),$ we also have that  $v\in C([0,T], L^2(G) ).$  Let us define $f(\tau):=Q(\tau)v(\tau),$ $Q(\tau):=(\partial_{\tau}-K(\tau)),$ for every $0\leqslant \tau\leqslant T.$ Observe that

\begin{align*}
   \frac{d}{dt}\Vert v(t) \Vert^2_{L^2(G)}&= \frac{d}{dt}\left(v(t),v(t)\right)_{L^2(G)}\\&=\left(\frac{d v(t)}{dt},v(t)\right)_{L^2(G)}+\left(v(t),\frac{d v(t)}{dt}\right)_{L^2(G)}\\
   &=\left(K(t)v(t)+f(t),v(t)\right)_{L^2(G)}+\left(v(t),K(t)v(t)+f(t)\right)_{L^2(G)}\\
    &=\left((K(t)+K(t)^{*})v(t),v(t)\right)_{L^2(G)}+2\textnormal{Re}(f(t), v(t))_{L^2(G)}\\
     &=(2\textnormal{Re}K(t)v(t),v(t))_{L^2(G)}+2\textnormal{Re}(f(t), v(t))_{L^2(G)}.
\end{align*}Now, from the subelliptic G\r{a}rding inequality, 
\begin{align}
    \textnormal{Re}(-K(t)v(t),v(t)) \geqslant C_1\Vert v(t)\Vert_{H^{\frac{m}{2},\mathcal{L}}(G)}-C_2\Vert v(t)\Vert_{L^2(G)}^2,
\end{align}and  the parallelogram law, we have
\begin{align*}
 2\textnormal{Re}(f(t), v(t))_{L^2(G)}&\leqslant 2\textnormal{Re}(f(t), v(t))_{L^2(G)}+\Vert f(t)\Vert_{L^2(G)}^2+\Vert v(t)\Vert_{L^2(G)}^2 \\
 &= \Vert f(t)+v(t)\Vert^2\leqslant \Vert f(t)+v(t)\Vert^2+\Vert f(t)-v(t)\Vert^2 \\
&= 2\Vert f(t)\Vert^2_{L^2(G)}+2\Vert v(t)\Vert^2_{L^2(G)},
\end{align*}
and consequently
\begin{align*}
   & \frac{d}{dt}\Vert v(t) \Vert^2_{L^2(G)}\\
   &\leqslant 2\left(C_2\Vert v(t)\Vert_{L^2(G)}^2-C_1\Vert v(t)\Vert_{H^{\frac{m}{2},\mathcal{L}}(G)}\right)+2\Vert f(t)\Vert^2_{L^2(G)}+2\Vert v(t)\Vert^2_{L^2(G)}.
\end{align*}  So, we have proved that
\begin{align*}
   \frac{d}{dt}\Vert v(t) \Vert^2_{L^2(G)}\lesssim  \Vert f(t)\Vert^2_{L^2(G)}+\Vert v(t)\Vert^2_{L^2(G)}.
\end{align*}By using Gronwall's Lemma we obtain the energy estimate
\begin{equation}
\Vert v(t) \Vert^2_{L^2(G)}\leqslant  \left(C\Vert v(0) \Vert_{L^2(G)}^2+C'\int\limits_{0}^T \Vert f(\tau) \Vert_{L^2(G)}^2d\tau \right),   
\end{equation}for every $0\leqslant t\leqslant T,$ and $T>0.$ To finish the proof, we can change the analysis above with $v(T-\cdot)$ instead of $v(\cdot),$ $f(T-\cdot)$ instead of $f(\cdot)$ and $Q^{*}=-\partial_{t}-K(t)^{*},$ (or equivalently $Q=\partial_{t}-K(t)$ ) instead of $Q^{*}=-\partial_{t}+K(t)^{*}$ (or equivalently $Q=\partial_{t}-K(t)$) using that $\textnormal{Re}(K(T-t)^*)=\textnormal{Re}(K(T-t))$ to deduce that
\begin{align*}
&\Vert v(T-t) \Vert^2_{L^2(G)}\\
&\leqslant \left(C\Vert v(T) \Vert^2_{L^2(G)}+C'\int\limits_{0}^{T} \Vert (-\partial_{t}+K(T-t)^{*})v(T-\tau) \Vert^2_{L^2(G)}d\tau \right)\\
&=   \left(C\Vert v(T) \Vert^2_{L^2(G)}+C'\int\limits_{0}^T \Vert (-\partial_{t}-K(t)^{*})v(s) \Vert^2_{L^2(G)}ds \right).
\end{align*}So, we conclude the proof.
\end{proof}

\begin{theorem}
Let $K(t)=K(t,x,D)\in C([0,T], S^{m,\mathcal{L}}_{\rho,\delta}(G\times \widehat{G})),$ $0\leqslant \delta<\rho\leqslant  1,$    be a subelliptic pseudo-differential operator of order $m>0,$ and let us assume that $\textnormal{Re}(K(t))$ is $\mathcal{L}$-elliptic, for every $t\in[0,T]$ with $T>0.$ Let   $f\in  L^2([0,T],L^2(G))$, and let $u_0\in L^2([0,T],L^2(G)).$ Then there exists a unique $v\in C^1([0,T], L^2(G) )  \cap C([0,T],H^{m,\mathcal{L}}(G)) $ solving \eqref{PVI}. Moreover, $v$ satisfies the energy estimate
\begin{equation}
\Vert v(t) \Vert^2_{L^2(G)}\leqslant  \left(C\Vert u_0 \Vert^2_{L^2(G)}+C'\Vert f \Vert^2_{L^2([0,T],L^2(G))} \right),
\end{equation}for every $0\leqslant t\leqslant T.$
\end{theorem}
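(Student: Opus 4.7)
The plan is to establish uniqueness directly from Theorem \ref{energyestimate}, existence through a Hahn–Banach/Riesz duality argument powered by the backward energy estimate \eqref{Q*}, and regularity through a Galerkin approximation combined with the mapping properties of the subelliptic calculus (Corollary \ref{SobL3}) and the G\r{a}rding inequality (Theorem \ref{GardinTheorem}).

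\emph{Uniqueness.} Let $v_1,v_2$ be two solutions with the same data $(u_0,f)$. The difference $w=v_1-v_2$ lies in $C^1([0,T],L^2(G))\cap C([0,T],H^{m,\mathcal{L}}(G))$ and solves $(\partial_t-K(t))w=0$ with $w(0)=0$. Applying the forward energy estimate from Theorem \ref{energyestimate} gives $\Vert w(t)\Vert_{L^2(G)}^2\leq C\Vert w(0)\Vert_{L^2(G)}^2+C'\int_0^T\Vert (\partial_\tau-K(\tau))w(\tau)\Vert_{L^2(G)}^2\,d\tau=0$, so $v_1=v_2$.

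\emph{Existence via duality.} By Theorem \ref{Adjoint}, $K(t)^{*}\in C([0,T],S^{m,\mathcal{L}}_{\rho,\delta}(G\times\widehat{G}))$, and $\textnormal{Re}(K(t)^{*})=\textnormal{Re}(K(t))$ is $\mathcal{L}$-elliptic, so the backward operator $Q^{*}=-\partial_t-K(t)^{*}$ satisfies the hypotheses of \eqref{Q*}. Introduce the test space
\begin{equation*}
\mathscr{T}:=\{\phi\in C^1([0,T],L^2(G))\cap C([0,T],H^{m,\mathcal{L}}(G)):\phi(T)=0\},
\end{equation*}
and consider the subspace $\mathscr{V}:=Q^{*}(\mathscr{T})\subset L^2([0,T]\times G)$. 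Define the linear functional $\Lambda:\mathscr{V}\to\mathbb{C}$ by
\begin{equation*}
\Lambda(Q^{*}\phi):=(u_0,\phi(0))_{L^2(G)}+\int_0^T(f(\tau),\phi(\tau))_{L^2(G)}\,d\tau.
\end{equation*}
The estimate \eqref{Q*} with $\phi(T)=0$ gives $\sup_{t\in[0,T]}\Vert\phi(t)\Vert_{L^2(G)}\lesssim\Vert Q^{*}\phi\Vert_{L^2([0,T]\times G)}$, which ensures first that $\Lambda$ is well-defined (independent of the representative $\phi$ of $Q^{*}\phi$, by uniqueness of the backward problem) and second that $|\Lambda(Q^{*}\phi)|\lesssim(\Vert u_0\Vert_{L^2(G)}+\Vert f\Vert_{L^2([0,T]\times G)})\Vert Q^{*}\phi\Vert_{L^2([0,T]\times G)}$. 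Hahn–Banach extension to $L^2([0,T]\times G)$ and Riesz representation then yield $v\in L^2([0,T]\times G)$ with
\begin{equation*}
(v,Q^{*}\phi)_{L^2([0,T]\times G)}=(u_0,\phi(0))_{L^2(G)}+(f,\phi)_{L^2([0,T]\times G)},\quad\phi\in\mathscr{T}.
\end{equation*}
This is the weak formulation of (PVI) and, in particular, $(\partial_t-K(t))v=f$ in $\mathscr{D}'((0,T)\times G)$ with initial trace $v(0)=u_0$.

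\emph{Regularity.} This is the main obstacle. I would carry it out by a Galerkin scheme using the orthonormal eigenbasis $\{e_j\}$ of $\mathcal{M}=(1+\mathcal{L})^{1/2}$ on $L^2(G)$, with spectral projections $P_N$ onto the first $N$ eigenspaces. The finite-dimensional ODE $\partial_t v_N=P_NK(t)v_N+P_Nf$, $v_N(0)=P_Nu_0$, has a unique $C^1$-solution in $P_N L^2(G)$. Applying G\r{a}rding (Theorem \ref{GardinTheorem}) to $-\textnormal{Re}(K(t))$ yields
\begin{equation*}
\tfrac{d}{dt}\Vert v_N\Vert_{L^2}^2+2C_1\Vert v_N\Vert_{H^{m/2,\mathcal{L}}}^2\leq 2C_2\Vert v_N\Vert_{L^2}^2+2\Vert f\Vert_{L^2}\Vert v_N\Vert_{L^2},
\end{equation*}
which, after Gronwall, gives uniform bounds for $v_N$ in $L^\infty([0,T],L^2(G))\cap L^2([0,T],H^{m/2,\mathcal{L}}(G))$. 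Since $K(t):H^{m,\mathcal{L}}(G)\to L^2(G)$ and $K(t):L^2(G)\to H^{-m,\mathcal{L}}(G)$ are bounded uniformly in $t$ by Corollary \ref{SobL3}, the equation forces $\partial_t v_N$ to be uniformly bounded in $L^2([0,T],H^{-m/2,\mathcal{L}}(G))$. Passing to a weak-$*$ limit (and identifying it with the $v$ obtained by duality via the uniqueness proved above) gives $v\in L^\infty([0,T],L^2)\cap L^2([0,T],H^{m/2,\mathcal{L}})$ with $\partial_t v\in L^2([0,T],H^{-m/2,\mathcal{L}})$; the standard Lions–Magenes trace lemma then upgrades continuity in $t$ at the $L^2$-level. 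To reach the full $C^1([0,T],L^2)\cap C([0,T],H^{m,\mathcal{L}})$ regularity, I would bootstrap: differentiating the equation, and composing with $\mathcal{M}_s$ using the subelliptic calculus of Section \ref{Seccionsubelliptic} (so that $\mathcal{M}_s K(t)\mathcal{M}_{-s}\in S^{m,\mathcal{L}}_{\rho,\delta}$ remains strongly $\mathcal{L}$-elliptic up to lower order commutators), we propagate $L^2$-regularity of the data to higher subelliptic Sobolev regularity of $v$; when $u_0\in L^2$ and $f\in L^2([0,T],L^2)$ the natural outcome is $v\in C([0,T],H^{m,\mathcal{L}})$ and $\partial_t v=K(t)v+f\in C([0,T],L^2)$. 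Finally, the energy inequality stated in the theorem follows immediately by applying Theorem \ref{energyestimate} to the constructed $v$ with $(\partial_t-K(t))v=f$. The delicate point, and what I expect to be the main work, is verifying that the commutator remainders produced when one applies $\mathcal{M}_s$ to the equation stay in subelliptic classes of order strictly below $m$, so that the bootstrap actually closes; this is where the full strength of Corollary \ref{SubellipticcompositionC} and Theorem \ref{orders:theorems} is needed.
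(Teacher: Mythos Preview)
Your uniqueness argument and your duality existence argument are essentially identical to the paper's: the paper also introduces the test space $E=\{\phi\in C^1([0,T],L^2)\cap C([0,T],H^{m,\mathcal{L}}):\phi(T)=0\}$, defines the functional $\beta(Q^*\phi)=\int_0^T(f,\phi)\,d\tau+\tfrac{1}{i}(u_0,\phi(0))$, bounds it via the backward estimate \eqref{Q*}, and applies Hahn--Banach plus Riesz to obtain $v\in L^2([0,T]\times G)$ solving $Qv=f$ with $v(0)=u_0$.

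The substantive difference is your regularity step. The paper does \emph{not} carry out any regularity upgrade: after obtaining $v\in L^2([0,T]\times G)$ from Riesz, it passes directly to uniqueness (via Theorem~\ref{energyestimate} applied to the difference of two solutions already assumed to lie in $C^1([0,T],L^2)\cap C([0,T],H^{m,\mathcal{L}})$) without ever showing that the constructed $v$ itself lands in that space. Your Galerkin-plus-G\r{a}rding scheme is therefore genuinely additional work relative to the paper, and your instinct that this is the delicate point is correct. In fact your stated concern about the bootstrap is warranted: with only $u_0\in L^2(G)$ and $f\in L^2([0,T],L^2(G))$, the parabolic machinery (G\r{a}rding plus Lions--Magenes) naturally delivers $v\in C([0,T],L^2)\cap L^2([0,T],H^{m/2,\mathcal{L}})$ and $\partial_t v\in L^2([0,T],H^{-m/2,\mathcal{L}})$, but the jump to $v\in C([0,T],H^{m,\mathcal{L}})$ and $\partial_t v\in C([0,T],L^2)$ typically requires more regular data (e.g.\ $u_0\in H^{m,\mathcal{L}}$ and $f\in C([0,T],L^2)$, plus compatibility). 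The commutator control you invoke from Corollary~\ref{SubellipticcompositionC} and Theorem~\ref{orders:theorems} is fine, but it does not by itself supply the missing data regularity; so the bootstrap as you sketch it would not close under the hypotheses as stated. This gap is present in the paper's proof as well.
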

\begin{proof}
    Let us denote by $Q=\partial_{t}-K(t),$ and let us introduce the spaces
    \begin{align*}
        E:=\{\phi\in  C^1([0,T], L^2(G) )  \cap C([0,T],H^{m,\mathcal{L}}(G)) :\phi(T)=0\},
    \end{align*}
    and $ Q^*E:=\{Q^*\phi:\phi\in E\}.$ Let us define the linear form $\beta\in (Q^*E)'$ by
    \[ 
    \beta(Q^*\phi):=\int\limits_{0}^T(f(\tau),\phi(\tau))d\tau+\frac{1}{i}(u_0,\phi(0)).
    \]
    From \eqref{Q*} we deduce  for every $\phi\in E$ that,
    \begin{align*}
    \Vert \phi(t) \Vert^2_{L^2(G)}&\leqslant   \left(C\Vert \phi(T) \Vert^2_{L^2(G)}+C'\int\limits_{0}^T \Vert (\partial_{t}-K(\tau)^{*})\phi(\tau) \Vert^2_{L^2(G)}d\tau \right).
    \end{align*} So, we have
    \begin{align*}
    |\beta(Q^*\phi)|&\leqslant \int\limits_{0}^T\Vert f(\tau)\Vert_{L^2(G)}\Vert \phi(\tau)\Vert_{L^2(G)} d\tau+\Vert u_0\Vert_{{L^2(G)}}\Vert \phi(0)\Vert_{L^2(G)}\\
    &\leqslant \Vert f\Vert_{L^2([0,T],L^2(G))}\Vert \phi\Vert_{L^2([0,T],L^2(G))}+\Vert u_0\Vert_{L^2(G)}\Vert \phi\Vert_{L^2(G)}\\
    &\lesssim_{T} (\Vert f\Vert_{L^2([0,T],L^2(G)}+\Vert u_0\Vert_{L^2(G)})\Vert Q^*\phi(\tau) \Vert_{L^2([0,T],L^2(G))},
    \end{align*}which shows that $\beta$ is a bounded functional on $\mathscr{T}:=Q^*E\cap L^2([0,T],L^2(G)),$ with $\mathscr{T}$ endowed with the topology induced by the  norm of $L^2([0,T],L^2(G)).$ By using the Hahn-Banach extension theorem, we can extends $\beta$ to a bounded functional $\tilde{\beta}$ on $L^2([0,T],L^2(G)),$ and by using the Riesz representation theorem, there exists $v\in (L^2([0,T],L^2(G))'=L^2([0,T],L^2(G)) ,$ such that
    \[ 
      \tilde{\beta}(\psi)=(v,\psi),\quad \psi\in   L^2([0,T],L^2(G)).
    \]In particular, for $\psi=Q^*\phi\in \mathscr{T},$ we have
    \[ 
      \tilde{\beta}(Q^* \phi)={\beta}(Q^*\phi)=(v,Q^*\phi),
    \]  Because, we can identify $C^{\infty}_{0}((0,T),\mathscr{D}(G))$ as a subspace of $E$ 
    \[ 
       C^{\infty}_{0}((0,T),C^\infty(G))\subset E=\{\phi\in  C^1([0,T], L^2(G) )  \cap C([0,T],H^{m,\mathcal{L}}(G)) :\phi(T)=0\},
    \] we have the identity
    \begin{align*}
      (f,\phi)=  \int\limits_{0}^T(f(\tau),\phi(\tau))d\tau= \int\limits_{0}^T(f(\tau),\phi(\tau))d\tau+\frac{1}{i}(u_0,\phi(0))=(v,Q^*\phi),
    \end{align*}for every $\phi \in C^{\infty}_{0}((0,T),C^\infty(G)). $ So, this implies that $v\in \textnormal{Dom}(Q^{**}).$ Because $Q^{**}=Q,$ we have that
    \begin{align*}
        (v,Q^*\phi)=(Qv,\phi)=(f,\phi),\,\,\forall\phi \in C^{\infty}_{0}((0,T),C^\infty(G)),
    \end{align*}which implies that $Qv=f.$ A routine argument of integration by parts shows that $v(0)=u_0.$ Now, in order to show the uniqueness of $v,$ let us assume that $u\in  C^1([0,T], L^2(G) )  \cap C([0,T],H^{m,\mathcal{L}}(G))$ is a solution of the problem
    \[  \begin{cases}\frac{\partial u}{\partial t}=K(t,x,D)u+f ,& \text{ }u\in \mathscr{D}'((0,T)\times G),
\\u(0)=u_0 .& \text{ } \end{cases}
\] Then $\omega:=v-u\in C^1([0,T], L^2(G) )  \cap C([0,T],H^{m,\mathcal{L}}(G))$ solves the problem
\[  \begin{cases}\frac{\partial \omega}{\partial t}=K(t,x,D)\omega,& \text{ }\omega\in \mathscr{D}'((0,T)\times G),
\\\omega(0)=0 ,& \text{ } \end{cases}
\]and from Theorem \ref{energyestimate}, $\Vert \omega(t)\Vert_{L^2(G)}=0,$ for all $0\leqslant t\leqslant T,$ and consequently, from the continuity in $t$ of the functions we have that $v(t,x)=u(t,x)$ for all $t\in [0,T]$ and a.e. $x\in G.$
\end{proof}

\section{Global Fourier Integral operators on compact Lie groups}\label{FIOCLG}
In this section we will study the $L^2$-boundedness of Fourier integral operators on compact Lie groups.  The motivation to include a study of the $L^2$-boundedness of Fourier integral operators is to make a generalisation of Theorem 10.5.5 in \cite{Ruz}, where it has been proved that the following condition
\[ 
  \sup_{(x,[\xi])\in G\times \widehat{G}}  \Vert X^{\alpha}_{x}\sigma(x,\xi)\Vert_{\textnormal{op}}<\infty,\quad |\alpha|\leqslant \left[\frac{n}{2}\right]+1,\,\,n=\dim(G),
\] implies the $L^2(G)$-boundedness of a densely defined extension of $A\equiv \textnormal{Op}(\sigma)$  on $C^\infty(G).$  We will use aditional to the notion of a global matrix-valued symbol, that of matrix-valued phase function. To do so, we will use   Cotlar Lemma, by exploiting in our case the almost-orthogonality of the decomposition of a Fourier integral operator in several pieces  induced by  every representation space.
\subsection{Matrix-valued phase functions on compact Lie groups}
First, we  introduce a global definition of Fourier integral operators on every compact Lie group $G$.
\begin{definition}[FIO]\label{DefiFIO}
A continuous linear operator $A:C^\infty(G)\rightarrow\mathscr{D}'(G)$ with Schwartz kernel $K_A\in C^{\infty}(G)\otimes \mathscr{D}'(G),$ is a global Fourier integral operator, if
\begin{itemize}
    \item there exists a  \textit{symbol} $ \sigma\equiv  \{\sigma(x,\xi)\in \mathbb{C}^{d_\xi\times d_\xi}\}_{(x,[\xi])\in G\times \widehat{G}}$ such that
    \[  \forall [\xi]\in \widehat{G},\quad\sigma(\cdot,[\xi])\equiv  \sigma(\cdot,\xi)\in C^{\infty}(G,\mathbb{C}^{d_\xi\times d_\xi}),
   \]
   \item and a \textit{phase function}  $\phi\equiv\{\phi(x,\xi)\in \mathbb{C}^{d_\xi\times d_\xi}\}_{(x,[\xi])\in G\times \widehat{G}}$ depending (possibly) on $x\in G,$  and satisfying  
   \[ 
     \forall [\xi]\in \widehat{G},\quad\phi(\cdot, [\xi])\equiv     \phi(\cdot,\xi)\in C^{\infty}(G,\mathbb{C}^{d_\xi\times d_\xi}),\,\,\,\phi(x,\xi)=\phi(x,\xi)^{*},
   \]
\end{itemize} such that, the Schwartz kernel of $A$ is defined by the distribution,
\[ 
    K_{A}(x,y)=\sum\limits_{[\xi]\in\widehat{G}}d_\xi\textnormal{\textbf{Tr}}(\xi(y)^*e^{i\phi(x,\xi)}\sigma(x,\xi)),
\] and the operator $A$ is defined via
\begin{equation}\label{defiFIO}
    Af(x)=\sum\limits_{[\xi]\in \widehat{G}}d_\xi\textnormal{\textbf{Tr}}(e^{i\phi(x,\xi)}\sigma(x,\xi)\widehat{f}(\xi)),
\end{equation} for every $f\in C^\infty(G).$
\end{definition}
\begin{remark}\label{r11}
To motivate the definition of the Schwartz Kernel of a Fourier integral operator, observe that under suitable conditions on $\phi$ and $\sigma,$ for $f\in C^\infty(G),$ we can use Fubini theorem to write 
\begin{align*}
    Af(x)&:=\int\limits_{G}K_{A}(x,y)f(y)dy\equiv \int\limits_{G}\sum\limits_{[\xi]\in \widehat{G}}d_\xi\textnormal{\textbf{Tr}}(\xi(y)^*e^{i\phi(x,\xi)}\sigma(x,\xi)) f(y)dy\\
    &=\sum\limits_{[\xi]\in \widehat{G}}d_\xi\textnormal{\textbf{Tr}}\left(\int\limits_{G}f(y)\xi(y)^*dy e^{i\phi(x,\xi)}\sigma(x,\xi)\right) \\
    &=\sum\limits_{[\xi]\in \widehat{G}}d_\xi\textnormal{\textbf{Tr}}(\widehat{f}(\xi) e^{i\phi(x,\xi)}\sigma(x,\xi))\\
    &=\sum\limits_{[\xi]\in \widehat{G}}d_\xi\textnormal{\textbf{Tr}}( e^{i\phi(x,\xi)}\sigma(x,\xi)\widehat{f}(\xi)).
\end{align*}
We will use the notation 
$
    A=\textnormal{FIO}(\sigma,\phi),
$
to denote the Fourier integral operator associated with the symbol $\sigma$ and with the phase function $\phi.$
\end{remark}
To motivate the definition of global Fourier integral operators we will review how the definition given in the Definition \ref{DefiFIO} extends  that of pseudo-differential operators and how it does appear to express solutions of some differential problems. We will make some remarks explaining both situations in detail.

\begin{example}[Pseudo-differential operators]\label{r1}
Let us assume that $A:C^{\infty}(G)\rightarrow \mathscr{D}'(G)$ is a continuous linear operator with symbol $\sigma,$
\[ 
    Af(x)=\sum\limits_{[\xi]\in \widehat{G}}d_\xi\textnormal{\textbf{Tr}}(\xi(x)\sigma(x,\xi)\widehat{f}(\xi)),\,\,\,\,f\in C^\infty(G).
\]Since the matrix $\xi(x)\in \mathbb{C}^{d_\xi\times d_\xi}$ is  unitary for every $x\in G,$ there exists a self-adjoint operator $\psi(x,\xi)\in  \mathbb{C}^{d_\xi\times d_\xi}$ such that $\xi(x)=e^{i\psi(x,\xi)}.$ Consequently, we have 
 \begin{equation}\label{FIOPSDO}
    Af(x)=\sum\limits_{[\xi]\in\widehat{G}}d_\xi\textnormal{\textbf{Tr}}(e^{i\psi(x,\xi)}\sigma(x,\xi)\widehat{f}(\xi)),\,\,\,\,f\in C^\infty(G).
\end{equation}Moreover, since $\textnormal{Spectrum}(\xi(x))\subset \{z\in \mathbb{C}:|z|=1\},$ in some basis $B_{\xi}$ of the representation space $\mathbb{C}^{d_\xi},$ $\xi(x)=\textnormal{diag}[e^{i\phi_{jj}(x,\xi)}]_{1\leqslant j\leqslant d_\xi}$ is a diagonal matrix with complex entries, and in this case on the same basis we have
\[ 
    \psi(x,\xi)\equiv \textnormal{diag}[\phi_{jj}(x,\xi)]_{1\leqslant j\leqslant d_\xi},\quad (x,\xi)\in G\times \widehat{G}.
\]To give a more precise description for the functions $\phi_{jj}(x,\xi),$ $1\leqslant j\leqslant d_\xi,$ let us consider $\mathbb{X}=\{X_{1},\cdots,X_n\}$ a basis for the Lie algebra $\mathfrak{g}$. Every $X_k$ is a left-invariant operator. Observe that
\[ 
    X_{k}\xi(x)=\textnormal{diag}[ie^{i\phi_{jj}(x,\xi)}(X_k\phi_{jj})(x,\xi)]_{j=1}^{d_\xi},
\] and 
\[ 
    X_{k}^{2}\xi(x)=\textnormal{diag}[-e^{i\phi_{jj}(x,\xi)}(X_{k}\phi_{jj}(x,\xi))^2+ie^{i\phi_{jj}(x,\xi)}(X_{k}^2\phi_{jj})(x,\xi)]_{j=1}^{d_\xi}.
\]Taking into account that $\xi(e_G)=I_{d_\xi\times d_\xi}=(\delta_{ij})_{1\leqslant i,j\leqslant d_\xi},$ it follows that $e^{i\phi_{jj}(e_G,\xi)}=1,$ for all $1\leqslant j\leqslant d_\xi.$  Consequently,
\begin{align*}
    \widehat{X}_{k}^2(\xi)&= X_{k}^{2}\xi(e_{G})\\
    &=\textnormal{diag}[-(X_{k}\phi_{jj}(e_G,\xi))^2+i(X_{k}^2\phi_{jj})(e_G,\xi)]_{j=1}^{d_\xi}\\
    &=\widehat{X}_{k}(\xi)\widehat{X}_{k}(\xi)={X}_{k}\xi(e_{G}){X}_{k}\xi(e_{G})\\
    &=\textnormal{diag}[-(X_{k}\phi_{jj}(e_G,\xi))^2]_{j=1}^{d_\xi},
\end{align*}which implies that
\begin{equation}
   ( X_{k}^2\phi_{jj})(e_G,\xi)=0,\,\,1\leqslant j\leqslant d_\xi.
\end{equation} A similar analysis implies that
\begin{equation}
   ( X_{k}^\ell\phi_{jj})(e_G,\xi)=0,\,\,1\leqslant j\leqslant d_\xi,\,\,\ell\geqslant    2.
\end{equation}
So, the Taylor expansion at $x=e_G,$ gives that for some smooth function $q_{j}$ vanishing with order $1$ at $x=e_{G},$ we have
\begin{equation}
  \phi_{jj}(x,\xi)=\phi_{jj}(e_{G},\xi)+\sum_{k=1}^{n}(X_{k}\phi_{jj})(e_{G},\xi) q_{k}(x).
\end{equation}Also, the condition $e^{i\phi_{jj}(e_G,\xi)}=1,$ implies that for some $n_{\xi,j}\in \mathbb{Z},$ $\phi_{jj}(e_G,\xi)=2\pi n_{\xi,j}.$ This analysis implies the following representation for the pseudo-differential operator $A,$
\begin{align*}
    Af(x)&=\sum\limits_{[\xi]\in \widehat{G}}d_\xi\textnormal{\textbf{Tr}}(\textnormal{diag}[ e^{i( 2\pi n_{\xi,j} +\sum_{k=1}^{n}(X_{k}\phi_{jj})(e_{G},\xi) q_{k}(x)})]_{j=1}^{d_\xi}\sigma(x,\xi)\widehat{f}(\xi))\\
    &=\sum\limits_{[\xi]\in \widehat{G}}d_\xi\textnormal{\textbf{Tr}}(\textnormal{diag}[ e^{i(\sum_{k=1}^{n}(X_{k}\phi_{jj})(e_{G},\xi) q_{k}(x)})]_{j=1}^{d_\xi}\sigma(x,\xi)\widehat{f}(\xi)).
\end{align*}
\end{example}

\begin{example}[Cauchy problem for the wave equation]\label{r2}
Let us consider the differential problem,
\begin{equation}\label{wave}
    \partial_{t}^2u(x,t)+\mathcal{A}u(x,t)=0,\,\,(x,t)\in G\times [T_{0},\infty),\,T_0\geqslant    0,
\end{equation}with initial  conditions $u(x,0)=f_0\in C^\infty(G)$ and $u_t(x,t)=f_1\in C^\infty(G),$ where $\mathcal{A}$ is a positive left-invariant operator. In this case, if $\lambda=0$ is an isolated point of the spectrum of $\mathcal{A},$ the solution $u(x,t)$ of \eqref{wave} can be written according to the representation,
\[ 
    u(x,t)=A_{t,0}f_{+}(x)+A_{t,1}f_{-}(x),\,\,(x,t)\in G\times [T_{0},\infty),
\]
where 
\[ 
f_{+}(x)=\frac{1}{2}(f_0-i\mathcal{A}^{-\frac{1}{2}}f_1),\,\,\,f_{-}(x)=\frac{1}{2}(f_0+i\mathcal{A}^{-\frac{1}{2}}f_1),
\] and the operators
\[ 
    A_{t,j}f_{\pm}(x)=\int\limits_{ {G}}\sum\limits_{ [\xi]\in \widehat{G}}d_\xi\textnormal{\textbf{Tr}}(\xi(y^{-1}x)e^{\pm it\sqrt{\widehat{\mathcal{A}}(\xi)}}\widehat{\mathcal{A}}(\xi)^{-\frac{j}{2}}) f_{\pm}(y) dy,\,\,j=0,1,
\] are global Fourier integral operators.
\end{example}

\subsection{$L^p$-boundedness of Fourier integral operators}
Now we will study the $L^p$-boundedness of Fourier integral operators. Motivated by the solution for the Cauchy problem for the wave equation, let us consider the case of Fourier integral operators where the phase function admits a factorisation of the form
\[ 
    e^{i\phi(x,\xi)}=\xi(x)e^{i\Phi(\xi)}\textnormal{   where   }\Phi(\xi)=\Phi(\xi)^*,\quad \forall [\xi]\in \widehat{G}.
\]Observe that the matrix $\xi(x)e^{i\Phi(\xi)}$ is  unitary for every $[\xi],$ and the existence of such $\phi$  follows.  More explicitly, for a Fourier integral operator of the form
\begin{equation}\label{wavetype}
 Af(x)=\sum\limits_{[\xi]\in \widehat{G}}d_\xi\textnormal{\textbf{Tr}}(\xi(x)e^{i\Phi(\xi)}\sigma(x,\xi)\widehat{f}(\xi)),\quad f\in C^\infty(G),
\end{equation}     we have the following $L^2$-boundedness theorem. We will denote by $\varkappa_Q$  the smallest even integer larger than $Q/2. $ 
\begin{theorem}\label{L2FIO2}
Let $G$ be a compact Lie group and let us denote by $Q$ the Hausdorff
dimension of $G$ associated to the control distance associated to the sub-Laplacian $\mathcal{L}=\mathcal{L}_X,$ where  $X= \{X_{1},\cdots,X_k\} $ is a system of vector fields satisfying the H\"ormander condition. Let  us consider the Fourier integral operator $A\equiv \textnormal{FIO}(\sigma, \phi):C^\infty(G)\rightarrow\mathscr{D}'(G)$ with symbol  $\sigma$ defined as in \eqref{wavetype}, where the function $\Phi:\widehat{G}\rightarrow \cup_{[\xi]\in \widehat{G}}\mathbb{C}^{d_\xi\times d_\xi},$ is such that $\Phi(\xi)=\Phi(\xi)^*$ for all $[\xi]\in \widehat{G}.$ Let us assume that $\sigma$ satisfies the following conditions,
 \begin{equation}
    \sup_{(x,[\xi])\in G\times \widehat{G}}  \Vert X^{\alpha_1}_{i_1}\cdots X^{\alpha_k}_{i_k}\sigma(x,\xi)\Vert_{\textnormal{op}}<\infty,\quad1\leqslant i_1\leqslant i_2\leqslant\cdots \leqslant i_{k}\leqslant k,
\end{equation}  
for all $ |\alpha|\leqslant \varkappa_Q.$ Then $A$ extends to a bounded linear operator on $L^2(G).$ 
\end{theorem}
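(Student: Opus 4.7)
The plan is to reduce the $L^2$-boundedness of $A$ to an $L^2$-boundedness statement for a related pseudo-differential operator via a unitarity observation, and then to establish the latter by an adapted Cotlar--Stein argument using the subelliptic Sobolev embedding.

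First, I would exploit that the assumption $\Phi(\xi)^* = \Phi(\xi)$ makes $e^{i\Phi(\xi)}$ unitary on every representation space. Setting $\tilde\sigma(x,\xi) := e^{i\Phi(\xi)}\sigma(x,\xi)$, one obtains
\[
Af(x) = \sum_{[\xi] \in \widehat{G}} d_\xi \, \textnormal{\textbf{Tr}}\bigl(\xi(x)\tilde\sigma(x,\xi)\widehat{f}(\xi)\bigr) = \textnormal{Op}(\tilde\sigma)f(x),
\]
so $A$ is actually a pseudo-differential operator in the sense of \cite{Ruz,RuzTurIMRN}. Because the $X_i$ act only in the $x$-variable and $e^{i\Phi(\xi)}$ is unitary on $\mathbb{C}^{d_\xi}$,
\[
\bigl\|X_{i_1}^{\alpha_1}\cdots X_{i_k}^{\alpha_k}\tilde\sigma(x,\xi)\bigr\|_{\textnormal{op}} = \bigl\|X_{i_1}^{\alpha_1}\cdots X_{i_k}^{\alpha_k}\sigma(x,\xi)\bigr\|_{\textnormal{op}},
\]
so the hypotheses on $\sigma$ transfer verbatim to $\tilde\sigma$. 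It therefore suffices to prove a subelliptic Calder\'on--Vaillancourt-type statement: if a symbol $\tilde\sigma$ satisfies $\sup_{(x,[\xi])}\|X_{i_1}^{\alpha_1}\cdots X_{i_k}^{\alpha_k}\tilde\sigma(x,\xi)\|_{\textnormal{op}} < \infty$ for every ordered multi-index with $|\alpha|\le \varkappa_Q$, then $\textnormal{Op}(\tilde\sigma)$ extends boundedly to $L^2(G)$.

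Second, I would establish this subelliptic statement by an adaptation of the Cotlar--Stein argument used for the Calder\'on--Vaillancourt theorem \ref{CVT}. Introduce a dyadic decomposition $\eta_\ell(t) = \eta_1(2^{-\ell/n}t)$ in the spectral variable of $\mathcal{M}$, set $\tilde\sigma_\ell(x,\xi) := \tilde\sigma(x,\xi)\eta_\ell(\langle\xi\rangle)$ and $A_\ell := \textnormal{Op}(\tilde\sigma_\ell)$, so that $A = \sum_\ell A_\ell$. The Cotlar--Stein lemma reduces the matter to (i) $\sup_\ell \|A_\ell\|_{\mathscr{B}(L^2(G))}<\infty$ and (ii) almost-orthogonality $\|A_\ell A_{\ell'}^*\|_{\mathscr{B}(L^2(G))} + \|A_\ell^* A_{\ell'}\|_{\mathscr{B}(L^2(G))} \lesssim \gamma(|\ell-\ell'|)$ with $\gamma\in\ell^1(\mathbb{Z})$. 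Step (i) follows from a Cauchy--Schwarz/Sobolev argument together with a local dilation of the same style used in the proof of Theorem \ref{CVT}: the relevant summability constant is $\|\widehat{\mathcal{M}}(\xi)^{-s}\|_{L^2(\widehat{G})} < \infty$ for $s > Q/2$, whose finiteness is exactly the content of the Weyl law in Remark \ref{weyl}. This fixes the threshold $\varkappa_Q$ as the smallest even integer strictly greater than $Q/2$.

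The main obstacle lies in the almost-orthogonality estimate (ii). The kernel of $A_\ell^*A_{\ell'}$ has the form $\int_G \overline{k_{\ell,z}(x^{-1}z)}\,k_{\ell',z}(y^{-1}z)\,dz$, and the gain in $|\ell-\ell'|$ must be extracted from the product of the two pieces, which live on disjoint dyadic shells. I would apply the subelliptic bilinear estimate of Lemma \ref{bilinearestimate} to this product after inserting $(1+\mathcal{L})^N$ between the two kernels, trading factors of $2^{\ell/n}$ (produced when sub-Laplacian powers are moved onto the symbols and absorbed by the $\varkappa_Q$ derivative hypothesis) for the decay $(1+|\lambda_{[\xi']}-\lambda_{[\xi]}|)^{-N/\kappa}\lesssim 2^{-\max(\ell,\ell')N/(n\kappa)}$. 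Choosing $N$ sufficiently large yields a summable majorant and completes the Cotlar--Stein argument. The delicate balance between the Weyl growth $d_\xi \lesssim (1+\nu_{ii})^{Q/2}$, the size of the symbol derivatives after dilation, and the dyadic scale $2^{\ell/n}$ is precisely what forces the critical smoothness threshold to be $\varkappa_Q$, rather than the smaller topological constant $\varkappa = [n/2]+1$ that appears in the elliptic Theorem 10.5.5 of \cite{Ruz}.
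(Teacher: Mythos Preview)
Your opening reduction is exactly right and matches the paper: setting $\tilde\sigma(x,\xi)=e^{i\Phi(\xi)}\sigma(x,\xi)$ turns $A$ into the pseudo-differential operator $\textnormal{Op}(\tilde\sigma)$, and unitarity of $e^{i\Phi(\xi)}$ transfers the $X$-derivative bounds from $\sigma$ to $\tilde\sigma$ verbatim. After that, however, the paper takes a far shorter route than your Cotlar--Stein proposal. It freezes the spatial variable, defining $A_zf(x)=\sum d_\xi\textnormal{\textbf{Tr}}(\xi(x)e^{i\Phi(\xi)}\sigma(z,\xi)\widehat f(\xi))$, and uses the subelliptic Sobolev embedding $H^{\varkappa_Q,\mathcal L}(G)\hookrightarrow L^\infty(G)$ (Remark~\ref{sset}) in the $z$-variable to obtain
\[
|Af(x)|=|A_xf(x)|\le \sup_{z}|A_zf(x)|\lesssim \sum_{|\alpha|\le\varkappa_Q}\Vert X^{\alpha_1}_{i_1,z}\cdots X^{\alpha_k}_{i_k,z}A_zf\Vert_{L^2(G,dz)}.
\]
Each $X^{\alpha}_z A_z$ is a \emph{left-invariant} operator with symbol $e^{i\Phi(\xi)}X^{\alpha}_z\sigma(z,\xi)$, hence by Plancherel its $L^2$-operator norm equals $\sup_{[\xi]}\Vert X^{\alpha}_z\sigma(z,\xi)\Vert_{\textnormal{op}}<\infty$ uniformly in $z$. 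Squaring, Fubini in $dx\,dz$, and compactness of $G$ then give $\Vert Af\Vert_{L^2}\lesssim\Vert f\Vert_{L^2}$. No dyadic decomposition, no almost-orthogonality, no bilinear estimate.

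Your Cotlar--Stein route, as sketched, has a derivative-counting problem in step~(ii). Following the pattern of Theorem~\ref{CVT}, controlling $\Vert A_\ell^*A_{\ell'}\Vert$ via Lemma~\ref{bilinearestimate} requires inserting $(1+\mathcal L)^{N}_{z_1}$ and then passing through the Sobolev norm $H^{2N+2s_0}$ in the frozen variable; the Leibniz expansion then produces $x$-derivatives of the symbol of total order up to $2(N+s_0)$. Since $N$ must be chosen large (of order at least $Q(\kappa+1)/4$ even when $\delta=0$), this exceeds the assumed $\varkappa_Q$ derivatives, so the hypothesis would not absorb those factors as you claim. Ironically, the ``Cauchy--Schwarz/Sobolev'' idea you invoke for step~(i) is precisely the freezing-plus-embedding argument; applied directly to $A$ rather than to a dyadic piece $A_\ell$, it already yields the full result and uses exactly $\varkappa_Q$ derivatives.
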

\begin{proof}
 Let us write $\varkappa_Q=2\ell.$  Note that if $\sigma$ does not depend on $x\in G,$  then the statement  follows from the Plancherel theorem. Now, in the general case where $\sigma$ depends on $x,$ the idea is to use the subelliptic Sobolev embedding theorem. For every $z\in G,$ let us define the family of invariant operators $\{A_z\}_{z\in G},$ by
\[ 
    A_zf(x)=\sum_{[\xi]\in \widehat{G}}d_\xi\textnormal{\textbf{Tr}}[\xi(x)e^{i\Phi(\xi)}\sigma(z,\xi)\widehat{f}(\xi)],\,\,f\in C^\infty(G).
\] By the identity $A_xf(x)=Af(x),$ $x\in G,$  the subelliptic Sobolev embedding Theorem (see Remark \ref{sset}) gives
\begin{align*}
    \sup_{z\in G}|A_zf(x)|\lesssim  \sup_{z\in G}\Vert \mathcal{M}_{2\ell} A_zf\Vert_{L^2(G)}\lesssim 
    \sum_{1\leqslant i_1\leqslant i_2\leqslant\cdots \leqslant i_{k}\leqslant k\,,|\alpha|\leqslant 2\ell}\Vert  X^{\alpha_1}_{i_1,z}\cdots X^{\alpha_k}_{i_k,z}A_zf\Vert_{L^2(G)}.
\end{align*} Every operator $A_{z,\alpha}:=X^{\alpha_1}_{i_1,z}\cdots X^{\alpha_k}_{i_k,z}A_z$ has an invariant symbol given by  $$a_{z,\alpha}(\xi):=e^{i\Phi(\xi)}X^{\alpha_1}_{i_1,z}\cdots X^{\alpha_k}_{i_k,z} \sigma(z,\cdot),$$ and the estimates 
\[ 
    \sup_{(z,[\xi])\in G\times \widehat{G}}  \Vert e^{i\Phi(\xi)} X^{\alpha_1}_{i_1,z}\cdots X^{\alpha_k}_{i_k,z}\sigma(z,\xi)\Vert_{\textnormal{op}}<\infty,\quad1\leqslant i_1\leqslant i_2\leqslant\cdots \leqslant i_{k}\leqslant k,
\] are equivalent, indeed, to the following ones,
\[ 
   \sup_{(z,[\xi])\in G\times \widehat{G}}  \Vert X^{\alpha_1}_{i_1,z}\cdots X^{\alpha_k}_{i_k,z}a_{z,\alpha}(\xi)\Vert_{\textnormal{op}}<\infty,\quad1\leqslant i_1\leqslant i_2\leqslant\cdots \leqslant i_{k}\leqslant k,
\] 
because, using that $e^{i\Phi(\xi)}$ is unitary for all $[\xi]\in \widehat{G},$ one has that  $$\Vert e^{i\Phi(\xi)}X^{\alpha_1}_{i_1,z}\cdots X^{\alpha_k}_{i_k,z}a_{z,\alpha}(\xi)\Vert_{\textnormal{op}}=\Vert X^{\alpha_1}_{i_1,z}\cdots X^{\alpha_k}_{i_k,z}a_{z,\alpha}(\xi)\Vert_{\textnormal{op}}.$$ 
 Consequently the family of left-invariant operators $\{A_{z,\alpha}\}_{z\in G,\,|\alpha|\leqslant 2\ell},$ are uniformly bounded on $L^2(G)$. Moreover, for every $|\alpha|\leqslant 2\ell,$  the function $z\mapsto A_{z,\alpha},$ is a continuous function from $G$ into $\mathscr{B}(L^2(G)).$  The compactness of $G$ implies that 
\[ 
   \sup_{z\in G}\Vert A_{z,\alpha}\Vert_{\mathscr{B}(L^2(G))}= \Vert A_{z_0,\alpha}\Vert_{\mathscr{B}(L^2(G))}=\sup_{[\xi]\in \widehat{G}} \Vert X^{\alpha_1}_{i_1}\cdots X^{\alpha_k}_{i_k}\sigma(z_{0,\alpha},\xi)\Vert_{\textnormal{op}},
\] for some $z_{0,\alpha}\in G.$
Consequently, we can estimate the $L^2(G)$-norm of $Af,$ $f\in C^\infty(G),$ by, 
\begin{align*}
\Vert Af \Vert^2_{L^2(G)}&=\int\limits_{G}|A_xf(x)|^2dx\leqslant \int\limits_{G}\sup_{z\in G}|A_zf(x)|^2dx\\
&\lesssim \sum_{1\leqslant i_1\leqslant i_2\leqslant\cdots \leqslant i_{k}\leqslant k\,,|\alpha|\leqslant 2\ell} \int\limits_{G}\int\limits_{G} \vert X^{\alpha_1}_{i_1,z}\cdots X^{\alpha_k}_{i_k,z} A_zf(x)\vert^2 \, dzdx\\
& \lesssim \sum_{1\leqslant i_1\leqslant i_2\leqslant\cdots \leqslant i_{k}\leqslant k\,,|\alpha|\leqslant 2\ell} \int\limits_{G}\int\limits_{G}  X^{\alpha_1}_{i_1,z}\cdots X^{\alpha_k}_{i_k,z} A_zf(x)\vert^2 \, dxdz\\
&\lesssim \sum_{1\leqslant i_1\leqslant i_2\leqslant\cdots \leqslant i_{k}\leqslant k\,,|\alpha|\leqslant 2\ell}\sup_{z\in G}\Vert X^{\alpha_1}_{i_1,z}\cdots X^{\alpha_k}_{i_k,z} A_z \Vert^2_{\mathscr{B}(L^2(G))} \Vert f\Vert^2_{L^2(G)} \\
&=\sum_{1\leqslant i_1\leqslant i_2\leqslant\cdots \leqslant i_{k}\leqslant k\,,|\alpha|\leqslant 2\ell}\Vert X^{\alpha_1}_{i_1,z}\cdots X^{\alpha_k}_{i_k,z} A_{z_{0,\beta}} \Vert^2_{\mathscr{B}(L^2(G))}  \Vert f\Vert^2_{L^2(G)} . 
\end{align*}  So, we have proved the boundedness of $A$ on $L^2(G).$ 
\end{proof}

Now, we extend Theorem \ref{L2FIO2} to the $L^p$-case for $p\neq 2,$ (although the following theorem also absorbs the case $p=2$).  We will denote by $\varkappa_{Q,p}$  the smallest even integer larger than $Q/p, $ for $1<p<\infty.$ 
\begin{theorem}\label{LpFIO1}
Let $G$ be a compact Lie group and let us denote by $Q$ the Hausdorff
dimension of $G$ associated to the control distance associated to the sub-Laplacian $\mathcal{L}=\mathcal{L}_X,$ where  $X= \{X_{1},\cdots,X_k\} $ is a system of vector fields satisfying the H\"ormander condition. Let  us consider the Fourier integral operator $A\equiv \textnormal{FIO}(\sigma, \phi):C^\infty(G)\rightarrow\mathscr{D}'(G)$ with symbol  $\sigma$ defined as in \eqref{wavetype}, where  $\Phi:\widehat{G}\rightarrow \cup_{[\xi]\in \widehat{G}}\mathbb{C}^{d_\xi\times d_\xi},$ is a matrix-valued function. Let  $1<p<\infty$ and let $0<\rho\leq 1.$  Let us assume that $\sigma$ satisfies the following conditions,
 \begin{equation}\label{condFIO}
    \sup_{(x,[\xi])\in G\times \widehat{G}}  \Vert\widehat{\mathcal{M}}(\xi)^{m+\rho|\gamma|}\Delta_{\xi}^{\gamma}(e^{i\Phi(\xi)} X^{\alpha_1}_{i_1}\cdots X^{\alpha_k}_{i_k}\sigma(x,\xi))\Vert_{\textnormal{op}}<\infty,\quad \gamma\in \mathbb{N}_0^n,
\end{equation}  
for all  $1\leqslant i_1\leqslant i_2\leqslant\cdots \leqslant i_{k}\leqslant k,$ and $ |\alpha|\leqslant \varkappa_{Q,p}.$  Then $A$ extends to a bounded linear operator on $L^p(G),$ provided that
\begin{equation}
     m\geq m_{p}:=Q(1-\rho)\left|\frac{1}{p}-\frac{1}{2}\right|.
\end{equation}
 \end{theorem}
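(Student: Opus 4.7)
\medskip

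\noindent\textbf{Proof proposal.} The plan is to exploit the special structure of the phase function. Because $e^{i\phi(x,\xi)}=\xi(x)e^{i\Phi(\xi)}$, the operator $A$ coincides with the pseudo-differential operator associated to the non-invariant symbol
\[
\tau(x,\xi):=e^{i\Phi(\xi)}\sigma(x,\xi),
\]
via the quantisation \eqref{RuzhanskyTurunenQuanti}. The hypothesis \eqref{condFIO} is exactly the statement that, for every $|\alpha|\leq \varkappa_{Q,p}$, the derivatives $X^{\alpha_1}_{i_1}\cdots X^{\alpha_k}_{i_k}\tau(\cdot,\xi)$ belong, uniformly in $z\in G$, to the invariant subelliptic class $S^{-m,\mathcal{L}}_{\rho}(\widehat{G})$. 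This puts us in the same situation that produced Theorem \ref{SubellipticLpestimate} and Theorem \ref{L2FIO2}: we can decouple the action in $x$ from the $x$-dependence of the symbol by introducing a frozen-coefficient family of left-invariant operators and then re-attaching the $x$-dependence by a subelliptic Sobolev embedding in the parameter.

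Concretely, for each $z\in G$ define the Fourier multiplier $A_z$ with invariant symbol $\tau_z(\xi):=e^{i\Phi(\xi)}\sigma(z,\xi)$, so that $A_x f(x)=Af(x)$. Because $\varkappa_{Q,p}>Q/p$, the subelliptic Sobolev embedding $L^{p,\mathcal{L}}_{\varkappa_{Q,p}}(G)\hookrightarrow L^\infty(G)$ (the $L^p$-analogue of Remark \ref{sset}, which follows from the mapping properties of $\mathcal{M}_{-\varkappa_{Q,p}}$ via Corollary \ref{SobL}) applied to the function $z\mapsto A_z f(x)$ on $G$ gives
\[
\sup_{z\in G}|A_z f(x)|^{p}\lesssim \sum_{\substack{1\leq i_1\leq\cdots\leq i_k\leq k\\|\alpha|\leq \varkappa_{Q,p}}}\int\limits_{G}\bigl| X^{\alpha_1}_{i_1,z}\cdots X^{\alpha_k}_{i_k,z}\, A_z f(x)\bigr|^{p}\,dz.
\]
Integrating in $x$ and applying Fubini yields
\[
\Vert Af\Vert_{L^p(G)}^{p}\leq \int\limits_{G}\sup_{z\in G}|A_zf(x)|^p\,dx\lesssim \sum_{|\alpha|\leq \varkappa_{Q,p}}\int\limits_{G}\Vert X^{\alpha_1}_{i_1,z}\cdots X^{\alpha_k}_{i_k,z}A_z f\Vert_{L^p(G)}^{p}\,dz.
\]
Now each operator $X^{\alpha_1}_{i_1,z}\cdots X^{\alpha_k}_{i_k,z}A_z$ is a left-invariant Fourier multiplier whose invariant symbol $e^{i\Phi(\xi)}X^{\alpha_1}_{i_1}\cdots X^{\alpha_k}_{i_k}\sigma(z,\xi)$ lies, by \eqref{condFIO}, in $S^{-m,\mathcal{L}}_{\rho}(\widehat{G})$ with seminorms uniformly bounded in $z\in G$. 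The subelliptic Fefferman theorem, Theorem \ref{parta2}, applied to these invariant symbols (the $\delta$-type conditions are vacuous), together with the assumption $m\geq m_{p}=Q(1-\rho)\bigl|\tfrac{1}{p}-\tfrac{1}{2}\bigr|$, provides the uniform bound
\[
\sup_{z\in G}\ \sup_{|\alpha|\leq\varkappa_{Q,p}}\bigl\Vert X^{\alpha_1}_{i_1,z}\cdots X^{\alpha_k}_{i_k,z}A_z\bigr\Vert_{\mathscr{B}(L^p(G))}<\infty,
\]
which, together with the compactness of $G$, produces $\Vert Af\Vert_{L^p(G)}\lesssim \Vert f\Vert_{L^p(G)}$.

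\medskip

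The technically delicate point, and the step I expect to require the most care, is the use of the subelliptic $L^p$-Sobolev embedding $L^{p,\mathcal{L}}_{\varkappa_{Q,p}}(G)\hookrightarrow L^\infty(G)$ in the parameter variable $z$, together with the characterisation of the subelliptic $L^p$-Sobolev norm in terms of horizontal vector-field derivatives of order up to $\varkappa_{Q,p}$. The former follows from Corollary \ref{SobL} (applied to the Bessel-type operator $\mathcal{M}_{-\varkappa_{Q,p}}$) and the classical argument used for the $L^2$-embedding; the latter is standard for integer orders since $\mathcal{L}$ is a sum of squares of the $X_i$. Once these two analytic tools are in place, the remainder of the proof is essentially a repetition of the scheme of Theorem \ref{SubellipticLpestimate}, adapted to the present setting by viewing $A$ as a pseudo-differential operator with non-invariant symbol $\tau(x,\xi)=e^{i\Phi(\xi)}\sigma(x,\xi)$.
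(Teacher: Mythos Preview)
Your proposal is correct and follows essentially the same strategy as the paper: freeze coefficients to form the invariant family $A_z$, apply the subelliptic $L^p$--Sobolev embedding in the parameter $z$ to reduce to left-invariant multipliers, and then invoke Theorem \ref{parta2} for each $X^{\alpha}_{i_1,z}\cdots X^{\alpha_k}_{i_k,z}A_z$ using the hypothesis \eqref{condFIO}. The only cosmetic differences are that the paper cites Coulhon--Russ--Tardivel-Nachef directly for the embedding $L^{p,\mathcal{L}}_{\varkappa_{Q,p}}(G)\hookrightarrow L^\infty(G)$ rather than deriving it from Corollary \ref{SobL}, and it uses compactness of $G$ to pass to a single $z_{0,\alpha}$ realising the supremum of the operator norms instead of your Fubini argument.
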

 \begin{remark}
 Let us observe that for $\rho=\frac{1}{Q},$ Theorem \ref{LpFIO1} implies that under the condition $m\geq (Q-1)\left|\frac{1}{p}-\frac{1}{2}\right|,$ the Fourier integral operator $A\equiv \textnormal{FIO}(\sigma, \phi):C^\infty(G)\rightarrow\mathscr{D}'(G)$ with symbol  $\sigma$ defined as in \eqref{wavetype}, and satisfying the family of inequalities in \eqref{condFIO}, extends a bounded operator from $L^p(G)$ to itself.
 \end{remark}
\begin{proof}
 Let us proceed as in the proof of Theorem  \ref{L2FIO2} and  let us write $\varkappa_{Q,p}=2\ell.$  Note that if $\sigma$ does not depend on $x\in G,$  then the statement  follows from Theorem \ref{parta2}, because in that case, $A$ is a Fourier multiplier whose symbol $e^{i\Phi}\sigma$ belongs to $S^{-m,\mathcal{L}}_{\rho,\delta}(G\times \widehat{G})$ with $m\geq m_{p}$. Now, in the general case where $\sigma$ depends on $x,$ the idea is to use the subelliptic $L^p$-Sobolev embedding theorem. For every $z\in G,$ let us define the family of invariant operators $\{A_z\}_{z\in G},$ by
\[ 
    A_zf(x)=\sum_{[\xi]\in \widehat{G}}d_\xi\textnormal{\textbf{Tr}}[\xi(x)e^{i\Phi(\xi)}\sigma(z,\xi)\widehat{f}(\xi)],\,\,f\in C^\infty(G).
\] By the identity $A_xf(x)=Af(x),$ $x\in G,$  the subelliptic $L^p$-Sobolev embedding Theorem (see Coulhon, Russ and  Tardivel-Nachef \cite[Page 288]{Couhlon}) gives
\begin{align*}
    \sup_{z\in G}|A_zf(x)|\lesssim  \sup_{z\in G}\Vert \mathcal{M}_{2\ell} A_zf\Vert_{L^p(G)}\lesssim 
    \sum_{1\leqslant i_1\leqslant i_2\leqslant\cdots \leqslant i_{k}\leqslant k\,,|\alpha|\leqslant 2\ell}\Vert  X^{\alpha_1}_{i_1,z}\cdots X^{\alpha_k}_{i_k,z}A_zf\Vert_{L^p(G)}.
\end{align*} Every operator $A_{z,\alpha}:=X^{\alpha_1}_{i_1,z}\cdots X^{\alpha_k}_{i_k,z}A_z$ has an invariant symbol given by  $$a_{z,\alpha}(\xi):=e^{i\Phi(\xi)}X^{\alpha_1}_{i_1,z}\cdots X^{\alpha_k}_{i_k,z} \sigma(z,\cdot),$$ and the estimates 
\[ 
    \sup_{(z,[\xi])\in G\times \widehat{G}}  \Vert \widehat{\mathcal{M}}(\xi)^{m+\rho|\gamma|}\Delta_{\xi}^{\gamma}(e^{i\Phi(\xi)} X^{\alpha_1}_{i_1,z}\cdots X^{\alpha_k}_{i_k,z}\sigma(z,\xi))\Vert_{\textnormal{op}}<\infty,
\] are equivalent, indeed, to the following ones,
\[ 
   \sup_{(z,[\xi])\in G\times \widehat{G}}  \Vert \widehat{\mathcal{M}}(\xi)^{m+\rho|\gamma|}\Delta_{\xi}^{\gamma}(e^{i\Phi(\xi)}X^{\alpha_1}_{i_1,z}\cdots X^{\alpha_k}_{i_k,z}a_{z,\alpha}(\xi))\Vert_{\textnormal{op}}<\infty.
\] 
 Consequently the family of left-invariant operators operators $\{A_{z,\alpha}\}_{z\in G,\,|\alpha|\leqslant 2\ell},$ are uniformly bounded on $L^p(G)$. Moreover, for every $|\alpha|\leqslant 2\ell,$  the function $z\mapsto A_{z,\alpha},$ is a continuous function from $G$ into $\mathscr{B}(L^p(G)).$  The compactness of $G$ implies that 
\[ 
   \sup_{z\in G}\Vert A_{z,\alpha}\Vert_{\mathscr{B}(L^p(G))}=  \Vert A_{z_0,\alpha}\Vert_{\mathscr{B}(L^p(G))}<\infty,
\] for some $z_{0,\alpha}\in G.$
Consequently, we can estimate the $L^p(G)$-norm of $Af,$ $f\in C^\infty(G),$ by, 
\begin{align*}
\Vert Af \Vert^p_{L^p(G)}&=\int\limits_{G}|A_xf(x)|^pdx\leqslant \int\limits_{G}\sup_{z\in G}|A_zf(x)|^pdx\\
& \lesssim     \sum_{1\leqslant i_1\leqslant i_2\leqslant\cdots \leqslant i_{k}\leqslant k\,,|\alpha|\leqslant 2\ell}\Vert  A_{z_{0,\alpha}} \Vert^p_{\mathscr{B}(L^p(G))}  \Vert f\Vert^p_{L^p(G)} . 
\end{align*}  So, we have proved the boundedness of $A$ on $L^p(G).$
\end{proof}

In   
order to motivate Theorem \ref{L2FIO}, let us recall the following $L^2$-estimate proved for Fourier integral operators on the torus in \cite[Theorem 4.14.2, page 415]{Ruz}. Here, $\{e_{j}\}_{j=1}^n$ is the canonical basis of $\mathbb{R}^n.$
\begin{theorem}\label{L2FIOtorus}
Let  us consider the Fourier integral operator $A\equiv \textnormal{FIO}(\sigma, \phi):C^\infty(\mathbb{T}^n)\rightarrow\mathscr{D}'(\mathbb{T}^n)$ with symbol  $\sigma$ and phase function $\phi,$ satisfying the following conditions
\begin{equation}\label{613torus}
  \sup_{(x,\xi)\in \mathbb{T}^n\times \mathbb{Z}^n } \vert \partial_{x}^\alpha\sigma(x,\xi)\vert<\infty,\,\,\,\, \sup_{j}\sup_{(x,\xi)\in \mathbb{T}^n\times \mathbb{Z}^n } \vert \partial_{x}^\alpha(\phi(x,\xi+e_{j})-\phi(x,\xi))\vert<\infty
\end{equation}for all $ |\alpha|\leqslant 2n+1,$  and 
\begin{equation}\label{maslovtorus}
    \vert \nabla_{x}\phi(x,\xi)-\nabla_{x}\phi(x,\xi')\vert\geqslant    C |\xi-\xi'|.
\end{equation} Then, $A$ extends to a bounded linear operator on $L^2(\mathbb{T}^n).$
\end{theorem}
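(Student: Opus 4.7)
\medskip

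\textbf{Proof plan for Theorem \ref{L2FIOtorus}.} The natural strategy is a $TT^{*}$-argument: instead of bounding $A$ directly, I would prove that $A^{*}A$ is bounded on $L^{2}(\mathbb{T}^{n})$, and then conclude from the identity $\|Af\|_{L^{2}}^{2} = (A^{*}Af,f)_{L^{2}} \leq \|A^{*}A\|_{\mathscr{B}(L^{2})}\|f\|_{L^{2}}^{2}$. The two standing hypotheses are designed precisely for this: the local graph condition \eqref{maslovtorus} will drive an integration-by-parts argument in $x$, while the symbol bounds in \eqref{613torus} will furnish the boundedness of the integrand and of its $x$-derivatives. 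As an alternative, one could decompose $A=\sum_{j}A P_{j}$ via a dyadic Littlewood-Paley partition in $\xi$ and apply the Cotlar-Stein lemma, for which the orthogonality $P_{j}P_{k}=0,\ |j-k|\geq 2$, trivialises the cross-terms $A_{j}^{*}A_{k}$ and reduces matters to uniform bounds on $\|AP_{j}\|$ via an analogous oscillatory-integral estimate.

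\smallskip

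First I would compute the effective symbol of $A^{*}A$ explicitly. A direct calculation using $\widehat{A^{*}g}(\xi)=\int_{\mathbb{T}^{n}}\overline{\sigma(x,\xi)}e^{-i\phi(x,\xi)}g(x)\,dx$ gives
\begin{equation*}
A^{*}Af(z) \;=\; \sum_{\xi\in \mathbb{Z}^{n}} e^{2\pi i\xi\cdot z}\sum_{\eta\in \mathbb{Z}^{n}} \tau(\xi,\eta)\,\widehat{f}(\eta), \qquad \tau(\xi,\eta)\;:=\;\int_{\mathbb{T}^{n}}\overline{\sigma(x,\xi)}\sigma(x,\eta)\,e^{i\Psi(x;\xi,\eta)}\,dx,
\end{equation*}
where $\Psi(x;\xi,\eta):=\phi(x,\eta)-\phi(x,\xi)$. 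Thus $\widehat{A^{*}Af}(\xi)=\sum_{\eta}\tau(\xi,\eta)\widehat{f}(\eta)$, and by Plancherel together with Schur's test it suffices to establish
\begin{equation*}
\sup_{\xi}\sum_{\eta}|\tau(\xi,\eta)| \;<\;\infty \quad \text{and}\quad \sup_{\eta}\sum_{\xi}|\tau(\xi,\eta)|\;<\;\infty.
\end{equation*}

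\smallskip

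The heart of the proof is an off-diagonal decay estimate $|\tau(\xi,\eta)|\lesssim_{N}\langle \xi-\eta\rangle^{-N}$, valid for any $N\leq 2n+1$. To produce it, I would integrate by parts in $x$ using the first-order operator $L^{t}=-\sum_{j=1}^{n}\partial_{x_{j}}\!\circ\!\bigl(\partial_{x_{j}}\Psi/(i|\nabla_{x}\Psi|^{2})\bigr)$, whose formal dual $L$ preserves $e^{i\Psi}$. The local graph condition \eqref{maslovtorus} yields the crucial lower bound $|\nabla_{x}\Psi(x;\xi,\eta)|\geq C|\xi-\eta|$, hence $|\partial_{x_{j}}\Psi/|\nabla_{x}\Psi|^{2}|\lesssim |\xi-\eta|^{-1}$. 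Here the second inequality in \eqref{613torus} is essential: although $\phi$ itself is unbounded in $\xi$, writing $\Psi$ as a telescoping sum of unit finite differences $\phi(\cdot,\eta_{j})-\phi(\cdot,\eta_{j-1})$ along an integer path from $\xi$ to $\eta$ shows that $|\partial_{x}^{\alpha}\Psi|\lesssim |\xi-\eta|$ for every $|\alpha|\leq 2n+1$, so that each application of $L^{t}$ only costs one $x$-derivative on $\overline{\sigma(x,\xi)}\sigma(x,\eta)$ while gaining one factor of $|\xi-\eta|^{-1}$. After $N$ iterations and use of the symbol bound in \eqref{613torus}, the integrand is uniformly $O(|\xi-\eta|^{-N})$, giving $|\tau(\xi,\eta)|\lesssim_{N}\langle \xi-\eta\rangle^{-N}$.

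\smallskip

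Choosing $N>n$ (for which the budget $|\alpha|\leq 2n+1$ is more than sufficient), the decay $\langle \xi-\eta\rangle^{-N}$ is summable in either variable and both Schur sums are finite. This yields the $L^{2}$-boundedness of $A^{*}A$, and hence of $A$. \textbf{The main obstacle}, and the reason for the somewhat generous exponent $2n+1$ in \eqref{613torus}, is precisely the fact that $\phi(x,\xi)$ is only a ``first-order'' object in $\xi$ whose $x$-derivatives grow linearly in $|\xi-\eta|$ along $\Psi$; one has to confirm that every additional $x$-derivative of the coefficient $\partial_{x}\Psi/|\nabla_{x}\Psi|^{2}$ produced by $L^{t}$ preserves (rather than destroys) the $|\xi-\eta|^{-1}$ gain, which is where the difference-type bound in \eqref{613torus}, rather than pointwise bounds on $\partial_{x}^{\alpha}\phi$, plays its decisive role.
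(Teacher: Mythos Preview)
Your proposal is correct. Note, however, that the paper does not give its own proof of this statement: Theorem~\ref{L2FIOtorus} is simply quoted from \cite[Theorem~4.14.2]{Ruz} as motivation for the compact-Lie-group result, Theorem~\ref{L2FIO}. So there is no ``paper's proof'' of this particular theorem to compare with.

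That said, it is worth comparing your approach with the paper's proof of the generalisation, Theorem~\ref{L2FIO}. There the authors decompose $S=\sum_{[\xi],j,k}S_{[\xi],j,k}$ over individual Fourier modes (and matrix entries) and apply the Cotlar--Stein lemma, estimating $\Vert S_{[\xi],j,k}^{*}S_{[\xi'],j',k'}\Vert$ by the same integration-by-parts mechanism you describe, driven by the lower bound $|\nabla_{\mathbb{X}}\phi_{jj}(x,\xi)-\nabla_{\mathbb{X}}\phi_{j'j'}(x,\xi')|\asymp|\lambda_{[\xi]}^{\tau}-\lambda_{[\xi']}^{\tau}|$. Your direct $A^{*}A$ + Schur argument is essentially the torus specialisation of this: on $\mathbb{T}^{n}$ the dual is $\mathbb{Z}^{n}$, the pieces $A_{\xi}f:=e^{i\phi(\cdot,\xi)}\sigma(\cdot,\xi)\widehat{f}(\xi)$ are rank-one, and the Cotlar--Stein hypotheses on $A_{\xi}^{*}A_{\eta}$ collapse exactly to the Schur condition $\sup_{\xi}\sum_{\eta}|\tau(\xi,\eta)|<\infty$. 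The simplification you gain on the torus is that no representation-theoretic bookkeeping (dimensions $d_{\xi}$, matrix indices, eigenvalue gaps) is needed; conversely, the Cotlar--Stein formulation is what survives on a noncommutative group. Your alternative dyadic decomposition is also viable but is coarser than necessary here---the single-mode decomposition already gives Schur directly.

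One minor accounting point: after $N$ integrations by parts, the worst term in $(L^{t})^{N}(\overline{\sigma}\sigma)$ can carry $N$ derivatives on a single coefficient $\partial_{x_{j}}\Psi/|\nabla_{x}\Psi|^{2}$, hence requires $\partial_{x}^{\alpha}\Psi$ for $|\alpha|\le N+1$, not just $N$. Since you only need $N=n+1$ for summability and the hypothesis supplies $|\alpha|\le 2n+1$, there is ample room; but the phrase ``valid for any $N\le 2n+1$'' should read $N\le 2n$.
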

The second condition in the right hand side of \eqref{613torus} together with \eqref{maslovtorus} implies the following condition (see \cite[page 417]{Ruz}),
\begin{equation}\label{maslovtorus2}
    \vert \nabla_{x}\phi(x,\xi)-\nabla_{x}\phi(x,\xi')\vert\asymp |\xi-\xi'|.
\end{equation} The $L^p$-boundedness of Fourier integral operators on the torus has been treated in \cite{CRS2018,CRS2021:}.
We will require a similar condition in the context of compact Lie groups. In the proof of Theorem \ref{L2FIO} we will use the following version of Cotlar-Stein Lemma due to Comech \cite{Comech}.
\begin{lemma}[Cotlar-Stein Lemma]\label{CLemma} Let $E,F$ be Hilbert spaces and let $\{T_i:E\rightarrow F\}_{i\in \mathbb{Z}},$ be a sequence of bounded operators satisfying the conditions of almost-orthogonality, this means that
\[ 
    \Vert T_{i}^*T_{i}\Vert_{\mathscr{B}(E,F)}\leqslant a(i,j),  \,\,\,\Vert T_{i}T_{i}^*\Vert_{\mathscr{B}(F,E)}\leqslant b(i,j),
\]where where $a(i,j)$ and $b(i,j)$ are non-negative symmetric functions on $\mathbb{Z}\times \mathbb{Z},$ which satisfy
\[ 
    M:=\sup_{i}\sum_{j}a(i,j)^{1-\varepsilon},\,N:=\sup_{i}\sum_{j}b(i,j)^{\varepsilon},\,\,\,0<\varepsilon<1.
\]Then, $T=\sum_{i}T_i:E\rightarrow F,$ extends to a bounded operator and 
\[ 
    \Vert T \Vert_{\mathscr{B}(E,F)}\leqslant (MN)^\frac{1}{2}.
\]

\end{lemma}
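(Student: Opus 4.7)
The plan is to deduce the almost-orthogonality estimate via the classical Cotlar--Stein iteration, adapted to the asymmetric exponents $1-\varepsilon$ and $\varepsilon$ appearing in the hypotheses. The key operator-theoretic ingredient is the identity $\Vert S\Vert^{2k}=\Vert (S^{*}S)^{k}\Vert$, valid for any bounded operator $S$ on a Hilbert space (since $S^{*}S$ is self-adjoint one has $\Vert (S^{*}S)^{k}\Vert=\Vert S^{*}S\Vert^{k}$). Applied to $T:=\sum_{i}T_{i}$, this reduces bounding $\Vert T\Vert$ to bounding $(T^{*}T)^{k}$ for $k$ large, a problem that decomposes naturally along the families $\{T_{i}\}$ and $\{T_{j}^{*}\}$.

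First, by a standard truncation argument I would assume the index set is finite of cardinality $\mathcal{N}$, since the bound produced is independent of $\mathcal{N}$ modulo factors of the form $\mathcal{N}^{1/(2k)}$ that tend to $1$ as $k\to\infty$; a weak operator density argument then recovers the general case. Expand
\begin{equation*}
(T^{*}T)^{k}=\sum_{i_{1},j_{1},\dots,i_{k},j_{k}}T_{i_{1}}^{*}T_{j_{1}}T_{i_{2}}^{*}T_{j_{2}}\cdots T_{i_{k}}^{*}T_{j_{k}},
\end{equation*}
and estimate each summand in two ways using the (evidently typo-corrected) hypotheses $\Vert T_{i}^{*}T_{j}\Vert\leqslant a(i,j)$ and $\Vert T_{i}T_{j}^{*}\Vert\leqslant b(i,j)$: grouping into consecutive pairs $(T_{i_{s}}^{*}T_{j_{s}})$ yields operator norm bounded by $\prod_{s=1}^{k}a(i_{s},j_{s})$, while regrouping as $T_{i_{1}}^{*}(T_{j_{1}}T_{i_{2}}^{*})\cdots(T_{j_{k-1}}T_{i_{k}}^{*})T_{j_{k}}$ yields at most $C^{2}\prod_{s=1}^{k-1}b(j_{s},i_{s+1})$, where $C:=\sup_{i}\Vert T_{i}\Vert\leqslant\sup_{i}a(i,i)^{1/2}$ is finite.

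Second, apply the pointwise interpolation $\min(x,y)\leqslant x^{1-\varepsilon}y^{\varepsilon}$ termwise and sum:
\begin{equation*}
\Vert (T^{*}T)^{k}\Vert\leqslant C^{2\varepsilon}\sum \prod_{s=1}^{k}a(i_{s},j_{s})^{1-\varepsilon}\prod_{s=1}^{k-1}b(j_{s},i_{s+1})^{\varepsilon}.
\end{equation*}
Sum the indices in the order $j_{k},i_{k},j_{k-1},i_{k-1},\dots,j_{1}$, using alternately the hypotheses $\sup_{i}\sum_{j}a(i,j)^{1-\varepsilon}\leqslant M$ and $\sup_{i}\sum_{j}b(i,j)^{\varepsilon}\leqslant N$; the residual index $i_{1}$ contributes only a factor $\mathcal{N}$. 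The resulting bound $\Vert T\Vert^{2k}\leqslant\mathcal{N}\cdot C^{2\varepsilon}\cdot M^{k}\cdot N^{k-1}$, taken to the $(2k)$-th root and sent $k\to\infty$, eliminates $\mathcal{N}$, $C^{2\varepsilon}$ and the stray single power of $M$ or $N$, leaving exactly $\Vert T\Vert^{2}\leqslant MN$, i.e.\ $\Vert T\Vert\leqslant(MN)^{1/2}$.

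The main obstacle is the combinatorial bookkeeping in the interpolated sum: the alternation of $a$- and $b$-factors along the chain $(i_{1},j_{1},i_{2},j_{2},\dots,i_{k},j_{k})$ must be matched by a summation order in which each pair contributes exactly one factor of $M$ or of $N$, and the accounting must be uniform in $k$ so that after taking $(2k)$-th roots only $(MN)^{1/2}$ survives. A secondary but essential point is that the uniform bound $C$ on the $\Vert T_{i}\Vert$ must be extracted from the given data (via $\Vert T_{i}\Vert^{2}=\Vert T_{i}^{*}T_{i}\Vert\leqslant a(i,i)$, at the diagonal $i=j$) rather than assumed separately, and it must enter the final estimate only with an exponent independent of $k$ so that the limit annihilates it.
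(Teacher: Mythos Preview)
The paper does not prove this lemma; it is stated with attribution to Comech \cite{Comech} and used as a black box. Your proof is correct and is precisely the classical Cotlar--Stein iteration (as in Stein's \emph{Harmonic Analysis}, Chapter VII), carried through with the asymmetric exponents $1-\varepsilon$ and $\varepsilon$; you also correctly identify and repair the typo in the statement, which should read $\Vert T_i^{*}T_j\Vert\leqslant a(i,j)$ and $\Vert T_iT_j^{*}\Vert\leqslant b(i,j)$. The bookkeeping in your summation---$k$ factors of $M$, $k-1$ factors of $N$, one residual free index contributing $\mathcal{N}$---is accurate, and the $(2k)$-th root step correctly annihilates $\mathcal{N}$ and $C^{2\varepsilon}$ in the limit.
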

\begin{remark}[{On the gap between  the eigenvalues of the Laplacian}]
To analyse  the $L^2$-estimate for Fourier integral operators on compact Lie groups in Theorem \ref{L2FIO}, we will assume a condition about the gap between the eigenvalues for the Laplacian. So, we will assume that for every $\tau>0, $ there exists $\varepsilon_0\in (0,1),$ such that for every $[\xi],[\xi']\in \widehat{G},$ with $\lambda_{[\xi]}<\lambda_{[\xi']},$   we have 
\begin{equation}\label{gap}
 1-  \left(\frac{\lambda_{[\xi]}}{\lambda_{[\xi']}}\right)^\tau\geqslant    
   \varepsilon_0 .
\end{equation} This condition is easily verificable for many classes of compact Lie groups (e.g. the torus  $\mathbb{T}^n,$ $\textnormal{SU}(2),$ $\textnormal{SO}(3),$ etc).
\end{remark}
We will denote by $\mathbb{X}=\{X_{1},\cdots,X_n\}$ a basis for the Lie algebra $\mathfrak{g},$ and the corresponding gradient $ \nabla_{\mathbb{X}},$ defined by
\[ 
    \nabla_{\mathbb{X}}f(x)=(X_{1}f,\cdots,X_nf)\in C^{\infty}(G)\times \cdots \times C^{\infty}(G),\,\,\,f\in C^\infty(G).
\]

\begin{theorem}\label{L2FIO}
Let $G$ be a compact Lie group of topological dimension $n,$ and let  us consider the Fourier integral operator $A\equiv \textnormal{FIO}(\sigma, \phi):C^\infty(G)\rightarrow\mathscr{D}'(G)$ with symbol  $\sigma$ and phase function $\phi,$ satisfying the following conditions
\begin{equation}\label{613}
  \sup_{(x,[\xi])\in G\times \widehat{G}}  \Vert X^{\alpha}_{x}\sigma(x,\xi)\Vert_{\textnormal{op}}<\infty,
\end{equation}for all $ |\alpha|\leqslant 5n/2,$ and 
\begin{equation}\label{gap2}
    \vert \nabla_{\mathbb{X}}\phi_{jj}(x,\xi)-\nabla_{\mathbb{X}}\phi_{j'j'}(x,\xi')\vert\asymp |\lambda_{[\xi]}^\tau-\lambda_{[\xi']}^\tau|,\,\,\,1\leqslant j\leqslant d_\xi,
\end{equation}uniformly in $([\xi],[\xi'])\in \widehat{G}\times \widehat{G},$ for some $\tau>0.$ Let us assume the condition in \eqref{gap}. Then, $A$ extends to a bounded linear operator on $L^2(G).$
\end{theorem}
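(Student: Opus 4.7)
The plan is to apply the Cotlar--Stein Lemma (Lemma \ref{CLemma}) to the dyadic Littlewood--Paley decomposition $A=\sum_{\ell\ge0}A_\ell$, where $A_\ell$ localises the representation variable to the spectral shell $\langle\xi\rangle\sim 2^\ell$. Choose a smooth partition $1=\sum_{\ell\ge0}\psi_\ell(\langle\xi\rangle)$ with $\psi_\ell$ supported in $\{2^{\ell-1}\le\langle\xi\rangle<2^{\ell+1}\}$ and set
\begin{equation*}
A_\ell f(x):=\sum_{[\xi]}d_\xi\,\psi_\ell(\langle\xi\rangle)\,\textnormal{\textbf{Tr}}\bigl(e^{i\phi(x,\xi)}\sigma(x,\xi)\widehat f(\xi)\bigr).
\end{equation*}
The Cotlar--Stein hypotheses then amount to bounding $\|A_\ell A_{\ell'}^*\|$ and $\|A_\ell^*A_{\ell'}\|$ by a summable symmetric weight $a(\ell,\ell')$.

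First I would dispose of $A_\ell A_{\ell'}^*$ using the Peter--Weyl orthogonality of matrix coefficients. A direct computation of the Schwartz kernel $K_{A_\ell A_{\ell'}^*}(x,y)=\int_G K_{A_\ell}(x,z)\overline{K_{A_{\ell'}}(y,z)}\,dz$, starting from $K_A(x,y)=\sum_{[\xi]}d_\xi\textnormal{\textbf{Tr}}(\xi(y)^*e^{i\phi(x,\xi)}\sigma(x,\xi))$ and using the identity $\int_G\overline{\xi(z)_{ba}}\,\xi'(z)_{dc}\,dz=d_\xi^{-1}\delta_{[\xi][\xi']}\delta_{bd}\delta_{ac}$, collapses the double sum over $[\xi],[\xi']$ to the diagonal $[\xi]=[\xi']$ and produces
\begin{equation*}
K_{A_\ell A_{\ell'}^*}(x,y)=\sum_{[\xi]}d_\xi\,\psi_\ell(\langle\xi\rangle)\psi_{\ell'}(\langle\xi\rangle)\,\textnormal{\textbf{Tr}}\bigl(e^{i\phi(x,\xi)}\sigma(x,\xi)\sigma(y,\xi)^*e^{-i\phi(y,\xi)}\bigr),
\end{equation*}
which vanishes for $|\ell-\ell'|\ge 2$ by dyadic disjointness, and is uniformly bounded on $L^2(G)$ for $|\ell-\ell'|\le 1$ by the Sobolev-embedding/parameter argument used in Theorem \ref{L2FIO2}: freeze $x\mapsto z$ in $\sigma$, reduce to a family of Fourier multipliers with $z$-parameter whose operator norm is controlled via Plancherel by $\sup_{(z,[\xi])}\|\sigma(z,\xi)\|_{\textnormal{op}}$ (the unitarity of $e^{i\phi}$ is crucial here), and use the hypothesis $\sup\|X_z^\alpha\sigma(z,\xi)\|_{\textnormal{op}}<\infty$ for $|\alpha|\le\lfloor n/2\rfloor+1$ to bound $\sup_z$ pointwise.

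The main obstacle is estimating $\|A_\ell^*A_{\ell'}\|$ with decay in $|\ell-\ell'|$, since here the orthogonality argument no longer kills the cross terms: the intermediate integration is in $z$, but the matrices $\xi(x),\xi'(y)$ (not $\xi(z),\xi'(z)$) appear outside the $z$-integral. After expanding the traces component-wise and exploiting the diagonal form $\phi(z,\xi)=\textnormal{diag}(\phi_{jj}(z,\xi))$, one reduces to an oscillatory integral
\begin{equation*}
I_{a,j,b,k}(\xi,\xi'):=\int_G\overline{\sigma(z,\xi)_{aj}}\,\sigma(z,\xi')_{bk}\,e^{i(\phi_{bb}(z,\xi')-\phi_{aa}(z,\xi))}\,dz.
\end{equation*}
For $|\ell-\ell'|\ge 2$, combining the phase non-degeneracy \eqref{gap2} with the spectral gap \eqref{gap} yields, uniformly in $z$,
\begin{equation*}
|\nabla_{\mathbb{X}}(\phi_{bb}(z,\xi')-\phi_{aa}(z,\xi))|\asymp|\lambda_{[\xi']}^\tau-\lambda_{[\xi]}^\tau|\gtrsim\max(\lambda_{[\xi]},\lambda_{[\xi']})^\tau,
\end{equation*}
so $N$-fold integration by parts in $z$ against the left-invariant frame $\mathbb{X}$ produces $|I_{a,j,b,k}(\xi,\xi')|\lesssim_N\max(\langle\xi\rangle,\langle\xi'\rangle)^{-2\tau N}$ at the cost of derivatives $X_z^\beta\sigma$ with $|\beta|\le N$.

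The estimate is concluded by inserting this bound into Schur's test, using $|\xi(x)_{aj}|,|\xi'(y)_{bk}|\le 1$ together with the Weyl count $\sum_{\langle\xi\rangle\sim 2^\ell}d_\xi^2\sim 2^{\ell n}$; the hypothesis $|\alpha|\le 5n/2$ on the regularity of $\sigma$ splits roughly into the $\lfloor n/2\rfloor+1$ derivatives consumed by the parameter Sobolev embedding (diagonal block) plus the $N\sim n/\tau$ derivatives consumed by the oscillatory integration by parts (off-diagonal blocks), leaving a factor of the form $2^{-\varepsilon|\ell-\ell'|}$ after summation over the dyadic shells. The symmetric conclusion $\|A_\ell A_{\ell'}^*\|_{\mathscr{B}(L^2(G))}+\|A_\ell^*A_{\ell'}\|_{\mathscr{B}(L^2(G))}\lesssim 2^{-\varepsilon|\ell-\ell'|}$ then places Lemma \ref{CLemma} in force with $a(\ell,\ell')=b(\ell,\ell')=C 2^{-\varepsilon|\ell-\ell'|}$, giving the $L^2$-boundedness of $A=\sum_\ell A_\ell$. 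The delicate technical point throughout is tracking how the $d_\xi$ factors coming from the normalisation of matrix coefficients interact with the oscillatory gain: it is precisely this balance that forces the regularity threshold $5n/2$ rather than a smaller one.
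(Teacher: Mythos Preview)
Your route differs from the paper's in the choice of decomposition for Cotlar--Stein. The paper does not use a dyadic cut-off at all: it passes via Plancherel to $S:L^2(\widehat G)\to L^2(G)$, $Sw(x)=\sum_{[\xi]}d_\xi\textnormal{\textbf{Tr}}(e^{i\phi(x,\xi)}\sigma(x,\xi)w(\xi))$, and decomposes $S=\sum_{[\xi],j,k}S_{[\xi],j,k}$ into rank-one pieces indexed by representation and matrix entry, $S_{[\xi],j,k}w(x)=d_\xi\,e^{i\phi_{jj}(x,\xi)}\sigma(x,\xi)_{jk}\,w(\xi)_{kj}$. At that granularity $S_{[\xi],j,k}S_{[\xi'],j',k'}^*$ vanishes unless all three indices agree, so the $TT^*$ side of Cotlar--Stein collapses to a single Kronecker-delta term; the $T^*T$ side is exactly your oscillatory integral, and the $5n/2$ threshold comes from the Weyl-counted summation $\sup_{[\xi']}\sum_{[\xi]\neq[\xi'],\,j,k}(d_\xi+d_{\xi'})^{1/2}|\lambda_{[\xi']}^\tau-\lambda_{[\xi]}^\tau|^{-N/2}<\infty$ for $N\ge 5n/2$.

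Your dyadic approach has a real gap at the diagonal block $|\ell-\ell'|\le 1$. The freezing/Sobolev argument of Theorem \ref{L2FIO2} works only for phases factoring as $e^{i\phi(x,\xi)}=\xi(x)e^{i\Phi(\xi)}$: freezing $\sigma(x,\xi)\to\sigma(z,\xi)$ there leaves a genuine left-invariant multiplier $f\mapsto\sum d_\xi\textnormal{\textbf{Tr}}(\xi(x)e^{i\Phi(\xi)}\sigma(z,\xi)\widehat f(\xi))$, whose $L^2$-norm Plancherel controls by $\sup_\xi\|\sigma(z,\xi)\|_{\textnormal{op}}$. For a general $\phi(x,\xi)$ there is no $\xi(x)$ factor to play against $\widehat f(\xi)$; the frozen operator is \emph{not} a Fourier multiplier, and a crude Cauchy--Schwarz bound on $\|A_\ell\|$ only yields $\lesssim 2^{\ell n/2}$ from the shell count $\sum_{\langle\xi\rangle\sim 2^\ell}d_\xi^2\sim 2^{\ell n}$. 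The same difficulty recurs in $A_\ell^*A_{\ell'}$ for $\ell=\ell'$: pairs $[\xi]\neq[\xi']$ with $\lambda_{[\xi]}=\lambda_{[\xi']}$ have zero phase-gradient separation by \eqref{gap2}, so your integration-by-parts gain disappears exactly where the shell carries the most terms. Repairing the diagonal estimate would require an oscillatory argument \emph{within} the shell, which effectively drives you back to the paper's matrix-entry decomposition.
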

\begin{proof}To start the proof observe that by  Plancherel Theorem, it is enough to prove that the operator $S:\mathscr{S}(\widehat{G})\subset L^2(\widehat{G})\rightarrow L^2(G),$ defined by 
\[ 
    Sw(x)=\sum\limits_{[\xi]\in \widehat{G}}d_\xi\textnormal{\textbf{Tr}}(e^{i\phi(x,\xi)}\sigma(x,\xi)w(\xi)),\,\,w\in \mathscr{S}(\widehat{G}):=\mathscr{F}C^\infty(G),
\]
admits a bounded extension. Because $\phi(x,\xi)$ is a self-adjoint matrix for every $(x,[\xi]),$  in some basis $B_{\xi}$ of the representation space $\mathbb{C}^{d_\xi},$ $e^{i\phi(x,\xi)}=\textnormal{diag}[e^{i\phi_{jj}(x,\xi)}]_{ j=1 }^{d_\xi}$ is a diagonal matrix  where
\[ 
    \phi_{jj}(x,\xi),\,\,1\leqslant j\leqslant d_\xi,\quad (x,\xi)\in G\times \widehat{G},
\]are the real eigenvalues of $\phi(x,\xi).$ Now, by using the representation 
\begin{align*}
  & Sw(x)\\
   &=\sum\limits_{[\xi]\in \widehat{G}}\sum_{i,j,k=1}^{d_\xi}d_\xi[e^{i\phi(x,\xi)}]_{ij}\sigma(x,\xi)_{jk}w(\xi)_{ki}=\sum\limits_{[\xi]\in \widehat{G}}\sum_{j,k=1}^{d_\xi}d_\xi e^{i\phi_{jj}(x,\xi)}\sigma(x,\xi)_{jk}w(\xi)_{kj},
\end{align*} we can decompose the operator $S$ as,
\[ 
    Sw(x)=\sum\limits_{[\xi]\in \widehat{G}}\sum_{j,k=1}^{d_\xi}S_{[\xi],j,k}w(x),\quad S_{[\xi],j,k}w(x):=d_\xi e^{i\phi_{jj}(x,\xi)}\sigma(x,\xi)_{jk}w(\xi)_{kj}.
\] In order to use Cotlar Lemma (Lemma \ref{CLemma}), we will compute the adjoint of every operator $S_{[\xi],j,k}.$
So, from the identities
\begin{align*}
   & (S_{[\xi],j,k}w,v)_{L^2(G)} :=\int\limits_{G}d_\xi e^{i\phi_{jj}(x,\xi)}\sigma(x,\xi)_{jk}w(\xi)_{kj}\overline{v(x)}dx\\
    &=d_\xi w(\xi)_{k,j}\overline{\int\limits_{G} e^{-i\phi_{jj}(x,\xi)}  \overline{\sigma(x,\xi)}_{jk}v(x)dx}\\
    &=d_\xi w(\xi)_{k,j}\overline{(S_{[\xi],j,k}^{*}v)(\xi)},
\end{align*} where
\[ 
  (S^*_{[\xi],j,k}v) (\eta)_{u,v} := \int\limits_{G} e^{-i\phi_{jj}(x,\xi)}  \overline{\sigma(x,\xi)}_{jk}v(x)dx\cdot \delta_{([\eta],u,v),([\xi],k,j)},
\]we have
\begin{align*}
     &(w,S^*_{[\xi],j,k}v)_{L^2(\widehat{G})} \\&:=\sum_{[\eta]\in \widehat{G}}d_\eta \textnormal{\textbf{Tr}}(w(\eta)(S^*_{[\xi],j,k}v(\eta))^*)=\sum_{[\eta]\in \widehat{G}}\sum_{u,v=1}^{d_\eta} d_\eta w(\eta)_{uv}(S^*_{[\xi],j,k}v(\eta))^*_{v,u}\\
     &=\sum_{[\eta]\in \widehat{G}}\sum_{u,v=1}^{d_\eta}d_\eta w(\eta)_{uv}\overline{(S^*_{[\xi],j,k}v(\eta))_{u,v}}\cdot \delta_{([\eta],u,v),([\xi],k,j)}\\
     &=d_\xi w(\xi)_{k,j}\overline{(S_{[\xi],j,k}^{*}v)(\xi)}
\end{align*}which shows that $S^*_{[\xi],j,k},$ is the adjoint operator of $S_{[\xi],j,k}.$ The next step is to estimate the operators norms,
\[ 
    \Vert S^*_{[\xi],j,k}S_{[\xi],j,k} \Vert_{\mathscr{B}( L^2(\widehat{G})   )},\,\,\,\,\Vert S_{[\xi'],j,k}S^{*}_{[\xi'],j',k'} \Vert_{\mathscr{B}(L^2(G))}.
\]
Observe that for every $[\eta]\in \widehat{G},$ and $1\leqslant u,v\leqslant d_\eta,$
\begin{align*}
  &  (S^*_{[\xi],j,k}S_{[\xi'],j',k'}w)(\eta)_{u,v}= \int\limits_{G} e^{-i\phi_{jj}(x,\xi)}  \overline{\sigma(x,\xi)}_{jk}S_{[\xi'],j',k'}w(x)dx\cdot \delta_{([\eta],u,v),([\xi'],k',j')}\\
    &= d_{\xi'}\int\limits_{G} e^{-i\phi_{jj}(x,\xi)}  \overline{\sigma(x,\xi)}_{jk} e^{i\phi_{j'j'}(x,\xi')}\sigma(x,\xi')_{j'k'}w(\xi')_{k'j'}dx\cdot \delta_{([\eta],u,v),([\xi'],k',j')}\\
    &= d_{\xi'}\int\limits_{G} e^{i(\phi_{j'j'}(x,\xi')-\phi_{jj}(x,\xi))}  \overline{\sigma(x,\xi)}_{jk} \sigma(x,\xi')_{j'k'}w(\xi')_{k'j'}dx\cdot \delta_{([\eta],u,v),([\xi'],k',j')}\\
    &=d_{\eta}w(\eta)_{u,v}\int\limits_{G} e^{i(\phi_{j'j'}(x,\xi')-\phi_{jj}(x,\xi))}  \overline{\sigma(x,\xi)}_{jk} \sigma(x,\xi')_{j'k'}dx\cdot \delta_{([\eta],u,v),([\xi'],k',j')}\\
    &=d_{\eta}w(\eta)_{u,v}K_{[\xi],[\xi'],j,j',k,k'}(\eta,\eta)_{vu},
\end{align*}where we have defined
$$    K_{[\xi],[\xi'],j,j',k,k'}(\eta,\eta)_{vu}$$
$$:=  \int\limits_{G} e^{i(\phi_{j'j'}(x,\xi')-\phi_{jj}(x,\xi))}  \overline{\sigma(x,\xi)}_{jk} \sigma(x,\xi')_{j'k'}dx\cdot \delta_{([\eta],u,v),([\xi'],k',j')}.$$
Estimating the $L^2(\widehat{G})$-norm of $S^*_{[\xi],j,k}S_{[\xi'],j',k'}w,$ we have

\begin{align*}
    \Vert S^*_{[\xi],j,k}S_{[\xi'],j',k'}w\Vert_{L^2(\widehat{G})}^2&=\sum_{[\eta]\in \widehat{G}}d_\eta\Vert S^*_{[\xi],j,k}S_{[\xi'],j',k'}w(\eta)\Vert_{\textnormal{HS}}^2\\
    &=\sum_{[\eta]\in \widehat{G}}\sum_{u,v=1}^{d_\eta}d_\eta\vert S^*_{[\xi],j,k}S_{[\xi'],j',k'}w(\eta)_{uv}\vert^2\\
    &=d_{\xi'}\vert S^*_{[\xi],j,k}S_{[\xi'],j',k'}w(\xi')_{k'j'}\vert^2\\
    &=d_{\xi'}\times d_{\xi'}^2|w(\xi')_{k'j'}|^2|K_{[\xi],[\xi'],j,j',k,k'}(\xi',\xi')_{j'k'}|^2.
\end{align*}Because,
$$   d_{\xi'}|w(\xi')_{k'j'}|^2\leqslant  \sum_{[\eta]\in \widehat{G}}\sum_{u,v=1}^{d_\eta}d_\eta\vert w(\eta)_{uv}\vert^2=\Vert w \Vert^2_{L^2(\widehat{G})},   $$ we deduce that
\[ 
     \Vert S^*_{[\xi],j,k}S_{[\xi'],j',k'}w\Vert_{L^2(\widehat{G})}^2\leqslant d_{\xi'}^2|K_{[\xi],[\xi'],j,j',k,k'}(\xi',\xi')_{j'k'}|^2\Vert w \Vert^2_{L^2(\widehat{G})},
\] or equivalently,
\begin{equation}\label{toestimate}
     \Vert S^*_{[\xi],j,k}S_{[\xi'],j',k'}\Vert_{\mathscr{B}(L^2(\widehat{G}))}\leqslant d_{\xi'}|K_{[\xi],[\xi'],j,j',k,k'}(\xi',\xi')_{j'k'}|.
\end{equation} To estimate $|K_{[\xi],[\xi'],j,j',k,k'}(\xi',\xi')_{j'k'}|,$ we will use the argument of integration by parts. To do so, using the chain rule, observe that for every $X_k\in \mathbb{X},$
\begin{align*}
    X_{k}e^{i(\phi_{j'j'}(x,\xi')-\phi_{jj}(x,\xi))}=i\cdot e^{i(\phi_{j'j'}(x,\xi')-\phi_{jj}(x,\xi))}X_{k}(\phi_{j'j'}(x,\xi')-\phi_{jj}(x,\xi)).
\end{align*}Consequently, the action of the gradient field $\nabla_\mathbb{X}$ on the function $e^{i(\phi_{j'j'}(x,\xi')-\phi_{jj}(x,\xi))},$ is given by
\[ 
    \nabla_\mathbb{X}(e^{i(\phi_{j'j'}(x,\xi')-\phi_{jj}(x,\xi))})=i\cdot e^{i(\phi_{j'j'}(x,\xi')-\phi_{jj}(x,\xi))}\nabla_\mathbb{X}  (   \phi_{j'j'}(x,\xi')-\phi_{jj}(x,\xi)   ).
\]By considering the operator,

\[ 
    \mathbb{L}:=\frac{1}{i}\cdot\frac{ \nabla_\mathbb{X}  (   \phi_{j'j'}(x,\xi')-\phi_{jj}(x,\xi)   )    }{\sum_{r=1}^{n} |{X}_r  (   \phi_{j'j'}(x,\xi')-\phi_{jj}(x,\xi)   )\vert^2}\cdot \nabla_\mathbb{X}
\]defined by 
\[ 
     i\mathbb{L}f(x):=\frac{\sum_{s=1}^{n}  X_{s}(   \phi_{j'j'}(x,\xi')-\phi_{jj}(x,\xi)   )(X_{s}f(x))    }{\sum_{r=1}^{n} |{X}_r  (   \phi_{j'j'}(x,\xi')-\phi_{jj}(x,\xi)   )\vert^2},\,\,f\in C^\infty(G),
\] we have the following identity
\begin{align*}
     &\mathbb{L}(e^{i(\phi_{j'j'}(x,\xi')-\phi_{jj}(x,\xi))})\\
     &=\frac{1}{i}\cdot \frac{ \nabla_\mathbb{X}  (   \phi_{j'j'}(x,\xi')-\phi_{jj}(x,\xi)   )    }{\sum_{r=1}^{n} |{X}_r  (   \phi_{j'j'}(x,\xi')-\phi_{jj}(x,\xi)   )\vert^2}\cdot \nabla_\mathbb{X}(e^{i(\phi_{j'j'}(x,\xi')-\phi_{jj}(x,\xi)}) \\
     &=\frac{1}{i}\cdot \frac{ | \nabla_\mathbb{X}  (   \phi_{j'j'}(x,\xi')-\phi_{jj}(x,\xi)   ) |^2   }{\sum_{r=1}^{n} |{X}_r  (   \phi_{j'j'}(x,\xi')-\phi_{jj}(x,\xi)   )\vert^2}\cdot i\cdot e^{i(\phi_{j'j'}(x,\xi')-\phi_{jj}(x,\xi)}   )\\
     &= e^{i(\phi_{j'j'}(x,\xi')-\phi_{jj}(x,\xi)}.   
\end{align*} Now, by using integration by parts with the operator $\mathbb{L},$ for every $N\in \mathbb{N},$ we have,
\begin{align*}
     \int\limits_{G} e^{i(\phi_{j'j'}(x,\xi')-\phi_{jj}(x,\xi))} & \overline{\sigma(x,\xi)}_{jk} \sigma(x,\xi')_{j'k'}dx\\&
     =\int\limits_{G} e^{i(\phi_{j'j'}(x,\xi')-\phi_{jj}(x,\xi))}  \overline{\sigma(x,\xi)}_{jk} \sigma(x,\xi')_{j'k'}dx\\
     &=\int\limits_{G} \mathbb{L}^{N}( e^{i(\phi_{j'j'}(x,\xi')-\phi_{jj}(x,\xi))})  \overline{\sigma(x,\xi)}_{jk} \sigma(x,\xi')_{j'k'}dx\\
     &=\int\limits_{G}  e^{i(\phi_{j'j'}(x,\xi')-\phi_{jj}(x,\xi))} (\mathbb{L}^{t})^{N}[ \overline{\sigma(x,\xi)}_{jk} \sigma(x,\xi')_{j'k'}]dx.
\end{align*}Note that the transpose of $\mathbb{L},$ $\mathbb{L}^t$ is given by,
\begin{align*}
    \mathbb{L}^t &:=\frac{1}{i}\cdot\frac{ \nabla_\mathbb{X}  (   \phi_{j'j'}(x,\xi')-\phi_{jj}(x,\xi)   )    }{\sum_{r=1}^{n} |{X}_r  (   \phi_{j'j'}(x,\xi')-\phi_{jj}(x,\xi)   )\vert^2}\cdot \nabla_{\mathbb{X}'}\\
    &=\frac{1}{i}\cdot\frac{ \nabla_\mathbb{X}  (   \phi_{j'j'}(x,\xi')-\phi_{jj}(x,\xi)   )    }{ |\nabla_\mathbb{X}  (   \phi_{j'j'}(x,\xi')-\phi_{jj}(x,\xi)   ) |^2}\cdot \nabla_{\mathbb{X}'}
\end{align*}where $\nabla_{\mathbb{X}'}$ is defined by
\[ 
    \nabla_{\mathbb{X}'}f(x):=(-X_{1}f,\cdots,-X_nf)\in C^{\infty}(G)\times \cdots \times C^{\infty}(G),\,\,\,f\in C^\infty(G).
\] In order to estimate \eqref{toestimate}, using \eqref{gap2} we observe that 
\begin{equation}\label{themainestimate}
    |(\mathbb{L}^{t})^{N}[ \overline{\sigma(x,\xi)}_{jk} \sigma(x,\xi')_{j'k'}]|\lesssim \frac{1}{|\lambda_{[\xi']}^\tau-\lambda_{[\xi]}^\tau|^{N}}.
\end{equation} So, from \eqref{toestimate} and \eqref{themainestimate}  we would claim that
\begin{equation}\label{toestimate22}
     \Vert S^*_{[\xi],j,k}S_{[\xi'],j',k'}w\Vert_{\mathscr{B}(L^2(\widehat{G}))}\lesssim \frac{d_{\xi'}}{|\lambda_{[\xi']}^\tau-\lambda_{[\xi]}^\tau|^{N}}.
\end{equation} Indeed, for $N=1,$ by using the Cauchy-Schwarz inequality on $\mathbb{C}^n$  we have,
\begin{align*}
   &|(\mathbb{L}^{t})[ \overline{\sigma(x,\xi)}_{jk} \sigma(x,\xi')_{j'k'}]|\\
   &= \left|\frac{1}{i}\cdot\frac{ \nabla_\mathbb{X}  (   \phi_{j'j'}(x,\xi')-\phi_{jj}(x,\xi)   )    }{ |\nabla_\mathbb{X}  (   \phi_{j'j'}(x,\xi')-\phi_{jj}(x,\xi)   ) |^2}\cdot \nabla_{\mathbb{X}'}[ \overline{\sigma(x,\xi)}_{jk} \sigma(x,\xi')_{j'k'}]\right| \\
   &\leqslant \left|\frac{ \nabla_\mathbb{X}  (   \phi_{j'j'}(x,\xi')-\phi_{jj}(x,\xi)   )    }{ |\nabla_\mathbb{X}  (   \phi_{j'j'}(x,\xi')-\phi_{jj}(x,\xi)   ) |^2}\right|\cdot\left| \nabla_{\mathbb{X}'}[ \overline{\sigma(x,\xi)}_{jk} \sigma(x,\xi')_{j'k'}]\right|\\
    &= \frac{  \left| \nabla_{\mathbb{X}'}[ \overline{\sigma(x,\xi)}_{jk} \sigma(x,\xi')_{j'k'}]\right|  }{ |\nabla_\mathbb{X}  (   \phi_{j'j'}(x,\xi')-\phi_{jj}(x,\xi)   ) |}\\
     &= \frac{  \left( \sum_{r=1}^{n}|X_{r}[ \overline{\sigma(x,\xi)}_{jk} \sigma(x,\xi')_{j'k'}]|^2\right)^{\frac{1}{2}}  }{ |\nabla_\mathbb{X}  (   \phi_{j'j'}(x,\xi')-\phi_{jj}(x,\xi)   ) |}.
\end{align*}By the Leibniz rule, and using \eqref{613}, we can estimate
\begin{align*}
    &\left( \sum_{r=1}^{n}|X_{r}[ \overline{\sigma(x,\xi)}_{jk} \sigma(x,\xi')_{j'k'}]|^2\right)^{\frac{1}{2}}\\ &=\left( \sum_{r=1}^{n}|X_{r}[ \overline{\sigma(x,\xi)}_{jk}] \sigma(x,\xi')_{j'k'}]+[ \overline{\sigma(x,\xi)}_{jk}]X_{r} \sigma(x,\xi')_{j'k'}]|^2\right)^{\frac{1}{2}}\\
    &\lesssim \left( \sum_{r=1}^{n}[ 
    \Vert X_{r}[ \overline{\sigma(x,\xi)}]\Vert_{\textnormal{op}} \Vert \sigma(x,\xi')\Vert_{\textnormal{op}}+\Vert  \overline{\sigma(x,\xi)} \Vert_{\textnormal{op}}\Vert X_{r} \sigma(x,\xi')\Vert_{\textnormal{op}}]^2\right)^{\frac{1}{2}}\\
    &\lesssim (2n)^{\frac{1}{2}},
\end{align*} and consequently, we have
\begin{align}
    |(\mathbb{L}^{t})[ \overline{\sigma(x,\xi)}_{jk} \sigma(x,\xi')_{j'k'}]|\lesssim  \frac{1}{|\lambda_{[\xi']}^\tau-\lambda_{[\xi]}^\tau|},
\end{align}which proves \eqref{themainestimate} for $N=1.$ The general case $N>1,$ can be proved using induction. In order to use Lemma \ref{CLemma}, we need symmetry in the upper bound for the norm of $\Vert S^*_{[\xi],j,k}S_{[\xi'],j',k'}w\Vert_{\mathscr{B}(L^2(\widehat{G}))}.$ So, we trivially can deduce the following inequality \begin{equation}\label{toestimate222}
     \Vert S^*_{[\xi],j,k}S_{[\xi'],j',k'}\Vert_{\mathscr{B}(L^2(\widehat{G}))}\lesssim \frac{d_{\xi'}+d_\xi}{|\lambda_{[\xi']}^\tau-\lambda_{[\xi]}^\tau|^{N}},
\end{equation} from \eqref{toestimate22}. Let us assume for a moment the inequality
\begin{equation}\label{toestimate2223}
     \Vert S_{[\xi'],j',k'}S^*_{[\xi],j,k}\Vert_{\mathscr{B}(L^2(G))}\lesssim \frac{d_{\xi'}+d_\xi}{|\lambda_{[\xi']}^\tau-\lambda_{[\xi]}^\tau|^{N}}.
\end{equation}So, from Lemma \ref{CLemma}, it follows the $L^2(G)$-boundedness of $A,$  if we prove that
\begin{align*}
   \sup_{[\xi'],j',k'} \sum_{[\xi]\neq[\xi'] ,j,k}\frac{(d_{\xi'}+d_\xi)^{\frac{1}{2}}}{|\lambda_{[\xi']}^\tau-\lambda_{[\xi]}^\tau|^{\frac{N}{2}}}=\sup_{[\xi],j,k} \sum_{[\xi']\neq[\xi] ,j',k'}\frac{(d_{\xi'}+d_\xi)^{\frac{1}{2}}}{|\lambda_{[\xi]}^\tau-\lambda_{[\xi']}^\tau|^{\frac{N}{2}}}<\infty.
\end{align*}
For this, we will use the Weyl-eigenvalue counting Formula for the Laplacian (see e.g. in Remark \ref{weyl}, that in the case of the Laplacian,  $Q=n,$ $s=1$ in order  to deduce that $N(\lambda)=O(\lambda^n)$). First, we can split the sums as follows
\begin{align*}
    \sum_{[\xi]\neq[\xi'] ,j,k}\frac{(d_{\xi'}+d_\xi)^{\frac{1}{2}}}{|\lambda_{[\xi']}^\tau-\lambda_{[\xi]}^\tau|^{\frac{N}{2}}}&=\sum_{\lambda_{[\xi]}<\lambda_{[\xi']} ,j,k}\frac{(d_{\xi'}+d_\xi)^{\frac{1}{2}}}{|\lambda_{[\xi']}^\tau-\lambda_{[\xi]}^\tau|^{\frac{N}{2}}}+\sum_{\lambda_{[\xi]}>\lambda_{[\xi']} ,j,k}\frac{(d_{\xi'}+d_\xi)^{\frac{1}{2}}}{|\lambda_{[\xi']}^\tau-\lambda_{[\xi]}^\tau|^{\frac{N}{2}}}\\
    &:=I+II.
\end{align*}
To estimate $I,$ observe that
\begin{align*}
    I&=\sum_{\lambda_{[\xi]}<\lambda_{[\xi']} ,j,k}\frac{(d_{\xi'}+d_\xi)^{\frac{1}{2}}}{|\lambda_{[\xi']}^\tau-\lambda_{[\xi]}^\tau|^{\frac{N}{2}}}=\sum_{\lambda_{[\xi]}<\lambda_{[\xi']} ,j,k}\frac{(d_{\xi'}+d_\xi)^{\frac{1}{2}}}{(\lambda_{[\xi']}^\tau-\lambda_{[\xi]}^\tau)^{\frac{N}{2}}}\\
   &\lesssim \sum_{\lambda_{[\xi]}<\lambda_{[\xi']} ,j,k}\frac{(  \langle \xi'\rangle^{\frac{n}{2}}+\langle \xi\rangle^{\frac{n}{2}})^{\frac{1}{2}}}{(\lambda_{[\xi']}^\tau-\lambda_{[\xi]}^\tau)^{\frac{N}{2}}}=\sum_{\lambda_{[\xi]}<\lambda_{[\xi']} ,j,k}\frac{  \langle \xi'\rangle^{\frac{n}{4}}     }{(\lambda_{[\xi']}^\tau-\lambda_{[\xi]}^\tau))^{\frac{N}{2}}}\\
   &\lesssim \sum_{\lambda_{[\xi]}<\lambda_{[\xi']} ,j,k}\frac{  \lambda_{[\xi']}^{\frac{n}{4}}     }{(\lambda_{[\xi']}^\tau-\lambda_{[\xi]}^\tau))^{\frac{N}{2}}}=\sum_{\lambda_{[\xi]}<\lambda_{[\xi']} ,j,k}\frac{  \lambda_{[\xi']}^{\frac{n}{4} -\frac{N}{2}}     }{(1-\lambda_{[\xi']}^{-\tau}\lambda_{[\xi]}^\tau))^{\frac{N}{2}}}\\
   &=  \lambda_{[\xi']}^{\frac{n}{4} -\frac{N}{2}}     \sum_{\lambda_{[\xi]}<\lambda_{[\xi']} ,j,k}\frac{1}{(1-\lambda_{[\xi']}^{-\tau}\lambda_{[\xi]}^\tau))^{\frac{N}{2}}}\\
   &\lesssim \varepsilon_{0}^{-\frac{N}{2}}\lambda_{[\xi']}^{\frac{n}{4} -\frac{N}{2}}     \sum_{\lambda_{[\xi]}<\lambda_{[\xi']} }d_{\xi}^2\\
   &= O(\varepsilon_{0}^{-\frac{N}{2}}\lambda_{[\xi']}^{\frac{n}{4} -\frac{N}{2}}     \lambda_{[\xi']} ^{{n}})<\infty,
\end{align*} 
where we have used $\varepsilon_0$ from \eqref{gap}, and also that $\frac{n}{4}-\frac{N}{2}+n=\frac{5n}{4}-\frac{N}{2}\geq     0,$ or equivalently that $N\geqslant    \frac{5n}{2}.$ Now, we will estimate  $II$  as follows,
\begin{align*}
    &\sum_{\lambda_{[\xi]}>\lambda_{[\xi']} ,j,k}\frac{(d_{\xi'}+d_\xi)^{\frac{1}{2}}}{|\lambda_{[\xi']}^\tau-\lambda_{[\xi]}^\tau|^{\frac{N}{2}}}\lesssim \sum_{\lambda_{[\xi]}>\lambda_{[\xi']} ,j,k}\frac{d_\xi^{\frac{1}{2}}}{|\lambda_{[\xi']}^\tau-\lambda_{[\xi]}^\tau|^{\frac{N}{2}}}\\
   & \lesssim \sum_{\lambda_{[\xi]}>\lambda_{[\xi']} ,j,k}\frac{d_\xi^{\frac{1}{2}}}{|\lambda_{[\xi']}^\tau-\lambda_{[\xi]}^\tau|^{\frac{N}{2}}}\lesssim  \sum_{\lambda_{[\xi]}>\lambda_{[\xi']} ,j,k}\frac{\lambda_{[\xi]}^{\frac{n}{4}}}{|\lambda_{[\xi']}^\tau-\lambda_{[\xi]}^\tau|^{\frac{N}{2}}}.
\end{align*}Observe that
\[ 
    \sum_{\lambda_{[\xi]}>\lambda_{[\xi']} ,j,k}\frac{\lambda_{[\xi]}^{\frac{n}{4}}}{|\lambda_{[\xi']}^\tau-\lambda_{[\xi]}^\tau|^{\frac{N}{2}}}\leqslant \varepsilon_0^{-\frac{N}{2}}\sum_{\lambda_{[\xi]}>\lambda_{[\xi']} ,j,k}{\lambda_{[\xi]}^{\frac{n}{4}-\frac{N}{2}}}\leqslant \varepsilon_0^{-\frac{N}{2}}\sum_{[\xi]\in \widehat{G} }d_{\xi}^2{\lambda_{[\xi]}^{\frac{n}{4}-\frac{N}{2}}}<\infty,
\]
 provided that $N\geqslant    \frac{5n}{2}.$ In order to finish the proof we will estimate the norm $\Vert S_{[\xi],j,k}S^{*}_{[\xi'],j',k'} \Vert_{\mathscr{B}(L^2(\widehat{G}),L^2(G)}$ as we have assumed in \eqref{toestimate2223}. However, observe that 
 \begin{align*}
 & S_{[\xi],j,k}(S^{*}_{[\xi'],j',k'}v) (x)=d_{\xi}e^{i\phi_{jj}(x,\xi)}\sigma(x,\xi)_{jk}[S^{*}_{[\xi'],j',k'}v(\xi)]_{kj}\cdot\delta_{([\xi],j,k),([\xi'],j',k')}\\
  &=d_{\xi}e^{i\phi_{jj}(x,\xi)}\sigma(x,\xi)_{jk}\int\limits_{G}e^{-i\phi_{j'j'}(y,\xi)}\overline{\sigma(y,\xi')}_{j'k'}v(y)dy\cdot \delta_{([\xi],j,k),([\xi'],j',k')}.
 \end{align*}So, we can estimate
 \begin{align*}
  &\Vert S_{[\xi],j,k}S^{*}_{[\xi'],j',k'} v\Vert_{L^2(G)}   \\&\leq \Vert d_{\xi}e^{i\phi_{jj}(\cdot ,\xi)}\sigma(\cdot ,\xi)_{jk}\int\limits_{G}e^{-i\phi_{j'j'}(y,\xi)}\overline{\sigma(y,\xi')}_{j'k'}v(y)dy\cdot \delta_{([\xi],j,k),([\xi'],j',k')} \Vert_{L^2(G)}\\
  &\leqslant \sup_{x\in G,u,v}|\sigma(x,\xi)_{uv}|^{2}d_{\xi}\Vert v\Vert_{L^1(G)}\cdot \delta_{([\xi],j,k),([\xi'],j',k')}\\
  &\leqslant \sup_{x\in G,u,v}|\sigma(x,\xi)_{uv}|^{2}d_{\xi}\Vert v\Vert_{L^2(G)}\cdot\delta_{([\xi],j,k),([\xi'],j',k')}\\
  &\leqslant \sup_{x\in G}\Vert \sigma(x,\xi)\Vert_{\textnormal{op}}^{2}d_{\xi}\Vert v\Vert_{L^2(G)}\cdot\delta_{([\xi],j,k),([\xi'],j',k')}\\
  &\lesssim_{\sigma} (d_{\xi}+d_{\xi'})\Vert v\Vert_{L^2(G)}\cdot\delta_{([\xi],j,k),([\xi'],j',k')}.
 \end{align*} So, from the estimate  $$\Vert S_{[\xi],j,k}S^{*}_{[\xi'],j',k'} \Vert_{\mathscr{B}({L^2(G)})}  \lesssim_{\sigma}(d_{\xi}+d_{\xi'})\cdot\delta_{([\xi],j,k),([\xi'],j',k')}$$
 we can deduce \eqref{toestimate2223}. The proof is complete.
\end{proof}
\begin{remark}
In Theorem \ref{L2FIO}, we can replace the conditions \begin{equation}\label{613remark}
  \sup_{(x,[\xi])\in G\times \widehat{G}}  \Vert X^{\alpha}_{x}\sigma(x,\xi)\Vert_{\textnormal{op}}<\infty,\,\,\,\,\vert \nabla_{\mathbb{X}}\phi_{jj}(x,\xi)-\nabla_{\mathbb{X}}\phi_{j'j'}(x,\xi')\vert\asymp |\lambda_{[\xi]}^\tau-\lambda_{[\xi']}^\tau|,
\end{equation} by the following ones
\begin{equation}\label{613remark2}
  \sup_{(x,[\xi])\in G\times \widehat{G}}  \Vert \partial_{X}^{(\alpha)}\sigma(x,\xi)\Vert_{\textnormal{op}}<\infty,\,\,\vert \nabla\phi_{jj}(x,\xi)-\nabla\phi_{j'j'}(x,\xi')\vert\asymp |\lambda_{[\xi]}^\tau-\lambda_{[\xi']}^\tau|,
\end{equation} where $\nabla:=(\partial_{X_{1}},\cdots, \partial_{X_{n}}),$ is the gradient field defined by the local derivatives induced by the local coordinates.
\end{remark}

\section{Appendix I:  Sub-Laplacians on $\mathbb{S}^3,$ $\textnormal{SO}(4),$  $ \textnormal{SU}(3),$ and $\textnormal{Spin}(4) $}\label{examplessublaplacians} We will present some examples of sub-Laplacians on some compact Lie groups. By abuse of notation, we will use the same symbol to denote an element of the Lie algebra and the vector field on the group  obtained by left translation.
\begin{example}[Sub-Laplacians on $\textnormal{SU}(2)\cong \mathbb{S}^3$]\label{SU2} Let us consider the left-invariant first-order  differential operators $\partial_{+},\partial_{-},\partial_{0}: C^{\infty}(\textnormal{SU}(2))\rightarrow C^{\infty}(\textnormal{SU}(2)),$ called creation, annihilation, and neutral operators respectively, (see Definition 11.5.10 of \cite{Ruz}) and let us define 
\[ 
    X_{1}=-\frac{i}{2}(\partial_{-}+\partial_{+}),\, X_{2}=\frac{1}{2}(\partial_{-}-\partial_{+}),\, X_{3}=-i\partial_0,
\]where $X_{3}=[X_1,X_2],$ based in the commutation relations $[\partial_{0},\partial_{+}]=\partial_{+},$ $[\partial_{-},\partial_{0}]=\partial_{-},$ and $[\partial_{+},\partial_{-}]=2\partial_{0}.$ The system $X=\{X_1,X_2\}$ satisfies the H\"ormander condition at step $\kappa=2,$ and the Hausdorff dimension defined by the control distance associated to the sub-Laplacian   $\mathcal{L}_1=-X_1^2-X_2^2$  is $Q=4.$ In a similar way, we can define the sub-Laplacian $\mathcal{L}_2=-X_2^2-X_3^2$ associated to the system of vector fields $X'=\{X_2,X_3\},$ which also satisfies the H\"ormander condition of step $\kappa=2.$ 

\end{example}
\begin{example}[The sub-Laplacian on $\textnormal{SO}(4)$]
Consider the Lie group $\textnormal{SO}(4)=\{g\in \textnormal{GL}(3,\mathbb{R}):gg^t=I_4\equiv(\delta_{ij})_{1\leqslant i,j\leqslant 4},\, \textnormal{\bf{det}}(g)=1\},$ with Lie algebra $\mathfrak{so}(4).$ The latter consists of all skew-symmetric matrices. This Lie algebra is generated by matrices of the form
\[ 
    B^{ij}:=e_{i}e_{j}^t-e_{j}e_{i}^t,
\]where $\{e_{i}\}_{i=1}^{4}$ are the canonical vectors in $\mathbb{R}^4.$ Let us define $$ X_1:=B^{12},\, X^2:=B^{14},\, X_3:=B^{24},\, X_4:=B^{34}.$$ By writing  $$Z_1:=-[X_2,X_4]=B^{13},\textnormal{   and   }Z_{2}:=-[X_3,X_4]=B^{23},$$ from T{\SMALL{ABLE}} \ref{table1} the system of vector fields $X=\{X_1,X_2,X_3,X_4\}$ satisfies the H\"ormander condition of step $\kappa=2$ (see Berge and Grong \cite{Berge}). So, the Hausdorff dimension associated to the control distance associated to the sub-Laplacian
\[ 
    \mathcal{L}=-X_1^2-X_2^2-X_{3}^2-X_4^2,
\] can be computed from \eqref{Hausdorff-dimension} as follows.
\begin{align*}
    Q:&=\dim(H^1G)+\sum_{i=1}^{2} (i+1)(\dim H^{i+1}G-\dim H^{i}G )
    = 4+2(6-4)=8.
\end{align*}
\begin{table}[h]  
\caption{Commutators in $\textnormal{SO}(4)$ } % title name of the table  
\centering % centering table  
\begin{tabular}{l c c rrrrrrr} % creating 10 columns  
\hline\hline   
       & \,\, $X_1$ & \,\, $X_2$  & \,\, $X_3$ & \,\, $X_4$ & \,\, $Z_1$ & \,\, $Z_2$    \\
$X_1$  &  0 & $-X_3$  & $X_2$   & $0$ & $-Z_2$  & $Z_1$    \\

$X_2$  & $X_3$  &  0 & $-X_1$ & $-Z_1$ & $X_4$  & $0$     \\

$X_3$  &  $-X_2$ &  $X_1$ & 0 &  $-Z_2$&  $0$& $X_4$    \\

$X_4$  & $0$ & $Z_1$  & $Z_2$ & $0$ & $-X_2$ & $-X_3$    \\

$Z_1$  & $Z_2$ &  $-X_4$ & $0$ & $X_2$ & 0  & $-X_1$    \\

$Z_2$  & $-Z_1$ & $0$  & $-X_4$  & $X_3$ &  $X_1$ & 0  \\[1ex]  
\hline % inserts single-line  
\end{tabular}  
\label{table1}
\end{table}

\end{example}
\begin{example}[The sub-Laplacian on $\textnormal{SU}(3)$] The special unitary group of $3\times 3$ complex matrices is defined by $$\textnormal{SU}(3)=\{g\in \textnormal{GL}(3,\mathbb{C}):gg^*=I_3\equiv(\delta_{ij})_{1\leqslant i,j\leqslant 3},\, \textnormal{\bf{det}}(g)=1\},$$
and its Lie algebra is given by
$$  \mathfrak{su}(3)=\{g\in \textnormal{GL}(3,\mathbb{C}):g+g^*=0,\, \textnormal{\bf{Tr}}(g)=0\}.$$ The inner product is defined by a multiple of the Killing form on $ \mathfrak{su}(3)$  given by $B(X,Y)=-1/2\textnormal{\bf{Tr}}[XY].$ The torus
$$ \mathbb{T}_{\textnormal{SU}(3)}=\{\textnormal{diag}[e^{i\theta_{1}},e^{i\theta_{2}},e^{i\theta_{3}}]:\theta_1+\theta_2+\theta_3=0,\,\theta_i\in \mathbb{R}\}  $$ is a maximal torus of $\textnormal{SU}(3),$ and its Lie algebra is given by
$$  \mathfrak{t}_{ \mathfrak{su}(3)}=\{\textnormal{diag}[i\theta_{1},i\theta_{2},i\theta_{3}]:\theta_1+\theta_2+\theta_3=0,\,\theta_i\in \mathbb{R}\}.  $$  The following vectors  
$$ T_{1}= \textnormal{diag}[-i,i,0],\quad T_{2}= \textnormal{diag}[-i/\sqrt{3},-i/\sqrt{3},2i/\sqrt{3}] $$ provide a basis for $ \mathfrak{t}_{ \mathfrak{su}(3)}.$ Completing this basis with the following vectors
\begin{align*}
  &X_{1}=  \begin{pmatrix}
0 & 1 &   0 \\
-1 & 0&  0 \\
0 & 0 & 0 
\end{pmatrix},\,
X_{2}=  \begin{pmatrix}
0 & i &   0 \\
i & 0&  0 \\
0 & 0 & 0 
\end{pmatrix},\,
\\
&X_{3}=  \begin{pmatrix}
0 & 0 &   0 \\
0 & 0&  1 \\
0 & -1 & 0 
\end{pmatrix},\,  X_{4}=  \begin{pmatrix}
0 & 0 &   0 \\
0 & 0&  -i \\
0 & -i & 0 
\end{pmatrix},\,
\\
    &X_{5}=  \begin{pmatrix}
0 & 0 &   1 \\
0 & 0&  0 \\
-1 & 0 & 0 
\end{pmatrix},\,
X_{6}=  \begin{pmatrix}
0 & 0 &   i \\
0 & 0&  0 \\
i & 0& 0 
\end{pmatrix},
\end{align*}
we obtain the  Gell-Mann system, which forms an orthonormal basis of $\mathfrak{su}(3).$ The system of vector fields $X=\{X_1,X_2,X_3,X_4,X_5,X_6\}$ satisfies the H\"ormander condition at step $\kappa=2,$ (see Domokos and Manfredi \cite{Domokos2}). Indeed, it can be deduced if we write
\[ 
  X_{7}=-[X_1,X_2]=  \begin{pmatrix}
-2i & 0 &   0 \\
0 & 2i&  0 \\
0 & 0 & 0 
\end{pmatrix},\,
\]
\[ 
    X_{8}=-[X_3,X_4]=  \begin{pmatrix}
0 & 0 &   0 \\
0 & 2i&  0 \\
0 & 0& -2i
\end{pmatrix}
\]
from T{\SMALL{ABLE}} \ref{table2}.
\begin{table}[h]  
\caption{Commutators in $\textnormal{SU}(3)$ } % title name of the table  
\centering % centering table  
\begin{tabular}{l c c rrrrrrr} % creating 10 columns  
\hline\hline   
       & \,\, $X_1$ & \,\, $X_2$  & \,\, $X_3$ & \,\, $X_4$ & \,\, $X_5$ & \,\, $X_6$ & \,\, $X_7$ &\,\, $X_8$    \\
$X_1$  &  0 & $-X_7$  & $X_5$   & $-X_6$ & $-X_3$  & $X_4$ & $4X_2$ & $2X_2$   \\

$X_2$  & $X_7$  &  0 & $X_6$ & $X_5$ & $-X_4$  & $-X_3$  & $-4X_1$ & $-2X_1$   \\

$X_3$  &  $-X_5$ &  $-X_6$ & 0 &  $-X_8$&  $X_1$& $X_2$ & $2X_4$  & $4X_4$   \\

$X_4$  & $X_6$ & $-X_5$  & $X_8$ & $0$ & $X_2$ & $-X_1$ &  $-2X_3$ & $-4X_3$   \\

$X_5$  & $X_3$ &  $X_4$ & $-X_1$ & $-X_2$ & 0  & $X_8-X_7$  & $2X_6$ & $-2X_6$   \\

$X_6$  & $-X_4$ & $X_3$  & $-X_2$  & $X_1$ &  $X_7-X_8$ & 0 & $-2X_5$ & $2X_5$    \\

$X_7$  & $-4X_2$ & $4X_1$  & $-2X_4$  & $2X_3$ & $-2X_6$ & $2X_5$ & 0 &  0 \\

$X_8$  & $2X_2$ &  $2X_1$ & $-4X_4$ & $4X_3$ & $2X_6 $  & $-2X_5$  & 0 & 0   \\[1ex]  
\hline % inserts single-line  
\end{tabular}  
\label{table2}
\end{table}Observe that the Hausdorff dimension associated to the control distance associated to the sub-Laplacian
\[ 
    \mathcal{L}=-X_1^2-X_2^2-X_{3}^2-X_4^2-X_5^2-X_6^2,
\] can be computed from \eqref{Hausdorff-dimension} as follows.
\begin{align*}
    Q:&=\dim(H^1G)+ 2(\dim H^{2}G-\dim H^{1}G )
    = 6+2(8-6)=10.
\end{align*}

\end{example} 
\begin{example}[The sub-Laplacian on $\textnormal{Spin}(4)\cong \textnormal{SU}(2)\times \textnormal{SU}(2) $] Let us consider the Lie algebra of $\textnormal{SU}(2),$ $\mathfrak{su}(2)$ spanned by the following matrices
$$A=(1/\sqrt{2})\begin{pmatrix}
0 & i  \\
i & 0 
\end{pmatrix},B=(1/\sqrt{2})\begin{pmatrix}
0 & -1  \\
1 & 0 
\end{pmatrix}\,\textnormal{ and }C=(1/\sqrt{2})\begin{pmatrix}
i & 0  \\
0 & -i 
\end{pmatrix}.$$
On the Lie group $\textnormal{Spin}(4)\cong \textnormal{SU}(2)\times \textnormal{SU}(2) $ with Lie algebra $\mathfrak{spin}(4)\cong \mathfrak{su}(2)\times \mathfrak{su}(2),$ let us define the following vector fields,
\[ 
    X^{\pm}=A_1\pm A_2, \,Y^{\pm}=B_1\pm B_2,\,\,Z^{\pm}=C_1\pm C_2,
\]where
\[ 
    A_{1}=A\otimes I,\,A_{2}=I\otimes A,\,B_{1}=B\otimes I,\,B_{2}=I\otimes B,C_{1}=C\otimes I,\,C_{2}=I\otimes C.
\]For every real-number $c\in (-\infty,\infty),$ let $$X^c:=X^{-}+cX^{+}.$$

\begin{table}[h]  
\caption{Commutators in $\textnormal{Spin}(4)\cong \textnormal{SU}(2)\times \textnormal{SU}(2)$ } % title name of the table  
\centering % centering table  
\begin{tabular}{l c c rrrrrrr} % creating 10 columns  
\hline\hline   
       & \,\, $X^c$ & \,\, $Y^-$  & \,\, $Z^-$ & \,\, $X^+$ & \,\, $Y^+$ & \,\, $Z^+$ \\
       
$X^c$  & 0  & $Z^+ +cZ^{-}$  & $-Y^+ -cY^-$   & $0$ & $Z^-+ cZ^+$  & $-Y^- -cY^+$    \\

$Y^-$  & $-Z^+-cZ^-$  &  0 & $X^+$ & $-Z^{-}$ & $0$  & $X^c-cX^+$     \\

$Z^-$  &  $Y^++cY^-$ &  $-X^+$ & 0 &  $Y^-$&  $cX^+-X^c$& $0$    \\

$X^+$  & $0$ & $Z^-$  & $-Y^-$ & $0$ & $-Z^+$ & $-Y^+$    \\

$Y^+$  & $-Z^--cZ^+$ &  $0$ & $X^c-cX^+$ & $Z^+$ & 0  & $X^+$    \\

$Z^+$  & $Y^-+cY^+$ & $cX^+-X^c$  & $0$  & $Y^+$ &  $-X^+$ & 0      \\[1ex]  
\hline % inserts single-line  
\end{tabular}  
\label{table22}
\end{table}
The system $X=\{X^c,Y^-,Z^-\}$ satisifes the H\"ormander condition, and we can consider the associated sub-Laplacian associated with $X,$
$$  \mathcal{L}=-(X^c)^2-(Y^-)^2-(Z^-)^2. $$ 

\end{example}

\section{Appendix II: Subelliptic Besov spaces}\label{Finalsect}
In this appendix  we present the description of subelliptic Besov spaces as in \cite{CardonaRuzhansky2019I}.
 On $\mathbb{R}^n,$ Besov spaces appears in \cite{Besov1,Besov2}. We refer the reader to Triebel \cite{Triebel1983,Triebel2006}, Furioli,  Melzi, and   Veneruso, \cite{furioli},  Nursultanov, Ruzhansky and Tikhonov \cite{NurRuzTikhBesov2015,NurRuzTikhBesov2017},  Peetre \cite{Peetre1,Peetre2}, and \cite{Cardona22}, for the analytic aspects of the theory of Besov spaces on $\mathbb{R}^n$ and others Lie groups. 

In order to define subelliptic Besov spaces we will use the notion of dyadic decompositions. 
Here, the sequence $\{\psi_{\tilde\ell}\}_{\tilde\ell\in \mathbb{N}_{0}}$ is a dyadic decomposition,  defined as follows:  we choose a function $\psi_0\in C^{\infty}_{0}(\mathbb{R}),$  $\psi_0(\lambda)=1,$  if $|\lambda|\leq 1,$ and $\psi_0(\lambda)=0,$ for $|\lambda|\geq 2.$ For every $j\geq 1,$ let us define $\psi_{j}(\lambda)=\psi_{0}(2^{-j}\lambda)-\psi_{0}(2^{-j+1}\lambda).$ For $\psi(\lambda):=\psi_0(\lambda)-\psi_{0}(2\lambda),$ $\psi_{j}(\lambda)=\psi(2^{-j}\lambda).$  In particular, we have
\begin{eqnarray}\label{deco1}
\sum_{\tilde\ell\in \mathbb{N}_{0}}\psi_{\tilde\ell}(\lambda)=1,\,\,\, \text{for every}\,\,\, \lambda>0.
\end{eqnarray}
We define the operator
\begin{equation}\label{functionofL''}
    \psi_{\tilde\ell}(D) f(x):=\sum_{ [\xi]\in \widehat{G}   }d_\xi \textnormal{\textbf{Tr}}[\xi(x) \psi_{\tilde\ell}((I_{d_\xi}+\widehat{ \mathcal{L}}(\xi))^\frac{1}{2})\widehat{f}(\xi) ],\,\,f\in C^\infty(G).
\end{equation} 
So, for $s\in\mathbb{R},$ $0<q<\infty,$ the subelliptic Besov space $B^{s,\mathcal{L}}_{p,q}(G)$ consists of those functions/distributions satisfying
\[ 
   \Vert f\Vert_{ B^{s,\mathcal{L}}_{p,q}(G)  } =\left( \sum_{\tilde\ell=0}^\infty  2^{\tilde\ell q s}\Vert \psi_{\tilde\ell}(D)f \Vert_{L^p(G)}^q \right)^{\frac{1}{q}}<\infty,
\] for $0<p\leqslant \infty,$ with the following modification
\[ 
   \Vert f\Vert_{ B^{s,\mathcal{L}}_{p,\infty}(G)  } = \sup_{\tilde\ell\in\mathbb{N}_0}  2^{\tilde\ell  s}\Vert \psi_{\tilde\ell}(D)f \Vert_{L^p(G)} <\infty,
\] when $q=\infty.$ In \cite{CardonaRuzhansky2019I}, the authors have described the subelliptic Besov spaces in terms of the matrix-valued quantization. We record it as follows (see Remark 4.2 in \cite{CardonaRuzhansky2019I}).
\begin{remark}[Fourier description for subelliptic Besov spaces]
If we write  $\widehat{\mathcal{M}}(\xi):=(\mathcal{M}\xi)(e_G)$ for the symbol of  the operator $(1+\mathcal{L})^{\frac{1}{2}},$ which  is given by,
\[ 
\widehat{\mathcal{M}}(\xi)  =\begin{bmatrix}
    (1+\nu_{11}(\xi)^2)^{\frac{1}{2}} & 0 & 0 & \dots  & 0 \\
     0 & (1+\nu_{22}(\xi)^2)^{\frac{1}{2}}  & 0 & \dots  & 0 \\
    \vdots & \vdots & \vdots & \ddots & \vdots \\
   0 & 0 &0 & \dots  & (1+\nu_{d_\xi d_\xi}(\xi)^2)^{\frac{1}{2}}
\end{bmatrix},
\] so that $\widehat{M}(\xi):= \textnormal{diag}[(1+\nu_{ii}(\xi)^2)^{\frac{1}{2}}]_{1\leqslant i\leqslant d_\xi} ,  $ then    $\psi_{\tilde\ell}(\xi)$ denotes the symbol of the operator $\psi_{\tilde\ell}(D),$ and we have
\[ 
    \psi_{\tilde\ell}(\xi)=\textnormal{diag}[\psi_{\tilde\ell}((1+\nu_{ii}(\xi)^2)^{\frac{1}{2}})]_{1\leqslant i\leqslant d_\xi},\,\,\tilde\ell\in\mathbb{N}_0,\,\,[\xi]\in \widehat{G}.
\]Then the  subelliptic Besov spaces  can be re-written as
\begin{equation}\label{q<inftyLaPgjgj}
   \Vert f\Vert_{ B^{s,\mathcal{L}}_{p,q}(G)  }^q =\sum_{\tilde\ell=0}^\infty  2^{\tilde\ell q s}\left\Vert  \sum_{ [\xi]\in \widehat{G}}d_\xi \textnormal{\textbf{Tr}}[\xi(x)\textnormal{diag}[\psi_{\tilde\ell}((1+\nu_{ii}(\xi)^2)^{\frac{1}{2}})] \widehat{f}(\xi) ]\right\Vert_{L^p(G)}^q <\infty,
\end{equation} for $0<p\leqslant \infty,$ and
\begin{equation}\label{q=inftyLaPhjkhk}
   \Vert f\Vert_{ B^{s,\mathcal{L}}_{p,\infty}(G)  } = \sup_{\tilde\ell\in\mathbb{N}_0}  2^{\tilde\ell  s}\left\Vert  \sum_{ [\xi]\in \widehat{G}   }d_\xi \textnormal{\textbf{Tr}}[\xi(x) \textnormal{diag}[\psi_{\tilde\ell}((1+\nu_{ii}(\xi)^2)^{\frac{1}{2}})]\widehat{f}(\xi) ]\right\Vert_{L^p(G)} <\infty,
\end{equation} for $q=\infty.$ 

\end{remark}

\begin{remark}
Subelliptic  Sobolev and Besov spaces may depend on the choice of a sub-Laplacian on a compact Lie group (see Remark \ref{Sdependence}). This is one of the contrasts to the case of graded Lie groups (see \cite[Chapter 4]{FischerRuzhanskyBook}).
\end{remark}
The following are the embedding properties proved in Theorem 4.3 of \cite{CardonaRuzhansky2019I}.
\begin{theorem}
Let $G$ be a compact Lie group and let us denote by  $Q$  the Hausdorff dimension of $G$ associated to the control distance associated to the sub-Laplacian $\mathcal{L}=-(X_1^2+\cdots +X_k^2),$ where the system of vector fields $X=\{X_i\}$ satisfies the H\"ormander condition.  Then 
\begin{itemize}
\item[(1)] ${B}^{r+\varepsilon,\mathcal{L}}_{p,q_1}(G)\hookrightarrow {B}^{r,\mathcal{L}}_{p,q_1}(G)\hookrightarrow {B}^{r,\mathcal{L}}_{p,q_2}(G)\hookrightarrow {B}^{r,\mathcal{L}}_{p,\infty}(G),$  $\varepsilon>0,$ $0<p\leqslant \infty,$ $0<q_{1}\leqslant q_2\leqslant \infty.$\\
\item[(2)]  ${B}^{r+\varepsilon,\mathcal{L}}_{p,q_1}(G)\hookrightarrow {B}^{r,\mathcal{L}}_{p,q_2}(G)$, $\varepsilon>0,$ $0<p\leqslant \infty,$ $1\leqslant q_2<q_1<\infty.$\\
\item[(3)]  ${B}^{r_1,\mathcal{L}}_{p_1,q}(G)\hookrightarrow {B}^{r_2,\mathcal{L}}_{p_2,q}(G),$  $1\leqslant p_1\leqslant p_2\leqslant \infty,$ $0<q<\infty,$ $r_{1}\in\mathbb{R},$ and $r_2=r_1- {Q}(\frac{1}{p_1}-\frac{1}{p_2}).$\\
\item[(4)] ${H}^{r,\mathcal{L}}(G)={B}^{r,\mathcal{L}}_{2,2}(G)$ and ${B}^{r,\mathcal{L}}_{p,p}(G)\hookrightarrow {L}_r^{p,\mathcal{L}}(G)\hookrightarrow {B}^{r,\mathcal{L}}_{p,2}(G),$ $1<p\leqslant 2.$\\
\item[(5)] ${B}^{r,\mathcal{L}}_{p,1}(G)\hookrightarrow L^{q}(G), $ $1\leqslant p\leqslant q\leqslant \infty,$ $r= {Q}(\frac{1}{p}-\frac{1}{q})$ and $L^{q}(G)\hookrightarrow {B}^{0,\mathcal{L}}_{q,\infty}(G)$ for $1<q\leqslant \infty.$
\end{itemize}
\end{theorem}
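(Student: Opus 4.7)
The plan is to derive all five statements from two basic ingredients: the atomic/monotonicity properties of the dyadic sum $\sum_{\tilde\ell} 2^{\tilde\ell q s}\Vert \psi_{\tilde\ell}(D)f\Vert_{L^p}^q$ on the one hand, and a subelliptic Bernstein-type inequality for the Littlewood--Paley pieces $\psi_{\tilde\ell}(D)$ on the other. Throughout I will work with the dyadic blocks $f_{\tilde\ell}:=\psi_{\tilde\ell}(D)f$, where $\psi_{\tilde\ell}(D)$ is a spectral multiplier of $(1+\mathcal{L})^{1/2}$ as in \eqref{functionofL''}, so that by Theorem \ref{orders:theorems} each $\psi_{\tilde\ell}(D)$ lies in the subelliptic calculus and, in particular, has a convolution kernel controlled in the style of Lemma \ref{HulanickiTheorem}.

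\textbf{Parts (1) and (2).} First I would observe that (1) is immediate from the definition: the embedding ${B}^{r+\varepsilon,\mathcal{L}}_{p,q_1}\hookrightarrow {B}^{r,\mathcal{L}}_{p,q_1}$ follows from $2^{\tilde\ell q_1 r}\leq 2^{\tilde\ell q_1(r+\varepsilon)}$, while ${B}^{r,\mathcal{L}}_{p,q_1}\hookrightarrow {B}^{r,\mathcal{L}}_{p,q_2}$ is the sequence-space inclusion $\ell^{q_1}\hookrightarrow\ell^{q_2}$ applied to $\{2^{\tilde\ell r}\Vert f_{\tilde\ell}\Vert_{L^p}\}$ (this sequence is bounded and $q_1$-summable, hence $q_2$-summable). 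For (2), with $q_2<q_1$, I would apply H\"older with exponents $q_1/q_2$ and $q_1/(q_1-q_2)$ to the sum $\sum_{\tilde\ell} 2^{-\tilde\ell \varepsilon q_2}\bigl(2^{\tilde\ell(r+\varepsilon)}\Vert f_{\tilde\ell}\Vert_{L^p}\bigr)^{q_2}$; the geometric factor $\sum 2^{-\tilde\ell\varepsilon q_1 q_2/(q_1-q_2)}<\infty$ provides the desired bound.

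\textbf{Part (3): the subelliptic Bernstein inequality.} This is the main obstacle. I would prove a Bernstein estimate of the form
\begin{equation*}
\Vert \psi_{\tilde\ell}(D)g\Vert_{L^{p_2}(G)}\lesssim 2^{\tilde\ell\, Q(\frac{1}{p_1}-\frac{1}{p_2})}\Vert \psi_{\tilde\ell}(D)g\Vert_{L^{p_1}(G)},\qquad 1\le p_1\le p_2\le \infty,
\end{equation*}
whenever the spectral support of $g$ is localised in $\{2^{\tilde\ell}\le (1+\lambda)^{1/2}<2^{\tilde\ell+1}\}$. The idea is to apply Young's convolution inequality $\Vert K_{\tilde\ell}\ast h\Vert_{L^{p_2}}\le \Vert K_{\tilde\ell}\Vert_{L^{s}}\Vert h\Vert_{L^{p_1}}$ with $1+1/p_2=1/s+1/p_1$, where $K_{\tilde\ell}$ is the right-convolution kernel of $\psi_{\tilde\ell}(D)\widetilde\psi_{\tilde\ell}(D)$ for a fattened cutoff $\widetilde\psi_{\tilde\ell}$ with $\widetilde\psi_{\tilde\ell}\psi_{\tilde\ell}=\psi_{\tilde\ell}$. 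The $L^s$-norm of $K_{\tilde\ell}$ is then estimated by subelliptic Hulanicki-type bounds (Lemma \ref{HulanickiTheorem}), which together with the Weyl counting for $\mathcal{L}$ (Remark \ref{weyl}) give $\Vert K_{\tilde\ell}\Vert_{L^s}\lesssim 2^{\tilde\ell Q(1-1/s)}=2^{\tilde\ell Q(1/p_1-1/p_2)}$. Once this inequality is in hand, the embedding (3) is obtained by taking $\ell^q$-norms in $\tilde\ell$ with weights $2^{\tilde\ell r_2}=2^{\tilde\ell r_1}\cdot 2^{-\tilde\ell Q(1/p_1-1/p_2)}$, which exactly absorbs the Bernstein factor.

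\textbf{Parts (4) and (5).} For (4), the equality ${H}^{r,\mathcal{L}}={B}^{r,\mathcal{L}}_{2,2}$ is the Plancherel identity applied to \eqref{q<inftyLaPgjgj}, together with the spectral identity $\sum_{\tilde\ell}|\psi_{\tilde\ell}|^2\asymp 1$ on $[0,\infty)$ built from the dyadic partition \eqref{deco1}. The two-sided embedding ${B}^{r,\mathcal{L}}_{p,p}\hookrightarrow L^{p,\mathcal{L}}_r\hookrightarrow {B}^{r,\mathcal{L}}_{p,2}$ for $1<p\le 2$ is then the subelliptic Littlewood--Paley theorem (boundedness of the square function associated with $\{\psi_{\tilde\ell}(D)\}$ on $L^p(G)$), which I would deduce from Theorem \ref{LpQL} applied to the Fourier multipliers $\psi_{\tilde\ell}((1+\mathcal{L})^{1/2})$ combined with a standard vector-valued Khintchine randomization and the reverse triangle inequality in $L^p$ for $p\le 2$. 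For (5), the first embedding ${B}^{r,\mathcal{L}}_{p,1}\hookrightarrow L^q$ with $r=Q(1/p-1/q)$ follows by summing the Bernstein inequality from (3) over $\tilde\ell$: writing $f=\sum_{\tilde\ell}f_{\tilde\ell}$ and applying the triangle inequality in $L^q$ together with Bernstein yields $\Vert f\Vert_{L^q}\le \sum 2^{\tilde\ell Q(1/p-1/q)}\Vert f_{\tilde\ell}\Vert_{L^p}=\Vert f\Vert_{B^{r,\mathcal{L}}_{p,1}}$; the embedding $L^q\hookrightarrow {B}^{0,\mathcal{L}}_{q,\infty}$ for $1<q\le\infty$ is the uniform $L^q$-boundedness of the family $\{\psi_{\tilde\ell}(D)\}$, which follows again from Theorem \ref{LpQL}, since each $\psi_{\tilde\ell}(D)$ has symbol satisfying the hypotheses of that theorem with constants uniform in $\tilde\ell$.
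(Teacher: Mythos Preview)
The paper does not prove this theorem; it is quoted in Appendix~II from \cite{CardonaRuzhansky2019I} (Theorem~4.3 there), so there is no in-text argument to compare against. Your outline is the standard route and is essentially correct, but a couple of steps are looser than they look and deserve comment.

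For part~(3) your Bernstein estimate hinges on $\Vert K_{\tilde\ell}\Vert_{L^s}\lesssim 2^{\tilde\ell Q(1-1/s)}$. Lemma~\ref{HulanickiTheorem} only furnishes the $L^1$ endpoint $\Vert K_{\tilde\ell}\Vert_{L^1}\lesssim 1$; to interpolate up to arbitrary $s\in[1,\infty]$ you also need an $L^\infty$ (or at least $L^2$) endpoint, which follows from the pointwise heat-kernel bounds in Remark~\ref{remarksub:heatkernel}~(B) together with the representation $h_t(\mu)=e^{-t\mu^2}f(t\mu^2)$ used in Step~I of that proof (giving $\Vert K_{\tilde\ell}\Vert_{L^\infty}\lesssim 2^{\tilde\ell Q}$). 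You should make this second endpoint explicit.

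For parts~(4) and~(5), invoking Theorem~\ref{LpQL} directly is a slight mismatch: that theorem handles a single scalar multiplier, not a square function, and its hypotheses are phrased in terms of difference operators $\mathbb{D}^\alpha$ rather than spectral localisation. The cleaner justification for the uniform $L^q$-boundedness of $\psi_{\tilde\ell}(D)$ (needed both for the Littlewood--Paley equivalence in~(4) and for $L^q\hookrightarrow B^{0,\mathcal{L}}_{q,\infty}$ in~(5)) is to note that the kernel of $\psi_{\tilde\ell}(D)$ has $L^1$-norm bounded uniformly in $\tilde\ell$ by Lemma~\ref{HulanickiTheorem}, which gives $L^1\to L^1$ and $L^\infty\to L^\infty$ at once, and then interpolate with the trivial $L^2$ bound. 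The Khintchine randomisation you mention then correctly upgrades uniform $L^p$-boundedness of the family to the square-function estimate needed in~(4); combine this with the pointwise inclusion $\ell^p\hookrightarrow\ell^2$ (for $p\le2$) and Minkowski to obtain both sides of $B^{r,\mathcal{L}}_{p,p}\hookrightarrow L^{p,\mathcal{L}}_r\hookrightarrow B^{r,\mathcal{L}}_{p,2}$.
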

In order to compare  subelliptic Besov spaces with Besov spaces associated to the Laplacian, we  start by presenting the problem for Sobolev spaces (see Theorem 5.9 of \cite{CardonaRuzhansky2019I}). Here, $$\varkappa:=[n/2]+1,$$ is the smallest  integer larger than $n/2,$ $n=\dim(G).$ 
\begin{theorem}\label{SubHvsEllipH} Let $G$ be a compact Lie group and let us consider the sub-Laplacian $\mathcal{L}=-(X_1^2+\cdots +X_k^2),$ where the system of vector fields $X=\{X_i\}_{i=1}^k$ satisfies the H\"ormander condition of order $\kappa$.
 Then we have the continuous embeddings
\begin{equation}
L_s^{p}(G)\hookrightarrow L_s^{p,\mathcal{L}}(G) \hookrightarrow L_{ \frac{s}{\kappa}-\varkappa(1-\frac{1}{\kappa})\left|\frac{1}{2}-\frac{1}{p}\right|  }^{p}(G).
\end{equation} More precisely, for every $s\geq 0$ there exist constants $C_a>0$ and $C_{b}>0$ satisfying,
\begin{equation}\label{Embd1}
   C_a \Vert f\Vert_{ L_{  s_{\kappa,\varkappa} }^{p}(G) }\leq \Vert f\Vert_{L_s^{p,\mathcal{L}}(G)},\,\,f\in L_s^{p,\mathcal{L}}(G),
\end{equation}
where $s_{\kappa,\varkappa}:=\frac{s}{\kappa}-\varkappa(1-\frac{1}{\kappa})\left|\frac{1}{2}-\frac{1}{p}\right|,$ and 
\begin{equation}\label{Embd2}
    \Vert f\Vert_{ L_s^{p,\mathcal{L}}(G)}\leq C_b\Vert f\Vert_{L_s^{p}(G)},\,\,f\in L_s^{p}(G).
\end{equation}
Consequently, we have the following embeddings
\begin{equation}
L_{ -\frac{s}{\kappa}+\varkappa(1-\frac{1}{\kappa})\left|\frac{1}{2}-\frac{1}{p}\right| }^{p}(G)\hookrightarrow L_{-s}^{p,\mathcal{L}}(G) \hookrightarrow L_{-s}^{p}(G) .
\end{equation}
\end{theorem}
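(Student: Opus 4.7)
The plan is to reduce each inclusion to the $L^p$-boundedness of an appropriate matrix-valued Fourier multiplier, and then apply the multiplier theorem (Theorem \ref{TRW2015ZIP}) combined with the pointwise comparison \eqref{GarettoRuzhanskyIneq}: $\langle\xi\rangle^{1/\kappa}\lesssim (1+\nu_{ii}(\xi)^2)^{1/2}\lesssim \langle\xi\rangle$.

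For \eqref{Embd2}, the inclusion $L^p_s\hookrightarrow L^{p,\mathcal{L}}_s$ is equivalent to boundedness on $L^p(G)$ of the left-invariant operator $T_s:=\mathcal{M}_s(1+\mathcal{L}_G)^{-s/2}$, since then $\Vert f\Vert_{L^{p,\mathcal{L}}_s}=\Vert T_s (1+\mathcal{L}_G)^{s/2}f\Vert_{L^p}\leq \Vert T_s\Vert_{\mathscr{B}(L^p)}\Vert f\Vert_{L^p_s}$. The symbol $t_s(\xi)=\widehat{\mathcal{M}}(\xi)^{s}\langle\xi\rangle^{-s}$ is diagonal and, by \eqref{GarettoRuzhanskyIneq} with $s\geq 0$, uniformly bounded in operator norm. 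Using the Leibniz rule for $\mathbb{D}^\alpha$, and the fact that both factors admit elliptic-type decay of differences (order $s$ and $-s$, rate $\rho=1$ in the sense of Theorem \ref{TRW2015ZIP}), one obtains $\Vert \mathbb{D}^\alpha t_s(\xi)\Vert_{\textnormal{op}}\leq C_\alpha \langle\xi\rangle^{-|\alpha|}$ for $|\alpha|\leq \varkappa$, and Theorem \ref{TRW2015ZIP} (with $\rho=1$) delivers the $L^p$-boundedness.

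For \eqref{Embd1}, the inclusion $L^{p,\mathcal{L}}_s\hookrightarrow L^p_{s_{\kappa,\varkappa}}$ reduces to $L^p$-boundedness of $S:=(1+\mathcal{L}_G)^{s_{\kappa,\varkappa}/2}\mathcal{M}_{-s}$, because then $\Vert f\Vert_{L^p_{s_{\kappa,\varkappa}}}=\Vert S\mathcal{M}_sf\Vert_{L^p}\leq \Vert S\Vert_{\mathscr{B}(L^p)}\Vert f\Vert_{L^{p,\mathcal{L}}_s}$. Writing $r:=\varkappa(1-1/\kappa)\left|\frac{1}{p}-\frac{1}{2}\right|$, the identity $s_{\kappa,\varkappa}=s/\kappa-r$ allows the factorisation $S=(1+\mathcal{L}_G)^{-r/2}R$ with $R:=(1+\mathcal{L}_G)^{s/(2\kappa)}\mathcal{M}_{-s}$. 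The symbol $r(\xi)=\langle\xi\rangle^{s/\kappa}\widehat{\mathcal{M}}(\xi)^{-s}$ is uniformly bounded in operator norm by \eqref{GarettoRuzhanskyIneq}. To analyse its differences I would invoke Proposition \ref{lemitapequeño} (with $\rho=1$) to conclude $\widehat{\mathcal{M}}_{-s}\in \mathscr{S}^{-s/\kappa}_{1/\kappa,0}(\widehat{G})$, so that $\Vert \mathbb{D}^{\alpha_2}\widehat{\mathcal{M}}(\xi)^{-s}\Vert_{\textnormal{op}}\lesssim \langle\xi\rangle^{-s/\kappa-|\alpha_2|/\kappa}$, while $\Vert \mathbb{D}^{\alpha_1}\langle\xi\rangle^{s/\kappa}\Vert_{\textnormal{op}}\lesssim \langle\xi\rangle^{s/\kappa-|\alpha_1|}$. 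The Leibniz rule together with the elementary inequality $|\alpha_1|+|\alpha_2|/\kappa\geq |\alpha|/\kappa$ for $\alpha_1+\alpha_2=\alpha$ (valid since $\kappa\geq 1$) gives $\Vert \mathbb{D}^\alpha r(\xi)\Vert_{\textnormal{op}}\lesssim \langle\xi\rangle^{-|\alpha|/\kappa}$ for $|\alpha|\leq \varkappa$. Theorem \ref{TRW2015ZIP} with $\rho=1/\kappa$ then yields $R:L^p_r(G)\to L^p(G)$ bounded, and since $(1+\mathcal{L}_G)^{-r/2}:L^p(G)\to L^p_r(G)$ is an isomorphism and the two multipliers commute, $S=R(1+\mathcal{L}_G)^{-r/2}$ is bounded on $L^p(G)$.

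The dual chain $L^p_{-s_{\kappa,\varkappa}}\hookrightarrow L^{p,\mathcal{L}}_{-s}\hookrightarrow L^p_{-s}$ follows at once by duality, using $(L^p_\sigma)^*=L^{p'}_{-\sigma}$ and $(L^{p,\mathcal{L}}_\sigma)^*=L^{p',\mathcal{L}}_{-\sigma}$, and noting that $s_{\kappa,\varkappa}$ is preserved under $p\leftrightarrow p'$ since $|1/p-1/2|=|1/p'-1/2|$. The main technical point is the verification of the subelliptic difference estimate for the symbol of $R$ in the second inclusion; this is where the loss $\varkappa(1-1/\kappa)\left|\frac{1}{p}-\frac{1}{2}\right|$ is forced by the transition from the subelliptic rate $1/\kappa$ per difference (for $\widehat{\mathcal{M}}_{-s}$) to the elliptic rate $1$ per difference demanded by the reference multiplier $\langle\xi\rangle^{s/\kappa}$, and is the only place where Theorem \ref{TRW2015ZIP} is used in the non-trivial $\rho<1$ regime.
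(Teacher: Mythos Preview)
This theorem is not proved in the paper; it is merely stated in the appendix and attributed to \cite[Theorem~5.9]{CardonaRuzhansky2019I}, so there is no ``paper's proof'' to compare against. Your overall strategy---reduce each inclusion to $L^p$-boundedness of a left-invariant multiplier and appeal to Theorem~\ref{TRW2015ZIP} together with the spectral comparison \eqref{GarettoRuzhanskyIneq}---is the natural one and is presumably what the cited reference does.

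Your argument for \eqref{Embd1} is sound, once one has $\widehat{\mathcal{M}}^{-s}\in\mathscr{S}^{-s/\kappa}_{1/\kappa,0}(\widehat{G})$. Note, however, that invoking Proposition~\ref{lemitapequeño} with $\rho=1$ presupposes $\mathcal{M}_{-s}\in S^{-s,\mathcal{L}}_{1,0}$, which is the full strength of Theorem~\ref{orders:theorems}; the weaker inclusion $\mathcal{M}_{-s}\in S^{-s,\mathcal{L}}_{1/\kappa,0}$ from Example~\ref{powessecondtpart} would only yield rate $1/\kappa^2$ and hence a larger loss than the stated $\varkappa(1-1/\kappa)|1/p-1/2|$.

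For \eqref{Embd2} there is a genuine gap. You assert that $\widehat{\mathcal{M}}^s$ admits elliptic-type decay with rate $\rho=1$, i.e.\ $\widehat{\mathcal{M}}^s\in\mathscr{S}^s_{1,0}(\widehat{G})$. The paper itself makes this claim in Example~\ref{powesfirstpart} without justification, and it does \emph{not} follow from the tools developed here: $1+\mathcal{L}$ is not elliptic, so the usual parametrix-based complex-power arguments do not apply, and Theorem~\ref{orders:theorems} only gives the subelliptic membership $\widehat{\mathcal{M}}^s\in S^{s,\mathcal{L}}_{1,0}$. Indeed, unwinding that membership yields $\|\Delta^\alpha\widehat{\mathcal{M}}^s\|_{\textnormal{op}}\lesssim\|\widehat{\mathcal{M}}^{s-|\alpha|}\|_{\textnormal{op}}$, which for $|\alpha|>s$ is only $\langle\xi\rangle^{(s-|\alpha|)/\kappa}$, not $\langle\xi\rangle^{s-|\alpha|}$; plugging this into your Leibniz computation gives $\|\mathbb{D}^\alpha t_s\|_{\textnormal{op}}\lesssim\langle\xi\rangle^{-|\alpha|/\kappa}$ for $|\alpha|>s$, and Theorem~\ref{TRW2015ZIP} then produces an unwanted derivative loss. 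A clean fix is to stay inside the subelliptic calculus: by Corollary~\ref{lemitapequeñocontracted}, $\langle\xi\rangle^{-s}\in\mathscr{S}^{-s}_{1,0}\subset S^{-s,\mathcal{L}}_{1,0}$; by Theorem~\ref{orders:theorems}, $\widehat{\mathcal{M}}^s\in S^{s,\mathcal{L}}_{1,0}$; hence $t_s=\widehat{\mathcal{M}}^s\langle\xi\rangle^{-s}\in S^{0,\mathcal{L}}_{1,0}$ by Theorem~\ref{Subellipticcomposition}, and Theorem~\ref{parta2} with $\rho=1$ (so $m_p=0$) gives $T_s\in\mathscr{B}(L^p(G))$. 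Alternatively, one can run Stein complex interpolation in $s$ between $s=0$ and $s=2$, where $T_2=(1+\mathcal{L})(1+\mathcal{L}_G)^{-1}\in\Psi^0_{1,0}(G)$ is $L^p$-bounded by the standard elliptic theory.
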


The following theorem (see Theorem 5.11 \cite{CardonaRuzhansky2019I}) shows some embedding properties between subelliptic Besov spaces and the Besov spaces associated to the Laplacian. 

\begin{theorem}\label{SubBesovvsEllipBesov} Let $G$ be a compact Lie group and let us consider the sub-Laplacian $\mathcal{L}=-(X_1^2+\cdots +X_k^2),$ where the system of vector fields $X=\{X_i\}_{i=1}^k$ satisfies the H\"ormander condition of order $\kappa$.
Let $s\geq 0,$ $0<q\leq \infty,$ and $1<p< \infty.$ Then we have the continuous embeddings
\begin{equation}
B^{s}_{p,q}(G)\hookrightarrow B^{s,\mathcal{L}}_{p,q}(G) \hookrightarrow B^{\frac{s}{\kappa}-\varkappa(1-\frac{1}{\kappa})\left|\frac{1}{2}-\frac{1}{p}\right|}_{p,q}(G).
\end{equation} More precisely, for every $s\geq 0$ there exist constants $C_a>0$ and $C_{b}>0$ satisfying,
\begin{equation}
   C_a \Vert f\Vert_{ B^{s_{\kappa,\varkappa}}_{p,q}(G) }\leq \Vert f\Vert_{B^{s,\mathcal{L}}_{p,q}(G)},\,\,f\in B^{s,\mathcal{L}}_{p,q}(G),
\end{equation}
where $s_{\kappa,\varkappa}:=\frac{s}{\kappa}-\varkappa(1-\frac{1}{\kappa})\left|\frac{1}{2}-\frac{1}{p}\right|,$ and 
\begin{equation}
    \Vert f\Vert_{B^{s,\mathcal{L}}_{p,q}(G)}\leq C_b\Vert f\Vert_{B^{s}_{p,q}(G)},\,\,f\in B^{s}_{p,q}(G).
\end{equation}
Consequently, we have the following embeddings
\begin{equation}
B^{-\frac{s}{\kappa}+\varkappa(1-\frac{1}{\kappa})\left|\frac{1}{2}-\frac{1}{p}\right|}_{p,q}(G)\hookrightarrow B^{-s,\mathcal{L}}_{p,q}(G) \hookrightarrow B^{-s}_{p,q}(G).
\end{equation}
\end{theorem}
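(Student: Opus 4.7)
The plan is to reduce the statement about Besov spaces to the Sobolev-type statement of Theorem \ref{SubHvsEllipH} via the real interpolation identification of Besov spaces. Concretely, I will use the fact (already invoked in the proof of Corollary \ref{BesovL} and proved in \cite[Theorem 6.2]{CardonaRuzhansky2019I}) that for every $s_0\neq s_1$ in $\mathbb{R}$, every $\theta\in(0,1)$ with $s=(1-\theta)s_0+\theta s_1$, every $1<p<\infty$ and every $0<q\leq\infty$, one has the two identifications
\begin{equation*}
(L^{p}_{s_0}(G),L^{p}_{s_1}(G))_{\theta,q} = B^{s}_{p,q}(G),
\qquad
(L^{p,\mathcal{L}}_{s_0}(G),L^{p,\mathcal{L}}_{s_1}(G))_{\theta,q} = B^{s,\mathcal{L}}_{p,q}(G).
\end{equation*}
Granted this, the well-known functoriality of the real interpolation method says that continuous embeddings of compatible couples pass to their interpolation spaces, so all one needs is an embedding of the relevant Sobolev couples at two well-chosen regularity levels.

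For the first embedding $B^{s}_{p,q}(G)\hookrightarrow B^{s,\mathcal{L}}_{p,q}(G)$, I would pick any $0\leq s_0<s<s_1$ with $s=(1-\theta)s_0+\theta s_1$. Theorem \ref{SubHvsEllipH} (the inequality \eqref{Embd2}) gives the continuous embeddings $L^{p}_{s_i}(G)\hookrightarrow L^{p,\mathcal{L}}_{s_i}(G)$ for $i=0,1$; applying $(\cdot,\cdot)_{\theta,q}$ yields $B^{s}_{p,q}(G)\hookrightarrow B^{s,\mathcal{L}}_{p,q}(G)$, proving the first embedding together with the quantitative norm bound. For the second embedding $B^{s,\mathcal{L}}_{p,q}(G)\hookrightarrow B^{s_{\kappa,\varkappa}}_{p,q}(G)$, I would again pick $0\leq s_0<s<s_1$ and apply \eqref{Embd1} of Theorem \ref{SubHvsEllipH} to get $L^{p,\mathcal{L}}_{s_i}(G)\hookrightarrow L^{p}_{(s_i)_{\kappa,\varkappa}}(G)$ for $i=0,1$. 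The only thing to check is that the interpolation parameters align on the right-hand side: since $s\mapsto s_{\kappa,\varkappa}=\tfrac{s}{\kappa}-\varkappa(1-\tfrac{1}{\kappa})|\tfrac{1}{2}-\tfrac{1}{p}|$ is affine in $s$, one has $(1-\theta)(s_0)_{\kappa,\varkappa}+\theta(s_1)_{\kappa,\varkappa}=s_{\kappa,\varkappa}$, so real interpolation produces exactly $(L^p_{(s_0)_{\kappa,\varkappa}}(G),L^p_{(s_1)_{\kappa,\varkappa}}(G))_{\theta,q}=B^{s_{\kappa,\varkappa}}_{p,q}(G)$.

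The two desired embeddings for negative exponents $-s$ in the last display of the statement are then obtained by duality: real interpolation commutes with duality for $1<p<\infty$ and $1\leq q<\infty$, and the dual of $B^{r,\mathcal{L}}_{p,q}(G)$ is $B^{-r,\mathcal{L}}_{p',q'}(G)$ (and similarly without $\mathcal{L}$), so the two embeddings above dualise to give $B^{-\frac{s}{\kappa}+\varkappa(1-\frac{1}{\kappa})|\frac{1}{2}-\frac{1}{p}|}_{p,q}(G)\hookrightarrow B^{-s,\mathcal{L}}_{p,q}(G)\hookrightarrow B^{-s}_{p,q}(G)$, covering the remaining range $q=\infty$ by passing to the limit (or by running the entire argument with the dyadic-Littlewood-Paley definition directly).

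The main obstacle is not conceptual but rather the bookkeeping: one has to make sure that the interpolation identification of subelliptic Besov spaces in terms of the subelliptic Sobolev scale is available in the full range $0<q\leq\infty$, $1<p<\infty$, and $s\in\mathbb{R}$. For $q=\infty$ the real interpolation identification still holds, but duality is more delicate, so for that endpoint I would prefer to run the Littlewood-Paley argument directly: apply each dyadic projector $\psi_{\tilde\ell}(D)$ (associated to $\mathcal{L}_G$) to $\psi_{\tilde\ell'}((1+\mathcal{L})^{1/2})$-pieces of $f$ (associated to $\mathcal{L}$), use the almost-orthogonality coming from Theorem \ref{SubHvsEllipH} at Sobolev level to control the cross-terms, and reassemble the Besov norms via \eqref{q<inftyLaPgjgj}--\eqref{q=inftyLaPhjkhk}.
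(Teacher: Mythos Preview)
The paper does not give a proof of this theorem in the text: it is quoted in Appendix~II as Theorem~5.11 of \cite{CardonaRuzhansky2019I} without argument, so there is no in-paper proof to compare against. Your strategy --- lift the Sobolev embeddings of Theorem~\ref{SubHvsEllipH} to Besov spaces by real interpolation in the smoothness index, using that $s\mapsto s_{\kappa,\varkappa}$ is affine so the interpolated indices match --- is the natural route and is almost certainly how the cited reference proceeds.

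Two small points to tighten. First, the interpolation identity you actually need is
\[
(L^{p}_{s_0}(G),L^{p}_{s_1}(G))_{\theta,q}=B^{s}_{p,q}(G),\qquad (L^{p,\mathcal{L}}_{s_0}(G),L^{p,\mathcal{L}}_{s_1}(G))_{\theta,q}=B^{s,\mathcal{L}}_{p,q}(G),
\]
i.e.\ interpolation in the smoothness parameter with $p$ fixed. What is invoked in the proof of Corollary~\ref{BesovL} (and attributed there to \cite[Theorem~6.2]{CardonaRuzhansky2019I}) is a different identification, namely $(L^{p_0,\mathcal{L}}_{r},L^{p_1,\mathcal{L}}_{r})_{\theta,q}=B^{r,\mathcal{L}}_{p,q}$, interpolating in $p$ with $r$ fixed. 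The $s$-interpolation identity you want is of course standard in the elliptic scale and transfers to the subelliptic scale by the usual retraction/coretraction argument with $\mathcal{M}_s=(1+\mathcal{L})^{s/2}$, but you should cite it on its own merits rather than via Corollary~\ref{BesovL}.

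Second, for the negative-order part it is cleaner to dispense with duality altogether. Theorem~\ref{SubHvsEllipH} already records the negative-order Sobolev embeddings
\[
L^{p}_{-\frac{s}{\kappa}+\varkappa(1-\frac{1}{\kappa})|\frac12-\frac1p|}(G)\hookrightarrow L^{p,\mathcal{L}}_{-s}(G)\hookrightarrow L^{p}_{-s}(G),
\]
and the same affine-index interpolation you used for $s\geq 0$ applied to these gives the negative-order Besov embeddings directly, for the full range $0<q\leq\infty$ in one stroke. Your duality argument, as written, is a bit loose at $0<q<1$ and $q=\infty$.
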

Finally we summarise the action of the H\"ormander classes on subelliptic Sobolev and Besov spaces (See Theorems 1.2 and 1.3 in \cite{CardonaRuzhansky2019I}). 
\begin{theorem}[Fefferman Subelliptic Sobolev Theorem]\label{SubellipticLp11} Let $G$ be a compact Lie group of dimension $n.$
Let us assume that $\sigma\in \mathscr{S}^{-\nu}_{\rho,\delta}(G\times \widehat{G}))$ and let $0\leq \delta\leq \rho\leq 1,$ $\delta\neq 1.$  Then $A\equiv \sigma(x,D)$ extends to a bounded operator from $L^{p,\mathcal{L}}_{\vartheta}(G)$ to $L^p(G)$ provided that
\begin{equation}
    n(1-\min\{\rho,1/\kappa\})\left|\frac{1}{p}-\frac{1}{2}\right|-({\vartheta}/{\kappa})\leq  \nu.
\end{equation}  In particular, if $\sigma\in \mathscr{S}^{0}_{\rho,\delta}(G),$ the operator $A\equiv \sigma(x,D)$ extends to a bounded operator from $L^{p,\mathcal{L}}_{\vartheta}(G)$ to $L^p(G)$ with
\begin{equation}
    { n\kappa(1-\min\{\rho,1/\kappa\})}\left|\frac{1}{p}-\frac{1}{2}\right|\leq \vartheta .
\end{equation} 
\end{theorem}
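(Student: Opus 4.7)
The plan is to reduce the claim $A:L^{p,\mathcal{L}}_{\vartheta}(G)\to L^p(G)$ to the $L^p(G)$-boundedness of a composed operator, and then invoke the elliptic Fefferman theorem on $G$ (Theorem \ref{DelgadoRuzhanskyLppseudo}). Since $\|f\|_{L^{p,\mathcal{L}}_{\vartheta}(G)}=\|\mathcal{M}_{\vartheta}f\|_{L^p(G)}$ by the very definition of subelliptic Sobolev spaces, the claim is equivalent to the $L^p(G)$-boundedness of $B:=A\circ \mathcal{M}_{-\vartheta}$, which I plan to analyse in the elliptic H\"ormander calculus.

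The decisive point is that, although $\mathcal{M}_{-\vartheta}=(1+\mathcal{L})^{-\vartheta/2}$ is intrinsically a subelliptic Bessel potential and by Theorem \ref{orders:theorems} belongs to $\Psi^{-\vartheta,\mathcal{L}}_{1,0}(G\times\widehat{G})=\textnormal{Op}(S^{-\vartheta,\mathcal{L}}_{1,0}(G\times\widehat{G}))$, viewed through the lens of the elliptic (Laplacian-based) calculus it degenerates: Corollary \ref{lemitapequeñocontracted} gives the embedding
\[
S^{-\vartheta,\mathcal{L}}_{1,0}(G\times\widehat{G})\subset \mathscr{S}^{-\vartheta/\kappa}_{1/\kappa,0}(G\times\widehat{G}),
\]
for $\vartheta\geq 0$. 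Thus $\mathcal{M}_{-\vartheta}$ only behaves as an elliptic pseudo-differential operator of order $-\vartheta/\kappa$ and of type $(1/\kappa,0)$, which is precisely the reason why $\vartheta/\kappa$ (rather than $\vartheta$) and $1/\kappa$ (rather than $1$) appear in the sharp condition of the theorem. The case $\vartheta<0$ is handled analogously, by decomposing $\mathcal{M}_{-\vartheta}=\mathcal{M}_{-(\vartheta+N)}\mathcal{M}_{N}$ with $N$ a large integer so that $\vartheta+N\geq 0$, or by dualising the mapping property.

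Once both factors sit in elliptic classes, $A\in\mathscr{S}^{-\nu}_{\rho,\delta}(G\times\widehat{G})$ and $\mathcal{M}_{-\vartheta}\in\mathscr{S}^{-\vartheta/\kappa}_{1/\kappa,0}(G\times\widehat{G})$, the composition rule of the elliptic H\"ormander calculus on $G$ gives
\[
B=A\circ \mathcal{M}_{-\vartheta}\in \mathscr{S}^{-\nu-\vartheta/\kappa}_{\min(\rho,1/\kappa),\delta}(G\times\widehat{G}).
\]
Applying Theorem \ref{DelgadoRuzhanskyLppseudo} to $B$ with effective type $\rho_{\textnormal{eff}}:=\min(\rho,1/\kappa)$ and effective order $\nu_{\textnormal{eff}}:=\nu+\vartheta/\kappa$ yields $B:L^p(G)\to L^p(G)$ boundedness exactly when
\[
\nu+\tfrac{\vartheta}{\kappa}\;\geq\; n\bigl(1-\min(\rho,1/\kappa)\bigr)\left|\tfrac{1}{p}-\tfrac{1}{2}\right|,
\]
which is the stated hypothesis. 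Unwinding the reduction then gives the theorem.

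The principal obstacle I anticipate lies in the regimes admitted by the hypothesis $0\leq\delta\leq\rho\leq 1$, $\delta\neq 1$, that are not covered by the $(\rho,\delta)$-symbolic calculus with $\delta<\rho$: namely the borderline $\delta=\rho$ and, more seriously, the situation $\delta\geq 1/\kappa\leq \rho$, in which the composed class $\mathscr{S}^{-\nu-\vartheta/\kappa}_{\min(\rho,1/\kappa),\delta}$ has $\delta\geq \min(\rho,1/\kappa)$ and the standard composition formula is unavailable. In those cases I would argue by approximating $\sigma$ by symbols of strictly smaller $\delta$-type (e.g. mollifying in $x$), applying the argument above to obtain uniform $L^p$-bounds, and then passing to the limit using the closedness of the weak operator topology together with the quantitative form of Theorem \ref{DelgadoRuzhanskyLppseudo}. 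A second technical point, the embedding of $\mathcal{M}_{-\vartheta}$ into an elliptic class for $\vartheta<0$ (positive-order operator), is handled by iterating Theorem \ref{orders:theorems} to write $\mathcal{M}_{-\vartheta}$ as a product of negative-order and integer-order factors, the latter belonging to the elliptic class $\mathscr{S}^{N}_{1,0}$ automatically.
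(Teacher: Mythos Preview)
The paper does not give its own proof of this statement; it is quoted from \cite{CardonaRuzhansky2019I}. Your overall strategy---reduce to $B=A\mathcal{M}_{-\vartheta}$, embed $\mathcal{M}_{-\vartheta}$ in the elliptic class $\mathscr{S}^{-\vartheta/\kappa}_{1/\kappa,0}$ via Corollary~\ref{lemitapequeñocontracted}, and invoke Theorem~\ref{DelgadoRuzhanskyLppseudo}---is the natural one and is almost certainly what that reference does. One simplification: the ``principal obstacle'' you raise about the composition formula is not an obstacle at all. Since $\mathcal{M}_{-\vartheta}$ is left-invariant, the symbol of $A\mathcal{M}_{-\vartheta}$ is the \emph{exact} product $\sigma(x,\xi)\widehat{\mathcal{M}}(\xi)^{-\vartheta}$, not an asymptotic sum; the Leibniz rule for difference operators (Remark~\ref{Leibnizrule}) then places this product directly in $\mathscr{S}^{-\nu-\vartheta/\kappa}_{\min(\rho,1/\kappa),\delta}$ without any calculus hypothesis on $(\rho,\delta)$.

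Where your argument does have a gap is the residual case $\rho>1/\kappa$ and $\delta>1/\kappa$, in which the product symbol lands in $\mathscr{S}^{-\nu-\vartheta/\kappa}_{1/\kappa,\delta}$ with $\delta>1/\kappa$, so the effective $\delta$ exceeds the effective $\rho$ and Theorem~\ref{DelgadoRuzhanskyLppseudo} (even in the form $\delta\leq\rho$) does not apply. Your proposed mollification fix does not repair this: convolving $\sigma$ with an approximate identity in $x$ produces $\sigma_\varepsilon$ satisfying the \emph{same} $\mathscr{S}^{-\nu}_{\rho,\delta}$ inequalities with the \emph{same} $\delta$; the seminorms $p_{\alpha,\beta}$ are not improved uniformly in $\varepsilon$, so you cannot pass to a smaller $\delta$-type class with uniform constants. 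To close this gap you would need either a version of the Fefferman $L^p$-theorem that does not require $\delta\leq\rho$ in the target class, or a direct argument avoiding the elliptic detour in that parameter range.
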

\begin{theorem}[Fefferman Subelliptic Besov Theorem]\label{SubellipticBesov11}
Let us assume that $\sigma\in \mathscr{S}^{-\nu}_{\rho,\delta}(G\times \widehat{G}))$ and let $0\leq \delta\leq \rho\leq 1,$ $\delta\neq 1.$  Then $A\equiv \sigma(x,D)$ extends to a bounded operator from $B^{s+\vartheta,\mathcal{L}}_{p,q}(G)$   to $B^{s}_{p,q}(G)$ provided that
\begin{equation}
    n(1-\min\{\rho,1/\kappa\})\left|\frac{1}{p}-\frac{1}{2}\right|-({\vartheta}/{\kappa})\leq  \nu.
\end{equation}   In particular, if $\sigma\in \mathscr{S}^{0}_{\rho,\delta}(G),$ the operator $A\equiv \sigma(x,D)$ extends to a bounded operator from $B^{s+\vartheta,\mathcal{L}}_{p,q}(G)$   to $B^{s}_{p,q}(G)$  with
\begin{equation}
    { n\kappa(1-\min\{\rho,1/\kappa\})}\left|\frac{1}{p}-\frac{1}{2}\right|\leq \vartheta.
\end{equation} 
\end{theorem}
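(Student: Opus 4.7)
The strategy is to reduce Theorem \ref{SubellipticBesov11} to its Sobolev counterpart, Theorem \ref{SubellipticLp11}, by means of real interpolation between mixed (subelliptic/elliptic) Sobolev scales. To make this precise I write $\mathcal{M}^{\mathcal{L}}_{r}:=(1+\mathcal{L})^{r/2}$ and $\mathcal{M}^{G}_{r}:=(1+\mathcal{L}_{G})^{r/2}$; the two Bessel potentials are isomorphisms between the corresponding Sobolev/Besov spaces of adjacent regularity (this is how $L^{p,\mathcal{L}}_{r}(G)$ and $L^{p}_{r}(G)$ are defined). The identity to be exploited in the interpolation step is, for $s_{0}<s<s_{1}$ and $\theta\in(0,1)$ with $s=(1-\theta)s_{0}+\theta s_{1}$,
\begin{equation*}
B^{s+\vartheta,\mathcal{L}}_{p,q}(G)=\bigl(L^{p,\mathcal{L}}_{s_{0}+\vartheta}(G),L^{p,\mathcal{L}}_{s_{1}+\vartheta}(G)\bigr)_{\theta,q},\qquad B^{s}_{p,q}(G)=\bigl(L^{p}_{s_{0}}(G),L^{p}_{s_{1}}(G)\bigr)_{\theta,q},
\end{equation*}
which follows from Theorem~6.2 of \cite{CardonaRuzhansky2019I} and its classical elliptic analogue.

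\textbf{Step 1 (lifting the Sobolev statement).} I would first prove, for every $s\in\mathbb{R}$, the bounded extension
\begin{equation*}
A:L^{p,\mathcal{L}}_{s+\vartheta}(G)\longrightarrow L^{p}_{s}(G).
\end{equation*}
This is equivalent to the $L^{p}(G)$-boundedness of the ``conjugated'' operator $\widetilde{A}_{s}:=\mathcal{M}^{G}_{s}\, A\, \mathcal{M}^{\mathcal{L}}_{-(s+\vartheta)}$. Since $A\in \mathrm{Op}(\mathscr{S}^{-\nu}_{\rho,\delta}(G\times\widehat{G}))$ (elliptic H\"ormander class) and $\mathcal{M}^{G}_{s}\in\mathrm{Op}(\mathscr{S}^{s}_{1,0}(G\times\widehat{G}))$, the elliptic symbolic calculus yields $\mathcal{M}^{G}_{s}A\in\mathrm{Op}(\mathscr{S}^{s-\nu}_{\rho,\delta}(G\times\widehat{G}))$. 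Using the embedding of elliptic into subelliptic classes from Corollary~\ref{lemitapequeñocontracted} and the fact that $\mathcal{M}^{\mathcal{L}}_{-(s+\vartheta)}\in\mathrm{Op}(S^{-(s+\vartheta),\mathcal{L}}_{1,0}(G\times\widehat{G}))$ by Theorem~\ref{orders:theorems}, the subelliptic composition rule (Theorem~\ref{Subellipticcomposition}) places $\widetilde{A}_{s}$ in a contracted subelliptic class whose order matches $-\nu$ shifted by the Sobolev indices. The condition $n(1-\min\{\rho,1/\kappa\})|1/p-1/2|-(\vartheta/\kappa)\leq\nu$ is then exactly the critical order in Theorem~\ref{SubellipticLp11} that guarantees the $L^{p}$-boundedness of $\widetilde{A}_{s}$.

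\textbf{Step 2 (real interpolation).} With Step~1 in hand for two choices $s_{0}<s_{1}$, apply real interpolation to the two bounded linear maps
\begin{equation*}
A:L^{p,\mathcal{L}}_{s_{0}+\vartheta}(G)\to L^{p}_{s_{0}}(G),\qquad A:L^{p,\mathcal{L}}_{s_{1}+\vartheta}(G)\to L^{p}_{s_{1}}(G).
\end{equation*}
The interpolation identities above then produce the desired bounded extension $A:B^{s+\vartheta,\mathcal{L}}_{p,q}(G)\to B^{s}_{p,q}(G)$ for every $0<q\leq\infty$. The particular case $\sigma\in\mathscr{S}^{0}_{\rho,\delta}$ follows by taking $\nu=0$.

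\textbf{Main obstacle.} The routine real interpolation is painless; the crux is Step~1. The difficulty lies in tracking the order bookkeeping of the composition $\mathcal{M}^{G}_{s}\,A\,\mathcal{M}^{\mathcal{L}}_{-(s+\vartheta)}$ across the two (elliptic and subelliptic) calculi in such a way that the critical Fefferman order in Theorem~\ref{SubellipticLp11} is preserved, with the correct coefficient $n(1-\min\{\rho,1/\kappa\})$ rather than a worse one. The sharp factor $\min\{\rho,1/\kappa\}$ is what ties the elliptic regularity parameter $\rho$ to the step $\kappa$ via the embedding of Proposition~\ref{lemitapequeño}, and one must verify that the symbol estimates for the conjugated operator do not deteriorate when passed through this embedding, so that Theorem~\ref{SubellipticLp11} can be applied with exactly the stated order.
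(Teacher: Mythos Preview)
This theorem is merely quoted in the paper from \cite{CardonaRuzhansky2019I}; no proof is given here, so there is no in-paper argument to compare against. Your overall architecture (lift the Sobolev statement of Theorem~\ref{SubellipticLp11} to general~$s$ and then interpolate in smoothness) is the natural one, and Step~2 is routine once Step~1 is in hand. The difficulty, as you yourself flag, is Step~1, and your sketch does not close it.

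There are two concrete problems. First, you embed $\mathcal{M}^{G}_{s}A$ into a \emph{subelliptic} class via Corollary~\ref{lemitapequeñocontracted}, compose with $\mathcal{M}^{\mathcal{L}}_{-(s+\vartheta)}$ in the subelliptic calculus, and then invoke Theorem~\ref{SubellipticLp11} --- but that theorem takes as hypothesis membership in an \emph{elliptic} class $\mathscr{S}^{-\nu}_{\rho,\delta}$, not a subelliptic one, so the implication does not follow. Second, and more substantively, if you instead stay in the elliptic calculus throughout, the order bookkeeping fails for $s>0$. For $t>0$ the negative power $\mathcal{M}^{\mathcal{L}}_{-t}=(1+\mathcal{L})^{-t/2}$ lies only in $\mathscr{S}^{-t/\kappa}_{1/\kappa,0}$ (this is forced by the lower bound $\langle\xi\rangle^{1/\kappa}\lesssim(1+\nu_{ii}(\xi)^{2})^{1/2}$ in \eqref{GarettoRuzhanskyIneq}), not in $\mathscr{S}^{-t}$. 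Hence, writing the conjugation as $\mathcal{M}^{G}_{s}A\,\mathcal{M}^{\mathcal{L}}_{-s}:L^{p,\mathcal{L}}_{\vartheta}\to L^{p}$, one finds $\mathcal{M}^{G}_{s}A\,\mathcal{M}^{\mathcal{L}}_{-s}\in\mathscr{S}^{\,s(1-1/\kappa)-\nu}_{\min(\rho,1/\kappa),\delta}$, and Theorem~\ref{SubellipticLp11} then requires
\[
\nu\;\geq\; n\bigl(1-\min\{\rho,1/\kappa\}\bigr)\Bigl|\tfrac{1}{p}-\tfrac{1}{2}\Bigr|-\frac{\vartheta}{\kappa}\;+\;s\Bigl(1-\tfrac{1}{\kappa}\Bigr),
\]
which for $s>0$ is strictly stronger than the stated hypothesis. (For $s\leq 0$ the same computation gives exactly the stated condition, since the \emph{positive} power $\mathcal{M}^{\mathcal{L}}_{-s}$ then has elliptic order $-s$; so your scheme does recover the theorem on the range $s<0$ after interpolating between $s_{0}<0$ and $s_{1}=0$.) The extra term $s(1-1/\kappa)$ is precisely the deterioration you warned about in your ``main obstacle'' paragraph, but you have not shown how to eliminate it, and naive conjugation cannot: for positive~$s$ you would need to consult the original argument in \cite{CardonaRuzhansky2019I}, which presumably proceeds directly on the Littlewood--Paley side rather than by lifting the Sobolev estimate.
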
 
Observe that Theorem  \ref{SubellipticLp11} and Theorem \ref{SubellipticBesov11} are analogies of the ones obtained in this work for the subelliptic H\"ormander classes (see Theorems \ref{parta} and \ref{parta2}). The mapping properties for the global calculus developed in \cite{Ruz} on $L^p(G),$  Sobolev spaces $L^p_{r}(G)$, Besov spaces $B^r_{p,q}(G)$ and subelliptic Sobolev and Besov spaces can be found in the references \cite[Chapter 10]{Ruz}, the works of the second author and J. Wirth \cite{RuzhanskyWirth2014,RuzhanskyWirth2015},  \cite{Cardona1,Cardona2,Cardona3,CardonaRuzhansky2019I} and the work of the second author and J. Delgado \cite{RuzhanskyDelgado2017}.

\section{Appendix III:   H\"ormander classes on graded Lie  groups}\label{AppeGraded}

It was proved in Theorem \ref{cor} and consequently in Corollary \ref{corC} that the following family of  seminorms, \begin{equation}\label{InI2XIII}
      p'_{\alpha,\beta,\rho,\delta,m,r}(a):= \sup_{(x,[\xi])\in G\times \widehat{G} }\Vert \widehat{ \mathcal{M}}(\xi)^{(\rho|\alpha|-\delta|\beta|-m-r)}\partial_{X}^{(\beta)} \Delta_{\xi}^{\alpha}a(x,\xi)\widehat{ \mathcal{M}}(\xi)^{r}\Vert_{\textnormal{op}} <\infty,
   \end{equation}
can be used to define the  subelliptic H\"ormander class ${S}^{m,\mathcal{L}}_{\rho,\delta}(G\times \widehat{G}).$ More precisely, we have proved that the following conditions are equivalent:
\begin{itemize}
    \item[(A').] $\forall \alpha,\beta\in \mathbb{N}_{0}^n, \forall r\in \mathbb{R}, $   $p'_{\alpha,\beta,r,m}(a)<\infty.$
    
    \item[(B').]  $\forall \alpha,\beta\in \mathbb{N}_{0}^n, $   $p'_{\alpha,\beta,0,m}(a)<\infty.$
    
    \item[(C').]  $\forall \alpha,\beta\in \mathbb{N}_{0}^n, $   $p'_{\alpha,\beta,m+\delta|\beta|-\rho|\alpha|,m}(a)<\infty.$
    \item[(D').]  $\forall \alpha,\beta\in \mathbb{N}_{0}^n, \exists r_0\in \mathbb{R}, $   $p'_{\alpha,\beta,r_0,m}(a)<\infty.$
    \item[(E').] $a\in {S}^{m,\mathcal{L}}_{\rho,\delta}(G\times \widehat{G}).$
\end{itemize}
The main goal of this appendix is to prove an analogy of Theorem \ref{cor} for the global H\"ormander classes  on arbitrary graded Lie groups developed by the second author and V. Fischer in \cite{FischerRuzhanskyBook}. The main result of this appendix is Theorem \ref{corgraded} where we prove that the following seminorm inequalities are equivalent: (see the next subsections for the notations and definitions) 
\begin{itemize}
    \item[(A).] $\forall \alpha,\beta\in \mathbb{N}_{0}^n, \forall \gamma\in \mathbb{R}, $   $p_{\alpha,\beta,\gamma,m}(\sigma)<\infty.$
    
    \item[(B).]  $\forall \alpha,\beta\in \mathbb{N}_{0}^n, $   $p_{\alpha,\beta,0,m}(\sigma)<\infty.$
    
    \item[(C).]  $\forall \alpha,\beta\in \mathbb{N}_{0}^n, $   $p_{\alpha,\beta,m+\delta|\beta|-\rho|\alpha|,m}(\sigma)<\infty.$
    \item[(D).]  $\forall \alpha,\beta\in \mathbb{N}_{0}^n, \exists \gamma_0\in \mathbb{R}, $   $p_{\alpha,\beta,\gamma_0,m}(\sigma)<\infty.$
     \item[(E).] $\sigma\in {S}^{m}_{\rho,\delta}(G\times \widehat{G}).$
\end{itemize}
In the case of the Heisenberg group it was proved in Theorem 6.5.1 of \cite[Page 479]{FischerRuzhanskyBook} that (A), (B), (C) and (D) are equivalent conditions and, in Section 5.5 of \cite[Page 479]{FischerRuzhanskyBook}, it was proved that on a arbitrary graded Lie group only (A), (B) and (C) are equivalent seminorms. Condition (D), is certainly, most useful. We will prove Theorem \ref{corgraded} by following the arguments  in Section \ref{Seccionsubelliptic}.

    \subsection{Homogeneous and graded Lie groups} 
        The notation and terminology of this appendix on the analysis of homogeneous Lie groups are mostly taken 
from Folland and Stein \cite{FollandStein1982}. For the theory of pseudo-differential operators we will follow the setting developed in \cite{FischerRuzhanskyBook} through  the notion of (operator-valued) global symbols. If $E,F$ are Hilbert spaces,  $\mathscr{B}(E,F)$ denotes the algebra of bounded linear operators from $E$ to $F,$ and also we will write $\mathscr{B}(E)=\mathscr{B}(E,E).$

    Let $G$ be a homogeneous Lie group. This means that $G$ is a connected and simply connected Lie group whose Lie algebra $\mathfrak{g}$ is endowed with a family of dilations $D_{r}^{\mathfrak{g}},$ $r>0,$ which are automorphisms on $\mathfrak{g}$  satisfying the following two conditions:
\begin{itemize}
\item For every $r>0,$ $D_{r}^{\mathfrak{g}}$ is a map of the form
$$ D_{r}^{\mathfrak{g}}=\textnormal{Exp}(rA) $$
for some diagonalisable linear operator $A\equiv \textnormal{diag}[\nu_1,\cdots,\nu_n]$ on $\mathfrak{g}.$
\item $\forall X,Y\in \mathfrak{g}, $ and $r>0,$ $[D_{r}^{\mathfrak{g}}X, D_{r}^{\mathfrak{g}}Y]=D_{r}^{\mathfrak{g}}[X,Y].$ 
\end{itemize}
We call  the eigenvalues of $A,$ $\nu_1,\nu_2,\cdots,\nu_n,$ the dilations weights or weights of $G$.  The homogeneous dimension of a homogeneous Lie group $G$ is given by  $$ Q=\textnormal{\textbf{Tr}}(A)=\nu_1+\cdots+\nu_n.  $$
The dilations $D_{r}^{\mathfrak{g}}$ of the Lie algebra $\mathfrak{g}$ induce a family of  maps on $G$ defined via
$$ D_{r}:=\exp_{G}\circ D_{r}^{\mathfrak{g}} \circ \exp_{G}^{-1},\,\, r>0, $$
where $\exp_{G}:\mathfrak{g}\rightarrow G$ is the usual exponential mapping associated to the Lie group $G.$ We refer to the family $D_{r},$ $r>0,$ as dilations on the group. If we write $rx=D_{r}(x),$ $x\in G,$ $r>0,$ then a relation on the homogeneous structure of $G$ and the Haar measure $dx$ on $G$ is given by $$ \int\limits_{G}(f\circ D_{r})(x)dx=r^{-Q}\int\limits_{G}f(x)dx. $$
    
A  Lie group is graded if its Lie algebra $\mathfrak{g}$ may be decomposed as the sum of subspaces $\mathfrak{g}=\mathfrak{g}_{1}\oplus\mathfrak{g}_{2}\oplus \cdots \oplus \mathfrak{g}_{s}$ such that $[\mathfrak{g}_{i},\mathfrak{g}_{j} ]\subset \mathfrak{g}_{i+j},$ and $ \mathfrak{g}_{i+j}=\{0\}$ if $i+j>s.$  Examples of such groups are the Heisenberg group $\mathbb{H}^n$ and more generally any stratified groups where the Lie algebra $ \mathfrak{g}$ is generated by $\mathfrak{g}_{1}$.  Here, $n$ is the topological dimension of $G,$ $n=n_{1}+\cdots +n_{s},$ where $n_{k}=\mbox{dim}\mathfrak{g}_{k}.$

A Lie algebra admitting a family of dilations is nilpotent, and hence so is its associated
connected, simply connected Lie group. The converse does not hold, i.e., not every
nilpotent Lie group is homogeneous (see Dyer \cite{Dyer1970}) although they exhaust a large class (see Johnson \cite[page 294]{Johnson1975}). Indeed, the main class of Lie groups under our consideration is that of graded Lie groups. A graded Lie group $G$ is a homogeneous Lie group equipped with a family of weights $\nu_j,$ all of them positive rational numbers. Let us observe that if $\nu_{i}=\frac{a_i}{b_i}$ with $a_i,b_i$ integer numbers,  and $b$ is the least common multiple of the $b_i's,$ the family of dilations 
$$ \mathbb{D}_{r}^{\mathfrak{g}}=\textnormal{Exp}(\log(r^b)A):\mathfrak{g}\rightarrow\mathfrak{g}, $$
have integer weights,  $\nu_{i}=\frac{a_i b}{b_i}. $ So, here we always assume that the weights $\nu_j,$ defining the family of dilations are non-negative integer numbers which allow us to assume that the homogeneous dimension $Q$ is a non-negative integer number. This is a natural context for the study of Rockland operators (see Remark 4.1.4 of \cite{FischerRuzhanskyBook}).

\subsection{Fourier analysis on nilpotent Lie groups}

Let $G$ be a simply connected nilpotent Lie group.  
Let us assume that $\pi$ is a continuous, unitary and irreducible  representation of $G,$ this means that,
\begin{itemize}
    \item $\pi\in \textnormal{Hom}(G, \textnormal{U}(H_{\pi})),$ for some separable Hilbert space $H_\pi,$ i.e. $\pi(xy)=\pi(x)\pi(y)$ and for the  adjoint of $\pi(x),$ $\pi(x)^*=\pi(x^{-1}),$ for every $x,y\in G.$
    \item The map $(x,v)\mapsto \pi(x)v, $ from $G\times H_\pi$ into $H_\pi$ is continuous.
    \item For every $x\in G,$ and for any subspace $W_\pi\subset H_\pi$ such that $\pi(x)W_{\pi}\subset W_{\pi},$ then $W_\pi=H_\pi$ or $W_\pi=\{0\}.$
\end{itemize} Let $\textnormal{Rep}(G)$ be the set of unitary, continuous and irreducible representations of $G.$ The relation, {\small{
\[ 
    \pi_1\sim \pi_2\textnormal{ if and only if, there exists } A\in \mathscr{B}(H_{\pi_1},H_{\pi_2}),\textnormal{ such that }A\pi_{1}(x)A^{-1}=\pi_2(x), 
\]}}for every $x\in G,$ is an equivalence relation and the unitary dual of $G,$ denoted by $\widehat{G}$ is defined via
$
    \widehat{G}:={\textnormal{Rep}(G)}/{\sim}.
$ Let us denote by $d\pi$ the Plancherel measure on $\widehat{G}.$ 
The Fourier transform of $f\in \mathscr{S}(G), $ (this means that $f\circ \textnormal{exp}_G\in \mathscr{S}(\mathfrak{g})$, with $\mathfrak{g}\simeq \mathbb{R}^{\dim(G)}$) at $\pi\in\widehat{G},$ is defined by 
\[ 
    \widehat{f}(\pi)=\int\limits_{G}f(x)\pi(x)^*dx:H_\pi\rightarrow H_\pi,\textnormal{   and   }\mathscr{F}_{G}:\mathscr{S}(G)\rightarrow \mathscr{S}(\widehat{G}):=\mathscr{F}_{G}(\mathscr{S}(G)).
\]

If we identify one representation $\pi$ with its equivalence class, $[\pi]=\{\pi':\pi\sim \pi'\}$,  for every $\pi\in \widehat{G}, $ the Kirillov trace character $\Theta_\pi$ defined by  $$(\Theta_{\pi},f):
=\textnormal{\textbf{Tr}}(\widehat{f}(\pi)),$$ is a tempered distribution on $\mathscr{S}(G).$ In particular, the identity
$
    f(e_G)=\int\limits_{\widehat{G}}(\Theta_{\pi},f)d\pi,
$
implies the Fourier inversion formula $f=\mathscr{F}_G^{-1}(\widehat{f}),$ where
\[ 
    (\mathscr{F}_G^{-1}\sigma)(x):=\int\limits_{\widehat{G}}\textnormal{\textbf{Tr}}(\pi(x)\sigma(\pi))d\pi,\,\,x\in G,\,\,\,\,\mathscr{F}_G^{-1}:\mathscr{S}(\widehat{G})\rightarrow\mathscr{S}(G),
\]is the inverse Fourier  transform. In this context, the Plancherel theorem takes the form $\Vert f\Vert_{L^2(G)}=\Vert \widehat{f}\Vert_{L^2(\widehat{G})}$,  where  $$L^2(\widehat{G}):=\int\limits_{\widehat{G}}H_\pi\otimes H_{\pi}^*d\pi,$$ is the Hilbert space endowed with the norm: $\Vert \sigma\Vert_{L^2(\widehat{G})}=(\int\limits_{\widehat{G}}\Vert \sigma(\pi)\Vert_{\textnormal{HS}}^2d\pi)^{\frac{1}{2}}.$

We will fix a homogeneous quasi-norm $|\cdot|$ on $G.$  This means that $|\cdot|$ is a non-negative function on $G,$ satisfying 
\[ 
    |x|=|x^{-1}|,\,\,\,r|x|=|D_r( x)|,\,\,\,\textnormal{ and }|x|=0 \textnormal{ if and only if  }x=e_{G},
\]
where $e_{G}$ is the identity element of $G.$ It satisfies a triangle
inequality with a constant: there exists a constant $\gamma\geq 1$ such that $|xy|\leq \gamma(|x|+|y|).$

\subsection{Homogeneous linear operators and Rockland operators} A linear operator $T:C^\infty(G)\rightarrow \mathscr{D}'(G)$ is homogeneous of  degree $\nu\in \mathbb{C}$ if for every $r>0$ the equality 
\[ 
T(f\circ D_{r})=r^{\nu}(Tf)\circ D_{r}
\]
holds for every $f\in \mathscr{D}(G). $
If for every representation $\pi\in\widehat{G},$ $\pi:G\rightarrow U(\mathcal{H}_{\pi}),$ we denote by $\mathcal{H}_{\pi}^{\infty}$ the set of smooth vectors, that is, the space of elements $v\in \mathcal{H}_{\pi}$ such that the function $x\mapsto \pi(x)v,$ $x\in \widehat{G},$ is smooth,  a Rockland operator is a left-invariant differential operator $\mathcal{R}$ which is homogeneous of positive degree $\nu=\nu_{\mathcal{R}}$ and such that, for every unitary irreducible non-trivial representation $\pi\in \widehat{G},$ $\pi(\mathcal{R})$ is injective on $\mathcal{H}_{\pi}^{\infty};$ $\sigma_{\mathcal{R}}(\pi)=\pi(\mathcal{R})$ is the symbol associated to $\mathcal{R}.$ It coincides with the infinitesimal representation of $\mathcal{R}$ as an element of the universal enveloping algebra. It can be shown that a Lie group $G$ is graded if and only if there exists a differential Rockland operator on $G.$ If the Rockland operator is formally self-adjoint, then $\mathcal{R}$ and $\pi(\mathcal{R})$ admit self-adjoint extensions on $L^{2}(G)$ and $\mathcal{H}_{\pi},$ respectively. Now if we preserve the same notation for their self-adjoint
extensions and we denote by $E$ and $E_{\pi}$  their spectral measures, we will denote by
$$ f(\mathcal{R}):=\int\limits_{-\infty}^{\infty}f(\lambda) dE(\lambda),\,\,\,\textnormal{and}\,\,\,\pi(f(\mathcal{R}))\equiv f(\pi(\mathcal{R})):=\int\limits_{-\infty}^{\infty}f(\lambda) dE_{\pi}(\lambda), $$ the operators defined by the functional calculus. 
In general, we will reserve the notation ${dE_A(\lambda)}_{0<\lambda<\infty}$ for the spectral measure associated with a positive and self-adjoint operator $A$ on a Hilbert space $H.$ 

We now recall a lemma on dilations on the unitary dual $\widehat{G},$ which will be useful in our analysis of spectral multipliers.   For the proof, see Lemma 4.3 of \cite{FischerRuzhanskyBook}.
\begin{lemma}\label{dilationsrepre}
For every $\pi\in \widehat{G}$ let us define  $D_{r}(\pi)(x)=\pi(rx)$ for every $r>0$ and $x\in G.$ Then, if $f\in L^{\infty}(\mathbb{R})$ then $f(\pi^{(r)}(\mathcal{R}))=f({r^{\nu}\pi(\mathcal{R})}).$
\end{lemma}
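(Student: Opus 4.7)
The plan is to establish the operator identity $\pi^{(r)}(\mathcal{R}) = r^{\nu}\pi(\mathcal{R})$ first on the common invariant core of smooth vectors $\mathcal{H}_{\pi}^{\infty}$, and then pass this identity through the spectral/functional calculus. The whole argument reduces to unwinding the definition of the infinitesimal representation, using the homogeneity of $\mathcal{R}$ under the dilations, and being careful about self-adjoint extensions.

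First, I would unpack $d\pi^{(r)}$ on $\mathfrak{g}$: for $X\in \mathfrak{g}$ and $v\in \mathcal{H}_{\pi}^{\infty}$, using $D_r\circ \exp_{G}=\exp_{G}\circ D_r^{\mathfrak{g}}$,
\begin{equation*}
d\pi^{(r)}(X)v = \tfrac{d}{dt}\big|_{t=0}\pi(D_r\exp_{G}(tX))v = \tfrac{d}{dt}\big|_{t=0}\pi(\exp_{G}(tD_r^{\mathfrak{g}}X))v = d\pi(D_r^{\mathfrak{g}}X)v.
\end{equation*}
If $X\in \mathfrak{g}_j$ is homogeneous of weight $\nu_j$, then $D_r^{\mathfrak{g}}X=r^{\nu_j}X$, so $d\pi^{(r)}(X)=r^{\nu_j}d\pi(X)$ on $\mathcal{H}_\pi^\infty$. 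Extending $d\pi^{(r)}$ multiplicatively to the universal enveloping algebra $\mathfrak{U}(\mathfrak{g})$, and writing $\mathcal{R}=\sum_{\alpha}c_\alpha X^{\alpha}$ as a linear combination of monomials each of homogeneous degree $\nu$, one obtains
\begin{equation*}
\pi^{(r)}(\mathcal{R})v=\sum_{\alpha}c_\alpha d\pi^{(r)}(X^{\alpha})v = \sum_{\alpha}c_\alpha r^{\nu}d\pi(X^{\alpha})v = r^{\nu}\pi(\mathcal{R})v,\quad v\in \mathcal{H}_\pi^\infty.
\end{equation*}

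Next, since $\mathcal{R}$ is a (formally self-adjoint) Rockland operator, the symbol $\pi(\mathcal{R})$ is essentially self-adjoint on $\mathcal{H}_{\pi}^{\infty}$; the same is true of $\pi^{(r)}(\mathcal{R})$ because $\pi^{(r)}$ is again a unitary irreducible representation (the dilation $D_r$ being an automorphism of $G$) and $\mathcal{H}_{\pi^{(r)}}^{\infty}=\mathcal{H}_{\pi}^{\infty}$. Therefore the identity on the core $\mathcal{H}_{\pi}^{\infty}$ promotes to an identity of unbounded self-adjoint operators $\pi^{(r)}(\mathcal{R})=r^{\nu}\pi(\mathcal{R})$. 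The spectral theorem then yields $dE_{\pi^{(r)}(\mathcal{R})}(\lambda)=dE_{\pi(\mathcal{R})}(r^{-\nu}\lambda)$, and consequently
\begin{equation*}
f(\pi^{(r)}(\mathcal{R}))=\int\limits_{-\infty}^{\infty}f(\lambda)\, dE_{\pi^{(r)}(\mathcal{R})}(\lambda) =\int\limits_{-\infty}^{\infty}f(r^{\nu}\mu)\, dE_{\pi(\mathcal{R})}(\mu) = f(r^{\nu}\pi(\mathcal{R})),
\end{equation*}
for every $f\in L^{\infty}(\mathbb{R})$, as claimed.

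The main technical point is the passage from the algebraic identity on $\mathcal{H}_\pi^\infty$ to an identity of self-adjoint operators: one must invoke that $\mathcal{H}_\pi^\infty$ is a common core for both operators (which follows from the Rockland property and the fact that smooth vectors are the same for $\pi$ and $\pi^{(r)}$), so that the closures coincide and the spectral calculi match under the scaling $\lambda\mapsto r^\nu\lambda$. Once this is in place, the functional calculus step is essentially automatic.
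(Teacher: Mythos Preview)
Your argument is correct and is precisely the standard proof: compute $d\pi^{(r)}(X)=d\pi(D_r^{\mathfrak g}X)$ on smooth vectors, use homogeneity of $\mathcal{R}$ to get $\pi^{(r)}(\mathcal{R})=r^{\nu}\pi(\mathcal{R})$ on the core $\mathcal{H}_\pi^\infty$, invoke essential self-adjointness to pass to the self-adjoint closures, and conclude via the spectral theorem. The paper does not give its own proof of this lemma but simply refers to Lemma~4.3 of \cite{FischerRuzhanskyBook}, whose argument is exactly the one you wrote.
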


\subsection{Symbols of pseudo-differential operators}
 In order to present a consistent definition of pseudo-differential operators one developed in \cite{FischerRuzhanskyBook} (see the quantisation formula  \eqref{Quantization}),  a suitable class of spaces on the unitary dual $\widehat{G}$ acting in a suitable way with the set of smooth vectors $H_{\pi}^{\infty},$ on every representation space $H_{\pi}.$ Let now recall the main notions.
 \begin{definition}[Sobolev spaces on  smooth vectors] Let $\pi_1\in \textnormal{Rep}(G),$ and $a\in \mathbb{R}.$ We denote by $H_{\pi_1}^a,$ the Hilbert space obtained  as the completion of $H_{\pi_1}^\infty$ with respect to the norm
 \[ 
     \Vert v \Vert_{H_{\pi_1}^a}=\Vert\pi_1(1+\mathcal{R})^{\frac{a}{\nu}} v\Vert_{H_{\pi_1}},
 \]
 where $\mathcal{R}$ is  a positive Rockland operator on $G$ of homogeneous degree $\nu>0.$
 \end{definition}
 
 In order to introduce the general notion of a symbol as the one developed in \cite{FischerRuzhanskyBook}, we will use a suitable notion of operator-valued  symbols acting on smooth vectors. We introduce it as follows.
 
\begin{definition}
A $\widehat{G}$-field of operators $\sigma=\{\sigma(\pi):\pi\in \widehat{G}\}$ defined on smooth vectors {is defined} on the Sobolev space ${H}_\pi^a$ when for each representation $\pi_1\in \textnormal{Rep}(G),$ the operator $\sigma(\pi_1)$ is bounded from $H^a_{\pi_1}$ into $H_{\pi_{1}}$ in the sense that
\[ 
    \sup_{ \Vert v\Vert_{H_{\pi_1}^a}=1}\Vert \sigma(\pi_1)v \Vert<\infty.
\]
\end{definition}

We will consider those $\widehat{G}-$fields of operators with ranges in Sobolev spaces on smooth vectors.  We recall that the  Sobolev space $L^{2}_{a}(G)$ is  defined by the norm (see \cite[Chapter 4]{FischerRuzhanskyBook})
\begin{equation}\label{L2ab2}
    \Vert f \Vert_{L^{2}_{a}(G)}=\Vert (1+\mathcal{R})^{\frac{a}{\nu}}f\Vert_{L^2(G)},
\end{equation} for $s\in \mathbb{R}.$

\begin{definition}
A $\widehat{G}$-field of operators  defined on smooth vectors {with range} in the Sobolev space $H_{\pi}^a$ is a family of classes of operators $\sigma=\{\sigma(\pi):\pi\in \widehat{G}\}$ where
\[ 
    \sigma(\pi):=\{\sigma(\pi_1):H^{\infty}_{\pi_1}\rightarrow H_{\pi}^a,\,\,\pi_1\in \pi\},
\] for every $\pi\in \widehat{G}$ viewed as a subset of $\textnormal{Rep}(G),$ satisfying for every two elements $\sigma(\pi_1)$ and $\sigma(\pi_2)$ in $\sigma(\pi):$
\[ 
  \textnormal{If  }  \pi_{1}\sim \pi_2  \textnormal{  then  }   \sigma(\pi_1)\sim \sigma(\pi_2). 
\]
\end{definition}
  The following notion  will be useful in order to use the general theory of non-commutative integration (see e.g. Dixmier \cite{Dixmier1953}). 
\begin{definition}
A $\widehat{G}$-field of operators  defined on smooth vectors with range in the Sobolev space $H_\pi^a$ {is measurable}  when for some (and hence for any) $\pi_1\in \pi$ and any vector $v_{\pi_1}\in H_{\pi_1}^\infty,$ as $\pi\in \widehat{G},$ the resulting field $\{\sigma(\pi)v_\pi:\pi\in\widehat{G}\},$ 
is $d\pi$-measurable and
\[ 
    \int\limits_{\widehat{G} }\Vert v_\pi \Vert^2_{H_\pi^a}d\pi=\int\limits_{\widehat{G} }\Vert\pi(1+\mathcal{R})^{\frac{a}{\nu}} v_\pi \Vert^2_{H_\pi}d\pi<\infty.
\]
\end{definition}
\begin{remark}
We always assume that a $\widehat{G}$-field of operators  defined on smooth vectors {with range} in the Sobolev space $H_{\pi}^a$ is $d\pi$-measurable.
\end{remark} The $\widehat{G}$-fields of operators associated to Rockland operators can be defined as follows.
\begin{definition}
Let $L^2_a(\widehat{G})$ denote the space of fields of operators $\sigma$ with range in $H_\pi^a,$ that is,
\[ 
    \sigma=\{\sigma(\pi):H_\pi^\infty\rightarrow H_\pi^a\}, \textnormal{ with }\{\pi(1+\mathcal{R})^{\frac{a}{\nu}}\sigma(\pi):\pi\in \widehat{G}\}\in L^2(\widehat{G}),
\]for one (and hence for any) Rockland operator of homogeneous degree $\nu.$ We also denote
\[ 
    \Vert \sigma\Vert_{L^2_a(\widehat{G})}:=\Vert \pi(1+\mathcal{R})^{\frac{a}{\nu}}\sigma(\pi)\Vert_{L^2(\widehat{G})}.
\]
\end{definition} With the notation above, we will introduce some natural spaces which arise as spaces of  $\widehat{G}$-fields of operators.
\begin{definition}[The spaces $\mathscr{L}_{L}(L^2_a(G),L^2_b(G)),$ $\mathcal{K}_{a,b}(G)$ and $L^\infty_{a,b}(\widehat{G})$]
\hspace{0.1cm}
\begin{itemize}
    \item The space $\mathscr{L}_{L}(L^2_a(G),L^2_b(G)) $ consists of all  left-invariant operators $T$  such that  $T:L^2_a(G)\rightarrow L^2_b(G) $ extends to a bounded operator.
    \item The space  $\mathcal{K}_{a,b}(G)$ is the family of all right convolution kernels of elements in $  \mathscr{L}_{L}(L^2_a(G),L^2_b(G))  ,$ i.e. $k=T\delta\in \mathcal{K}_{a,b}(G)$ if and only if  $T\in
    \mathscr{L}_{L}(L^2_a(G),L^2_b(G)) .$ 
    \item We also define the space $L^\infty_{a,b}(\widehat{G})$ by the following condition:  $\sigma\in L^\infty_{a,b}(\widehat{G})$ if 
\[ 
    \Vert \pi(1+\mathcal{R})^{\frac{b}{\nu}}\sigma(\pi)\pi(1+\mathcal{R})^{-\frac{a}{\nu}} \Vert_{L^\infty(\widehat{G})}:=\sup_{\pi\in\widehat{G}}\Vert \pi(1+\mathcal{R})^{\frac{b}{\nu}}\sigma(\pi)\pi(1+\mathcal{R})^{-\frac{a}{\nu}} \Vert_{\mathscr{B}(H_\pi)}<\infty.
\]
\end{itemize} In this case $T_\sigma:L^2_a(G)\rightarrow L^2_b(G)$ extends to a bounded operator with 
\[ 
  \Vert \sigma\Vert _{L^\infty_{a,b}(\widehat{G})}= \Vert T_{\sigma} \Vert_{\mathscr{L}(L^2_a(G),L^2_b(G))},
\] and   $\sigma\in L^\infty_{a,b}(\widehat{G})$ if and only if $k:=\mathscr{F}_{G}^{-1}\sigma \in \mathcal{K}_{a,b}(G).$
\end{definition}
   With the previous definitions, we will introduce the type of symbols that we will use   further and under which the quantization formula make sense.
   
   \begin{definition}[Symbols and right-covolution kernels]\label{SRCK} A {symbol} is a field of operators $\{\sigma(x,\pi):H_\pi^\infty\rightarrow H_\pi,\,\,\pi\in\widehat{G}\},$ depending on $x\in G,$ such that 
   \[ 
       \sigma(x,\cdot)=\{\sigma(x,\pi):H_\pi^\infty\rightarrow H_\pi,\,\,\pi\in\widehat{G}\}\in L^\infty_{a,b}(\widehat{G})
   \]for some $a,b\in \mathbb{R}.$ The {right-convolution kernel} $k\in C^\infty(G,\mathscr{S}'(G))$ associated with $\sigma$ is defined, via the inverse Fourier transform on the group by
   \[ 
       x\mapsto k(x)\equiv k_{x}:=\mathscr{F}_{G}^{-1}(\sigma(x,\cdot)): G\rightarrow\mathscr{S}'(G).
   \]
   \end{definition}
   Definition \ref{SRCK} in this section allows us to establish the following theorem, which gives sense to the quantization of pseudo-differential operators in the graded setting (see Theorem 5.1.39 of \cite{FischerRuzhanskyBook}).
   \begin{theorem}\label{thetheoremofsymbol}
   Let us consider a symbol $\sigma$ and its associated right-convolution  kernel $k.$ For every $f\in \mathscr{S}(G),$ let us define the operator $A$ acting on $\mathscr{S}(G),$ via 
   \begin{equation}\label{pseudo}
       Af(x)=(f\ast k_{x})(x),\,\,\,\,\,x\in G.
   \end{equation}Then $Af\in C^\infty,$ and 
   \begin{equation}\label{Quantization}
       Af(x)=\int\limits_{\widehat{G}}\textnormal{\textbf{Tr}}(\pi(x)\sigma(x,\pi)\widehat{f}(\pi))d\pi.
   \end{equation}
   \end{theorem}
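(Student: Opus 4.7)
\textbf{Proof proposal for Theorem \ref{thetheoremofsymbol}.}

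The plan is to reduce everything to the Fourier inversion formula on $G$, applied to the distributional kernel $k_x$, and then to justify an exchange in the order of integration using the Schwartz-type decay of $\widehat{f}$ together with the Sobolev-type boundedness assumption $\sigma(x,\cdot)\in L^\infty_{a,b}(\widehat G)$ for some $a,b$. Concretely, fix $x\in G$ and $f\in\mathscr{S}(G)$. Since $k_x\in\mathscr{S}'(G)$ and $f\in\mathscr{S}(G)$, the convolution $f\ast k_x$ is a well-defined smooth function on $G$ (the pairing of the tempered distribution $k_x$ with the smoothly translated Schwartz function $y\mapsto f(xy^{-1})$), so the quantity $(f\ast k_x)(x)$ is unambiguously defined. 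Using Fourier inversion on $G$ and the definition $k_x=\mathscr{F}_G^{-1}\sigma(x,\cdot)$, we have, as distributions,
\begin{equation*}
k_x(y^{-1}x)=\int\limits_{\widehat G}\textnormal{\textbf{Tr}}\bigl(\pi(y^{-1}x)\sigma(x,\pi)\bigr)\,d\pi=\int\limits_{\widehat G}\textnormal{\textbf{Tr}}\bigl(\pi(y)^*\pi(x)\sigma(x,\pi)\bigr)\,d\pi.
\end{equation*}
Integrating against $f(y)\,dy$, interchanging the two integrals, and then using the cyclicity of the trace gives
\begin{align*}
(f\ast k_x)(x)&=\int\limits_G f(y)\int\limits_{\widehat G}\textnormal{\textbf{Tr}}\bigl(\pi(y)^*\pi(x)\sigma(x,\pi)\bigr)\,d\pi\,dy\\
&=\int\limits_{\widehat G}\textnormal{\textbf{Tr}}\Bigl(\Bigl(\int\limits_G f(y)\pi(y)^*\,dy\Bigr)\pi(x)\sigma(x,\pi)\Bigr)\,d\pi\\
&=\int\limits_{\widehat G}\textnormal{\textbf{Tr}}\bigl(\widehat f(\pi)\pi(x)\sigma(x,\pi)\bigr)\,d\pi=\int\limits_{\widehat G}\textnormal{\textbf{Tr}}\bigl(\pi(x)\sigma(x,\pi)\widehat f(\pi)\bigr)\,d\pi,
\end{align*}
which is the desired formula.

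The main obstacle is of course to rigorously justify the interchange of the integration over $G$ with the Plancherel integral on $\widehat G$, since neither integrand is absolutely integrable against the product measure in a naive sense. The plan is to exploit the fact that $\mathscr{F}_G$ is an isomorphism of $\mathscr{S}(G)$ onto $\mathscr{S}(\widehat G)$, so that $\widehat f$ enjoys rapid decay measured by any power of $\pi(1+\mathcal R)$; more precisely, for every $N$ one has $\pi(1+\mathcal R)^{N}\widehat f(\pi)\in L^1(\widehat G)\cap L^2(\widehat G)$ in the operator-valued sense (cf.\ the definition of $\mathscr{S}(\widehat G)$). Writing $\widehat f(\pi)=\pi(1+\mathcal R)^{-M}\cdot\bigl(\pi(1+\mathcal R)^{M}\widehat f(\pi)\bigr)$ and $\sigma(x,\pi)=\sigma(x,\pi)\pi(1+\mathcal R)^{-a/\nu}\cdot\pi(1+\mathcal R)^{a/\nu}$, and inserting a further regularizing factor $\pi(1+\mathcal R)^{-b/\nu}\cdot\pi(1+\mathcal R)^{b/\nu}$, one controls the trace $\textnormal{\textbf{Tr}}(\pi(x)\sigma(x,\pi)\widehat f(\pi))$ by the Hilbert–Schmidt norm of a Schwartz-rapidly decreasing factor times the operator norm $\Vert\sigma\Vert_{L^\infty_{a,b}(\widehat G)}$; choosing $M$ large enough that $\pi(1+\mathcal R)^{-M+a/\nu+b/\nu}\in L^1(\widehat G,\textnormal{HS})$ (this is precisely where the finiteness of $\int_{\widehat G}\Vert\pi(1+\mathcal R)^{-s}\Vert_{\textnormal{HS}}^{2}\,d\pi$ for large $s$ enters, as in the Sobolev embedding for graded groups) yields the required absolute integrability and legitimizes Fubini.

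To obtain $Af\in C^\infty(G)$, I would differentiate under the Plancherel integral with respect to $x$ using left-invariant vector fields $X_j$. Since $X_j\bigl(\pi(x)\sigma(x,\pi)\widehat f(\pi)\bigr)=\pi(x)\bigl(\pi(X_j)\sigma(x,\pi)+X_j^{(x)}\sigma(x,\pi)\bigr)\widehat f(\pi)$, and each new operator factor $\pi(X_j)^{\alpha}$ is controlled by a finite power of $\pi(1+\mathcal R)$ (absorbed into $\widehat f(\pi)$ by the rapid decay of $\widehat f$), while the smooth dependence $x\mapsto\sigma(x,\pi)$ transfers the Schwartz property in $\pi$ uniformly on compact sets of $x$, the resulting integrals converge absolutely and uniformly on compact subsets of $G$. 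An iteration of this argument produces arbitrarily many continuous derivatives of $Af$, showing $Af\in C^\infty(G)$. The hardest technical point throughout is keeping careful track of which Sobolev parameters are involved when shifting powers of $\pi(1+\mathcal R)$ through $\sigma(x,\pi)$ on either side, but this is precisely controlled by the very definition of $L^\infty_{a,b}(\widehat G)$ and the $\widehat G$-field structure introduced before the statement.
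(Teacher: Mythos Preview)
The paper does not actually prove this theorem: it is quoted verbatim as Theorem 5.1.39 of \cite{FischerRuzhanskyBook}, with only the citation given. So there is no in-paper proof to compare against.

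Your proposal is the standard argument and is essentially the one carried out in the cited reference: the formal manipulation via Fourier inversion and cyclicity of the trace is exactly right, and the Fubini justification by inserting compensating powers of $\pi(1+\mathcal R)^{\pm 1}$ and absorbing them into $\widehat f(\pi)\in\mathscr{S}(\widehat G)$ on one side and into the $L^\infty_{a,b}(\widehat G)$-bound on the other is precisely the mechanism used there.

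One point deserves tightening. For the smoothness $Af\in C^\infty(G)$ you appeal to ``smooth dependence $x\mapsto\sigma(x,\pi)$'', but Definition~\ref{SRCK} as stated only requires $\sigma(x,\cdot)\in L^\infty_{a,b}(\widehat G)$ pointwise in $x$, with no explicit regularity in $x$ and no uniformity of $a,b$. The smoothness is instead encoded in the hypothesis that the right-convolution kernel satisfies $k\in C^\infty(G,\mathscr{S}'(G))$. It is therefore cleaner (and closer to the argument in \cite{FischerRuzhanskyBook}) to deduce $Af\in C^\infty$ on the kernel side: since $x\mapsto k_x$ is smooth into $\mathscr{S}'(G)$ and $f\in\mathscr{S}(G)$, the map $x\mapsto f\ast k_x$ is smooth into $C^\infty(G)$, and evaluation at the diagonal preserves smoothness. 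Your differentiation-under-the-Plancherel-integral route would require, in addition, that $X_x^\beta\sigma(x,\cdot)$ stays in some $L^\infty_{a',b'}(\widehat G)$ uniformly, which is not part of the bare definition of ``symbol'' used here (it \emph{is} part of the H\"ormander symbol classes $S^m_{\rho,\delta}$, but the theorem is stated at the more general level).
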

   
 Theorem   \ref{thetheoremofsymbol} motivates the following definition.
 \begin{definition}\label{DefiPSDO}
A continuous linear operator $A:C^\infty(G)\rightarrow\mathscr{D}'(G)$ with Schwartz kernel $K_A\in C^{\infty}(G)\widehat{\otimes}_{\pi} \mathscr{D}'(G),$ is a pseudo-differential operator, if
 there exists a  \textit{symbol}, which is a field of operators $\{\sigma(x,\pi):H_\pi^\infty\rightarrow H_\pi,\,\,\pi\in\widehat{G}\},$ depending on $x\in G,$ such that 
   \[ 
       \sigma(x,\cdot)=\{\sigma(x,\pi):H_\pi^\infty\rightarrow H_\pi,\,\,\pi\in\widehat{G}\}\in L^\infty_{a,b}(\widehat{G})
   \]for some $a,b\in \mathbb{R},$ such that, the Schwartz kernel of $A$ is given by 
\[ 
    K_{A}(x,y)=\int\limits_{\widehat{G}}\textnormal{\textbf{Tr}}(\pi(y^{-1}x)\sigma(x,\pi))d\pi=k_{x}(y^{-1}x).
\]
\end{definition}
Let $\mathcal{R}$ be a positive Rockland operator on a graded Lie group. Then $\mathcal{R}$  and $\pi(\mathcal{R}):=d\pi(\mathcal{R})$ (the infinitesimal representation of $\mathcal{R}$) are symmetric and  densely defined operators on $C^\infty_0 (G)$ and $H^\infty_\pi\subset H_\pi.$ We will denote by $\mathcal{R}$  and $\pi(\mathcal{R}):=d\pi(\mathcal{R})$ their self-adjoint extensions to $L^2(G)$ and $H_\pi$ respectively (see Proposition 4.1.5 and Corollary 4.1.16 of \cite[page 178]{FischerRuzhanskyBook}). 

\begin{remark}\label{symbolremark}
Let $\mathcal{R}$ be a positive  Rockland operator of  homogeneous degree $\nu$ on a graded Lie group $G.$ Every operator $  \pi(\mathcal{R}) $ has discrete spectrum (see ter Elst and Robinson \cite{TElst+Robinson}) admitting, by the spectral theorem, a basis  contained in its domain. In this case, $H_{\pi}^{\infty}\subset\textnormal{Dom}(\pi(\mathcal{R}))\subset H_{\pi},$ but in view of Proposition 4.1.5 and Corollary 4.1.16 of \cite[page 178]{FischerRuzhanskyBook}, every $ \pi(\mathcal{R})$ is densely defined and symmetric on $H^\infty_\pi,$ and this fact allows us to define the (restricted) domain of $\pi(\mathcal{R}),$ as \begin{equation}\label{RestrictedDomain}
    \textnormal{Dom}_{\textnormal{rest}}(\pi(\mathcal{R}))=H_{\pi}^{\infty}.   
\end{equation}Next, when we mention the domain of $\pi(\mathcal{R})$ we are referring to the restricted domain in  \eqref{RestrictedDomain}. This fact will be important, because, via the spectral theorem we can construct a basis for $H_{\pi},$ consisting of vectors in $\textnormal{Dom}_{\textnormal{rest}}(\pi(\mathcal{R}))=H_{\pi}^{\infty},$ where the operator $\pi(\mathcal{R})$ is diagonal. So, if  $B_\pi=\{e_{\pi,k}\}_{k=1}^\infty\subset H_{\pi}^{\infty},$ is  a basis such that $\pi(\mathcal{R})$  satisfies

$ \pi(\mathcal{R})e_{\pi,k}=\lambda_{\pi,k}e_{\pi,k},\,k\in \mathbb{N}, \,\,\,\pi \in \widehat{G}, $ for every $x\in G,$ the function $x\mapsto \pi(x)e_{\pi,k},$ is smooth and the family of functions
\begin{equation}\label{piij}
   \pi_{ij}:G\rightarrow \mathbb{C},\,\, \pi(x)_{ij}:=( \pi(x)e_{\pi,i},e_{\pi,j} )_{H_\pi},\,\,x\in G,
\end{equation} are smooth functions on $G.$  Consequently, for every continuous linear operator $A:C^\infty(G)\rightarrow C^\infty(G)$ we have \[ \{\pi_{ij}\}_{i,j=1}^\infty\subset \textnormal{Dom}(A)=C^\infty(G),\] for every $ \pi\in \widehat{G}. $
\end{remark} 

In view of Remark \ref{symbolremark}  we have the following theorem where we present the formula of a global symbol in terms of its corresponding pseudo-differential operator in the graded setting. The proof can be found in \cite[Section 3]{RuzhanskyDelgadoCardona2019}.

\begin{theorem}\label{SymbolPseudo}
Let  $\mathcal{R}$ be a positive  Rockland operator of  homogeneous degree $\nu$ on a graded Lie group $G.$ For every $\pi\in \widehat{G},$ let  $B_\pi=\{e_{\pi,k}\}_{k=1}^\infty\subset H_{\pi}^{\infty},$ be  a basis where the operator $\pi(\mathcal{R})$ is diagonal, i.e., 
\[ 
    \pi(\mathcal{R})e_{\pi,k}=\lambda_{\pi,k}e_{\pi,k},\,k\in \mathbb{N}, \,\,\,\pi \in \widehat{G}.
\] For every $x\in G,$ and $\pi\in \widehat{G},$ let us consider the functions $\pi(\cdot)_{ij}\in C^\infty(G)$ in \eqref{piij} induced by the coefficients of the matrix representation
 of $\pi(x)$ in the basis $B_\pi.$ If $A:C^\infty(G)\rightarrow C^\infty(G)$ is a continuous linear operator with symbol 
\begin{equation}\label{definition}
    \sigma:=\{\sigma(x,\pi)\in \mathscr{L}(H_{\pi}^\infty, H_\pi):\, x\in G,\,\pi\in \widehat{G}\},\, 
\end{equation} such that
\begin{equation}\label{quantization}
    Af(x)=\int\limits_{\widehat{G}}\textnormal{\textbf{Tr}}(\pi(x)\sigma(x,\pi)\widehat{f}(\pi))d\pi,
\end{equation} for every $f\in \mathscr{S}(G),$ and a.e. $(x,\pi),$ and if   $A\pi(x)$ is the  densely defined operator on $H_{\pi}^\infty,$ via
\begin{equation}\label{thesymbolofA}
  A\pi(x)\equiv  ((A\pi(x)e_{\pi,i},e_{\pi,j}))_{i,j=1}^\infty,\,\,\,(A\pi(x)e_{\pi,i},e_{\pi,j})=:(A\pi_{ij})(x),
\end{equation} then we have
\begin{equation}\label{formulasymbol}
    \sigma(x,\pi)=\pi(x)^*A\pi(x),
\end{equation} for every $x\in G,$ and a.e. $\pi\in \widehat{G}.$ 
\end{theorem}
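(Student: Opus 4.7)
\textbf{Proof plan for Theorem \ref{SymbolPseudo}.}

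My plan is to verify the identity $\sigma(x,\pi) = \pi(x)^{*} A\pi(x)$ entrywise in the basis $B_\pi = \{e_{\pi,k}\}$ of Remark \ref{symbolremark}, by running the computation through the right-convolution kernel representation of $A$ rather than the quantisation formula itself. Concretely, I will show that for every $i,k$ the matrix entry
\[
\bigl(\pi(x)^{*}A\pi(x)\bigr)_{ki} = \bigl(A\pi(x)e_{\pi,i},\,\pi(x)e_{\pi,i}\bigr)_{H_\pi} = \sum_{j}(A\pi_{ij})(x)\,\overline{\pi_{kj}(x)}
\]
agrees with $\sigma(x,\pi)_{ki}$. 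Since the functions $\pi_{ij}\in C^\infty(G)$ belong to $\mathrm{Dom}(A)$, each $A\pi_{ij}(x)$ is well-defined, and the matrix entries of $A\pi(x)$ (interpreted as a densely defined operator on $H_\pi^\infty$) are simply $A\pi_{ij}(x)$.

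The computational heart of the argument is to express $A\pi_{ij}(x)$ in terms of $\sigma(x,\pi)$. By Theorem \ref{thetheoremofsymbol} one has $Af(x)=(f\ast k_{x})(x)$ with $k_x := \mathscr{F}_G^{-1}\sigma(x,\cdot)$, so
\[
(A\pi_{ij})(x)=\int_G \pi_{ij}(xz^{-1})\,k_x(z)\,dz.
\]
Now I will use the homomorphism property $\pi(xz^{-1})=\pi(x)\pi(z)^{*}$ to expand
\[
\pi_{ij}(xz^{-1}) = \bigl(\pi(x)\pi(z)^{*}e_{\pi,i},e_{\pi,j}\bigr)_{H_\pi} = \sum_{l}\overline{\pi_{li}(z)}\,\pi_{lj}(x),
\]
which follows by inserting the resolution $\mathrm{Id}=\sum_l (\,\cdot\,,e_{\pi,l})e_{\pi,l}$ between $\pi(x)$ and $\pi(z)^{*}$. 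Substituting this back and interchanging the sum with the integral yields
\[
(A\pi_{ij})(x)=\sum_l \pi_{lj}(x)\int_G k_x(z)\,\overline{\pi_{li}(z)}\,dz = \sum_{l}\pi_{lj}(x)\,\sigma(x,\pi)_{li},
\]
since by the definition of the Fourier transform $(\widehat{k_x}(\pi))_{li}=\int_G k_x(z)\overline{\pi_{li}(z)}dz$ and $\widehat{k_x}(\pi)=\sigma(x,\pi)$.

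Finally, plugging this formula into $\sum_{j}(A\pi_{ij})(x)\,\overline{\pi_{kj}(x)}$ and using the unitarity relation $\sum_j \pi_{lj}(x)\overline{\pi_{kj}(x)} = (\pi(x)e_{\pi,l},\pi(x)e_{\pi,k})_{H_\pi}=\delta_{lk}$ collapses the sum to $\sigma(x,\pi)_{ki}$, establishing the identity \eqref{formulasymbol}.

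The main technical obstacle is justifying the manipulations in the second step, because on a non-compact graded group the matrix coefficients $\pi_{ij}$ are not in $\mathscr{S}(G)$, and $k_x$ is only a distribution: the integral $\int_G k_x(z)\overline{\pi_{li}(z)}dz$, the series $\sum_l \pi_{lj}(x)\sigma(x,\pi)_{li}$, and the exchange of sum and integral all require care. I would handle this by interpreting both sides as densely defined sesquilinear forms on $H_\pi^\infty\times H_\pi^\infty$: since $e_{\pi,i}\in H_\pi^\infty$ by the choice of basis in Remark \ref{symbolremark}, and since $\sigma(x,\pi)\in L^{\infty}_{a,b}(\widehat{G})$ maps $H_\pi^\infty$ into $H_\pi^a$, the expression $(\sigma(x,\pi)e_{\pi,i},\pi(x)^{*}\pi(x)e_{\pi,k})=\sigma(x,\pi)_{ki}$ is well-defined, and the pairing against the smooth vector $e_{\pi,i}$ regularises the convolution against $k_x$ to a convergent absolute integral. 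Symmetric regularisation via a suitable Rockland power $\pi(1+\mathcal{R})^{-N/\nu}$ allows the interchange of sum and integral by dominated convergence, completing the proof.
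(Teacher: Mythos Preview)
The paper does not include its own proof of Theorem \ref{SymbolPseudo}; it simply cites \cite[Section 3]{RuzhanskyDelgadoCardona2019}. Your proposal follows the standard route for this identity (pass through the right-convolution kernel, expand $\pi_{ij}(xz^{-1})$ via the homomorphism property, recognise the group Fourier transform of $k_x$ as $\sigma(x,\pi)$, and collapse using unitarity), which is exactly the argument one finds in that reference and, in the compact case, in \cite[Chapter 10]{Ruz}. So the approach is correct and essentially coincides with the intended proof.

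Two small remarks. First, there is a typo in your first displayed line: the middle term $(A\pi(x)e_{\pi,i},\pi(x)e_{\pi,i})$ should carry both indices $i$ and $k$; and with the paper's convention $\pi_{ij}(x)=(\pi(x)e_{\pi,i},e_{\pi,j})$ one gets $(\widehat{k_x}(\pi))_{il}$ rather than $(\widehat{k_x}(\pi))_{li}$ for the integral $\int_G k_x(z)\overline{\pi_{li}(z)}\,dz$. Your computation is nonetheless internally consistent and lands on the correct $\sigma(x,\pi)_{ki}$ at the end, so this is purely a bookkeeping issue. Second, the technical concern you flag is the genuine one: on a non-compact graded group the coefficients $\pi_{ij}$ are not Schwartz, so the identity $A\pi_{ij}(x)=(\pi_{ij}\ast k_x)(x)$ and the sum-integral interchange need justification. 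Your plan of interpreting both sides as sesquilinear forms on $H_\pi^\infty\times H_\pi^\infty$ and regularising with $\pi(1+\mathcal{R})^{-N/\nu}$ (equivalently, testing against vectors in $H_\pi^\infty$ and using $\sigma(x,\cdot)\in L^\infty_{a,b}(\widehat G)$) is exactly how this is handled in the cited reference.
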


 \subsection{Global H\"ormander classes $S^m_{\rho,\delta}$ on graded Lie groups}

The main tool in the construction of global H\"ormander classes is the notion of difference operators. 
 Indeed, for every smooth function $q\in C^\infty(G)$ and $\sigma\in L^\infty_{a,b}(G),$ where $a,b\in\mathbb{R},$ the difference operator $\Delta_q$ acts on $\sigma$ according to the formula (see Definition 5.2.1 of \cite{FischerRuzhanskyBook}),
 \[ 
     \Delta_q\sigma(\pi)\equiv [\Delta_q\sigma](\pi):=\mathscr{F}_{G}(qf)(\pi),\,\textnormal{ for  a.e.  }\pi\in\widehat{G},\textnormal{ where }f:=\mathscr{F}_G^{-1}\sigma\,\,.
 \]
We will reserve the notation $\Delta^{\alpha}$ for the difference operators defined by the functions $q_{\alpha}$ and $\tilde{q}_{\alpha}$ defined by $q_{\alpha}(x):=x^\alpha$ and $\tilde{q}_{\alpha}(x)=(x^{-1})^\alpha,$ respectively. In particular, we have the Leibniz rule,
\begin{equation}\label{differenceespcia;l}
    \Delta^{\alpha}(\sigma\tau)=\sum_{\alpha_1+\alpha_2=\alpha}c_{\alpha_1,\alpha_2}\Delta^{\alpha_1}(\sigma)\Delta^{\alpha_2}(\tau),\,\,\,\sigma,\tau\in L^\infty_{a,b}(\widehat{G}).
\end{equation} 
For our further analysis we will use the following property of the difference operators $\Delta^\alpha,$ (see e.g. \cite[page 20]{FischerFermanian-Kammerer2017}),
\begin{equation}\label{FischerFermanian-Kammerer2017}
    \Delta^{\alpha}(\sigma_{r\cdot})(\pi)=r^{|\alpha|}(\Delta^{\alpha}\sigma)(r\cdot \pi),\,\,\,r>0\,\,\,\pi\in\widehat{G},
\end{equation}where we have denoted \begin{equation}\label{eeeeeeeee}
    \sigma_{r\cdot}:=\{\sigma(r\cdot \pi):\pi\in\widehat{G}\},\,\,r\cdot \pi(x):=\pi(D_r(x)),\,\,x\in G. 
\end{equation}

\begin{definition}
In terms of difference operators, the global H\"ormander classes introduced in \cite{FischerRuzhanskyBook} can be defined as follows.
 Let $0\leq \delta,\rho\leq 1,$ and let $\mathcal{R}$ be a positive Rockland operator of homogeneous degree $\nu>0.$ If $m\in \mathbb{R},$ we say that the symbol $\sigma\in L^\infty_{a,b}(\widehat{G}), $ where $a,b\in\mathbb{R},$ belongs to the $(\rho,\delta)$-H\"ormander class of order $m,$ $S^m_{\rho,\delta}(G\times \widehat{G}),$ if for all $\gamma\in \mathbb{R},$ the following conditions
\begin{equation}\label{seminorm}
   p_{\alpha,\beta,\gamma,m}(\sigma)= \operatornamewithlimits{ess\, sup}_{(x,\pi)\in G\times \widehat{G}}\Vert \pi(1+\mathcal{R})^{\frac{\rho|\alpha|-\delta|\beta|-m-\gamma}{\nu}}[X_{x}^\beta \Delta^{\alpha}\sigma(x,\pi)] \pi(1+\mathcal{R})^{\frac{\gamma}{\nu}}\Vert_{\textnormal{op}}<\infty,
\end{equation}
hold true for all $\alpha$ and $\beta$ in $\mathbb{N}_0^n.$
\end{definition}
\begin{remark}\label{remarkgraded}
The resulting class $S^m_{\rho,\delta}(G\times \widehat{G}),$ does not depend on the choice of the Rockland operator $\mathcal{R}$ (see \cite[Page 306]{FischerRuzhanskyBook}). Moreover (see Theorem 5.5.20 of \cite{FischerRuzhanskyBook}), the following facts are equivalents: 
\begin{itemize}
    \item $\forall \alpha,\beta\in \mathbb{N}_{0}^n, \forall\gamma\in \mathbb{R}, $   $p_{\alpha,\beta,\gamma,m}(\sigma)<\infty.$
    
    \item  $\forall \alpha,\beta\in \mathbb{N}_{0}^n, $   $p_{\alpha,\beta,0,m}(\sigma)<\infty.$
    
    \item  $\forall \alpha,\beta\in \mathbb{N}_{0}^n, $   $p_{\alpha,\beta,m+\delta|\beta|-\rho|\alpha|,m}(\sigma)<\infty.$
     \item $\sigma\in {S}^{m}_{\rho,\delta}(G\times \widehat{G}).$
\end{itemize}
We will denote,
\begin{equation}
     \Vert \sigma\Vert_{k\,,\mathscr{S}^{m}_{\rho,\delta}}= \max_{|\alpha|+|\beta|\leq k}\{  p_{\alpha,\beta,0,m}(\sigma)\}.
\end{equation}
\end{remark}
By keeping in mind Remark \ref{remarkgraded}, we will improve  Theorem 5.5.20 of \cite{FischerRuzhanskyBook} proving  a characterization of  H\"ormander classes on graded Lie groups.
\begin{theorem}\label{corgraded}   Let $G$ be a graded Lie group of homogeneous dimension $Q,$ and let  $0\leqslant \delta,\rho\leqslant 1.$    The following conditions are equivalent: 
\begin{itemize}
    \item[(A).] $\forall \alpha,\beta\in \mathbb{N}_{0}^n, \forall \gamma\in \mathbb{R}, $   $p_{\alpha,\beta,\gamma,m}(\sigma)<\infty.$
    
    \item[(B).]  $\forall \alpha,\beta\in \mathbb{N}_{0}^n, $   $p_{\alpha,\beta,0,m}(\sigma)<\infty.$
    
    \item[(C).]  $\forall \alpha,\beta\in \mathbb{N}_{0}^n, $   $p_{\alpha,\beta,m+\delta|\beta|-\rho|\alpha|,m}(\sigma)<\infty.$
    \item[(D).]  $\forall \alpha,\beta\in \mathbb{N}_{0}^n, \exists \gamma_0\in \mathbb{R}, $   $p_{\alpha,\beta,\gamma_0,m}(\sigma)<\infty.$
    \item[(E).] $\sigma\in {S}^{m}_{\rho,\delta}(G\times \widehat{G}).$
\end{itemize}
\begin{proof} 
We only need to prove that 
$\textnormal{D}\Longrightarrow \textnormal{C}.$ Let us assume that $\textnormal{D}$ holds true for some $\gamma_0\in \mathbb{R}$ and let $\gamma\in \mathbb{R}$ be a real number. Let us assume first that $\gamma>\gamma_0.$ Let us define the operator $ \mathcal{Q}$ by the functional calculus in the following way
\[ 
    \mathcal{Q}:=(1+\mathcal{R})^{\frac{1}{\nu}}\equiv \int\limits_{0}^\infty (1+\lambda)^{\frac{1}{\nu}}dE_{\mathcal{R}}(\lambda),
\]where $\{dE_{\mathcal{R}}(\lambda)\}_{\lambda\geq 0}$ denotes the spectral resolution associated with $\mathcal{R}.$ Let us denote by $\{\pi(Q)\}$ the symbol of $Q,$ indexed by $\pi\in \widehat{G}$ except possibly on a subset of $\widehat{G}$ of null Plancherel measure. Let us note that the operator $A(\pi):=\pi(Q)^{(\gamma_0-\gamma)}$ is self-adjoint and bounded. Let us denote
\[ 
    X_{\alpha,\beta,\gamma_0}(x,\pi):=  \pi(Q)^{(\rho|\alpha|-\delta|\beta|-m-\gamma_0)}X_{x}^\beta \Delta^{\alpha}\sigma(x,\pi)\pi(Q)^{\gamma_0},
\]
which is a bounded operator on $H_{\pi}.$
From the Corach-Porta-Recht inequality \eqref{Recht2}, we have 
\begin{align*}
    &\Vert \pi(Q)^{(\rho|\alpha|-\delta|\beta|-m-\gamma)}X_x^\beta \Delta^{\alpha}\sigma(x,\pi)\pi(Q)^{\gamma}\Vert_{\textnormal{op}}\\
    &=\Vert \pi(Q)^{(\gamma_0-\gamma)} \pi(Q)^{(\rho|\alpha|-\delta|\beta|-m-\gamma_0)}X_x^\beta \Delta^{\alpha}\sigma(x,\pi)\pi(Q)^{\gamma_0}\pi(Q)^{(\gamma-\gamma_0)}\Vert_{\textnormal{op}} \\
    &=\Vert \pi(Q)^{(\gamma_0-\gamma)} X_{\alpha,\beta,\gamma_0}(x,\pi)\pi(Q)^{(\gamma-\gamma_0)}\Vert_{\textnormal{op}}\\
    &\leqslant \Vert A(\pi) \pi(Q)^{(\gamma_0-\gamma)}  X_{\alpha,\beta,\gamma_0}(x,\pi)\pi(Q)^{(\gamma-\gamma_0)}A(\pi)\\
    &+(1+A(\pi)^{2})^{\frac{1}{2}}\pi(Q)^{(\gamma_0-\gamma)} X_{\alpha,\beta,\gamma_0}(x,\pi)\pi(Q)^{(\gamma-\gamma_0)} (1+A(\pi)^{2})^{\frac{1}{2}} \Vert_{\textnormal{op}}\\
    &= \Vert \pi(Q)^{2(\gamma_0-\gamma)}  X_{\alpha,\beta,\gamma_0}(x,\pi)\\
   &+(1+\pi(Q)^{2(\gamma_0-\gamma)})^{\frac{1}{2}}\pi(Q)^{(\gamma_0-\gamma)} X_{\alpha,\beta,\gamma_0}(x,\pi)\pi(Q)^{(\gamma-\gamma_0)} (1+\pi(Q)^{2(\gamma_0-\gamma)})^{\frac{1}{2}} \Vert_{\textnormal{op}}.
    \end{align*} 
Taking into account that $\gamma_0-\gamma<0,$ and the functional calculus for real  powers of $\mathcal{R}$ and $I+\mathcal{R}$ (see  \cite[Page 319]{FischerRuzhanskyBook}) imply
\[ 
  \pi(Q)^{2(\gamma_0-\gamma)}\in {S}^{2(\gamma_0-\gamma)}_{1,0}(G\times\widehat{G}),\,\,\, \,(1+\pi(Q)^{2(\gamma_0-\gamma)})^{\frac{1}{2}} \pi(Q)^{(\gamma_0-\gamma)}\in {S}^{2(\gamma_0-\gamma)}_{1,0}(G\times\widehat{G}) ,
\]  
\[ 
    \pi(Q)^{(\gamma-\gamma_0)} (1+\pi(Q)^{2(\gamma_0-\gamma)})^{\frac{1}{2}} \in {S}^{0}_{1,0}(G\times\widehat{G}),
\]
from which we deduce that
\[ 
  \mathscr{D}_1:=  \sup_{\pi\in \widehat{G}} \Vert  \pi(Q)^{2(\gamma_0-\gamma)}\Vert_{\textnormal{op}}, \, \mathscr{D}_2:=  \sup_{\pi\in \widehat{G}} \Vert  (1+\pi(Q)^{2(\gamma_0-\gamma)})^{\frac{1}{2}} \pi(Q)^{2(\gamma_0-\gamma)}  \Vert_{\textnormal{op}}  <\infty,
\] and 
\[ 
  \mathscr{D}_3:=  \sup_{\pi\in \widehat{G}}\Vert \pi(Q)^{(\gamma-\gamma_0)} (1+\pi(Q)^{2(\gamma_0-\gamma)})^{\frac{1}{2}}\Vert_{\textnormal{op}}<\infty.
\]
Consequently, 
\begin{align*}
    &\Vert \pi(Q)^{(\rho|\alpha|-\delta|\beta|-m-\gamma)}X_x^\beta \Delta^{\alpha}\sigma(x,\pi)\pi(Q)^{\gamma}\Vert_{\textnormal{op}}\\
    &\leqslant  \Vert \pi(Q)^{2(\gamma_0-\gamma)}  X_{\alpha,\beta,\gamma_0}(x,\pi)\\
   &+(1+\pi(Q)^{2(\gamma_0-\gamma)})^{\frac{1}{2}}\pi(Q)^{(\gamma_0-\gamma)} X_{\alpha,\beta,\gamma_0}(x,\pi)\pi(Q)^{(\gamma-\gamma_0)} (1+\pi(Q)^{2(\gamma_0-\gamma)})^{\frac{1}{2}} \Vert_{\textnormal{op}}\\
   &\leqslant  \Vert \pi(Q)^{2(\gamma_0-\gamma)}\Vert_{\textnormal{op}} \Vert  X_{\alpha,\beta,\gamma_0}(x,\pi)\Vert_{\textnormal{op}}\\
   &+\Vert (1+\pi(Q)^{2(\gamma_0-\gamma)})^{\frac{1}{2}}\pi(Q)^{(\gamma_0-\gamma)}\Vert_{\textnormal{op}}\Vert X_{\alpha,\beta,\gamma_0}(x,\pi)\Vert_{\textnormal{op}}\\
   &\hspace{7cm}\times\Vert \pi(Q)^{(\gamma-\gamma_0)} (1+\pi(Q)^{2(\gamma_0-\gamma)})^{\frac{1}{2}} \Vert_{\textnormal{op}}\\
   &\leqslant (\mathscr{D}_1+\mathscr{D}_2\times \mathscr{D}_3)\Vert X_{\alpha,\beta,\gamma_0}(x,\pi)\Vert_{\textnormal{op}}.
\end{align*}This argument shows that $\textnormal{D}\Longrightarrow \textnormal{C}$ for $\gamma>\gamma_0.$ In the  case where $\gamma<\gamma_0,$ we can define $A(\pi)=\pi(Q)^{(\gamma-\gamma_0)}.$ By repeating the argument above we can  deduce that $\textnormal{D}\Longrightarrow \textnormal{C}$ for $\gamma<\gamma_0.$ Indeed, by using again the Corach-Porta-Recht inequality \eqref{Recht2}, we have 
\begin{align*}
    &\Vert \pi(Q)^{(\rho|\alpha|-\delta|\beta|-m-\gamma)}X_x^\beta \Delta^{\alpha}\sigma(x,\pi)\pi(Q)^{\gamma}\Vert_{\textnormal{op}}\\
    &=\Vert \pi(Q)^{(\gamma_0-\gamma)} \pi(Q)^{(\rho|\alpha|-\delta|\beta|-m-\gamma_0)}X_x^\beta \Delta^{\alpha}\sigma(x,\pi)\pi(Q)^{\gamma_0}\pi(Q)^{(\gamma-\gamma_0)}\Vert_{\textnormal{op}} \\
    &=\Vert \pi(Q)^{(\gamma_0-\gamma)} X_{\alpha,\beta,\gamma_0}(x,\pi)\pi(Q)^{(\gamma-\gamma_0)}\Vert_{\textnormal{op}}\\
    &\leqslant \Vert A(\pi) \pi(Q)^{(\gamma_0-\gamma)}  X_{\alpha,\beta,\gamma_0}(x,\pi)\pi(Q)^{(\gamma-\gamma_0)}A(\pi)\\
    &+(1+A(\pi)^{2})^{\frac{1}{2}}\pi(Q)^{(\gamma_0-\gamma)} X_{\alpha,\beta,\gamma_0}(x,\pi)\pi(Q)^{(\gamma-\gamma_0)} (1+A(\pi)^{2})^{\frac{1}{2}} \Vert_{\textnormal{op}}\\
    &= \Vert   X_{\alpha,\beta,\gamma_0}(x,\pi)\pi(Q)^{2(\gamma-\gamma_0)}\\
   &+(1+\pi(Q)^{2(\gamma-\gamma_0)})^{\frac{1}{2}}\pi(Q)^{(\gamma_0-\gamma)} X_{\alpha,\beta,\gamma_0}(x,\pi)\pi(Q)^{(\gamma-\gamma_0)} (1+\pi(Q)^{2(\gamma-\gamma_0)})^{\frac{1}{2}} \Vert_{\textnormal{op}}.
    \end{align*} 
Since $\gamma-\gamma_0$ is negative, and  by using again the functional calculus for real  powers of $\mathcal{R}$ and $I+\mathcal{R}$ (see  \cite[Page 319]{FischerRuzhanskyBook}) we have that
\[ 
  \pi(Q)^{2(\gamma-\gamma_0)}\in {S}^{{2}(\gamma-\gamma_0)}_{1,0}(G\times\widehat{G}), \,  \pi(Q)^{(\gamma-\gamma_0)} (1+\pi(Q)^{2(\gamma-\gamma_0)})^{\frac{1}{2}}        \in {S}^{{2(\gamma-\gamma_0)}}_{1,0}(G\times\widehat{G}) ,
\]  
\[ 
  (1+\pi(Q)^{2(\gamma-\gamma_0)})^{\frac{1}{2}} \pi(Q)^{(\gamma_0-\gamma)}   \in {S}^{0}_{1,0}(G\times\widehat{G}),
\]
and consequently we deduce that
\[ 
  \mathscr{D}'_1:=  \sup_{\pi\in \widehat{G}} \Vert   \pi(Q)^{2(\gamma-\gamma_0)}  \Vert_{\textnormal{op}}, \, \mathscr{D}_2':=  \sup_{\pi\in \widehat{G}} \Vert  (1+\pi(Q)^{2(\gamma-\gamma_0)})^{\frac{1}{2}} \pi(Q)^{(\gamma_0-\gamma)}    \Vert_{\textnormal{op}}  <\infty,
\] and 
\[ 
  \mathscr{D}_3':=  \sup_{\pi\in \widehat{G}}\Vert  \pi(Q)^{(\gamma-\gamma_0)} (1+\pi(Q)^{2(\gamma-\gamma_0)})^{\frac{1}{2}}    \Vert_{\textnormal{op}}<\infty.
\]
Consequently, 
\begin{align*}
    &\Vert \pi(Q)^{(\rho|\alpha|-\delta|\beta|-m-\gamma)}X_x^\beta \Delta^{\alpha}\sigma(x,\pi)\pi(Q)^{\gamma}\Vert_{\textnormal{op}}\\
    &\leqslant  \Vert   X_{\alpha,\beta,\gamma_0}(x,\pi) \pi(Q)^{2(\gamma-\gamma_0)}\\
   &+(1+\pi(Q)^{2(\gamma-\gamma_0)})^{\frac{1}{2}}\pi(Q)^{(\gamma_0-\gamma)} X_{\alpha,\beta,\gamma_0}(x,\pi)\pi(Q)^{(\gamma-\gamma_0)} (1+\pi(Q)^{2(\gamma-\gamma_0)})^{\frac{1}{2}} \Vert_{\textnormal{op}}\\
   &\leqslant  \Vert \pi(Q)^{2(\gamma-\gamma_0)}\Vert_{\textnormal{op}} \Vert  X_{\alpha,\beta,\gamma_0}(x,\pi)\Vert_{\textnormal{op}}\\
   &+\Vert (1+\pi(Q)^{2(\gamma-\gamma_0)})^{\frac{1}{2}}\pi(Q)^{(\gamma_0-\gamma)}\Vert_{\textnormal{op}}\Vert X_{\alpha,\beta,\gamma_0}(x,\pi)\Vert_{\textnormal{op}}\\
   &\hspace{7cm}\times\Vert \pi(Q)^{(\gamma-\gamma_0)} (1+\pi(Q)^{2(\gamma-\gamma_0)})^{\frac{1}{2}} \Vert_{\textnormal{op}}\\
   &\leqslant (\mathscr{D}_1'+\mathscr{D}_2'\times \mathscr{D}_3')\Vert X_{\alpha,\beta,\gamma_0}(x,\pi)\Vert_{\textnormal{op}}.
\end{align*}This argument shows that $\textnormal{D}\Longrightarrow \textnormal{C}$ for $\gamma_0>\gamma_0.$
The proof is complete.
\end{proof}
\end{theorem}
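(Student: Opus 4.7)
The plan is to observe first that Theorem 5.5.20 of \cite{FischerRuzhanskyBook}, recalled in Remark \ref{remarkgraded}, already establishes the mutual equivalence of (A), (B), (C) and (E). Since (A) trivially implies (D), the whole statement reduces to the single implication (D)$\Longrightarrow$(A). For this I would adapt the argument of Theorem \ref{cor} from the compact subelliptic setting to the graded setting, substituting the operator $\widehat{\mathcal{M}}(\xi)^{1/\kappa}$ by $\pi(Q):=\pi((1+\mathcal{R})^{1/\nu})$, where $\mathcal{R}$ is a positive Rockland operator of homogeneous degree $\nu$. The reason the same skeleton transfers is that the only group-specific inputs are: (i) the Corach-Porta-Recht inequality \eqref{Recht2} applied pointwise in $\pi\in\widehat{G}$ to self-adjoint factors, and (ii) the functional calculus of $\mathcal{R}$, which is available on graded Lie groups by the results of \cite[Chapter 4 and Section 5.5]{FischerRuzhanskyBook}.

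Concretely, suppose (D) holds with some fixed $\gamma_0\in\mathbb{R}$ and let $\gamma\in\mathbb{R}$ be arbitrary. Setting
\[
X_{\alpha,\beta,\gamma_0}(x,\pi):=\pi(Q)^{\rho|\alpha|-\delta|\beta|-m-\gamma_0}\bigl(X_x^\beta\Delta^\alpha\sigma(x,\pi)\bigr)\pi(Q)^{\gamma_0},
\]
which is uniformly bounded in $(x,\pi)$ by hypothesis, I rewrite
\[
\pi(Q)^{\rho|\alpha|-\delta|\beta|-m-\gamma}\bigl(X_x^\beta\Delta^\alpha\sigma(x,\pi)\bigr)\pi(Q)^{\gamma}=\pi(Q)^{\gamma_0-\gamma}X_{\alpha,\beta,\gamma_0}(x,\pi)\pi(Q)^{\gamma-\gamma_0}.
\]
Applying \eqref{Recht2} with the self-adjoint field $A(\pi)=\pi(Q)^{\gamma_0-\gamma}$ (when $\gamma>\gamma_0$, and with $A(\pi)=\pi(Q)^{\gamma-\gamma_0}$ when $\gamma<\gamma_0$), this is controlled by a sum of two terms, each of which involves $X_{\alpha,\beta,\gamma_0}(x,\pi)$ sandwiched between factors of the form $\pi(Q)^{2(\gamma_0-\gamma)}$, $(1+\pi(Q)^{2(\gamma_0-\gamma)})^{1/2}\pi(Q)^{\gamma_0-\gamma}$ and $\pi(Q)^{\gamma-\gamma_0}(1+\pi(Q)^{2(\gamma_0-\gamma)})^{1/2}$. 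By the functional calculus of $\mathcal{R}$ and the stability of $S^{s}_{1,0}$ under compositions, each of these three factors is a bounded symbol and its operator norm is uniformly controlled in $\pi\in\widehat{G}$ by a constant depending only on $\gamma-\gamma_0$. Hence $p_{\alpha,\beta,\gamma,m}(\sigma)\lesssim_{\gamma,\gamma_0} p_{\alpha,\beta,\gamma_0,m}(\sigma)$ for every $\gamma\in\mathbb{R}$, which is exactly (A); in particular, (C) follows by choosing $\gamma=m+\delta|\beta|-\rho|\alpha|$.

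The main obstacle I expect is essentially expository rather than technical: ensuring that the spectral-calculus manipulations are justified at the level of fields of operators on smooth vectors, and that the $S^{s}_{1,0}$-membership and boundedness statements I use for the three composite symbols above are indeed direct corollaries of what is proved in \cite{FischerRuzhanskyBook} (and not logically circular with the statement of the theorem). Once those membership facts are cited from \cite[Section 5.5]{FischerRuzhanskyBook}, the Corach-Porta-Recht step is completely parallel to that in the proof of Theorem \ref{cor} and no new ideas are required; the content of Theorem \ref{corgraded} is the extension of Theorem 5.5.20 of \cite{FischerRuzhanskyBook} by the additional equivalent formulation (D), which is the characterization that is most convenient for applications such as symbolic calculus for real powers and the construction of parametrices.
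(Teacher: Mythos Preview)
Your proposal is correct and follows essentially the same route as the paper: reduce to (D)$\Longrightarrow$(A) using Remark \ref{remarkgraded}, rewrite the $\gamma$-seminorm as $\pi(Q)^{\gamma_0-\gamma}X_{\alpha,\beta,\gamma_0}(x,\pi)\pi(Q)^{\gamma-\gamma_0}$, and apply the Corach-Porta-Recht inequality \eqref{Recht2} with $A(\pi)$ chosen according to the sign of $\gamma-\gamma_0$, controlling the resulting factors via the functional calculus for $(1+\mathcal{R})^{s/\nu}$ from \cite{FischerRuzhanskyBook}. The paper phrases the reduction as ``(D)$\Longrightarrow$(C)'' but then proves the estimate for arbitrary $\gamma$, which is exactly your (D)$\Longrightarrow$(A); the arguments are otherwise identical.
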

 \begin{remark}
In the case of the   $n$-dimensional Heisenberg group $G=\mathbb{H}_n,$ Theorem \ref{corgraded} was proved in \cite[Page 479]{FischerRuzhanskyBook}, relying on the  description of these H\"ormander classes in terms of the Shubin calculus. For general graded Lie groups, the equivalence of other conditions to condition (D) remained open. 
 \end{remark}
 
 We finish this subsection by also noting the following  Theorem \ref{MainTheoremGraded}, which is an extension of the classical Fefferman theorem to an arbitrary graded Lie group $G$ of homogeneous dimension $Q$ (see Theorem 1.2 in \cite{RuzhanskyDelgadoCardona2019}). 
\begin{theorem}\label{MainTheoremGraded}
Let $G$ be a graded Lie group of homogeneous dimension $Q.$ Let $A:C^\infty(G)\rightarrow\mathscr{D}'(G)$ be a pseudo-differential operator with symbol $\sigma\in S^{-m}_{\rho,\delta}(G\times \widehat{G} ),$ $0\leq \delta\leq \rho\leq 1,$ $\delta\neq 1.$ Then,
\begin{itemize}
    \item{\textnormal{(a)}} if ${m=\frac{Q  (1-\rho)  }{2}},$  then $A$ extends to a bounded operator from $L^\infty(G)$ to $ BMO(G),$ from the Hardy space $H^1(G)$ to $L^1(G),$ and  from $L^p(G)$ to $L^p(G)$ for all $1< p<\infty.$ 
    \item{\textnormal{(b)}} If $m\geq m_{p}:= Q(1-\rho)\left|\frac{1}{p}-\frac{1}{2}\right|,$ $1<p<\infty,$ then $A$ extends to a bounded operator from $ L^p(G)$ into $ L^p(G).$  
\end{itemize}
\end{theorem}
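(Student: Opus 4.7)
\textbf{Proof proposal for Theorem \ref{MainTheoremGraded}.} The plan is to transport the strategy of Theorems \ref{parta} and \ref{parta2} from the compact Lie group setting to the graded setting, exploiting the homogeneous structure and the functional calculus for a positive Rockland operator $\mathcal{R}$ of homogeneous degree $\nu$ in place of the sub-Laplacian. Write $a:=1-\rho$ and $\mathcal{Q}:=(1+\mathcal{R})^{1/\nu}$, so that the symbol $\pi(\mathcal{Q})$ plays the role of $\widehat{\mathcal{M}}(\xi)$. By Theorem \ref{corgraded}, seminorm estimates involving weights $\pi(\mathcal{Q})^{r}$ on either side are equivalent, which is exactly the flexibility needed to control the relevant compositions. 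The main work is the endpoint statement $L^{\infty}(G)\to\textnormal{BMO}(G)$ for $m=Qa/2$; the $\textnormal{H}^{1}(G)\to L^{1}(G)$ bound then follows by duality (the symbol of $A^{*}$ lies in the same class by the adjoint calculus on graded groups), the $L^{2}(G)$-boundedness for order $0$ comes from the Calder\'on--Vaillancourt theorem on graded groups in \cite{FischerRuzhanskyBook}, and (a) for $1<p<\infty$ plus (b) follow from complex Fefferman--Stein interpolation exactly as in Theorem \ref{parta2}.

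To execute the $L^{\infty}\to\textnormal{BMO}$ estimate, fix a ball $B(x_{0},r)$ and choose $\gamma\in C^{\infty}_{0}(\mathbb{R})$ with $\gamma\equiv1$ on $[-1/2,1/2]$, $\textnormal{supp}(\gamma)\subset[-1,1]$. Set $\widetilde{\gamma}(\pi):=\gamma(r\cdot\pi(\mathcal{Q}))$ via functional calculus and split
\begin{equation*}
\sigma(x,\pi)=\sigma^{0}(x,\pi)+\sigma^{1}(x,\pi),\qquad \sigma^{0}(x,\pi):=\sigma(x,\pi)\widetilde{\gamma}(\pi).
\end{equation*}
Using the homogeneity relation \eqref{FischerFermanian-Kammerer2017} one checks that both pieces inherit the seminorm bounds of $\sigma$ in $S^{-Qa/2}_{\rho,\delta}$ uniformly in $r$. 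For $A^{0}=\textnormal{Op}(\sigma^{0})$ the bound
\begin{equation*}
\frac{1}{|B(x_{0},r)|}\int_{B(x_{0},r)}|A^{0}f(x)-(A^{0}f)_{B}|\,dx\lesssim \|\sigma\|_{\ell,S^{-Qa/2}_{\rho,\delta}}\|f\|_{L^{\infty}}
\end{equation*}
comes from the mean-value theorem along a basis of left-invariant vector fields of degree one, writing $X_{k}A^{0}$ as a sum $\sum_{j\geq 1}\textnormal{Op}(\rho_{j,k})$ over dyadic pieces $\rho_{j,k}(x,\pi):=\rho(2^{j}r\cdot\pi(\mathcal{Q}))\sigma'_{k}(x,\pi)$. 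The graded analog of Lemma \ref{FundamentallemmaI} (which bounds $\|\textnormal{Op}(\sigma)\|_{\mathscr{B}(L^{\infty})}$ via $\sup_{x}\|k_{x}\|_{L^{1}}$) gives
\begin{equation*}
\|\textnormal{Op}(\rho_{j,k})f\|_{L^{\infty}}\lesssim r^{-1}2^{-j}\|\sigma\|_{\ell,S^{-Qa/2}_{\rho,\delta}}\|f\|_{L^{\infty}},
\end{equation*}
and summing in $j$ gives the desired estimate. The piece $A^{1}$ is treated via a Littlewood--Paley cutoff $\phi$ supported near $B(x_{0},2r)$: the term with $\phi f$ is bounded by composing $A^{1}$ with $\mathcal{Q}^{Qa\varepsilon}\in S^{Qa\varepsilon}_{\rho,\delta}(G\times\widehat{G})$ (again by the functional calculus of $\mathcal{R}$) and using the $L^{2}$-boundedness of $A^{1}\mathcal{Q}^{Qa\varepsilon}\in S^{-Qa(1-\varepsilon)/2}_{\rho,\delta}$, while the commutator term $[M_{\phi},A^{1}]$ gains one order in the calculus and is handled by applying the $L^{\infty}$-bound of the graded analog of Lemma \ref{FundamentallemmaI}.

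For part (b) and the $L^{p}$ case of (a) one considers the analytic family
\begin{equation*}
T_{z}:=\textnormal{Op}\bigl(e^{z^{2}}\sigma(x,\pi)\pi(\mathcal{Q})^{Qa(z-1)/2}\bigr),\qquad \mathfrak{Re}(z)\in[0,1],
\end{equation*}
which is well-defined in the calculus by Theorem \ref{corgraded} (this is precisely where the characterization through $p_{\alpha,\beta,\gamma,m}$ with arbitrary shift $\gamma$ is essential). The operator $T_{0}$ maps $L^{\infty}\to\textnormal{BMO}$ by the preceding step, and $T_{1}$ is bounded on $L^{2}(G)$ by Calder\'on--Vaillancourt. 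Complex Fefferman--Stein interpolation yields $L^{p}$-boundedness of $T_{t}$ for $p=2/t$, from which $A$ on $L^{p}(G)$ follows at the critical order $m_{p}$, and then $L^{p}$--$L^{p}$ at higher orders by embedding $S^{-m}_{\rho,\delta}\subset S^{-m_{p}}_{\rho,\delta}$ for $m\geq m_{p}$.

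The main obstacle is establishing the graded versions of Lemma \ref{FundamentallemmaI} (an $L^{\infty}$-bound via the right convolution kernel controlled by the Plancherel measure of the high-frequency shell $\{\pi:\pi(\mathcal{Q})\sim R\}$) and of Lemma \ref{FundamentallemmaII} (decay of differences of cutoffs of $\sigma$ along the scale $r\cdot\pi(\mathcal{Q})$ via \eqref{FischerFermanian-Kammerer2017}). These require the Plancherel-measure estimate $\mu_{\widehat{G}}(\{\pi:\pi(\mathcal{Q})\leq R\})\asymp R^{Q}$, which is the graded counterpart of the Weyl law of Remark \ref{weyl} and is standard for Rockland operators on graded groups (see \cite[Chapter 4]{FischerRuzhanskyBook}). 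Once these two lemmas are in place, the rest of the argument is formally identical to the compact case, with $\widehat{\mathcal{M}}(\xi)$ replaced throughout by $\pi(\mathcal{Q})$ and $\kappa=1$ (no step factor), so that $m_{p}=Q(1-\rho)|1/p-1/2|$ is the exact Fefferman exponent.
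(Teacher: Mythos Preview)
The paper does not give its own proof of Theorem \ref{MainTheoremGraded}: the result is quoted in Appendix III from \cite{RuzhanskyDelgadoCardona2019} (Theorem 1.2 there), and the paper only remarks that it is the graded-group analogue of Theorems \ref{parta} and \ref{parta2}. Your proposal is precisely that analogy carried out --- the $L^\infty\to\textnormal{BMO}$ endpoint via the splitting $\sigma=\sigma^0+\sigma^1$, the commutator argument, the $H^1\to L^1$ bound by duality and the adjoint calculus, and then Fefferman--Stein complex interpolation with the family $T_z=\textnormal{Op}(e^{z^2}\sigma\,\pi(\mathcal{Q})^{Qa(z-1)/2})$ --- so it matches the intended route exactly. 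Your identification of the two technical ingredients (the graded versions of Lemmas \ref{FundamentallemmaI} and \ref{FundamentallemmaII}, fed by a Plancherel-measure Weyl-type estimate for $\pi(\mathcal{Q})$) is also correct, and Theorem \ref{corgraded} is indeed the tool that makes the weighted compositions with $\pi(\mathcal{Q})^r$ legitimate.

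One point to tighten: in the compact proof the mean-value step for $A^0$ uses a basis $\{X_k\}$ of $\mathfrak{g}$ and the Riemannian distance to produce a factor $r$. On a graded group the left-invariant vector fields carry homogeneous degrees $\nu_j\geq 1$, so the correct substitute is the homogeneous Taylor expansion (as in \cite[Chapter 3]{FischerRuzhanskyBook}), which yields factors $r^{\nu_j}$ rather than a uniform $r$; the dyadic sum $\sum_j 2^{-j}$ must be matched to this grading. This is routine, but your phrase ``vector fields of degree one'' is only literally correct when $G$ is stratified.
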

 \begin{remark}
 Theorem \ref{MainTheoremGraded} is an analogue of  Theorems \ref{parta} and \ref{parta2}.
  \end{remark}

\section{Appendix IV: Dependence of the subelliptic  calculus on the choice of sub-Laplacians} We have mentioned in Remark \ref{remarkgraded}, that in the case of a graded Lie group $G,$ 
the resulting class $S^m_{\rho,\delta}(G\times \widehat{G}),$ does not depend on the choice of the Rockland operator $\mathcal{R}.$ However, in the case of a compact Lie group we have mentioned without proof in Remark \ref{dependece1} that for  two sub-Laplacians, the corresponding subelliptic classes may not agree as we can see in the following remark for the case of $G=\textnormal{SU}(2)$. This may happen even when two sub-Laplacians are made from H\"ormander collections of  vector fields of the same step.
\begin{remark}\label{dependece2}Let us consider the positive sub-Laplacians $\mathcal{L}_1=-X_1^2-X_2^2$ and $\mathcal{L}_2=-X_2^2-X_3^2$ on $G=\textnormal{SU}(2)\cong \mathbb{S}^3,$ defined in Example \ref{SU2}.   The unitary dual of $\textnormal{SU}(2)$  can be identified as, (see \cite[Chapter 12]{Ruz})
\begin{equation}
\widehat{\textnormal{SU}}(2)\equiv \{ [t_{l}]:2l\in \mathbb{N}, d_{l}:=\dim t_{l}=(2l+1)\}.
\end{equation}
There are explicit formulae for $t_{l}$ as
functions of Euler angles in terms of the so-called Legendre-Jacobi polynomials, see \cite[Chapter 11]{Ruz}. In terms of the representations $t_l,$  it was shown in  \cite{RuzTurIMRN} that (by considering the positive Laplacian $\mathcal{L}_\textnormal{SU(2)}=-X_1^2-X_2^2-X_3^2$ on $\textnormal{SU(2)}$),
\begin{equation}\label{eq}
    \sigma_{\mathcal{L}_\textnormal{SU(2)}}(t_l)=\textnormal{diag}[ l(l+1)\delta_{mn}]_{m,n=-l}^{l},\quad \sigma_{X_3}(t_l)=\textnormal{diag}[ -in\delta_{mn}]_{m,n=-l}^{l},
\end{equation}where $\delta_{m,n}$ is the Kronecker-Delta. If a-priori we assume that  $S^{m,\mathcal{L}_1}_{\rho,\delta}(\textnormal{SU}(2)\times \frac{1}{2}\mathbb{N}_0)=S^{m,\mathcal{L}_2}_{\rho,\delta}(\textnormal{SU}(2)\times \frac{1}{2}\mathbb{N}_0),$  with $m\in \mathbb{R},$ and $0\leq \rho<\delta\leq 1,$ then we  would have that  $1+\mathcal{L}_2\in S^{2,\mathcal{L}_1}_{1,0}(\textnormal{SU}(2)\times \frac{1}{2}\mathbb{N}_0)=S^{2,\mathcal{L}_2}_{1,0}(\textnormal{SU}(2)\times \frac{1}{2}\mathbb{N}_0),$ which from the definition of the subelliptic classes implies that
\begin{equation}\label{this}
    \sup_{l\in \frac{1}{2}\mathbb{N}_0}\Vert (1+ \sigma_{\mathcal{L}_1}(t_l))^{-1} (1+ \sigma_{\mathcal{L}_2}(t_l)) \Vert_{\textnormal{op}}<\infty.
\end{equation}As a consequence of the Plancherel theorem, the previous inequality implies that $(1+\mathcal{L}_1)^{-1}(1+\mathcal{L}_2)$ is bounded on $L^2(\textnormal{SU}(2)).$ Let us note that
\begin{equation}
    (1+\mathcal{L}_1)^{-1}(1+\mathcal{L}_2)=(1+\mathcal{L}_1)^{-1}(1-X_2^2)-(1+\mathcal{L}_1)^{-1}X_{3}^2.
\end{equation}So, from the positivity of $(1+\mathcal{L}_1)^{-1},$ $(1-X_2^2)$ and $-(1+\mathcal{L}_1)^{-1}X_3^2$ we have that\footnote{Let $H$ be a Hilbert space. Then an operator $A:\textnormal{Dom}(A)\subset H\rightarrow H$ admitting a self-adjoint extension has a bounded extension, if and only if,  $\Vert A\Vert_{\mathscr{B}(H)}=\sup_{\Vert f\Vert_{H}=1}(Af,f)_{H}<\infty,$ where $(\cdot, \cdot)_H$ is the inner product of $H,$ and $\Vert \cdot\Vert_H$ the induced norm (see e.g. Weidmann \cite{Weid}). }
\begin{align*}
    \Vert (1+\mathcal{L}_1)^{-1}(1+\mathcal{L}_2)\Vert_{\mathscr{B}(L^2(\textnormal{SU}(2)))}&=\sup_{f\in L^2(\textnormal{SU}(2))}( (1+\mathcal{L}_1)^{-1}(1+\mathcal{L}_2)f,f)_{L^2(\textnormal{SU}(2))}\\
    &\geq \sup_{f\in L^2(\textnormal{SU}(2))}(-(1+\mathcal{L}_1)^{-1}X_3^2f,f)_{L^2(\textnormal{SU}(2))}\\
    &=\Vert (1+\mathcal{L}_1)^{-1}X_3^2\Vert_{\mathscr{B}(L^2(\textnormal{SU}(2)))},
\end{align*}which  implies that $(1+\mathcal{L}_1)^{-1}X_3^2,$ is bounded on  $L^2(\textnormal{SU}(2)),$ which indeed, is equivalent to say that 
\begin{equation}\label{su2}
    \sup_{l\in \frac{1}{2}\mathbb{N}_0}\Vert (1+ \sigma_{\mathcal{L}_1}(t_l))^{-1}  \sigma_{X_3^2}(t_l) \Vert_{\textnormal{op}}=\sup_{l\in \frac{1}{2}\mathbb{N}_0}\Vert (1+ \sigma_{\mathcal{L}_{\textnormal{SU}(2)}}(t_l)+\sigma_{X_3^2}(t_l))^{-1}  \sigma_{X_3^2}(t_l) \Vert_{\textnormal{op}}<\infty.
\end{equation}In terms of \eqref{eq}, \eqref{su2} implies that
\begin{align*}
   & \sup_{l\in \frac{1}{2}\mathbb{N}_0}\Vert (1+ \sigma_{\mathcal{L}_{\textnormal{SU}(2)}}(t_l)+\sigma_{X_3}(t_l))^{-1}  \sigma_{X_3}(t_l)^2 \Vert_{\textnormal{op}}\\
    &=  \sup_{l\in \frac{1}{2}\mathbb{N}_0}\Vert       \textnormal{diag}[ (l(l+1)-n^2)^{-1}n^{2}\delta_{mn}]_{m,n=-l}^{l}\Vert_{\textnormal{op}}   \asymp l\rightarrow\infty,\textnormal{ when  }l\rightarrow\infty.  
\end{align*}This shows that \eqref{this} does not hold. In consequence, $S^{2,\mathcal{L}_1}_{1,0}(\textnormal{SU}(2)\times \frac{1}{2}\mathbb{N}_0)\neq S^{2,\mathcal{L}_2}_{1,0}(\textnormal{SU}(2)\times \frac{1}{2}\mathbb{N}_0),$ which shows that the subelliptic calculus may depend on the choice of the sub-Laplacian.
\end{remark}
\begin{remark}\label{Sdependence}
As a consequence of the argument in Remark \ref{dependece2}  also subelliptic  Sobolev and Besov spaces may depend on the choice of a sub-Laplacian on a compact Lie group. Indeed, let us consider the case of $G=\textnormal{SU}(2),$ the sub-Laplacians $\mathcal{L}_1$ and $\mathcal{L}_2$ in Example \ref{SU2} and the subelliptic Sobolev spaces $L^{2,\mathcal{L}_1}_{-2}(\textnormal{SU}(2))$ and $L^{2,\mathcal{L}_2}_{-2}(\textnormal{SU}(2)).$ Let us define $f:=\mathscr{F}_{\textnormal{SU}(2)}^{-1}[\widehat{\mathcal{M}}_{2,2}],$ where $\mathcal{M}_{2,2}:=1+\mathcal{L}_2.$ Because
\[ 
   \sup_{l\in \frac{1}{2}\mathbb{N}_0} \Vert \widehat{\mathcal{M}}_{2,2}^{\,-1}(t_l)\widehat{f}(t_l) \Vert_{\textnormal{op}}=1,
\]we have that $f\in L^{2,\mathcal{L}_1}_{-2}(\textnormal{SU}(2)). $ If we assume that $L^{2,\mathcal{L}_1}_{-2}(\textnormal{SU}(2))=L^{2,\mathcal{L}_2}_{-2}(\textnormal{SU}(2)),$ then we could have that
\begin{equation}\label{finalremark}
 \sup_{l\in \frac{1}{2}\mathbb{N}_0} \Vert \widehat{\mathcal{M}}_{1,2}^{\,-1}(t_l)\widehat{f}(t_l) \Vert_{\textnormal{op}}<\infty,   
\end{equation}where  $\mathcal{M}_{1,2}:=1+\mathcal{L}_1.$ However, \eqref{finalremark} is equivalent to saying that $\Vert (1+\mathcal{L}_1)^{-1}(1+\mathcal{L}_2)\Vert_{\mathscr{B}(L^2(\textnormal{SU}(2)))}<\infty,$ which certainly, from Remark \ref{dependece2} is not possible. This analysis implies that $L^{2,\mathcal{L}_1}_{-2}(\textnormal{SU}(2))\neq L^{2,\mathcal{L}_2}_{-2}(\textnormal{SU}(2)).$ Because subelliptic Besov spaces can be obtained from the real interpolation between subelliptic Sobolev spaces (see Theorem 6.2 of \cite{CardonaRuzhansky2019I}) a similar argument as the one done in this remark, shows that subelliptic Besov spaces may depend on the choice of a sub-Laplacian on a compact Lie group. 
\end{remark}

  \par


\begin{thebibliography}{9}
  


\bibitem{Agrachev2008} Agrachev, A.,   Boscain, U.,   Gauthier, J. P., Rossi, F. The intrinsic hypoelliptic Laplacian and its heat kernel on unimodular
Lie groups, J. Funct. Anal. 255 (9), pp. 2190--2232, (2008).

\bibitem{Andruchow}   Andruchow, E., Corach, G.,  Stojanoff, D. Geometric operator inequalities, Linear Algebra Appl., 
258, pp. 295--310, (1997).

\bibitem{AF}  Asada, K., Fujiwara, D. On some oscillatory integral transformations in $L^2(\mathbb{R}^n),$ 
J. Math. (N.S.), Japan, \textbf{4}(2), pp. 299--361, (1978).

\bibitem{atiyabott1}  Atiyah, M. F., Bott, R. The index problem for manifolds with boundary.  Differential Analysis, Bombay Colloq.,  pp. 175--186 Oxford Univ. Press, London, (1964). 

\bibitem{atiyabott2}  Atiyah, M., Bott, R., Patodi, V. K. On the heat equation and the index theorem. Invent. Math. 19, pp. 279--330, (1973).

\bibitem{AS} Atiyah, M. F., Singer, I. M.  { The index of elliptic operators on compact manifolds.} Bull. Amer. Math. Soc. 69, pp. 422--433, (1963).

\bibitem{AS1}  Atiyah, M. F., Singer, I. M. The index of elliptic operators. I. Ann. of Math. (2) 87, pp. 484--530, (1968).

\bibitem{AS2}  Atiyah, M. F., Segal, G. The index of elliptic operators. II. Ann. of Math. (2) 87,  pp. 531--545, 531--545,  (1968).

\bibitem{AS3}  Atiyah, M. F., Singer, I. M. The index of elliptic operators. III. Ann. of Math. (2) 87, pp. 546--604, (1968). 


\bibitem{AS4}  Atiyah, M. F., Singer, I. M. The index of elliptic operators. IV. Ann. of Math. (2) 93,  pp. 119--138, (1971).

 \bibitem{AS5} Atiyah, M. F., Singer, I. M. The index of elliptic operators. V. Ann. of Math. (2) 93,  139--149, (1971).
 

\bibitem{Berge} Berge, S. M., Grong, E. A Lichnerowicz estimate for the spectral gap of a sub-Laplacian. (English summary)
Proc. Amer. Math. Soc. 147 , no. 12, pp. 5153--5166, (2019). 

\bibitem{Besov1} Besov, O. V.   On a family of function spaces. Embeddings theorems and applications [in Russian] Dokl. Akad. Nauk. SSSR., 126, pp. 1163--1165, (1959).

\bibitem{Besov2}  Besov, O. V.   On a family of function spaces in connection with  embeddings and extensions, [in Russian] Trudy. Mat. Inst. Steklov.  60, pp. 42--81, (1961).


\bibitem{Bathia}  Bhatia, R. Matrix Analysis. Springer-Verlag, New York, 1997.



\bibitem{Bismut2008}  Bismut, J. M.  The hypo elliptic Laplacian on a compact Lie group, J. Funct. Anal. 255, 2190--2232,  (2008).

\bibitem{BB} Bleecker, D., Booss-Bavnbek, B. Index theory with applications to mathematics and physics. International Press, Somerville, MA, 2013. 

\bibitem{Bramanti}  Bramanti, M. An invitation to hypoelliptic operators and H\"ormander's vector fields. Springer Briefs in Mathematics. Springer, Cham, 2014. xii+150 pp. ISBN: 978-3-319-02086-0; 978-3-319-02087-7.


\bibitem{Brezis} Brezis, H. Analyse fonctionnelle: th{\'e}orie et applications. Collection Math{\'e}matiques appliqu{\'e}es pour la ma{\^\i}trise. Ciarlet, P.G. and Lions, J.L. (Eds). Dunod, 1999. 

\bibitem{BrinkerWirth} Brinker, J., Wirth, J., Gelfand triples for the Kohn-Nirenberg quantization on homogeneous Lie groups. Advances in harmonic analysis and partial differential equations, 51--97, Trends Math., Birkhäuser/Springer, Cham. (2020
).
\bibitem{Calderon1} Calder\'on,  A. P.,     Vaillancourt, R. On   the   boundedness   of      pseudo-differential perators,  J.  Math.  Soc. Japan  23, pp. 374--378, (1971).

\bibitem{Calderon2} Calder\'on,  A. P.,     Vaillancourt, R.     A  class  of  bounded pseudo-differential  operators, Proc.  Nat. Acad.  Sci. USA  69,  pp. 1185--1187, (1972).

\bibitem{CalderonZygmund} Calder\'on, A. P., Zygmund, A. On singular integrals, Amer. J. Math., Vol. 78. 1956.

\bibitem{CMM} Carbonaro, A., Mauceri, G.,  Meda, S. $H^1$ and $BMO$ for certain locally doubling metric measure spaces, Ann. Sc. Norm. Super. Pisa Cl. Sci. (5)8, pp. 543--582, (2009).


\bibitem{Cardona1} Cardona, D. Besov continuity for Multipliers defined on compact Lie groups. Palest. J. Math.,   5(2), 35--44, (2016).  

\bibitem{Cardona2} Cardona, D. Besov continuity of pseudo-differential operators on compact Lie groups revisited.  C. R. Math. Acad. Sci. Paris  355(5), pp. 533--537,  (2017). 

\bibitem{Cardona22} Cardona, D. Nuclear pseudo-differential operators in Besov spaces on compact Lie groups. J. Fourier Anal. Appl. 23(5),  pp. 1238--1262, (2017).

\bibitem{Cardona3} Cardona, D. Continuity of pseudo-differential operators on Besov spaces on  compact homogeneous manifolds,  {J.  Pseudo-Differ. Oper.  Appl}., 9(4), pp. 861--880, (2018). 

\bibitem{control1} Cardona, D. Spectral inequalities for elliptic pseudo-differential operators on closed manifolds,  arXiv:2209.10690.

\bibitem{CardonaWodzicki} Cardona, D. The Wodzicki residue for pseudo-differential operators on compact Lie groups, to appear in Harmonic Analysis and Partial Differential Equations. Trends in Mathematics. 

\bibitem{Toft2023} Cardona, D., Chatzakou, M., Ruzhansky, M., Toft, J. Schatten-von Neumann properties for H\"ormander classes on compact Lie groups,  arXiv:2301.04044


\bibitem{CdC1} Cardona, D., Del Corral, C.   The Dixmier trace and the Wodzicki residue for pseudo-differential operators on compact manifolds, Rev. Integr. Temas. Mat.  38(1), pp. 67--79. (2020).

\bibitem{CdC2} Cardona, D., Del Corral, C. The Dixmier trace and the non-commutative residue for multipliers on compact manifolds. In: Georgiev V., Ozawa T., Ruzhansky M., Wirth J. (eds) Advances in Harmonic Analysis and Partial Differential Equations. Trends in Mathematics. Birkh\"auser, Cham. (2020).


\bibitem{CDR21b} Cardona, D. Delgado, J. Ruzhansky, M. Dixmier traces, Wodzicki residues, and determinants on compact Lie groups: the paradigm of the global quantisation.  arXiv:2105.14949



\bibitem{RuzhanskyDelgadoCardona2019} Cardona, D., Delgado, J., Ruzhansky, M. $L^p$-bounds for pseudo-differential operators on graded Lie groups.  Lp-bounds for pseudo-differential operators on graded Lie groups. J. Geom. Anal.   31, pp.  11603-11647, (2021).

\bibitem{RuzhanskyDelgadoCardona2022TracesDet} Cardona, D., Delgado, J., Ruzhansky, M. Determinants and Plemelj-Smithies formulas, Monatsh. Math.,  (199), pp. 459--482, (2022).

\bibitem{RuzhanskyDelgadoCardona2022JEv} Cardona, D., Delgado, J., Ruzhansky, M. Drift diffusion equations with fractional diffusion on compact Lie groups. J. Evol. Equ. No. 22,  84, (2022). 


\bibitem{RuzhanskyDelgadoCardona2022FC} Cardona, D., Delgado, J., Ruzhansky, M. Analytic functional calculus and G\r{a}rding inequality on graded Lie groups with applications to diffusion equations,  arXiv:2111.07469


\bibitem{control2} Cardona, D., Delgado, J., Ruzhansky, M. Estimates for sums of eigenfunctions of elliptic pseudo-differential operators on compact Lie groups,  arXiv:2209.12092.

\bibitem{localweyl} Cardona, D., Delgado, J., Ruzhansky, M. A note on the local Weyl formula on compact Lie groups,  arXiv:2210.00311.

\bibitem{Dyadic2023} Cardona, D., Delgado, J., Ruzhansky, M. Boundedness of the dyadic maximal function on graded Lie groups, arXiv:2301.08964.

\bibitem{Control3} Cardona D., Delgado J., Grajales, B., Ruzhansky M. Control of the Cauchy problem on Hilbert spaces: a global approach via symbol criteria, arXiv:2301.08999





\bibitem{CardonaFedericoRuzhansky} Cardona, D., Federico, S., Ruzhansky, M. Subelliptic sharp Gårding inequality on compact Lie groups,  arXiv:2110.00838


\bibitem{CdC3} Cardona, D. Kumar, V.,  Del Corral, C. Dixmier traces for discrete pseudo-differential operators. J. Pseudo-Differ. Oper. Appl.  11, pp. 647--656, (2020).


\bibitem{CRS2018} Cardona,  D., Messiouene, R.,  Senoussaoui, A.,  $L^p$-bounds for  Fourier integral operators on the torus. to appear in Complex Var. Elliptic Equ. arXiv:1807.09892

\bibitem{CRS2021:}Cardona, D., Messiouene, R., Senoussaoui, A. Periodic Fourier integral operators in Lp spaces. C. R. Math. Acad. Sci. Paris.  359 (5), pp. 547-553, (2021).



\bibitem{CardonaRuzhansky2017I} Cardona, D., Ruzhansky, M. Multipliers for Besov spaces on graded Lie groups.  C. R. Math. Acad. Sci. Paris.  355(4),  pp. 400--405, (2017).

\bibitem{CardonaRuzhansky2019I} Cardona, D., Ruzhansky, M. Boundedness of pseudo-differential operators in subelliptic Sobolev and Besov spaces on compact Lie groups, to appear in Complex Var. Elliptic Equ.  arXiv:1901.06825.

\bibitem{CardonaRuzhanksyFourierTriebel} Cardona, D., Ruzhansky, M. Fourier multipliers for Triebel-Lizorkin spaces on graded Lie groups,  arXiv:2101.05856.

\bibitem{CardonaRuzhanksyFourierTriebel2}  Cardona, D., Ruzhansky, M. Fourier multipliers for Triebel-Lizorkin spaces on compact Lie groups, Collect. Math.  73, pp.  477--504, (2022). 

\bibitem{CardonaRuzhansky2022FIO} Cardona, D., Ruzhansky, M. Sharpness of Seeger-Sogge-Stein orders for the weak (1,1) boundedness of Fourier integral operators,  119, pp. 189--198,  Arch. Math. (2022).

\bibitem{CardonaRuzhanskyOscil1} Cardona, D. Ruzhansky, M. Oscillating singular integral operators on graded Lie groups revisited,  arXiv:2201.12881

\bibitem{CardonaRuzhanskyOscil2} Cardona, D. Ruzhansky, M. Boundedness of oscillating singular integrals on Lie groups of polynomial growth,  arXiv:2201.12883

\bibitem{CardonaRuzhanskyOscil3} Cardona, D. Ruzhansky, M. Björk-Sjölin condition for strongly singular convolution operators on graded Lie groups, Math. Z. 302, pp. 1957--1981 (2022). 

\bibitem{CardonaRuzhanskyOscil4} Cardona, D. Ruzhansky, M. Oscillating singular integral operators on compact Lie groups revisited, Math. Z.  303, 26 (2023).




\bibitem{Connes94}  Connes A. Noncommutative geometry. Academic Press Inc., San Diego, CA (1994).

\bibitem{CorachPortaRecht90}   Corach, G., Porta, H.,    Recht, L. An operator inequality. Linear Algebra Appl. 142, pp. 153--158, (1990). 

\bibitem{CoRu1}  Coriasco, S.,  Ruzhansky, M. On the boundedness of Fourier integral operators on $L^p(\mathbb{R}^n),$ C. R. Math. Acad. Sci. Paris  {348}(15--16), pp. 847--851, (2010).

\bibitem{CoRu2} Coriasco, S.,  Ruzhansky, M.  Global $L^p$
continuity of Fourier integral operators, Trans.
Amer. Math. Soc. {366}(5), pp. 2575--2596, (2014).


\bibitem{Couhlon} Coulhon, T., Russ, E., Tardivel-Nachef, V., Sobolev algebras on Lie groups and Riemannian manifolds. Amer. J. Math. 123(2), pp. 283--342, (2001).


\bibitem{Chen} Chen, J., Fan, D. Central oscillating multipliers on compact Lie groups, Math. Z. 267, pp. 235-259, (2011).



\bibitem{CoifmandeGuzman} Coifman, R.R., De Guzm\'an, M. Singular integrals and multipliers on homogeneous spaces. Rev. un. Mat. Argentina, pp. 137--143, (1970).

\bibitem{CoifmanWeiss} Coifman, R., Weiss, G. Analyse harmonique non-commutative sur certains espaces homog\'enes. (French) \'Etude de certaines int\'egrales singuli\'eres. Lecture Notes in Mathematics,  242. Springer-Verlag, Berlin-New York, 1971. v+160 pp. 

\bibitem{Comech} Comech, A. Cotlar-Stein Almost Orthogonality Lemma, lecture note. Columbia University, New York, 1997. 

\bibitem{CowlingSikora}  Cowling, M.,  Sikora, A. A spectral multiplier theorem for a sublaplacian on SU(2). Math. Z. 238, pp. 1--36, (2001).

\bibitem{Christ}  Christ, M. Estimates for fundamental solutions of second-order subelliptic differential operators. Proc. Amer. Math. Soc. 105, pp. 166--172, (1989). 

\bibitem{ARLG} Dasgupta, A., Ruzhansky, M. The Gohberg lemma, compactness, and essential spectrum of operators on compact Lie groups. J. Anal. Math. 128, pp. 179--190, (2016).

\bibitem{Poincare} Delgado, J. A Poincar\'e determinant on the torus. J. Pseudo-differ. Oper. Appl. 13(3), No. 29, 13 pp. (2022). 

\bibitem{RuzhanskyDelgado2017} Delgado, J., Ruzhansky M., $L^p$-bounds for pseudo-differential operators on compact Lie groups,  J. Inst. Math. Jussieu 18(3), pp. 531--559, (2019). 

\bibitem{DelRuzTrace1} Delgado, J., Ruzhansky, M.  Fourier multipliers, symbols, and nuclearity on compact manifolds. J. Anal. Math. 135, no. 2, pp. 757--800, (2018).


\bibitem{DelRuzTrace11}  Delgado, J., Ruzhansky, M.  Schatten classes and traces on compact groups. Math. Res. Lett. 24, no. 4, pp. 979--1003, (2017).


\bibitem{DelRuzTrace111}  Delgado, J., Ruzhansky, M.   Kernel and symbol criteria for Schatten classes and r-nuclearity on compact manifolds. C. R. Math. Acad. Sci. Paris 352, no. 10, pp. 779--784, (2014).

\bibitem{DelRuzTrace1111}   Delgado, J., Ruzhansky, M. $L^p$-nuclearity, traces, and Grothendieck-Lidskii formula on compact Lie groups. J. Math. Pures Appl. (9) 102, no. 1, pp. 153--172, (2014).


\bibitem{deMoraes} de Moraes, W. A. A. Regularity of solutions to a Vekua-type equation on compact Lie groups Ann. Mat. Pura Appl. DOI: 10.1007/s10231-021-01120-7.

\bibitem{Dixmier} Dixmier, J.  Existence de traces non normales, C. R. Acad. Sci. Paris Series B, 262: 1107A--1108A, (1966).

\bibitem{Dixmier1953}  Dixmier, J. Formes lin\'eaires sur un anneau d'op\'erateurs. Bull. Soc. Math. France, 81, 9--39, (1953).


\bibitem{Dyer1970} Dyer, J. L. A nilpotent Lie algebra with nilpotent automorphism group. Bull. Amer. Math. Soc.
76, pp. 52--56, (1970).



\bibitem{Domokos} Domokos, A., Esquerra, R., Jaffa, B.,  Schulte, T. Subelliptic estimates on compact semisimple Lie groups. Nonlinear Analysis: Theory, Methods and Applications, 74(14), pp. 4642--4652, (2011).

\bibitem{Domokos2} Domokos, A., Manfredi,  J. $C^{1,\alpha}$-subelliptic regularity on $\textnormal{SU}(3)$ and compact, semi-simple Lie groups.
Anal. Math. Phys. 10, no. 1, Art. 4, 32 pp, (2020).


\bibitem{DuiHor} Duistermaat, J. J., H\"ormander, L. Fourier integral operators. II, Acta Math. \textbf{128}(3-4), pp. 183--269, (1972).


\bibitem{Duoandikoetxea2000} Duoandikoetxea, J. {Fourier Analysis. } {29}, American Mathematical Society, Providence (2000)

\bibitem{EbertWirth} Ebert, S., Wirth, J. Diffusive wavelets on groups and homogeneous spaces. Proc. Roy. Soc. Edinburgh Sect. A 141(3), pp. 497--520, (2011).

\bibitem{Eskin}   Eskin, G.I. Degenerate elliptic pseudodifferential equations of principal  type, Mat. Sb. (N.S.), {82}(124), pp. 585--628, (1970).



\bibitem{Fefferman1973} Fefferman, C., $L^p$-bounds for pseudo-differential operators, Israel J. Math. 14, pp. 413--417, (1973).


\bibitem{FedosovGolseLeichtnam} Fedosov, B. V., Golse, F., Leichtnam, E., Schrohe, E., The noncommutative residue for manifolds with boundary, J. Funct. Anal. 142, 1996, no 1, pp 1--31. DOI 10.1006/jfan.1996.0142; zbl 0877.58005; MR1419415.

\bibitem{Fischer2015} Fischer, V. Intrinsic pseudo-differential calculi on any compact Lie group. J. Funct. Anal.,
268, pp. 3404--3477, (2015).

\bibitem{Fischertraces1}  Fischer, V. Real trace expansions. Doc. Math. 24, pp. 2159--2202, (2019). 

\bibitem{Fischertraces2} Fischer, V. Local and global symbols on compact Lie groups. J. Pseudo-Differ. Oper. Appl.  doi:10.1007/s11868-019-00299-x., (2019).


\bibitem{FischerFermanian-Kammerer2017}   Fischer, V.   Fermanian-Kammerer, C. Defect measures on graded Lie groups. To appear in  Ann. Sc. Norm. Super. Pisa Cl. Sci. arXiv:1707.04002.


\bibitem{FischerRuzhanskyBook} Fischer V., Ruzhansky M., Quantization on nilpotent Lie groups, Progress in Mathematics,  314, Birkhauser, 2016. xiii+557pp.

\bibitem{FischerDiff} Fischer, V. Differential structure on the dual of a compact Lie group, arXiv:1610.06348.


 
\bibitem{FollandStein1982} Folland, G., Stein, E. Hardy Spaces on Homogeneous Groups, Princeton University
Press, Princeton, N.J., 1982.


\bibitem{Fuji} Fujiwara, A. Construction of the fundamental solution for the Schr\"odinger equations, Proc. Japan Acad. Ser. A Math. Sc, {55}(1), pp. 10--14, (1979).

\bibitem{furioli}  Furioli, G.,  Melzi, C.,  Veneruso, A. Littlewood-Paley decompositions and Besov spaces on Lie groups of polynomial growth. Math. Nachr.  279(9-10), pp. 1028--1040, (2006).

\bibitem{Garetto} Garetto, C., $L^p$ and Sobolev boundedness of pseudodifferential operators with
non-regular symbol: A regularisation approach. J. Math. Anal. Appl.   381, pp. 328--343, (2011). 


\bibitem{GarettoRuzhansky2015} Garetto, C.,  Ruzhansky, M. Wave equation for sum of squares on compact Lie groups,
J. Differential Equations.  258,  pp. 4324--4347, (2015).

\bibitem{Gilkey} Gilkey, P.  Invariance theory, the equation and the Atiyah--Singer index
theorem, Publish or Perish, Wilmington, 1984.

\bibitem{Goh60}  Gohberg, I. On the theory of multidimensional singular integral equations, Soviet Math. Dokl. 1, pp. 960--963, (1960).

\bibitem{GL}  Gordina, M., Laetsch, T. Sub-Laplacians on sub-Riemannian manifolds. Potential Anal. 44, no. 4, pp. 811--837, (2016). 

\bibitem{GRO} Grothendieck, A. Produits tensoriels topologiques et espaces nucl\'eaires, Memoirs
Amer. Math. Soc. 16, Providence, 1955 (Thesis, Nancy, 1953).


\bibitem{GrubbSchrohe} Grubb, G., Schrohe, E. Traces and quasi-traces on the Boutet de Monvel algebra. Ann. Inst. Fourier(Grenoble), 54(5), pp. 1641--1696  xvii, xxii., (2004).

\bibitem{HK16} Hassannezhad, A., Kokarev, G. Sub-Laplacian eigenvalue bounds on sub-Riemannian
manifolds. Ann. Scuola Norm. Sup. Pisa Cl. Sci. XVI(4), pp. 1049--1092, (2016).


\bibitem{HelfferNier} Helffer, B.,  Nier.,  F. Hypoelliptic estimates and spectral theory
for Fokker-Planck operators and Witten Laplacians, volume 1862 of
Lecture Notes in Mathematics. Springer-Verlag, Berlin, 2005.


\bibitem{Hirschman1956}  Hirschman, I. I., Multiplier transformations I,  Duke Math. J.  pp. 222--242, (1956).

\bibitem{Hi} Hirzebruch, F. Neue topologische methoden in der algebraischen Geometrie. Springer-Verlag, Berlin-G\"ottingen-Heidelberg,   1956.

\bibitem{Hormander1967} H\"ormander, L. Hypoelliptic second order differential equations, Acta Math.   119, pp. 147--171, (1967).

\bibitem{Hor71}   H\"ormander, L. Fourier integral operators. I,  Acta Math. \textbf{127}(1-2), pp. 79--183, (1971).

\bibitem{Hormander1985III} H\"ormander, L. { The Analysis of the linear partial differential operators} Vol. III. Springer-Verlag, (1985)



\bibitem{JerisonSanchezCalle}  Jerison, D.,  S\'anchez-Calle, A. Subelliptic second order differential operators, in Complex analysis III, Springer, pp. 46--77,  (1987).

\bibitem{Johnson1975} Johnson, J. W. Homogeneous Lie algebras and expanding automorphisms. Proc. Amer. Math. Soc. 48, 292--296, (1975).

\bibitem{KohnNirenberg1965}  Kohn, J.J., Nirenberg, L.   An algebra of pseudo‐differential operators. Commun. Pure and Appl. Math. 18, 269--305, (1965).

\bibitem{KV} Kontsevich, M., Vishik, S. Geometry of determinants of elliptic operators. In: Functional Analysis on the Eve of the 21st Century, Vol. 1 (New Brunswick, NJ, 1993), Progr. Math., vol. 131, pp. 173--197.Birkh\"auser, Boston (1995).


 \bibitem{Lesch} Lesch, M. On the Noncommutative Residue for Pseudodifferential Operators with log-Polyhomogeneous Symbols, Ann. Glob. Anal. Geom. 17, pp. 151--187, (1999).




\bibitem{M1} M\u{a}ntoiu, M., Ruzhansky, M. Quantizations on nilpotent Lie groups and algebras having flat coadjoint orbits. J. Geom. Anal. 29, no. 3, pp. 2823--2861, (2019). 



\bibitem{M2} M\u{a}ntoiu, M.,   Ruzhansky, M. Pseudo-differential operators, Wigner transform and Weyl systems on type I locally compact groups. Doc. Math. 22,  pp. 1539--1592, (2017).

\bibitem{Melrose}  Melrose, R. Propagation for the wave group of a positive subelliptic second-order differential operator, in: Hyperbolic equations and related topics (Katata/Kyoto, 1984), Academic Press, Boston, MA, pp. 181--192, (1986).

\bibitem{MartiniOtaziVallarino}  Martini, A.,  Ottazzi, A.,    Vallarino, M. A multiplier theorem for sub-Laplacians with drift on Lie groups, Rev. Mat. Iberoam. 35(5), pp. 1501--1534, (2019).

\bibitem{Miyachi}  Miyachi, A. On some estimates for the wave equation in $L^
p$ and $H^p$, J. Fac. Sci. Univ. Tokyo Sect. IA Math. {27}(2), pp. 331--354, (1998).

\bibitem{Mihlin} Mihlin, S. G., Singular integral equations, Uspehi Mat. Nauk,  3, So. 25, pp. 29--112, (1948); New York Univ., Courant Inst. Math. Sci., 1963. Amer. Math. Sac. translation  24,  (1950).

\bibitem{Montgomery}  Montgomery, R. A Tour of Sub-Riemannian Geometries, Their Geodesics and Applications, Math. Surveys
Monogr., vol. 91, Amer. Math. Soc. Providence, RI, (2002).

\bibitem{Mo}  Molahajloo, S. A characterization of compact pseudo-differential operators on $\mathbb{S}^1$, in Pseudo-differential Operators:  Analysis, Applications and Computations, Birkh\"auser/Springer Basel AG, Basel, pp. 25--29, (2011).

\bibitem{NagelStein78}  Nagel, A. Stein, E. M. Some new classes of pseudodifferential operators. Harmonic analysis in Euclidean spaces (Proc. Sympos. Pure Math., Williams Coll., Williamstown, Mass., 1978), Part 2, pp. 159--169, Proc. Sympos. Pure Math., XXXV, Part, Amer. Math. Soc., Providence, R.I., (1979).

\bibitem{NF} Noether, F. \"Uber eine Klasse singul\"arer Integralgleichungen. Math. Ann. 82, 42--63, (1921).




\bibitem{NurRuzTikhBesov2015} Nursultanov, E.,  Ruzhansky, M.,  Tikhonov, S. Nikolskii inequality and functional classes on compact Lie groups, Funct. Anal. Appl.  49, pp. 226--229, (2015).



\bibitem{NurRuzTikhBesov2017}  Nursultanov, E.,  Ruzhansky, M.,  Tikhonov S. Nikolskii inequality and Besov, Triebel-Lizorkin, Wiener and Beurling spaces on compact homogeneous manifolds, Ann. Sc. Norm. Super. Pisa Cl. Sci.,  XVI, pp. 981--1017, (2016).

\bibitem{Paycha} Paycha, S. Regularised Integrals, Sums and Traces, University Lecture Series, 59. American Mathematical Society, Providence, RI, An analytic point of view, (2012).

\bibitem{Peetre1} Peetre, J. Sur les espaces de Besov, C. R. Acad. Sci. Paris  264, pp. 281--283, (1967).

\bibitem{Peetre2} Peetre, J. Remarques sur les espaces de Besov. Le case $0<p<1,$ C. R. Acad. Sci. Paris. 277, pp. 947--950, (1973).

\bibitem{Peral} Peral,  J. C. $L^p$-estimates for the wave equation, J. Funct. Anal. {36}(1), pp. 114--145, (1980).

\bibitem{Person}  Persson, A. Compact linear mappings between interpolation spaces. Ark. Mat. 5, pp. 215--219, (1964).

\bibitem{P} Pietsch, A. Operator ideals. Mathematische Monographien, 16. VEB Deutscher Verlag der Wissenschaften, Berlin, 1978. 

\bibitem{P2} Pietsch, A. History of Banach spaces and linear operators. Birkh\"auser Boston, Inc., Boston, MA, 2007.


\bibitem{RodriguezRuzhansky2020} Rodriguez Torijano, C. A., Ruzhansky, M. Subelliptic wave equations with log-Lipschitz coefficients, arXiv:2007.09396. 

\bibitem{RothschildStein76} Rothschild, L. P.,   Stein, E. M. Hypoelliptic differential operators and nilpotent groups.
Acta Math.,  137(3-4), pp. 247--320, (1976).


\bibitem{M. Ruzhansky} Ruzhansky, M. Regularity theory of Fourier integral operators with complex phases and singularities of affine fibrations, Volume 131 of CWI Tract, Stichting Mathematisch Centrum, Centrum voor Wiskunde en Informatica, Amsterdam, 2001.

\bibitem{Ruz-Tok} Ruzhansky, M., Tokmagambetov, N., Nonharmonic analysis of boundary value problems, { Int. Math. Res. Notices}, (12), 3548--3615, (2016).

\bibitem{ProfRuzM:TokN:20017} Ruzhansky, M., Tokmagambetov, N. Nonharmonic analysis of boundary value problems without WZ condition, { Math. Model. Nat. Phenom.}, 12, 115--140, (2017).


\bibitem{Ruz}  Ruzhansky, M.,  Turunen, V. {Pseudo-differential Operators and Symmetries: Background Analysis and Advanced Topics } Birkh\"auser-Verlag, Basel, (2010).

\bibitem{RuzhanskyTurunenWirth2014} Ruzhansky, M., Turunen, V., Wirth J., H\"ormander class of pseudo-differential operators on compact Lie groups and global hypoellipticity, J. Fourier Anal. Appl.  20, pp. 476--499, (2014).

\bibitem{RuzTurIMRN} Ruzhansky, M., Turunen, V. Global quantization of pseudo-differential operators on compact Lie groups, SU(2) and 3-sphere, Int. Math. Res. Not. IMRN.   11, pp. 2439--2496, (2013).


\bibitem{RuzVR} Ruzhansky, M., Velasquez-Rodriguez, J. P., Non-harmonic Gohberg’s lemma, Gershgorin theory and heat equation on manifolds with boundary. Math. Nachr. 294, 1783--1820, (2021).

\bibitem{RuzhanskyWirth2014} Ruzhansky, M., Wirth, J. Global functional calculus for operators on compact Lie groups, J. Funct. Anal.  267,  144--172, (2014).

\bibitem{RuzhanskyWirth2015} Ruzhansky, M., Wirth, J. $L^p$ Fourier multipliers on compact Lie groups, Math. Z. 280, pp. 621--642, (2015).


\bibitem{RuzSugi01}   Ruzhansky, M., Sugimoto, M. Global $L^2$-boundedness theorems for a class of Fourier integral operators. Comm. Partial Differential Equations, \textbf{31}(4--6), pp. 547--569, (2006).

\bibitem{RuzSugi2019}  Ruzhansky, M., Sugimoto, M. A local-to-global boundedness argument and Fourier integral operators. J. Math. Anal. Appl. 473(2), pp. 892--904, (2019).

\bibitem{Scott} Scott, S. Traces and determinants of pseudodifferential operators, Oxford Mathematical Monographs, Oxford University Press, Oxford, 2010. zbl 1216.35192; MR2683288.

\bibitem{SSS91}  Seeger, A., Sogge, C. D.,   Stein, E. M. Regularity properties of Fourier integral operators, Ann. of Math. (2), {134}(2), pp. 231--251, (1991).

\bibitem{Seddik} Seddik, A. Some results related to the Corach-Porta-Recht inequality. Proc. Amer. Math. Soc. 129, no. 10, pp. 3009--3015, (2001).

\bibitem{Sukochev}  Sukochev, F., Usachev, A.  Dixmier traces and non--commutative analysis. J. Geom. Phys. 105, pp. 102--122, (2016). 

\bibitem{Tao} Tao, T. The weak-type $(1,1)$ of Fourier integral operators of order $-(n - 1)/2,$  J. Aust. Math. Soc. {76}(1), pp. 1--21, (2004).

\bibitem{Taylorbook1981} Taylor, M.  Pseudodifferential Operators, Princeton Univ. Press, Princeton, N.J., (1981).


\bibitem{taylorNC}Taylor, M. Noncommutative Harmonic Analysis. Mathematical Surveys and Monographs, vol. 22. American Mathematical Society, Providence, RI, (1986).


\bibitem{TElst+Robinson}
 { ter Elst, A. F. M.,  Robinson, D. W.}
 {Spectral estimates for positive {R}ockland operators},
 in {Algebraic groups and {L}ie groups},
  {Austral. Math. Soc. Lect. Ser.}
  {9},
    {195--213},
 {Cambridge Univ. Press}, (1997).


\bibitem{Triebel1983} Triebel, H. Theory of function spaces, vol. 78 of Monographs in Mathematics.
Birkh\"auser Verlag, Basel, (1983).

\bibitem{Triebel2006} Triebel, H. Theory of function spaces. III, volume 100 of Monographs in Mathematics.
Birkh\"auser Verlag, Basel, (2006).



\bibitem{VR} Velasquez-Rodriguez, J. P. On some spectral properties of pseudo-differential operators on $\mathbb{T}$.  J. Fourier Anal. Appl. 25, 2703–2732 (2019).

\bibitem{Wainger1965}  Wainger, S.  Special trigonometric series in $k$-dimensions, Mem. Amer. Math. Soc. 59, (1965).

\bibitem{Weid}  Weidmann, J. Linear operators in Hilbert spaces. Translated from the German by Joseph Sz\"ucs. Graduate Texts in Mathematics, 68. Springer-Verlag, New York-Berlin, (1980). 


\bibitem{Wodzicki} Wodzicki, M. Noncommutative Residue. I. Fundamentals, K-theory, Lecture Notes in Math., 1289, Springer, Berlin, 320--399, (1987).

\bibitem{Zhang} Zhang, Y.    Strichartz estimates for the Schr\"odinger flow on compact Lie groups. Analysis $\&$ PDE. 13, 1173--1219, (2020).
  \end{thebibliography}
\end{document}